\definecolor{refkey}{gray}{.75}
\definecolor{labelkey}{gray}{.5}
\colorlet{DarkGreen}{green!50!black}
\colorlet{DarkGray}{gray!60!black}
\numberwithin{equation}{section}
\renewcommand{\restriction}{\mathord{\upharpoonright}}
\renewcommand{\epsilon}{\varepsilon}
\newcommand{\one}{\mathbbm{1}}
\newcommand\norm[1]{\lVert#1\rVert}
\definecolor{refkey}{gray}{.5}
\definecolor{labelkey}{gray}{.5}
\definecolor{light}{gray}{.9}
\newtheorem{theorem}{Theorem}[section]
\newtheorem*{theorem*}{Theorem}
\newtheorem{lemma}[theorem]{Lemma}
\newtheorem{claim}[theorem]{Claim}
\crefname{claim}{Claim}{Claims}
\newtheorem{proposition}[theorem]{Proposition}
\crefname{step}{Step}{Steps}
\newtheorem{observation}[theorem]{Observation}
\newtheorem{fact}[theorem]{Fact}
\newtheorem{corollary}[theorem]{Corollary}
\theoremstyle{definition}{
	
	\newtheorem{definition}[theorem]{Definition}
	
	\newtheorem*{definition*}{Definition}

	\newtheorem{remark}[theorem]{Remark}
	\newtheorem*{remark*}{Remark}
	
}
\newcommand{\E}{\mathbb E}
\renewcommand{\P}{\mathbb P}
\renewcommand{\H}{\mathbb H}
\newcommand{\R}{\mathbb R}
\newcommand{\Z}{\mathbb Z}
\newcommand{\sfh}{\mathsf h}
\newcommand{\sfv}{\mathsf v}
\newcommand{\sfw}{\mathsf w}
\newcommand{\sfx}{\mathsf x}
\newcommand{\sfy}{\mathsf y}
\newcommand{\sfz}{\mathsf z}
\newcommand{\GFF}{\textsc{gff}\xspace}
\newcommand{\ZGFF}{\ensuremath{\Z\textsc{gff}}\xspace}
\newcommand{\SOS}{\textsc{sos}\xspace}
\newcommand{\Dim}{\textsc{d} }
\newcommand{\hatpi}{{\widehat\pi}}
\newcommand{\hatq}{{\widehat q}}
\newcommand{\tildeq}{{\widetilde {\mathsf q}}}
\newcommand{\hatZ}{{\widehat Z}}
\newcommand{\tildeZ}{{\widetilde {\mathsf Z}}}
\newcommand{\partialvtx}{\partial_{\mathtt v}}
\newcommand{\cA}{\ensuremath{\mathcal A}}
\newcommand{\cB}{\ensuremath{\mathcal B}}
\newcommand{\cC}{\ensuremath{\mathcal C}}
\newcommand{\cD}{\ensuremath{\mathcal D}}
\newcommand{\cE}{\ensuremath{\mathcal E}}
\newcommand{\cF}{\ensuremath{\mathcal F}}
\newcommand{\cG}{\ensuremath{\mathcal G}}
\newcommand{\cH}{\ensuremath{\mathcal H}}
\newcommand{\cI}{\ensuremath{\mathcal I}}
\newcommand{\cP}{\ensuremath{\mathcal P}}
\newcommand{\cR}{\ensuremath{\mathcal R}}
\newcommand{\cS}{\ensuremath{\mathcal S}}
\newcommand{\cW}{\ensuremath{\mathcal W}}
\newcommand{\cY}{\ensuremath{\mathcal Y}}
\newcommand{\cZ}{\ensuremath{\mathcal Z}}
\newcommand{\llb }{\llbracket}
\newcommand{\rrb }{\rrbracket}
\newcommand{\fu}{\mathfrak{u}}
\newcommand{\fs}{\mathfrak{s}}
\newcommand{\fF}{\mathfrak{F}}
\newcommand{\fI}{\mathfrak{I}}
\newcommand{\fL}{\mathfrak{L}}
\newcommand{\fR}{\mathfrak{R}}
\newcommand{\fT}{\mathfrak{T}}
\newcommand{\fP}{\mathfrak{P}}
\newcommand{\sfA}{{\ensuremath{\mathsf A}}}
\newcommand{\sfC}{{\ensuremath{\mathsf C}}}
\newcommand{\sfL}{{\ensuremath{\mathsf L}}}
\newcommand{\sfR}{{\ensuremath{\mathsf R}}}
\newcommand{\sfS}{{\ensuremath{\mathsf S}}}
\newcommand{\sfU}{{\ensuremath{\mathsf U}}}
\newcommand{\sfW}{{\ensuremath{\mathsf W}}}
\newcommand{\sfu}{{\ensuremath{\mathsf u}}}
\newcommand{\sB}{{\ensuremath{\mathscr B}}}
\newcommand{\sC}{{\ensuremath{\mathscr C}}}
\newcommand{\sE}{{\ensuremath{\mathscr E}}}
\newcommand{\sG}{{\ensuremath{\mathscr G}}}
\newcommand{\sL}{{\ensuremath{\mathscr L}}}
\newcommand{\sN}{{\ensuremath{\mathscr N}}}
\newcommand{\fD}{\mathfrak{D}}
\newcommand{\fU}{\mathfrak{U}}
\newcommand{\f}{{\ensuremath{\mathbf f}}}
\newcommand{\bE}{{\ensuremath{\mathbf E}}}
\newcommand{\bd}{{\ensuremath{\mathbf d}}}
\newcommand{\bY}{{\ensuremath{\mathbf Y}}}
\newcommand{\bP}{{\ensuremath{\mathbf P}}}
\renewcommand{\epsilon}{\varepsilon}
\DeclareMathOperator{\var}{Var}
\DeclareMathOperator{\dist}{dist}
\DeclareMathOperator{\diam}{diam}
\DeclareMathOperator{\sgn}{sgn}
\newcommand{\Bdd}{{\mathsf{Bdd}}}
\newcommand{\Low}{{\mathsf{Low}}}
\newcommand{\Cpts}{{\mathsf{Cpts}}}
\renewcommand{\d}{\mathrm{d}}
\newcommand{\n}{{\vec{\mathsf{n}}}}
\renewcommand{\o}{{\mathsf{o}^*}}
\newcommand{\tv}{{\textsc{tv}}}
\newcommand{\btl}{\blacktriangleleft}
\newcommand{\btr}{\blacktriangleright}
\crefname{step}{Step}{Steps}
\crefname{case}{Case}{Cases}
\newcommand{\superimpose}[2]{%
	{\ooalign{$#1\@firstoftwo#2$\cr\hfil$#1\@secondoftwo#2$\hfil\cr}}}
\newcommand{\sbullet}{%
	\hbox{\fontfamily{lmr}\fontsize{.4\dimexpr(\f@size pt)}{0}\selectfont\textbullet}}
\title{The limiting law of the Discrete Gaussian level lines}
\author{Joseph Chen}
\address{J.\ Chen\hfill\break
	Courant Institute\\ New York University\\
	251 Mercer Street\\ New York, NY 10012, USA.}
\email{jlc871@courant.nyu.edu}
\author{Eyal Lubetzky}
\address{E.\ Lubetzky\hfill\break
	Courant Institute\\ New York University\\
	251 Mercer Street\\ New York, NY 10012, USA.}
\email{eyal@courant.nyu.edu}
\begin{document}
	
	\begin{abstract}
		Consider the $(2+1)$\Dim Discrete Gaussian (\ZGFF, integer-valued Gaussian free field) model in an $L\times L$ box above a hard floor. Bricmont, El-Mellouki and Fr\"ohlich~(1986) established that, at low enough temperature, this random surface exhibits entropic repulsion: the floor propels the average height to be poly-logarithmic in $L$. The second author, Martinelli and Sly (2016) showed that, for all but exceptional values of $L$, the surface has a plateau whose height concentrates on an explicit integer $H(L)$, and fills nearly the full square. It was conjectured there that the boundary of this plateau---the top level line of the surface---should have random  fluctuations of $L^{1/3+o(1)}$.
		
		We confirm this conjecture of [LMS16] and further recover the limiting law of the top level line: there exists an explicit sequence $N=L^{1-o(1)}$ such that the distance of the top level line from $I$, the interval of length $N^{2/3}$ centered along the side boundary, converges, after rescaling it by $N^{1/3}$ and the width of the interval by $N^{2/3}$, to a Ferrari--Spohn diffusion. In particular, the level-line fluctuations at, say, the center of $I$, have a limit law involving the Airy function rescaled by $N^{1/3}$. This gives the first example of one of the $(2+1)$\Dim $|\nabla \phi|^p$ models (approximating 3\Dim Ising and crystal formation) where the conjectured limit law of its level lines is confirmed (\ZGFF is the case $p=2$).
		
		More generally, we find the joint limit law of any finite number of top level lines: rescaling their distances from the side boundary, each by its $(N_n^{2/3},N_n^{1/3})$, yields a product of Ferrari--Spohn laws. These new results extend to the full universality class of $|\nabla\phi|^p$ models for any fixed $p>1$.
	\end{abstract}
	
	\maketitle
	\vspace{-0.3in}
	
	\section{Introduction}
	The $(2+1)$\Dim Discrete Gaussian model, also known as the integer-valued Gaussian free field and denoted here \ZGFF, is a random surface model extensively studied in the context of the roughening transition in crystals (see the work of Chui and Weeks~\cite{ChuiWeeks76} in 1976, and the related models in~\cite{BCF51} dating back to the 1950's). It is dual to the Villain XY model~\cite{Villain75}, and as such undergoes a Kosterlitz--Thouless phase transition (see~\cite{Kosterlitz2016})---one of two models (along with Solid-On-Solid) where this transition was established by Fr\"ohlich and Spencer in their celebrated works~\cite{FrohlichSpencer81a,FrohlichSpencer81b}.
	
	For $\beta>0$ (the inverse-temperature, which in our context will be taken fixed and large enough), the $(2+1)$\Dim \ZGFF model with a floor (or a hard wall) at height $0$ is a probability distribution over functions that assign nonnegative integer heights to the sites of the square grid $\Lambda=\llb 1,L\rrb^2$. Writing $x\sim y$ for the nearest-neighbor relation in $\Z^2$, the probability of $\phi:\Lambda\to\Z_+$ is given by
	\begin{equation}\label{eq:pi-def}
		\pi^0_\Lambda(\phi) \propto \exp\Big(-\beta \sum_{x\sim y}|\phi_x-\phi_y|^2\Big)\,,
	\end{equation}
	with zero boundary conditions ($\phi_x=0$ for all $x\notin\Lambda$). Define $\hatpi^0_\Lambda$ as the analogue of $\pi^0_\Lambda$ in the no-floor setting (i.e., when $\phi$ accepts values in $\Z$ as opposed to $\Z_+$), and define the infinite-volume measure~$\hatpi_\infty$ as the weak limit of $\hatpi^0_\Lambda$ as $L\to\infty$. 
	The aforementioned \ZGFF phase transition, occurring in $\phi\sim \hatpi_\infty$ at
	a critical $\beta_{\textsc{r}}$ (empirically, $\beta_{\textsc{r}}\approx0.665$),  can be demonstrated at $\phi_o$ 
	as follows: for $\beta\leq\beta_{\textsc{r}}$ (delocalized regime) the surface is rough, in that $\lim_{L\to\infty} \var(\phi_o)=\infty$, while for $\beta>\beta_{\textsc{r}}$ (localized regime) the surface is rigid, in that $\var(\phi_o)=O(1)$ (rigidity, moreover with exponential tails of $|\phi_o|$, was shown in~\cite{BrandenbergerWayne82} at large~$\beta$; roughness at small~$\beta$ was established in~\cite{FrohlichSpencer81a,FrohlichSpencer81b};  monotonicity in $\beta$ was shown in \cite{AHPS21} and continuity of the phase transition was established in~\cite{Lammers22}). 
	
	We focus on $\phi\sim\pi^0_\Lambda$ in the regime of $\beta$ large enough, where most $x\in\Lambda$ would have~$\phi_x=0$ under the no-floor measure $\hatpi^0_\Lambda$, yet the floor in $\pi^0_\Lambda$ induces a nontrivial surface (see \cref{fig:zgff-illustration}). 
	This entropic repulsion effect was identified in a pioneering work of Bricmont, El-Mellouki and Fr\"ohlich~\cite{BEF86}: despite the penalizing boundary conditions, the surface is propelled to height at least $c \sqrt{\log L}$, where it gains entropy from extra (permitted once at this height) downward fluctuations.
	
	\begin{figure}
		\vspace{-0.15in}
		\begin{tikzpicture}
			\node at (0,-0.2) {\includegraphics[width=0.36\textwidth]{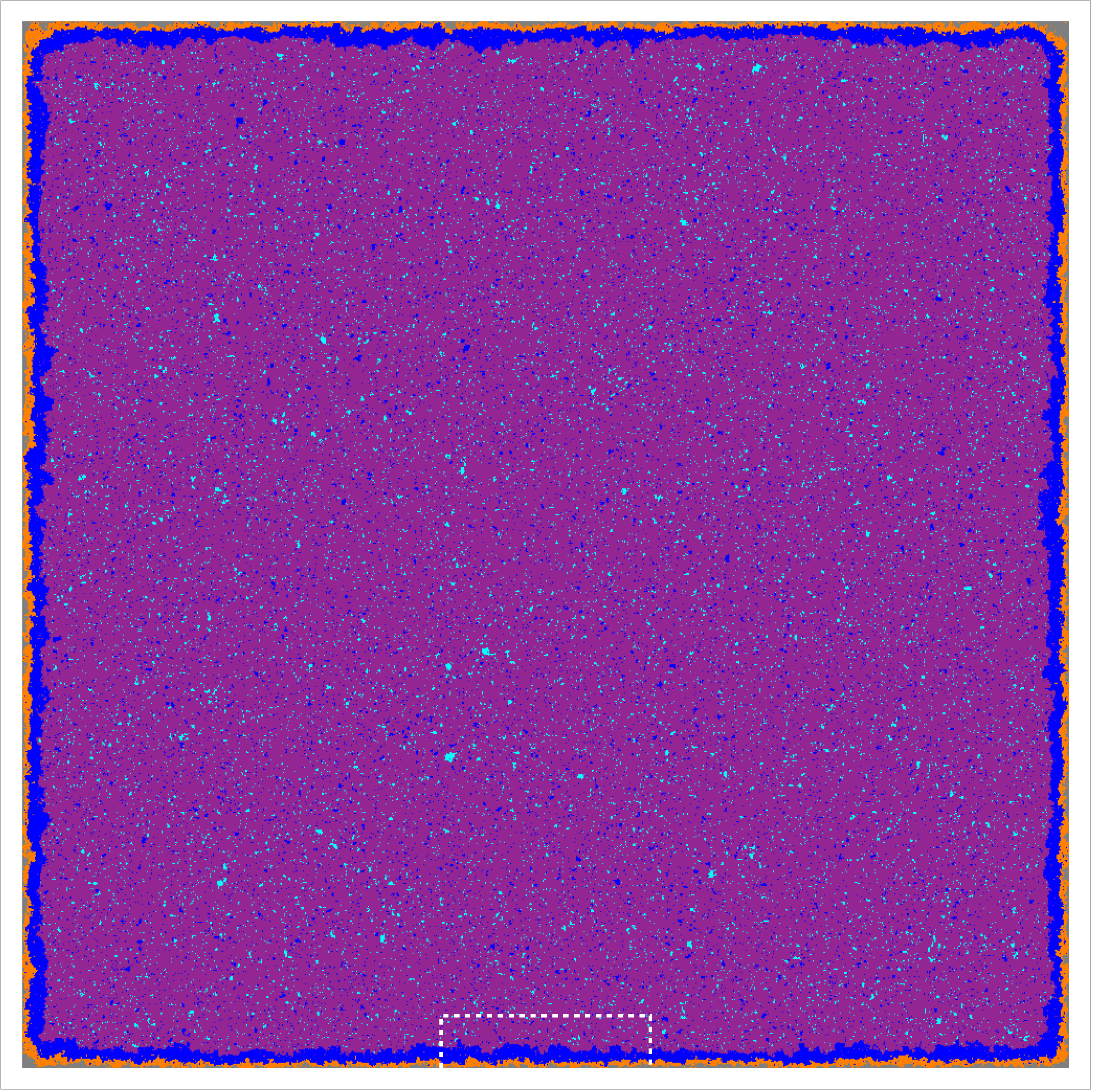}};
			
			\node at (8.1,1){\includegraphics[width=0.5\textwidth]{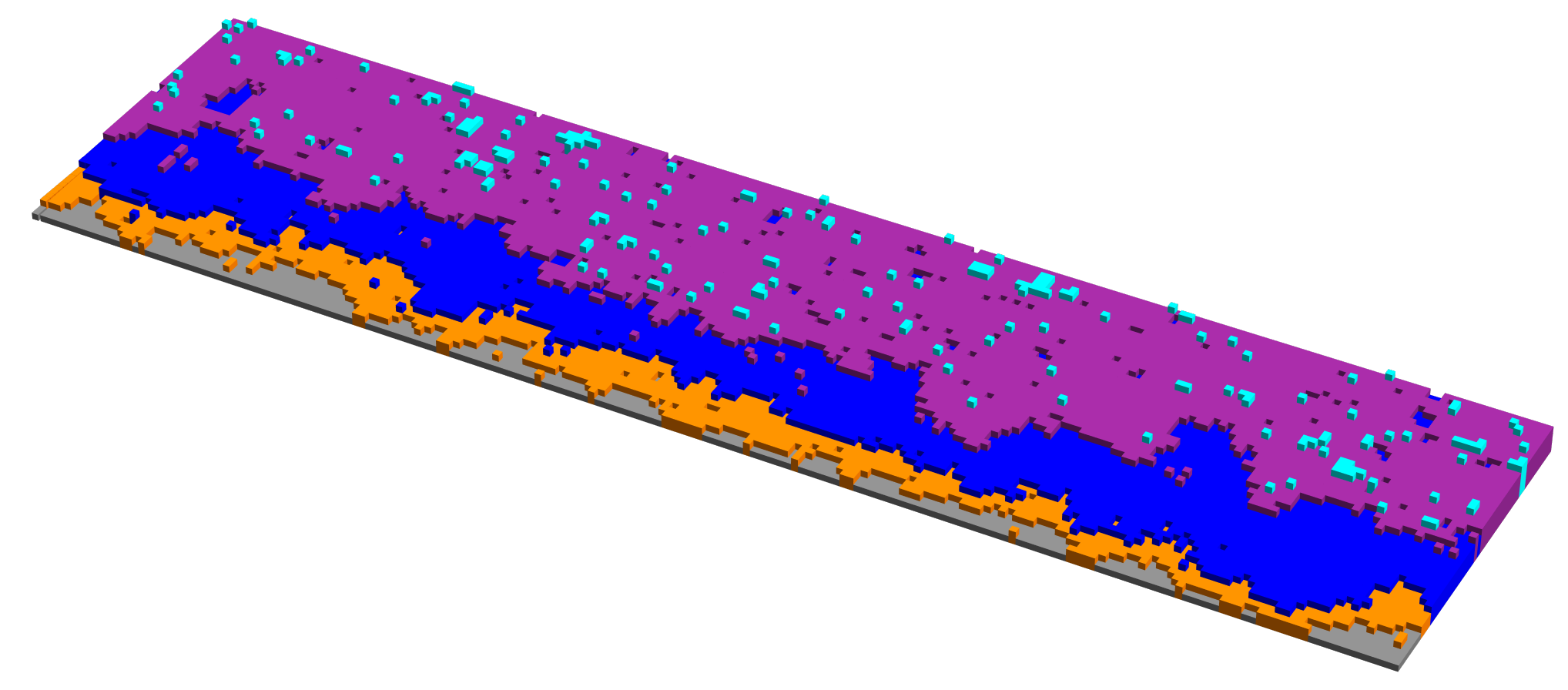}};
			\node at (7.4,-1.9) {\includegraphics[width=0.27\textwidth]{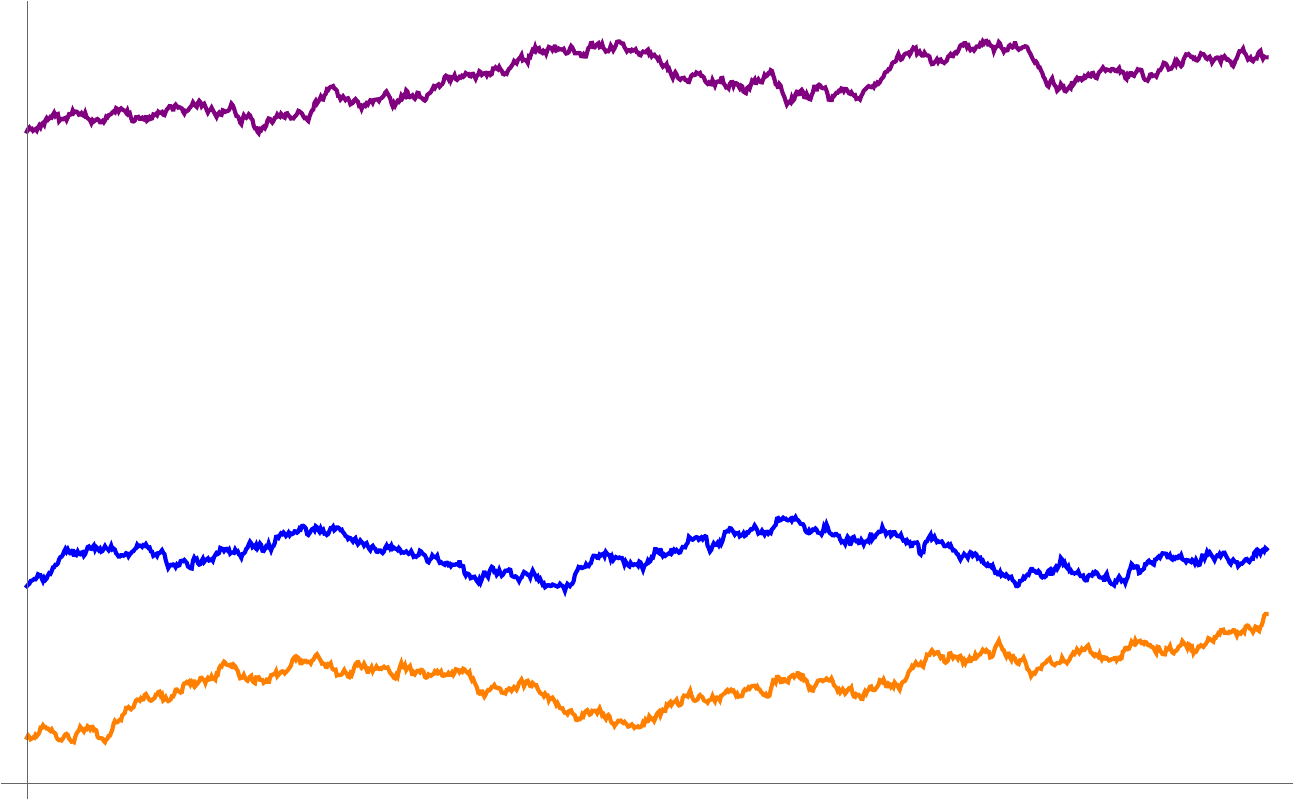}};
			\node [font=\footnotesize] at (4.8,-1.) {$N_1^{1/3}$};
			\node [font=\footnotesize] at (4.8,-2.4) {$N_2^{1/3}$};
			\node [font=\footnotesize] at (4.8,-2.9) {$N_3^{1/3}$};
			
		\end{tikzpicture}
		\vspace{-0.16in}
		\caption{Illustration of the low temperature \ZGFF on $\llb 1,L\rrb^2$, where we look at the law of the top level lines in a rectangle (magnified on right) along the center of the bottom side. On bottom right: independent Ferrari--Spohn diffusions, the limiting law of these level lines.}
		\vspace{-0.2in}
        \label{fig:zgff-illustration}
	\end{figure}
	The study of entropic repulsion in the paper above considered two closely-related models: the \ZGFF and Solid-On-Solid (\SOS), where the term $|\phi_x-\phi_y|^2$ in \cref{eq:pi-def} is replaced by $|\phi_x-\phi_y|$. Caputo et al.~\cite{CLMST12,CLMST14,CLMST16} made significant progress in characterizing the shape of the \SOS surface above a floor at low temperature. They showed that the macroscopic level lines form a sequence of nested loops, with the height of the top one concentrating on one of the two values $\lfloor\frac1{4\beta}\log L\rfloor-1,\lfloor\frac1{4\beta}\log L\rfloor$.
	These works further identified the deterministic scaling limits of the top level lines in $[0,1]^2$: these are the boundaries of the shapes formed by dilating the associated Wulff shape to explicit radii, then taking the union of all their translations in $[0,1]^2$. So, the limits are curved near the corners of the box (where the dilated Wulff shape is visible), and overlap the boundary in intervals surrounding the center-sides (the ``flat portions'' of the limit), where the random fluctuations are of interest.

	The top level line in \SOS was shown in~\cite{CLMST16} to have random fluctuations of at most $L^{1/3+o(1)}$ from the flat portions of its limit. Key to that was showing that the law of the top level line in the relevant region resembles that of a random walk, conditioned to be nonnegative, and penalized exponentially in the area below it. The latter is known~\cite{ISV15} to have $L^{1/3}$ fluctuations, and moreover its scaling limit is known to have the law of a Ferrari--Spohn diffusion~\cite{FerrariSpohn05}, defined below (in fact, the continuous analogue---a Brownian excursion tilted by its area---is equivalent via a Girsanov transformation to Brownian motion staying above a parabolic barrier, which was studied in~\cite{FerrariSpohn05}). 
	It was recently established~\cite{CKL24} that the order of the \SOS top level line fluctuations is at least~$L^{1/3}$. This is expected to be the correct order, but bounding the fluctuations from above is challenging---in \SOS (and also in the \ZGFF), there is a diverging number of level lines pushing against each other. Showing that this interaction does not change the fluctuation order at the top level remains an open problem in \SOS.  
    (If the boundary conditions in \SOS are manipulated to force just a single level line, then it would have a Ferrari--Spohn limit, as was shown in~\cite{CKL24} via adapting the recent proof~\cite{IOSV21} of this limit for the 2\Dim Ising model with critical prewetting). The conjectured \SOS limit should, instead of Ferrari--Spohn, take after the line-ensemble of non-crossing random walks with geometric area tilts (see, e.g.,~\cite{BCG25,CaputoGanguly23,CIW19a,CIW19b,DLZ23,HKS25,Serio23} studying it in the discrete and continuous setting). 
    See \cref{sec:open-prob-related} for more on the open problem of these laws  in \SOS.   
	
	For \ZGFF, the second author, Martinelli and Sly~\cite{LMS16} showed that, as in the case of \SOS, the surface heights concentrate on two values $H,H+1$, and for ``most'' values of $L$ the surface forms a plateau of area $(1-\epsilon_\beta)L^2$ at height $H$ (for $\epsilon$  arbitrarily small as $\beta$ increases). In more detail, let 
	\begin{equation}
		\label{eq:H-def}
		H(L) := \max\Big\{h \,:\; \hatpi_\infty(\phi_o= h) \geq \frac{5\beta}{L} \Big\}\,.
	\end{equation}
	Further define the $h$ level line(s) of $\phi\sim\pi^0_\Lambda$, for each $h\geq 0$, as the collection of loops obtained from edges dual to nearest-neighbors $x\sim y$ such that $\phi_x < h$ and $\phi_y \geq h$ (for concreteness, in case $4$ dual-edges share an endpoint, one ``splits'' them into two loops along the \textsc{northeast} diagonal). Call a loop $\gamma$ \texttt{large} if its length is at least $\log L$ (one could even use a threshold of $(C/\beta)\log L$).
	\begin{theorem*}[\cite{LMS16}\footnote{The cutoff for \texttt{large} loops in \cite{LMS16} (called \emph{macroscopic} loops there) was $\log^2 L$, though the proofs hold also for a threshold of $\log L$ provided $\beta$ is large enough; see the footnote below the definition of macroscopic loops in \cite[\S1.2]{LMS16}. }]
		Fix $\beta$ large, and consider the \ZGFF $\phi\sim\pi^0_\Lambda$ per \cref{eq:pi-def} and $H(L)$ per \cref{eq:H-def}. Then with high probability, 
		there is a unique \texttt{large} $h$ level-line loop for each $h=0,\ldots,H$, and there are no \texttt{large} $h$ level-line loops for any $h\geq H+2$. This sequence of loops is nested; the loops for $h\leq H-1$ have area $(1-o(1))L^2$, and the loop for $h=H$ has area at least $(1-\epsilon_\beta)L^2$.
	\end{theorem*}
	As noted in \cite[Remark~1.5]{LMS16}, for all but an exceptional set\footnote{Precisely, for all side-lengths $L$ outside a set $\sB\subset \Z$ of zero logarithmic density ($\sum_{k\in\sB\cap\llb1,n\rrb}\frac1k = o(\log n)$).} of values of $L$, the area of the $H$ level-line loop is in fact $(1-o(1))L^2$, and there are no \texttt{large} $(H+1)$ level-line loops (making the one at height $H$ the top level line). An open problem left in that work (see \cite[\S1.5]{LMS16}) was to prove that this top level line should have $L^{1/3+o(1)}$ fluctuations away from the corners of $\Lambda$. 
	
	Our main result identifies the correct scale of the fluctuations of the top level line from an interval set at the center of the side boundary---which as conjectured, is indeed  $L^{1/3+o(1)}$, yet with a necessary $L^{o(1)}$ correction---and moreover recovers its Ferrari--Spohn limit law, defined as follows.
	Let $\mathsf{Ai}(x)$ be the Airy function (of the first kind), i.e., the solution to $y''(x) = x y$ with the initial condition $y=0$ at $x=\infty$. For $\sigma>0$, let $\varphi_{\sigma}(x) := \mathsf{Ai}((2/\sigma^2)^{1/3}x -\omega_1)$, where $\omega_1$ is the ``first'' zero of $\mathsf{Ai}$, that is, $\omega_1=\min\{x>0\,:\;\mathsf{Ai}(-x)=0\}$.
	The stationary Ferrari--Spohn diffusion we consider (see \cite{FerrariSpohn05,ISV15}) 
	is the diffusion $\mathsf{FS}_\sigma$ on $(0,\infty)$ with Dirichlet boundary condition at $0$ and generator
	\begin{align}\label{def:fs-generator}
		\mathsf{L}_{\sigma} = \frac{\sigma^2}2 \frac{\d^2}{\d x^2} +  \sigma^2\frac{\varphi_{\sigma}'(x)}{\varphi_{\sigma}(x)}\frac{\d}{\d x}\,.
	\end{align}
	This diffusion is ergodic and reversible with respect to the probability density $\frac{(2/\sigma^2)^{1/3}}{\mathrm{Ai'(-\omega_1)^2}}  \varphi_{\sigma}(x)^2\one_{\{x>0\}}$, 
	where the rescaled Airy function $\varphi_{\sigma}$ given above is also the first eigenfunction of the operator $\mathsf{L}_\sigma$.
	Our main result addresses side-lengths $L$ excluding an explicit set $\sB$ of integers of logarithmic density zero (see \cref{rem:Lh-B-concrete}), about which the plateau transitions from one height to the next.
	\begin{theorem}\label{thm:1}
		Fix $\beta>0$ large enough and consider $\phi\sim \pi^0_\Lambda$, the $(2+1)$\Dim \ZGFF model on $\Lambda=\llb 1,L\rrb^2$ for $L\in\Z\setminus\sB$ (with $\sB$ as in \cref{rem:Lh-B-concrete}) with a floor and zero boundary conditions. Set $H(L)$ per \cref{eq:H-def}, and
		consider the bottom boundary interval $I = \llb \frac{L}2-N^{2/3}, \frac{L}2 + N^{2/3}\rrb$ for  
		\begin{equation}\label{eq:N-def} N := \frac1{ \hatpi_\infty(\phi_o = H)}\quad(=L^{1-o(1)})\,.\end{equation}
		Let $\fL_1$ be the top \texttt{large} level line of $\phi$, and define its vertical distance from~$I$ via
		\[ \rho(x) = \min\{y\geq 0 \,:\; (\tfrac{L}2+x,y)\in\fL_1\}\quad\mbox{for}\quad -N^{2/3}\leq x \leq N^{2/3}\,.\] 
		Then $ Y_1(t) := N^{-1/3}\rho(t N^{2/3})$ converges weakly to the stationary law $\mathsf{FS}_\sigma$ on $[-1,1]$ as $L\to\infty$
        for a fixed $\sigma>0$.
        The analogous $\bar{Y}_1(t)$ w.r.t.\ $\bar{ \rho}(x)= \max\{y \leq \frac L2\,:\; (\frac{L}2+ x,y)\in\fL_1\}$ has $\|\bar{Y}_1-Y_1\|_\infty\xrightarrow[]{\textsc{p}}0$.
		
		More generally, for fixed $m$, let $\fL_1\subset\ldots\subset\fL_{m}$ be the top $m$ \texttt{large} level lines, and
		\begin{equation}\label{eq:Nn-def}
			N_n := \frac{1}{\hatpi_\infty(\phi_o = H+1-n)}\quad(=L^{1-o(1)})\qquad(n=1,\ldots,m)\,.
		\end{equation}
		Let $I_n = \llb \frac{L}2- N_n^{2/3},\frac{L}2+N_n^{2/3}\rrb$, and let $\rho_n(x) = \min\{y\geq 0 \,:\; (\frac{L}2+x,y)\in\fL_n\}$ denote the vertical distance of~$\fL_n$ from $I_n$. The joint law of $Y_n(t) := N_n^{-1/3}\rho_n(t N_n^{2/3})$ ($n=1,\ldots,m$) converges weakly to that of
		$m$ i.i.d.\ stationary Ferrari--Spohn diffusions $\mathsf{FS}_{\sigma}$ on~$[-1,1]$ for an explicit fixed~$\sigma>0$. The analogous $\bar{Y}_n(t)$ w.r.t.\ $\bar{\rho}_n(x)=\max\{y\leq \frac{L}2:(\frac{L}2+x,y)\in\fL_n\}$ have $\max_{n\leq m}\|\bar{Y}_n-Y_n\|_\infty\xrightarrow[]{\textsc{p}}0$.
	\end{theorem}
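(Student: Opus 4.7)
My approach is to reduce the problem to a random-line/random-walk description of each level-line in the flat portion of the side, and then apply a Ferrari--Spohn invariance principle for walks conditioned to stay positive with an exponential area tilt. Throughout, I condition on the high-probability macroscopic event from the [LMS16] theorem quoted above: for $L\notin\sB$, the top $m$ level-lines $\fL_0\supset\fL_1\supset\ldots\supset\fL_{m-1}$ are nested loops of area $(1-o(1))L^2$ whose deterministic (Wulff-type) limit shapes coincide with the boundary of $\Lambda$ along the flat portion in the middle of each side. Each window $I_n$ then lies strictly inside this flat portion, so $\fL_n$ appears there as a fluctuating line attached to the flat plateau boundary.

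\emph{Step 1: random-line representation and identification of the area tilt.} In a horizontal strip of width $\asymp N_n^{2/3}$ centered at $I_n$, I would construct an Ornstein--Zernike-type cluster expansion of $\fL_n$, representing it as a directed random walk $\{S^{(n)}_t\}$ with i.i.d.\ exponentially-tailed increments of finite variance $s_n^2$ (determined by the horizontal surface tension at zero slope), conditioned to stay above $0$ and tilted by its enclosed area with parameter $\lambda_n=(1+o(1))/N_n$. The identification of $\lambda_n$ comes from an entropic accounting: each additional unit of area at height $<H-n$ contributes, relative to the plateau, a multiplicative factor $\hatpi_\infty(\phi_o=H-n)(1+o(1))=N_n^{-1}(1+o(1))$. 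The classical variational balance of diffusion against area cost then dictates the horizontal scale $N_n^{2/3}$ and vertical scale $N_n^{1/3}$, and fixes $\sigma_n$ in \cref{def:fs-generator} as an explicit function of $s_n$ and $\lambda_n$.

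\emph{Step 2: single-line limit and joint independence.} Given this walk representation, the convergence $Y_n\Rightarrow\mathsf{FS}_{\sigma_n}$ on $[-1,1]$ is precisely the Ferrari--Spohn invariance principle of [ISV15] (as extended in [IOSV21]), once one verifies that $\fL_n$ enters and exits $I_n$ at vertical height $\asymp N_n^{1/3}$ so that $S^{(n)}$ relaxes to its stationary profile inside $I_n$; this input follows from the Wulff-shape approximation of [LMS16] combined with a monotone coupling. The closeness $\|\bar Y_n-Y_n\|_\infty\xrightarrow[]{\textsc{p}}0$ reduces to ruling out vertical overhangs of size $N_n^{1/3}$ at any single column, a regularity statement following from the exponentially-tailed increments. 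For the joint statement, the polynomial separation $N_0>N_1>\ldots>N_{m-1}$ (since $\hatpi_\infty(\phi_o=h)$ is decreasing on $[0,H]$) is crucial: at the window $I_n$, each deeper line $\fL_{n'}$ with $n'>n$ fluctuates on a longer horizontal scale $N_{n'}^{2/3}$ and acts as a nearly flat effective floor (absorbable into the constant $\sigma_n$), while each shallower line $\fL_{n'}$ with $n'<n$ fluctuates on a shorter scale and can be conditioned upon with only a local renormalization. Combined with the exponential decay of two-point correlations of the \ZGFF at large $\beta$ and a finite-range decoupling argument, this yields the product of independent Ferrari--Spohn laws in the joint limit.

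\emph{Main obstacle.} The bulk of the work lies in Step~1: constructing the random-line representation for level-lines of the \emph{Gaussian} (quadratic, $p=2$) interaction with enough precision to identify the area tilt as $\lambda_n=(1+o(1))/N_n$ and to extract the effective step variance. In the SOS case of [CLMST16,CKL24] the polymer weights are relatively transparent; for the \ZGFF one must control long-range Gaussian fluctuations of $\phi$ inside the plateau while still reducing to a line dynamics with exponentially decaying interactions. I expect Step~2, by contrast, to amount largely to verifying the hypotheses of existing limit theorems.
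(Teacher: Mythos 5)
Your high-level outline (cluster expansion $\to$ area-tilted random walk $\to$ Ferrari--Spohn invariance principle, with scale separation providing asymptotic independence) matches the paper's at the very top level, but two of your key claims are concretely wrong and they hide where almost all of the work actually sits. First, the scale separation is \emph{not} polynomial: by \cref{eq:LD-ratio-inf}, $N_{n+1}/N_n = e^{-\Theta(\sqrt{\beta\log L/\log\log L})} = L^{-o(1)}$, i.e.\ the gap between consecutive scales is smaller than any power of $L$. This is precisely why it is nontrivial to isolate $\fL_{n+1}$ from $\fL_n$: the typical height of $\fL_{n+1}$ is $N_n^{1/3}e^{-(\log L)^{1/2-o(1)}}$, so one needs to first locate $\fL_n$ to an accuracy finer than $N_n^{1/3}L^\epsilon$ for every $\epsilon>0$ --- which forces the $\mathrm{polylog}$-accuracy ``growth gadget'' of \cref{thm:level-line-contains-Wulff} and the dropping argument of \cref{lem:dropping} before any random-walk invariance principle can be invoked.

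Second, your claim that in Step~2 the lower line $\fL_{n+1}$ ``acts as a nearly flat effective floor absorbable into the constant $\sigma_n$'' and that shallower lines ``can be conditioned upon with only a local renormalization'' gets the pinning problem backwards. The depinning technology of \cite{IST15} is only valid when the confining boundary is genuinely flat, and the paper is explicit that extending it to nearly-flat wiggly domains appears out of reach; a conditioning-and-renormalization step is not available. The paper's actual resolution is to never run the two-sided argument at once: one proves stochastic domination from above and from below in \emph{separate} arguments, using FKG to discard the floor on a subdomain (upper bound) or to erase the lower level-lines (lower bound), revealing lines from exterior to interior for the upper bound and from interior to exterior for the lower bound, and invoking \cref{fact:stoch-dom-indep} only at the very end. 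Without this asymmetric structure your Step~2 is not ``verifying the hypotheses of existing limit theorems'' --- it is exactly the part where the paper expends most of its effort. Finally, note that the polymer model here is not the Ising polymer class of \cite{DKS92,IST15}: the disagreement polymer carries an extra factor for enclosed finite regions (the $\hatpi_{D_i^\circ}^{h_i}(\cdots)$ terms in \cref{eq:CE-with-area}), so the surface-tension and Ornstein--Zernike inputs you propose to cite also need re-derivation (see \cref{lem:DKS-polymer-weights} and \cref{prop:OZ-normalization}); you cannot treat them as off-the-shelf.
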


	\begin{remark}\label{rem:Nn-scales}
		The sequence $N=N_1$ from~\cref{eq:N-def} satisfies
		$  e^{-c\sqrt{\beta \log L / \log\log L}} \leq  N / L \leq 1 / (5\beta)$,
		and each of these two bounds gives the behavior of the fluctuations of $\fL_1$ for infinitely many values of $L$. Namely, the fluctuations of $\fL_1$ are of order $L^{1/3}$ for infinitely many values of $L$, yet they are $o(L^{-1/3})$ for infinitely many other values of $L$. As for $N_2\geq N_3\geq\ldots\geq N_m$ from \cref{eq:Nn-def}, while these scales are all $L^{1-o(1)}$ for any fixed $n$, they are of lower order: $N_n = N_{n-1} e^{-\Theta(\sqrt{\beta\log L/\log\log L})}$ (in particular, the fluctuations of $\fL_n$ for $n\geq 2$ have order $N_n^{1/3} = L^{1/3-o(1)}=o(L^{1/3})$).
	\end{remark}

	\begin{remark}\label{rem:Lh-B-concrete}
		As one increases the side-length $L$ of the box, once  the top level line concentrates on height $h$ (at some $L_h$), it will remain so until $L$ further increases by a factor of $\exp(c\beta \frac{h}{\log h})$. Precisely, the transition marking the onset of level $h+1$ occurs at $L\approx (4+\epsilon_\beta)\beta/\hatpi_\infty(\phi_o=h+1)$.
		As such, one can define the set $\sB$ of exceptional values of the side-lengths $L$ (see \cref{fig:exceptional_set})  via
		\begin{equation}\label{eq:exceptional-set-def} \sB = \bigcup_{h\geq 1} \llb \tfrac34 L_h, L_{h}\rrb \qquad\mbox{where}\qquad
		L_h = \lceil 5\beta /\hatpi_\infty(\phi_o=h)\rceil \qquad(h=1,2,\ldots)\,.\end{equation} 
		With this choice, $\sum_{k\in\sB\cap\llb1,n\rrb}\frac1k = O(\sqrt{\log n \log\log n}) = o(\log n)$ (i.e., zero logarithmic density), and the \ZGFF surface will concentrate on height $H(L)=h$ for $L\in\llb L_h,\ldots,\frac34 L_{h+1}\rrb$ ($h=1,2,\ldots$).
	\end{remark}

        \begin{remark}\label{rem:FS-on-R}
        Taking the intervals $I_n=\llb \frac{L}2- K N_n^{2/3},\frac{L}2+K N_n^{2/3}\rrb$ for any fixed $K>0$ (in lieu of $K=1$), one has convergence to i.i.d. Ferrari--Spohn diffusions $\mathsf{FS}_{\sigma}$ on~$[-K,K]$.
    \end{remark}

	\begin{figure}
		\vspace{-0.15in}
		\begin{tikzpicture}
			\node at (0,0) {\includegraphics[width=0.5\textwidth]{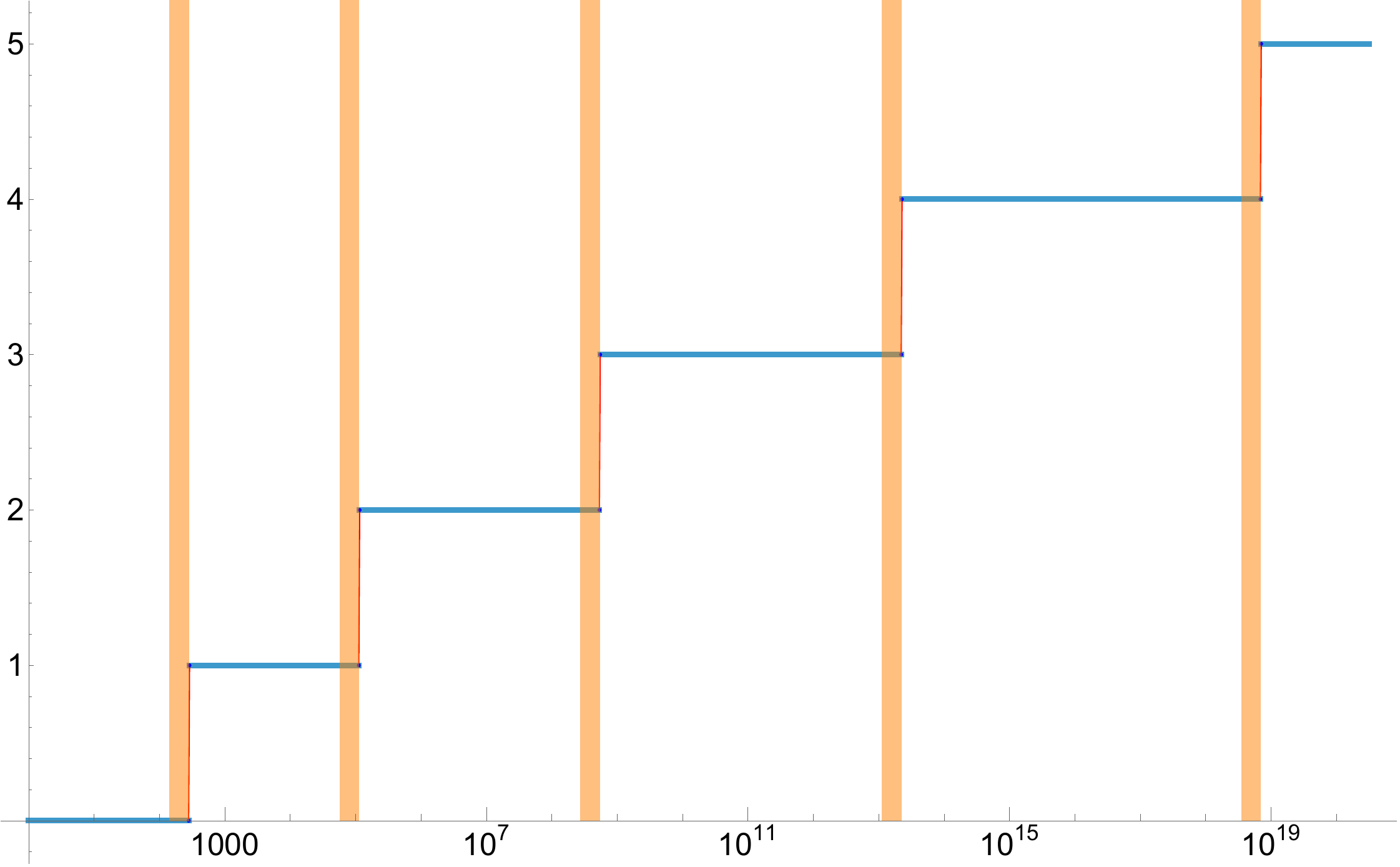}};
			\node[font=\tiny] at (-3.95,2.8) {$h$};
			\node[font=\tiny] at (4.3,-2.25) {$L$};
			
		\end{tikzpicture}
		\vspace{-0.15in}
		\caption{The exceptional set $\sB$ of values of $L$ as per \cref{rem:Lh-B-concrete}, highlighted in orange, which delimits the intervals $\llb L_h, \frac34 L_{h+1}\rrb$ where the top level line concentrates on height $h$. As depicted in this log-plot, the set $\sB$ has zero logarithmic density.}
		\vspace{-0.1in}
        \label{fig:exceptional_set}
	\end{figure}
	
	The new results extend to the family of  $|\nabla\phi|^p$ models, defined as follows. In lieu of \cref{eq:pi-def}, the probability of $\phi:\Lambda\to\Z_+$ (with $0$ boundary conditions and inverse-temperature $\beta>0$) is given by
	\begin{equation}\label{eq:pi-p-def}
		\pi^{(p),0}_\Lambda(\phi) \propto \exp\Big(-\beta \sum_{x\sim y}|\phi_x-\phi_y|^p\Big)\,,
	\end{equation}
	so that $p=2$ is the \ZGFF and $p=1$ is the \SOS model. It was shown in~\cite{LMS16} that, for all $p>1$, and all ``typical'' values of $L$, the surface is typically a plateau (with microscopic fluctuations) at a single deterministic height $H$ (that scales differently with $L$ for different values of $p$), as is the case for \SOS and \ZGFF. As for the limit law of the top level lines, the pictures in \SOS and \ZGFF are different (a Ferrari--Spohn limit in the latter, vs.\ a conjectured variant thereof in the former). 
    
    In what follows, put $\pi^{(p)}_\Lambda$ instead of $\pi^{(p),0}_\Lambda$ for brevity ($0$ boundary conditions by default), and let
    \begin{equation}
        \label{eq:gradphi-def-H-Nn}
        		H^{(p)}(L) := \max\Big\{h \,:\; \hatpi^{(p)}_\infty(\phi_o= h) \geq \frac{5\beta}{L} \Big\}\quad,\quad
                N^{(p)}_n := \frac{1}{\hatpi^{(p)}_\infty(\phi_o = H^{(p)}+1-n)}\quad(=L^{1-o(1)})\,,
    \end{equation}
    the analogues of $H$ and $N_n$ defined in \cref{eq:H-def,eq:Nn-def} for the $\ZGFF$.

\begin{figure}
    \centering
    \vspace{-0.1in}
    \begin{tikzpicture}
			\node at (0,0) {\includegraphics[width=0.45\textwidth]{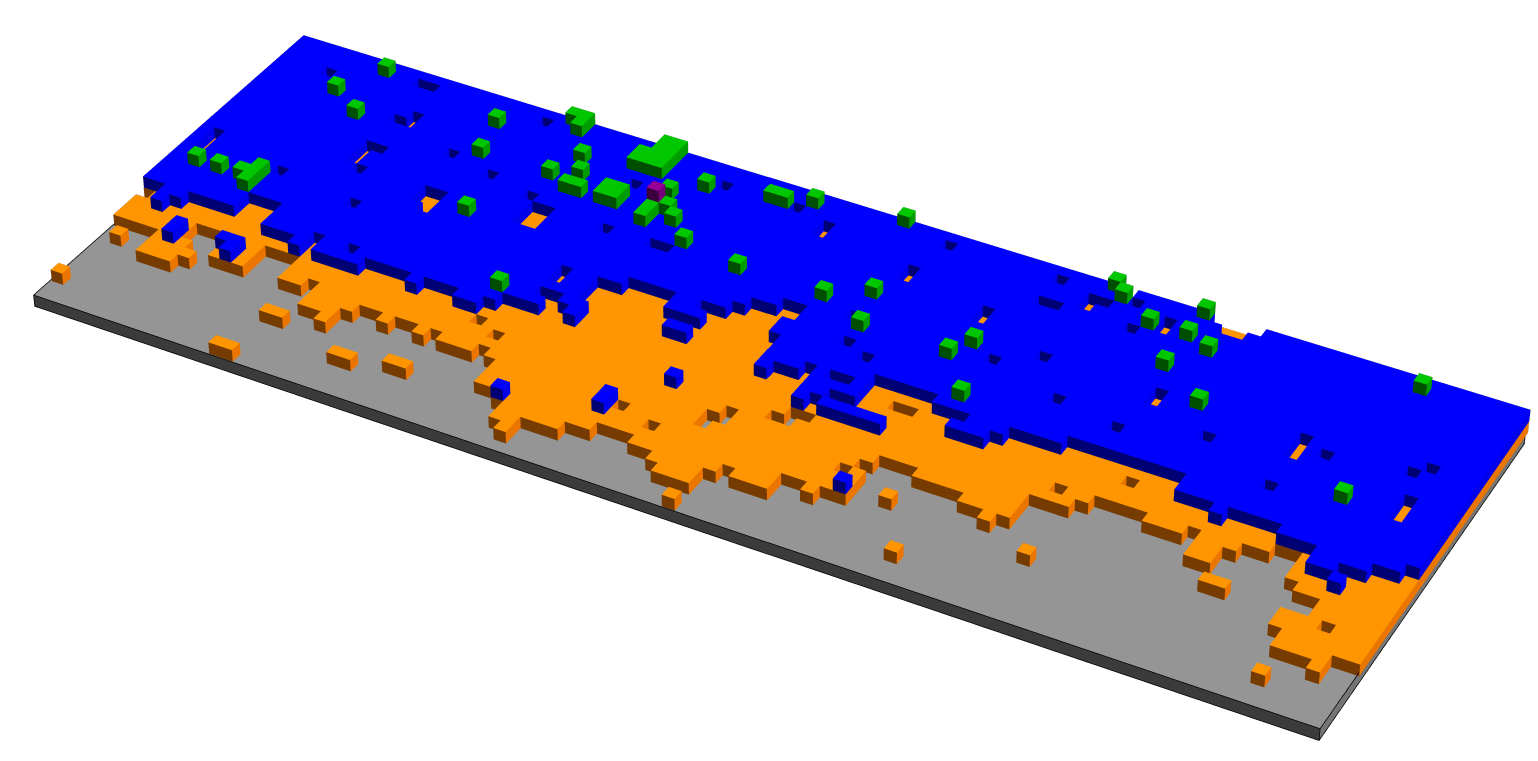}};

            \node [font=\footnotesize] at (-1,-1.2) {$p=1$ (\SOS)};
            \node [font=\scriptsize] at (-1,-1.6) {$N_{n+1}\asymp N_n$};

            \draw[thick,->, color=white] (1.15,-1.17) -- (1.68,-.39);
            \draw[thick,->, color=white] (1.25,-1.2) -- (1.64,-.62);

            \node at (7.5,-0.3) {\includegraphics[width=0.45\textwidth]{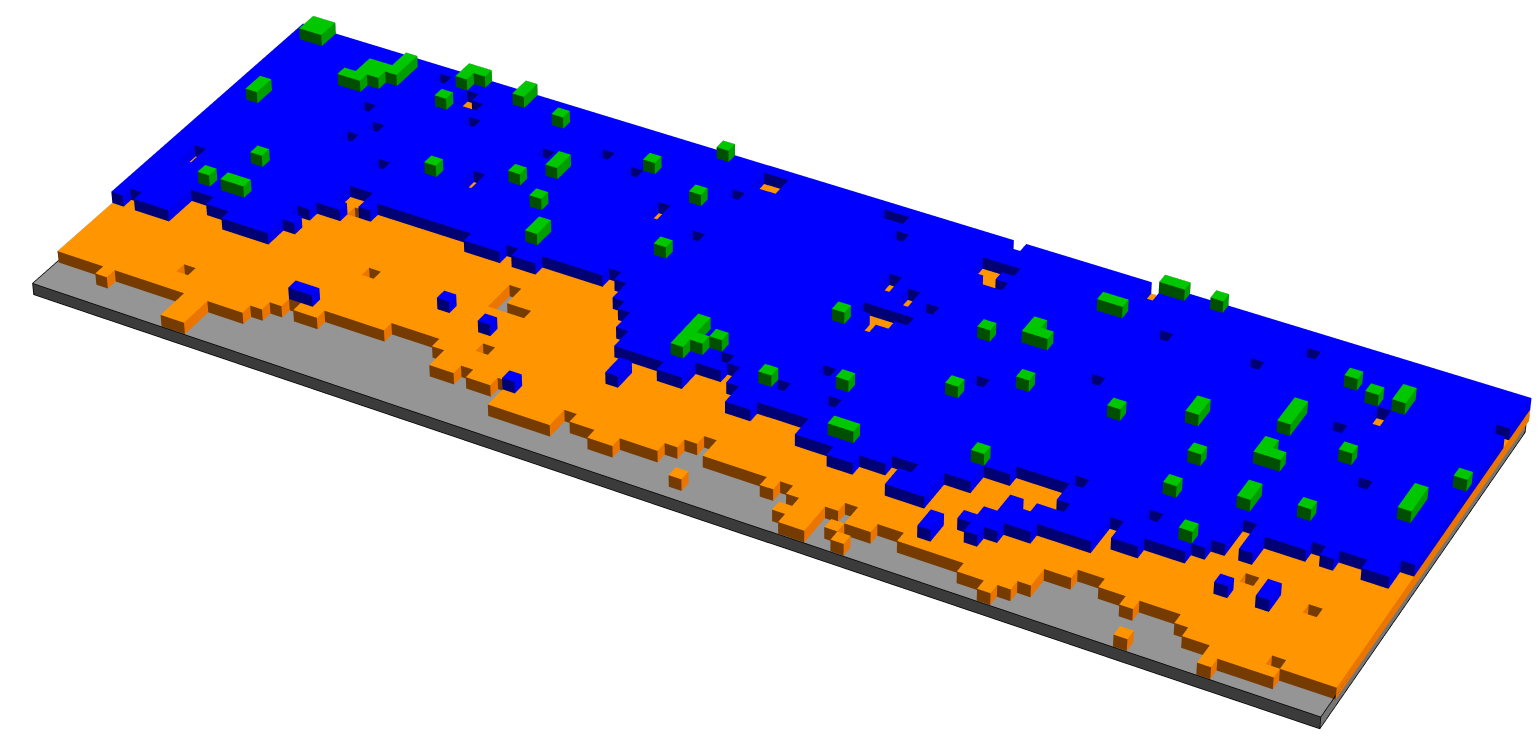}};

            \node [font=\footnotesize] at (6.15,-1.2) {$p=1.5$};
            \node [font=\scriptsize] at (6.0,-1.6) {$N_{n+1}\approx N_{n}e^{-c (\log L)^{1/3}}$};

            \draw[thick,->, color=white] (9.32,-1.65) -- (9.67,-1.15);
            \draw[thick,->, color=white] (9.42,-1.68) -- (9.5,-1.56);

            \node at (0,-3.6) {\includegraphics[width=0.45\textwidth]{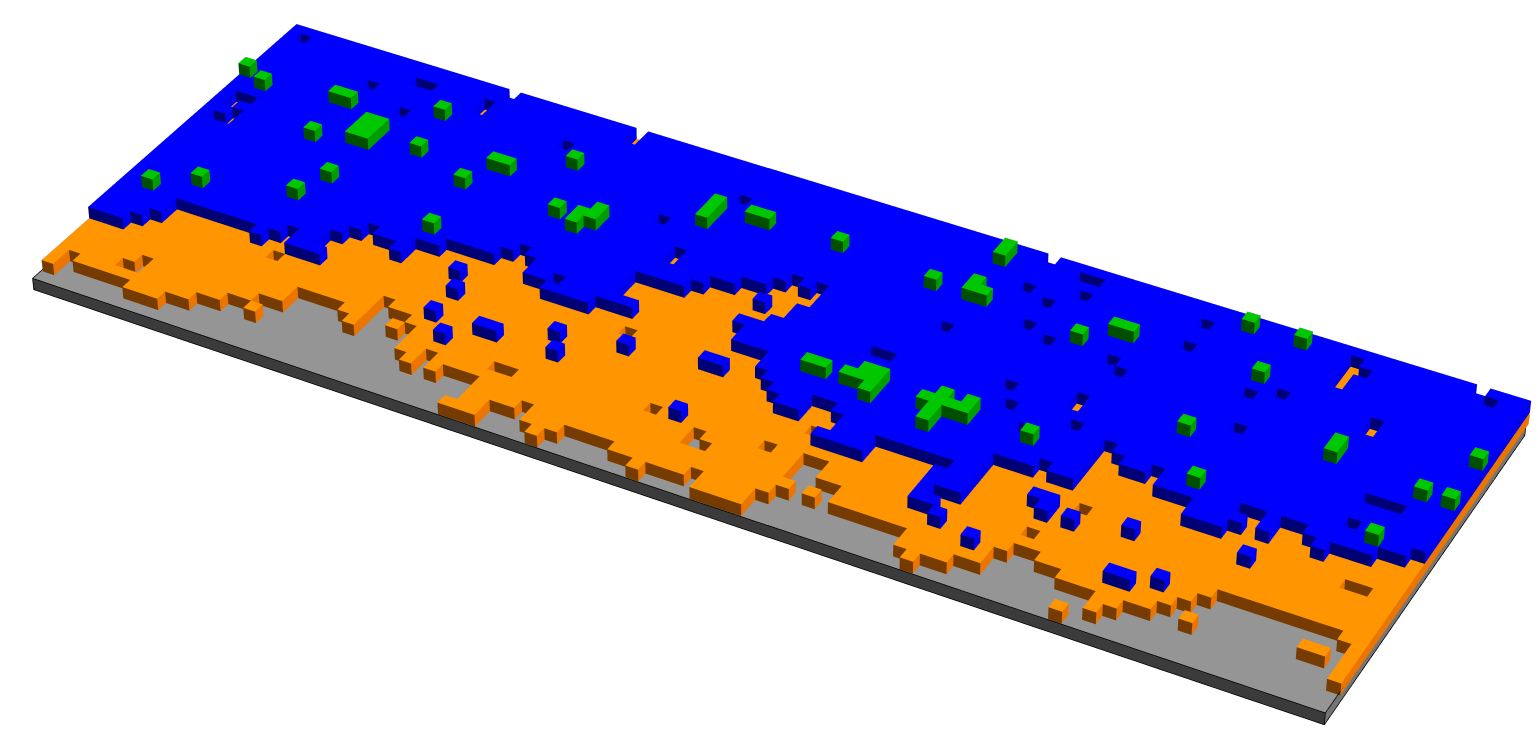}};
            \node [font=\footnotesize] at (-1,-4.8) {$p=2$ (\ZGFF)};
            \node [font=\scriptsize] at (-1,-5.2) {$N_{n+1}\approx N_{n} e^{-c \sqrt{\log L/\log\log L}}$};

            \draw[thick,->, color=white] (1.15,-4.73) -- (1.63,-4.);
            \draw[thick,->, color=white] (1.25,-4.76) -- (1.35,-4.6);

            \node at (7.5,-3.9){\includegraphics[width=0.45\textwidth]{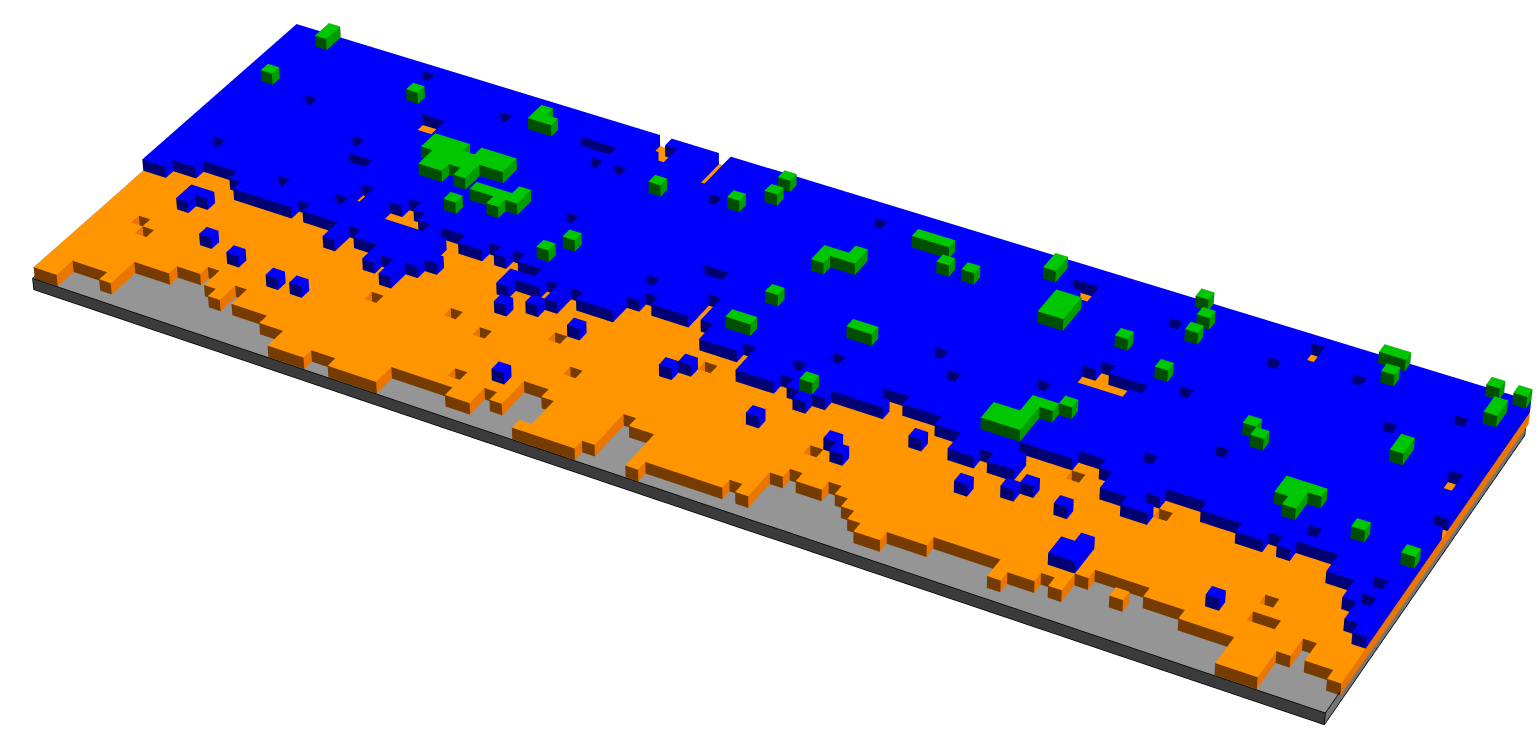}};
            \node [font=\footnotesize] at (6.15,-4.8) {$p=3$};
            \node [font=\scriptsize] at (6.0,-5.2) {$N_{n+1}\approx N_{n} e^{-c \sqrt{\log L}}$};

            \draw[thick,->, color=white] (9.32,-5.25) -- (9.72,-4.65);
            \draw[thick,->, color=white] (9.42,-5.28) -- (9.5,-5.16);

	\end{tikzpicture}
    \vspace{-0.2in}
    \caption{Comparison of the scales $N_n^{(p)}\asymp 1/\hatpi_\infty^{(p)}(\phi_o=L)$
    in the $|\nabla\phi|^p$-model for different values of $p$. As the scaling for the level line $\fL_n$ is $((N_n^{(p)})^{2/3},(N_n^{(p)})^{1/3})$, the $|\nabla\phi|^p$ models enjoy a scale separation between the level lines for $p>1$, unlike the \SOS model ($p=1$).}
    \label{fig:grad-phi-ll}
    \vspace{-0.1in}
\end{figure}

    The next theorem shows that for all $p>1$, the level lines have the same limit law as for \ZGFF.
	
	\begin{theorem}
		\label{thm:grad-phi-p}
		Fix $p>1$, and consider the setting of \cref{thm:1} with the $|\nabla\phi|^p$ model $\pi_\Lambda^{(p)}$
		replacing $\pi_\Lambda^0$ and $H^{(p)},N_n^{(p)}$ replacing $H,N_n$.
        Then the conclusion of \cref{thm:1} holds: the law of $(Y_n)_{n=1}^{m}$, the rescaled vertical distances of the top $m$ level lines from the bottom boundary intervals, converges weakly to the law of $m$ i.i.d.\ stationary Ferrari--Spohn diffusions $\mathsf{FS}_\sigma$ on~$[-1,1]$ for $\sigma>0$.
	\end{theorem}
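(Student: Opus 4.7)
The plan is to run the proof of \cref{thm:1} verbatim with $\pi_\Lambda^{(p)}$, $H^{(p)}$, and $N_n^{(p)}$ replacing $\pi_\Lambda^0$, $H$, and $N_n$, and verify that each of its structural inputs extends from $p=2$ to any fixed $p>1$ at sufficiently large $\beta=\beta(p)$. These inputs are: (i) a plateau structure theorem for $\pi_\Lambda^{(p)}$ (two-value concentration of the surface on $H^{(p)},H^{(p)}+1$, with the $h$-level-lines forming a nested sequence of macroscopic loops for $h\leq H^{(p)}$ and no macroscopic loops above); (ii) a cluster expansion that represents each top level-line $\fL_n$, in a window about $I_n$, as an effective nearest-neighbor random walk conditioned to be nonnegative and tilted by an exponential area penalty; and (iii) the Ferrari--Spohn invariance principle of~\cite{FerrariSpohn05,ISV15} for such area-tilted polymers.

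Input (i) is essentially available for all $p>1$ through~\cite{LMS16}, with $H^{(p)}$ and the exceptional set $\sB^{(p)}$ obtained by substituting $\hatpi^{(p)}_\infty$ for $\hatpi_\infty$ in \cref{eq:H-def} and \cref{eq:exceptional-set-def}. The scale separation illustrated in \cref{fig:grad-phi-ll}, $N_{n+1}^{(p)}/N_n^{(p)}\to 0$, holds for every $p>1$ (and only fails at $p=1$); this is what enables asymptotic decoupling of the top $m$ level-lines, since at the scale of $\fL_n$ the level-lines $\fL_0,\ldots,\fL_{n-1}$ are essentially flat over $I_n$, so conditioning on them reduces, via the FKG inequality and monotonicity, to an effective boundary condition for $\fL_n$ alone. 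Input (iii) is a black-box application once (ii) is established, because the only $p$-dependent ingredient of the limit is the variance parameter $\sigma_n=\sigma_n(p,\beta)$ of the effective walk's step distribution, while the scaling exponents $(2/3,1/3)$ and the $\mathsf{FS}$ shape are dictated purely by the area tilt.

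The main obstacle is therefore input (ii): the low-temperature cluster expansion for $\hatpi_\infty^{(p)}$ and the resulting random-walk representation of a single top level-line. The $\ZGFF$ analysis leverages Gaussian identities (domain-Markov arguments and Brascamp--Lieb-type inequalities) that are unavailable for $p\neq 2$; these must be replaced by contour-based Peierls arguments using only the strict convexity of $t\mapsto |t|^p$ together with the large-$\beta$ separation between typical microscopic fluctuations and unit-height jumps. The entropic-repulsion mechanism of~\cite{BEF86}, originally established for both \SOS and \ZGFF, extends to any $p\geq 1$, and the local-CLT-type estimates needed for the effective step distribution follow from a standard Ornstein--Zernike / coarse-graining analysis applied to the $|\nabla\phi|^p$ potential. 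Granted input (ii), the convergence of the joint law of $(Y_0,\ldots,Y_{m-1})$ to a product of independent stationary $\mathsf{FS}_{\sigma_n}$'s---along with the statement for the lower envelopes $\bar{Y}_n$---follows exactly as in the proof of \cref{thm:1}, via the Ferrari--Spohn invariance principle on each individual scale $(N_n^{(p)})^{2/3}\times (N_n^{(p)})^{1/3}$, combined with the decoupling furnished by the scale separation above.
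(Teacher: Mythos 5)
Your high-level plan matches the paper's approach in \cref{sec:gradphi}: re-run the $p=2$ proof after substituting $\pi^{(p)}$, $H^{(p)}$, $N^{(p)}_n$, and check that the inputs extend. You also correctly identify that once the polymer law (cluster expansion with a floor) is established, \cref{sec:geom-disagreement-polymers,sec:initial-UB-JEMS,sec:UB,sec:LB} go through essentially verbatim---the paper notes that those sections use only the energy lower bound $\sE^*_\beta(\gamma)\geq\beta'\sN(\gamma)$ plus FKG and Peierls---and that the separation of scales $N^{(p)}_{n+1}=o(N^{(p)}_n)$ for $p>1$ is what makes the decoupling into independent Ferrari--Spohn diffusions possible, with $\sigma_n$ the only $p$-dependent parameter.

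Where your diagnosis goes wrong is in locating the obstacle. The $p=2$ proof in this paper does not rely on Gaussian identities, Gaussian domain-Markov, or Brascamp--Lieb; its large-deviation input (\cref{thm:LD-DG}) is a contour/Peierls argument built on the harmonic-pinnacle estimates of~\cite{LMS16}. Replacing ``Gaussian identities'' with ``Peierls using strict convexity of $|t|^p$'' is a strawman and would not point you to the real work. What is actually missing for $p\neq 2$, and constitutes the new content of \cref{subsec:ld-grad-phi}, is: (a) a \emph{lower} bound on the ratio $\hatpi^{(p)}_V(\phi_o=h)/\hatpi^{(p)}_V(\phi_o=h-1)$ (\cref{lem:ratio-lb-p12} for $1<p<2$; \cref{cor:ratio-lb-p2inf} for $p>2$). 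For $p\neq 2$, \cite{LMS16} proved only the upper bound (which sufficed for the height histogram), but without the matching lower bound the coefficient $\tfrac{1}{N_n}$ in the area-tilt term of \cref{prop:CE-law-with-floor} is not asymptotically sharp. (b) A sharp conditional bound $\hatpi^{(p)}_V(\phi_z=h\mid\phi_o=h)\leq\exp[-c\beta h^{p\wedge\frac{p}{p-1}}]$ (\cref{eq:gradphi-LD-conditional}), which for $p>2$ in turn requires a new bound on the weight of the outermost level-line loop (\cref{lem:bd-outermost-loop}, via a ``delete $\Gamma_1$, insert a copy of $\Gamma_{i^*}$'' map that is not a standard Peierls argument). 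Without (b), the ``grill'' estimate in the analogue of \cref{lem:area-estimate} cannot beat the pairwise two-point probability on $L(\log L)^c$-size regions, and the cluster expansion with a floor fails. Your proposal, as written, would leave both of these unproven.
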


    \begin{remark}\label{rem:gradphi-Nn-scales}
    Analogously to \cref{rem:Nn-scales}, for $2<p<\infty$, the sequence $N_1^{(p)}$ from \cref{eq:gradphi-def-H-Nn} satisfies $e^{-c\sqrt{\beta \log L}}\leq N_1^{(p)}/L \leq 1/(5\beta)$ and $N_n^{(p)} = N_{n-1}^{(p)} e^{-\Theta(\sqrt{\beta\log L})}$. For $1<p<2$, one has that $N_1^{(p)}$ from \cref{eq:gradphi-def-H-Nn} satisfies that $e^{-c \beta^{1/p}(\log L)^{(p-1)/p}}\leq N_1^{(p)}/L \leq 1/(5\beta)$ and $N_n^{(p)} = N_{n-1}^{(p)} e^{-\Theta(\beta^{1/p}(\log L)^{(p-1)/p})}$.
    In both cases, as in \cref{rem:Nn-scales}, the upper and lower bounds on $N^{(p)}_1/L$ give the correct behavior of the fluctuations along infinitely many values of $L$: the fluctuations of $\fL_1$ have order $L^{1/3}$ in the former case and $L^{1/3-o(1)}=o(L^{1/3})$ in the latter. Lower level lines, $\fL_2,\ldots,\fL_m$ for any fixed $m$, all have fluctuations $L^{1/3-o(1)}=o(L^{1/3})$. 
    \end{remark}

    \begin{remark}
        \label{rem:gradphi-bad-set}
        Analogously to \cref{rem:Lh-B-concrete}, one defines the exceptional set $\sB$ for the $|\nabla\phi|^p$ model in \cref{thm:grad-phi-p} exactly as in \cref{eq:exceptional-set-def} but with  $\hatpi^{(p)}_\infty$ replacing $\hatpi_\infty$ there.
        The fact that $\sB$ has zero logarithmic density extends to all $1<p<\infty$, owing to the fact that the large deviation rate of $\hatpi_\infty^{(p)}$ is super-linear: $\lim_{h\to\infty}-\frac1h \log\hatpi_\infty^{(p)}(\phi_o=h)=\infty$ (see \cref{rem:bad-set-calc} for more on this).
    \end{remark}

\medskip
We end this subsection with a discussion on the exceptional values $\sB$ and how these are handled in a follow-up work \cite{ChenLubetzky26+}. Recall that the intervals $\llb \frac34 L_h,L_h\rrb$ that make the set $\sB$ as per \cref{eq:exceptional-set-def} capture the emergence of a new macroscopic layer in the surface at height $h$, and that \cref{thm:1} took $L\notin \sB$, so the top level is guaranteed to be $H$ w.h.p.\ and one needs not address level~$H+1$.
In order to address these exceptional intervals, and in particular side-lengths where the probability that level $H+1$ exists is uniformly bounded away from both $0$ and $1$, additional ideas are needed. 
We establish this in the companion paper \cite{ChenLubetzky26+}, where we also derive the global limit shape of the top level lines (near the corners of the box, whereas here we looked at the local law at its flat parts), and the critical point and window around it for the onset of level $h$.

\begin{theorem}[\cite{ChenLubetzky26+}]  \label{thm:1-companion}
		Fix $\beta>0$ large enough and consider $\phi\sim \pi^0_\Lambda$, the $(2+1)$\Dim \ZGFF model on $\Lambda=\llb 1,L\rrb^2$ 
        with a floor and zero boundary conditions. Set $H(L)$ and $N_n$ per \cref{eq:H-def,eq:Nn-def}.
For fixed $m$, let $\fL_n$ ($n=0,\ldots,m$) be the \texttt{large} $(H+1-n)$ level lines, where possibly $\fL_0$ does not exist. 
Let $I_n = \llb \frac{L}2- N_n^{2/3},\frac{L}2+N_n^{2/3}\rrb$, let $\rho_n(x) := \min\{y\geq 0 \,:\; (\frac{L}2+x,y)\in\fL_n\}$ be the vertical distance of~$\fL_n$ from $I_n$, and 
$Y_n(t) := N_n^{-1/3}\rho_n(t N_n^{2/3})$.
There exists a constant $\sigma>0$  such that:
\begin{enumerate}[1.,leftmargin=2em]
\item The law of $(Y_n(t))_{n=1}^m$ under $\pi_\Lambda^0$ converges weakly to the law of 
$m$ i.i.d.\ copies of the stationary Ferrari--Spohn diffusion $\mathsf{FS}_{\sigma}$ on~$[-1,1]$.
\item For $L$ with $\pi_\Lambda^0(\fL_0\mbox{ exists})>L^{-10}$, the law of $(Y_n(t))_{n=0}^m$ under $\pi_\Lambda^0(\cdot\mid\fL_0\mbox{ exists})$ converges weakly to that of $m+1$ i.i.d.\ copies of 
$\mathsf{FS}_{\sigma}$.
\item 
     For $L$ with $\pi_\Lambda^0(\fL_0\mbox{ does not exist}) > L^{-10}$, the law of $(Y_n(t))_{n=1}^m$ under $\pi_\Lambda^0(\cdot\mid \fL_0\mbox{ does not exist})$ converges weakly to that of $m$ i.i.d.\ copies of $\mathsf{FS}_{\sigma}$.
\end{enumerate}
The same statements hold for $\bar{Y}_n(t)$ corresponding to $\bar{\rho}_n(x)=\max\{y\leq \frac{L}2:(\frac{L}2+x,y)\in\fL_n\}$.
	\end{theorem}

    \subsection{Proof ideas}\label{subsec:proof-ideas}
        Our starting point is the observation that the level lines in the $\ZGFF$, and more generally, in the $|\nabla\phi|^p$ model for any $p>1$, ought to be ``separated'' from one another. Indeed, $\fL_n$, the $n$-th level line from the top,  is expected to behave like a random walk tilted by a term of $\exp[\lambda_{L,n} \cA(\fL_n)]$, where $\lambda_{L,n} = \hatpi^{(p)}_\infty(\phi_o = H^{(p)}+1-n)$ and $\cA(\fL_n)$ is the area of its interior. Consequently, as such area-tilted random walks are known to have a Ferrari--Spohn limit law after rescaling their width and height by $(\lambda^{2/3},\lambda^{1/3})$ respectively, one expects the distance of $\fL_n$ from (say) the bottom boundary to be of order $\hatpi^{(p)}_\infty(\phi_o = H^{(p)}+1-n)^{-1/3}$, i.e., $(N_n^{(p)})^{-1/3}$ for $N_n^{(p)}$ from \cref{eq:gradphi-def-H-Nn}. 
        In the case of \SOS ($p=1$), the fact that $\hatpi^{(1)}_\infty(\phi_o=h)\asymp e^{-4\beta h}$ would have every level line $\fL_n$, for $n\geq 1$ fixed, be at the same scale: all would be found at $\Theta(L^{1/3})$ from the bottom boundary, as seen in the top left of \cref{fig:grad-phi-ll} (see \cref{sec:open-prob-related} for more on this).

For all $p>1$, however, one has $\hatpi^{(p)}_\infty(\phi_o=h+1) / \hatpi^{(p)}_\infty(\phi_o=h)=o(1)$, whence the above heuristic implies that the scales $N_n^{(p)}$ satisfy $N^{(p)}_{n+1}= o(N^{(p)}_n)$ for all $n$; i.e., each level line is at a microscopic scale compared to the one above it (see \cref{fig:grad-phi-ll}). Let us focus on the \ZGFF model for simplicity. There, the above ratio of $\hatpi_\infty(\cdot)$ terms is $e^{-\Theta(h/\log h)}$. For $h \asymp H(L)$, this is $\exp[-(\log L)^{1/2-o(1)}]$, e.g., we expect the second level line~$\fL_2$ to be at an $\exp[-(\log L)^{1/2-o(1)}]$ fraction of the ``height'' (distance from the bottom boundary) of the top level line~$\fL_1$, and similarly for the height of $\fL_{n+1}$ vs.\ $\fL_n$. Intuitively, this should mean there is no interaction between the level lines; if these are effectively independent area-tilted RWs,  their limit would be a product of Ferrari--Spohn diffusions.

Showing this separation of the level lines rigorously is nontrivial since the typical height of $\fL_{n+1}$ ought to be $N_n^{1/3} e^{-(\log L)^{1/2-o(1)}}$, so one needs to first control $\fL_n$ and show that it is concentrated about its height of $N_n^{1/3}$ to a finer degree. However, even for the \SOS level lines ($p=1$), where much more was known on $\{\fL_n\}$, the best-known estimates on the height of $\fL_{n}$ had an $L^{o(1)}$ error...
Moreover, we emphasize that, even after one manages to show that the level lines are well separated, the interactions between them could still have a huge impact on their limiting laws. A notoriously challenging obstacle in these and related models is the \emph{pinning} problem, whereby a level line might opt to stick to its  predecessor/successor (in this case, random; in other settings, it can be the boundary of the box) due to said interactions. Handling these delicate interactions (and associated pinning issues) is where most of our efforts in the proof are concentrated.

Our crucial idea here is to separate the analysis into two cases: one showing that $\fL_n$ stochastically dominates a Ferrari--Spohn diffusion, and another showing it is stochastically dominated by one with the same parameters. If one were to try to show directly that $\fL_n$ behaves as an area-tilted random walk, current techniques break down due to the combined interactions (and pinning hazards) with random level lines below and above. For instance, the depinning result of \cite{IST15} is quite delicate, even failing when the exponential decay of interactions has rate $\leq \beta/2$ (for us it is $\beta$, a non-issue). Specifically, it is only valid when the boundary of the domain is flat, and it appears that extending it to ``nearly flat'' domains is highly nontrivial, if at all possible using their method. In our setting, even the top level line $\fL_1$ faces pinning issues from below (to $\fL_{2}$) and from above (to microscopic holes along the boundary of the mesoscopic rectangle of interest, where we wish to derive a Ferrari--Spohn law). 
Luckily, the stochastic domination allows us to employ FKG adjustments that only go in the right direction for the respective side of the bound (upper/lower) currently studied. This does away with the pinning issues on one of the two sides of $\fL_n$ (the interactions on the remaining side are handled differently in the upper and lower bounds, as we explain below).

\subsubsection*{Upper bound on the limit law of $\fL_n$} The upper bound is proved in three steps. 
\begin{enumerate}[(i)]
    \item \label{it:sketch-growth}
We show that each $\fL_n$ does not exceed height $N_n^{1/3}(\log L)^C$, a sufficiently fine degree of accuracy to obtain the aforementioned separation. 
This is achieved via a ``growth gadget,'' that is, a result showing that conditional on the fact that  the level-line loop $\fL_n$ exceeds a certain area, it is likely to exceed it farther.

    \item\label{it:sketch-drop} Then, disregarding lower level lines via the separation established above, as well as upper level lines via monotonicity, we show that $\fL_n$ continues to drop down from $N_n^{1/3}(\log L)^C$ to its equilibrium height $N_n^{1/3}$ using finer random walk estimates. 

\item \label{it:sketch-area-tilt} Lastly, we show that the law of the level line intersected in a small box is approximately that of an area-tilted RW, when its initial height is $O(N_n^{1/3})$.
\end{enumerate}

(As a byproduct of \cref{it:sketch-growth}, we refine the corresponding estimate for \SOS; see \cref{rem:sos-log}.)

In each case, one aims to estimate the law of a level line $\fL_n$ in a mesoscopic box $V$ with boundary conditions $H+1-n,H-n$, and show it is approximately governed by $\exp[-\beta|\fL_n|+\lambda_{L,n} \cA(\fL_n)]$ where $\lambda_{L,n}=\hatpi_\infty(\phi_o=H+1-n)$. 
E.g., the prequel~\cite{CLMST16} on \SOS featured a growth gadget analogous to \cref{it:sketch-growth} where the region $V$ had area $L^{4/3+o(1)}$ (then applied to boxes of size $L^{2/3+o(1)}\times L^{2/3+o(1)}$). The paper~\cite{CKL24} on \SOS had an analysis analogous to \cref{it:sketch-area-tilt} in a box of size $(C L^{2/3})\times (C L^{2/3})$.

Unfortunately, for the \ZGFF, we can afford to obtain such estimates at regions of area $L (\log L)^c$.
The culprit is the two point probability $\hatpi_\infty(\phi_x \geq h, \phi_y \geq h)$ for a pair of nearest-neighbors $x \sim y$. In \SOS, this probability is $\approx e^{-6\beta h}$, so when $\hatpi_\infty(\phi_x \geq h) \approx e^{-4\beta h}$ is $1/L$, the two point probability is $L^{-3/2}$. This translates to a domain restriction of $|V| \ll L^{3/2}$, a computation made rigorous in \cite{CLMST16}. In the \ZGFF model, the two point probability turns out to be $L^{-1 - o(1)}$, since the large deviation event $\phi_x \geq h$ is dominated by $\phi$ which climbs to $h$ in the shape of a harmonic pinnacle~\cite{LMS16}, whereby its neighbors are already at height $h-O(h/\log h)$. The consequence is that we can only work on domains of size $|V| \leq L^{1 + o(1)}$, which forces us to take boxes of say $L^{2/3}(\log L)^c\times L^{1/3}(\log L)^c$. 

This is a serious restriction, due to the pinning issues. When the height of $V$ is only $L^{1/3 + o(1)}$, even though the probability of a random walk reaching height $L^{1/3 + o(1)}$ on an $L^{2/3}$ interval is only $e^{-L^{o(1)}}$, the interactions between $\fL_n$ and $\partial V$ could potentially tilt the measure by a factor of $e^{\epsilon_\beta L^{2/3}}$. As we mentioned, the case of a flat boundary was handled in \cite{IST15}; however, our $V$ is a perturbation of a rectangle with wiggly boundaries, for which that proof method breaks down.

In the direction of showing an upper bound on $\fL_n$, we circumvent these pinning issues by using FKG to forget the floor constraint on a subset of $V$, picking up an area tilt only on a set $F \subset V$ instead. This effectively removes the size restriction from $V$, allowing us to use $L^{2/3+o(1)}\times L^{2/3+o(1)}$ rectangles, where the top boundary is too far to induce pinning issues. We handle interactions with the bottom boundary of $V$ by conditioning $\fL_n$ to stay sufficiently far away from it (again by FKG). 

As evident from this strategy, we moreover have that with high probability, if we condition on $\fL_{n+1},\fL_{n+2},\ldots$, then the conditional limit law of $\fL_n$ is stochastically dominated by a Ferrari--Spohn diffusion. One can view this as an induction  revealing level lines from exterior to interior, ultimately showing that the joint limit law of $\fL_1,\ldots,\fL_m$ is dominated by independent Ferrari--Spohn laws.

\bigskip
\subsubsection*{Lower bound on the limit law of $\fL_n$}
For the lower bound on the top level line $\fL_1$, we first remove the lower level lines by FKG (intuitively, they only push $\fL_1$ upwards). Then, we study the restriction of $\fL_1$ to a domain $V$ which is a perturbation of an $N_1^{2/3} \times N_1^{1/3}(\log L)$ rectangle, with a flat boundary at the bottom and wiggly boundary on the other three sides. All the monotonicity workarounds from the upper bound now no longer apply, being in the wrong direction. In particular, the top boundary of $V$ cannot be placed far away anymore. Instead, we condition $\fL_1$ to stay away from the top boundary, and handle the (now flat) bottom boundary by extending the depinning results of~\cite{IST15} to our setting. Once we establish the depinning, we can reduce the problem to the area-tilted 2\Dim random walk setting of prior works~\cite{CKL24,IOSV21} which show convergence to Ferrari--Spohn.
This proves a lower bound for $\fL_1$, which in particular shows that its typical height is $O(N_1^{1/3})$. 

Turning to the next level line, $\fL_2$, we look at its restriction to a domain $V$ which is a perturbation of a $N_2^{2/3} \times N_2^{1/3}\log L$ rectangle. Since $N_2^{1/3}\log L = o(N_1^{1/3})$, the behavior of $\fL_1$ can be isolated from the analysis of $\fL_2$, and as before the lower level lines $\fL_3, \fL_4, \ldots$ can be removed by FKG. This then allows us to show the desired lower bound for $\fL_2$, and we can proceed inductively for $\fL_n$.

As in the upper bound on $\fL_n$ (but with a reverse order of which level lines are revealed, where here the induction reveals them from interior to exterior), one obtains that with high probability, conditional on
$\fL_1,\ldots,\fL_{n-1}$, the limit law of $\fL_n$ stochastically dominates a Ferrari--Spohn diffusion. Consequently, the joint limit law of $\fL_1,\ldots,\fL_{m}$ dominates independent Ferrari--Spohn laws.

\begin{remark}
    Note that the upper bound had to reveal the level lines from exterior to interior (revealing $\fL_{n-1}$ would introduce a non-flat interacting boundary at distance $L^{1/3+o(1)}$ above $\fL_n$)  whereas the lower bound had to reveal the level lines from interior to exterior (revealing $\fL_{n+1}$ would introduce a non-flat interacting boundary at distance $L^{1/3+o(1)}$ below $\fL_n$).
\end{remark}

    \subsection{Proof outline and organization}\label{subsec:proof-outline}~The paper is organized as follows.

    In \cref{sec:law-of-disagreement-poly}, we establish a polymer representation for the law of a level line $\fL$ using classical cluster expansion techniques. This requires some refinements of large deviation results of \cite{LMS16}. It turns out that the natural object to study here is not $\fL$, but the entire component of bonds $\gamma$ dual to height disagreement edges containing $\fL$, which we call a disagreement polymer. Thus,  the polymer model of interest no longer falls under the family of ``Ising polymers'' for which much has been proven (e.g., the existence of a surface tension~\cite{DKS92} and the depinning from a flat boundary~\cite{IST15}). Instead, the geometry of $\gamma$ is allowed to be more complex than that of a contour, and its law has an additional term capturing the area tilt of finite components surrounded by $\gamma$.

    In \cref{sec:geom-disagreement-polymers}, we prove several foundational results for the aforementioned class of disagreement polymers, which are already known for ``Ising polymers.'' We begin by defining animals, cone-points, and irreducible components in order to establish a product structure to the polymer law, via Ornstein--Zernike analysis. Next, we extend results of \cite[Sec.~4]{DKS92} to our model, proving the existence and key properties of the surface tension, as well as elementary depinning results. We then define the Wulff shape corresponding to the surface tension, and continue the Ornstein--Zernike theory to show the proper renormalization needed to turn the product structure into a random walk.

   In \cref{sec:initial-UB-JEMS}, 
   we show one side (growth of a droplet) of the macroscopic scaling limit of the $(H+1-n)$ level line $\fL_n$, which should be given by a translation of Wulff shapes. (Hereafter, we prove results on $\fL_n$, but remember that the object we work with is $\gamma$.) The behavior in the $\ZGFF$ is markedly different from that in the \SOS model: here the Wulff shape is only visible on the $O(L)$ scale for select values of $L$, and only for the top level line $\fL_1$ (see \cref{rem:wulff-shape-invisible}). A direct implication of this scaling limit is that $\fL_n$ has a maximum deviation of $N_n^{1/3}(\log L)^C$ from the boundary of~$\Lambda$, away from the corners. Following the strategy employed for the \SOS model in \cite{CLMST16}, we look at the region $\cD$ known to be contained inside $\fL_n$ and iteratively grow this region. The key is to show that in a rectangle $V$ with $H+1-n, H-n$ boundary conditions, $\fL_n$ drops to a lower height. Then we can place such rectangles all around the boundary of the contained region $\cD$ to expand $\cD$ in every direction. A similar dropping lemma is needed in \cref{sec:UB}, but the difference is that here we need to handle boundary conditions at an angle. The lattice effect makes this a nontrivial change (being the reason there is even a Wulff shape to begin with), leading to a different analysis.

   In \cref{sec:UB}, once we show that the height of $\fL_n$ above the bottom side of $\partial \Lambda$ is at most $N_n^{1/3}(\log L)^C$, we prove that inside an $N_n^{2/3}(\log L)^C \times 2N_n^{2/3}(\log L)^C$ rectangle, the curve of $\fL_n$ is stochastically dominated by an area tilted random walk in the middle $N_n^{2/3}$ interval. As mentioned above, this requires another dropping lemma, this time showing the stronger result that $\fL_n$ drops from height $N_n^{1/3}(\log L)^C$ to $O(N_n^{1/3})$. (In turn, we must assume that the endpoints of $\fL_n$ are at an angle of $\approx 0$.) Then, combining monotonicity considerations with previous results on area tilted random walks, we prove the sought upper bound on the limit law in \cref{thm:1}.

 In \cref{sec:LB}, we prove that inside a $TN_n^{2/3}\times N_n^{1/3}(\log L)^C$ rectangle, the curve $\fL_n$ stochastically dominates an area tilted random walk in the middle $N_n^{2/3}$ interval. Since this random walk has the same increment law as the one studied in \cref{sec:UB}, it converges to the same Ferrari--Spohn diffusion, proving the lower bound on the limit law in  \cref{thm:1}.

In \cref{sec:gradphi}, we prove \cref{thm:grad-phi-p}, extending our results to the $|\nabla\phi|^p$ model for  $1 < p < \infty$. 
To that end, we prove a lower bound on the ratio $\hatpi^{(p)}_\infty(\phi_o = h)/\hatpi^{(p)}_\infty(\phi_o = h-1)$, which was not addressed as part of the large deviation results from \cite{LMS16} when $p \neq 2$. This was not needed in that work, which only concerned the height histogram of those models, but is imperative here to obtain an asymptotically sharp bound on the law of the disagreement polymer.

    \subsection{Open problems and related works}\label{sec:open-prob-related}
    In this work, we showed that the scaling limit of the $m$ top level lines, $\fL_1,\ldots,\fL_{m}$, in the \ZGFF above a floor at large enough $\beta$, is $m$ independent Ferrari--Spohn diffusions. Moreover, this holds for the $|\nabla\phi|^p$ model for any fixed $p > 1$, which captures the complete universality class when varying $p$, as the behavior for $p = 1$ ($\SOS$) is expected to be different. It remains an open problem to establish the limit law for the countably many level lines $\fL_1,\fL_2,\ldots$ in \SOS, conjectured to be a Brownian line ensemble with a geometric area tilt; see 
\cite{DLZ23,CaputoGanguly23,CIW19a,CIW19b,BCG25,HKS25,Serio23} for a plethora of works studying this object.
    
    For $p > 1$, several notable problems remain open. \cref{thm:1,thm:grad-phi-p} address the law of the level lines away from the corners. It is expected that at the corners, the level lines should have fluctuations of $L^{1/2-o(1)}$ (as opposed to $L^{1/3-o(1)}$), with an associated scaling limit of independent Brownian motions. (This is open for all $p\geq 1$, i.e., including \SOS.) In addition, our results exclude an exceptional set $\sB$ of values of $L$, addressed in the companion paper~\cite{ChenLubetzky26+} (cf.~\cref{thm:1-companion} above). The reader is referred to \cite{ChenLubetzky26+} for additional questions on the critical window marking the onset of a new level line.
    Specifically, the question of establishing the width of the critical window is open.

Lastly, we mention recent developments on the rough phase of the $|\nabla\phi|^p$ model (with no floor). At high enough temperature with uniform boundary conditions, convergence of the $\ZGFF$ to the Gaussian Free Field (\GFF) was shown in~\cite{BPR22a,BPR22b}; this is expected, but remains open for all other $p\geq 1$. When the boundary conditions are tilted, e.g., taking the value $\lfloor \theta_1 x + \theta_2 y\rfloor$ at every boundary point $(x,y)$ for some slope $\theta=(\theta_1,\theta_2)\neq 0$, 
then, as long as at least one of the $\theta_i$'s is noninteger, the surface is known~\cite{Sheffield05} to delocalize at some unknown rate (``qualitative delocalization'') for all $\beta$ and~$p\geq 1$. At high enough temperature, this was extended also to integer slopes in~\cite{LammersOtt24} for \SOS. More recently, logarithmic delocalization at high enough temperature was established for all $0 < p \leq 2$ in~\cite{OttSchweiger25}. It is further believed that the rough phase induced by the slope should have a \GFF scaling limit, mirroring the high temperature regime with flat boundary conditions.
However, this remains a formidable open problem at large $\beta$, where even logarithmic fluctuations are not yet established. One recent result in this direction is for \SOS tilted by an added long range potential~\cite{LaslierLubetzky25}, which indeed has a \GFF scaling limit (for any slope  with $\theta_1,\theta_2>0$ including when both are integers; this is in contrast to  $|\nabla\phi|^p$ for all $p>1$, including  \ZGFF, which are rigid at large $\beta$ when $\theta_1, \theta_2 \in \Z$).

	\section{Law of disagreement polymers}\label{sec:law-of-disagreement-poly}
	
	The goal of this section is to formulate the cluster expansion framework for the level lines of the \ZGFF configuration $\phi$. In what follows, we will typically refer to dual-edges (in $(\Z^2)^*$) as bonds, to help distinguish them from the edges of $\Z^2$. We say $\sfu,\sfv\in \Z^2$ are $*$-adjacent if their $L^\infty(\R^2)$ distance is $1$ (i.e., they are either adjacent in $\Z^2$, or their bounding cells share a single corner). For any point $\sfu \in \R^2$, we will denote the $x$ and $y$ coordinates by $\sfu_1, \sfu_2$ respectively. Further let $\partial U$ be the boundary bonds of $U$, i.e., the set of bonds dual to $uv$ with $u\in U$ and $v\notin U$, and let 
    $\partialvtx U$ be the external vertex boundary of $U$, i.e., every vertex $v\notin U$ adjacent to some $u\in U$.
	
	\begin{definition}[Disagreement polymer]\label{def:gamma}
		Let $\phi$ be a
		$\Z$-valued height function on the vertices of a connected domain $V\subset \Z^2$. Associate to each bond $e\in (\Z^2)^*$, dual to some edge $(x,y)\in\Z^2$, the gradient $(\nabla \phi)_e := \phi_x - \phi_y$, where $x$ is taken to be the \textsc{north} vertex if $(x,y)$ is vertical and the \textsc{west} vertex if $(x,y)$ is horizontal. We say the bond $e$ is a \emph{disagreement bond} of $\phi$ if $(\nabla\phi)_e\neq 0$ (including when $\phi_x$ or $\phi_y$ are specified by the boundary conditions). A \emph{disagreement polymer}~$\gamma$ is a (maximal) connected component of the disagreement bonds of $\phi$, and we let $\cP_\phi$ denote the disagreement polymers in $\phi$. For such a polymer, let $D_i$ be the connected components of $V \setminus \gamma$, noting that by the maximality of $\gamma$, within each $D_i$, all the vertices that are $*$-adjacent to $V\setminus D_i$ must have the same height $h_i$ in~$\phi$. Call the triple $(\gamma, \{D_i\}, \{h_i\})$ a \emph{labeled disagreement polymer}. For brevity,  we denote labeled disagreement polymers by $\gamma$ (omitting the $\{h_i\}$ from the notation).
	\end{definition}

    \begin{figure}
    \vspace{-0.15in}
    \centering
    \includegraphics[width=0.5\textwidth]{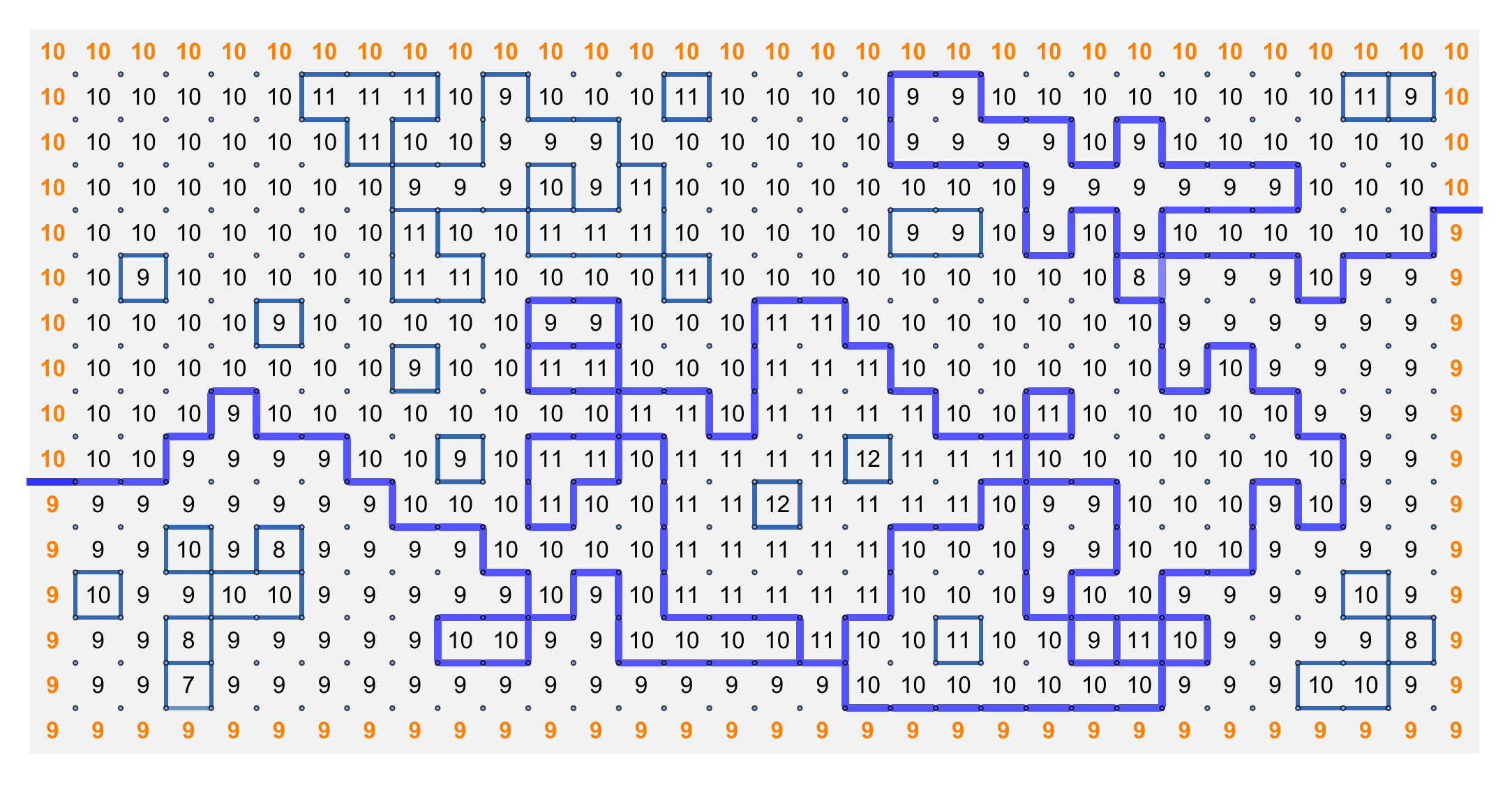}
    \hspace{-0.1in}
    \includegraphics[width=0.5\textwidth]{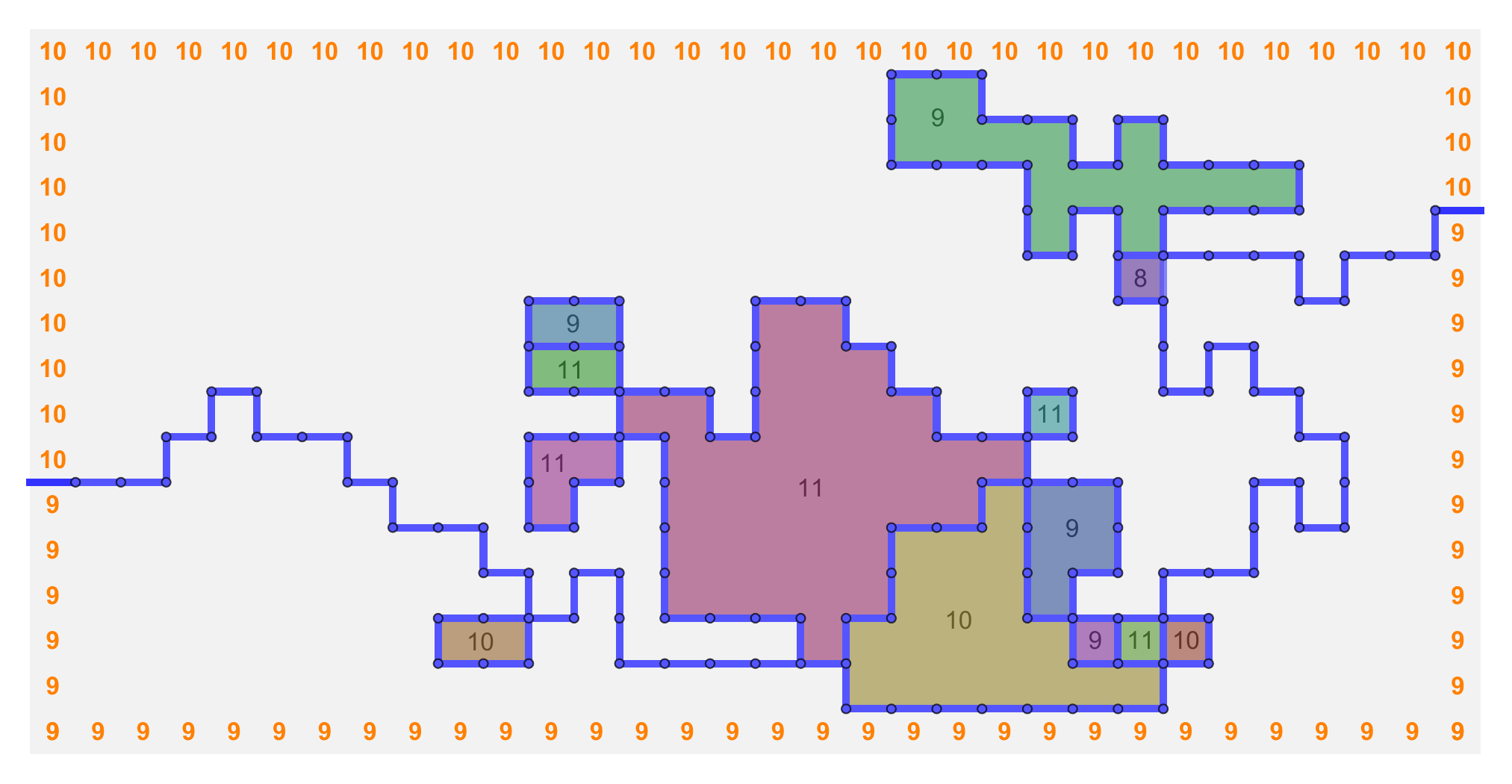}

    \vspace{-0.15in}
    \caption{Left: A height function $\phi$ on a rectangle with $h, h-1$ boundary conditions at $h = 10$, with all disagreement bonds in blue. The disagreement polymer $\gamma$ is highlighted. Right: The corresponding labeled disagreement polymer $(\gamma, \{D_i\}, \{h_i\})$. Each region $D_i$ is assigned a single height $h_i$ which can be read off from $\phi$ on the left (its interior boundary).}
    \label{fig:disagree-poly}
    \vspace{-0.1in}
    \end{figure}
    
	NB.\ In the graph on $(\Z^2)^*$ whose edges are $\gamma$, no vertex can have degree $1$ (degrees $0,2,3,4$ are possible), hence the restriction that all $v\in D_i$ that are $*$-adjacent to $V\setminus D_i$ have the same height. See the example in \cref{fig:disagree-poly} illustrating this.
	\begin{definition}[Energy, length and decorations of a disagreement polymer]\label{def:disagree-polymer-len-energy-decor}
		Let $(\gamma,\{D_i\},\{h_i\})$ be a labeled disagreement polymer. Its \emph{length} $\sN(\gamma)$ and \emph{energy} 
		$\sE_\beta(\gamma)$ are defined as
		\[\sN(\gamma)=\sum_{e\in\gamma}|(\nabla \phi)_e|\quad,\quad 
		\sE_\beta(\gamma) := \beta\sum_{e \in \gamma} |(\nabla \phi)_e|^2\,.\]
		To describe the law of the disagreement polymer, we will use a family of decoration functions $\Phi$ on subsets $\sfW\subset \Z^2$---each referred to as a cluster if it is also connected---
        satisfying the following properties.
		\begin{enumerate}[(i)]
			\item \label{it:phi(W)-conn} If $\sfW$ is not connected, then $\Phi(\sfW) = 0$.
			
			\item \label{it:phi(W)-symmetric}
			The function $\Phi$ is translation invariant, as well as invariant under a rotation by $\pi/2$ or reflection with respect to one of the axes.
			
			\item There exists a constant $C > 0$ such that for every $\sfW$, we have 
			$
			|\Phi(\sfW)| \leq \exp(-(\beta - C)\bd(\sfW))
			$, 
			where $\bd(\sfW)$ is the size of the smallest connected set of bonds in $(\Z^2)^*$ containing all the boundary bonds of $\sfW$.\label{it:phi(W)-decay-bound}
		\end{enumerate}
		
	\end{definition}

	The main result of this section is the following, where here and in what follows, we use the notation $D_i^\circ = D_i^\circ(\gamma)$ to denote $D_i \setminus \Delta_\gamma$ where 
	\[ \Delta_\gamma := \{ u\in V\,:\; \dist(u,\gamma) \leq 1/\sqrt{2}\}\,,\]
	as the value of $\phi$ on $D_i\cap \Delta_\gamma$ (and elsewhere on $\partialvtx D_i^\circ$) are specified to be the corresponding $h_i$.
	
	\begin{proposition}[Cluster expansion with a floor]\label{prop:CE-law-with-floor} Fix $n\geq 1$. Let $V\subset \Z^2$ be a connected domain, and consider the \ZGFF model $\pi_{V;F}^{\xi}$ with a floor at~0 imposed only on a subset $F\subset V$, and boundary conditions $\xi$ that are $H+1-n$ on a $*$-connected path in $\partialvtx V$ and $H-n$ elsewhere so that they induce a unique disagreement polymer $(\gamma,\{D_i\},\{h_i\})$ in $V\cup \partialvtx V$ that contains boundary disagreements\footnote{We say the disagreement polymer contains boundary disagreements if it contains a bond dual to $\sfu\sfv$ for $\sfu,\sfv\notin V$.}.  Then for $\beta\geq \beta_0$, the law of this unique disagreement polymer $\gamma$ is given by 
		\begin{align}\label{eq:CE-with-area}
			\pi^{\xi}_{V;F}(\gamma) =\frac1{Z} \exp\bigg(-\sE_\beta(\gamma) + \sum_{\substack{\sfW\subset V \\ \sfW \cap \Delta_\gamma \neq \emptyset}} \Phi(\sfW)
			\bigg)\prod_{i\geq 0}\hatpi_{D_i^\circ}^{h_i}\big(\phi_x \geq 0,\, \forall x \in D_i^{\circ}\cap F\big)\,,
		\end{align}
		for a normalizer $Z= Z(V,F,\xi,\beta,n)$ and a decoration function $\Phi(\sfW)$ as per \cref{def:disagree-polymer-len-energy-decor}.
		
		Moreover, if we further have $|F| \leq L e^{\kappa\sqrt{\log L}}$ and $|\partial F|\leq L^{1-\delta}$ for fixed $\kappa>0$ and $0<\delta<\frac13$, denote by $D_0$ and $D_1$ the regions of $\gamma$ containing the boundary vertices of $V$ at heights $H+1-n$ and $H-n$, respectively, and let
		\begin{align*} E:=\left\{|\gamma|\leq L^{1-\delta}\,,\;
			\;|F| - |D_0 \cap F| - |D_1 \cap F| \leq L^{1-\delta}\right\}\,,\end{align*}
		then the following holds
		for all $\beta\geq \beta_0$ (not depending on $\kappa,\delta$). The probability distribution given~by 
		\[ p_{V;F}^{\xi}(\gamma) := 
		\frac1{\overline Z} \exp\bigg(-\sE_\beta(\gamma) + \frac{|D_0\cap F|}{N_n}+ \sum_{\substack{\sfW\subset V \\ \sfW \cap \Delta_\gamma \neq \emptyset}} \Phi(\sfW)  \bigg)\prod_{i \geq 2}\hatpi_{D_i^\circ}^{h_i}(\phi_x \geq 0,\, \forall x \in D_i^{\circ}\cap F)
		\]
        for $\gamma\in E$, where $\overline Z = \overline{Z}(V,F,\xi,\beta,n)$ is a normalizer, satisfies		\begin{align}\label{eq:CE-with-area-cond}
			\pi^\xi_{V;F}(\gamma \mid E) =(1+o(1))p_{V;F}^{\xi}(\gamma)\,.
		\end{align}
	\end{proposition}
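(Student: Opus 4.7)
The proof splits into two steps: the polymer representation \cref{eq:CE-with-area}, and its $o(1)$-approximation \cref{eq:CE-with-area-cond} on $E$.

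For \cref{eq:CE-with-area}, the plan is to use the Gibbs property of the $\ZGFF$ to factor along the components of $V\setminus\gamma$, invoke shift-invariance of the no-floor measure to standardize boundary heights, and then apply a low-temperature cluster expansion. Summing the Boltzmann weight over the $\phi$ compatible with $\gamma$: the $\gamma$-bonds contribute $e^{-\sE_\beta(\gamma)}$; conditionally on $\gamma$, the field on each $D_i^\circ$ is an independent no-floor $\ZGFF$ with boundary values $h_i$ on $(\Delta_\gamma\cup\partialvtx V)\cap D_i$ and a floor on $F\cap D_i^\circ$. This gives
\[ \pi^\xi_{V;F}(\gamma) = \frac{e^{-\sE_\beta(\gamma)}}{Z^\xi_{V;F}}\,\prod_i \hatZ^0_{D_i^\circ}\cdot\prod_i\hatpi^{h_i}_{D_i^\circ}\bigl(\phi\geq 0\text{ on }F\cap D_i^\circ\bigr), \]
using shift-invariance of $\hatpi$ to write $\hatZ^{h_i}_{D_i^\circ}=\hatZ^0_{D_i^\circ}$. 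For $\beta\geq\beta_0$, the standard low-temperature cluster expansion for the no-floor \ZGFF (as in \cite{BrandenbergerWayne82,LMS16}) represents $\log \hatZ^0_U$ as a sum over connected clusters of a translation-invariant decoration $\Psi$ with $|\Psi(\sfW)|\leq e^{-(\beta-C)\bd(\sfW)}$. Comparing the expansions for $U=V$ versus $U=D_i^\circ$: clusters lying entirely inside some $D_i^\circ$ cancel (and disconnected clusters contribute zero), so the $\gamma$-dependent remainder $\Phi(\sfW)$ is supported on clusters meeting $\Delta_\gamma=V\setminus\bigsqcup_i D_i^\circ$ and inherits the three properties of \cref{def:disagree-polymer-len-energy-decor} from $\Psi$. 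Absorbing the $\gamma$-independent $\hatZ^0_V$ into $Z^\xi_{V;F}$ yields \cref{eq:CE-with-area}.

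For \cref{eq:CE-with-area-cond}, the plan is to Poisson-approximate the two dominant floor factors and absorb a $\gamma$-independent piece into the normalizer. Shift invariance gives $\hatpi^{h_i}_{D_i^\circ}(\phi\geq 0\text{ on }F\cap D_i^\circ)=\hatpi^0_{D_i^\circ}(\phi\geq -h_i\text{ on }F\cap D_i^\circ)$. Set $p_i:=\hatpi_\infty(\phi_o\geq h_i+1)$. Exponential decay of correlations under $\hatpi_\infty$ at large $\beta$, combined with bulk approximation of $\hatpi^0_{D_i^\circ}$ by $\hatpi_\infty$ (error controlled by $|\partial F|+|\gamma|\leq 2L^{1-\delta}$), yields via a Bonferroni-type bound
\[ \hatpi^0_{D_i^\circ}\bigl(\phi\geq -h_i\text{ on }F\cap D_i^\circ\bigr) = \exp\bigl(-p_i\,|F\cap D_i^\circ|+r_i(\gamma)\bigr), \qquad i=0,1, \]
with $|r_i(\gamma)|=o(1)$ uniformly for $\gamma\in E$; the key estimate is $|F|p_i^2\leq (\log L)^\kappa/N_n^{2-o(1)}=o(1)$, using $p_i\leq 1/N_{n-1}$ and $N_n=L^{1-o(1)}$. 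Using the identity $p_1-p_0=\hatpi_\infty(\phi_o=H-n)=1/N_n$,
\[ -p_0|F\cap D_0^\circ|-p_1|F\cap D_1^\circ| = -p_1\bigl(|F\cap D_0^\circ|+|F\cap D_1^\circ|\bigr)+\frac{|F\cap D_0^\circ|}{N_n}. \]
On $E$ one has $|F\cap D_0^\circ|+|F\cap D_1^\circ|=|F|-O(L^{1-\delta})$ (the deficit being $\sum_{i\geq 2}|F\cap D_i|+|F\cap \Delta_\gamma\cap(D_0\cup D_1)|$) and $|F\cap D_0^\circ|=|D_0\cap F|-O(|\gamma|)$, so both $L^{1-\delta}p_1$ and $L^{1-\delta}/N_n$ are $L^{-\delta+o(1)}=o(1)$. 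Hence the exponent becomes $-p_1|F|+|D_0\cap F|/N_n+o(1)$, and the $\gamma$-independent $-p_1|F|$ enters $\widetilde{Z}^\xi_{V;F}$, giving \cref{eq:CE-with-area-cond}.

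The main obstacle is the Poisson approximation step: securing $|r_i(\gamma)|=o(1)$ uniformly over $\gamma\in E$ requires refinements of the large-deviation and correlation-decay estimates of \cite{LMS16} at the precise height scale $H-n$ (the refinements alluded to at the start of this section), along with a careful accounting of finite-volume and interface corrections along $\Delta_\gamma$ and $\partialvtx V$.
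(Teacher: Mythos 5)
Your proof follows the same route as the paper's: factor the Gibbs weight over the components $D_i^\circ$, compare cluster expansions with and without $\gamma$ to produce the decoration function $\Phi$ supported on clusters meeting $\Delta_\gamma$, approximate the two dominant floor factors, and then rewrite $-p_0|F\cap D_0^\circ| - p_1|F\cap D_1^\circ|$ via $p_1-p_0 = 1/N_n$, absorbing the $\gamma$-independent piece into the normalizer. Two notes on the Poisson-approximation step, though. A minor slip: $p_i\leq 1/N_{n-1}$ fails for $p_1 = \hatpi_\infty(\phi_o\geq H-n)\approx 1/N_n$, and $N_n<N_{n-1}$; what one actually has is $p_i = L^{-1+o(1)}$, which still gives $|F|p_i^2=o(1)$. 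More substantively, the estimate $|F|p_i^2=o(1)$ that you flag as the key one is \emph{not} the bottleneck: if floor violations at distinct sites behaved independently, that bound would allow $|F|$ up to nearly $L^{2-o(1)}$, as in the \SOS setting of \cite{CLMST16}. What actually forces the hypothesis $|F|\leq L(\log L)^\kappa$ in \ZGFF, and what requires a genuinely new input beyond \cite{LMS16}, is the conditional two-point probability $\hatpi_\infty(\phi_y<-h\mid\phi_x<-h)$ for \emph{nearby} $x,y$: one must improve the $\exp(-c\beta h/\log h)$ bound of \cite{LMS16} to $\exp(-c\beta h^2/\log^2 h)$ (\cref{eq:LD-conditional} of \cref{thm:LD-DG}) before the Bonferroni sum delivers $r_i(\gamma)=o(1)$, and the dominant error term is then $(\log L)^4 e^{-c\beta h^2/\log^2 h}\,p_1\,|F|$ (the $\chi_4$ contribution in \cref{lem:area-estimate}), which is $o(1)$ only if $|F|\leq L^{1+o(1)}$. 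Your closing paragraph alludes to needing refined large-deviation inputs, but the sketch puts the weight on the wrong estimate.
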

	
	We prove this result in \cref{sec:Cluster-Expansion}, after we establish some properties of the measure $\hatpi_\infty$.
	
	\subsection{Large deviations without a floor}
	The following was established in \cite[Eq.~(3.1)--(3.4)]{LMS16}) for the infinite volume \ZGFF measure $\hatpi_\infty$:  for every fixed large enough $\beta$ and every $h\geq 2$,
	\begin{align}
		&e^{-c_0\beta h/\log h} \leq \frac{\hatpi_\infty(\phi_o = h)}{\hatpi_\infty(\phi_o = h-1)} \leq e^{-c_1\beta h/\log h}\,,\label{eq:LD-ratio-inf}\\
		&\hatpi_\infty(\phi_o = h) = \exp\bigg[-2\pi\beta\frac{h^2}{\log h} + O\Big(\frac{h^2}{\log^2h}\Big)\bigg]\,,\label{eq:LD-inf}\\
		&\hatpi_\infty(\phi_z = h \mid \phi_o = h) \leq e^{-c_2 \beta h /\log h}\,,\label{eq:LD-conditional-inf}
	\end{align}
	where $c_0,c_1,c_2>0$ are absolute constants.
	\begin{remark}
		A stronger upper bound of $\exp(-c_2 h^2/\log^2 h)$ was stated in \cite[Eq.~(3.3)]{LMS16}, even though the bound in \cref{eq:LD-conditional-inf} was the one later proved in \cite[\S3.3]{LMS16}. The weaker bound still suffices for the proof of the main theorem there: one needs only to modify the hypothesis in \cite[Prop.~4.4]{LMS16} (where this result is used) from $h\geq \log\log L$ to $h\geq (\log\log L)^2$, which would reproduce the same estimates needed in that proof from the weaker \cref{eq:LD-conditional-inf}. The proof applied \cite[Prop.~4.4]{LMS16} in a box $\Lambda_\ell$ with $h=(\log \ell)^{1/2+o(1)}$, and hence the stronger hypothesis $h \geq (\log\log\ell)^2$ would be valid.
		NB. In~\cite{LMS16}, the aim was to show $\hatpi_V^0(\min_{x\in V}\phi_x \geq -h)\leq \exp[-(1+o(1))I(h)]$ for the correct $I(h)$, while our analysis of the level lines requires a much more precise bound of the form $(1+o(1))\exp[-I(h)]$.
	\end{remark}
	For our proofs, it will be crucial to have the stronger upper bound of $\exp(-c h^2 / \log^2 h)$ on the probability appearing in the left hand of \cref{eq:LD-conditional-inf}; we adapt the argument of \cite{LMS16} to obtain it, as well as estimates in an arbitrary region~$V$ containing a ball of a certain radius around the origin $o$.
	\begin{theorem}[adapting {\cite[Thm.~3.1]{LMS16}}]\label{thm:LD-DG}
		There exist constants $\beta_0>0$ and $c>c'>0$ so that the following holds for every $\beta\geq \beta_0$ and integer $h\geq 2$. Let $V \subset \Z^2$ be a region containing $\cB_r(o)$, the ball of radius $r=\lceil 2c h/\log h\rceil $ centered at the origin $o$, as well as $\cB_{r+1}(z)$ for a site $z\in V$. Then
		\begin{align}
			\exp\Big(-c\beta \frac{h}{\log h}\Big) \leq &\;\frac{\hatpi_V^0(\phi_o = h)}{\hatpi_V^0(\phi_o = h-1)} \leq 
			\exp\Big(-c'\beta \frac{h}{\log h}\Big)
			\,,\label{eq:LD-ratio}\\
			\exp\Big(-2\pi\beta\frac{h^2}{\log h} - c \beta \frac{h^2}{\log^2h}\Big)
			\leq  &\quad\;\;\hatpi_V^0(\phi_o = h)\!\!\!\!\quad\;\leq \exp\Big(-2\pi\beta\frac{h^2}{\log h} + c \beta\frac{h^2}{\log^2h}\Big)\,,\label{eq:LD}\\
			&\!\!\!\!\!\!\!\!\!
			\hatpi_V^0(\phi_z = h \mid \phi_o = h) \leq \exp\Big(-c \beta \frac{h^2}{\log^2h}\Big)\,.\label{eq:LD-conditional}
		\end{align}
	\end{theorem}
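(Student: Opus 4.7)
The plan is to adapt the harmonic-pinnacle variational analysis of \cite{LMS16} to finite volume and strengthen the two-point bound. The starting point is the polymer/cluster-expansion representation from \cite[\S3]{LMS16}, which recasts $\hatpi_V^0(\phi_o = h)$ as a sum over height-function configurations weighted by a Dirichlet-type energy. The minimum is attained by a near-harmonic extension equal to $h$ at $o$ and vanishing on the boundary of a ball of optimal radius $r_* \asymp h/\log h$, yielding minimum energy $(2\pi+o(1))\beta h^2/\log h$ and hence the leading constant in \cref{eq:LD}. To pass from $\hatpi_\infty$ to the finite-volume $\hatpi_V^0$, I would use that the near-optimal configurations are localized inside $\cB_{r_*}(o)\subset \cB_r(o)\subset V$ (this is what the hypothesis on $V$ buys us), so that the contribution from outside $\cB_r(o)$ is negligible after a monotonicity/FKG comparison with $\hatpi_\infty$, with the residual error absorbed in the $e^{c\beta h^2/\log^2 h}$ slack.

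Given \cref{eq:LD} in finite volume, the ratio bound \cref{eq:LD-ratio} follows from a surgery argument: converting a near-optimal pinnacle at height $h$ into one at height $h-1$ changes the Dirichlet energy by $(1+o(1))\cdot 4\pi\beta h/\log h$, with a matching bound from the reverse map. This gives the two-sided bound $e^{-\Theta(\beta h/\log h)}$ on the ratio. The auxiliary hypothesis that $V$ contains $\cB_{r+1}(z)$ for some site $z \in V$ is used here to guarantee that the shifted (or competing) pinnacle has room to live away from the boundary of $V$, ensuring that the surgery can be done without incurring additional boundary corrections.

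For the strengthened conditional bound \cref{eq:LD-conditional}, I would analyze the two-point variational problem $\{\phi_o = h,\,\phi_z = h\}$, splitting by the displacement $|z|$. When $|z| \gtrsim r_*$, the event essentially requires two near-independent pinnacles, costing an extra $(2\pi+o(1))\beta h^2/\log h$ in Dirichlet energy over the single-pinnacle minimum, which is much larger than the claimed $c\beta h^2/\log^2 h$. When $|z| \ll r_*$, a single enlarged pinnacle can cover both points, but the optimal two-source harmonic function on $\cB_{r_*}$ has Dirichlet energy exceeding the one-source minimum by $\Theta(h^2/\log^2 h)$, computable via the Green's function on $\cB_{r_*}$ with explicit logarithmic asymptotics. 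A uniform matching at $|z|\asymp r_*$ then closes out the bound for all $z\neq o$.

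The main obstacle is this last improvement from $e^{-c\beta h/\log h}$ (the bound actually proved in \cite{LMS16}) to the second-order $e^{-c\beta h^2/\log^2 h}$: this is a subleading refinement that requires tracking $o(1/\log h)$ corrections in the variational problem \emph{uniformly in $z$}. In particular, the near regime $|z| = o(r_*)$ demands a precise comparison of the two-source and one-source harmonic extensions up to additive error $o(h^2/\log^2 h)$; simultaneously, the cluster-expansion error terms inherited from the polymer representation of \cite{LMS16} must be kept at this same order of smallness, which is the most delicate aspect of the adaptation and is what forces the explicit localization hypothesis $V \supset \cB_r(o)$ rather than a qualitative ``large $V$'' assumption.
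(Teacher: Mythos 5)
Your plan for \cref{eq:LD-ratio,eq:LD} is in the right spirit: localize the near-optimal pinnacle inside $\cB_r(o)\subset V$ and argue that boundary corrections can be absorbed into the $e^{O(\beta h^2/\log^2 h)}$ slack. This is essentially what the paper does, although rather than re-deriving the variational analysis it leans directly on extensions of \cite[Lem.~3.6, Cor.~3.9, Lem.~3.10]{LMS16}: the crucial intermediate step is \cref{clm:hatpi-V-vs-Br}, the estimate $e^{-\epsilon_\beta r}\leq \hatpi_V(\phi_o=h)/\hatpi_{\cB_r(o)}(\phi_o=h)\leq e^{\epsilon_\beta r}$ for $V\supset\cB_r(o)$ with $r\asymp h/\log h$, which is exactly what lets one transport the infinite-volume estimates to finite volume at the required precision.

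For \cref{eq:LD-conditional}, however, your route diverges from the paper's and leaves a real gap. You propose to compare the two-source and one-source harmonic extensions and read off the $\Theta(h^2/\log^2 h)$ excess Dirichlet energy from Green's-function asymptotics. The difficulty is that a variational lower bound on the minimal energy of configurations in $\{\phi_o=h,\,\phi_z=h\}$ does not by itself give an upper bound on the probability of that event: one must also control the entropy of near-optimal configurations and the cluster-expansion error terms, and in \cite{LMS16} the corresponding reduction for the \emph{one-point} event already constitutes the bulk of the argument. Redoing this with a two-point constraint, uniformly in $z$ (including the transitional regime $|z|\asymp r_*$ that you acknowledge requires ``uniform matching''), is precisely the step that is not carried out in your sketch, and it is not a routine extension.

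The paper instead avoids the two-source variational problem altogether. It sets $\Delta\asymp h/\log h$ and partitions according to whether $X:=\max_{x\sim z}\phi_x$ exceeds $h+\Delta$ (ruled out at cost $\exp(-c\beta\Delta\cdot h/\log h)$ by iterating \cref{eq:LD-ratio} with \cref{clm:hatpi-V-vs-Br}), whether $Y:=\min_{x\sim z}\phi_x$ falls below $h-5\Delta$ (on $E_1\cap E_2^c$ one reveals the neighbors of $z$ and uses FKG monotonicity to bound $\hatpi_V^0(\phi_z=h\mid\phi_o=h,E_1,E_2^c)\leq(1+\epsilon_\beta)e^{-28\beta\Delta^2}$), and finally shows $\hatpi_V^0(E_2\mid\phi_o=h,E_1)\leq e^{-c\beta h^2/\log^2 h}$ by comparing a monotonicity lower bound of order $e^{-144\beta\Delta^2}$ on $\hatpi_V^0(\phi_z\geq h+\Delta\mid\cdot)$ with an upper bound from iterating \cref{eq:LD-ratio}. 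The only inputs are \cref{eq:LD-ratio}, \cref{clm:hatpi-V-vs-Br}, FKG, and one-step Peierls estimates --- nothing about two-source harmonic functions. If you want to pursue the variational route you must supply the entropy/cluster-expansion control to order $h^2/\log^2 h$ uniformly in $z$; otherwise I would adopt the paper's $E_1,E_2$ argument.
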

	\begin{proof}
		The arguments of \cref{eq:LD-ratio-inf,eq:LD-inf} extend more or less verbatim to the setting given here of a more general domain $V$ in \cref{eq:LD-ratio,eq:LD}, provided that $V\supset \cB_r(o)$ for the given $r$, and we begin by explaining this point. Let $R = \lfloor h / \log h\rfloor$. 

    The lower bound given in \cite{LMS16} on \cref{eq:LD-ratio-inf} was stated for $\hatpi_\infty(\phi_o=h)/\hatpi_\infty(\phi_o=h-1)$, carried out on $V=\cB_L(o)$ for $L\gg R$, but in fact holds for $\hatpi_V$ for any domain $V\supset \cB_R(o)$\footnote{The proof of this bound in \cite{LMS16} is concluded immediately after \cite[Claim~3.5]{LMS16}, using nothing only that $V\supset \cB_R(o)$ until that point; it does appeal to  \cite[Claim~3.6]{LMS16}, but the latter is already phrased for a general domain $V$.}.

    \begin{remark}
    To see why the requirement $V\supset \cB_R(o)$ is the only one needed for the lower bound on \cref{eq:LD-ratio}, we briefly summarize that argument. The proof first reduced the lower bound to showing that all $x$ neighbors of $o$ would have $\phi_x\geq h-c R$, for a large enough $c$, as expected given that the (harmonic) optimum $\phi^*$ for the $\R$-valued Dirichlet problem has $h-\phi^*_x \asymp h/\log h$ at those sites: this step is valid in any domain~$V$.
	Next, factoring out the contribution of $\phi^*$, the problem was reduced, sequentially, to 
	\begin{enumerate}[(i)]\item the configuration $\sigma := \phi-\phi^*$ on $V\setminus\{o\}$ (already here the assumption $\cB_R(o)\subset V$ is used, as one defines $\phi^*$ as the $\R$-valued solution on $\cB_R(o)$, and $0$ outside); \item then to its integer part $\bar\sigma := \lfloor \sigma\rfloor$ in $V\setminus\{o\}$; \item and finally to $\bar\sigma$ in a $\ZGFF$ model on $V\setminus \{o\}$ where the interaction strength in the interior $\cB_R(o)$ is modified to $1/2$ vs.\ $1$ in its exterior.
	\end{enumerate} (This second series of reductions  incurs a cost that is absorbed by further increasing $c$.) The dominant term in the lower bound is $\exp(-c \beta R)$, yielding the sought bound that appears here, for the more general $V$, in the left-hand of \cref{eq:LD-ratio}.
    \end{remark}
    
    The upper bound of \cite{LMS16} on \cref{eq:LD-ratio-inf} holds for every $V\supset \cB_R(o)$ as follows: arguing very similarly to the proof there\footnote{See the argument that appears immediately after \cite[Lem.~3.10]{LMS16}.}, if one chooses $r=\delta R$ for a small enough $\delta>0$ (fixed independently of $\beta$), then
    \[ \frac{\hatpi_V(\phi_o=h)}{\hatpi_V(\phi_o=h-1)} \leq e^{-\frac34\beta r} + e^{\epsilon_\beta r}\frac{\hatpi_{\cB_r(o)}(\phi_o=h)}{\hatpi_{V}(\phi_o=h-1)} 
    \leq e^{-\frac34\beta \delta R} + e^{-c_0 \beta \log(\frac1\delta) R^2}\frac{\hatpi_{V}(\phi_o=h)}{\hatpi_{V}(\phi_o=h-1)}\,, 
    \]
    using that $\hatpi_{\cB_r(o)}(\phi_o=h) \leq e^{-c_0\beta \log(\frac1\delta)R^2}\hatpi_{\cB_R(o)}(\phi_o=h)$ and that $\hatpi_{\cB_R(o)}(\phi_o=h)\leq e^{\epsilon_\beta R}\hatpi_V(\phi_o=h)$ using \cite[Lem.~3.6]{LMS16} for the first inequality and \cite[Cor.~3.9]{LMS16} for the second one\footnote{Here one appeals only to the first part of \cite[Cor.~3.9]{LMS16}, which is valid for any $V\supset \cB_R(o)$.}.
    Rearranging the above equation yields the sought upper bound on $\hatpi_V(\phi_o=h)/\hatpi_V(\phi_o=h-1)$ in \cref{eq:LD-ratio}.

    It is for \cref{eq:LD-inf} where one needs to extend the radius of the ball contained in $V$ from $R$ to $c R$; more precisely, one has the following more general form of \cite[Cor.~3.9]{LMS16}, via the exact same proof\footnote{The upper bound in that result used the fact  $r\geq 2c_0 R$ to replace $\hatpi_V(\phi_o=h-1)$ in \cite[Eq.~(3.12)]{LMS16} by $\hatpi_V(\phi_o=h)$; the lower bound holds for all $r$.}:
    \begin{claim}[extension of {\cite[Cor.~3.9]{LMS16}} via the same proof] \label{clm:hatpi-V-vs-Br}
    Let $c_0>0$ be an absolute constant satisfying, for all $h\geq 1$ and $V\supset \cB_R(o)$, that $\hatpi_V(\phi_o=h)/\hatpi_V(\phi_o=h-1) \geq \exp(-c_0 \beta h/\log h)$. Setting $r = \lceil 2 c_0 h/\log h\rceil$, 
    for every $V\supset \cB_{r}(o)$ one has
        \[ e^{-\epsilon_\beta r} \leq \frac{\hatpi_V(\phi_o=h)}{\hatpi_{\cB_r(o)}(\phi_o=h)} \leq e^{\epsilon_\beta r}\,.\] 
    \end{claim}
    The required bound in \cref{eq:LD} now follows from \cite[Lem.~3.10]{LMS16}, which showed that
    \[\left|\log \hatpi_{\cB_r(o)}(\phi_o=h) - 2\hatpi \beta \frac{h^2}{\log r}\right| \leq c \beta \frac{h^2}{\log^2 r} + c\beta r^2\,,
    \]
    where $c>0$ is some absolute constant\footnote{As seen in the short proof of \cite[Lem.~3.10]{LMS16}, the constant $c'$ associated with the error term $e^{c' r^2}$ in its statement is of the form $c' = c \beta$ for some absolute constant $c>0$.}.

        It remains to establish \cref{eq:LD-conditional}, via a small modification of the argument of \cite{LMS16}. Letting
		\[ X = \max_{x\sim z}\phi_x\quad,\quad Y := \min_{x\sim z}\phi_x\,,\] 
		the proof of \cref{eq:LD-conditional-inf} in~\cite{LMS16} considered the events $\{X \leq h\}$ and $\{Y \geq h-\delta\sqrt{h/\log h}\}$ to derive the sought estimate. Here we will instead consider $\{X \leq h+\delta_1 h/\log h\}$ and $\{Y \geq h - \delta_2 h/\log h\}$. Specifically, let $c'>0$ be the constant in the upper bound form \cref{eq:LD-ratio}, and let
		\[ E_1 := \{ X \leq h + \Delta\}\quad ,\quad E_2 := \{ Y \geq h-5\Delta\}\quad\mbox{for}\qquad \Delta := \Big\lfloor (c'/200)\frac{h}{\log h} \Big\rfloor\,.\]
		By iterating the upper bound in \cref{eq:LD-ratio}, using here that $\cB_{r}(x)\subset V$ for each $x\sim z$, one has that
		\begin{align}
			\hatpi_V^0(E_1^c \mid \phi_o = h) &\leq  4 \max_{x\sim z}\frac{\hatpi_V^0(\phi_x > h + \Delta)}{\hatpi_V^0(\phi_o = h)} 
            \leq 4 \max_{x\sim z} \frac{\hatpi_V^0(\phi_x = h)}{\hatpi_V^0(\phi_o=h)} \exp\Big(-c'\beta \Delta \frac{h}{\log h}\Big)
            \nonumber\\ &\leq \exp\bigg(-(c'-o(1)) \beta \frac{ h \Delta}{\log h}\bigg)\,,\label{eq:E1c-upper}
		\end{align}
        with the second line following from  \cref{clm:hatpi-V-vs-Br} to show that $\hatpi_V(\phi_x=h)/\hatpi_V(\phi_o=h) \leq \exp(\epsilon_\beta r)$, which is negligible as $r \asymp \frac h{\log h} = o( \Delta \frac{h}{\log h})$.
        It thus suffices to show that, for some $c>0$,
		\begin{align}\label{eq:LD-conditional-reduced-to-E1}
			\hatpi_V^0(\phi_z = h \mid \phi_o = h,\, E_1) &\leq  \exp\Big(-c \beta \frac{h^2}{\log^2 h}\Big)\,.
		\end{align}
		Next, on the events $E_1\cap E_2^c$, we can reveal $\phi$ at the neighbors $\{x_i\}_{i=1}^4$ of $z$ and then increase those values by monotonicity, to find that
		\begin{align*} \hatpi_V^0(\phi_z = h \mid \phi_o = h,\, E_1,\, E_2^c) &\leq
			\hatpi_V^0\left(\phi_z \geq h \mid \phi_{x_1} = h-5\Delta\,,\,\phi_{x_i}=h+\Delta\;\mbox{ for $2\leq i\leq 4$}\right) \\ &\leq (1+\epsilon_\beta)e^{-\beta(3 \Delta^2 + (5\Delta)^2)}=(1+\epsilon_\beta)e^{-28\beta \Delta^2}\,.\end{align*}
		Thus, bounding $\hatpi_V^0(\phi_z = h \mid \phi_o = h,\, E_1) \leq 
		\hatpi_V^0(\phi_z = h \mid \phi_o = h,\, E_1,\,E_2^c) + \hatpi_V^0(E_2 \mid \phi_o = h,\, E_1)
		$,
		in order to show \cref{eq:LD-conditional-reduced-to-E1}, it is enough to show that for some $c>0$,
		\begin{align}\label{eq:LD-reduced-to-E2-given-E1}
			\hatpi_V^0(E_2 \mid \phi_o = h,\, E_1) &\leq  \exp\Big(-c \beta \frac{h^2}{\log^2 h}\Big)\,.
		\end{align}
		Again by monotonicity,
		\begin{align} \hatpi_V^0(\phi_z \geq h+\Delta \mid \phi_o = h,\, E_1,\, E_2) &\geq 
			\hatpi_V^0\left(\phi_z \geq h+\Delta \mid \phi_{x_i} = h-5\Delta \;\mbox{ for $1\leq i\leq 4$}\right)\nonumber\\
			&\geq (1-\epsilon_\beta)e^{-4\beta  (6\Delta)^2}=
			(1-\epsilon_\beta)e^{-144\beta\Delta^2}\,.
			\label{eq:eta_geq-h+Delta-lower-bound}\end{align}
		On the other hand,
		\begin{align*} \hatpi_V^0(\phi_z \geq h+\Delta&\mid \phi_o =h,\, E_1\,, E_2) \leq \frac{\hatpi_V^0(\phi_z \geq h+\Delta\mid \phi_o = h\,,E_1)}{\hatpi_V^0(E_2\mid \phi_o=h\,,E_1)} \\ &\leq \frac{\hatpi_V^0(\phi_z\geq h+\Delta\mid \phi_o=h) }{\hatpi_V^0(E_2\mid \phi_o=h,\,E_1)\hatpi_V^0(E_1\mid\phi_o=h)} \leq \frac{
				\exp(-(c'-o(1))\beta\frac{h\Delta}{\log h})}
			{\hatpi_V^0(E_2\mid \phi_o=h,\,E_1)}\,,
		\end{align*}
		where the last transition used \cref{eq:E1c-upper} to show that $\hatpi_V^0(E_1\mid\phi_o=h) = 1-o(1)$, whereas the numerator was bounded from above first by $\hatpi_V^0(\phi_z \geq h+\Delta) / \hatpi_V^0(\phi_o=h)$ and then by iterating the upper bound in \cref{eq:LD-ratio} and thereafter using \cref{clm:hatpi-V-vs-Br}, as was done in \cref{eq:E1c-upper}.
		Combining this with \cref{eq:eta_geq-h+Delta-lower-bound}, we find that  \[ \hatpi_V^0(E_2 \mid\phi_o=h\,,\,E_1) \leq (1+\epsilon_\beta)\exp\bigg(\beta\Big(144\Delta-(c'-o(1))\frac{h}{\log h} \Big) \Delta\bigg) \leq \exp\Big(-(c'/4)\beta \frac{h\Delta}{\log h}\Big)\]
		by the choice of $\Delta$. This establishes \cref{eq:LD-reduced-to-E2-given-E1} and completes the proof.
	\end{proof}
	The following lemma shows that the large deviation estimate in \cref{eq:LD} is also an upper bound for all points in $V$, in particular at sites $x$ near the boundary of $V$ (whereas \cref{thm:LD-DG} was only applicable for sites $x$ at distance at least $c h/\log h$ from $\partial V$).
	\begin{lemma}\label{lem:UB-LD-any-point}
		There exists $c_0>0$ such that, for $\beta$ large and every $V \subset \Z^2$, $h \geq 2$, and $x \in V$,
		\begin{equation}
			\hatpi_V^0(\phi_x = h) \leq \exp\bigg(-2\pi\beta\frac{h^2}{\log h} + c_0 \frac{h^2}{\log^2 h}\bigg)\,.
		\end{equation}
	\end{lemma}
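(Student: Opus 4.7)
My plan is to reduce the general case to the hypothesis of \cref{thm:LD-DG} via enlargement of the domain around $x$. Set $r := \lceil 2ch/\log h \rceil$ with $c$ the constant from \cref{thm:LD-DG}. If $\cB_r(x) \subset V$, then by translation invariance (sending $x \mapsto o$), Eq.~\eqref{eq:LD} of \cref{thm:LD-DG} applied in the translated domain directly yields the stated bound, with $c_0$ absorbing the factor $c\beta$.

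If $\cB_r(x) \not\subset V$, enlarge the domain: let $V' := V \cup \cB_r(x)$ and $W := V' \setminus V \subseteq \cB_r(x)$, so that $V' \supset \cB_r(x)$ and $|W| \leq |\cB_r(x)| = O(r^2) = O(h^2/\log^2 h)$. By the spatial Markov property of the no-floor $\ZGFF$, $\hatpi_V^0$ is identified with the law of $\hatpi_{V'}^0$ conditioned on $\phi|_W = 0$ (the external vertex boundary of $V$ is contained in $W \cup \partialvtx V'$, and each such vertex has height $0$ either by the conditioning or by the original boundary condition of $V'$). Consequently,
\begin{equation*}
\hatpi_V^0(\phi_x = h) \;\leq\; \frac{\hatpi_{V'}^0(\phi_x = h)}{\hatpi_{V'}^0(\phi|_W = 0)}\,,
\end{equation*}
where the numerator is bounded by Eq.~\eqref{eq:LD} of \cref{thm:LD-DG} applied in $V' \supset \cB_r(x)$, yielding $\exp(-2\pi\beta h^2/\log h + c\beta h^2/\log^2 h)$.

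The main remaining task---and principal technical obstacle---is the lower bound $\hatpi_{V'}^0(\phi|_W = 0) \geq \exp(-\epsilon_\beta |W|)$ for some $\epsilon_\beta = O(e^{-c\beta})$ uniform in $V' \supseteq W$. Combined with $|W| = O(h^2/\log^2 h)$, this contributes at most $\exp(O(h^2/\log^2 h))$ to the ratio, absorbable into the $c_0$ error term. To establish the bound, enumerate $W = \{y_1, \ldots, y_N\}$ and apply the chain rule
\[
\hatpi_{V'}^0(\phi|_W = 0) = \prod_{j=1}^N \hatpi_{V'}^0\big(\phi_{y_j} = 0 \,\big|\, \phi_{y_i} = 0 \text{ for } i < j\big)\,,
\]
then lower-bound each factor by $1 - O(e^{-c\beta})$ using (i) the uniform single-site localization $\hatpi_{V'}^0(\phi_y = 0) \geq 1 - e^{-c\beta}$ (Brandenberger--Wayne-type estimates) and (ii) exponential decay of correlations at large $\beta$, which insulates $y_j$ from the distant conditioning. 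The delicate point is that conditioning on the event $\{\phi_{y_i} = 0\}$ (equality, not inequality) is not monotone, so FKG is not directly applicable; at sufficiently large $\beta$ this is handled via the cluster expansion for the no-floor $\ZGFF$, which provides the needed approximate independence and thereby yields the sought product-form lower bound.
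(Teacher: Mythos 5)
Your proposal is correct and follows essentially the same overall strategy as the paper: enlarge the domain to $V'=V\cup\cB_r(x)$, apply \cref{eq:LD} of \cref{thm:LD-DG} in $V'$, and pay a cost of $\exp(O(h^2/\log^2 h))$ for modifying the boundary. The two arguments diverge in how the boundary cost is handled. The paper works with the increasing event $\{\phi\geq 0 \text{ on the new boundary vertices}\}$, so both steps (comparing $\hatpi_{V'}^0(\phi_x\geq h)$ conditioned on the event to $\hatpi_V^0(\phi_x\geq h)$, and factoring the event probability into single-site terms) go by FKG. You instead condition on exact equality $\{\phi|_W = 0\}$, which by the domain Markov property identifies $\hatpi_V^0$ exactly with $\hatpi_{V'}^0(\cdot\mid\phi|_W=0)$, and then you need the lower bound $\hatpi_{V'}^0(\phi|_W=0)\geq e^{-\epsilon_\beta|W|}$. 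You are right that this event is not monotone, but your proposed fix (cluster expansion plus decorrelation) is over-engineered: the chain-rule factor $\hatpi_{V'}^0(\phi_{y_j}=0\mid\phi_{y_i}=0,\,i<j)$ equals, again by the Markov property, the \emph{unconditional} probability $\hatpi_{V'\setminus\{y_1,\dots,y_{j-1}\}}^0(\phi_{y_j}=0)$ in a smaller domain with zero boundary data, which is at least $1-\epsilon_\beta$ by the Brandenberger--Wayne Peierls bound uniformly over domains. No decay of correlations and no cluster expansion is needed—this is also precisely the observation the paper uses (``conditional on $\phi\equiv 0$ on a subset \dots, the probability that $\phi_x=0$ for an additional vertex \dots is at least $\exp(-\epsilon_\beta)$''). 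So your plan is sound but takes a detour through heavier machinery than the Markov-plus-Peierls shortcut that both your reduction and the paper's already make available.
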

	\begin{proof}
		Let $r = \lceil 2c h/\log h\rceil$ for the constant $c>0$ from \cref{thm:LD-DG}, and let $V' = V \cup \cB_r(x)$. Then
		\[  \hatpi_{V'}^0(\phi_x \geq h) \geq \hatpi_{V'}^0(\phi_x \geq h,\, \phi\restriction_{\partial V \setminus \partial V'} \geq 0) \geq \bigg(\inf_{\substack{\xi\,:\;\xi\restriction_{\partial V'}=0, \\ \xi\restriction_{\partial V\setminus \partial V'}\geq 0}}
		\hatpi_V^\xi(\phi_x\geq h)    
		\bigg)
		\hatpi_{V'}^0(
		\phi\restriction_{\partial V \setminus \partial V'}\geq 0) \,. \]
		By FKG, this is minimized at $\xi\equiv 0$; namely,
		\begin{align}\label{eq:force-zero-boundary}
			\hatpi_{V'}^0(\phi_x \geq h) \geq \hatpi_V^0(\phi_x \geq h)\hatpi_{V'}^0(\phi\restriction_{\partial V \setminus \partial V'} \geq 0)
			\geq \hatpi_V^0(\phi_x \geq h)\prod_{x \in \partial V \setminus \partial V'}\hatpi_{V'}^0(\phi_x \geq 0)\,,
		\end{align}
		with the last inequality again following from FKG.
		A standard Peierls argument (see, e.g., \cite{BrandenbergerWayne82}) shows that for any $h \geq 0$ and any $V$, \[\hatpi_V^0(\phi_x > h) \leq \epsilon_\beta\hatpi_V^0(\phi_x = h)\,,
		\]
		where $\epsilon_\beta \to 0$ as $\beta \to \infty$ (and similarly, $\hat\pi_V^0(\phi_x<-h) \leq \epsilon_\beta\hatpi_V^0(\phi_x=-h)$ by symmetry).  In particular, conditional on $\phi\equiv0$ on a subset of $\partial V\setminus\partial V'$, the probability that $\phi_x=0$ for an additional vertex $x$ along that boundary is at least $\exp(-\epsilon_\beta)$, and so     
		\begin{align}\label{eq:Peierls-rigidity}\prod_{x \in \partial V \setminus \partial V'}\hatpi_{V'}^0(\phi_x \geq 0) \geq e^{-\epsilon_\beta|\partial V \setminus \partial V'|} \geq e^{-\epsilon_\beta h^2 / \log^2 h} \,,\end{align}
		where the last inequality holds (for a different $\epsilon_\beta$) using that,     since $\partial V \setminus \partial V' \subset \cB_r(x)$, we can infer that $|\partial V \setminus \partial V'| \leq \pi r^2 \leq  O( h^2/\log^2 h)$. Combining \cref{eq:force-zero-boundary,eq:Peierls-rigidity}, we find that
		\begin{equation}
			\hatpi_V^0(\phi_x = h) \leq e^{\epsilon_\beta h^2/\log^2h}\hatpi_{V'}^0(\phi_x = h),\,
		\end{equation}
		whence the proof concludes via \cref{thm:LD-DG}.
	\end{proof}
	
	We can now bound the probability of the floor event. Our bound will be asymptotically tight, whereas the analogous estimate in \cite[Prop.~4.4]{LMS16} (which considered a similar event: rather than $B=\bigcap_{x\in V}\{\phi_x\geq -h\}$ addressed here, it pertained $B$ intersected with another event forbidding \texttt{large} nonzero paths) had a $(1+o(1))$ term, not as a prefactor, but within the exponent in that probability. It is crucial that we have this more precise estimate for our cluster expansion expression in \cref{sec:Cluster-Expansion}, and it comes at a cost of area and boundary constraints on the domain. As we describe below (see \cref{rem:bbq-worse-setup}), these constraints are more stringent in \ZGFF compared to the case of \SOS, where the corresponding estimate was applicable to an $L^{2/3+\epsilon}\times L^{2/3+\epsilon}$ box (whereas we can only afford to address a $L^{2/3}(\log L)^c\times L^{1/3}(\log L)^c$ box in the setting of \ZGFF).
    (NB.\ the probability estimated below resembles  $\hatpi_V^0(\phi_x \geq 0,\forall x\in V)$---i.e., replacing $h$ by $0$ in the lemma; the asymptotic of $\log\hatpi_V^0(\phi_x\geq 0,\forall x\in V)$ was established in \cite{CMT17} for the $|\nabla\phi|^p$ models (and previously in \cite{CMT15} for the \SOS); here we are interested in the asymptotic probability for $h=H+1-n$.)
    
    \begin{lemma}\label{lem:area-estimate}
		Fix $0<\delta<\frac13$, $\kappa > 0$, and $n\geq 1$. Let $V \subset \Z^2$ be a connected region, let $F\subset V$ be a subset satisfying $|F| \leq Le^{\kappa\sqrt{\log L}}$ and $|\partial F| = O(L^{1-\delta})$, and set $h = H+1-n$. Then
		\[
		\hatpi_V^0(\phi_x \geq -h,\, \forall x \in F) = (1+o(1))\exp\Big(-\hatpi_\infty(\phi_o < -h)|F| \Big)\,.
		\]
	\end{lemma}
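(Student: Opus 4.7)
First, by the $\phi\leftrightarrow-\phi$ symmetry of $\hatpi_V^0$, setting $\mu := \hatpi_\infty(\phi_o > h)$, it is equivalent to show $\hatpi_V^0(\phi_x \leq h,\,\forall x \in F) = (1+o(1))e^{-\mu|F|}$. The lower bound is immediate: since $\{\phi_x \leq h\}_{x\in F}$ are decreasing events and $\hatpi_V^0 \preceq \hatpi_\infty$ on increasing events, FKG yields
$$\hatpi_V^0(\phi_x \leq h,\,\forall x \in F) \geq \prod_{x\in F}\hatpi_V^0(\phi_x \leq h) \geq (1-\mu)^{|F|}.$$
By \cref{rem:Nn-scales}, $\mu \leq 1/N_{n+1} = L^{-1+o(1)}$, so combined with $|F|\leq L(\log L)^\kappa$ we have $\mu^2|F| = o(1)$, giving $(1-\mu)^{|F|} = (1-o(1))e^{-\mu|F|}$.

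For the matching upper bound, the plan is to deploy a polymer/cluster expansion for the low-temperature \ZGFF to control the ratio $Z_V^{F,h}/Z_V$ of constrained to unconstrained partition functions (with $Z_V^{F,h}$ imposing $\phi_x \leq h$ on $F$). At $\beta \geq \beta_0$, both admit convergent expansions in terms of excursion clusters from the ground state $\phi\equiv 0$, and taking the ratio yields
$$-\log \hatpi_V^0(\phi_x \leq h,\,\forall x \in F) = \sum_{x\in F}\mu_x + \epsilon,$$
where $\mu_x$ is the single-site ``pinnacle above $h$'' contribution at $x$ and $\epsilon$ collects multi-site cluster contributions. Combining the sharp large-deviation bounds of \cref{thm:LD-DG} together with \cref{clm:hatpi-V-vs-Br} for interior sites (with $\cB_r(x)\subset V$, $r = \lceil 2ch/\log h\rceil$) and \cref{lem:UB-LD-any-point} for the remaining boundary sites $x \in F\cap\partial V \subset \partial F$ (whose number is $\leq |\partial F| = O(L^{1-\delta})$, contributing $O(\mu r L^{1-\delta}) = o(1)$) yields $\sum_x \mu_x = \mu|F|(1+o(1))$.

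The error $\epsilon$ is bounded by splitting pair contributions by distance. For close pairs $|x-y| \leq 2r$, iterating \cref{eq:LD-conditional} gives $\hatpi_V^0(\phi_x > h,\, \phi_y > h) \leq \mu\, e^{-c\beta h^2/\log^2 h}$, so their total contribution is $O(|F| r^2 \mu\, e^{-c\beta h^2/\log^2 h}) = o(1)$ using $\mu|F| \leq L^{o(1)}$ and the super-polynomial suppression $e^{-c\beta h^2/\log^2 h} \leq L^{-\Omega(1/\log\log L)}$; for far pairs, standard exponential decay of truncated correlations at low temperature gives $|\cov_V(\{\phi_x>h\},\{\phi_y>h\})| \leq C e^{-c\beta|x-y|}\mu$, summing to $O(\mu|F|e^{-c\beta r}) = o(1)$. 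The hard part will be establishing this \emph{absolute}-$o(1)$ accuracy for $\epsilon$ (not merely $o(\mu|F|)$): since $\mu|F|$ can grow up to $L^{o(1)}$, the target multiplicative $(1+o(1))$ is far more stringent than the $(1+o(1))$-in-exponent bound of \cite{LMS16}, and relies crucially on both the super-polynomial refinement of \cref{eq:LD-conditional} established in \cref{thm:LD-DG} (over the weaker $e^{-c\beta h/\log h}$ rate that would suffice only for the exponent-level bound) and quantitative exponential correlation decay at $\beta\geq\beta_0$.
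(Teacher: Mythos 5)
Your lower bound is close in spirit to the paper's, but the justification of $\hatpi_V^0(\phi_x > h)\le\hatpi_\infty(\phi_o>h)$ "by FKG" is not valid: conditioning $\hatpi_\infty$ on $\{\phi\equiv 0 \text{ on } V^c\}$ is neither an increasing nor a decreasing conditioning, so no FKG-type stochastic domination between $\hatpi_V^0$ and $\hatpi_\infty$ is available. (The statement itself is plausible as a variance comparison, but requires a separate argument.) The paper sidesteps this: it keeps the FKG step $\hatpi_V^0(\cap_x)\ge\prod_x\hatpi_V^0(\phi_x\ge-h)$, then for each factor uses \cref{lem:UB-LD-any-point} to get the crude $L^{-1+o(1)}$ bound for all $x$ (killing the second-moment term and the $O(|\partial F|\log^2 L)$ near-boundary sites), and for interior sites uses the explicit decorrelation estimate \cref{eq:decay-of-corr} to replace $\hatpi_V^0(\phi_x<-h)$ by $\hatpi_\infty(\phi_o<-h)$ up to an additive $e^{-c\beta\log^2 L}$. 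This gives equality of the one-point terms to the required precision, not just an upper bound on them.

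For the upper bound, your route is genuinely different from the paper's, and at the crucial step it is not a proof. You posit a decomposition
\[
-\log\hatpi_V^0(\phi_x\le h,\ \forall x\in F)=\sum_{x\in F}\mu_x+\epsilon
\]
coming from "taking the ratio" of two cluster expansions, but the event $\{\phi_x\le h,\ \forall x\in F\}$ is a hard constraint on the state space, not a perturbed interaction, and there is no off-the-shelf polymer representation of this ratio with a provably convergent expansion whose truncated terms you then control. You yourself flag "the hard part will be establishing this absolute-$o(1)$ accuracy for $\epsilon$"—and this is precisely where your write-up stops. Your error budget also conflates pairwise inclusion probabilities $\hatpi_V^0(\phi_x>h,\phi_y>h)$ with the truncated cluster coefficients $\mu_x$, $\epsilon$ of a purported expansion; these are not the same object and the discrepancy is exactly what makes the claimed decomposition nontrivial to establish.

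The paper's upper bound instead avoids cluster expansion entirely and is hands-on: partition into blocks $P_i\supset Q_i$ of polylogarithmic side, discard the sites in $V_1\cup V_2$ (near $\partial F$ or in the shells $P_i\setminus Q_i$) whose total $\hatpi_\infty(\phi_o<-h)$-weight is $o(1)$, intersect with the high-probability event $\fD_\fs$ that no disagreement polymer has diameter $\ge L^\fs/4$, and then reveal $\phi\restriction_{P_{i_j}}$ block by block. On the event $\fD_\fs$, each revealed block contains a circuit of zeros $\sfC$ near $\partial P_{i_j}$ whose interior $\sfU$ contains $\cB_r(x)$ for every $x\in Q_{i_j}$; conditional on $\sfC$ one applies Bonferroni to the measure $\hatpi^0_\sfU$, passes to $\hatpi_\infty$ by the decorrelation estimate, and splits the pair sum exactly as you propose (close pairs via the refined \cref{eq:LD-conditional}, far pairs via \cref{eq:decay-of-corr}). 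So you correctly identified the key analytic inputs and the close/far split, and the numerology you worked out does match the paper's $\chi_1,\ldots,\chi_4$ budget; but the filtration + circuit-of-zeros + Bonferroni mechanism, which is what turns those inputs into an actual proof of the upper bound, is missing from your argument and is replaced by an unsubstantiated cluster-expansion claim.
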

	\begin{proof}
		We begin with the lower bound. By FKG, 
		\begin{align}\nonumber
			\hatpi_{V}^0(\phi_x \geq -h,\, \forall x \in F) &\geq \prod_{x \in F} \hatpi_{V}^0(\phi_x \geq -h) = \prod_{x \in F}\left(1 - \hatpi_{V}^0(\phi_x < -h)\right) \\ \label{eq:area-lower-bound}
			&\geq\exp\bigg(-\sum_{x \in F}\hatpi_{V}^0(\phi_x < -h) -\sum_{x \in F} \hatpi_{V}^0(\phi_x < -h)^2\bigg)\,,
		\end{align}
		using $1-s\geq \exp(-s-s^2)$ for $0\leq s \leq \frac12$, applied to $\hatpi_V^0(\phi_x<-h)=o(1)$. Moreover, by \cref{eq:LD-ratio-inf,eq:LD-inf} and the definition in \cref{eq:H-def} of $H$, we have
		$ \hatpi_{\infty}(\phi_o < -h) \leq L^{-1+o(1)} $
		(moving from $\hatpi_{\infty}(\phi_o = -(h+1))$ for $h=H$ to $h=H+1-n$ incurs a multiplicative cost of $\exp\big(n c_0 \beta \frac{H}{\log H}\big) = L^{o(1)}$), and hence by \cref{lem:UB-LD-any-point}
		we have that for all $x$,
		\begin{equation}\label{eq:x-close-to-boundary}
			\hatpi_{V}^0(\phi_o < -h) \leq L^{-1+o(1)}\,,
		\end{equation}
		as moving from the upper bound $\hatpi_{\infty}(\phi_o < -h)$ to $\hatpi_{V}^0(\phi_o = -h)$ incurs an additional multiplicative cost of $\exp\big( c \frac{H^2}{\log^2 H}\big) = L^{o(1)}$. 
		This implies that 
		\[ \sum_{x \in F} \hatpi_{V}^0(\phi_x < -h)^2 \leq L^{-2+o(1)}|F| \leq L^{-1+o(1)}=o(1)\]
		using our hypothesis on $|F|$ (with room to spare: the stringent requirement on $|F|$ would come from the upper bound). \Cref{eq:x-close-to-boundary} further  implies that 
	\begin{equation}\label{eq:sum-dist-log2-from-F}\sum_{\substack{x\in F\\ \dist(x,\partial F)\leq \log L}} \hatpi_V^0(\phi_x<-h) \leq L^{-1+o(1)}|\partial F| \leq L^{-\delta+o(1)}=o(1)
        \end{equation}
		by our assumption on $|\partial F|$. 
		A straightforward consequence of the Peierls argument of \cite{BrandenbergerWayne82} is the following decay of correlation property of $\hatpi$: for every $x\in F$ and $R$ such that $\dist(x,\partial V)>2R$,
		\begin{equation}\label{eq:decay-of-corr}
			\left\|\hatpi_{V}^0(\phi\restriction_{\cB_R(x)}\in\cdot)-\hatpi_\infty(\phi\restriction_{\cB_R(o)}\in\cdot)\right\|_{\tv} \leq e^{-c\beta R}\,.
		\end{equation}
		(To see this, note that one can couple $\phi\sim\hatpi_V^0$ and $\phi'\sim \hatpi_\infty^0$ so that they agree on $\cB_R(x)$ and $\cB_R(o)$, respectively,  provided there is no $*$-connected path of sites $P$ connecting $\partial B_{R/2}(x)$ to $\partial V$, along which every site $y$ has $\phi_y\neq 0$ or $\phi'_y\neq 0$; in particular, either $\phi$ or $\phi'$ have more than $|P|/2$ nonzero sites along $P$. This occurs with probability at most $\exp(-c \beta R)$ by Peierls applied to both~$\phi,\phi'$.)
		Thus, if $x$ is such that $\dist(x,\partial V)\geq \log L\}$ (which holds if $\dist(x,\partial F)\geq \log L$ as $F\subset V$) then 
		\[\left|\hatpi_{V}^0(\phi_x < -h) - \hatpi_\infty(\phi_o < -h)\right| \leq e^{-c\beta \log L}\,,\]
		and hence our bound on $|F|$ again yields
		\begin{align*} \sum_{\substack{x\in F\\ \dist(x,\partial F)\geq \log L}} \hatpi_V^0(\phi_x<-h) &\leq |F| (\hatpi_\infty(\phi_o<-h) + e^{-c\beta\log L}) \leq |F|\hatpi_\infty(\phi_o<-h) +o(1)\,.  \end{align*}
		Overall, we conclude a lower bound of 
		\begin{equation}
			\hatpi_V^0(\phi_x \geq -h,\, \forall x \in F) \geq (1-o(1))\exp\big(-\hatpi_\infty(\phi_o < -h)|F| \big)\,.
		\end{equation}
		
		We proceed to the upper bound, which is where the stronger assumptions on $|F|$ will be needed. The proof will follow a  grid partitioning argument using Bonferroni's inequalities, as was done in \cite[Prop.~7.7]{CLMST14} and~\cite[Prop.~A.1]{CLMST16} for the \SOS model, in \cite[Prop.~6.2]{GheissariLubetzky23} for the 3\Dim Ising model, and in \cite[Prop~4.4]{LMS16} for the \ZGFF (where it was applied more crudely, as the focus there was the height histogram of the \ZGFF surface as opposed to the fluctuations of its level lines).
		
		Let \[ \mathfrak{u} := \delta/3\quad,\quad \mathfrak{s}:=\delta/4\,,\]
		and partition $\Z^2$ into squares $P_i$ of side-length $L^{\fu} + L^{\fs}$, and let $Q_i\subset P_i$ be the concentric squares of side-length $L^{\fu}$. We will refer to $P_i\setminus Q_i$ as the \emph{shell} of the square $Q_i$. Observe that if
		\begin{align*}
			V_1 &= \bigcup \big\{ P_i \,:\; P_i\cap F^c \neq \emptyset \big\}\,,\\ 
			V_2 &= \bigcup \big\{ (P_i \setminus Q_i) \cap F \,:\; P_i\cap V_1=\emptyset\big\}\,,\\
			V_3 &= F\setminus (V_1\cup V_2)\,,
		\end{align*}
		then $|V_1| \leq |\partial F| L^{2\fu} = O(L^{1-\delta/3})$ by our assumption on $\partial F$ and choice of $\fu$, while $|V_2|\leq L^{\fs-\fu}|F|$ (each shell $P_i\setminus Q_i$, fully contained in $F$, has $|P_i|L^{\fs-\fu}$ sites), which is at most $L^{1-\delta/12+o(1)}$ by the assumption on $F$ (and choice of $\fu,\fs$). Combined, we have
		\[ \hatpi_\infty(\phi_o<-h)(|V_1|+|V_2|) \leq L^{-\delta/12+o(1)} = o(1)\,,\]
		so it suffices to show that, for the increased event that restricts its attention only to $V_3$ we have 
		\begin{equation}
			\label{eq:V3} 
			\hatpi_V^0\bigg(\bigcap_{x\in V_3}\{\phi_x\geq -h\}\bigg) \leq (1+o(1))\exp\Big( -\hatpi_\infty(\phi_o<-h)|V_3|\Big)\,.
		\end{equation}
		We now aim to decrease the event under consideration via intersecting it with 
		\[ \fD_\fs = \big\{ \mbox{$\phi$ has no disagreement polymers $\gamma$ with $\diam(\gamma) \geq L^{\fs}/4$}\big\}\,.\]
		Indeed, by the standard Peierls argument,
		\[ \hatpi_V^0(\fD_\fs^c) \leq |V|\exp\big(-(\beta-c)L^{\fs}/4\big) = o\Big(\exp\Big( -\hatpi_\infty(\phi_o<-h)|V_3|\Big)\Big)
		\]
		since $\hatpi_\infty(\phi_o<-h)|V_3| \leq L^{o(1)}$, and thus \cref{eq:V3} will follow once we show that
		\begin{align}\label{eq:V3-cap-B} 
			\hatpi_V^0\bigg( \fD_\fs \cap \bigcap_{x\in V_3}\{\phi_x\geq -h\}\bigg)  \leq (1+o(1))\exp\Big( -\hatpi_\infty(\phi_o<-h)|V_3|\Big)\,.\end{align}
		Denoting by $\{i_j\}_{j\geq 1}$ the indices of the $Q_i$'s that contribute to $V_3$, we reveal $\phi\restriction_{P_{i_j}}$ in step $j$, and denote the associated filtration by $(\cF_j)_{j\geq 0}$. For brevity, write 
        \[ Q'_{i_j}:= F\cap Q_{i_j}\,.\]
        We will argue that
		\begin{equation}\label{eq:grill-good-points}
			\hatpi_V^0\bigg(\fD_\fs\cap \bigcap_{x \in Q'_{i_j}}
			\{\phi_x \geq -h\} \;\Big|\; \cF_{j-1}\bigg) \leq \exp\Big(-\Big(1 -e^{-c\beta \frac{h^2}{\log^2 h}}\Big)\hatpi_\infty(\phi_o < -h)|Q'_{i_j}| \Big)\,,
		\end{equation}
		from which \cref{eq:V3-cap-B} will readily follow as $\sum_j |Q'_{i_j}|=|V_3|\leq |F| \leq Le^{\kappa\sqrt{\log L}}$ by assumption, so
		\begin{align} e^{-c\beta\frac{h^2}{\log^2 h}}\,\hatpi_\infty(\phi_o<-h)\sum_{j}|Q'_{i_j}| &\leq 
			e^{-c\beta\frac{h^2}{\log^2 h}+c_0  \beta n \frac{h}{\log h}}\,\hatpi_\infty(\phi_o<-H) |V_3| \nonumber\\  &\leq e^{-(c\beta-o(1))\frac{h^2}{\log^2 h}}\,,\label{eq:grill-absorbing-cond}\end{align}
		using the lower bound of \cref{eq:LD-ratio-inf} (or the one in \cref{eq:LD-ratio}) in the first inequality (to move from $\hatpi_\infty(\phi_o<-h)$ to $\hatpi_\infty(\phi_o<-H)$ at a cost of $e^{c_0\beta n\frac{h}{\log h}}$) and \cref{eq:H-def} and the hypothesis on $|F|$ in the second inequality, as  $h^2/\log^2 h \asymp \log L / \log\log L$, thus $e^{\kappa \sqrt{\log L}} = o (\exp( ch^2/\log^2 h))$ for any fixed $\kappa,c>0$.
		
		To establish \cref{eq:grill-good-points}, denote by $\sfC$ the outermost $*$-connected circuit of zeros in $P_{i_j}$, noting that for every $\phi\in \fD_\fs$, the distance of $\sfC$ from $\partial P_{i_j}$ cannot exceed $L^{\fs}/4$. 
		Letting $\sfU$ denote the interior of~$\sfC$, we see that $\cB_r(x)\subset \sfU$ for every $x\in Q_{i_j}$ with $r \asymp L^{\fs} \gg \frac{h}{\log h}$, our assumption in \cref{thm:LD-DG}. 
		We now apply Bonferroni's inequalities to infer that
		\[
		\hatpi_{\sfU}^0\bigg(\bigcap_{x \in Q'_{i_j}}
		\{\phi_x \geq -h\}\bigg) \leq 1-\sum_{x\in Q'_{i_j}} \hatpi_\sfU^0(\phi_x < -h) + \frac12 \sum_{\substack{x,y\in Q'_{i_j}\\ x\neq y}} \hatpi_\sfU^0(\phi_x < -h\,,\, \phi_y<-h)\,.
		\]
		By the decay of correlations bound in \cref{eq:decay-of-corr}, we can replace $\hatpi_{\sfU}^0$ with $\hatpi_\infty$ in each of the sums at an additive error cost of 
		\[ |Q'_{i_j}|^2\exp(-c\beta L^{\fs})
		= \chi_1\, |Q'_{i_j}|\hatpi_\infty(\phi_o<-h)  \quad\mbox{for}\quad \chi_1 < \exp\left(-(c\beta-o(1))L^{\fs}\right) < L^{-100}\,.\]
		Next, we again use the decorrelation estimate in \cref{eq:decay-of-corr} to infer that
		\begin{align*}\sum_{\substack{x,y\in Q'_{i_j}\\\dist(x,y)\geq \log L}} \hatpi_\infty(\phi_x<-h,\, \phi_y<-h) &\leq \Big(\sum_{x\in Q'_{i_j}} \hatpi_\infty(\phi_x<-h)\Big)^2 + |Q'_{i_j}|^2 e^{-c\beta \log L} \\
			&\leq (\chi_2+\chi_3) |Q'_{i_j}|\hatpi_\infty(\phi_o<-h)\,, \end{align*}
		where $\chi_2,\chi_3$ correspond to the two terms on the right-hand of the first line, and satisfy
		\[ \chi_2 < L^{-1+2\fu+o(1)}<L^{-1+\delta}\quad,\quad\chi_3 < \exp\left(-(c\beta-o(1))\log L\right) < L^{-1}\,.\]
		Finally, and this will be the dominant term in our error, we address pairs $x,y\in Q'_{i_j}$ at distance at most $\log L$ as follows.
        Noting that $\hatpi_\infty(\phi_x>h,\,\phi_y>h) \leq (1+\epsilon_\beta)\hatpi_\infty(\phi_x=h+1,\,\phi_y=h+1)$ by a standard Peierls argument (enumerating the two inner most $(h+2)$ level-line loops that surround each of $x,y$ (possibly this is the same loop), and subtracting $1$ in each of their interiors if nonempty), it suffices to bound the sum of said probability. This is achieved by         
        \cref{eq:LD-conditional} as follows:
		\begin{align*}\sum_{\substack{x,y\in Q'_{i_j}\\ 0<\dist(x,y)\leq \log L}} \!\!\!\!\!\hatpi_\infty(\phi_x=-h-1,\, \phi_y=-h-1) &= 
			\sum_{x\in Q'_{i_j}} 
			\hatpi_\infty(\phi_x=-h-1) \\ &\qquad\times \!\!
            \sum_{\substack{y\in Q'_{i_j} \\ 0<\dist(x,y)\leq \log L}} \!\!\!\!\!\hatpi_\infty(\phi_y=-h-1\mid \phi_x=-h-1) \\
			&\leq \chi_4\, \hatpi_\infty(\phi_o<-h) |Q'_{i_j}|\,,
		\end{align*}
		where
		\[ \chi_4 \leq (\log L)^2 \exp\Big(-c\beta \frac{h^2}{\log^2 h}\Big) \,.\]
		Summing these, we see that for $\chi = \sum_{i=1}^4 \chi_i \leq \exp(-(c\beta-o(1))\frac{h^2}{\log^2 h})$ (dominated by $\chi_4$), we have
		\[
		\hatpi_{\sfU}^0\bigg(\bigcap_{x \in Q'_{i_j}}
		\{\phi_x \geq -h\}\bigg) \leq 1- ( 1- \chi)\hatpi_\infty^0(\phi_o<-h)|Q'_{i_j}| \leq \exp\Big(-(1-\chi)\hatpi_\infty^0(\phi_o<-h)|Q'_{i_j}|\Big)\,,
		\]
		thus establishing \cref{eq:grill-good-points} and completing the proof.
	\end{proof}
	
	\begin{remark}\label{rem:bbq:cond-bound}
		The probability $\hatpi_\infty(\phi_y=h\mid \phi_x=h)$, which we controlled through \eqref{eq:LD-conditional}, governed the error-term in the proof of \cref{lem:area-estimate}.
		Had we instead used \cref{eq:LD-conditional-inf}---which features an exponent of $c\beta \frac{h}{\log h}$ as opposed to $c \beta \frac{h^2}{\log^2 h}$ from \cref{eq:LD-conditional}---it would have competed with the exponent $c_0\beta n \frac{h}{\log h}$ from \cref{eq:LD-ratio-inf} which appears in our estimate for $\hatpi_\infty(\phi_o>-h)$ already when considering $h=H+1-n$ for $n=2$. 
		We will need to consider $n=2$, the second-from-top level line, already to address the law of the top level line, as we need to establish that there is ample spacing between them (and of course, in \cref{thm:1} we proceed to further obtain the joint law of the $m
		$ top level lines).
	\end{remark}
	\begin{remark}
		\label{rem:bbq-worse-setup} 
		The analysis of $\dist(x,y)\leq\log L$ in the final part of the upper bound generated the term  $\chi_4 |F| \hatpi_\infty(\phi_o<-h) \leq p |F|/L$ for  $p=\exp(-c\frac{\log L}{\log\log L}) = L^{-o(1)}$ 
		derived from \cref{eq:LD-conditional}. Thus, we can only obtain an $o(1)$ additive error in the exponent if $|F|=L^{1+o(1)}$, and moreover we can handle $Le^{c\sqrt{\log L}}$ but not any arbitrary $L^{1+o(1)}$. The analogous setting in \SOS had $p\leq L^{-1/2}$ (due to the different nature of the large deviation problem), resulting in a much larger applicable domain of area $L^{4/3+o(1)}$ in \cite[Prop.~A.1]{CLMST16} (see the hypothesis on the domain $\Lambda$ in Eq.~(A.2) there).
	\end{remark}
	
	\subsection{Cluster expansion and proof of \cref{prop:CE-law-with-floor}}\label{sec:Cluster-Expansion}
	Cluster expansion for disagreement polymers under without a floor (i.e., under $\hatpi_V^\xi$) will follow from classical results, as formulated in the next proposition. We will thereafter use it in conjunction with \cref{lem:area-estimate} to derive \cref{prop:CE-law-with-floor}.
	
	\begin{proposition}[Cluster expansion]\label{prop:CE-law}
		Let $V\subset \Z^2$ be a connected domain, and consider the \ZGFF model $\hatpi_V^\xi$ with boundary conditions $\xi$ that are $1$ on a $*$-connected path in $\partialvtx V$ and $0$ elsewhere so that they induce a unique disagreement polymer $(\gamma,\{D_i\},\{h_i\})$ in $V\cup \partialvtx V$ that contains boundary disagreements. Then for $\beta\geq \beta_0$, the law of this unique disagreement polymer is given by 
		\begin{equation}\label{eq:CE-without-area}
			\hatpi^\xi_V(\gamma) =\frac1{Z} \exp\bigg(-\sE_\beta(\gamma) + \sum_{\substack{\sfW \subset V \\ 
					\sfW \cap \Delta_\gamma \neq \emptyset}} \Phi(\sfW)\bigg)
		\end{equation}
		for a normalizer $Z = Z(\xi,V,\beta)$ and a decoration function $\Phi(\sfW)$ as per \cref{def:disagree-polymer-len-energy-decor}.    
	\end{proposition}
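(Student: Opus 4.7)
My plan is to derive the claimed formula via a direct partition-function factorization followed by the classical low-temperature polymer cluster expansion for the $\ZGFF$ (as employed, e.g., in \cite{BEF86,LMS16}). Given the unique boundary-crossing labeled polymer $(\gamma, \{D_i\}, \{h_i\})$ induced by $\xi$, the configurations $\phi$ inducing exactly this labeled polymer are those satisfying $\phi \equiv h_i$ on every $*$-neighbor of $V\setminus D_i$ inside $D_i$, with $\phi|_{D_i^\circ}$ an independent $\ZGFF$ with boundary condition $h_i$ whose disagreement components stay strictly inside $D_i^\circ$. Summing the Gibbs weight $\exp(-\beta\sum_{x\sim y}|\phi_x-\phi_y|^2)$ over such $\phi$ gives
\[
\hatpi_V^\xi(\gamma) = \frac{1}{\hatZ_V^\xi}\, e^{-\sE_\beta(\gamma)}\prod_i \hatZ_{D_i^\circ}^{h_i},
\]
where $\hatZ_{D_i^\circ}^{h_i}$ denotes the corresponding restricted partition function. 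By translation invariance of the gradient energy, $\hatZ_{D_i^\circ}^{h_i}=\hatZ_{D_i^\circ}^{0}$ for every $i$.

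Next I would apply the classical polymer cluster expansion for $\log\hatZ_U^0$. Representing configurations as $\phi\equiv 0$ decorated by a gas of disagreement polymers (each with weight $e^{-\sE_\beta}$), the Koteck\'y--Preiss criterion is satisfied at large $\beta$ and yields
\[
\log \hatZ_U^0 = \sum_{\sfW \subset U} \Psi(\sfW),
\]
with $\Psi$ supported on connected $\sfW$, invariant under translations, rotations by $\pi/2$ and axis reflections, and satisfying $|\Psi(\sfW)| \leq \exp(-(\beta - C)\bd(\sfW))$. A connected $\sfW \subset V$ is contained in some single $D_i^\circ$ iff $\sfW \cap \Delta_\gamma = \emptyset$ (since $\Delta_\gamma$ separates the regions $D_i$), whence
\[
\sum_i \log \hatZ_{D_i^\circ}^0 = \sum_{\substack{\sfW \subset V \\ \sfW\cap\Delta_\gamma=\emptyset}}\Psi(\sfW) = \sum_{\sfW\subset V}\Psi(\sfW) \;-\; \sum_{\substack{\sfW\subset V \\ \sfW\cap\Delta_\gamma\neq\emptyset}}\Psi(\sfW).
\]
The first sum on the right is independent of $\gamma$ and is absorbed into the normalization $\hatZ_V^\xi$; setting $\Phi(\sfW;\gamma):=-\Psi(\sfW)$ (together with mild $\gamma$-dependent corrections from the no-touching constraint in each $D_i^\circ$, which are themselves exponentially small in $\bd(\sfW)$) yields the claimed identity. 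Properties (i)--(iii) of \cref{def:disagree-polymer-len-energy-decor} transfer directly from the corresponding properties of $\Psi$.

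The main technical obstacle is setting up the $\ZGFF$ polymer representation so that the Koteck\'y--Preiss criterion holds uniformly in the integer labels attached to polymer-enclosed regions. Unlike the Ising polymer model, each geometric support here admits an infinite family of integer labelings of surrounding heights, and one must verify that the sum over labelings of $e^{-\sE_\beta}$ leaves a weight exponentially small in the bond-boundary: since a disagreement polymer of bond-boundary $b$ surrounding a region at height $h$ has energy at least $\beta b h^2$, summing over $h\in\Z\setminus\{0\}$ gives a geometric series dominated by $e^{-\beta b}$, so the criterion is satisfied with a constant $C$ independent of the label entropy. Given this input, the remainder of the argument is the algebraic cancellation sketched above.
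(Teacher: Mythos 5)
Your proposal is correct and follows essentially the same route as the paper: factor $\hatpi_V^\xi(\gamma) = e^{-\sE_\beta(\gamma)}\prod_i\widehat Z_{D_i^\circ}^{h_i}/\widehat Z_V^\xi$, apply Koteck\'y--Preiss to the disagreement-polymer gas (verifying the convergence criterion uniformly over the integer labels, exactly as you note), and conclude by the algebraic cancellation $\sum_i\sum_{\sfW\subset D_i^\circ}\Phi(\sfW) - \sum_{\sfW\subset V}\Phi(\sfW) = -\sum_{\sfW\cap\Delta_\gamma\neq\emptyset}\Phi(\sfW)$. One small clarification: the ``mild $\gamma$-dependent corrections from the no-touching constraint'' you invoke are actually identically zero --- the buffer $\Delta_\gamma$ in the definition of $D_i^\circ$ is chosen wide enough that any disagreement polymer of $\phi\restriction_{D_i^\circ}$ (with $h_i$ boundary) cannot share a dual vertex with $\gamma$, so the factorization into \emph{unrestricted} partition functions $\widehat Z_{D_i^\circ}^{h_i}$ is exact and the resulting $\Phi(\sfW)$ carries no $\gamma$-dependence.
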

	\begin{proof}
		The proof will be a standard application of the framework of the Koteck\'{y}--Preiss~\cite{KoteckyPreiss86} cluster expansion.
		Observe that, by \cref{def:gamma,def:disagree-polymer-len-energy-decor}, for any connected domain $V$ we have
		\[ \hatpi^0_V(\phi) = \frac1{\widehat \cZ^0_V} \exp\Big(-\sum_{\gamma \in \cP_\phi} \sE_\beta(\gamma)\Big)\,,\]
		where the sum goes over the disagreement polymers arising from $\phi$, each of the form $(\gamma,\{D_i\},\{h_i\})$.
		Equivalently, letting $\cP_V$ be the set of all disagreement polymers that can arise in $\phi\sim\hatpi_V^0$ (or, equivalently, arising from any $\phi\sim\hatpi_V^k$ for boundary conditions all-$k$), we put \[ \widehat \cZ^0_{\cP_V} = \sum_{\substack{\{\gamma_j\}\subset \cP_V\\ \gamma_j\text{ pairwise disjoint}}} \prod_j e^{-\sE_\beta(\gamma_j)}\,,\] 
		so that the partition function $\widehat \cZ^0_V$ from above is synonymous with~$\widehat \cZ^0_{\cP_V}$.    
		Since $\sE_\beta(\gamma)\geq \beta \sN(\gamma)$,
		\[ \sum_{\gamma\,:\;\gamma\cap\gamma_0\neq \emptyset} e^{(\beta-C_0)\sN(\gamma)}e^{-\sE_\beta(\gamma)}
		\leq |\gamma_0| \sum_{\gamma\ni e_0}e^{-C_0 \sN(\gamma)} \leq \sN(\gamma_0) \]
		for a large enough $C_0$,     
		satisfying the criterion of the main theorem of \cite{KoteckyPreiss86} for $a(\gamma)=\sN(\gamma)$ and $\dist(\gamma)=(\beta-C_0-1)\sN(\gamma)$ in their notation. By that theorem, one has
		\[ \log \widehat \cZ^0_{\cP_V} = \sum_{\fP\subset\cP_V} \Phi_0(\fP) \quad\mbox{where}\quad \Phi_0(\fP) = \sum_{\fP'\subset \fP} \ (-1)^{|\fP|-|\fP'|} \log \widehat \cZ_{\fP'}^0\,,\]
		and the function $\Phi_0$ satisfies $\Phi_0(\{\gamma_j\})=0$ if the graph formed by the edges of $\{\gamma_j\}$ is not connected, and  
		$ \sum_{\fP: \fP\cap\gamma_0\neq \emptyset} |\Phi_0(\fP)| e^{(\beta-C_0-1)\sN(\fP)}\leq \sN(\gamma_0)$, where $\sN(\fP):=\sum_j \sN(\gamma_j)$ for $\fP=\{\gamma_j\}$.
		It follows that $|\Phi_0(\fP)|\leq \exp\big(-(\beta-C_1)\sN(\fP)\big)$. One can then, as done in \cite[Sec.~3.9]{DKS92} (see also~\cite[{\S}A.2]{CLMST14}), define for any subset of vertices $\sfW\subset \Z^2$,
		\[\Phi(\sfW) = \sum\Big\{ 
		\Phi_0(\fP)\,:\;
		\fP=\big\{(\gamma_j,\{D_{j,i}\},\{h_{j,i}\})\big\}\mbox{ such that }\bigcup D_{j,i}=\sfW\}
		\Big\}\,,\] 
		whence (not just under $0$ boundary conditions, but for any all-$k$ boundary conditions)
		\begin{equation}\label{eq:log-Z-0} \log \widehat \cZ_{V}^0 = \sum_{\sfW\subset V} \Phi(\sfW)\,,\end{equation}
		and the aforementioned bound on $\Phi_0(\fP)$ implies that (after summing it over all possible $\{h_{j,i}\}$ to get a decay of $\exp(-(\beta-C)\sum |\gamma_j|)$ and noting that the boundary bonds $\partial \sfW$ are a subset of $\bigcup_j \gamma_j$),
		\[ |\Phi(\sfW)| \leq \exp\left(-(\beta-C)\bd(\sfW)\right)\,,\]
		recalling that $\bd(\sfW)$ is the size of the smallest connected set of bonds containing all boundary bonds of $\sfW$.
		The fact that $\Phi$ is invariant under translation ($\Phi(\sfW)=\Phi(\sfW+x)$) as well as under rotation by $\pi/2$ or reflection with respect to one of the axes is apparent from its combinatorial construction (as the graphs formed by the $\gamma_j$'s are isomorphic to the corresponding translations/rotations/reflections).
		
		Finally, let $\phi\sim\hatpi_V^\xi$ with $\xi$ consisting of an interval of $0$ and an interval of $1$ as per \cref{prop:CE-law}. We will move from \cref{eq:log-Z-0} to \cref{eq:CE-without-area} 
		via observing that, by definition, if $(\gamma,\{D_i\},\{h_i\})\in\cP_\phi$, then 
		\begin{align*} \hatpi_V^\xi(\gamma) = \frac1{\widehat\cZ^\xi_V} \sum_{\phi:\cP_\phi \ni \gamma} e^{-\sum_{\gamma' \in \cP_\phi} \sE_\beta(\gamma')} 
     =   \frac{\widehat \cZ_V^0}{\widehat \cZ_V^\xi}e^{-\sE_\beta(\gamma)} \frac{\prod_i \widehat \cZ_{D_i^\circ}^{h_i} }{\widehat \cZ_V^0}\,. 
		\end{align*}
		We combine this with the fact that, by \cref{eq:log-Z-0},
		\begin{align}\label{eq:hat-Z-ratio}
			\frac{\prod_i \widehat \cZ_{D_i^\circ}^{h_i}}{\widehat \cZ_V^0}  &= \exp\Big( - \sum_{\sfW\subset V} \Phi(\sfW) + 
			\sum_i \sum_{\sfW\subset D_i^\circ} \Phi(\sfW) 
			\Big) = \exp\Big(\sum_{\substack{\sfW\subset V\\\sfW \cap \Delta_\gamma\neq \emptyset}} \Phi(\sfW) \Big)\,,
		\end{align}
		to conclude the proof.
	\end{proof}
	
	\begin{proof}[Proof of \cref{prop:CE-law-with-floor}]
		We begin with the standard identity (see the proof of \cref{prop:CE-law}) 
		\begin{align*}\pi_{V;F}^\xi(\gamma) &= \frac1{\cZ_{V;F}^\xi} e^{-\sE_\beta(\gamma)}\prod_i \cZ_{D_i^\circ;F}^{h_i} \\ &= \frac{\widehat \cZ_V^0}{\cZ_{V;F}^\xi} e^{-\sE_\beta(\gamma)}\frac1{\widehat \cZ_V^0 } \prod_i\widehat \cZ_{D_i^\circ;F}^{h_i}\,\hatpi_{D_i^\circ}^{h_i}(\phi_x \geq 0,\, \forall x \in D_i^\circ\cap F)\,.
		\end{align*}
		Absorbing $\widehat \cZ_V^0 / \cZ_{V;F}^\xi$ into the partition function, then applying 
		\cref{eq:hat-Z-ratio} to rewrite $(\prod_i\widehat \cZ_{D_i^\circ}^{h_i})/\widehat \cZ_V^0$ in terms of the decoration function $\Phi$,
		establishes \cref{eq:CE-with-area}.
		
		To derive \cref{eq:CE-with-area-cond}, we let $\gamma\in E$, and wish to apply \cref{lem:area-estimate} to   $\hatpi_{D_i^\circ}^{h_i}(\phi_x\geq 0,\,\forall x\in D_i^\circ\cap F)$.
Our hypothesis on $F$ gives $|D_i^\circ\cap F| \leq |F| \leq Le^{\kappa\sqrt{\log L}}$. Next, $|\partial(D_i^\circ \cap F)| \leq |\partial (D_i\cap F)| + O(|\Delta_\gamma|)$, which is at most $|\partial (D_i\cap F)|+O(|\gamma|)$. We further claim that
\[ \partial(D_i \cap F) \subset \gamma \cup \partial F\,; \]
indeed, every bond $b$ in the left-hand is dual to some edge $uv$ for $u\in D_i \cap F$ and $v \notin D_i\cap F$, which, if $v\in F^c$, is counted in $\partial F$, and otherwise $v\in D_i^c$ and must have $b\in \gamma$ (as all other bonds of $\partial D_i$ have $v\in V^c \subset F^c$). Altogether, we conclude that
\[ |\partial (D_i^\circ \cap F)| \leq |\partial F| + O(|\gamma|) = O(L^{1-\delta})\]
by the definition of $E$, satisfying the hypothesis of \cref{lem:area-estimate}. Applying that lemma we obtain that, for $i=0,1$, 
		\begin{align*}
			\hatpi_{D_i^\circ}^{H+1-n-i}\big(\phi_x \geq 0,\, \forall x \in D_i^\circ\cap F\big) &=
			\hatpi_{D_i^\circ}^{0}\big(\phi_x \geq -(H+1-n-i),\, \forall x \in D_i^\circ\cap F\big) 
			\\ &=(1+ o(1))\exp\big(-\hatpi_\infty(\phi_o < -(H+1-n-i))|D_i \cap F| \big)\,,
		\end{align*}
using here that 
\[ \hatpi_\infty(\phi_o<-(H+1-n))|(D_i \setminus D_i^\circ)\cap F| \leq \hatpi_\infty(\phi_o<-(H+1-n))O(|\gamma|) = L^{-\delta+o(1)}=o(1)\,.\]  
		Recalling that $|F| \leq |D_0\cap F|+|D_1\cap F|+L^{1-\delta}$, we see that
		\[
		\hatpi_\infty(\phi_o < -(H-n)\big)|D_1 \cap F| =  \hatpi_\infty(\phi_o < -(H-n)\big)\big(|F|-|D_0\cap F|+O(L^{1-\delta})\big)\,,
		\]
		whereby we again have $\hatpi_\infty(\phi_o<-(H-n))L^{1-\delta}=L^{-\delta+o(1)}=o(1)$, and
		\begin{align*}
			\Big(\hatpi_\infty\big(\phi_o < -(H-n)\big)- \hatpi_\infty\big(\phi_o < -(H+1-n)\big)\Big)|D_0\cap F| &= \hatpi_\infty\big(\phi_o = -(H+1-n)\big) |D_0\cap F| \\&= \frac{|D_0\cap F|}{N_n}
		\end{align*}
		by the definition in \cref{eq:Nn-def}.
		Thus,  absorbing $\exp[-\hatpi_\infty(\phi_o<-(H-n))|F|]$ into the partition function (being independent of $\gamma$), we find that
		\begin{align*}
			\pi^\xi_{V;F}(\gamma) \propto (1+o(1))\exp\bigg(-\sE_\beta(\gamma) + \frac{|D_0\cap F|}{N_n} + \sum_{\substack{\sfW\subset V\\ \sfW \cap \Delta_\gamma \neq \emptyset}}\Phi(\sfW)\bigg)\prod_{i\geq 2}\hatpi_{D_i^\circ}^{h_i}(\phi_x \geq 0,\, \forall x \in D_i^\circ\cap F)\,,
		\end{align*}
		as required.
	\end{proof}
	
	\section{Geometry of the disagreement polymers}\label{sec:geom-disagreement-polymers}
	Our eventual goal is to show that $\gamma$ behaves like an area tilted random walk. Hence, we would like to show that upon removing the main area tilt $\frac{|D_0|}{N_n}$ from \cref{eq:CE-with-area-cond}, $\gamma$ falls under the Ornstein--Zernike setup. We will prove this in a more general polymer model setting. Throughout this section, we can take any fixed $n \in \Z_+$ and $L$ large.
	
	To begin, we need to define the set of legal polymers from $A$ to $B$. Fix any two vertices $A, B \in (\Z^2)^*$ and a simply connected domain $V$ such that $A, B$ are on $\partial V$. As $\partial V$ is the boundary in $\R^2$ of a simply connected domain, consider any conformal map that sends $\partial V$ to the unit circle centered at the origin, mapping $A$ to the point $(0, -1)$ and $B$ to the point $(0, 1)$. Let $\xi$ be the boundary condition which is $h$ along the interval of $\partial V$ that maps to the arc of the unit circle in the upper half plane, and $h-1$ along the rest of $\partial V$, where $h$ can be chosen as desired (the choice of $h$ is irrelevant in the following, all that matters here is that the boundary heights differ by 1). For every height function on $V$ with boundary condition $\xi$, there is a unique labeled disagreement polymer $\gamma$ which contains the boundary disagreements. Define $\cP_V(A, B)$ as the set of all such possible disagreement polymers in this setting. We can then extend this definition to domains $V$ with infinite volume by defining $\cP_V(A, B) = \bigcup_{V' \subset V} \cP_{V'}(A, B)$, where the union is over all simply connected $V' \subset V$ which have finite volume. 
	
	Next we will define the polymer weights. Fix $L$ and define the weight of $\gamma$ interacting with a domain $U$ with boundary conditions at height $H+1-n$ and $H-n$ as 
	\begin{equation}\label{eq:def-polymer-weights}
		\hatq^n_U(\gamma)= \exp\big(-\sE_\beta(\gamma) + \sum_{\sfW \cap \Delta_\gamma \neq \emptyset} \Phi(\sfW)\one_{\{\sfW \subset U\}}\big)\!\prod_{i \geq 2}\!\hatpi_{D_i^\circ}^{h_i}(\phi_x \geq 0,\, \forall x \in D_i^\circ)\,,
	\end{equation}
    for $\Phi(\sfW)$ as from \cref{prop:CE-law}. For brevity, define the shorthand notation \[\Phi_U(\sfW) = \Phi(\sfW)\one_{\{\sfW \subset U\}}\,.\]
	\begin{remark}\label{rem:conditions-on-Phi-U}
		It is clear that if $\Phi(\sfW)$ satisfies the properties of \cref{def:disagree-polymer-len-energy-decor}, then $\Phi_U(\sfW)$ satisfies the decay condition \cref{it:phi(W)-decay-bound}. As usual, the polymer results of this paper hold more generally for any choice of $\Phi_U$ satisfying the decay condition such that $\Phi_U(\sfW) = \Phi(\sfW)$ for any $\sfW \subset U$, with the exception of \cref{prop:compare-tau} which uses the explicit form of $\Phi_U$ to compare different surface tensions.
	\end{remark}
	Note that the weight $\hatq^n_U(\gamma)$ makes sense even when $\gamma$ is the disagreement polymer for a domain $V \neq U$. Moreover, the definition of $\hatq^n_U(\gamma)$ does not depend on this reference domain $V$, as $\{D_i\}_{i \geq 2}$ are the finite areas encapsulated by $\gamma$. Hence, it makes sense to consider polymer partition functions of the form
	\[\hatZ_{V, U}^n(A, B) := \sum_{\gamma \in \cP_{V}(A, B)} \hatq^n_U(\gamma)\,.\]
	If $\cE$ is an event about $\gamma$, then we define
	\[\hatZ_{V, U}^n(A, B \mid \cE) := \sum_{\gamma \in \cP_{V}(A, B) \cap \cE} \hatq^n_U(\gamma)\,.\]
	When the points $A, B$ are clear from context, we will drop them from the notation so we instead have $\hatZ^n_{V, U}$ and $\hatZ^n_{V, U}(\cE)$.

    Closely related will be the following polymer weight appearing in \cref{prop:CE-law},
    \begin{equation}\label{eq:def-polymer-weights-tilde}
		\tildeq_U(\gamma)= \exp\big(-\sE_\beta(\gamma) + \sum_{\sfW \cap \Delta_\gamma \neq \emptyset} \Phi(\sfW)\one_{\{\sfW \subset U\}}\big)\,,
	\end{equation}
    with analogous notation for the partition function $\tildeZ_{V, U}$. Note that $\tildeq$ has no reference to $n$ or $L$.
	
	\subsection{Non-negative decoration functions and product structure}\label{sec:product-structure}~

	Following standard treatment, we now adjust $\sE_\beta(\gamma)$ to make the decoration functions $\Phi_U(\sfW)$ non-negative (see, e.g., \cite[Sec.~3.1]{IST15} or \cite[Sec.~2.4]{CKL24}). This will allow us to eventually couple the polymer with a random walk. The following steps can be done for both $\tildeq$ and $\hatq^n$, but we focus on $\hatq^n$ as the random walk coupling will only be needed for the latter polymer model. 
    
    Let $\nabla_\gamma = \bigcup_{b = (\mathsf{y}, \mathsf{y} + e_i) \in \gamma} \{b, b+e_i, b-e_i\}$, viewed as a multiset ($|\nabla_\gamma|=3|\gamma|$). Then we can consider the decoration functions given by 
	\begin{equation}\label{eq:def-Phi'}\Phi'_U(\sfW; \gamma) := |\sfW \cap \nabla_\gamma|e^{-(\beta-C)\bd(\sfW)} + \Phi_U(\sfW)\,,\end{equation}
    for the constant $C$ from \cref{it:phi(W)-decay-bound} from \cref{def:disagree-polymer-len-energy-decor}. Since $\Phi(\sfW)$ satisfies the decay bound \cref{it:phi(W)-decay-bound} in \cref{def:disagree-polymer-len-energy-decor}, we have that $\Phi'_U(\sfW;\gamma) \geq 0$. Moreover, $\Phi'_U(\sfW;\gamma)$ also satisfies the decay bound, i.e. there exists a constant $C' > 0$ such that for any $U, \sfW, \gamma$, we have
    \begin{equation}\label{eq:decay-bound-W-gamma}
        |\Phi'_U(\sfW;\gamma)| \leq \exp(-(\beta - C')\bd(\sfW))\,.
    \end{equation}
%
    Let $c(\beta) = \sum_{\sfW \cap b \neq \emptyset} e^{-(\beta - C)\bd(\sfW)}$, noting that $c(\beta) \to 0$ as $\beta \to \infty$. Then, we obtain that $\sum_{\sfW \cap \Delta_\gamma \neq \emptyset} \Phi_U(\sfW) = -3c(\beta)|\gamma| + \sum_{\sfW \cap \nabla_\gamma \neq \emptyset} \Phi'_U(\sfW;\gamma)$. (Note that, unlike $\Phi_U(\cdot)$, the function  $\Phi'_U(\cdot;\gamma)$ no longer vanishes on $W\not\subset V$.) Hence, we get
	\begin{align*}\hatq^n_U(\gamma)= \exp\big(-\sE_\beta(\gamma) - 3c(\beta)|\gamma| + \sum_{\sfW \cap \nabla_\gamma \neq \emptyset} \Phi'_U(\sfW;\gamma)\big)\!\prod_{i \geq 2}\!\hatpi_{D_i^\circ}^{h_i}(\phi_x \geq 0,\, \forall x \in D_i^\circ)\,.
	\end{align*}
	For simplicity of notation, we now define 
	\begin{equation}\label{eq:define-E*_beta}\sE_\beta^*(\gamma) = \sE_\beta(\gamma) + 3c(\beta)|\gamma| - \sum_{i\geq 2}\log \hatpi_{D_i^\circ}^{h_i}(\phi_x \geq 0,\, \forall x \in D_i^\circ)\,,\end{equation}
	so that
	\begin{equation}\label{eq:weight-of-q-E*}\hatq^n_U(\gamma) = \exp\big(-\sE^*_\beta(\gamma)+\sum_{\sfW \cap \nabla_\gamma \neq \emptyset} \Phi'_U(\sfW;\gamma)\big)\,.
	\end{equation}
	In many parts of the paper, we will only care about the total interaction. For simplicity, we define as shorthand the notation 
	\begin{equation}\label{eq:define-fI}\fI_U(\gamma) = \sum_{\sfW \cap \nabla_\gamma \neq \emptyset} \Phi'_U(\sfW;\gamma)\,,
	\end{equation}
	so that we can equivalently write $\hatq^n_U(\gamma) = \exp\big(-\sE^*_\beta(\gamma)+ \fI_U(\gamma)\big)$. Recall that the boundary condition $n$ is needed to determine either the heights $h_i$, if viewing $\gamma$ as labeled disagreement bonds, or the energy $\sE_\beta(\gamma)$, if viewing $\gamma$ as a triple $(\gamma, \{D_i\}, \{h_i\})$, so that the term $\sE^*_\beta$ implicitly depends on $n$. 
	It will also be convenient to define
	\[\beta' = \beta + 3c(\beta)\,.\]
	Now define $\Psi_U(\sfW, \gamma)=(\exp(\Phi'_U((\sfW;\gamma))-1)\one_{\{\sfW \cap \nabla_\gamma \neq \emptyset\}}$. We have now ensured that $\Psi_U(\sfW, \gamma) \geq 0$, so that the weights can eventually be interpreted as probabilities. For this, observe that we can write 
	\begin{equation*}
		\exp\bigg(\sum_{\sfW \cap \nabla_\gamma \neq \emptyset} \Phi'_U(\sfW;\gamma)\bigg) = \prod_{\sfW \cap \nabla_\gamma \neq \emptyset}\bigg((e^{\Phi'_U(\sfW;\gamma)} - 1)+1\bigg) = \sum_{\underline\sfW = \{\sfW_i\}} \prod_i \Psi_U(\sfW_i;\gamma)\,,
	\end{equation*}
	where the final sum is over all possible finite collections of components $\underline{\sfW}$.
	\begin{definition}\label{def:animal}
		Define an animal as a pair $\Gamma = [\gamma, \underline\sfW]$, where $\gamma$ is a disagreement polymer of a height function in some domain $V$, and $\underline{\sfW}$ is a finite collection of connected components of vertices. We can assign each animal with the weight
		\[\hatq^n_U(\Gamma) = \exp(-\sE^*_\beta(\gamma))\prod_{\sfW \in \underline\sfW} \Psi_U(\sfW;\gamma)\,.\]
	\end{definition}
	
	Observe that we can again sum over all possible finite collections $\underline{\sfW}$ to write 
	\begin{equation}\label{eq:gamma-sum-Gamma}
		\hatq^n_U(\gamma) = \sum_{\Gamma = [\gamma, \underline\sfW]}\hatq^n_U(\Gamma)\,.
	\end{equation}
	Hence, we will apply the previous notation of $\cP_{V}(A, B)$ and $\hatZ^n_{V, U}(A, B)$ to animals. More precisely, we say $\Gamma \in \cP_{V}(A, B)$ to mean that $\Gamma = [\gamma, \underline{\sfW}]$ for some $\gamma \in \cP_{V}(A, B)$ and any finite collection of components $\underline \sfW$, and $\hatZ^n_{V, U}(A, B)$ is also a partition function for animals through \cref{eq:gamma-sum-Gamma}.
	
	\begin{definition}
		We say a point $m = (m_1, m_2)$ is a cut-point of $\Gamma$ if $m \in \Gamma$ and the intersection of $\Gamma$ and the vertical line $x = m_1$ is the point $m$. That is, $\gamma$ only crosses the line once, and so in particular no components of $\underline{\sfW}$ or finite regions $D_i$ cross the line. 
	\end{definition}
	
	Suppose $\Gamma = \Gamma_1 \circ \Gamma_2$ is the decomposition of $\Gamma$ before and after a cut point $m$. More precisely, if $\Gamma = [\gamma, \underline \sfW]$, then we can write $\Gamma_1 = [\gamma_1, \underline\sfW_1]$ and $\Gamma_2 = [\gamma_2, \underline\sfW_2]$ where $\gamma = \gamma_1 \circ \gamma_2$ for $\gamma_1 \in \cP_V(A,m)$ and $\gamma_2 \in \cP_V(m, B)$, and $\underline\sfW = \underline\sfW_1 \cup \underline\sfW_2$ where all components of $\underline\sfW_1$ lie to the left of $m$ and all components of $\underline\sfW_2$ lie to the right of $m$. By the definition of the law $\hatq^n_U$, we immediately have 
	\begin{equation}\label{eq:product-structure}\hatq^n_U(\Gamma) = \hatq^n_U(\Gamma_1)\hatq^n_U(\Gamma_2)\,.
	\end{equation}
	
	\begin{remark}\label{rem:energy-conditions}
		The only properties of $\sE^*_\beta(\gamma)$ that will be used is that
		\begin{equation}\label{eq:energy-property}
			\sE^*_\beta(\gamma) \geq (\beta + 3c(\beta))\sN(\gamma) =: \beta'\sN(\gamma)\,,
		\end{equation}
		along with the product structure of the energy established above in \cref{eq:product-structure}. (Recall from \cref{def:disagree-polymer-len-energy-decor} that $\sN(\gamma)$ is the number of bonds in $\gamma$, counting the absolute value of the gradient along the bonds.) In particular, one could instead sum over $|\nabla\phi|^p$ instead of $|\nabla\phi|^2$ in the definition of $\sE_\beta(\gamma)$. Or, one can begin with the weight $\tildeq$ instead of $\hatq^n$, removing the $D_i^\circ$ terms from \cref{eq:define-E*_beta}. 
        
        Moreover, in bounding the number of disagreement polymers $\gamma$ that satisfy some criterion, we will only use the fact that each $\gamma$ is a connected set of dual bonds, where each bond is labeled with an integer whose absolute value represents the multiplicity of that bond. In particular, we will not use any of the additional geometric restrictions that come with the fact that $\gamma$ is the disagreement polymer of a height function $\phi$; this is only needed to ensure that the regions $D_i$ and heights $h_i$ are well-defined. However, to keep notation consistent, we will still denote the integer label on a bond $e$ by $(\nabla \phi)_e$.
	\end{remark}
	
	\subsection{Cone-points and irreducible components}\label{subsec:cone-points}~
	We now define cone-points as a subset of the cut-points. See \cref{fig:disagree-poly-diamond} for an illustration of the diamonds formed by the cone-points.
	\begin{definition}[Cone-points]
		The $\delta$-forward cone from $\sfu$ is the set $\cY^\btl_\delta(\sfu) := \sfu + \{(x, y) \in \Z^2: |y| \leq \delta x\}$. Similarly, the backward cone from $\sfu$ is the set $\cY^\btr_\delta(\sfu) = \sfu - \{(x, y) \in \Z^2: |y| \leq \delta x\}$. When $\delta$ is omitted from the notation, we assume $\delta = 1$. When $\sfu$ is omitted from the notation, we assume $\sfu = (0, 0)$. For a disagreement polymer $\gamma$, $\sfu$ is a cone-point of $\gamma$ if $\gamma \subset \cY^\btr(\sfu) \cap \cY^\btl(\sfu)$. Similarly, for an animal $\Gamma = [\gamma, \underline\sfW]$, $\sfu$ is a cone-point for $\Gamma$ if $\Gamma \subset \cY^\btr(\sfu) \cap \cY^\btl(\sfu)$.
	\end{definition}
	
	\begin{definition}[Irreducible components]\label{def:irreducible-comp}
		An animal $\Gamma \in \cP_V(\sfu, \sfv)$ is called left-irreducible if it has no cone-points, and $\Gamma \subset \cY^\btr(\sfv)$. Similarly, $\Gamma$ is right-irreducible if it has no cone-points and $\Gamma \subset \cY^\btl(\sfu)$. We say $\Gamma$ is irreducible if it is both right and left irreducible. The set of all left-irreducible, right-irreducible, and irreducible animals with starting point at $\o$ is denoted $\sfA_\sfL, \sfA_\sfR, \sfA$ respectively. (So each of these sets consist of $\Gamma$ in $\bigcup_B \cP_V(\o, B)$.)
	\end{definition}

    \begin{figure}
    \centering
    \includegraphics[width=0.75\textwidth]{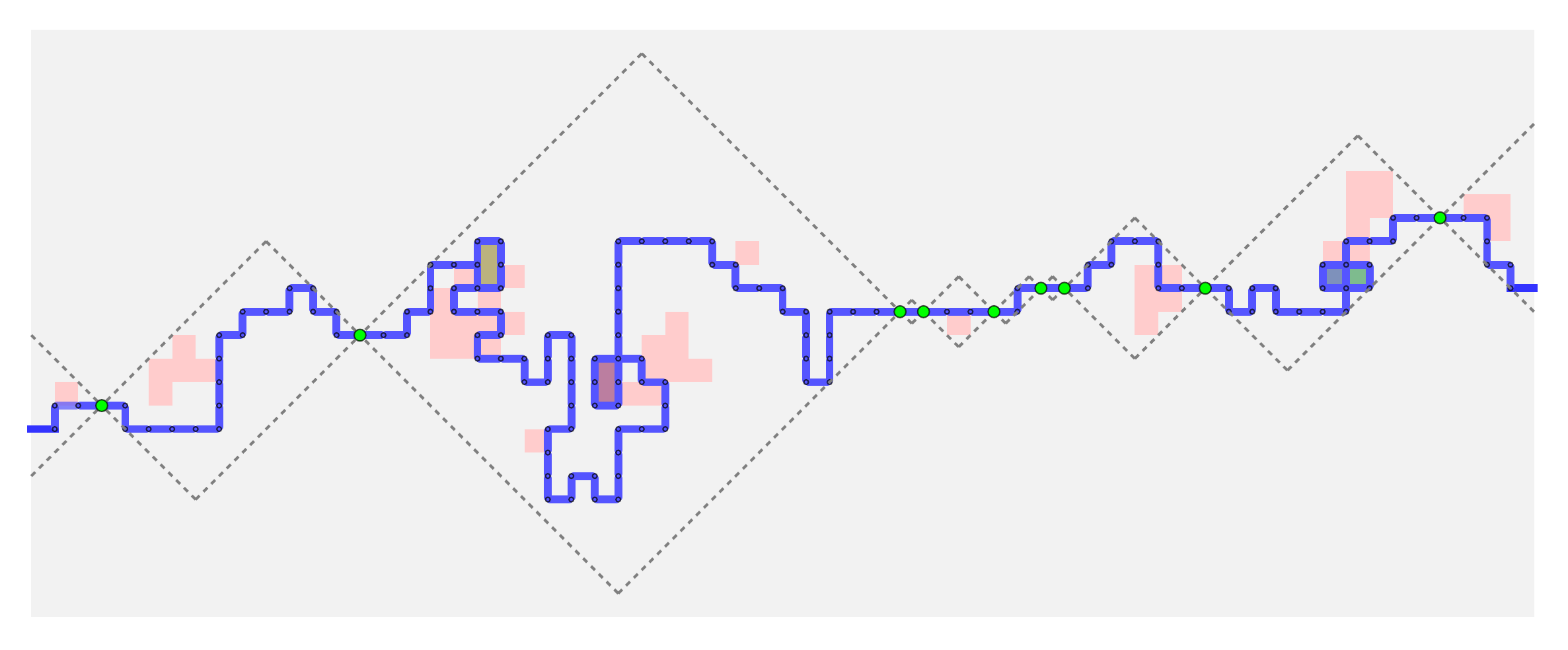}
    \caption{An animal $\Gamma = [\gamma, \underline{\sfW}]$, with cone-points in green. The forward and backward cones emanating from the cone-points form diamonds which encapsulate $\Gamma$. The disagreement polymer $\gamma$ is in blue, and the components $\underline\sfW$ are in pink.}
    \label{fig:disagree-poly-diamond}
    \end{figure}
    
	A few remarks are in order. First note that the only difference between $A$ being a cone-point for $\gamma$ vs. $\Gamma$ is the additional criteria on the clusters in $\underline\sfW$. Note also that by definition, every cone-point is a cut-point. Suppose now that $\Gamma$ has at least two cone-points. Then, we can decompose $\Gamma$ according to all of its cone-points, so that
	\[\Gamma = \Gamma^{(\sfL)}\circ\Gamma^{(1)} \circ \ldots \circ \Gamma^{(k)} \circ \Gamma^{(\sfR)}\]
	where $\Gamma^{(\sfL)} \in \sfA_\sfL$, each $\Gamma^{(i)} \in \sfA$, and $\Gamma^{(\sfR)} \in \sfA_\sfR$. 
	Repeatedly applying \cref{eq:product-structure}, we obtain the product structure of the weights:
	\begin{equation}\label{eq:cone-pt-decomposition}\hatq^n_U(\Gamma) = \hatq^n_U(\Gamma^{(\sfL)})\hatq^n_U(\Gamma^{(\sfR)})\prod_{i=1}^k\hatq^n_U(\Gamma^{(i)})\,.
	\end{equation}	
	We will now show that a typical animal $\Gamma$ has lots of cone-points. First, suppose $S$ is a simple (non self-intersecting) path of dual bonds, and is given the weight $e^{-\beta|S|}$. Define also the partition function
	\begin{equation*}
		Z_V^{\mathsf{SW}}(A, B) = \sum_{\substack{S: A \mapsto B\\ S \in V}} e^{-\beta|S|}\,,
	\end{equation*}
	where the sum is over all simple paths from $A$ to $B$ which stay inside $V$. We will focus our attention on domains for which a typical simple path $S$ drawn from the measure given by the above weights will have linear length and number of cone-points.
	
	\begin{definition}
		Fix $\epsilon, \delta \in (0, 1)$, and $A, B$ with $B \in \cY_\delta^\btl(A)\setminus \{A\}$. We call $V$ an $\epsilon$-\emph{nice} domain with respect to $A, B$ if there exists $\beta_0, v_0, \delta_0, c > 0$ such that uniformly over $\beta \geq \beta_0$ and $r \geq 1 + \epsilon$, we have
		\begin{align}
			Z_V^{\mathsf{SW}}(A, B \mid |S| \geq r\norm{A - B}_1) \leq ce^{-v_0\beta r\norm{A-B}_1}Z_V^{\mathsf{SW}}(A, B)\,,\label{eq:SW-length}\\
			Z_V^{\mathsf{SW}}(A, B \mid |\mathsf{Cpts}(S)| < 2\delta_0\norm{A-B}_1) \leq ce^{-v_0\beta\norm{A-B}_1}Z_V^{\mathsf{SW}}(A, B)\,.\label{eq:SW-cone-points}
		\end{align}
	\end{definition}
    Throughout this paper, we will refer back to the following cigar shape. 
    \begin{definition}[Cigar shape]\label{def:cigar-shape}
        Fix $A, B$ with angle denoted $\theta_{A, B}$ and $B_1 - A_1 = M_{A, B}$. Define the curves $\cC^\pm_{A, B}$ by
		\begin{equation*}
			\cC^\pm(t) = \tan(\theta_{A, B})(t-A_1) + \left(\frac{(t - A_1)(M_{A, B} - t + A_1)}{M_{A, B}}\right)^{1/2}(\log L)^2\,.
		\end{equation*}
		Define the cigar shape $\sC = \sC(\overline{AB})$ as the region in between the curves $\cC^+_{A, B}$ and $\cC^-_{A, B}$.
    \end{definition}
	\begin{remark*}The domains that we will consider in this paper include $\Z^2$, the upper half plane $\H_+$, and domains containing $\sC(\overline{AB})$ or $\sC(\overline{AB}) \cap \H_+$. In each setting, \cref{eq:SW-length} is easily satisfied by a Peierls argument mapping $S$ to a minimal length path from $A$ to $B$. The existence of a linear number of cone-points was established on the domain $\Z^2$ in \cite[Section 2.7]{CIV03}, but the proof method is robust and has since been applied to reach the same conclusion for other domains, such as $\H_+$ in \cite[Lemma 3]{IST15}. By the product structure of \cref{eq:product-structure}, this implies that for the domains $\Z^2$ and $\H_+$, the simple paths behave like random walks. As $\sC(\overline{AB})$ is defined to contain a typical random walk from $A$ to $B$, it is then easy to extend the results to domains containing $\sC(\overline{AB})$ or $\sC(\overline{AB}) \cap \H_+$ as well.
    \end{remark*}
	
	To go from simple paths to disagreement polymers, it will be useful to define the upper and lower envelopes of a disagreement polymer.
	\begin{definition}
		Suppose $\gamma \in \cP_V(A, B)$. The upper envelope $\mathsf{UE}(\gamma)$ is the highest (by lexicographical ordering) simple path from $A$ to $B$ that is a subset of the bonds of $\gamma$ (so in particular, the labels of the bonds are ignored). We can analogously define the lower envelope $\mathsf{LE}(\gamma)$.
	\end{definition} 
	
	The next lemma states that for nice domains, a typical animal $\Gamma$ has linear length and number of cone-points.
	\begin{lemma}\label{lem:cpts-length}
		Fix $\epsilon, \delta \in (0, 1)$, and $A, B$ with $B \in \cY_\delta^\btl(A)\setminus \{A\}$. Let $V$ be an $\epsilon$-nice domain, and $U$ any domain. There exists $\beta_0, v_0, \delta_0, c > 0$ such that uniformly over $n$, $\beta \geq \beta_0$, and $r \geq 1 + 2\epsilon$, we have
		\begin{align}
			\hatZ^n_{V, U}(A, B \mid |\gamma| \geq r\norm{A - B}_1) \leq ce^{-v_0\beta r\norm{A - B}_1}\hatZ^n_{V, U}(A, B)\,,\label{eq:linear-length}\\
			\hatZ^n_{V, U}(A, B \mid |\mathsf{Cpts}(\Gamma)| < 2\delta_0\norm{A - B}_1) \leq ce^{-v_0\beta\norm{A - B}_1}\hatZ^n_{V, U}(A, B)\,.\label{eq:cone-points}
		\end{align}
	\end{lemma}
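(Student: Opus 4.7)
The plan is to reduce both bounds to the corresponding simple-walk bounds \cref{eq:SW-length,eq:SW-cone-points} via the envelope map $\gamma\mapsto S:=\mathsf{UE}(\gamma)$. The key ingredient is a two-sided comparison between the animal partition function and the simple-walk one. Using the energy lower bound $\sE^*_\beta(\gamma)\geq \beta' \sN(\gamma)$ from \cref{eq:energy-property}, the decay estimate on $\Phi'_U(\sfW;\gamma)$ (which gives $\fI_U(\gamma)\leq c(\beta)|\gamma|$ with $c(\beta)\to 0$ as $\beta\to\infty$), and a Peierls-style enumeration of the ways to attach extra multiplicity, loops, or dangling bonds to $S$ so that $\mathsf{UE}$ of the result equals $S$, one obtains
\[
\sum_{\Gamma:\,\mathsf{UE}(\gamma)=S}\hatq^n_U(\Gamma) \;\leq\; e^{-(\beta-\epsilon_\beta)|S|}
\]
with $\epsilon_\beta\to 0$ as $\beta\to\infty$. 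In the opposite direction, embedding each simple path as an animal (with trivial $\underline\sfW$ and no extra bonds) yields $\hatZ^n_{V,U}(A,B) \geq e^{-\epsilon_\beta \norm{A-B}_1} Z_V^{\mathsf{SW}}(A,B)$.

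For the length bound \cref{eq:linear-length}, I would split $\{|\gamma|\geq r\norm{A-B}_1\}$ into (a) $\{|S|\geq (1+\epsilon)\norm{A-B}_1\}$, handled directly by the comparison above combined with \cref{eq:SW-length} applied with parameter $1+\epsilon$; and (b) $\{|\gamma|-|S|\geq (r-1-\epsilon)\norm{A-B}_1\}$, in which the surplus bonds provide $e^{-(\beta-O(1))(r-1-\epsilon)\norm{A-B}_1}$ decay beyond the envelope weight, via the same Peierls enumeration localized near the extra bonds. Summing these two contributions yields the claimed $c\,e^{-v_0\beta r\norm{A-B}_1}\hatZ^n_{V,U}(A,B)$, possibly with $v_0$ reduced by a factor independent of $r$.

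For the cone-point bound \cref{eq:cone-points}, I would argue that every cone-point $\sfu$ of $S$ that fails to be a cone-point of $\Gamma$ carries a local ``defect'' costing extra weight: either $\gamma$ has an additional bond crossing the vertical line $x=\sfu_1$ beyond the single envelope bond, or a cluster $\sfW\in\underline\sfW$ crosses that line, or a finite region $D_i$ of $\gamma$ straddles it; in any case at least one bond or one $\sfW$ with $\bd(\sfW)\geq 2$ can be charged locally to $\sfu$, producing a multiplicative factor $e^{-\Omega(\beta)}$ per defective cone-point. Consequently, on the event $\{|\Cpts(\Gamma)|< 2\delta_0\norm{A-B}_1\}$, either $|\Cpts(S)|< 3\delta_0\norm{A-B}_1$ (directly controlled by \cref{eq:SW-cone-points}) or at least $\delta_0\norm{A-B}_1$ cone-points of $S$ are defective, yielding a total cost $e^{-c\beta\delta_0\norm{A-B}_1}$ via summation over the defect configurations.

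The main obstacle will be the combinatorial bookkeeping underlying the envelope comparison: one must enumerate all $\gamma$ with a prescribed envelope $S$, properly accounting for bond multiplicities, finite loops hanging off $S$, and the labels required to make $\gamma$ a valid disagreement polymer with well-defined regions $\{D_i\}$ and heights $\{h_i\}$, then verify that the resulting geometric series contributes only an $\epsilon_\beta|S|$ correction to the exponent. A secondary subtlety in the cone-point argument is ensuring that the local charge assignment is genuinely local so that defects are not double-counted; this is handled by the very definition of a cone-point, which forbids any bond or decoration (beyond the single envelope crossing) from meeting the vertical line at $\sfu$.
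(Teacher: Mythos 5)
Your overall plan—reduce to the simple-walk bounds \cref{eq:SW-length,eq:SW-cone-points} via the envelope and a Peierls-style enumeration, plus a defect-charging argument for lost cone-points—is in the right spirit and is close to what the paper does (the paper handles \cref{eq:linear-length} by mapping $\gamma$ directly to a shortest path $\gamma_0$ rather than via the envelope, but your split into $\{|S|\text{ large}\}$ and $\{|\gamma|-|S|\text{ large}\}$ also works and costs only a constant in $v_0$). However, there is a genuine gap in your cone-point argument: the defect you charge to a cone-point $\sfu$ of $S$ that fails to survive to $\Gamma$ is described as ``a bond, cluster $\sfW$, or finite region $D_i$ crossing the vertical line $x=\sfu_1$.'' That is the characterization of a \emph{cut-point} that fails, not a \emph{cone-point}. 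By \cref{def:irreducible-comp}, $\sfu$ is a cone-point of $\Gamma=[\gamma,\underline\sfW]$ only if \emph{all} of $\Gamma$ lies inside $\cY^\btr(\sfu)\cap\cY^\btl(\sfu)$; a cluster or loop hanging off $S$ at horizontal distance $d$ from $\sfu$ that pokes more than $d$ above or below the cone will destroy the cone-point without ever meeting the line $x=\sfu_1$. Thus your charge cannot be ``genuinely local'' in the sense you invoke, and a single cluster can kill several cone-points at once, so the ``one defect per $\sfu$, each costing $e^{-\Omega(\beta)}$'' bookkeeping breaks down.

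The correct accounting (which is what the paper outsources to \cite[Prop.~2.12]{CKL24}) is different: one observes that a cluster $\sfW$ (or analogously an appendage $B_e$ of $\gamma\setminus S$) of boundary size $\bd(\sfW)$ can reach at most $O(\bd(\sfW))$ out of the cone, hence destroys at most $O(\bd(\sfW))$ cone-points of $S$; consequently, if $\delta_0\norm{A-B}_1$ cone-points of $S$ fail to survive to $\Gamma$, then the total decoration cost $\sum\bd(\sfW)+\sum|B_e|$ is at least $c\,\delta_0\norm{A-B}_1$, whence the weight drops by $e^{-(\beta-C)c\,\delta_0\norm{A-B}_1}$ via \cref{it:phi-decay-bound} of \cref{def:phi-w-gamma} and \cref{eq:energy-property}. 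This charges clusters for the cone-points they destroy, not the reverse, and accounts correctly for the cone geometry. With this repair your proposal would close the gap; without it, the argument only controls cut-points, and the remaining cone-point deficit is unbounded.
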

	\begin{proof}
		The proof of \cref{eq:linear-length} is standard, we simply compare $\gamma$ to $\gamma_0$, a shortest path from $0$ to $\mathsf{y}$ where all the edge labels are 1. Note that since $V$ is $\epsilon$-nice, the length of $\gamma_0$ is at most $(1 + \epsilon)\norm{A-B}$. We have by \cref{eq:decay-bound-W-gamma,eq:weight-of-q-E*} that
		\begin{equation}
			\hatq^n_U(\gamma) \leq \hatq^n_U(\gamma_0)\exp(-\sE^*_\beta(\gamma) + \beta'|\gamma_0|+e^{-(\beta'-C)}(|\gamma| + |\gamma_0|))\,.
		\end{equation}
		Now, the number of rooted connected sets of bonds of size $k$ is at most $C^k$. For each such set of bonds $\gamma$, we then obtain the following upper bound by allowing each $(\nabla \phi)_e$ to take on any value in $\Z \setminus \{0\}$, and applying \cref{eq:energy-property}:
		\begin{equation}
			\sum_{\substack{\gamma \in \cP_V(A, B),\\
					|\gamma| = k}}\exp(-\sE_\beta^*(\gamma)) \leq C^k\prod_{e \in \gamma}\sum_{j = 1}^\infty 2\exp(-\beta' j) \leq C^ke^{-\beta' k}\,.
		\end{equation}
		Hence, we have
		\begin{align*}
			\hatZ^n_{V, U}(A, B \mid |\gamma| \geq r\norm{A - B}_1) &\leq \sum_{k \geq r\norm{A - B}_1}  \sum_{\substack{\gamma \in \cP_V(A, B),\\
					|\gamma| = k}} \hatq^n_U(\gamma)\\
			&\leq \hatq^n_U(\gamma_0)\sum_{k \geq r\norm{A - B}_1}\exp(-\beta' k + \beta'(1+\epsilon)\norm{A - B}_1+ Ck)\\
			&\leq \hatq^n_U(\gamma_0)C\exp(-(\beta' -C)r\norm{A - B}_1 + \beta'(1+\epsilon)\norm{A - B}_1)\\
			&\leq Ce^{-v_0\beta' r\norm{A - B}_1}\hatZ^n_{V, U}(A, B)\,
		\end{align*}
		as long as $v_0 < 1 - \frac{1+\epsilon}r - \frac C{\beta'}$.
		
		To show \cref{eq:cone-points}, we will first show the same inequality holds when considering cone-points of $\gamma$ instead of $\Gamma$. First note that by \cref{eq:decay-bound-W-gamma} we have
		\begin{equation}\label{eq:compare-no-decoration}
			\left|\log\frac{\exp\big(-\sE_\beta^*(\gamma) + \fI_U(\gamma))}{\exp\big(-\sE_\beta^*(\gamma))}\right| \leq e^{-(\beta-C)}|\gamma|\,.
		\end{equation}
		As $e^{-(\beta - C)}|\gamma|$ is smaller than $v_0\beta$ for sufficiently large $\beta$, it suffices to prove the bound replacing $\hatq^n_U(\gamma)$ with $\overline q^n(\gamma) := e^{-\sE_\beta^*(\gamma)}$. That is, if we define $\overline Z^n_{V, U}(A, B) = \sum_{\gamma \in  \cP_V(A, B)} \overline q^n(\gamma)$, we will show that
		\begin{equation}\label{eq:cone-points-gamma}
			\overline Z^n_{V, U}(A, B \mid |\mathsf{Cpts}(\gamma)| < 2\delta_0\norm{A - B}_1) \leq ce^{-v_0\beta\norm{A - B}_1}\overline Z^n_{V, U}(A, B)\,.
		\end{equation}
		
		Now, for every $\gamma \in \cP_V(A, B)$, let $S = \mathsf{UE}(\gamma)$. Observe that since each $S$ is a simple path, the energy $\sE^*_\beta(\gamma)$ is simply $\beta$ times the length of the path, $\beta|\gamma|$. (In particular, there are no domains $D_i$ enclosed by $\gamma$ that contribute an area term to the energy, and so there is no dependence on $n$ anymore). Hence, we have 
		\begin{equation}\label{eq:q1-weight-SW}
			\sum_{S: S = \mathsf{UE}(\gamma), \gamma \in \cP_V(A, B)} \overline q^n(S) = Z_V^{\mathsf{SW}}(A, B)\,.
		\end{equation}
		 Moreover, for each $\gamma$ such that $\mathsf{UE}(\gamma) = S$, we can view the set of bonds in $\gamma$ as a collection $\{S, (B_e)_{e \in S}\}$ where $B_e$ are connected components of dual bonds indexed by the first bond $e \in S$ that the component is connected to. For every $S$, denote by $\cP_V(S)$ the set of labeled disagreement polymers $\gamma$ such that $\mathsf{UE}(\gamma) = S$. Using \cref{eq:energy-property}, we can upper bound the sum over $\gamma \in \cP_V(S)$ by enumerating over collections $\{S, (B_e)_{e \in S}\}$ and then enumerating over adding integer labels on $\gamma$:
		\begin{align*}
			\sum_{\gamma \in \cP_V(S)} \overline q^n(\gamma) \leq \prod_{e \in S}(1+\sum_{k \geq 2} e^{-\beta' (k-1)} + \sum_{k \leq -1}e^{-\beta'(|k|-1)})&\big(1 + \sum_{l=1}^\infty C^l(\sum_{j=1}^\infty 2e^{-\beta' j})^l\big)\overline q^n(S)\\
			&\leq (1+Ce^{-\beta})^{|S|}\overline q^n(S)\\
			&\leq \exp(Ce^{-\beta}|S|)\overline q^n(S)\,.
		\end{align*}
		The sums over $k$ capture the possibilities that at each bond $e \in \gamma$, the gradient $(\nabla\phi)_e$ is either $\geq 2$ or $\leq -1$ instead of 1. The sum over $l$ includes the possible sizes of the connected component $B_e$, and there are $C^l$ possible components of size $l$ for some constant $C$. For each such component $B_e$, each bond of $B_e$ can also have a height disagreement of $j$ along that edge for $|j| \geq 1$. The above with \cref{eq:q1-weight-SW} implies that if $\cE$ is measurable with respect to $\mathsf{UE}(\gamma)$, then 
		\begin{equation}\label{eq:GD1-omega-measurable}
			\overline Z^n_{V, U}(A, B \mid \cE, |\mathsf{UE}(\gamma)| \leq 1.1\norm{A - B}_1) \leq \exp(Ce^{-\beta}\norm{A - B}_1)Z_V^{\mathsf{SW}}(A, B \mid \cE, |\mathsf{UE}(\gamma)| \leq 1.1\norm{A - B}_1)\,.
		\end{equation}
		Combining \cref{eq:SW-cone-points,eq:GD1-omega-measurable} thus implies that
		\begin{align*}
			&\overline Z^n_{V, U}(A, B \mid \mathsf{Cpts}(\mathsf{UE}(\gamma)) < 2\delta_0\norm{A - B}_1,|\mathsf{UE}(\gamma)| \leq 1.1\norm{A - B}_1) \\
			&\leq e^{-c\beta\norm{A - B}_1}Z_V^{\mathsf{SW}}(A, B)\leq e^{-c\beta\norm{A - B}_1}\overline Z^n_{V, U}(A, B)\,,
		\end{align*}
		where the last inequality follows as $Z_V^{\mathsf{SW}}(A, B)$ is just a restriction of  $\overline Z^n_{V, U}(A, B)$ to simple paths.  
		
		Moreover, since $|\mathsf{UE}(\gamma)| \leq |\gamma|$, we can also bound the event that $\mathsf{UE}(\gamma)$ is too long. We have from \cref{eq:linear-length} and \cref{eq:compare-no-decoration} that 
		\begin{equation}
			\overline Z^n_{V, U}(A, B  \mid |\mathsf{UE}(\gamma)| > 1.1\norm{A - B}_1) \leq e^{-c\beta\norm{A - B}_1}\overline Z^n_{V, U}(A, B)\,.
		\end{equation}
		
		Hence, to prove \cref{eq:cone-points-gamma}, it suffices to show
		\begin{align}\label{eq:cpts-omega-to-gamma}
			\overline Z^n_{V, U}(A, B \mid \mathsf{Cpts}(\gamma) < \tfrac32\delta_0\norm{A - B}_1, \mathsf{Cpts}(\mathsf{UE}(\gamma)) \geq 2\delta_0\norm{A - B}_1,& |\mathsf{UE}(\gamma)| \leq 1.1\norm{A - B}_1) \nonumber\\
            &\leq e^{-c\beta\norm{A - B}_1}\overline Z^n_{V, U}(A, B)\,.
		\end{align}
		However, observe that in the computation above \cref{eq:GD1-omega-measurable}, each component $B_e$ adds on a multiplicative factor of $O(e^{-\beta |B_e|})$ to the weight of $\overline q^n(S)$ (more precisely, this factor is in the interval $[e^{-\beta |C_e|},\,  (\sum_{j=1}^\infty 2e^{-\beta j})^{|C_e|}]$, depending on the admissible assignments of height differences along the edges of $B_e$). In \cite[Propositions 2.12]{CKL24}, it was proved that for an animal $\Gamma = [\gamma, \underline{\sfW}]$, many cone-points of $\gamma$ are also cone-points of the whole animal $\Gamma$, precisely in the sense of \cref{eq:cpts-omega-to-gamma}. The proof was a combinatorial computation using the fact that each cluster $\sfW$ of the animal contributes a factor of $e^{-(\beta - C) \bd(\sfW)}$ to the weight, and hence the exact same computation proves \cref{eq:cpts-omega-to-gamma} with components $B_e$ taking the role of clusters. This finally establishes \cref{eq:cone-points-gamma}. From here we can show that many cone-points of $\gamma$ imply many cone-points of $\Gamma$ to conclude \cref{eq:cone-points} again using the computation of \cite[Propositions 2.12]{CKL24} (this time actually using it to enumerate over clusters in the animal), as it did not matter there that $\gamma$ had the simpler geometry of a simple path.
	\end{proof}
	
	\subsection{Surface tension existence and properties}
	Fix $n$ (which determines the heights $h_i$ in the weight $\hatq^n_U(\gamma)$), and fix a unit vector $\n$ with angle $\theta$. Recall the definitions of $\cP_{\Z^2}$ and $\hatZ^n_{\Z^2, \Z^2}, \tildeZ_{\Z^2,\Z^2}$ from the beginning of \cref{sec:geom-disagreement-polymers}. Let $N$ be such that the point $N\n$ lies on the lattice. Define the surface tensions as \begin{align}\label{eq:surface-tension}
		&\tau_{\beta, n}(\theta):=\tau_{\beta, n}(\n) := -\lim_{N \to \infty} \frac1{\norm{N\n}_1} \log \hatZ^n_{\Z^2, \Z^2}(\o, N\n)\\
        &\tau_{\beta}(\theta):=\tau_{\beta}(\n) := -\lim_{N \to \infty} \frac1{\norm{N\n}_1} \log \tildeZ_{\Z^2, \Z^2}(\o, N\n)\label{eq:surface-tension-tilde}
	\end{align}
	where the limit is over $N$ such that $N\n$ lies on the lattice. We will use both the notation of $\n$ and $\theta$. As usual, if the above limit exists, we can extend the surface tension to a function over all of $\R^2$ by homogeneity. We will show the following proposition about the surface tension.
	\begin{proposition}\label{prop:surface-tension-properties}
		The above limits for $\tau = \tau_{\beta, n}$ or $\tau_\beta$ exist and satisfy the following properties:
		\begin{enumerate}[(i)]
			\item The convergence of the limit is uniform over all unit vectors $\n$.\label{it:tau-uniform-convergence}
			
			\item $\tau$ is analytic as a function from $\R^2$ to $\R^2$\label{it:tau-analytic}
			
			\item $\tau$ is strictly convex, i.e., for any two vectors $\sfu, \sfv$ pointing in different directions, 
			\[\tau(\sfu) + \tau(\sfv) > \tau(\sfu + \sfv)\,.\]\label{it:tau-strict-convexity}
			
			\item $\tau$ is symmetric under rotations by $\pi/4$, reflections across the $x,y$ axes and diagonals $y = \pm x$.\label{it:tau-symmetric}
		\end{enumerate}
	\end{proposition}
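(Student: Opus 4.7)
The plan is to follow the classical Ornstein--Zernike / Dobrushin--Koteck\'y--Shlosman framework of~\cite{DKS92,CIV03}, adapting it to the disagreement-polymer setting through the cone-point product structure \cref{eq:cone-pt-decomposition} and the quantitative cone-point count of \cref{lem:cpts-length}. For existence along a direction $\n$ such that $N\n\in\Z^2$ for infinitely many $N$, I would exploit supermultiplicativity: concatenating $\Gamma_1\in\cP_{\Z^2}(\o,N_1\n)$ with a translate of $\Gamma_2\in\cP_{\Z^2}(\o,N_2\n)$ at a shared cone-point yields, via \cref{eq:cone-pt-decomposition}, a clean factorization of weights, so that
$$\hatZ^n_{\Z^2,\Z^2}(\o,(N_1+N_2)\n)\,\geq\, c\,\hatZ^n_{\Z^2,\Z^2}(\o,N_1\n)\,\hatZ^n_{\Z^2,\Z^2}(\o,N_2\n)\,.$$
The reverse direction uses \cref{lem:cpts-length}: a typical animal has a linear density of cone-points, so conditioning on the existence of one near $N_1\n$ yields a matching submultiplicative bound up to a polynomial entropy factor, and a Fekete-type argument produces the limit.

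To obtain properties (i)--(iii) I would pass to the irreducible-animal generating function
$$\Psi(h)\;:=\;\sum_{\Gamma\in\sfA} e^{h\cdot X(\Gamma)}\,\hatq^n_{\Z^2}(\Gamma)\qquad(h\in\R^2)\,,$$
where $X(\Gamma)$ is the displacement between the endpoints of $\Gamma$. The bounds \cref{lem:cpts-length} and \cref{eq:energy-property} imply that irreducible weights decay exponentially in $\norm{X(\Gamma)}_1$, so $\Psi$ is analytic on a complex neighborhood of some nonempty real open set. The cone-point decomposition then expresses $\hatZ^n_{\Z^2,\Z^2}(\o,N\n)$ as a renewal-type sum of products of irreducibles, and a standard Laplace / saddle-point analysis yields the sharp asymptotic $\hatZ^n_{\Z^2,\Z^2}(\o,N\n)\asymp N^{-1/2}e^{-N\norm{\n}_1\tau_{\beta,n}(\n)}$ with $\tau_{\beta,n}$ identified as the Legendre dual of $\log\Psi$. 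Uniformity in $\n$ (property (i)) is inherited from the uniform analyticity of $\Psi$ on compact subcones; analyticity (ii) follows from the implicit function theorem applied to the critical-tilt equation $\nabla\log\Psi(h)=\n$; and strict convexity (iii) is equivalent to positive-definiteness of $\nabla^2\log\Psi$, i.e., to non-degeneracy of the covariance of $X$ under the tilted measure. Non-degeneracy is guaranteed because trivial local modifications of any irreducible (short bond-level detours) produce displacements spanning both coordinate directions of $\R^2$.

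Symmetry (iv) is a direct consequence of the invariance of $\hatq^n_{\Z^2}(\Gamma)$ under each of the listed symmetries: $\sE_\beta(\gamma)$ depends only on $(\nabla\phi)_e^2$ and so is invariant, the length correction $3c(\beta)|\gamma|$ is invariant by construction, the enclosed floor probabilities $\hatpi^{h_i}_{D_i^\circ}(\phi_x\geq 0)$ are invariant under lattice symmetries applied to $D_i^\circ$, and $\Phi(\sfW;\gamma)$ is invariant by item \cref{it:phi-symmetric} of \cref{def:phi-w-gamma}. Hence $\hatq^n_{\Z^2}(R\Gamma)=\hatq^n_{\Z^2}(\Gamma)$ for every symmetry $R$ under consideration, which transfers through the partition function to $\tau_{\beta,n}$.

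The principal obstacle, relative to the Ising-polymer setting of \cite{DKS92,CIV03}, is that $\sE^*_\beta(\gamma)$ carries non-local contributions from the floor probabilities attached to the finite regions $D_i$ that $\gamma$ encloses. For the renewal analysis to apply, these contributions must factor multiplicatively across the cone-point decomposition. This factorization is however immediate from the geometry: each finite enclosed region $D_i$ lies on exactly one side of every cone-point of $\Gamma$, so its area contribution is assigned to exactly one irreducible piece, which makes $\hatq^n_{\Z^2}$ genuinely multiplicative under cone-point concatenation in the sense required by \cref{eq:product-structure}. Once this point is verified, the remaining steps reduce to the classical OZ machinery.
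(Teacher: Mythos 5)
Your treatment of the symmetry statement (iv) is exactly what the paper does: the weight $\hatq^n_{\Z^2}(\Gamma)$ is invariant under each lattice symmetry because the energy, the enclosed-region floor probabilities, and the decoration functions $\Phi$ are, so the symmetry of $\tau_{\beta,n}$ follows by construction. (Aside: the paper's statement actually reads ``rotations by $\pi/4$'', which is not a lattice symmetry; this is surely a typo for $\pi/2$, consistent with \cref{it:phi-symmetric} of \cref{def:phi-w-gamma}.)

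For (i)--(iii) you take a genuinely different route from the paper. The paper adapts the Dobrushin--Koteck\'y--Shlosman framework of~\cite[Ch.~4]{DKS92}: decompose by cut-points into ``wild'' animals, verify the single key estimate that wild animals of large horizontal span have exponentially small weight (\cref{lem:DKS-polymer-weights}, adapting~\cite[Lem.~4.7]{DKS92} via the marked-fragments combinatorics), and then black-box the abstract cluster-expansion analysis of the rest of \cite{DKS92} to extract uniform convergence, analyticity, and strict convexity. You instead propose the more overtly Ornstein--Zernike / renewal route (closer to \cite{CIV03,IV08}): pass to the irreducible-component generating function $\Psi(\sfh)$, prove analyticity of $\Psi$ from exponential decay of irreducible weights, and read off the surface tension, its analyticity, and its strict convexity from a saddle-point / Legendre-type analysis. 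Both are legitimate; the paper's choice minimizes the new machinery it has to develop (only the marked-fragments lemma), whereas your proposal would require re-deriving the sharp local asymptotics (the $N^{-1/2}e^{-N\tau}$ behavior and the positivity of the Hessian) from scratch in the disagreement-polymer setting, rather than inheriting them from~\cite{DKS92}.

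Two substantive points are worth flagging. First, you identify ``the principal obstacle'' as the factorization of the enclosed-region floor probabilities across cone-points. This factorization is indeed immediate (it is exactly \cref{eq:product-structure}, built into the definition of cut-points) and the paper treats it as such. The genuine extra work in this setting is elsewhere: bounding the weight of \emph{non-minimal} excursions---polymer pieces whose length exceeds their horizontal span, possibly containing nontrivial enclosed components $D_i$ each contributing its own floor probability---so that the generating function $\Psi$ is actually analytic past the critical tilt. The paper's proof of \cref{lem:DKS-polymer-weights} (the marked-fragments injection) and, relatedly, the enumeration in the proof of \cref{lem:cpts-length} are precisely where this happens. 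You cite \cref{lem:cpts-length} and \cref{eq:energy-property} for the exponential decay of irreducible weights, which is in the right direction (the contrapositive of ``typical polymers have many cone-points'' is ``cone-point-free polymers are exponentially suppressed''), but this deduction is not spelled out, and it is exactly the part that would demand the careful energy-vs-entropy bookkeeping that the paper isolates in \cref{lem:DKS-polymer-weights}. Second, the description of $\tau_{\beta,n}$ as ``the Legendre dual of $\log\Psi$'' is not quite right: the Wulff shape $\cW$ is (the closure of) the region $\{\sfh : \Psi(\sfh) < 1\}$, i.e.\ where the partition-function generating series converges, and $\tau_{\beta,n}$ is the support function of $\cW$---equivalently, $\tau_{\beta,n}$ is the Legendre transform of the convex indicator of $\cW$, not of $\log\Psi$ directly. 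This is a standard slip and easily repaired, but it matters for the saddle-point identification of the critical tilt $\sfh^n_\sfy$.
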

	The statement of \cref{it:tau-symmetric} follows by construction: it is clear that each listed symmetry defines a bijection from $\cP_{\Z^2}(0, N\n)$ to $\cP_{\Z^2}(0, N\n')$, where $\n'$ is the image of $\n$ under the symmetry. Considering the weights in \cref{eq:def-polymer-weights,eq:def-polymer-weights-tilde}, the energy $\sE_\beta(\gamma)$ is preserved under the symmetry since $\sN(\gamma)$ remains unchanged, the encapsulated domains $D_i$ are just rotated/reflected so that their corresponding area terms are unchanged, and finally the functions $\Phi(\sfW)$ are also preserved under symmetries as stated in \cref{def:disagree-polymer-len-energy-decor}.
	
	The statements of \cref{it:tau-uniform-convergence,it:tau-analytic,it:tau-strict-convexity} (and many more properties) regarding the surface tension of a polymer model have been well studied in \cite[Ch.~4]{DKS92} in the case where the shape of $\gamma$ is a contour and $\sE_\beta^*(\gamma) = \beta'|\gamma|$. (See  \cite[Sec.~4.12]{DKS92} for analyticity and uniform convergence, and \cite[Sec.~4.21]{DKS92} for strict convexity.) We provide a brief review of the beginning proof ideas there in order to show what adjustments need to be made in our setting.\footnote{The careful reader will notice slight differences due to conventional inconsistencies, such as a choice of $\beta$ vs. $2\beta$ in the law of $\gamma$ --- these are inconsequential and hence ignored.} For brevity we will write the proof in terms of $\hatZ^n$ and $\tau_{\beta, n}$, but the proof holds verbatim for $\tildeZ$ and $\tau_\beta$ (and even more generally, as per \cref{rem:energy-conditions}).
	
	\begin{remark}\label{rem:DKS-idea} The key point is that the polymer model in \cite{DKS92} is shown to be a perturbation of minimal length paths between the start and end points. Even though our model has more complicated features --- $\gamma$ is allowed to be any connected component of bonds, and there is also an additional penalty in the area term for finite regions enclosed by $\gamma$ in $\hatq^n$ --- any portion of $\gamma$ that is not locally behaving like a minimal length path is sufficiently penalized due to \cref{eq:energy-property}, so that our setting is also a perturbation of minimal length paths.
	\end{remark}
	
	We begin by recalling \cite[Secs.~4.1--4.4]{DKS92}, which set up the following definitions.\footnote{Dictionary between the notation of this paper and \cite{DKS92}, modulo that this paper considers a more general set of polymers:
		$\cP_{\Z^2}(\o, t^{\n}_N)=\cI_{N, \n}$, 
		$\cP_{\Z^2}(N)=\cI_N$,
		the complex parameter $z$ is denoted $H$, 
		$\hatq^n_z(\Gamma_i) = \Psi_H(\hat\xi_i)$,
		$\hatZ^n_{\Z^2, \Z^2}(N, z) = \Xi(N, H)$.
	} Define the point-to-line polymers
	\[\cP_{\Z^2}(N) := \bigcup_{\n} \cP_{\Z^2}(\o, t^{\n}_N)\,.\]
	Let $h(\gamma)$ be the difference in height between the start and end of $\gamma$. That is, if $\gamma \in \cP_V(A, B)$, then $h(\gamma) := B_2 - A_2$, the difference between the $y$-coordinates of $B$ and $A$. Define also the partition function with a complex parameter $z$ as
	\[\hatZ^n_{\Z^2, \Z^2}(N, z) = \sum_{\gamma \in \cP_{\Z^2}(N)} \exp\big(-\sE^*_\beta(\gamma) + \tfrac12\beta' h(\gamma)z + \sum_{\sfW \cap \nabla_\gamma \neq \emptyset} \Phi'_{\Z^2}(\sfW; \gamma)\big)\]
	(Ultimately, we only care about $z = 0$, but including this parameter is needed for studying the characteristic polynomial of a random variable in later sections of \cite{DKS92}.) Recalling the definitions of cut-points and animals $\Gamma = [\gamma, \underline\sfW]$ from \cref{sec:product-structure} as well as the notation $\Psi(\sfW; \gamma)=(\exp(\Phi'((\sfW;\gamma))-1)\one_{\{\sfW \cap \nabla_\gamma \neq \emptyset\}}$, we can define the animal weights $\hatq^n_z(\Gamma)$ for $\Gamma = [\gamma, \underline \sfW]$
	\begin{equation*}
		\hatq^n_z(\Gamma) := \exp(-\sE^*_\beta(\gamma) + \tfrac12\beta'h(\gamma)z)\prod_{\sfW \in \underline\sfW}\Psi(\sfW)\,.
	\end{equation*}
	As before, we can then obtain a product structure for the weights, 
	\begin{equation*}
		\hatZ^n_{\Z^2, \Z^2}(N, z) = \sum_{\Gamma_0, \ldots, \Gamma_l} \prod_{i =1}^l \hatq^n_z(\Gamma)\,,
	\end{equation*}
	where $\{\Gamma_i\}_{i = 1}^l$ is the decomposition by cut points of some animal whose polymer $\gamma$ is in $\cP_{\Z^2}(N)$. We assume that each resulting $\Gamma_i$ cannot be further decomposed by cut points, and we call such a $\Gamma_i$ as wild (not to be confused with the previously defined irreducible components, which refer specifically to cone-points).
	
	Next, in \cite[Sec.~4.5]{DKS92}, the authors consider only $\Gamma$ capturing the $\beta \to \infty$ behavior. That is, they consider restricting to the set of $\Gamma = [\gamma, \underline\sfW]$ where $\gamma$ has minimal length in the sense that there are no two horizontal bonds with the same $x$-coordinate, and $\underline\sfW = \emptyset$. This is denoted by $\cI^\infty_{N, \n}$, a notation that we will leave unchanged to emphasize that even though we start with a more complicated set of polymers $\gamma$ than in \cite{DKS92}, once we impose the above restrictions, we are reduced to the same set of $\Gamma$ considered there (and with the same weight as well). The case of no fixed endpoint is also considered, with $\cI^\infty_N := \bigcup_\n \cI_{N_,\n}^\infty$. The partition function summing over just animals in $\cI_N^\infty$ is then directly computed (\cite[Eq. 4.5.7-8]{DKS92}) as 
	\[\sum_{\Gamma \in \cI^\infty_N}\hatq^n_z(\Gamma) = \bigg(\frac{\sinh(\beta')}{\cosh(\beta') - \cosh(z\beta')}\bigg)^N =: (Q_z)^N\,.\]
	
	Now for each wild animal $\Gamma_i$ in the decomposition of some $\Gamma$, define $J(\Gamma_i)$ as the projection onto the $x$-coordinate of the starting and ending cut-points of $\Gamma_i$. In \cite[Sections 4.6--4.7]{DKS92}, the focus is on the sum over all possible wild animals projecting onto a fixed interval $I$, 
	\[\sum_{\Gamma_i: J(\Gamma_i) = I} \hatq^n_z(\Gamma_i)\,.\]
	To be precise, the sum should be interpreted as restricting to $\Gamma_i$ rooted at the origin, since $\hatq^n_z(\Gamma_i)$ is invariant under vertical shifts of $\Gamma_i$. The key bound in \cite[Lemma 4.7]{DKS92} shows that this decays exponentially with $(\beta-C)|I|$ relative to the total weight of (not necessarily wild) animals with $J(\Gamma) = I$ and $\Gamma \in \cI_N^\infty$. This is a mathematical statement of the perturbation described in \cref{rem:DKS-idea}. The proof there relies on the fact that the polymers are contours, and thus will need to be modified to fit our setting. We do this in \cref{lem:DKS-polymer-weights}.
	
	In \cite[Sec.~4.8]{DKS92}, the key bound is used with the cluster expansion machinery of \cite{KoteckyPreiss86} to obtain a formula for $\log \hatZ^n_{\Z^2, \Z^2}(N, z)$ as a sum of some abstract functions $\Phi(I)$ over all intervals $I \subset [0, N]$. From here onwards, the authors work primarily with the functions $\Phi$ and the properties of $\Phi$ proven in \cite{KoteckyPreiss86}, which otherwise loses the information of the original polymer model. The first exception is in \cite[Sec.~4.14]{DKS92} where the polymer weights are reintroduced to add extra parameters to the weights. However, the same cluster expansion machinery is immediately used, justified also by \cref{lem:DKS-polymer-weights} (see the equation below \cite[Eq. 4.14.18]{DKS92}), after which the authors work with the resulting $\Phi(I)$ functions again. The second exception is in the proof of the sharp triangle inequality \cite[Sec.~4.21]{DKS92}. It is straightforward to check that the proof there requires only the results proved previously in the chapter, and is not impacted by our different polymer setting. Hence, to obtain the results of \cite[Chapter 4]{DKS92} in our setting, it suffices to reprove the key bound given in \cite[Lemma 4.7]{DKS92}, which we now state more explicitly.
	
	\begin{lemma}\label{lem:DKS-polymer-weights}
		Fix $\delta > 0$. Let $z$, $z_r$ for $r = 1, \ldots, N$ be complex numbers such that $|\Re z| < 2 - \delta/\beta'$. Let $s$ be a real number in $[0, \delta/3]$. For any interval $I \subset [0, N]$ with integer endpoints, define
		\begin{equation*}
			\hat X_{N, s}(I) := (\prod_{r \in I} Q_{z_r})^{-1} \sum_{\Gamma: J(\Gamma) = I} \hatq^n_z(\Gamma)\exp(s|I|)\,.
		\end{equation*}
		Then, there exists some $\beta_0$ and constant $c$ such that for all $N$, $\beta \geq \beta_0$, $z$, $z_r$, and all such intervals $I$, we have
		\begin{equation*}
			|\hat X_{N, s}(I)| \leq \exp(-(\beta - \beta_0)(|I| - 1))\,.
		\end{equation*}
	\end{lemma}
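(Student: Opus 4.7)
The plan is to adapt the proof of \cite[Lemma~4.7]{DKS92} to our more general polymer model, following \cref{rem:DKS-idea}. The key structural observation is that a wild animal $\Gamma = [\gamma,\underline{\sfW}]$ with $J(\Gamma) = I$ has, by definition, no cut-points in the interior of $I$. Hence for every interior integer $x$-coordinate $k$, the vertical line $\{x = k\}$ must intersect $\Gamma$ in strictly more than one point, an ``excess'' that is exponentially penalized by the weight. This excess originates from one of three mechanisms: (i) the polymer $\gamma$ has multiple bonds crossing the line, each contributing a factor at least $e^{-\beta'}$ of decay via \eqref{eq:energy-property}; (ii) some cluster $\sfW \in \underline{\sfW}$ intersects the line, contributing $|\Psi(\sfW;\gamma)| \leq C e^{-(\beta-C)\bd(\sfW)}$ via \cref{it:phi-decay-bound}; or (iii) a finite enclosed region $D_i$ of $\gamma$ crosses the line, where the associated factor $\hatpi^{h_i}_{D_i^\circ}(\phi\restriction_{D_i^\circ}\geq 0)\leq 1$ only decreases the weight, so the non-negative term $-\log\hatpi^{h_i}_{D_i^\circ}$ hidden inside $\sE^*_\beta$ may be dropped for an upper bound.

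Next, I would compare $\sum_{\Gamma: J(\Gamma) = I} \hatq^n_z(\Gamma)$ to the weight of minimal reference animals. By the direct computation in \cite[Eqs.~4.5.7--8]{DKS92}, the sum over minimal $\Gamma \in \cI_N^\infty$ with $J(\Gamma) = I$ (a monotone horizontal skeleton with empty decoration) equals exactly $\prod_{r \in I} Q_{z_r}$. Using the product structure \eqref{eq:product-structure} of $\hatq^n_z$ across cut-points, together with a decomposition of each wild $\Gamma$ into a minimal skeleton plus per-column excess (attributing excess objects that span several columns to the leftmost column they touch), I would bound the ratio $\hatq^n_z(\Gamma)/\prod_{r \in I} Q_{z_r}$ by a product over the $|I|-1$ interior columns of per-column excess factors. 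Each such factor is bounded by a geometric sum of the form $\sum_{k \geq 1} C_1^k e^{-(\beta-C_2)k} \leq e^{-(\beta-\beta_0)}$ for $\beta$ large, where $k$ runs over the total size of the excess (counting extra bonds, cluster boundary length, and enclosed-region boundary length). The factor $e^{s|I|}$ is absorbed into $\beta_0$ using $s \leq \delta/3$, and the constraint $|\Re z|, |\Re z_r| < 2 - \delta/\beta'$ ensures that $|Q_{z_r}|$ is uniformly bounded away from zero, so the reference denominator is well-controlled.

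The main obstacle, compared to the setting of \cite{DKS92} where $\gamma$ is a contour, is the handling of the enclosed domains $D_i$: a single polymer $\gamma$ may contain many such components with independently chosen height labels $h_i$, so summation over admissible labelings is a new combinatorial ingredient absent in the classical Ising-polymer setting. Fortunately, as noted above, each $\hatpi^{h_i}_{D_i^\circ}$ factor is at most $1$, so these terms only help the upper bound and can safely be bounded by $1$. The summation over height labels $h_i$, as well as over integer gradient labels on the bonds of $\gamma$, is then absorbed into the per-bond geometric series $\sum_{j \geq 1} 2e^{-\beta' j} \leq C e^{-\beta'}$ exactly as in the proof of \cref{lem:cpts-length}. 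With these modifications, the remainder of the argument reduces to the cluster-expansion-style computation of \cite[Secs.~4.6--4.7]{DKS92}, yielding the stated bound.
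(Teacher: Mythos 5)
Your high-level strategy agrees with the paper's in several places: dropping the factors $\hatpi^{h_i}_{D_i^\circ}\leq 1$ by working with $\sE^*_\beta(\gamma)\geq \beta'\sN(\gamma)$ from \cref{eq:energy-property}, using the decay of $\Psi(\sfW;\gamma)$ for the decoration excess, and absorbing the sums over integer gradient labels and the factor $e^{s|I|}$ into geometric series and into $\beta_0$. These are all correct, and the observation that the enclosed-region floor factors can simply be bounded by $1$ is exactly how the paper handles them.

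However, there is a genuine gap where you write that the remainder ``reduces to the cluster-expansion-style computation of [DKS, Secs.~4.6--4.7]'' and that you would organize the sum by ``attributing excess objects that span several columns to the leftmost column they touch.'' The point of this lemma is precisely that the DKS computation does \emph{not} directly apply: there, $\gamma$ is a contour with a canonical traversal, so slicing at vertical lines and summing over per-column excess is automatic. Here $\gamma$ is an arbitrary connected set of labeled bonds, potentially with cycles and branching, and there is no a priori traversal order. The entropy of such $\gamma$'s with prescribed per-column sizes is not bounded by a naive product of per-column geometric series unless one supplies a way to reconstruct $\gamma$ column by column. The paper's proof does this by introducing \emph{fragments} (the pieces of $\gamma$ between consecutive vertical lines, each a vertical segment with horizontal half-bonds) together with a \emph{marking} and an explicit injection from $\gamma$ to an ordered list of marked fragments, using a lexicographic order on ``open'' half-bonds to show the map is invertible. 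This device is what legitimizes the geometric-series bound over fragments (and hence over columns); your proposal gestures at it but does not supply it, and without it the enumeration step is not justified. Separately, your appeal to the product structure \cref{eq:product-structure} ``across cut-points'' is out of place: a wild animal has, by definition, no interior cut-points, and the product structure is used to assemble the partition function $\hatZ^n(N,z)$ from wild animals, not to decompose a single wild animal.
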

	\begin{proof}
		Let $\Gamma = (\gamma, \sfW_1, \ldots, \sfW_i)$, and let $J(\Gamma) = I = [m', m'']$. We can write $\sN(\gamma) = \sN_\sfh(\gamma) + \sN_\sfv(\gamma)$ where $\sN_\sfh(\gamma), \sN_\sfv(\gamma)$ denote the contribution to $\sN(\gamma)$ from horizontal and vertical bonds of $\gamma$, respectively. Moreover, we define $\sN_\sfh'(\gamma) := \sN_\sfh(\gamma) - (|I| - 1)$  and $\sN_\sfv'(\gamma) := \sN_\sfv(\gamma) - h(\gamma)$. We begin with the observation that for every integer $m' < m < m''$, either $\gamma$ intersects the vertical line $x = m$ more than once, or there is some cluster $\sfW$ that intersects the line $x = m$. This implies that
		\begin{equation}\label{eq:no-cuts-excess}
			|J(\gamma)| - 1 \leq \sN_\sfh'(\gamma) + \sum_{\sfW \in \Gamma}\bd(\sfW)\,.
		\end{equation}
		(Note that this is off from \cite[Eq. 4.7.6]{DKS92} by a factor of $1/2$, because if $\gamma$ is required to be a contour, then the only way $\gamma$ can intersect a vertical line more than once is to intersect it three times. The only impact is that the decay bound in the lemma is off by a factor of 2, which is of no consequence.) By the assumption in \cref{eq:energy-property}, for an upper bound we can replace $\sE^*_\beta(\gamma)$ with $\beta'\sN(\gamma)$ in $\hatq^n_z(\Gamma)$. With this simplification we can now follow the calculations exactly as up to \cite[Eq. 4.7.18]{DKS92} to obtain 
		\begin{equation}\label{eq:bound-hatX}
			|\hat X_{N, \gamma}(I)| \leq e^{-(\beta - \beta_1)(|I| - 1)}\sum_{\gamma: J(\gamma) = I} \exp(-\frac\delta2|h(\gamma)| - \beta'(\sN_\sfv(\gamma) - |h(\gamma)|)- \beta_2\sN_\sfh(\gamma))\,,
		\end{equation}
		where $\beta_1$, $\beta_2$ are just constants that can be taken arbitrarily large depending on $\delta$, but not dependent on $\beta'$. (Note that above \cite[Eq. 4.7.13]{DKS92}, the additional bound $|h(\gamma)| \leq \sN_\sfv(\gamma)$ is used to upper bound $-\frac\delta2|h(\gamma)| - \beta'(\sN_\sfv(\gamma) - |h(\gamma)|) \leq -\frac\delta2\sN_\sfv(\gamma)$, which is in the final expression obtained in \cite[Eq. 4.7.18]{DKS92}, the analog of \cref{eq:bound-hatX} above. We will need the sharper bound as we have written.)
		
		It now remains to show that the sum 
		\begin{equation*}
			\Xi := \sum_{\gamma: J(\gamma) = I} \exp(-\frac\delta2|h(\gamma)| - \beta'(\sN_\sfv(\gamma) - |h(\gamma)|)- \beta_2 \sN_\sfh(\gamma))
		\end{equation*}
		is bounded by a constant independent of $\beta'$. First observe that in order for $\gamma$ to climb height $h(\gamma)$, we need at least $|h(\gamma)|$ vertical bonds $e$ such that $\sgn((\nabla \phi)_e) = \sgn(h(\gamma))$. Hence, we partition the bonds of $\gamma$ into the three sets
		\begin{align*}
			&B_1 = \{e\in \gamma: \sgn((\nabla \phi)_e) = \sgn(h(\gamma)),\, e \text{ is horizontal}\}\\
			&B_2 = \{e\in \gamma: \sgn((\nabla \phi)_e) \neq \sgn(h(\gamma)),\, e \text{ is horizontal}\}\\
			&B_3 = \{e \in \gamma: e \text{ is vertical}\}\,,
		\end{align*} 
		and obtain the upper bound
		\begin{align}\label{eq:B1-B2-vert-bonds}
			\Xi \leq \sum_{\gamma: J(\gamma) = I}\prod_{e \in B_1} e^{-\frac\delta2-\beta'(|(\nabla \phi)_e|-1)}\prod_{e \in B_2}e^{-\beta'|(\nabla \phi)_e|}\prod_{e \in B_3}e^{-\beta_2|(\nabla \phi)_e|}\,.
		\end{align}
		
		We next enumerate over the bonds by partitioning $\gamma$ into fragments. For every $\gamma$ such that $J(\gamma) = I$, we consider slicing $\gamma$ at all vertical lines $x = x_i$ for integer $x_i$, so that we are cutting all horizontal bonds in half. We end up with a collection of fragments $F$, each consisting of a (possibly empty) vertical segment $\gamma$, and at least one (but possibly more) horizontal half-bond attached to it. Each bond (or half-bond) also inherits the label $(\nabla \phi)_e$ from $\gamma$.
		
		For a fragment $F$, we can denote its height by $h(F)$, and the number of vertical and horizontal bonds by $\sN_\sfv(F)$ and $\sN_\sfh(F)$ respectively (accounting for the labels as in \cref{def:disagree-polymer-len-energy-decor}) where now $\sN_\sfh(F)$ may be a half integer. We additionally define a marked fragment as a pair $(F, e)$ of a fragment and one of its horizontal half-bonds. The marked half-bonds will tell us later how to reconstruct $\gamma$ given these fragments. Finally, if $R$ is any connected (in $\R^2$) set of vertical bonds and horizontal half bonds, we call a horizontal half-bond of $R$ open if it is not part of a whole horizontal bond in $R$. We will consider an ordering of open half-bonds of $R$ by associating to each open half-bond its point at which another half-bond could join to it to create a whole bond, and then applying a lexicographic ordering to such points.
		
		We now construct an injection from $\gamma$ to a list of marked fragments $\fF = \fF(\gamma)$, to be indexed as $\fF[i]$. Starting at $i = 1$, set $\fF[1]$ to be $(F, \emptyset)$ where $F$ is the fragment containing the unique half-bond of $\gamma$ intersecting the left-most vertical line $x = m'$ (we allow $e = \emptyset$ for this starting fragment only). At each step $i$, define $R_i$ to be the subset of $\gamma$ consisting of the fragments which have been added to $\fF$ by the end of step $i$, so that $R_1$ is the fragment $\fF[1]$ (rooted at the origin, where $\gamma$ is assumed to be rooted). Now let $i = 2$. Let $e$ be the minimal open horizontal half-bond of $R_{i-1}$. Let $F$ be the fragment in $\gamma \setminus R_{i-1}$ that contains the half-bond $e'$ such that $e'$ and $e$ combine to make a whole bond in $\gamma$. Set $\fF[i] = (F, e')$. Then define $R_i$ as instructed above, set $i = i+1$, and repeat until we have reached $R_N = \gamma$, where $N$ is the number of fragments in $\gamma$.
		
		Now we show that this map is injective. Suppose we start with $\fF$ and we know that $\fF = \fF(\gamma)$ for some $\gamma$, and that $\fF$ was built through a sequence of $R_i$ as described above (but of course we do not know what $\gamma$ or the $R_i$ are). We know the number of fragments in $\gamma$ is $N := |\fF|$. We start by placing the fragment of $\fF[1]$ so that its leftmost horizontal bond starts at the origin, and call this $T_1$. By construction, $T_1 = R_1$. Now set $i = 2$. We know $T_{i - 1} = R_{i-1}$. Let $e$ be the minimal open half-bond of $T_{i-1}$. Calling $\fF[i] = (F, e')$, let $T_i$ be the result of adding $F$ to $T_{i-1}$ so that $e'$ and $e$ join together to make a whole vertical bond. By construction, $R_i$ also results from attaching $(F, e')$ to the minimal open half-bond of $R_{i-1}$ via $e'$. Since $R_{i-1} = T_{i-1}$, then $R_i = T_i$. Now set $i = i+1$ and repeat until we have exhausted the whole list $\fF$, so we have reached $T_N$. Since $T_N = R_N = \gamma$, we are done as we have constructed $\gamma$ only from knowing $\fF$. 
		
		Thus, we can upper bound the sum over $\gamma$ by a sum over marked fragments. Consider $F$ with height $h(F) = k$. We next bound the contribution to \cref{eq:B1-B2-vert-bonds} of the bonds in such a fragment. We begin with the horizontal half-bonds. There are $2k+2$ choices of where to put the marked half-bond. Accounting for the possible label of this half bond, its total contribution to the weight is at most 
		\[(2k+2)\sum_{j = 1}^\infty 2e^{-j\beta_2/2}\,.\]
		Similarly, each of the other $2k+1$ locations can either have a labeled half-bond or not, for a total contribution of \[(1+\sum_{j = 1}^\infty 2e^{-j\beta_2/2})^{2k+1}\,.\]
		
		For the $k$ vertical bonds, we split up into the two cases of $B_1$ and $B_2$. Assume without loss of generality that $\sgn(h(\gamma)) = 1$, as the $-1$ case is analogous and the result is the same. For $e \in B_1$, we get a contribution of $\sum_{j \geq 1}e^{-\frac\delta2 -\beta'(j-1)}$. For $e \in B_2$, we get a contribution of $\sum_{j \leq -1}e^{-\beta'|j|}$. Hence, we get a total contribution of 
		\[(\sum_{j \geq 1}e^{-\frac\delta2 - \beta'(j-1)} + \sum_{j \leq -1}e^{-\beta'|j|})^k\,.\]
		Altogether, we have that 
		\begin{align}\label{eq:fragment-weight}\sum_{k = 0}^\infty &\sum_{F: h(F) = k} \prod_{e \in B_1\cap F} e^{-\frac\delta2-\beta'(|(\nabla \phi)_e|-1)}\prod_{e \in B_2\cap F}e^{-\beta'|(\nabla \phi)_e|}\prod_{e \in B_3 \cap F}e^{-\beta_2|(\nabla \phi)_e|} \\
			&\leq \sum_{k = 0}^\infty \bigg((2k+2)\sum_{j = 1}^\infty 2e^{-j\beta_2/2}\bigg)\bigg(1+\sum_{j = 1}^\infty 2e^{-j\beta_2/2}\bigg)^{2k+1}\bigg(\sum_{j \geq 1}e^{-\frac\delta2 - \beta'(j-1)} + \sum_{j \leq -1}e^{-\beta'|j|}\bigg)^k\nonumber \\
			&\leq \sum_{k = 0}^\infty Ce^{-\beta_2/2}ke^{cke^{-\beta_2/2}}e^{-k\delta/3}\nonumber
		\end{align}
		for sufficiently large $\beta'$ compared to $\delta$.
		We can then choose $\beta_2 = \beta_2(\delta)$ and a constant $C = C(\delta)$ so that $ke^{-k\delta/3 + cke^{-\beta_2/2}} < Ce^{-k\delta/4}$ for all $k \geq 0$. Then the above sum is upper bounded by
		\begin{equation}\label{eq:frag-weight-UB}\sum_{k = 0}^\infty Ce^{-\beta_2/2}ke^{cke^{-\beta_2/2}}e^{-k\delta/3} \leq C'e^{-\beta_2/2}\frac1{1 - e^{-\delta/4}} \leq C''e^{-\beta_2}\,.
		\end{equation}
		Finally, enumerating over $\gamma$ as a collection of marked fragments (and ignoring any other compatibility conditions) and noting that there must be at least $|I|$ fragments, we have by \cref{eq:B1-B2-vert-bonds,eq:fragment-weight,eq:frag-weight-UB} that
		\begin{align*} 
			\Xi &\leq \sum_{N \geq |I|} \sum_{\fF: |\fF| = N} \prod_{i = 1}^N \prod_{e \in B_1 \cap \fF[i]} e^{-\frac\delta2-\beta'(|(\nabla \phi)_e|-1)}\prod_{e \in B_2\cap \fF[i]}e^{-\beta'|(\nabla \phi)_e|}\prod_{e \in B_3\cap \fF[i]}e^{-\beta_2|(\nabla \phi)_e|}\\
			&\leq \sum_{N \geq |I|} \bigg(\sum_F \prod_{e \in B_1\cap F} e^{-\frac\delta2-\beta'(|(\nabla \phi)_e|-1)}\prod_{e \in B_2\cap F}e^{-\beta'|(\nabla \phi)_e|}\prod_{e \in B_3 \cap F}e^{-\beta_2|(\nabla \phi)_e|}\bigg)^N\\
			&\leq \sum_{N \geq |I|} (C''e^{-\beta_2})^N\\
			&\leq e^{-(\beta_2-C)|I|}\,,
		\end{align*}
		which is a $\beta$ independent constant.
	\end{proof}
	
	As a result of the above discussion, we can now conclude the remaining properties of \cref{prop:surface-tension-properties}, as well as the following results from \cite[Chapter 4]{DKS92}:
	\begin{proposition}\label{prop:DKS-propositions}
		The following statements hold for any finite $n$ and sufficiently large $\beta$. Let $A=(A_1,A_2), B=(B_1,B_2)$ be any two points, and assume that $A$ is to the left of $B$. Let $\theta_{A, B}$, $M_{A, B}$, $\ell_{A, B}$ denote the angle, horizontal length, and length of the line segment $\overline{AB}$ respectively, and assume that $|\theta_{A, B}|$ is bounded away from $\pi/2$. Let $\cS = \cS(A, B)$ denote the infinite vertical strip with $A, B$ on its boundary, and let $W \supset \cS$.
		\begin{enumerate}[(i)]
			\item\label{it:DKS-large-deviations} \cite[Eq. 4.15.4]{DKS92} Let $\overline{h}_t(\gamma)$ be the maximum height of $\gamma$ above $\overline{AB}$ at the line $x = A_1 + t$. There exist constants $C, c> 0$ such that for any $A, B$ as above,
			\begin{equation*}
				\hatZ^n_{W, \Z^2}(A, B \mid \overline{h}_t(\gamma) \geq j) \leq C\sqrt{M_{A, B}}e^{-c(j \wedge j^2/t)}\hatZ^n_{W, \Z^2}(A, B)\,.
			\end{equation*}
			Let $\ell_t(\gamma)$ be the number of bonds in $\gamma$ to the left of $x = A_1 + t$. For some constant $K(\theta_{A, B})$, we have
			\begin{equation*}
				\hatZ^n_{W, \Z^2}(A, B \mid \ell_t(\gamma) \geq j + K(\theta_{A, B})t) \leq C\sqrt{M_{A, B}}e^{-c(j \wedge j^2/t)}\hatZ^n_{W, \Z^2}(A, B)\,.
			\end{equation*}
			
			\item \cite[\S4.12.3]{DKS92} There exists $C > 0$ such that for any $A, B$ as above,
			\begin{align*}
				&\left|\log \hatZ^n_{\Z^2, \Z^2}(A, B) + \tau_{\beta, n}\ell_{A, B} + \tfrac12\log M_{A, B}\right| \leq C\,, \mbox{ and }\\
				&\left|\log \hatZ^n_{\cS, \Z^2}(A, B) + \tau_{\beta, n}\ell_{A, B} + \tfrac12\log M_{A, B}\right| \leq C\,.
			\end{align*} 
			\label{it:DKS-convergence-rate}
		\end{enumerate}
	\end{proposition}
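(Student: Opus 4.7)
The plan is to invoke the cluster expansion machinery of \cite[Ch.~4]{DKS92}, since the decisive adaptation to our polymer model, namely Lemma \ref{lem:DKS-polymer-weights}, has already been established. That lemma validates the application of the Kotecký--Preiss cluster expansion to our weights $\hatq^n_z$, so one can write $\log \hatZ^n_{\Z^2,\Z^2}(N,z) = \sum_{I} \Phi(I)$ over intervals $I \subset [0,N]$ as in \cite[Sec.~4.8]{DKS92}, with the same abstract bounds on $|\Phi(I)|$. From this point, the remainder of \cite[Ch.~4]{DKS92} operates solely through these $\Phi(I)$, and so transfers to our setting without further modification.

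For part \eqref{it:DKS-convergence-rate}, the strategy is to view $\log \hatZ^n_{\Z^2,\Z^2}(N,z)$ as a perturbation of $N \log Q_z$ with error controlled by $\sum_I |\Phi(I)|$, which is $O(N)$ and in fact has a Riemann-sum structure yielding $\log Q_z + $ corrections identifiable with the surface tension $\tau_{\beta,n}(\theta)$ after the Legendre duality between the slope parameter $z$ and the angle $\theta$. The point-to-point partition function $\hatZ^n_{\Z^2,\Z^2}(A,B)$ is then extracted from the point-to-line partition function by a local central limit theorem for the associated random walk (whose increments are irreducible pieces from the cone-point decomposition, see \cref{def:irreducible-comp}), and the local CLT density produces exactly the $M_{A,B}^{-1/2}$ prefactor, yielding the claim. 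For the strip version, one observes that the typical $\sqrt{M_{A,B}}$ fluctuations of $\gamma$ stay inside $\cS$ with overwhelming probability, so the restriction to $\cS$ affects the partition function only at the level of a multiplicative $1 + o(1)$ error absorbed into $C$.

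For part \eqref{it:DKS-large-deviations}, both bounds become large deviation estimates for a random walk with i.i.d.\ increments of exponential tail, thanks to the cone-point decomposition \cref{eq:cone-pt-decomposition} and the linear cone-point count from \cref{lem:cpts-length}. The event $\{\overline{h}_t(\gamma)\geq j\}$ is the event that the random walk, run for time proportional to $t$, reaches height $j$; its probability is bounded by $\exp(-c(j \wedge j^2/t))$ by a Gaussian bound on moderate deviations plus an exponential tail bound on single-step overshoots. The $\sqrt{M_{A,B}}$ prefactor comes from comparing conditional and unconditional partition functions via the local CLT of the previous step. The horizontal length estimate follows the same pattern, with $K(\theta_{A,B})$ being the mean horizontal displacement per irreducible increment under the $\theta_{A,B}$-tilted measure.

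The main obstacle in this program was precisely Lemma \ref{lem:DKS-polymer-weights}, whose proof required handling two features absent in \cite{DKS92}: polymers that need not be contours (so a vertical line can be crossed an arbitrary number of times), and the area-weight contributions from enclosed regions $D_i$ encoded in $\sE^*_\beta(\gamma)$. Both are controlled because any deviation from the minimal-length skeleton is penalized by $\sE^*_\beta(\gamma) \geq \beta' \sN(\gamma)$, which dominates any gain from extra crossings or enclosed-region entropy once $\beta$ is large. Having paid that cost, Proposition \ref{prop:DKS-propositions} and the remaining claims of Proposition \ref{prop:surface-tension-properties} (analyticity, uniform convergence, strict convexity) follow from the DKS analysis verbatim.
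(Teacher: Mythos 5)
Your overall strategy coincides with the paper's: once Lemma~\ref{lem:DKS-polymer-weights} is in place, the Koteck\'y--Preiss machinery produces the $\Phi(I)$ representation of \cite[\S4.8]{DKS92}, after which the DKS analysis is model-agnostic, and one simply cites \cite[Eq.~4.15.4]{DKS92} for item~\eqref{it:DKS-large-deviations} and \cite[\S4.12.3]{DKS92} for item~\eqref{it:DKS-convergence-rate}. Your paragraph-level sketches of the Ornstein--Zernike/local-CLT mechanisms that underlie those DKS results are accurate and consistent with the paper's framing.

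There is, however, one genuine omission. In item~\eqref{it:DKS-large-deviations} the statement is phrased for an arbitrary domain $W\supset \cS$, whereas \cite[Eq.~4.15.4]{DKS92} only gives the bound for the full-plane polymer domain $W = \Z^2$. Your large-deviation reasoning (reduction to a random walk on cone-points) is carried out in the unrestricted setting and never engages the restriction $\gamma\subset W$. The missing step, which the paper supplies in a remark immediately after the proposition, is that item~\eqref{it:DKS-convergence-rate} gives the uniform bound
\begin{equation*}
\bigl|\log \hatZ^n_{\Z^2, \Z^2}(A, B) - \log \hatZ^n_{W, \Z^2}(A, B)\bigr| \leq C
\end{equation*}
for any $W\supset\cS$ (since both $\hatZ^n_{\Z^2,\Z^2}$ and $\hatZ^n_{\cS,\Z^2}$ are within $e^{\pm C}$ of the same quantity, and $\hatZ^n_{W,\Z^2}$ is sandwiched between them by monotonicity in $W$). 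This is what converts the $W=\Z^2$ estimates into the stated bounds for general $W$. Without that observation, your proof does not actually cover the claim as stated. As a secondary, minor slip: you assert that restricting to $\cS$ in item~\eqref{it:DKS-convergence-rate} costs only a multiplicative $1+o(1)$; in fact the two log-partition functions differ by a bounded constant (the polymer exits $\cS$ with probability bounded away from $0$ and $1$), not an $o(1)$ correction. The conclusion is unaffected since the constant is absorbed into $C$, but the estimate you cite is not correct as written.
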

	\begin{remark*}
		Note that \cref{it:DKS-large-deviations} above was originally written only for $W = \Z^2$. However, an immediate consequence of \cref{it:DKS-convergence-rate} is that for any $W \supset \cS$,
		\begin{equation}\label{eq:change-domain-S-Z2}\left|\log \hatZ^n_{\Z^2, \Z^2}(A, B) - \log \hatZ^n_{W, \Z^2}(A, B)\right| \leq C\,,
        \end{equation}
		and this implies the  generalization.
	\end{remark*}
    
    Now assume additionally that $1 \leq \ell_{A, B} \ll L$. We next prove two useful lemmas that allow us to switch between domains and modify interactions, as long as we contain the cigar shape capturing typical random walk fluctuations (recall \cref{def:cigar-shape}). 
    \begin{lemma}\label{lem:cigar-likely-in-Z2}
        Let $V \supset \sC(\overline{AB})$. Then, 
        \[|\log\hatZ^n_{V, \Z^2}(A, B) - \log\hatZ^n_{\Z^2, \Z^2}(A, B)| \leq C\,.\]
        Consequently, both the large deviation bounds in \cref{it:DKS-large-deviations} and the surface tension result in \cref{it:DKS-convergence-rate} of \cref{prop:DKS-propositions} hold for $\hatZ^n_{V, \Z^2}(A, B)$.
    \end{lemma}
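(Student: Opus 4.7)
The plan is to compare $\hatZ^n_{V,\Z^2}(A,B)$ to $\hatZ^n_{\Z^2,\Z^2}(A,B)$ and establish a bounded ratio between them; the two stated consequences will then follow immediately. The upper direction $\hatZ^n_{V,\Z^2}\leq\hatZ^n_{\Z^2,\Z^2}$ is immediate, since $V\subset\Z^2$ gives $\cP_V(A,B)\subset\cP_{\Z^2}(A,B)$ with identical weights. For the lower direction, I aim to show
\[
\hatZ^n_{\Z^2,\Z^2}\bigl(A,B\bigm|\gamma\not\subset V\bigr)\leq \tfrac12\,\hatZ^n_{\Z^2,\Z^2}(A,B)\,,
\]
which yields $\hatZ^n_{V,\Z^2}\geq \tfrac12\hatZ^n_{\Z^2,\Z^2}$ and hence $|\log\hatZ^n_{V,\Z^2}-\log\hatZ^n_{\Z^2,\Z^2}|\leq \log 2$. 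Since $V\supset\sC(\overline{AB})$, it will suffice to bound $\hatZ^n_{\Z^2,\Z^2}(\gamma\not\subset\sC)$ instead, and I will split this event into vertical exits (above $\cC^+$ or below $\cC^-$) and horizontal excursions past $A_1$ or $B_1$.

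The cigar boundary at horizontal distance $s$ from $A$ thickens $\overline{AB}$ by $h(s):=(\log L)^2\sqrt{s(M_{A,B}-s)/M_{A,B}}$, which is precisely the typical random-walk fluctuation scale inflated by the large factor $(\log L)^2$. Vertical exits from $\sC$ are controlled by applying the height bound of \cref{it:DKS-large-deviations} at each integer line $x=A_1+s$: for $s\leq M_{A,B}/2$ one has $h(s)^2/s\geq\tfrac12(\log L)^4$, and the symmetric bound based at the endpoint $B$ takes care of $s>M_{A,B}/2$, giving a vertical exit probability per line of at most $C\sqrt{M_{A,B}}\exp(-c(\log L)^4/2)$. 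A union bound over the $O(L)$ integer values of $s$, combined with the analogous estimate for the lower envelope, yields total vertical escape probability $O(L^{3/2}\exp(-c(\log L)^4))\ll 1$. Horizontal excursions past the endpoints, where the cigar pinches to width zero, are handled by a Peierls-type argument: a polymer extending $s$ steps past $A_1$ or $B_1$ carries at least $2s$ extra bonds beyond the minimum length between $A$ and $B$ and hence an extra energy cost of $2\beta s$ by \cref{eq:energy-property}, giving (after enumerating routings of the extension) a total contribution $O(e^{-\beta})$ that is negligibly small for $\beta$ large.

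Given the main inequality, both stated consequences follow routinely. The surface tension asymptotic of \cref{it:DKS-convergence-rate} transfers verbatim by absorbing $\log 2$ into the constant $C$ there. For the large deviation bounds of \cref{it:DKS-large-deviations} on $V$, the inclusion $\cP_V\subset\cP_{\Z^2}$ gives $\hatZ^n_{V,\Z^2}(\cE)\leq\hatZ^n_{\Z^2,\Z^2}(\cE)$ for any event $\cE$, which combined with the LDP on $\Z^2$ and the newly established lower bound $\hatZ^n_{V,\Z^2}\geq e^{-C}\hatZ^n_{\Z^2,\Z^2}$ preserves the decay rate at the cost of a multiplicative $e^C$. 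The main obstacle is the horizontal excursion analysis near the endpoints: the $\sqrt{M_{A,B}}$ prefactor in \cref{it:DKS-large-deviations} becomes large precisely where the cigar has vanishing width, so a direct application of the LDP is not small enough, and one must instead exploit the per-bond energy cost. Everything else is essentially a bookkeeping exercise building on \cref{prop:DKS-propositions}.
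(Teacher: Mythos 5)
Your overall plan is correct and shares the central idea with the paper (trivial upper bound from $\cP_V\subset\cP_{\Z^2}$; lower bound by showing the bad event $\gamma\not\subset\sC$ carries negligible weight via the large-deviation estimate), and your derivation of the two consequences is fine. The one genuine structural difference is how you dispose of the horizontal boundary of the cigar: you propose a direct Peierls cost for excursions past $A_1$ or $B_1$, whereas the paper first restricts to the infinite strip $\cS$ (where such excursions are forbidden by fiat), establishes $\hatZ^n_{\sC,\Z^2}(A,B)\geq(1-o(1))\hatZ^n_{\cS,\Z^2}(A,B)$ by the vertical-exit union bound alone, and then concludes via the already-proven comparison $|\log\hatZ^n_{\cS,\Z^2}-\log\hatZ^n_{\Z^2,\Z^2}|\leq C$ (\cref{eq:change-domain-S-Z2}, a corollary of \cref{it:DKS-convergence-rate}). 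The paper's detour through $\cS$ buys a cleaner argument: since both partition functions are known to within $O(1)$ on the log scale, the horizontal boundary is handled with no additional work, and one never needs to set up a map, track its multiplicity, or compare the excursion cost against the surface tension. Your Peierls route is plausible but would need more care than you give it (you must exhibit the map, bound the multiplicity by $C^s$, and verify that the resulting factor $e^{-(\beta-C)s}$ sums to $\epsilon_\beta$ against the same partition function, not merely against the weight of a minimal path).

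One concrete error: your per-line bound for vertical exits, $C\sqrt{M_{A,B}}\exp(-c(\log L)^4/2)$, is not correct. The exponent in \cref{it:DKS-large-deviations} is $c\,(j\wedge j^2/t)$, and while you correctly compute $h(s)^2/s\geq\tfrac12(\log L)^4$, you ignore the minimum with $j=h(s)$ itself. Near the endpoints $h(s)\approx\sqrt{s}\,(\log L)^2$, which for $s=O(1)$ is only $(\log L)^2$, so the correct per-line worst case is $C\sqrt{M_{A,B}}\exp(-c(\log L)^2)$ — exactly the exponent the paper uses. The union bound over $O(L)$ lines then gives $O(L^{3/2}e^{-c(\log L)^2})$, still $o(1)$, so the conclusion survives, but the intermediate estimate should be fixed.
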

    \begin{proof}
        It suffices to consider $V = \sC(\overline{AB})$. It is clear that $\hatZ^n_{\sC(\overline{AB}), \Z^2}(A, B) \leq \hatZ^n_{\Z^2, \Z^2}(A, B)$. To show the other side, we take a union bound over \cref{it:DKS-large-deviations} of \cref{prop:DKS-propositions} to obtain
        \[\hatZ^n_{\cS, \Z^2}(A, B \mid \gamma \not\subset \sC(\overline{AB})) \leq M_{A, B}e^{-c(\log L)^2}\hatZ^n_{\cS, \Z^2}(A, B) = o(1)\hatZ^n_{\cS, \Z^2}(A, B)\,.\]
        This implies that 
        \[\hatZ^n_{\sC(\overline{AB}), \Z^2}(A, B) \geq (1 - o(1))\hatZ^n_{\cS, \Z^2}\,,\]
        and then we can apply \cref{eq:change-domain-S-Z2} to conclude.
    \end{proof}

    \begin{lemma}\label{lem:strip-to-Z2-interactions}
        Let $V \supset \sC(\overline{AB})$. Let $W$ contain the infinite vertical strip with sides $x = A_1 + (\log L)^2$ and $x = B_1 - (\log L)^2$. Then, we have 
        \[|\log \hatZ^n_{V, W}(A, B) - \log \hatZ^n_{V, \Z^2}(A, B)| \leq C(\log L)^2\,.\]
    \end{lemma}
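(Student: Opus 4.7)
The plan is to express the ratio $\hatZ^n_{V,\Z^2}(A,B)/\hatZ^n_{V,W}(A,B)$ as the $\hatq^n_W$-expectation of $\exp(\fI_{\Z^2}(\gamma)-\fI_W(\gamma))$, and to bound the exponent pointwise by bond counts of $\gamma$ within distance $O((\log L)^2)$ of the endpoints, which are then controlled by the large-deviation estimates of \cref{prop:DKS-propositions}\cref{it:DKS-large-deviations}.

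The first step is a pointwise comparison. Since the non-negative $|\sfW\cap\nabla_\gamma|e^{-(\beta-C)\bd(\sfW)}$ piece of $\Phi'_U$ is independent of $U$, the difference equals
\[\fI_{\Z^2}(\gamma)-\fI_W(\gamma)=\sum_{\sfW\cap\nabla_\gamma\neq\emptyset,\ \sfW\not\subset W}\Phi(\sfW;\gamma).\]
Combining the decay bound in \cref{it:phi-decay-bound} of \cref{def:phi-w-gamma} with the observation that any connected $\sfW$ intersecting both $\nabla_\gamma$ and $W^c$ must satisfy $\bd(\sfW)\geq\dist(e,W^c)$ for every $e\in\sfW\cap\nabla_\gamma$ yields
\[|\fI_{\Z^2}(\gamma)-\fI_W(\gamma)|\leq \sum_{e\in\nabla_\gamma} e^{-c\beta\dist(e,W^c)}=:S(\gamma).\]
Since $W$ contains the strip $\{A_1+(\log L)^2<x<B_1-(\log L)^2\}$, bonds of $\nabla_\gamma$ whose $x$-coordinate lies at distance at least $K_0\log L$ from the strip boundary contribute at most $|\gamma|L^{-c\beta K_0}=o(1)$ for $K_0$ large. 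Hence
\[S(\gamma)\leq C\bigl[\ell_{2(\log L)^2}(\gamma)+\tilde{\ell}_{2(\log L)^2}(\gamma)\bigr]+o(1),\]
where $\ell_t$ is as in \cref{prop:DKS-propositions}\cref{it:DKS-large-deviations} and $\tilde{\ell}_t$ denotes the mirror count near $B$.

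I then control exponential moments of $\ell_t,\tilde{\ell}_t$ under $\hatq^n_W/\hatZ^n_{V,W}$. The large-deviation estimate of \cref{prop:DKS-propositions}\cref{it:DKS-large-deviations} extends to interaction domain $U=W$, since as noted in \cref{rem:conditions-on-Phi-U} the cluster expansion of \cite[Ch.~4]{DKS92} (whose key input is \cref{lem:DKS-polymer-weights}) uses only the decay property of \cref{it:phi-decay-bound}, which $\Phi_W$ inherits. Thus $\P(\ell_t\geq Kt+j)\leq C\sqrt{M}\,e^{-c(j\wedge j^2/t)}$, and for $t\asymp(\log L)^2$ and $\beta$ large enough that the linear-tail regime dominates for $j\geq t$, a direct integration against $e^{Cj}$ yields $\E[e^{CS(\gamma)}]\leq\sqrt{M}\,e^{C_1(\log L)^2}\leq e^{C_2(\log L)^2}$, the $\tfrac12\log M\leq\tfrac12\log L$ being absorbed into $(\log L)^2$.

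The proof concludes in two directions. For the upper bound, $\hatZ^n_{V,\Z^2}/\hatZ^n_{V,W}\leq\E_W[e^{S(\gamma)}]\leq e^{C_2(\log L)^2}$. For the lower bound, on the event $\cG=\{S(\gamma)\leq C_3(\log L)^2\}$---which has $\hatq^n_W$-probability at least $1-e^{-c(\log L)^2}$ by applying the same large-deviation bound---one has $e^{\fI_{\Z^2}-\fI_W}\geq e^{-C_3(\log L)^2}$, so $\hatZ^n_{V,\Z^2}\geq (1-o(1))e^{-C_3(\log L)^2}\hatZ^n_{V,W}$. Combining yields the claim. The main obstacle is the exponential moment calculation: one must verify that at the scale $t\asymp(\log L)^2$, the sub-Gaussian regime of the $\ell_t$ tail produces a Gaussian-integrable integrand $e^{Cj-cj^2/t}$ with maximum value $e^{C^2t/(4c)}=e^{O((\log L)^2)}$, and that the prefactor $\sqrt{M}$ contributes only $\tfrac12\log M$ in the exponent rather than an unbounded factor that would swamp the $(\log L)^2$ bound.
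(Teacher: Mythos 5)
Your high-level plan—express the ratio of partition functions as an exponential moment of the interaction difference, bound that difference by a bond count near the two vertical lines $x=A_1+(\log L)^2$, $x=B_1-(\log L)^2$, and control the bond count via the large-deviation estimates—is the right idea and is essentially what the paper does. But you have taken the change of measure in the "wrong" direction, and this creates a circularity that the paper's proof is carefully arranged to avoid.

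You write $\hatZ^n_{V,\Z^2}/\hatZ^n_{V,W}=\E_W[e^{\fI_{\Z^2}(\gamma)-\fI_W(\gamma)}]$, i.e., the expectation is under the measure with weights $\hatq^n_W$. To control that expectation you then need the tail bound of \cref{prop:DKS-propositions}\cref{it:DKS-large-deviations} for the $\hatq^n_W$-measure. Your justification—that the cluster expansion uses only the decay property \cref{it:phi-decay-bound}, which $\Phi_W$ inherits—is not enough: $\Phi_W$ fails the translation-invariance property \cref{it:phi-symmetric} of \cref{def:phi-w-gamma}, so the surface-tension setup and the Ornstein--Zernike/random-walk reduction of \cite[Ch.~4]{DKS92} do not go through verbatim (the increment law of the effective walk would depend on position relative to $\partial W$, i.e.\ the walk becomes inhomogeneous). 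More to the point, transferring such tail estimates from the $\Z^2$-interaction measure to the $W$-interaction measure is, in spirit, exactly what this lemma establishes (via \cref{lem:rare-events-changing-Z}), so invoking it here is circular. The same issue appears in your lower bound, where you need $\bP_W(\cG)\geq 1-o(1)$.

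The paper sidesteps all of this by computing the ratio as $\hatZ^n_{V,W}/\hatZ^n_{V,\Z^2}=\widehat\bE[e^{\fI_W(\gamma)-\fI_{\Z^2}(\gamma)}]$, with $\widehat\bE$ the expectation under the $\Z^2$-interaction weights $\hatq^n_{\Z^2}$ on the spatial domain $V$. For that measure, \cref{lem:cigar-likely-in-Z2} extends \cref{prop:DKS-propositions}\cref{it:DKS-large-deviations} from spatial domain $\Z^2$ to $V$ (keeping the interaction domain $\Z^2$), so the exponential moment of $|\gamma\cap\fT_{A,B}|$ is controlled with no circularity. Both the upper and lower bounds on the ratio are then obtained under $\widehat\bE$ alone. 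Your pointwise comparison $|\fI_{\Z^2}-\fI_W|\lesssim S(\gamma)$ and the subsequent Gaussian-integral bookkeeping are fine and match the paper's estimate $Ce^{-\beta}|\gamma\cap\fT_{A,B}|+e^{-c(\log L)^2}|\gamma\cap\fT_{A,B}^c|$ up to constants; the fix is simply to carry out the identical moment calculation under $\hatq^n_{\Z^2}$, not $\hatq^n_W$.
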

    \begin{proof}
        Let $\fT_{A, B}$ be the set of points to the left of $x = A_1 + 2(\log L)^2$ and to the right of $x = B_1 - 2(\log L)^2$. Since $W$ contains the strip, we have
        \[|\log \hatq^n_{W}(\gamma) - \log \hatq^n_{\Z^2}(\gamma)| \leq Ce^{-\beta}|\gamma \cap \fT_{A, B}| + e^{-c(\log L)^2}|\gamma \cap \fT_{A, B}^c|\,.\]
        Let $\widehat\bP$,  $\widehat\bE$ be the probability and expectation for the polymer model with weight $\hatq^n_{\Z^2}$ and partition function $\hatZ^n_{V, \Z^2}$. Then we have
        \[\frac{\hatZ^n_{V, W}(A, B)}{\hatZ^n_{V, \Z^2}(A, B)} \leq \widehat\bE[\exp(Ce^{-\beta}|\gamma \cap \fT_{A, B}| + e^{-c(\log L)^2}|\gamma \cap \fT_{A, B}^c|)]\,.\]
        An easy Peierls argument shows that 
        \[\widehat\bP(|\gamma| \geq 2\ell_{A, B} + j) \leq e^{-(\beta - C)j}\,,\]
        while \cref{lem:cigar-likely-in-Z2} shows that $|\gamma \cap \fT_{A, B}|$ has an exponential tail past $O((\log L)^2)$ that beats the gain of $\exp(Ce^{-\beta}|\gamma \cap \fT_{A, B}|)$ for sufficiently large $\beta$, so that we have
        \[\frac{\hatZ^n_{V, W}(A, B)}{\hatZ^n_{V, \Z^2}(A, B)} \leq e^{C(\log L)^2}\,.\]
        For the lower bound, we have
        \begin{align*}\frac{\hatZ^n_{V, W}(A, B)}{\hatZ^n_{V, \Z^2}(A, B)} &\geq \widehat\bE[\exp(-Ce^{-\beta}|\gamma \cap \fT_{A, B}| - e^{-c(\log L)^2}|\gamma \cap \fT_{A, B}^c|)] \\
        &\geq e^{-C(\log L)^2}\widehat\bP(|\gamma| \leq 3\ell_{A, B}, |\gamma \cap \fT_{A, B}| \leq 2(\log L)^2)\\
        &\geq (1-o(1))e^{-C(\log L)^2}\,.\qedhere
        \end{align*}
    \end{proof}

    We end with a payoff of this subsection - one of the main benefits of comparing two partition functions is the ability to show an event has low probability in one polymer model by bounding its probability in a simpler polymer model. This is stated more generally as follows.
    \begin{lemma}\label{lem:rare-events-changing-Z}
		Let $V_1, V_2, U_1, U_2$ be any domains such that $V_1 \subset V_2$ and $U_1 \subset U_2$. Suppose we have the bounds on the partition functions
		\begin{align}
			&\left|\log \hatZ^n_{V_1, U_1}(A, B) - \log \hatZ^n_{V_1, U_2}(A, B)\right| \leq f_1(A,B)\,, \mbox{ and }\label{eq:cor-bound-change-interaction}\\
			&\left|\log \hatZ^n_{V_1, U_2}(A, B) - \log \hatZ^n_{V_2, U_2}(A, B)\right| \leq f_2(A, B)\,,\label{eq:cor-bound-change-domain}
		\end{align}
		where we allow the more general form of $\Phi_{U}$ as in \cref{rem:conditions-on-Phi-U}.
		Then, for any subset $\cE\subset \cP_{V_1}(A, B)$ such that  
		\[\hatZ^n_{V_2, U_2}(A, B \mid \cE) \leq p\hatZ^n_{V_2, U_2}(A, B)\,,\]
		we also have
		\[\hatZ^n_{V_1, U_1}(A, B \mid \cE) \leq e^{2f_1(A, B) + \tfrac12f_2(A, B)}\sqrt{p}\hatZ^n_{V_1, U_1}(A, B)\,.\]
	\end{lemma}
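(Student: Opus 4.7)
The proof will apply Cauchy--Schwarz on the sum defining $\hatZ^n_{V_1,U_1}(A,B\mid\cE)$, splitting each weight $\hatq^n_{U_1}(\gamma)$ as a geometric mean between the $U_2$ weight and a partition function for a ``doubled'' decoration. The Cauchy--Schwarz step is what produces the square-root factor $\sqrt p$; without it, working only from the partition-function hypotheses (which give no per-$\gamma$ control on $\hatq^n_{U_1}/\hatq^n_{U_2}$), the best one could extract is an estimate with $p$ rather than $\sqrt p$.

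Concretely, set $\tilde\Phi(\sfW;\gamma):=2\Phi_{U_1}(\sfW;\gamma)-\Phi_{U_2}(\sfW;\gamma)$. By the triangle inequality $\tilde\Phi$ satisfies the decay bound of \cref{it:phi-decay-bound} of \cref{def:phi-w-gamma} with $C$ replaced by $C+\log 3$ (absorbed by taking $\beta$ large), and $\tilde\Phi(\sfW;\gamma)=\Phi(\sfW;\gamma)$ for every $\sfW\subset U_1$; hence $\tilde\Phi$ is an admissible modification of $\Phi_{U_1}$ in the sense of \cref{rem:conditions-on-Phi-U}. Writing $\hatZ^n_{V_1,\tilde\Phi}$ for the corresponding partition function, the identity $\hatq^n_{U_1}(\gamma)^2=\hatq^n_{U_2}(\gamma)\,\hatq^n_{\tilde\Phi}(\gamma)$---immediate from \cref{eq:def-polymer-weights}, using that the product of floor probabilities there does not depend on $U$---together with Cauchy--Schwarz yields
\[
\hatZ^n_{V_1,U_1}(A,B\mid\cE)^{\,2} \;\leq\; \hatZ^n_{V_1,U_2}(A,B\mid\cE)\;\hatZ^n_{V_1,\tilde\Phi}(A,B).
\]

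Each factor is then controlled by the hypotheses. For the first, using $\cE\subset\cP_{V_1}(A,B)\subset\cP_{V_2}(A,B)$ to pass from $V_1$ to $V_2$ in the conditional partition function, then applying the given bound on $\cE$ and returning to $(V_1,U_1)$ via \cref{eq:cor-bound-change-domain,eq:cor-bound-change-interaction}, gives $\hatZ^n_{V_1,U_2}(A,B\mid\cE)\leq p\,e^{f_1+f_2}\hatZ^n_{V_1,U_1}(A,B)$. For the second, apply \cref{eq:cor-bound-change-interaction} with $\tilde\Phi$ in the role of $\Phi_{U_1}$ (permitted by the clause invoking \cref{rem:conditions-on-Phi-U}) and observe that the perturbation relative to $\Phi_{U_2}$ is exactly doubled, $\tilde\Phi-\Phi_{U_2}=2(\Phi_{U_1}-\Phi_{U_2})$, so the bound scales linearly to give $|\log\hatZ^n_{V_1,\tilde\Phi}(A,B)-\log\hatZ^n_{V_1,U_2}(A,B)|\leq 2f_1(A,B)$; combining once more with \cref{eq:cor-bound-change-interaction} itself yields $\hatZ^n_{V_1,\tilde\Phi}(A,B)\leq e^{3f_1}\hatZ^n_{V_1,U_1}(A,B)$. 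Multiplying the two estimates and taking square roots produces $\sqrt p\,e^{2f_1+\frac12 f_2}\hatZ^n_{V_1,U_1}(A,B)$, as required.

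\textbf{Main obstacle.} The only delicate point is the doubling step in the bound on $\hatZ^n_{V_1,\tilde\Phi}$. The hypothesis \cref{eq:cor-bound-change-interaction} is stated for a specific pair $(\Phi_{U_1},\Phi_{U_2})$; justifying its use with $\tilde\Phi$ in place of $\Phi_{U_1}$ requires it to hold uniformly over admissible modifications, which is exactly the content of the lemma's generality clause, and the resulting bound picks up a factor of $2$ (rather than a less explicit constant) precisely because the underlying cluster-expansion estimate is linear in the perturbation $\Phi_{U_1}-\Phi_{U_2}$---a feature visible in concrete applications such as \cref{lem:strip-to-Z2-interactions}.
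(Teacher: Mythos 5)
Your proof is correct and uses the same mechanism as the paper's: a Cauchy--Schwarz step against the modified decoration with doubled perturbation $2(\Phi_{U_1}-\Phi_{U_2})$, combined with the two partition-function hypotheses to control the resulting factors. The paper phrases this in terms of expectations under the tilted probability measures $\bP^n_{V_1,U_2}$ and $\bP^n_{V_1,U_1}$ while you work directly on the partition-function sums, but these are equivalent; incidentally, your $\tilde\Phi=2\Phi_{U_1}-\Phi_{U_2}$ is the correct form for the modified decoration (the paper writes $\widetilde\Phi_{U_1}:=2\Phi_{U_1}-2\Phi_{U_2}$, which should be read as the perturbation $2(\Phi_{U_1}-\Phi_{U_2})$ relative to $\Phi_{U_2}$, since that is what makes the corresponding interaction shift equal to $2\Delta\fI$), and your observation that the hypothesis scales linearly in the perturbation is a careful point that the paper glosses over.
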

	\begin{proof}
		Let $\bP^n_{V_2, U_2}$ be the probability measure given by
		\[\bP^n_{V_2, U_2}(\gamma) = \frac{\hatq_{U_2}(\gamma)}{\hatZ^n_{V_2, U_2}(A, B)}\,,\]
		and let $\bE^n_{V_2, U_2}$ be expectation under $\bP^n_{V_2, U_2}$. Let $\bP^n_{V_1, U_2}, \bP^n_{V_1, U_1}$ be defined similarly. Define $\Delta\fI(\gamma) = \fI_{U_1}(\gamma) - \fI_{U_2}(\gamma)$. Then, we have
		\[\bE^n_{V_1, U_2}[\exp(\Delta\fI(\gamma))] = \frac{\hatZ^n_{V_1, U_1}(A, B)}{\hatZ^n_{V_1, U_2}(A, B)} \quad\mbox{ and }\quad \bP^n_{V_1, U_1}(\cE) = \frac{\bE^n_{V_1, U_2}[\one_\cE \exp(\Delta\fI(\gamma))]}{\bE^n_{V_1, U_2}[\exp(\Delta\fI(\gamma))]}\,.\]
		The assumption \cref{eq:cor-bound-change-interaction} implies that $\bE^n_{V_1, U_2}[\exp(\Delta\fI(\gamma))] \geq e^{-f_1(A, B)}$. Applying the bound with respect to $\overline \Phi_{U_1} := 2\Phi_{U_1} - 2\Phi_{U_2}$ (justified since $U_1 \subset U_2$), we also have $\bE^n_{V_1, U_2}[\exp(2\Delta\fI(\gamma))] \leq e^{f_1(A, B)}$.
		By Cauchy-Schwarz for the first inequality and \cref{eq:cor-bound-change-domain} for the second, this implies that
		\[\bP^n_{V_1, U_1}(\cE) \leq e^{2f_1(A, B)}\sqrt{\bP^n_{V_1, U_2}(\cE)} \leq e^{2f_1(A, B) + \tfrac12f_2(A, B)}\sqrt{\bP^n_{V_2, U_2}(\cE)}\,,\]
		concluding the proof.
	\end{proof}
	
	\subsection{Wulff Shape}~
	In this subsection, we recall a few facts about the Wulff shape, which will allow for a more fine-tuned analysis of the polymers via \cref{prop:OZ-normalization}. We move from working with $\tau_{\beta, n}$ to $\tau_\beta$ for the following reason. The polymer model with weight $\hatq^n$ arises naturally from the $\ZGFF$ above a floor, as seen in \cref{prop:CE-law-with-floor}. The previous \cref{prop:DKS-propositions} allows us to estimate $\hatZ^n_{\Z^2, \Z^2}$ by $\tau_{\beta, n}$. However, this surface tension depends on $L$ and $n$ while $\tau_\beta$ does not, making the latter more suitable for analysis once we show that $\tau_{\beta, n}$ is sufficiently comparable to $\tau_{\beta}$. We will postpone such a comparison to \cref{prop:compare-tau}.
    
    For any function $\tau$, we can define a corresponding Wulff shape
	\begin{equation}\label{eq:def-Wulff-shape}
		\cW  = \cW(\tau) := \bigcap_{\sfy \in \R^2} \{\sfh \in \R^2: \sfh \cdot \sfy \leq \tau(\sfy)\}\,.
	\end{equation}
	Now for $\tau$ being the surface tension $\tau_{\beta}$ defined in \cref{eq:surface-tension-tilde}, by definition we have
	\begin{equation*}
		\log \tildeZ_{\Z^2, \Z^2}(\o, \sfy) = -\tau_{\beta}(\sfy)(1+o_{\norm{\sfy}_1}(1))\,.
	\end{equation*}
	Thus, the sum $\sum_{\sfy \in \Z^2} e^{\sfh \cdot \sfy}\tildeZ_{\Z^2, \Z^2}(\o, \sfy)$ converges if and only if $\sfh\cdot \sfy < \tau_{\beta}(\sfy)$ for all $\sfy \in \Z^2$ such that $\norm{\sfy}_1$ is sufficiently large. But then since $\tau_{\beta}$ was defined by homogeneity and is also a continuous function, this is equivalent to requiring that $\sfh\cdot \sfy < \tau_{\beta}(\sfy)$ for all $\sfy \in \R^2$. Hence, an equivalent definition for the Wulff shape is
	\[\cW := \overline{\bigg\{\sfh \in \R^2: \sum_{\sfy \in \Z^2} e^{\sfh\cdot \sfy}\tildeZ_{\Z^2, \Z^2}(\o, \sfy) < \infty\bigg\}}\,.\]
	
	
	Now, for any $\sfh \in \R^2$, we can define the weight 
	\[W^\sfh(\Gamma) = e^{\sfh \cdot X(\Gamma)}\tildeq_{\Z^2}(\Gamma)\]
	where $X(\Gamma)$ is the difference between the ending and starting point of $\Gamma$ (i.e., if $\Gamma \in \cP_{\Z^2}(A, B)$, then $X(\Gamma) := B - A$). A special value of $\sfh$ will be $\sfh_\sfy = \nabla \tau_{\beta}(\sfy)$, so that by the homogeneity of~$\tau_{\beta}$, we have  $\sfh_\sfy \cdot \sfy = \tau_{\beta}(\sfy)$. Note that $\sfh_\sfy$ is only dependent on the angle of $\sfy$ and not the length. We next state an important result which allows us to view the weights of irreducible components as probabilities, when normalized by $e^{\sfh_\sfy \cdot X(\Gamma)}$. Recall the definitions of the sets $\sfA, \sfA_\sfL, \sfA_\sfR$ from \cref{def:irreducible-comp}.
	\begin{proposition}\label{prop:OZ-normalization}
		For any $\delta \in (0, 1)$, there exists $\beta_0$ such that for all $\beta > \beta_0$, any $\sfy \in \cY_\delta(\o) \setminus \{\o\}$, we have
		\[\sum_{\Gamma \in \sfA}\P^{\sfh_\sfy}(\Gamma) := \sum_{\Gamma \in \sfA}e^{\sfh_\sfy\cdot X(\Gamma)}\tildeq_{\Z^2}(\Gamma) = 1\,.\]
		Moreover, the expectation is collinear to $\sfy$, i.e. there exists some constant $\alpha=\alpha(\beta,\sfy)>0$ such that
		\[\E^{\sfh_\sfy}[X(\Gamma)] = \alpha\sfy\,.\]
		Lastly, there exists a constant $\nu_g > 0$ such that for $k \geq 1$,
		\[\sum_{\Gamma \in \sfA_\sfL \cup \sfA_\sfR} \P^{\sfh_\sfy}(\Gamma)1_{\{|\Gamma| > k\}} \leq Ce^{-\nu_g \beta k}\,.\]
		This last exponential decay property holds more generally, replacing $\sfh_\sfy$ with any $\sfh \in \cW$, or replacing $\P^{\sfh_\sfy}(\Gamma)$ with $e^{\sfh_\sfy\cdot X(\Gamma)}\tildeq_{U}(\Gamma)$ for any domain $U$.
	\end{proposition}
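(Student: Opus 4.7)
The plan is to carry out a standard Ornstein--Zernike renormalization following \cite{DKS92,CIV03}, adapted to the disagreement polymer setup of \cref{sec:product-structure,subsec:cone-points}. For $\sfh \in \R^2$, define
\[ G(\sfh) := \sum_{\Gamma \in \sfA} e^{\sfh \cdot X(\Gamma)}\hatq^n_{\Z^2}(\Gamma), \quad G_\sfL(\sfh) := \sum_{\Gamma \in \sfA_\sfL} e^{\sfh \cdot X(\Gamma)}\hatq^n_{\Z^2}(\Gamma), \quad G_\sfR(\sfh) := \sum_{\Gamma \in \sfA_\sfR} e^{\sfh \cdot X(\Gamma)}\hatq^n_{\Z^2}(\Gamma). \]
Summing the cone-point decomposition \eqref{eq:cone-pt-decomposition} over the number of irreducible pieces yields the generating-function identity
\[ \sum_{\sfy \in \Z^2} e^{\sfh \cdot \sfy}\hatZ^n_{\Z^2,\Z^2}(\o,\sfy) = \frac{G_\sfL(\sfh)\,G_\sfR(\sfh)}{1 - G(\sfh)}, \]
interpreted as a formal identity (and convergent when the left-hand side is). The cone-point abundance of \cref{lem:cpts-length} guarantees that animals with no cone-points are negligible, so no extra correction is needed.

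Next, I would use \cref{it:DKS-convergence-rate} of \cref{prop:DKS-propositions} together with the reformulation of $\cW$ given after \eqref{eq:def-Wulff-shape} to see that the left-hand side converges exactly when $\sfh \in \mathrm{int}\,\cW$ and diverges outside $\cW$. By the identity, this forces $G(\sfh) < 1$ on $\mathrm{int}\,\cW$ and $G(\sfh) \geq 1$ just outside; by continuity of $G$ on a neighborhood of the (bounded) set $\cW$ (absolute convergence follows from the Peierls bound in the final paragraph below), one concludes $G(\sfh) = 1$ on $\partial \cW$. By convex duality and the strict convexity and analyticity of $\tau_{\beta,n}$ (\cref{it:tau-analytic,it:tau-strict-convexity} of \cref{prop:surface-tension-properties}), the vector $\sfh^n_\sfy = \nabla \tau_{\beta,n}(\sfy)$ is the unique supporting point of $\cW$ with outer normal $\sfy/\|\sfy\|$; in particular $\sfh^n_\sfy \in \partial \cW$, so $G(\sfh^n_\sfy) = 1$, which is the first claim.

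For collinearity, I differentiate along $\partial \cW$. Since $G \equiv 1$ on the analytic hypersurface $\partial \cW$, $\nabla G(\sfh^n_\sfy)$ is normal to $\partial \cW$ at $\sfh^n_\sfy$, and by construction that normal is parallel to $\sfy$; thus
\[ \nabla G(\sfh^n_\sfy) = \sum_{\Gamma \in \sfA} X(\Gamma)\, e^{\sfh^n_\sfy \cdot X(\Gamma)}\hatq^n_{\Z^2}(\Gamma) = \E^{\sfh^n_\sfy}[X(\Gamma)] = \alpha\, \sfy, \]
where the second equality is legitimate because $G(\sfh^n_\sfy) = 1$ turns the tilted weights into a probability measure on $\sfA$. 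Positivity of $\alpha$ follows from $\sfh^n_\sfy \cdot \sfy = \tau_{\beta,n}(\sfy) > 0$: the convex function $G$ satisfies $G(0) < 1 < G(t\sfh^n_\sfy)$ for $t$ slightly larger than $1$ (by divergence of the LHS just outside $\cW$), so $\nabla G(\sfh^n_\sfy) \cdot \sfh^n_\sfy > 0$, forcing $\alpha > 0$.

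Finally, the tail bound is essentially a Peierls estimate. For any animal $\Gamma = [\gamma, \underline\sfW]$ and any domain $U$, the energy bound \eqref{eq:energy-property} together with the decay \cref{it:phi-decay-bound} of \cref{def:phi-w-gamma} give $\hatq^n_U(\Gamma) \leq e^{-(\beta - C)|\Gamma|}$, uniformly in $U$ (the combinatorial bookkeeping is exactly as in the proof of \cref{lem:cpts-length}, counting rooted connected bond configurations, their integer labels, and the clusters). Since $\cW$ is bounded with $\sup_{\sfh \in \cW}\|\sfh\| = O(\beta)$ (as $\tau_{\beta,n}(\sfe) = O(\beta)$ for unit $\sfe$ via a shortest-path lower bound on $\hatZ^n_{\Z^2,\Z^2}$), the tilt $e^{\sfh \cdot X(\Gamma)}$ contributes at most $e^{O(\beta)|\Gamma|}$, which is absorbed for $\beta$ large, yielding $\sum_{|\Gamma|>k} e^{\sfh \cdot X(\Gamma)}\hatq^n_U(\Gamma) \leq C e^{-\nu_g \beta k}$ uniformly over $\sfh \in \cW$ and $U$. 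The main obstacle is the identification $G(\sfh^n_\sfy) = 1$ on $\partial \cW$, which rests entirely on the strict convexity and analyticity of $\tau_{\beta,n}$ just established in \cref{prop:surface-tension-properties}; everything else reduces to straightforward Peierls enumeration once this is in hand.
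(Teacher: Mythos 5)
The generating-function argument for the first two parts of the statement is essentially right and mirrors the standard Ornstein--Zernike route followed in the cited reference \cite{CKL24}: the identity $\sum_\sfy e^{\sfh\cdot\sfy}\hatZ^n_{\Z^2,\Z^2}(\o,\sfy) = \frac{G_\sfL G_\sfR}{1-G} + (\text{correction})$, the identification $\mathrm{int}\,\cW = \{G<1\}$, and collinearity from differentiating along $\partial\cW$. (One small omission: for the continuity argument you need $G_\sfL$, $G_\sfR$, and the correction term from animals with fewer than two cone-points to all converge on a full neighborhood of $\overline\cW$, not just $\mathrm{int}\,\cW$; this does hold, via \cref{lem:cpts-length}, but should be recorded.)

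The tail bound, however, is where the proposal has a genuine gap. You bound $\hatq^n_U(\Gamma) \leq e^{-(\beta-C)|\Gamma|}$ and the tilt by $e^{O(\beta)|\Gamma|}$ and assert the latter is ``absorbed for $\beta$ large.'' This cannot work: for $\sfh\in\partial\cW$ one has $\sfh\cdot X(\Gamma)\leq\tau_{\beta,n}(X(\Gamma))$, and $\tau_{\beta,n}$ per unit $\ell^1$-length is $\beta(1+o_\beta(1))$; meanwhile $\sE^*_\beta(\gamma)\geq\beta'\sN(\gamma)$ is also $\beta(1+o_\beta(1))$ per unit length. For a taut irreducible $\Gamma$ (with $\sN(\gamma)\approx\|X(\Gamma)\|_1$) the tilt cancels $e^{-\sE^*_\beta(\gamma)}$ to leading order, leaving at most an $e^{O(1)|\Gamma|}$ term once the $C^{|\Gamma|}$ combinatorial entropy is included; increasing $\beta$ does not help because both exponents scale linearly in $\beta$. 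The naive Peierls count therefore does \emph{not} give the claimed $e^{-\nu_g\beta k}$ decay.

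The exponential decay is instead a mass-gap statement: irreducible animals of large size are atypical precisely because they have no cone-points, and \cref{lem:cpts-length} is what quantifies this. A correct argument splits on $\|X(\Gamma)\|_1$. If $\|X(\Gamma)\|_1\gtrsim k$, restrict $\hatZ^n_{\Z^2,\Z^2}(\o, X(\Gamma))$ to animals with $|\Cpts(\Gamma)|=0$ via \cref{eq:cone-points}, obtaining the factor $e^{-v_0\beta\|X(\Gamma)\|_1}$; then multiply by $e^{\sfh\cdot X(\Gamma)}\leq e^{\tau_{\beta,n}(X(\Gamma))}$ and use \cref{it:DKS-convergence-rate} of \cref{prop:DKS-propositions} to cancel the $e^{\pm\tau}$ factors, leaving $e^{-v_0\beta\|X(\Gamma)\|_1}\lesssim e^{-c\beta k}$. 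If instead $\|X(\Gamma)\|_1\lesssim k$ while $|\Gamma|>k$, the excess length is $\gtrsim k$ and \cref{eq:linear-length} gives $e^{-c\beta k}$ after the same cancellation. Your closing claim that ``everything else reduces to straightforward Peierls enumeration'' is therefore misplaced: the tail bound is exactly the point at which the Ornstein--Zernike cone-point machinery of \cref{lem:cpts-length} is indispensable, which is also why the paper emphasizes that its cited proof ``only uses \ldots the existence of cone-points.''
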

    Versions of this proposition have appeared before in slightly different settings \cite{CKL24,IV08,IST15}. The closest to our setting is the treatment in \cite{CKL24}, where the definition of cones is exactly the same. The only differences are in the weights $\tildeq_{\Z^2}(\Gamma)$ and the set of admissible $\Gamma$, both of which are irrelevant to the proof of \cite[Prop. 2.14, Eq. 5.18]{CKL24}, which only uses the properties of the surface tension in \cref{eq:surface-tension} and the existence of cone-points in \cref{lem:cpts-length}. Hence, the result also holds in our disagreement polymer setting.

    We can now conclude by defining the variance $\sigma^2$ which appears in \cref{thm:1}.
        \begin{definition}[Variance of the effective 2\Dim RW]\label{def:sigma}
            For the particular choice of $\sfy = (1, 0)$, define~$\sigma^2$ as the variance of the $y$-coordinate $X(\Gamma)_2$ under the measure $\P^{\sfh_\sfy}$ from \cref{prop:OZ-normalization}. 
        \end{definition}
	
	\section{Initial upper bound on the displacement of a level line}\label{sec:initial-UB-JEMS}
	The goal of this section is to prove \cref{thm:level-line-contains-Wulff}, stating that w.h.p., the $(H+1-n)$ level line $\fL_n$ contains a translation of Wulff shapes, which in turn provides an initial upper bound on the displacement of the level lines from the sides of $\Lambda$. We will use the following shorthand notation: 
	\begin{definition}
		For a subset $A \subseteq \Z^2$, let $\cE_n(A)$ be the event that $\fL_n$ contains $A$.
	\end{definition}
	
	We begin with some notation surrounding the Wulff shape. Recalling the definition of the Wulff shape $\cW(\tau)$ from \cref{eq:def-Wulff-shape}, let $\cW_1(\tau)$ be $\cW(\tau)$ rescaled to have unit area. Define \[\mathsf{w}_1(\tau) = \int_{\partial \cW_1(\tau)} \tau (\theta_s)ds\,,\]
	where $\theta_s$ is the direction of the normal with respect to $\partial \cW_1(\tau)$ at $s$. Our target shape for the level lines will be a translation of Wulff shapes.
	\begin{definition}
		Recall $\tau_\beta$ from \cref{eq:surface-tension-tilde}. Let $\sL(\ell, r)$ be the set obtained by taking the union of all translates of $\ell\cW_1(\tau_{\beta})$ inside the unit square, and then dilating the shape by a factor of $(1 + r)$. 
	\end{definition}
	Define the parameters
	\begin{align*}
		&\ell_n^*  = \mathsf{w}_1(\tau_{\beta}) N_n/2L\,,\\
		&\kappa_{n, b} := N_n^{1/3}(\log L)^b/L\,.
	\end{align*}

    \begin{remark}\label{rem:size-of-ell0}
        Observe that since $N_1 \leq L/5\beta$, then $\ell_1^* \leq \mathsf{w}_1(\tau_{\beta})/{10\beta}$. Moreover, $\mathsf{w}_1(\tau_{\beta}) \approx 4\beta$ (up to a multiplicative $(1\pm\epsilon)$). Hence, $\ell_n^* \leq \ell_1^* \leq 1/2$.
    \end{remark}
	We next state the main theorem of this section.
	
	\begin{theorem}\label{thm:level-line-contains-Wulff} 
		Fix any $L$ and $n \geq 1$, and consider the \ZGFF on an $L \times L$ box.\footnote{Here, we do not require that $L\notin \sB$.} Recall that $H = H(L) := \max\{ h : \hatpi_\infty(\phi_o\geq h) \geq 5\beta / L\}$ and $N_n = N_n(L) := 1/ \hatpi_\infty(\phi_o \geq H+1-n)$. Then, w.h.p., we have
		\[\cE_n(L\sL(\ell_n^*(1+(\log L)^{-5/3}), -(H+1-n)\kappa_{n,15}))\,.\]
		In particular, if $x$ is a point on the bottom boundary of $\Lambda$ at distance at least, say, $L/10$ from the corners, then 
		\begin{enumerate}[(a)]
			\item \label{it:growth-vertical-dist} the vertical distance of $\fL_{n}$, the $(H+1-n)$ level line, from $x$ is at most $N_n^{1/3} (\log L)^{16}$;\label{it:H-n-level-line-dist}
			\item by the upper bound of \cref{eq:LD-ratio}, the analogous distance of $\fL_{n+1}$ is at most \[N_{n+1}^{1/3}(\log L)^{16} \leq N_n^{1/3} \exp\Big(-c \sqrt{\beta\tfrac{\log L}{\log\log L}}\Big)\,.\]\label{it:H-n-1-level-line-dist}
		\end{enumerate}
	\end{theorem}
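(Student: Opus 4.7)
The plan is to follow the iterative region-growth strategy developed for \SOS in \cite{CLMST16}, adapted to the finer scales and far more restrictive domain-size constraints of the \ZGFF. One starts from an initial region $\cD_0$ contained in the interior of $\fL_n$ with high probability---available from the theorem of \cite{LMS16} quoted in the introduction, which gives $\fL_n$ as a loop of area at least $(1-\epsilon_\beta)L^2$---and iteratively enlarges $\cD_0$ toward the target shape $L\cL^n(\ell_n(1+L^{-1/2}), -(H-n)\kappa_{n,15})$ by placing mesoscopic rectangles $V$ tangent to the target boundary and, in each, forcing the level-line to drop into $V$ via a \emph{dropping lemma}. A tiling of the target boundary by $O(L^{1/3+o(1)})$ such rectangles, together with a union bound, then yields the containment claim, from which \cref{it:H-n-level-line-dist,it:H-n-1-level-line-dist} follow by direct geometry.

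The heart of the argument is the dropping lemma: in a rectangle $V$ of dimensions $\asymp N_n^{2/3}(\log L)^C \times N_n^{1/3}(\log L)^C$ aligned with the tangent direction $\n$ of $\partial\cL^n$ at the chosen base-point, with boundary conditions $H-n$ on the ``outer'' arc and $H-n-1$ on the ``inner'' arc of $\partial V$, the $(H-n)$ level-line inside $V$ should stay below height $N_n^{1/3}(\log L)^{15}$ with probability $1-e^{-(\log L)^c}$. The proof will invoke \cref{prop:CE-law-with-floor} with $F$ taken to be the portion of $V$ below the target level: the constraints $|F|\leq L(\log L)^\kappa$ and $|\partial F|\leq L^{1-\delta}$ are met by the construction of $V$, and the proposition shows that the conditional law of $\gamma := \fL_n \cap V$ is $(1+o(1))$ times the polymer measure $\propto \exp[-\sE_\beta(\gamma) + |D_0 \cap F|/N_n + \sum_\sfW \Phi(\sfW;\gamma)]$, i.e.\ an area-tilted polymer with tilt $1/N_n$. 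The Ornstein--Zernike machinery of \cref{sec:geom-disagreement-polymers}---in particular \cref{prop:surface-tension-properties,prop:DKS-propositions} together with the Gaussian tail bound of \cref{it:DKS-large-deviations}---then identifies $N_n^{1/3}$ as the characteristic height scale of such a tilted polymer, so that excess height $N_n^{1/3}(\log L)^{15}$ carries a penalty of at least $\exp(-(\log L)^c)$.

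Two main obstacles arise. First, the boundary conditions live at arbitrary tangent angles $\n$ along $\partial\cL^n$, so the lattice anisotropy of $\tau_{\beta,n}$ (the very reason a Wulff shape appears) must be respected in the dropping lemma; this is what distinguishes the analysis here from its flat-boundary counterpart in \cref{sec:UB}. It will be handled by orienting $V$ along $\n$ and relying on the fact that the Ornstein--Zernike theory of \cref{sec:geom-disagreement-polymers} is uniform in $\n$ bounded away from the axis directions. Second, to invoke \cref{prop:CE-law-with-floor} one needs $|F|\leq L(\log L)^\kappa$, restricting $V$ to area $\leq L(\log L)^c$---in contrast to the $L^{4/3+o(1)}$-sized domains afforded for \SOS in \cite{CLMST16} (cf.\ \cref{rem:bbq-worse-setup})---which rules out a single large growth gadget and forces the many-step iterative scheme above. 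Finally, \cref{it:H-n-level-line-dist} is deduced from the containment statement by noting that near the center of a side of $\Lambda$, the inward deficit of $L\cL^n(\ell_n(1+L^{-1/2}), -(H-n)\kappa_{n,15})$ is $\tfrac{L}{2}(H-n)\kappa_{n,15}+O(1) = O((H-n)N_n^{1/3}(\log L)^{15}) \leq N_n^{1/3}(\log L)^{16}$, using $H-n = O(\sqrt{\log L\,\log\log L})$ from \cref{eq:LD-inf}; \cref{it:H-n-1-level-line-dist} follows by applying the same bound with $n$ replaced by $n+1$ and invoking the large-deviation ratio \cref{eq:LD-ratio}.
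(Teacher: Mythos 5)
Your high-level blueprint (growth gadget $\to$ iterative enlargement of a contained region $\to$ deterministic geometry of $\cL^n$ to read off \cref{it:H-n-level-line-dist,it:H-n-1-level-line-dist}) matches the paper's, and your back-of-the-envelope computation for part~\ref{it:H-n-level-line-dist}--\ref{it:H-n-1-level-line-dist} from the containment event is essentially the one the paper uses. But the dropping lemma you propose has a concrete flaw that the paper goes to considerable lengths to avoid.

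You take the rectangle $V$ to have dimensions $\asymp N_n^{2/3}(\log L)^C \times N_n^{1/3}(\log L)^C$, i.e., a \emph{thin} rectangle whose height is the polylog-corrected fluctuation scale, and then set $F$ to be a further subset of $V$. In the paper, the mesoscopic domain $Q$ in \cref{thm:growth-gadget} is a perturbation of an $N_n^{2/3}(\log L)^a \times 2N_n^{2/3}(\log L)^a$ rectangle---a \emph{tall}, roughly square box---while the floor set $F$ is restricted to a thin parallelogram slab of height $N_n^{1/3}(\log L)^b$ inside it. This is not an incidental difference: it is precisely the workaround, announced in \cref{subsec:proof-ideas}, that lets the paper impose the constraint $|F| \le L(\log L)^\kappa$ needed for \cref{prop:CE-law-with-floor} \emph{without} shrinking $V$ itself. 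Once the floor is dropped outside $F$ (by FKG, since this is the upper-bound direction), the tall $V$ can be afforded, which places the wiggly top boundary of $Q$ at distance $\asymp N_n^{2/3}(\log L)^a$ from the polymer, well beyond the reach of the decoration functions $\Phi$, so that the interactions with the top side of $Q$ are negligible. With your thin $V$, the top boundary (which, after the construction of \cref{lem:boundary-construction}, is necessarily wiggly, not flat) sits only $(\log L)^{O(1)}$ factors above the polymer's equilibrium height $N_n^{1/3}$; the pinning problem for a wiggly boundary at that range is exactly what the paper argues the depinning results of \cite{IST15} do \emph{not} handle, and the upper-bound argument does not need to face it. So as written, your dropping lemma re-introduces the obstacle the paper was designed to sidestep.

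A second, related point: you attribute the iterative region-growth to the domain-size cap coming from $|F|$. In the paper this is not what forces iteration. Even for \SOS (where the analogue of \cref{lem:area-estimate} tolerates $|F|\leq L^{4/3+o(1)}$), the same iterative Wulff-shape growth is required because a single application of the dropping lemma only pushes the level-line slightly below the boundary of the already-contained Wulff shape---a geometric fact about how tangent rectangles sample $\partial\cW$, not a domain-size issue. The paper's iteration (\cref{lem:grow-droplet,lem:Wulff-Dk-convergence,lem:grow-translates}) rescales the Wulff shape by factors of $1+L^{-3/4}$ and then translates it, iterating $O(\log L)$ times via \cref{lem:Wulff-Dk-convergence}. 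Finally, you quote the [LMS16] theorem for the initial region $\cD_0 \subset \fL_n$, but that theorem only gives lower bounds on areas; obtaining the concrete starting shape $\Lambda_{(1-2\epsilon)L}\subset\fL_0$ is \cref{clm:starting-shape}, and going from $\fL_0$ to $\fL_n$ for $n\geq 1$ requires the inductive peeling argument at the end of the paper's proof of \cref{thm:level-line-contains-Wulff} (reveal $\fL_{k-1}$, drop the induced boundary condition by monotonicity, re-apply \cref{prop:wulff-to-cL}), which your sketch omits.
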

    \begin{remark}\label{rem:wulff-shape-invisible}
		The shape $L\sL(\ell_n^*(1+(\log L)^{-5/3}), -(H+1-n)\kappa_{n,15}))$ is flat away from the corners, but around the corners there will be an arc of the Wulff shape. When $N_1 = O(L)$, then the size of this arc is $O(L)$ and the Wulff shape is visible (note that the shape and hence the $H$ level line will still occupy a $1 - \epsilon_\beta$ fraction of the sites however). However, for most values of $L$ (i.e., with the exception of a zero logarithmic density set, similar to \cref{rem:Lh-B-concrete}), $\ell_1^* = o(1)$ so that $L\sL(\ell_1^*(1+(\log L)^{-5/3}), -H\kappa_{1,15}))$ is flat all the way up to distance $o(L)$ away from the corners, and there is no visible Wulff shape at the $O(L)$ scale. When we consider $n \geq 2$, we always have $\ell_n^* = o(1)$, so the latter picture is always the case. This is in contrast with the \SOS picture where the Wulff shape is always visible, even for the $(H+1-n)$ level line for finite $n$.
	\end{remark}

     \begin{remark}\label{rem:sos-log}
 While our focus is the \ZGFF, and later (in \cref{sec:gradphi}) we also extend \cref{thm:level-line-contains-Wulff}  to the $|\nabla\phi|^p$ model for all $p>1$, we note that \cref{it:growth-vertical-dist} of the theorem applies also to the case $p=1$: Using \cite[Lemma~5.9]{CLMST16} in lieu of  \cref{prop:CE-law-with-floor}, the proof of \cref{thm:level-line-contains-Wulff} extends to \SOS to show that, for $H=\lfloor \frac1{4\beta}\log L\rfloor $ and any fixed $n\geq 1$, the vertical distance of its $(H+1-n)$ level line is at most $L^{1/3}(\log L)^{16}$ with high probability (refining the $L^{\epsilon}$ in \cite[Thm.~6.2]{CLMST16} into a $\mathrm{polylog}(L)$).
    \end{remark}

    \begin{remark}In this section, we study $\fL_n$ in a mesoscopic rectangle of side-length $O(N_n^{2/3}(\log L)^c)$. We could instead take any side-length $N_n^{2/3}f(L)$ where $(\log L)^C \leq f(L) \leq e^{C\sqrt{\log L}}$, with the upper bound coming from the area restriction to apply \cref{prop:CE-law-with-floor} for this rectangle. (The lower bound will be needed for the application of \cref{thm:growth-gadget} below.) It turns out that a smaller $f(L)$ provides a stronger result on the distance of $\fL_n$ from the bottom of $\partial \Lambda$, as this is limited by how close we can place our rectangle to the bottom without exiting $\Lambda$ (see, e.g., the definition of $\ell_x$ in the proof of \cref{prop:wulff-to-cL}, or \cref{fig:growth-gadget}). Thus, in this paper we choose the smallest possible $f(L)$. However, a larger $f(L)$ provides a better result on the limit shape near the corners: if we choose $f(L) = e^{C\sqrt{\log L}}$, then our proof of \cref{thm:level-line-contains-Wulff} will instead give that, w.h.p., we have 
    \[\cE_n(L\sL(\ell_n^*(1+e^{-\frac{C}3\sqrt{\log L}}), -(H+1-n)N_n^{1/3}e^{3C\sqrt{\log L}}/L))\,.\]
    That is, the term $1+(\log L)^{-5/3}$ arising from \cref{eq:min-start-Wulff} can be lowered to $1+e^{-C'\sqrt{\log L}}$, and this is the dominant error term at the corners of the limit shape, studied in our companion paper \cite{ChenLubetzky26+}.
    \end{remark}
	
	\subsection{Growth gadget}
	In this section, we will prove that in a region around a line segment at the correct scale, the level line will drop below a certain point, motivated by the treatment in \cite[Section 5.2]{CLMST16}. This will eventually be used (see \cref{lem:grow-droplet}) to show that the level line drops far enough below where a Wulff shape would be, which will allow us to iteratively grow the region which we know the level line contains.

    We start with a lemma stating that for a rectangle in the middle of $\Lambda$, we can enforce the desired $H+1-n$, $H-n$ boundary conditions at the cost of moving from the rectangle to a wiggly domain approximating the rectangle. The proof will be postponed to \cref{sec:random-boundary}.
    \begin{lemma}\label{lem:boundary-construction}
        Fix $n \geq 1$, and let $R$ be an $\ell_1 \times \ell_2$ rectangle, where $\ell_1, \ell_2 \gg (\log L)^6$. Let $\Lambda_{9L/10} \subset V \subset \Lambda$, and $0 \leq k \leq H-n$. Suppose that, w.h.p.\ under $\pi_{V}^k$, $R$ is contained in the interior of $\fL_{n-1}$ and that there are two points $A, B$ on the sides of $\partial R$ with the top arc of $\partial R$ from $A$ to $B$ in the interior of $\fL_n$, such that if $d$ is the distance from $\{A, B\}$ to the top or bottom of $R$, then $\sqrt{\ell_1}(\log L)^2 \leq d$. 
        Then there exists a $\pi^k_V$-measurable distribution on connected regions $Q\subset R$ with marked boundary conditions $\xi$ satisfying  \cref{it:Q-simp-conn,it:dist-Q-R-bdy,it:good-boundary-points,it:Q-bc-arc}
        below, such that the following holds. If $\cA_1$ is the area of the interior of $\fL_n$ intersected with~$Q$, and $\cA_2$ is the area above the $(H+1-n)$ level line in $Q$ under $\pi^\xi_Q$, then $\cA_2 \subset \cA_1$ w.h.p.\ under $\pi^k_V$.        
        \begin{enumerate}
            \item \label{it:Q-simp-conn}$Q$ is simply connected,

            \item\label{it:dist-Q-R-bdy} $\dist(\partial Q, \partial R) \leq \log L$,

            \item\label{it:good-boundary-points} There exists $A', B' \in \partial Q$ such that $\sC(\overline{A'B'}) \subset Q$, $\cY^{\btl}(A')$ does not intersect the left side of $Q$, $\cY^{\btr}(B')$ does not intersect the right side of $Q$, and $\max\{d(A, A'), \dist(B, B')\} \leq 2(\log L)^5$.

            \item \label{it:Q-bc-arc} The boundary conditions $\xi$ assigns height $H+1-n$ on the top arc from $A'$ to $B'$, and $H-n$ on the bottom arc.
        \end{enumerate}
    \end{lemma}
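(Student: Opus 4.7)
The plan is to construct $(Q,\xi)$ via a domain-Markov exposure of the height field $\phi\sim\pi_V^k$ in a thin strip inside $R$ along $\partial R$, and then to apply FKG monotonicity to compare $\pi^\xi_Q$ against the conditional law of $\phi\restriction_Q$. Since, by hypothesis, w.h.p.\ the top arc of $\partial R$ from $A$ to $B$ lies in the interior of $\fL_n$ and $R\subset \mathrm{int}(\fL_{n-1})$, the connected component of $\{\phi\geq H-n\}$ in the inner $(\log L)^2$-strip adjacent to the top arc admits an innermost bounding sub-arc $\eta_{top}$ of $\fL_n$, joining a point $A'$ on the left side of $R$ to a point $B'$ on the right side. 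Near the bottom arc of $R$, where $\fL_n$ need not be present, take $\eta_{bot}$ to be a deterministic inner-parallel translate of the bottom arc by $(\log L)^2$ on which $\phi\geq H-n-1$ automatically (as $R\subset \mathrm{int}(\fL_{n-1})$). Define $Q\subset R$ as the simply-connected subregion bounded by $\eta_{top}$, $\eta_{bot}$, and the two vertical pieces of $\partial R$ between their endpoints, and let $\xi\equiv H-n$ on the top arc of $\partial Q$ from $A'$ to $B'$ and $\xi\equiv H-n-1$ on the bottom arc.

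Conditions \cref{it:Q-simp-conn,it:dist-Q-R-bdy,it:Q-bc-arc} are then immediate from the construction: simple-connectivity is topological (outermost-component exposure), the $(\log L)^2$-strip controls $\dist(\partial Q,\partial R)$, and the boundary-condition assignment matches by definition. For \cref{it:good-boundary-points}, the transverse fluctuations of $\eta_{top}$ over any horizontal span of length $O(\ell_1)$ are $O(\sqrt{\ell_1}\,\log L)$ by the large-deviation estimate \cref{it:DKS-large-deviations} of \cref{prop:DKS-propositions}, applied to the disagreement polymer carrying $\eta_{top}$ under the height drop $H-n \to H-n-1$. Using $\sqrt{\ell_1}(\log L)^2 \leq d$, the endpoints $A', B'$ then land on the vertical sides of $R$ within $2(\log L)^5$ of $A, B$, and both the cones $\cY^\btl(A'), \cY^\btr(B')$ and the cigar $\sC(\overline{A'B'})$ fit inside $Q$ with margin.

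For $\cA_2\subset\cA_1$, apply the domain Markov property: conditional on $\phi\restriction_{V\setminus Q}$, the restriction $\phi\restriction_Q$ has law $\pi^{\phi\restriction_{\partial Q}}_Q$. By construction $\phi\restriction_{\partial Q}\geq \xi$ pointwise (on $\eta_{top}$, $\phi\geq H-n$; on $\eta_{bot}\subset R$, $\phi\geq H-n-1$), so FKG yields a monotone coupling of $\phi^\xi\sim\pi^\xi_Q$ with $\phi\restriction_Q$ satisfying $\phi^\xi\leq\phi\restriction_Q$; hence $\{\phi^\xi\geq H-n\}\cap Q\subset\{\phi\geq H-n\}\cap Q$, i.e., $\cA_2\subset\cA_1$ on the (w.h.p.) event that the above exposure succeeds. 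The main obstacle is producing a canonical $\eta_{top}$ with the claimed transverse fluctuations and well-placed random endpoints: a depinning-type argument combining the Wulff-shape analysis earlier in \cref{sec:initial-UB-JEMS} with the cone-point machinery of \cref{subsec:cone-points} is required to rule out $\eta_{top}$ sticking to microscopic irregularities of $\partial R$ or clinging to the side portions of $\partial R$ above the exposure, with extra care needed because the endpoints $A', B'$ are themselves random and must be shown to be close to $A, B$.
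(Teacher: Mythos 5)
Your overall scheme (reveal $\phi$ near $\partial R$, define $Q,\xi$ from the revealed data so that $\phi\restriction_{\partial Q}\geq\xi$ pointwise, then conclude by monotonicity) matches the paper's, and the final FKG step is stated correctly. But the construction of $\partial Q$ in your proposal has two genuine errors, both of which stem from using the wrong tool.

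First, the bottom arc. You take $\eta_{bot}$ to be a \emph{deterministic} inner-parallel translate of the bottom side of $R$ and assert that $\phi\geq H-n-1$ holds there ``automatically'' because $R\subset\mathrm{int}(\fL_{n-1})$. This is false. Being inside $\fL_{n-1}$ (the $(H-n+1)$ level-line) only controls the macroscopic phase; on any deterministic arc of length $\ell_1\gg(\log L)^6$, the number of sites where $\phi$ dips to height $\leq H-n-2$ is of order $\ell_1 e^{-c\beta}$, which is $\gg 1$. So a deterministic bottom boundary simply does not satisfy $\phi\geq\xi$ pointwise, and the FKG comparison then fails. The paper instead makes $\eta_{bot}$ random: it reveals, for each site $x$ on the bottom arc with $\phi_x\leq H-n-2$, the connected component $\cC^2_x$ of sites at height $\leq H-n-2$, and takes the outer boundary of the union of these components; those boundary sites have height $\geq H-n-1$ by maximality, and the components are small by Peierls (\cref{obs:dist-bdy-R}), which is what gives \cref{it:dist-Q-R-bdy}.

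Second, the top arc. You appeal to \cref{it:DKS-large-deviations} of \cref{prop:DKS-propositions} to bound the transverse fluctuations of $\eta_{top}$ by $O(\sqrt{\ell_1}\log L)$. But \cref{it:dist-Q-R-bdy} requires $\dist(\partial Q,\partial R)\leq(\log L)^2$, and with $\ell_1\gg(\log L)^6$ we have $\sqrt{\ell_1}\log L\gg(\log L)^4\gg(\log L)^2$, so this estimate is far too weak. The deeper confusion is that you are treating $\eta_{top}$ as a sub-arc of $\fL_n$, whose fluctuations are indeed of random-walk order; the paper's $\eta_{top}$ is not any piece of $\fL_n$ but rather the outer boundary of the Peierls components $\cC^1_x$ of sites at height $\leq H-n-1$ that touch the top arc, which is within $(\log L)^2$ of $\partial R$ by \cref{obs:dist-bdy-R}. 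No OZ machinery is needed (or appropriate) here. Finally, your treatment of \cref{it:good-boundary-points} is missing the actual construction: the paper locates $A',B'$ via an iterative scan along $\partial Q$ (moving right by at most $(\log L)^2$ per step, for at most $(\log L)^2$ steps, yielding the $2(\log L)^5$ displacement and the cone/cigar containment), and your proposal does not supply a substitute for that argument.
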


    The main theorem of this subsection is the following, showing that the $H+1-n$ level line drops below a certain height. Fix $a > 4$ and $b\geq3a$. Motivated by the above lemma, consider a $N_n^{2/3}(\log L)^a \times 2N_n^{2/3}(\log L)^a$ rectangle $R$ with $A, B$ distance at $CN_n^{1/3}(\log L)^b$ above the bottom of $R$ for some $C > 1$. (The requirements on $\ell_1, \ell_2, d$ are easily satisfied.) Let $Q = Q_{a, b}$ satisfy the above properties in \cref{lem:boundary-construction}. Assume for simplicity that the slope of $\overline{AB}$ is $\theta = \theta_{A, B} \in [0, \pi/4]$ and the midpoint of $\overline{AB}$ is the origin. For future reference, we define here also $F$ as the intersection of $Q$ with the parallelogram that shares sides with $R$, has top and bottom sides parallel to $\overline{AB}$, with height $N_n^{1/3}(\log L)^b$ and centered at the origin. 
	\begin{theorem}\label{thm:growth-gadget}
    The following holds uniformly over all possible $Q$ as above. Let $\fL_n$ be the (unique) $(H+1-n)$ level line induced by $\xi$. Then, with $\pi^\xi_Q$-probability $1 - e^{-c\log L}$, $\fL_n$ lies below the point $X= (0, Y + \sigma(\log L)^a + \log L)$, where
		\begin{align}\label{eq:def-Y-sigma}
			&Y = -\frac{N_n^{1/3}(\log L)^{2a}}{8(\tau_\beta(\theta) + \tau_\beta''(\theta))\cos(\theta)^3}\,,\\
			&\sigma^2 = \frac{N_n^{2/3}(\log L)^a}{4(\tau_\beta(\theta) + \tau_\beta''(\theta))\cos(\theta)^3}\,.\nonumber
		\end{align}
	\end{theorem}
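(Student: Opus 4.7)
The strategy is a tilted random-walk / Wulff variational argument carried out through the polymer representation of \cref{sec:law-of-disagreement-poly,sec:geom-disagreement-polymers}. First, I apply \cref{prop:CE-law-with-floor} on the domain $Q$ with the set $F$ being the parallelogram from the setup (whose area is $O(N_n (\log L)^{a+b})=O(L(\log L)^{O(1)})$ and whose boundary is $O(N_n^{2/3}(\log L)^a)=o(L^{1-\delta})$, satisfying the hypotheses). The lower level-lines $\fL_{n+1},\fL_{n+2},\ldots$ live inside the regions $D_i^\circ$ for $i\geq 2$, and their contribution enters as the product $\prod_{i\geq 2}\hatpi_{D_i^\circ}^{h_i}(\phi_x\geq 0)$. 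Since these are at the scale $N_{n+1}^{1/3}\leq N_n^{1/3}\exp[-c\sqrt{\beta\log L/\log\log L}]$, much finer than any feature we are tracking, monotonicity/FKG arguments allow us to bound them away: up to a harmless prefactor, the law of the unique polymer containing $\fL_n$ is proportional to $\exp\!\big(-\sE_\beta(\gamma)+|D_0\cap F|/N_n+\fI_Q(\gamma)\big)$, with $D_0$ the region above the level-line.

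Next, I use the Ornstein--Zernike machinery from \cref{sec:geom-disagreement-polymers} (cone-points of \cref{lem:cpts-length}, product structure \cref{eq:cone-pt-decomposition}, and the renormalization \cref{prop:OZ-normalization}) to represent the portion of $\gamma$ from $A'$ to $B'$, conditioned to stay in $Q$, as an effective $2$D area-tilted random walk with increment law of surface tension $\tau_{\beta,n}$ (and mean direction controlled by $\sfh^n_{B'-A'}$), area-tilted by $1/N_n$ on the strip $F$. To leading order, for a curve parametrized normally to the chord $\overline{A'B'}$ by $f(s)$, $s\in[0,L_c]$ with $L_c=N_n^{2/3}(\log L)^a/\cos\theta$, the effective action is
\[
\cI(f)=\tfrac{1}{2}(\tau_{\beta,n}(\theta)+\tau_{\beta,n}''(\theta))\int_0^{L_c}(f'(s))^2\,ds+\frac{1}{N_n}\int_0^{L_c}f(s)\,ds,
\]
the first term being the standard OZ second-order expansion of $\int\tau(\psi(s))\,ds'$ around $\theta$ (the $\tau+\tau''$ stiffness arises from the joint expansion of both the angular cost and the arc-length cost; see \cref{rem:DKS-idea} and the analysis in \cite[\S4]{DKS92}).

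Euler--Lagrange for $\cI$ gives $f''(s)=[(\tau+\tau'')N_n]^{-1}$, hence $f^*(s)=s(s-L_c)/[2(\tau+\tau'')N_n]$, with $f^*(L_c/2)=-L_c^2/[8(\tau+\tau'')N_n]$. Converting the normal displacement to the vertical by dividing by $\cos\theta$ reproduces exactly $Y$ in \cref{eq:def-Y-sigma}. Since $|Y|\ll N_n^{1/3}(\log L)^b$, the optimal trajectory stays inside $F$, so the area tilt is effective throughout. Standard Gaussian / CLT tail bounds for the area-tilted random walk (as in \cite{IOSV21,CKL24}, using the variance $\sigma_n^2$ of \cref{def:sigma} and the exponential tails of the irreducible components from \cref{prop:OZ-normalization}) show that fluctuations of the vertical coordinate at $s=L_c/2$ around $Y$ are Gaussian with variance $\sim\sigma_n^2 L_c/\cos^3\theta$, matching $\sigma^2$ in \cref{eq:def-Y-sigma}. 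This immediately yields, for the bare area-tilted RW,
\[
\bP\bigl(\text{midpoint of }\gamma\text{ is above }Y+\sigma(\log L)^a+(\log L)^2\bigr)\leq\exp\!\bigl(-c(\log L)^{2a}\bigr)\leq e^{-c(\log L)^2},
\]
using $a\geq 1$.

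The main obstacle is transferring this estimate from the idealized polymer (in $\Z^2$ or a vertical strip) to the actual law $\pi^\xi_Q$: (i) the boundary $\partial Q$ is wiggly at scale $(\log L)^2$ (the $(\log L)^2$ term in $X$ is reserved precisely to absorb this); (ii) one must rule out that $\gamma$ drifts out of the cigar $\sC(\overline{A'B'})$ or is pinned to $\partial Q$ near the endpoints $A',B'$; (iii) the finite upper level-lines $\fL_0,\ldots,\fL_{n-1}$ above $\fL_n$ can be ignored by FKG (they push $\fL_n$ down, which is in our favor), but the lower level-lines' contribution must be controlled. For (i)--(ii) I would use \cref{lem:rare-events-changing-Z} combined with \cref{lem:cigar-likely-in-Z2,lem:strip-to-Z2-interactions} together with \cref{it:DKS-large-deviations} of \cref{prop:DKS-propositions} to pass, at a multiplicative cost of $\exp(O((\log L)^2))$, between the polymer laws on $Q$, on the vertical strip $\cS(A',B')$, and on $\Z^2$; \cref{it:good-boundary-points} of \cref{lem:boundary-construction} ensures the endpoint cones are disjoint from $\partial Q$, preventing endpoint pinning. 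For (iii), FKG domination by a measure with appropriate all-$H-n-1$ or all-$H-n$ boundary conditions on $D_1^\circ$ reduces to the effective area-tilted polymer above. Since the Gaussian bound beats $e^{-c(\log L)^2}$ by a factor of $e^{-c(\log L)^{2a}}$, all these $\exp((\log L)^2)$-order losses are absorbed, concluding the result.
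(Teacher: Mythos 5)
Your proposal correctly identifies the variational structure behind the constants in \cref{eq:def-Y-sigma}: the effective action with stiffness $\tau_{\beta,n}+\tau_{\beta,n}''$, the Euler--Lagrange equation, the optimal curve $f^*(s)=s(s-L_c)/[2(\tau+\tau'')N_n]$, and the resulting $Y$ at the midpoint. This mirrors what the paper does for the \emph{lower bound} on the denominator, which restricts to a union of cigars around $\gamma_{\mathsf{opt}}$ and compares $\int\tau(\theta_s)\,ds$ to $\sG^n_1(\ell,\theta)$.

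The gap is in the upper bound. You propose to control the upper tail by invoking ``standard Gaussian/CLT tail bounds for the area-tilted random walk (as in \cite{IOSV21,CKL24})'' at the midpoint $s=L_c/2$. Those references prove Ferrari--Spohn limit laws for area-tilted effective random walks \emph{above a flat floor}; they are not off-the-shelf moderate-deviation estimates for the unconstrained polymer bridge on a wiggly mesoscopic domain $Q$. More importantly, the paper's entire architecture is that the RW/Ferrari--Spohn machinery of \cref{sec:UB,sec:LB} can only be applied once the level-line is already known to be at height $O(N_n^{1/3}(\log L)^C)$ — and establishing that is exactly what \cref{thm:growth-gadget} (fed into \cref{thm:level-line-contains-Wulff}) is for. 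Reaching for the full effective-RW convergence at this stage would require controlling pinning against the wiggly $\partial Q$ (the obstacle the paper highlights repeatedly), and the depinning input \cref{prop:IST} only applies to flat boundaries in $\H_+$. In short, the route you sketch is circular relative to the paper's dependency order.

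What the paper's upper bound actually does is more elementary and robust against the wiggly boundary: after using FKG to remove the floor outside $F$ and condition on $\cG^\sqcup$, and after conditioning on the cluster-expansion validity events $\cG_1,\cG_2$ from \cref{lem:cluster-expansion-likely,lem:Db-smaller-log2}, it \emph{splits $\gamma$ only once} at the first horizontal bond crossing $x=0$ (on $\{D_{b(\gamma)}=\emptyset\}$, via \cref{lem:cut-gamma-in-half}), writes $\gamma=\gamma_1\circ\gamma_2$, bounds each half by the inductive partition-function estimate $Z^n_{A',B'}\leq z_k\exp(\sG^n_\mu(\ell,\theta))$ from \cref{prop:bound-partition-functions}/\cref{cor:growth-bound-partition-functions}, and reduces the upper tail to a \emph{one-dimensional} Gaussian summation over the height $C_2$ of the crossing point (via the Taylor expansion of $\sG^n_1(\ell_{A'C},\theta_{A'C})+\sG^n_2(\ell_{CB'},\theta_{CB'})$). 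The doubling in \cref{prop:bound-partition-functions} incurs losses $z_k=\exp(c(\log L)^2\cdot 2^{k-1})$, which are absorbed because $a>4$ guarantees $(\log L)^{2a}$ dominates; this is where the (apparently mysterious) exponent $a$ is actually fixed, and your sketch does not account for this accumulation. Your proposal also does not track the $(1+o(1))$ validity of \cref{prop:CE-law-with-floor}, which requires staying inside $\cG_1$; the paper manages this with \cref{eq:good-event-CE,eq:good-event-CE-G'}. If you want to salvage the RW route, you would need to independently prove a moderate-deviation bound for the area-tilted bridge under $Z^n_{Q,Q;F,\mu}$ that is stable under the $\exp(O((\log L)^2))$ domain/interaction switching costs — which, once written out, amounts to reproving \cref{prop:bound-partition-functions}.
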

    
	\begin{remark}\label{rem:choice-of-a-b}
		In \cref{subsec:level-line-contain-wulff}, we will apply the above for the choice of $a = 5, b = 15$. As the error probability is smaller than, say $e^{-c(\log L)^2}$, any union bound over a polynomial number of applications of \cref{thm:growth-gadget} will still have an error probability of the same form.
	\end{remark}
	
	In contrast to the polymer model studied in \cref{sec:geom-disagreement-polymers}, we now need to reintroduce the primary area term. Let $V$ be a finite simply connected set, and consider $\gamma \in \cP_{V}(A, B)$ for any $A, B \in V$. Let $F \subset V$, and assume for simplicity that $\overline{AB} \subset F$. Define $\cA_F(\gamma)$ as the signed area of the region above $\mathsf{UE}(\gamma)$ with respect to the line segment $\overline{AB}$, after restricting to $F$. That is, $\cA_F(\gamma)$ is equal to $|D_0 \cap F|$ minus the area above the line segment $\overline{AB}$ in $F$. (Note that although $|D_0|$ depends on the reference domain $V$, the normalized area $\cA_F(\gamma)$ does not.) Define the polymer weight with boundary conditions $H+1-n$ and $H-n$, area tilt $\mu$ on $F$, and domain of interaction $U$, by
	\begin{align}\label{eq:def-qUmu-weights}
		q^n_{U; F, \mu}(\gamma) &= \exp\big(-\sE^*_\beta(\gamma)+ \frac{\mu}{N_n}\cA_F(\gamma) + \sum_{\sfW \cap \nabla_\gamma \neq \emptyset} \Phi'_U(\sfW;\gamma)\big)\\
		&=: \exp\big(-\sE^*_\beta(\gamma)+ \frac{\mu}{N_n}\cA_F(\gamma) + \fI_U(\gamma)\big)\,,\nonumber
	\end{align}
	where $\sE^*_\beta(\gamma)$ and $\Phi'_U(\sfW;\gamma)$ are as defined in \cref{eq:define-E*_beta,eq:def-Phi'}. For convenience, recall that
	\[\sE_\beta^*(\gamma) = \sE_\beta(\gamma) + 3c(\beta)|\gamma| - \sum_{i\geq 2}\log \hatpi_{D_i^\circ}^{h_i}(\phi_x \geq 0,\, \forall x \in D_i^\circ)\,.\]
	As usual, we can define the partition function
	\[Z^n_{V, U; F, \mu}(A, B) = \sum_{\gamma \in \cP_V(A, B)}q^n_{U; F, \mu}(\gamma)\,,\]
	with the notation $Z^n_{V, U; F, \mu}(A, B \mid \cE)$ denoting the restriction of the sum to $\gamma \in \cE$, for any event $\cE$.
	Note that when $\mu = 0$, we recover the definitions of $\hatq^n_U$ and $\hatZ^n_{V, U}$ from \cref{sec:geom-disagreement-polymers}. Of course, $q^n_{U; F, \mu}(\gamma)$ is just a renormalization of the cluster expansion weight $p^\xi_{V; F}(\gamma)$, a fact we will frequently apply without reference.
    
    We next prove a few preliminary lemmas. In order to prove \cref{thm:growth-gadget}, we will in several instances want to show that at a specific point, $\gamma$ has the geometry of a simple path.
    
    \begin{definition} For a bond $b \in \gamma$, call a pocket $D_b = D_b(\gamma)$ the connected component of all finite regions in $\Z^2 \setminus \gamma$ containing a region that has $b$ as part of its boundary. If there is no such finite region adjacent to $b$, then $D_b = \emptyset$. 
	\end{definition}

	\begin{lemma}\label{lem:cut-gamma-in-half}
		Fix any $V, U, F$, and any points $A, B \in V$. Let $b(\gamma)$ be the first horizontal bond where the upper envelope $\mathsf{UE}(\gamma)$ hits the vertical line $x = m$ for a fixed $m \in [A_1, B_1]$. Then there exists a constant $C > 0$ such that for any $k \geq 0$, we have 
		\[Z^n_{V, U; F, \mu}(A, B \mid |\partial D_{b(\gamma)}| \geq k) \leq e^{-(\beta - C)k/2}Z^n_{V, U; F, \mu}(A, B)\,.\]
		In particular, we have
		\[Z^n_{V, U; F, \mu}(A, B \mid D_{b(\gamma)} = \emptyset) \geq (1 - \epsilon_\beta)Z^n_{V, U; F, \mu}(A, B)\,.\]
        The same holds replacing $Z^n_{V, U;F, \mu}$ with $\tildeZ_{V, U}$ or $\hatZ^n_{V, U}$.
	\end{lemma}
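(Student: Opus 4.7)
My plan is a Peierls-type surgery argument, exploiting only the energy lower bound $\sE^*_\beta(\gamma) \geq \beta'\sN(\gamma)$ from \cref{eq:energy-property}. Fix a $\gamma$ with $D_{b(\gamma)} \neq \emptyset$ and $|\partial D_{b(\gamma)}| = k' \geq k$, and write $D = D_{b(\gamma)}$ for brevity. Since $b$ is a horizontal bond on $\mathsf{UE}(\gamma)$, the region $D$ lies strictly below $b$, and its boundary $\partial D \subset \gamma$ decomposes into an upper arc $\alpha^+$ (containing $b$) and a lower arc $\alpha^-$, split at the leftmost and rightmost vertices of $\partial D$. At least one of $\alpha^+, \alpha^-$ has $\lceil k'/2\rceil$ bonds or more; call this longer arc $\alpha$. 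Define the surgery $\gamma \mapsto \gamma' := \gamma \setminus \alpha$; this opens $D$ so that in $\gamma'$ it merges with an adjacent region (either $D_0$ or some $D_j$ with $j \geq 1$).

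To compare weights via \cref{eq:def-qUmu-weights}, I track the three terms. First, the energy drops by at least $\beta'|\alpha| \geq (\beta - c(\beta))k'/2$, since I have removed at least $k'/2$ bonds, each contributing at least $\beta'$ by \cref{eq:energy-property}. Second, the interaction $\fI_U$ changes by at most $C|\alpha|$ by the decay bound \cref{it:phi-decay-bound} of \cref{def:phi-w-gamma}. Third, the area tilt $(\mu/N_n)\cA_F(\cdot)$ can only \emph{increase} from $\gamma$ to $\gamma'$: if $D$ merges with $D_0$, then $|D_0 \cap F|$ grows by $|D \cap F|$, and if $D$ merges with some $D_j$ for $j \geq 1$, then $\cA_F$ is unchanged. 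Hence $q^n_{U;F,\mu}(\gamma) \leq e^{-(\beta - C)k'/2}\,q^n_{U;F,\mu}(\gamma')$. The map $\gamma \mapsto \gamma'$ is at most $C^{k'}$-to-one: to invert, one must reconstruct the removed arc $\alpha$, a connected bond set anchored at two vertices of $\gamma'$, of which there are at most $C^{k'}$ by the classical enumeration of $*$-connected subsets of given size. Summing,
\[
Z^n_{V, U; F, \mu}(A, B \mid |\partial D_{b(\gamma)}| \geq k) \;\leq\; \sum_{k'\geq k} C^{k'} e^{-(\beta - C)k'/2}\, Z^n_{V, U; F, \mu}(A, B) \;\leq\; e^{-(\beta - C')k/2}\, Z^n_{V, U; F, \mu}(A, B)
\]
for $\beta$ large enough, giving the first estimate. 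The ``in particular'' then follows by taking $k = 4$ (the minimum perimeter of a nonempty finite region touching $b$) and summing the geometric tail, yielding a total of $\epsilon_\beta$ for the complement of $\{D_{b(\gamma)} = \emptyset\}$.

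The main subtlety lies in defining $\gamma'$ cleanly when $\alpha = \alpha^+$: there $b$ itself is removed, so one must check that the resulting $\gamma'$ still belongs to a class of labeled bond sets for which the energy bound \cref{eq:energy-property} applies (it does not need to have a valid $b(\gamma')$, since $\gamma'$ enters only through its weight in the partition function). This is routine given the flexibility afforded by the remark following \cref{eq:energy-property}, which allows $\gamma$ to be any integer-labeled connected bond set. A slightly cleaner alternative is to always remove the \emph{lower} arc $\alpha^-$ and to treat the complementary regime $|\alpha^-| < k'/2$ separately by an analogous Peierls count, exploiting that then $|\alpha^+| > k'/2$ forces $D$ to be sufficiently elongated that one gains a factor $e^{-(\beta - C)k'/2}$ from $\alpha^+$ instead.
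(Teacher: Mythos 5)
Your proposal follows the same Peierls-surgery philosophy as the paper, and your bookkeeping of the three terms in $q^n_{U;F,\mu}$ is in the right spirit (the paper likewise gains a factor $\frac12|\partial D_b|$ in length and checks that the $\Phi_U$-interaction and the area tilt move favourably). However, the \emph{specific} surgery you choose --- deleting a half-arc $\alpha$ of $\partial D_{b(\gamma)}$ --- differs from the paper's, which \emph{replaces} the whole portion of $\gamma$ inside $\overline{D_{b(\gamma)}}$ between $t_1$ and $t_2$ by a shortest path of length $\Delta(D_{b(\gamma)})$ within $D_{b(\gamma)}$. That difference is not cosmetic, and creates two genuine gaps:

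\emph{Connectivity and membership in $\cP_V(A,B)$.} After deleting $\alpha$, the bond set $\gamma'=\gamma\setminus\alpha$ need not be connected: $\gamma$ is allowed to have bonds lying inside $D_{b(\gamma)}$ whose only attachment to the rest of $\gamma$ is through an interior vertex of $\alpha$, and these become orphaned by your surgery. Since $\cP_V(A,B)$ consists of \emph{connected} disagreement polymers, $\gamma'$ may then leave $\cP_V(A,B)$, and the final Peierls step, which requires $\sum_{\omega\in\mathsf{Image}(\f)} q^n_{U;F,\mu}(\omega)\leq Z^n_{V,U;F,\mu}(A,B)$, collapses. This is precisely what the paper's ``replace by a shortest path from $t_1$ to $t_2$'' surgery is designed to avoid: by construction it removes everything inside the closure of $D_{b(\gamma)}$ and stitches the endpoints back together with a single path, so the image stays in $\cP_V(A,B)$. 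Your concluding remark flags a worry about validity ``when $\alpha=\alpha^+$'', but the issue is not merely that $b(\gamma')$ might change (that is harmless); the concrete obstruction is that $\gamma'$ must remain a connected labelled disagreement polymer, and deletion does not guarantee this. Your ``cleaner alternative'' (always remove $\alpha^-$ and treat $|\alpha^-|<k'/2$ separately) inherits the same problem and does not resolve it.

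\emph{The $\hatpi$-product inside $\sE^*_\beta$.} You assert that ``the energy drops by at least $\beta'|\alpha|$ \ldots by \cref{eq:energy-property}''. But \cref{eq:energy-property} is a one-sided lower bound on $\sE^*_\beta(\gamma)$, not a statement about differences under a surgery; by \cref{eq:define-E*_beta}, $\sE^*_\beta$ also contains $-\log\prod_i\hatpi_{D_i^\circ}^{h_i}(\phi\geq 0)$, whose change under the map must be controlled separately. In the paper this is dispatched by observing that the map only \emph{removes} some regions $D_i$, $i\geq 2$, without altering the others, so the product only gains factors that are $\leq 1$ and hence $\sE^*_\beta$ can only decrease further. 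You should record the analogous observation for your map --- here it does hold, since deleting $\alpha^-\subset\mathsf{LE}(\gamma)$ (resp.\ $\alpha^+\subset\mathsf{UE}(\gamma)$) absorbs $D_{b(\gamma)}$ into $D_1$ (resp.\ $D_0$), which carry no factor --- but it is a necessary step and should not be subsumed under a citation of \cref{eq:energy-property}.

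Finally, your multiplicity count ``$C^{k'}$'' omits the choice of the two attachment vertices of the removed arc; the paper makes this explicit (an extra $k^2$ factor, harmless since the exponential gain dominates), and you should too if you want a complete argument. Likewise, for the ``in particular'' conclusion no summation is needed: once the first display holds, plugging $k=4$ directly gives $Z(\cdot\mid D_{b(\gamma)}\neq\emptyset)\leq e^{-2(\beta-C)}Z(\cdot)$, which is already $\epsilon_\beta Z(\cdot)$.
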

	\begin{proof}
		We will prove the case of $Z^n_{V, U;F, \mu}$, as the proof holds verbatim for the other cases. By construction, $D_b$ is the union of some $D_i$ for $i \geq 2$ such that one arc of $\partial D_b$ is a subset of $\mathsf{UE}(\gamma)$ and the remaining arc is a subset of $\mathsf{LE}(\gamma)$. Let $t_1, t_2$ be the first and last points on $\mathsf{UE}(\gamma)$ that are intersected by $\partial D_b$. 
		Let $\Delta(D_{b(\gamma)})$ be the length of the shortest path from $t_1$ to $t_2$ that does not exit $D_{b(\gamma)}$. In particular, we have $|\partial D_{b(\gamma)}| \geq 2\Delta(D_{b(\gamma)})$, and so 
		\[|\partial D_{b(\gamma)}| - \Delta(D_{b(\gamma)}) \geq \tfrac12|\partial D_{b(\gamma)}|\,.\]
		Let $\f$ be the map which replaces the portion of $\gamma$ between $t_1$ and $t_2$ by a path attaining length $\Delta(D_{b(\gamma)})$ (which can be chosen arbitrarily --- say by the minimal lexicographic one that does not exit $D_{b(\gamma)}$). Note that this map can only lower $\mathsf{UE}(\gamma)$ so that $\cA_F(\gamma) \leq \cA_F(\f(\gamma))$. Moreover, the map only removes some regions $D_i$ for $i \geq 2$ and does not change any others. Thus, by the definition of $\sE^*_\beta(\gamma)$ and the decay property of $\Phi_U$, we have for some $C > 0$ that
		\begin{equation}\label{eq:delete-Db-energy}q^n_{U; F, \mu}(\gamma) \leq q^n_{U; F, \mu}(\f(\gamma))e^{-(\beta - C)\tfrac12|\partial D_{b(\gamma)}|}\,.
		\end{equation}		
		Moreover, for a given $\omega$ in the image of $\f$, the number of preimages $\gamma$ such that $\f(\gamma) = \omega$ and $|\partial D_{b(\gamma)}(\gamma)| = k$ is bounded above by the number of connected components of bonds of size $k$ times the number of possible points $t_1$. The former is at most $s^k$ for some universal constant $s$, and the latter of which is at most $k^2$ (since we can look at the first intersection of $\omega$ with $x = m$, and $t_1$ is at most distance $k$ from there). Hence, we obtain
		\begin{align}\label{eq:peierls-map-Db}
			Z^n_{V, U; F, \mu}(A, B \mid \partial D_{b(\gamma)}| \geq k) = \sum_{\gamma: |\partial D_{b(\gamma)}| \geq k} q^n_{U; F, \mu}(\gamma) &\leq \sum_{j \geq k} \sum_{\omega \in \mathsf{Image}(\f)}\sum_{\substack{\gamma\in \f^{-1}(\omega):\\|\partial D_{b(\gamma)}| = j}} q^n_{U; F, \mu}(\gamma)\\
			&\leq \sum_{j \geq k} \sum_{\omega \in \mathsf{Image}(\f)}\sum_{\substack{\gamma\in \f^{-1}(\omega):\\|\partial D_{b(\gamma)}| = j}}q^n_{U; F, \mu}(\f(\gamma))e^{-(\beta - C)\tfrac12j}\nonumber\\
			&\leq\sum_{j \geq k} e^{-(\beta - C')\frac12j}Z^n_{V, U; F, \mu}(A, B)\nonumber\\
			&\leq e^{-(\beta - C'')\frac12k}Z^n_{V, U; F, \mu}(A, B)\,.\nonumber\qedhere
		\end{align}
	\end{proof}
	\begin{remark}\label{rem:cut-gamma-in-half-conditional}
		Since the proof of \cref{lem:cut-gamma-in-half} uses a Peierls map type argument, the results can be strengthened if we have more information about the image of the map $\f$. More specifically, suppose we know that for all $\gamma \in \cE_1$, we have $\f(\gamma) \in \cE_2$. Then, in \cref{eq:peierls-map-Db} we can start instead with the sum $\sum_{\gamma: |\partial D_{b(\gamma)}| \geq k, \gamma \in \cE_1} q^n_{U; F, \mu}(\gamma)$ and replace the sum $\sum_{\omega \in \mathsf{Image}(\f)}$ by $\sum_{\omega \in \mathsf{Image}(\f) \cap \cE_2}$. What was previously an upper bound of \[\sum_{\omega \in \mathsf{Image}(\f)} q^n_{U; F, \mu}(\f(\gamma)) \leq Z^n_{V, U; F, \mu}(A, B)\] now becomes \[\sum_{\omega \in \mathsf{Image}(\f) \cap \cE_2}q^n_{U; F, \mu}(\f(\gamma)) \leq Z^n_{V, U; F, \mu}(A, B\mid\cE_2)\,,\] and hence we get 
		\[Z^n_{V, U; F, \mu}(A, B \mid |\partial D_{b(\gamma)}| \geq k, \cE_1) \leq e^{-(\beta - C)k/2}Z^n_{V, U; F, \mu}(A, B \mid \cE_2)\,.\]
	\end{remark}

    Equipped with the previous lemma, we can now take a detour and prove the promised comparison of the surface tensions. Recall the polymer model with weight $\tildeq_{\Z^2}$ given in \cref{eq:def-polymer-weights-tilde} and the resulting surface tension $\tau_\beta$ in \cref{eq:surface-tension-tilde}.
    \begin{proposition}[Comparing $\tau_{\beta, n}$ and $\tau_\beta$]\label{prop:compare-tau}
        Fix $n \geq 0$ and any $\theta$. There exists $\epsilon_\beta \to 0$ as $\beta \to \infty$ such that for sufficiently large $L$, we have
        \begin{equation*}
            |\tau_{\beta, n}(\theta) - \tau_\beta(\theta)| \leq L^{-1+\epsilon_\beta}\,.
        \end{equation*}
    \end{proposition}
    \begin{proof}
        Let $\n$ be the unit vector with angle $\theta$, and let $N$ be such that the point $N\n\in \Z^2$. It suffices to show that
        \begin{equation}\label{eq:comparing-tau}
         \hatZ^n_{\Z^2, \Z^2}(o, N\n)/\tildeZ_{\Z^2, \Z^2}(o, N\n) \geq (1 - L^{-1+\epsilon_\beta})^N\,,
        \end{equation}
        as the above ratio is by definition at most 1. Let $\widetilde\bP_{\Z^2,\Z^2}$ and $\widetilde\bE_{\Z^2,\Z^2}$ denote the measure and expectation for the polymer model with weight $\tildeq_{\Z^2}$ and partition function $\tildeZ_{\Z^2,\Z^2}$. We may rewrite the above ratio as the expectation $\widetilde \bE_{\Z^2, \Z^2}[\prod_{i \geq 2}\hatpi^{h_i}_{D_i^\circ}(\phi_x \geq 0, \forall x \in D_i^\circ)]$, and then condition on the upper and lower envelopes: \[\widetilde \bE_{\Z^2, \Z^2}\Big[\widetilde \bE_{\Z^2, \Z^2}[\prod_{i \geq 2}\hatpi^{h_i}_{D_i^\circ}(\phi_x \geq 0, \forall x \in D_i^\circ)\mid \mathsf{UE}(\gamma) = S_1, \mathsf{LE}(\gamma) = S_2)]\Big]\,.\] Observe that the pockets $\bigcup_{b \in \gamma} D_b$ are determined by $S_1, S_2$. By \cref{lem:cut-gamma-in-half,rem:cut-gamma-in-half-conditional}, we may restrict to envelopes $S_1, S_2$ such that the maximum size of a pocket boundary $|\partial D_b|$ is at most $\log L$ at the cost of $(1-e^{-(\beta - C)\log L})^N$, which can be absorbed into the $\epsilon_\beta$  in the desired bound in \cref{eq:comparing-tau}. Henceforth we will fix such a realization $S_1, S_2$. 
        
        Next we consider the weight of $\gamma$ in the measure
        $\widetilde\bP_{\Z^2,\Z^2}(\cdot \mid \mathsf{UE}(\gamma) = S_1, \mathsf{LE}(\gamma) = S_2)$. Let $\gamma_{\mathsf{com}}$ denote the bonds common to both $S_1$ and $S_2$, and let $\gamma_{\mathsf{poc}}$ be the remaining bonds of $\gamma$. Note that  $\prod_{i \geq 2}\hatpi^{h_i}_{D_i^\circ}(\phi_x \geq 0, \forall x \in D_i^\circ)$ is a random variable (with randomness in $h_i$) that is measurable w.r.t. $\gamma_{\mathsf{poc}}$. Enumerate over the pockets by $D^{(j)}$, and let $\gamma_{\mathsf{poc}}^{(j)}$ denote the subset of bonds $b$ such that $D_b = D^{(j)}$. Let $E(D^{(j)})$ denote the set of bonds contained in $D^{(j)}$. It is straightforward to verify that the event $\{\mathsf{UE}(\gamma) = S_1, \mathsf{LE}(\gamma) = S_2\}$ is equivalent to having the following:
        \begin{enumerate}
            \item Every bond $b$ of $\gamma_{\mathsf{com}}$ has $D_b = \emptyset$,

            \item $\partial D^{(j)} \subset \gamma^{(j)}_\mathsf{poc} \subset \partial D^{(j)} \cup E(D^{(j)})$.\label{it:gamma-contain-boundary}
        \end{enumerate}
        
        By definition, every disagreement polymer $\gamma \in \cP_{\Z^2, \Z^2}(o, N\n)$ with upper/lower envelopes $S_1, S_2$ must necessarily assign a gradient of 1 across bonds of $\gamma_{\mathsf{com}}$. Hence, in the weight $\tildeq(\gamma)$ we can factor out some terms depending only on the bonds of $S_1$ and $S_2$ to obtain that for the appropriate renormalization $Z(S_1, S_2)$ we have
        \[\tildeq(\gamma) = \frac{1}{Z(S_1, S_2)}\exp\Big(-\sE_\beta(\gamma_\mathsf{poc}) + \sum_{\substack{\sfW \cap \Delta_{\gamma_{\mathsf{poc}}} \neq \emptyset\\ \sfW \subset \bigcup_{b \in \gamma}D_b}}\Phi(\sfW)\Big)\,.\]
        
        We can then split the weight as a product over the pockets, obtaining
        \[\tildeq(\gamma) = \frac{1}{Z(S_1, S_2)}\prod_j\exp\Big(-\sE_\beta(\gamma_\mathsf{poc}^{(j)}) + \sum_{\substack{\sfW \cap \Delta_{\gamma_{\mathsf{poc}}^{(j)}} \neq \emptyset\\ \sfW \subset D^{(j)}}}\Phi(\sfW)\Big)\,.\]
        Together with the condition that $\gamma^{(j)}_\mathsf{poc} \subset \partial D^{(j)} \cup E(D^{(j)})$, this is exactly the weight appearing in \cref{prop:CE-law} with $V = D^{(j)}$ and $\xi = \xi_j$ being $H+1-n$ on the arc of $\partial D^{(j)}$ consisting of bonds of $S_1$, and $H-n$ on the arc consisting of bonds of $S_2$. In other words, the marginal of $\widetilde\bP_{\Z^2,\Z^2}(\cdot \mid \mathsf{UE}(\gamma) = S_1, \mathsf{LE}(\gamma) = S_2)$ on $\gamma_\mathsf{poc}$ is a product measure over disagreement polymers $\gamma_j$ in $\hatpi^{\xi_j}_{D^{(j)}}$ with an extra conditioning on $\partial D^{(j)} \subset \gamma_j$ from \cref{it:gamma-contain-boundary}; denote such a measure by $\widetilde\pi^{\xi_j}_{D^{(j)}}$ for simplicity, with expectation $\widetilde\E^{\xi_j}_{D^{(j)}}$. We thus have
        \begin{align*}
            \widetilde \bE_{\Z^2, \Z^2}\big[\prod_{i \geq 2}\hatpi^{h_i}_{D_i^\circ}&(\phi_x \geq 0, \forall x \in D_i^\circ) \mid \mathsf{UE}(\gamma) = S_1, \mathsf{LE}(\gamma) = S_2\big] \\&= \prod_j \widetilde \E^{\xi_j}_{D^{(j)}}\big[\prod_{i \geq 2}\hatpi^{h_i}_{D_i^\circ}(\phi_x \geq 0, \forall x \in D_i^\circ)\big]\,.
        \end{align*}
        The right side is simply the conditional expectation $\prod_j \widetilde \E^{\xi_j}_{D^{(j)}}\big[ \widetilde \E^{\xi_j}_{D^{(j)}}\big[\one_{\phi_x \geq 0, \forall x \in D^{(j)}} \mid \cF_{\gamma_j}\big]\big]$ where $\cF_{\gamma_j}$ is the $\sigma$-algebra generated by $\gamma_j$. By the tower law, this is $\prod_j \widetilde\pi^{\xi_j}_{D^{(j)}}(\phi_x \geq 0,\ \forall x \in D^{(j)})$, which by a union bound is at least $\prod_j (1 - |D^{(j)}|\max_{x \in D^{(j)}}\widetilde\pi^{\xi_j}_{D^{(j)}}(\phi_x < 0))$. Since there are at most $N$ pockets, to show \cref{eq:comparing-tau} it remains to prove the upper bound
        \[|D^{(j)}|\max_{x \in D^{(j)}}\widetilde\pi^{\xi_j}_{D^{(j)}}(\phi_x < 0) \leq L^{-1+\epsilon_\beta}\,.\]
    
    Recalling now the assumption that $|\partial D^{(j)}| \leq \log L$, we then have $|D^{(j)}| \leq (\log L)^2$. Conditioning on $\partial D^{(j)} \subset \gamma_j$ is equivalent to asking that interior boundary vertices are either $\neq H+1-n$ or $\neq H-n$, depending on if the vertex is adjacent to the boundary condition of $H+1-n$ or $H-n$ respectively of $\xi_j$. This in turn occurs with probability $e^{\epsilon_\beta |\partial D^{(j)}|} \leq L^{\epsilon_\beta}$ for some $\epsilon_\beta$, so that $\widetilde\pi^{\xi_j}_{D^{(j)}}(\phi_x < 0) \leq \hatpi^{\xi_j}_{D^{(j)}}(\phi_x < 0)L^{\epsilon_\beta}$. By monotonicity, we can then lower the boundary conditions to $H-n$ and apply \cref{lem:UB-LD-any-point} to obtain the desired upper bound.
    \end{proof}
    
    We next show that we are, with high probability, in a setting where we have cluster expansion. 
    \begin{lemma}\label{lem:cluster-expansion-likely} Let $\gamma$ be the disagreement polymer from $A$ to $B$ in the setting above \cref{thm:growth-gadget}, recalling also $Q, \xi, F$ there. Let $\cG_1$ be the event that $|F| - |D_0 \cap F| - |D_1 \cap F| \leq L^{5/6}$ and $|\gamma| \leq 2N_n^{2/3}(\log L)^a$. Then, $\pi_{Q; F}^\xi(\cG_1^c) \leq e^{-c L^{1/24}}$. Moreover, if $\cG^\sqcup$ is the event that $\fL_n$ stays above a given horizontal line $\cH$ above the bottom of $Q$, then $\pi_{Q; F}^\xi(\cG_1^c \mid \cG^\sqcup) \leq e^{-c L^{1/24}}$.
	\end{lemma}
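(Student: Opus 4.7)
The plan is to prove both clauses of $\cG_1$ — the length bound $|\gamma|\leq 2N_n^{2/3}(\log L)^a$ and the pocket-area bound $|F|-|D_0\cap F|-|D_1\cap F|\leq L^{5/6}$ — by Peierls-type comparisons based on the cluster-expansion representation of \cref{prop:CE-law-with-floor}, and then observe that the same arguments survive conditioning on $\cG^\sqcup$.

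\smallskip
\noindent\emph{Length bound.} First I will show $\pi_{Q;F}^\xi(|\gamma|\geq 2N_n^{2/3}(\log L)^a)\leq e^{-cL^{2/3-o(1)}}$. Fix a reference polymer $\gamma_0$ given by a minimal simple lattice path from $A'$ to $B'$ with all gradient labels equal to $1$; by \cref{lem:boundary-construction}\,\cref{it:good-boundary-points} and the assumption that the slope of $\overline{A'B'}$ is bounded, $|\gamma_0|\leq \|A'-B'\|_1+O((\log L)^5)\ll 2N_n^{2/3}(\log L)^a$. Using \cref{eq:CE-with-area}, I upper-bound $\pi_{Q;F}^\xi(\gamma)$ via $\sE_\beta(\gamma)\geq \beta'|\gamma|$, $|\sum_{\sfW}\Phi(\sfW)|\leq C|\gamma|$, and $\prod_i\hatpi_{D_i^\circ}^{h_i}(\cdots)\leq 1$. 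For the partition-function lower bound I use $\gamma_0$, for which \cref{lem:area-estimate} applied to $F$ (whose hypotheses hold since $|F|\leq L(\log L)^{a+b}$ and $|\partial F|\leq L^{2/3+o(1)}$) bounds the floor product below by $\exp(-\hatpi_\infty(\phi_o<-(H-n-1))|F|)\geq \exp(-C(\log L)^{a+b})$, using $|F|/N_n\leq (\log L)^{a+b}$ (recall $F$ has dimensions $N_n^{2/3}(\log L)^a\times N_n^{1/3}(\log L)^b$). A standard Peierls count — connected rooted bond sets of length $k$ number at most $C^k$, with label multiplicities absorbed into $\sE^*_\beta$ — yields a geometric series in $e^{-c\beta k}$ for $k\geq 2N_n^{2/3}(\log L)^a$ that dwarfs the $e^{O((\log L)^{a+b})}$ corrections.

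\smallskip
\noindent\emph{Area bound.} Restricting to the event $\cG_\ell:=\{|\gamma|\leq 2N_n^{2/3}(\log L)^a\}$ already controlled above, I bound the pocket area $\cA:=\sum_{j\geq 2}|D_j\cap F|$. Since $\sC(\overline{A'B'})\subset Q$ by \cref{lem:boundary-construction}\,\cref{it:good-boundary-points}, $Q$ is an $\epsilon$-nice domain, and \cref{lem:cpts-length} yields at least $\delta_0\|A'-B'\|_1=\Theta(N_n^{2/3}(\log L)^a)$ cone-points of $\gamma$ off an event of probability $e^{-c\beta N_n^{2/3}(\log L)^a}$. Cone-points are cut-points, so every finite region $D_j$ ($j\geq 2$) is contained in a single irreducible component $\Gamma^{(i)}$, and by isoperimetry $\sum_j|D_j|\leq \tfrac1{16}\sum_i|\Gamma^{(i)}|^2$. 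By \cref{prop:OZ-normalization}, after Ornstein--Zernike renormalization the irreducible components are essentially i.i.d.\ with exponential tails $\Pr(|\Gamma^{(i)}|\geq k)\leq Ce^{-\nu_g\beta k}$. A union bound over the $O(L^{2/3+o(1)})$ pieces gives $\Pr(\max_i|\Gamma^{(i)}|\geq L^{1/8})\leq e^{-cL^{1/8}/2}$, and on the complement $\sum_i|\Gamma^{(i)}|^2\leq L^{1/8}\cdot|\gamma|\leq L^{1/8+2/3+o(1)}<L^{5/6}$. All collected error probabilities are $\ll e^{-cL^{1/24}}$, yielding $\pi_{Q;F}^\xi(\cG_1^c)\leq e^{-cL^{1/24}}$.

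\smallskip
\noindent\emph{Conditional version on $\cG^\sqcup$.} The event $\cG^\sqcup$ forces the upper envelope of $\gamma$ to lie above $\cH$, effectively confining the main contour of $\gamma$ to the sub-domain $Q':=Q\cap\{y\geq \cH\}$. Since $A',B'$ lie above $\cH$ on $\cG^\sqcup$, the cigar $\sC(\overline{A'B'})$ remains contained in $Q'$, so $Q'$ is still $\epsilon$-nice for the same pair $(A',B')$. The length comparison is rerun with a reference path $\gamma_0'$ chosen just above $\cH$ (same length scale $O(N_n^{2/3}(\log L)^a)$), and the cone-point decomposition applies verbatim. The main bookkeeping subtlety throughout the proof — and the place I expect the most care will be needed — is ensuring the polylogarithmic area-tilt corrections $O((\log L)^{a+b})$ from the cluster expansion are dominated by the energetic savings (of order $L^{1/8}$, hence with huge room to spare over the target $L^{1/24}$); beyond this, everything is a routine combination of the cluster-expansion technology of Section~2 with the cone-point machinery of Section~3.
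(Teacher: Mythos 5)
The proposal is correct in spirit but takes a genuinely different route for the pocket-area bound. The paper's own proof is a direct Peierls argument: after first reducing $\pi^\xi_{Q;F}$ to the polymer measure $\hatq^n_Q$ (lower-bounding the floor product by $e^{-L^{o(1)}}$, as you also do), it shows that if $\sum_{j\geq 2}|D_j\cap F|\geq L^{5/6}$ while $|\gamma|\leq 2N_n^{2/3}(\log L)^a$, then by pigeonhole some single pocket $D_b$ has $|D_b|\geq L^{1/12}$, hence $|\partial D_b|\geq 4L^{1/24}$ by isoperimetry, and the pocket-deleting map $\f_b$ (as in \cref{lem:cut-gamma-in-half}) pays a factor $e^{-(\beta-C)L^{1/24}}$; a union bound over bonds $b$ adjacent to $F$ then gives the claim. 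You instead invoke the cone-point / Ornstein--Zernike machinery (\cref{lem:cpts-length}, \cref{prop:OZ-normalization}) to decompose $\gamma$ into irreducible pieces with exponential tails, and bound pocket areas by isoperimetry on the pieces. This is essentially the strategy the paper uses later in \cref{lem:bdd} for a different purpose, and it can be made to work here; but it carries nontrivial bookkeeping — tilting by $\sfh^n_{\sfy}$, switching interaction domains between $\hatq^n_Q$ and $\hatq^n_{\Z^2}$ via \cref{lem:cigar-likely-in-Z2,lem:strip-to-Z2-interactions}, and handling the bridge conditioning in the union bound — which the proposal glosses over, whereas the paper's Peierls map avoids all of it and is self-contained at this stage of the argument.

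A second, more concrete imprecision is in the conditional version. You argue that on $\cG^\sqcup$ the polymer is "effectively confined to $Q'=Q\cap\{y\geq\cH\}$", so $Q'$ is $\epsilon$-nice and the cone-point decomposition applies verbatim. But $\cG^\sqcup$ constrains only $\fL_n$, not the full disagreement polymer $\gamma$: pockets and lower envelope of $\gamma$ can and do dip below $\cH$ on $\cG^\sqcup$, so $\gamma$ does not live in $Q'$ and this substitution is not justified. The correct (and simpler) fix for the conditional probability bound along your route is to observe that $\hat\bP(\cG^\sqcup)\geq 1-o(1)$ in the no-area-tilt polymer measure (the cigar $\sC(\overline{A'B'})$ is well above $\cH$, so $\gamma\subset\sC$ w.h.p. forces $\fL_n\subset\gamma$ above $\cH$), whence conditioning costs at most a $(1+o(1))$ factor. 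The paper does something stronger — it shows the Peierls map $\f_b$ preserves $\cG^\sqcup$ via a shortest-path-stays-above-$\cH$ argument — which yields the partition-function bound $Z^n_{Q,Q;F,1}(\cG_1^c,\cG^\sqcup)\leq e^{-cL^{1/24}}Z^n_{Q,Q;F,1}(\cG^\sqcup)$ that is needed again in \cref{eq:good-event-CE-G'} (where the area tilt is present and a naive $\hat\bP(\cG^\sqcup)\geq c$ argument would not suffice).
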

	\begin{proof}
        By \cref{prop:CE-law-with-floor} and comparing with the definition of $\hatq^n_Q$, we have
        \[\pi^\xi_{Q; F}(\gamma) \propto \hatq^n_Q(\gamma)\prod_{i = 0, 1}\hatpi^{H+1-n-i}_{D_i^\mathsf{o}}(\phi_x \geq 0, \forall x \in D_i^\mathsf{o} \cap F)\,,\]
        with the same statement holding for $\pi^\xi_{Q; F}(\gamma \mid \cG^\sqcup)$ except restricting to $\gamma \in \cG^\sqcup$. To obtain a rough lower bound on the probabilities, a standard computation using FKG and monotonicity in the boundary conditions obtains
        \begin{align*}\hatpi^{H+1-n-i}_{D_i^\mathsf{o}}(\phi_x \geq 0, \forall x \in D_i^\mathsf{o} \cap F) 
        \geq \frac12\exp\bigg(-\sum_{x \in D_i^\mathsf{o} \cap F} \hatpi^0_{D_i^\mathsf{o}}(\phi_x < -(H-n))\bigg)\,.
        \end{align*}
        Using that $\hatpi^0_{D_i^\mathsf{o}}(\phi_x < -(H-n)) \leq L^{-1 + o(1)}$ and the fact that $|F| \leq N_n(\log L)^{a+b}$, we obtain
        \[\prod_{i = 0, 1}\hatpi^{H+1-n-i}_{D_i^\mathsf{o}}(\phi_x \geq 0, \forall x \in D_i^\mathsf{o} \cap F) \geq \frac12e^{-N_n(\log L)^{a+b}L^{-1 + o(1)}} \geq e^{-L^{o(1)}}\,.\]
        Hence, changing the weights to $\hatq^n_Q(\gamma)$ can only tilt the measure by a multiplicative factor of $e^{L^{o(1)}}$, so it suffices to prove that 
        \begin{align}\label{eq:cluster-expansion-likely-reduction}&\hatZ^n_{Q, Q}(A, B \mid \cG_1^c) \leq e^{-cL^{1/24}}\hatZ^n_{Q, Q}(A, B)\,, \mbox{ and }\\ \label{eq:cluster-expansion-likely-reduction-G'}
        &\hatZ^n_{Q, Q}(A, B \mid \cG_1^c, \cG^\sqcup) \leq e^{-cL^{1/24}}\hatZ^n_{Q, Q}(A, B \mid \cG^\sqcup)\,.
        \end{align}
        Now in the polymer model, the bound on the length
		\[\hatZ^n_{Q, Q}(A, B \mid |\gamma| > 2N_n^{2/3}(\log L)^a) \leq e^{-cN_n^{2/3}(\log L)^a}\hatZ^n_{Q, Q}(A, B)\]
		follows by an easy Peierls argument (take the map that sends $\gamma$ to a minimal length path from $A$ to $B$). The same holds when intersecting with $\cG^\sqcup$, as mapping to the minimal length path stays within the set $\cG^\sqcup$.
		
		Bounding the area of $|F| - |D_0 \cap F| - |D_1 \cap F| = \bigcup_{i \geq 2} D_i \cap F$ follows essentially the same proof of \cref{lem:cut-gamma-in-half} but simpler since there is no area term anymore. By the above, we can assume that $|\gamma| \leq 2N_n^{2/3}(\log L)^a$. Thus, if $|F| - |D_0 \cap F| - |D_1 \cap F| \geq L^{5/6}$, then by the pigeonhole principle we must have $|D_b| \geq L^{1/12}$ for some $b \in \gamma$ which is adjacent to a vertex in $F$. So, we fix any bond $b$ adjacent to a vertex in $F$, and let $\gamma$ be such that $b \in \gamma$. As in \cref{lem:cut-gamma-in-half}, let $t_1, t_2$ be the first and last points on $\mathsf{UE}(\gamma)$ that are intersected by $D_b$, and let $\Delta(D_b)$ be the length of the shortest path from $t_1$ to $t_2$ that does not exit $D_b$ so that $|\partial D_b| - \Delta(D_b) \geq \tfrac12|\partial D_b|$.
		Let $\f_b$ be the map which replaces the portion of $\gamma$ between $t_1$ and $t_2$ by a path attaining length $\Delta(D_b)$. By the definition of $\cE^*_\beta(\gamma)$ and the decay property of  $\Phi_Q$, we have \begin{equation}\label{eq:delete-Db-energy-pi}\hatq^n_Q(\gamma) \leq \hatq^n_Q(\f_b(\gamma))e^{-(\beta - C)\tfrac12|\partial D_b|}\,.
		\end{equation}
		Moreover, for a given $\omega$ in the image of $\f_b$, the number of preimages $\gamma$ such that $\f_b(\gamma) = \omega$ and $|\partial D_b(\gamma)| = k$ is bounded above by the number of connected components of bonds of size $k$, which is at most $s^k$ for some universal constant $s$. Finally, observe that by the isoperimetric inequality, $|D_b| \geq L^{1/12}$ implies $|\partial D_b| \geq 4L^{1/24}$. Hence, we obtain
		\begin{align*}
			\sum_{\substack{\gamma:b \in \gamma,\\|D_b| \geq L^{1/12}}} \hatq^n_Q(\gamma) &\leq \sum_{k \geq 4L^{1/24}} \sum_{\omega \in \mathsf{Image}(\f_b)}\sum_{\substack{\gamma\in \f_b^{-1}(\omega):\\|\partial D_b(\gamma)| = k}} \hatq^n_Q(\gamma)\\
			&\leq \sum_{k \geq 4L^{1/24}} \sum_{\omega \in \mathsf{Image}(\f_b)}\sum_{\substack{\gamma\in \f_b^{-1}(\omega):\\|\partial D_b(\gamma)| = k}}\hatq^n_Q(\f_b(\gamma))e^{-(\beta - C)\tfrac12k}\\
			&\leq\sum_{k \geq 4L^{1/24}} e^{-(\beta - C')\frac12k}\hatZ^n_{Q, Q}(A, B)\\
			&\leq e^{-(\beta - C'')2L^{1/24}}\hatZ^n_{Q, Q}(A, B)\,.
		\end{align*}
		We can then take a union bound over the at most $O(N_n(\log L)^{a+b})$ bonds adjacent to a vertex in $F$ to conclude the proof of \cref{eq:cluster-expansion-likely-reduction}. 
        
        By the logic in \cref{rem:cut-gamma-in-half-conditional}, we also obtain \cref{eq:cluster-expansion-likely-reduction-G'} as long as we can show that if $\gamma \in \cG^\sqcup$, then $\f_b(\gamma) \in \cG^\sqcup$, or equivalently that for every $D_b$, there exists a shortest path from $t_1$ to $t_2$ that stays above $\cH$. Without loss of generality, assume $\cH$ is at a half-integer height, and suppose for contradiction that there is no such path. Then, take a shortest path $P$, and let $\sfu, \sfv$ be two consecutive points of $P$ on $\cH$ which mark a drop of $P$ below $\cH$ (i.e., in between $\sfu, \sfv$, $P$ lies strictly below $\cH$). Observe that the region sandwiched between $P$ and the arc of $\mathsf{UE}(\gamma)$ between $t_1, t_2$ is contained in $D_b$. Since $\gamma \in \cG^\sqcup$, $\mathsf{UE}(\gamma)$ must be at or above $\cH$, and in particular at or above the line segment $\overline{uv}$. Moreover, the arc of $P$ from $u$ to $v$ must be strictly below $\overline{uv}$. Hence,  $\overline{uv}$ is in the sandwiched region, and is therefore in $D_b$. This means that the path $P'$ which replaces the arc of $P$ between $u$ and $v$ by $\overline{uv}$ is in $D_b$. Since $|P'| < |P|$, this is a contradiction.
	\end{proof}
	
	The above lemma shows we can restrict our attention to a set of ``good'' $\gamma$ for which we have cluster expansion. We will also want to then consider the partition function with respect to these cluster expansion weights, and it will be convenient to have the partition function sum over all possible $\gamma \subset Q$, and not just the ``good'' $\gamma$ from \cref{lem:cluster-expansion-likely}. For the same reason as above, this difference is negligible. That is, for $Q, F, A, B$ as above \cref{thm:growth-gadget} and for any constant $\mu > 0$, we claim that 
    \begin{align}\label{eq:good-event-CE}&Z^n_{Q, Q; F, \mu}(A, B \mid \cG_1^c) \leq e^{-cL^{1/24}}Z^n_{Q, Q; F, \mu}(A, B)\,, \mbox{ and }\\
    \label{eq:good-event-CE-G'}&Z^n_{Q, Q; F, \mu}(A, B \mid \cG_1^c, \cG^\sqcup) \leq e^{-cL^{1/24}}Z^n_{Q, Q; F, \mu}(A, B \mid \cG^\sqcup)\,.
    \end{align}
    Indeed, the effect of the area term can only tilt the measure by a factor of $O(\exp(-(\log L)^{a+b}))$. Without the area term however, we reduce to showing that 
    \begin{align*}&\hatZ^n_{Q, Q}(A, B \mid \cG_1^c) \leq e^{-cL^{1/24}}\hatZ^n_{Q, Q}(A, B)\,, \mbox{ and }\\
    &\hatZ^n_{Q, Q}(A, B \mid \cG_1^c, \cG^\sqcup) \leq e^{-cL^{1/24}}\hatZ^n_{Q, Q}(A, B \mid \cG^\sqcup)\,,
    \end{align*}
    which were proven above in \cref{eq:cluster-expansion-likely-reduction,eq:cluster-expansion-likely-reduction-G'}.

    \begin{lemma}\label{lem:Db-smaller-log2}
        Let $\gamma$ be the disagreement polymer from $A$ to $B$ in the setting above \cref{thm:growth-gadget}. Let $\cG_2$ be the event that for all $b \in \gamma$, $|\partial D_b| \leq \log L$. Then, $\pi^\xi_{Q; F}(\cG_2^c) \leq e^{-c\beta\log L}$. Moreover, if $\cG^\sqcup$ is the event that $\fL_n$ stays above a given horizontal line $\cH$ above the bottom of $Q$, then $\pi_{Q; F}^\xi(\cG_2^c \mid \cG^\sqcup) \leq e^{-c\beta\log L}$.
    \end{lemma}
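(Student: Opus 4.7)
The plan is to mirror the Peierls-map argument used for \cref{lem:cluster-expansion-likely}, applying it uniformly in $b$ rather than to a single pre-specified component. As in that proof, the floor-event factors $\hatpi^{h_i}_{D_i^\circ}(\phi_x \geq 0,\ \forall x\in D_i^\circ\cap F)$ appearing in $\pi^\xi_{Q;F}(\gamma)$ lie in $[e^{-L^{o(1)}}, 1]$, so replacing $\pi^\xi_{Q;F}$ by the polymer measure with weights $\hatq^n_Q$ costs only a multiplicative factor $e^{L^{o(1)}}$, which is negligible against the target $e^{-c(\log L)^2}$. Thus the task reduces to showing
\[
\hatZ^n_{Q,Q}(A,B \mid \cG_2^c) \leq e^{-c(\log L)^2}\hatZ^n_{Q,Q}(A,B),
\]
and the analogous bound with $\cG^\sqcup$ inserted on both sides.

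For a fixed dual bond $b$ inside $Q$, I will reuse the map $\f_b$ from the proof of \cref{lem:cluster-expansion-likely}: if $\gamma \ni b$, let $t_1,t_2$ be the first and last points where $\mathsf{UE}(\gamma)$ meets $\partial D_b$, and replace the arc of $\gamma$ between them by a shortest path inside $D_b$. The inequality $|\partial D_b|\geq 2\Delta(D_b)$, combined with the decay of $\Phi_Q$ and the observation that deleting the finite regions inside $D_b$ only decreases $\sE^*_\beta$, gives the same pointwise bound
\[
\hatq^n_Q(\gamma) \;\leq\; \hatq^n_Q(\f_b(\gamma))\, e^{-(\beta-C)|\partial D_b|/2}
\]
that appears in \cref{eq:delete-Db-energy-pi}. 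The number of $\gamma \in \f_b^{-1}(\omega)$ with $|\partial D_b(\gamma)|=k$ is at most $\mathrm{poly}(k)\cdot s^k$, so summing over $\gamma \ni b$ with $|\partial D_b|\geq (\log L)^2$ yields a contribution of $e^{-(\beta-C)(\log L)^2/2}\hatZ^n_{Q,Q}(A,B)$.

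The last step is a union bound. Since every bond of $\gamma$ lies in $Q$ and $|Q|=O(L^2)$, there are $O(L^2)$ candidate bonds $b$, and writing $\cG_2^c$ as the union over them of $\{b\in\gamma,\ |\partial D_b|>(\log L)^2\}$ gives a prefactor $L^2=e^{2\log L}$ that is absorbed by the Peierls gain. For the conditional version, I will argue exactly as at the end of \cref{lem:cluster-expansion-likely}: on $\cG^\sqcup$ the shortest path from $t_1$ to $t_2$ inside $D_b$ can be chosen above $\cH$, since otherwise one could swap any sub-excursion below $\cH$ for the corresponding segment of $\cH$ (which must also lie in $D_b$ by the same sandwich argument) and produce a strictly shorter path; hence $\f_b$ preserves $\cG^\sqcup$ and \cref{rem:cut-gamma-in-half-conditional} applies. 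There is no real obstacle; the only mild subtlety is that the Peierls gain needs to beat the $O(L^2)$ union-bound loss, which it does because $(\log L)^2$ is superlogarithmic in $L$.
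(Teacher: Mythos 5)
There is a genuine gap in the very first reduction, and it reflects a conceptual misunderstanding of why \cref{prop:CE-law-with-floor} (and in particular \cref{lem:area-estimate}) is needed here.

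You propose to drop the floor factors $\hatpi^{h_i}_{D_i^\circ}(\phi_x\geq 0,\ \forall x\in D_i^\circ\cap F)$ for $i=0,1$ wholesale, at a multiplicative cost of $e^{L^{o(1)}}$, and claim this is ``negligible against the target $e^{-c(\log L)^2}$.'' It is not. The $L^{o(1)}$ in this part of the paper is of size $\exp\bigl(c\log L/\log\log L\bigr)$ (coming from the $\exp(c\beta H^2/\log^2 H)$ and $\exp(c\beta n H/\log H)$ error terms in \cref{eq:LD-ratio,eq:LD}), which, as a number, is far larger than $(\log L)^2 = \exp(2\log\log L)$. So $e^{L^{o(1)}}\cdot e^{-c(\log L)^2}\to\infty$: the crude removal of the floor destroys the bound completely. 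This reduction is valid in \cref{lem:cluster-expansion-likely} only because the target there is $e^{-cL^{1/24}}$ and $L^{1/24}\gg L^{o(1)}$; it fails once the target shrinks to polylog scale, which is precisely the point of the present lemma.

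The paper's route avoids this loss: it first conditions on the good event $\cG_1$ (\cref{lem:cluster-expansion-likely}), which costs only an additive $e^{-cL^{1/24}}$, and then invokes \cref{prop:CE-law-with-floor} (specifically \cref{eq:CE-with-area-cond}), which gives a \emph{$(1+o(1))$}-factor comparison---not an $e^{L^{o(1)}}$-factor comparison---between $\pi^\xi_{Q;F}(\cdot\mid\cG_1)$ and the area-tilted polymer measure with partition function $Z^n_{Q,Q;F,1}$. This is exactly the payoff of the sharp floor estimate \cref{lem:area-estimate}: on $\cG_1$ the floor contribution is $(1+o(1))\exp(-\hatpi_\infty(\phi_o<-(H-n-1))|F|)$, a $\gamma$-independent constant that cancels in the normalization, leaving only the manageable area tilt $|D_0\cap F|/N_n$. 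Your choice to invoke the bound \cref{eq:delete-Db-energy-pi} (the $\hatq$-version) rather than \cref{eq:delete-Db-energy} (the $q^n_{U;F,\mu}$-version) mirrors this same issue: because the paper retains the area tilt, it must use the latter bound, which works because the map $\f_b$ only lowers $\mathsf{UE}(\gamma)$, hence only increases $\cA_F$ and thus only helps the image's weight. The union bound over bonds $b$ and the preservation of $\cG^\sqcup$ under $\f_b$ are correct as you state them and match the paper.
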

    \begin{proof}
        By \cref{lem:cluster-expansion-likely}, it suffices to upper bound $\pi^\xi_{Q; F}(\cG_2^c, \cG_1)$. This in turn is upper bounded by $\pi^\xi_{Q; F}(\cG_2^c \mid \cG_1)$, whence we can use \cref{prop:CE-law-with-floor} for the equality and \cref{eq:good-event-CE} for the inequality to write
        \[\pi^\xi_{Q; F}(\cG_2^c \mid \cG_1) = (1+o(1))\frac{Z^n_{Q, Q; F, 1}(A, B \mid \cG_2^c, \cG_1)}{Z^n_{Q, Q; F, 1}(A, B \mid \cG_1)} \leq (1+o(1))\frac{Z^n_{Q, Q; F, 1}(A, B \mid \cG_2^c)}{Z^n_{Q, Q; F, 1}(A, B)}\,.\]
        For a fixed bond $b \in Q$, we have
        \begin{equation*}
            Z^n_{Q, Q; F, 1}(A, B \mid |\partial D_b| > \log L) \leq e^{-c\beta\log L}Z^n_{Q, Q; F, 1}(A, B)
        \end{equation*}
        by applying yet another map argument which is a minor modification of the ones in \cref{lem:cut-gamma-in-half,lem:cluster-expansion-likely} (in particular, we would apply the map $\f_b$ from \cref{lem:cluster-expansion-likely}, but use the bound \cref{eq:delete-Db-energy} from \cref{lem:cut-gamma-in-half} since there is an area term). The proof now concludes after taking a union bound over all $b \in Q$. The case conditioning on $\cG^\sqcup$ follows similarly as $\f_b$ preserves the event $\cG^\sqcup$, just intersect every partition function above with the event $\cG^\sqcup$ and use \cref{eq:good-event-CE-G'} instead of \cref{eq:good-event-CE}. We record for later use the end result that 
        \begin{equation}\label{eq:G2-likely-given-G'}
            Z^n_{Q, Q; F, 1}(A, B \mid \cG_2^c, \cG^\sqcup) \leq e^{-c\beta\log L}Z^n_{Q, Q; F, 1}(A, B \mid \cG^\sqcup)\,.\qedhere
        \end{equation}
    \end{proof}
    
	Our last item before proving the theorem is to provide a bound on partition functions in terms of the functions $\sG^n_\mu(\ell,\theta)$, defined as
	\begin{equation}\label{eq:define-G-mu}
		\sG^n_\mu(\ell, \theta) = -\tau_\beta(\theta)\ell + \frac{\ell^3\mu^2}{24(\tau_\beta(\theta) + \tau_\beta''(\theta))N_n^2}\,.
	\end{equation}
	Let $\delta = 1/10$. Let $\cR_k$ be the set of pairs of points $(A', B')$ in $Q$ with distance $\ell_{A', B'} \leq 2^{k(1-\delta)}N_n^{2/3}$, angle $\theta_{A', B'}$, and horizontal distance $M_{A', B'}$. Without loss of generality, assume $A'$ is to the left of $B'$.
	Finally, let $\fT_{A', B'}$ be the set of points either to the left of the vertical line $x = A'_1 + 2(\log L)^2$ or to the right of the vertical line $x = B'_1 - 2(\log L)^2$. Fix any $\mu \geq 0$, and define the weights
	\begin{equation*}
		w^n(\gamma) = \exp\big(-\sE^*_\beta(\gamma) + \fI_{\Z^2}(\gamma) + e^{-\beta}|\gamma \cap \fT_{A', B'}| + \frac{\mu}{N_n}\cA_F(\gamma)\big)\,,
	\end{equation*}
	where the energy $\sE^*_\beta(\gamma)$ is defined as in \cref{eq:define-E*_beta} with respect to boundary conditions $H+1-n$ and $H-n$. The new term $e^{-\beta}|\gamma \cap \fT_{A', B'}|$ in the weight $w^n(\gamma)$ should be thought of as a buffer term that allows us to switch between different interaction functions $\fI_U(\gamma)$ (see \cref{cor:growth-bound-partition-functions}). It will also be important for later use to note that the exact dimensions of $Q$ and $F$ (in particular, the choice of $a, b$ above \cref{thm:growth-gadget}) play no role in the following proposition.
    \begin{proposition}\label{prop:bound-partition-functions}
		Let $Z^n_{A', B'} = \sum_{\gamma \in \cP_{\Z^2}(A', B')}w^n(\gamma)$. Then, for all $k \leq \frac{a}{1-\delta}\log_2\log L$, uniformly over all $A', B' \in \cR_k$, we have
		\begin{equation*}
			Z^n_{A', B'} \leq z_ke^{\sG^n_\mu(\ell_{A', B'}, \theta_{A', B'}})
		\end{equation*}
		where $z_1 = e^{c(\log L)^2}$, $z_k = z_1^{2^{k-1}}$.
	\end{proposition}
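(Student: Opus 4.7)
The plan is to prove the bound by induction on $k$, with the inductive step being a splitting/Laplace-type argument that exploits the product structure of the polymer weights and the strict convexity of $\tau_{\beta,n}$.

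For the base case $k=1$, the range $\ell_{A',B'}\le 2^{1-\delta}N_n^{2/3}$ makes the correction $\ell^3\mu^2/N_n^2$ of order $O(1)$, so it suffices to absorb all lower-order contributions into the factor $z_1=e^{c(\log L)^2}$. First, I would drop the buffer term and the area term: the surface-tension asymptotic of \cref{prop:DKS-propositions}\cref{it:DKS-convergence-rate} gives $\hatZ^n_{\Z^2,\Z^2}(A',B')\le C\,M_{A',B'}^{-1/2}e^{-\tau_{\beta,n}(\theta_{A',B'})\ell_{A',B'}}$. The buffer $e^{-\beta}|\gamma\cap\fT_{A',B'}|$ is tamed by the exponential tails of $|\gamma|$ from \cref{lem:cpts-length}, giving a multiplicative $O(1)$. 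The area term is controlled by the maximum-height estimate of \cref{prop:DKS-propositions}\cref{it:DKS-large-deviations}: on the scale $\ell=N_n^{2/3}$ the overshoot past $N_n^{1/3}(\log L)^2$ has Gaussian tails, so integrating $\exp(\tfrac{\mu}{N_n}\cA_F(\gamma))$ against the polymer law yields a factor $\le e^{c(\log L)^2}$, which matches $z_1$.

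For the inductive step, assume the bound holds at level $k$ and take $A',B'\in\cR_{k+1}$ with $\ell_{A',B'}\approx 2^{(k+1)(1-\delta)}N_n^{2/3}$. The key is to split at a cut-point $M$ close to the midpoint of $\overline{A'B'}$. By \cref{lem:cpts-length}, a typical $\gamma$ has $\Theta(\ell_{A',B'})$ cut-points, and by \cref{prop:DKS-propositions}\cref{it:DKS-large-deviations} the first one past the midpoint lies at horizontal distance $O(\sqrt{\ell_{A',B'}}\log L)$ from it with overwhelming probability. By the product structure \cref{eq:product-structure} applied to the weight $w^n$, I would write
\[
Z^n_{A',B'}\;\le\;\sum_{M}\;Z^n_{A',M}\,Z^n_{M,B'}\,\exp\!\Big(\tfrac{\mu}{N_n}\,\mathrm{tri}(A',M,B')\Big),
\]
where $\mathrm{tri}(A',M,B')$ accounts for the area of the triangle bounded by $\overline{A'M}$, $\overline{MB'}$, $\overline{A'B'}$ which $\cA_F$ acquires when the reference chord is changed. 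Since the doubling factor $2^{-\delta}$ gives room for each half to lie in $\cR_k$, the inductive hypothesis applies to both factors.

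The heart of the proof is then a Laplace estimate
\[
\sum_{M}\exp\!\Big(\sG^n_\mu(\ell_{A',M},\theta_{A',M})+\sG^n_\mu(\ell_{M,B'},\theta_{M,B'})+\tfrac{\mu}{N_n}\mathrm{tri}(A',M,B')\Big)\;\le\; P(\ell_{A',B'})\,e^{\sG^n_\mu(\ell_{A',B'},\theta_{A',B'})},
\]
for a polynomial $P$. By strict convexity of $\tau_{\beta,n}$ (\cref{prop:surface-tension-properties}\cref{it:tau-strict-convexity}), the exponent is maximized when $M$ lies on $\overline{A'B'}$; the second-order expansion along the transverse direction produces a Gaussian with variance $\propto\ell_{A',B'}/(\tau_{\beta,n}+\tau_{\beta,n}'')$, and it is precisely this calculation that reconstructs the cubic area-tilt correction $\ell^3\mu^2/\big(24(\tau_{\beta,n}+\tau_{\beta,n}'')N_n^2\big)$ as the Legendre dual of the area tilt against the surface-tension cost. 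The prefactor $P(\ell_{A',B'})$ is polynomial and is absorbed into the squaring $z_k^2=z_{k+1}$, since each squaring gains a factor $e^{c(\log L)^2}$ while $P\le (\log L)^{O(1)}$.

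The main obstacle is closing the induction uniformly across scales: I have to verify that the Gaussian/Laplace error at every doubling step is genuinely polynomial and not worse than the squared loss budget $z_k^2/z_k$, which in turn restricts the range of $k$ to $k\le \frac{a}{1-\delta}\log_2\log L$ (this is where the hypothesis of the proposition enters). A secondary technical point is handling the interaction term $\fI_{\Z^2}$ when cutting at $M$: the cluster expansion \emph{factorizes} across a cut-point (\cref{eq:product-structure}), but the buffer term $e^{-\beta}|\gamma\cap\fT_{A',B'}|$ must be redistributed between the two halves. Since $\fT_{A',B'}$ intersects $\fT_{A',M}\cup\fT_{M,B'}$ only in windows of size $O((\log L)^2)$ near $A', M, B'$, the mismatch is at most $e^{O((\log L)^2)}$ and is again absorbed into $z_1$.
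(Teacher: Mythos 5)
Your proposal is correct and follows essentially the same route as the paper's proof: a base case controlled via \cref{prop:DKS-propositions}\cref{it:DKS-convergence-rate} plus Cauchy--Schwarz to separate the area and buffer contributions, and an inductive doubling step that splits the polymer near the middle vertical line and reconstructs the cubic area-tilt correction $\ell^3\mu^2/(24(\tau_{\beta,n}+\tau_{\beta,n}'')N_n^2)$ via a Taylor expansion of the surface tension against a Gaussian sum over the cut-point height, using strict convexity of $\tau_{\beta,n}$. Two cosmetic differences worth noting: the paper cuts at the first $\mathsf{UE}(\gamma)$-crossing of the middle vertical line and handles the possible bubble $D_{b(\gamma)} \neq \emptyset$ via \cref{lem:cut-gamma-in-half}, rather than at a cone-point of $\Gamma$; and in the base case the $e^{c(\log L)^2}$ factor in $z_1$ is driven by the buffer term $\widehat\bE_{A',B'}[e^{2e^{-\beta}|\gamma\cap\fT_{A',B'}|}]$, while the area contribution is only polynomial in $M_{A',B'}$ --- you have these attributions swapped, but the total budget matches.
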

	\begin{proof}
		Following \cite[Section 5.3]{CLMST16}, this will be a proof by induction.  Let $
		\widehat\bP_{A', B'}$ denote the probability measure on $\gamma$ given by 
		\begin{equation*}
			\widehat\bP_{A', B'}(\gamma) = \frac{\hatq^n_{\Z^2}(\gamma)}{\hatZ^n_{\Z^2, \Z^2}(A', B')}\,,
		\end{equation*}
		and let $\widehat\bE_{A', B'}$ denote the expectation. 
		We begin with the base case taking $k = 1$. Fix $A', B'$ distance $\ell_{A', B'} \leq 2N_n^{2/3}$ apart. 
		Let $h_{\max}(\gamma)$ be the maximum height reached by $\gamma$ with respect to the line segment $\overline{A'B'}$. By a union bound over \cref{it:DKS-large-deviations} of \cref{prop:DKS-propositions}, we obtain
		\begin{equation}\label{eq:hmax-bound}
			\widehat\bP_{A', B'}(h_{\max}(\gamma) \geq j) \leq CM_{A', B'}^{3/2}e^{-c(j \wedge j^2/M_{A', B'})}\,.
		\end{equation}
		By an easy Peierls argument mapping $\gamma$ to a minimal length path from $A'$ to $B'$, we also have
		\begin{align*}
			\widehat\bP_{A', B'}(|\gamma| \geq 2\ell_{A', B'} + j) \leq e^{-(\beta - C)j}\,.
		\end{align*}
		Now, by Cauchy--Schwarz, we write 
		\begin{align*}
			Z^n_{A', B'} &= \hatZ^n_{\Z^2, \Z^2}(A', B')\widehat\bE_{A', B'}\bigg[e^{\tfrac{\mu}{N_n}\cA_F(\gamma) + e^{-\beta}|\gamma \cap \fT_{A', B'}|}\bigg] \\
			&\leq \hatZ^n_{\Z^2, \Z^2}(A', B')\widehat\bE_{A', B'}\bigg[e^{\tfrac{2\mu}{N_n}\cA_F(\gamma)}\bigg]\widehat\bE_{A', B'}\bigg[e^{2e^{-\beta}|\gamma \cap \fT_{A', B'}|}\bigg]\,.
		\end{align*}
		By the fact that $\cA_F(\gamma) \leq |\gamma|\big(h_{\max}(\gamma) \wedge N_n^{1/3}(\log L)^b\big)$, we have \[\widehat\bE_{A', B'}\bigg[e^{\tfrac{2\mu}{N_n}\cA_F(\gamma)}\bigg] \leq \widehat\bE_{A', B'}\bigg[e^{|\gamma|o(1)}\one_{|\gamma| \geq 5N_n^{2/3}}\bigg] + \widehat\bE_{A', B'}\bigg[e^{\tfrac{10\mu}{N_n^{1/3}}h_{\max}(\gamma)}\bigg]\,.\]
        The first term on the right can then be bounded by a constant by the tail bound on $|\gamma|$. The second term can be bounded by $CM^{1/2}_{A', B'}$ by the tail bound on $h_{\max}(\gamma)$.
        
        Similarly, the tail bound on $|\gamma \cap \fT_{A', B'}|$ given by \cref{it:DKS-large-deviations} of \cref{prop:DKS-propositions} implies that $\widehat\bE_{A', B'}\big[e^{2e^{-\beta}|\gamma \cap \fT_{A', B'}|}\big] \leq e^{c(\log L)^2}$. Moreover, by \cref{it:DKS-convergence-rate} of \cref{prop:DKS-propositions} and \cref{prop:compare-tau}, we have 
		\begin{equation*}
			\hatZ^n_{\Z^2, \Z^2}(A', B') \leq Ce^{-\tau_\beta(\theta_{A', B'})\ell_{A', B'}}\,.
		\end{equation*}
		Putting the above together implies the claim for $k = 1$.
		
		Now we show the induction step. Let $(A', B') \in \cR_{k+1}$. Let $b(\gamma)$ be the first horizontal bond where  $\mathsf{UE}(\gamma)$ hits the middle vertical line between $A'$ and $B'$. Let $C = (C_1, C_2)$ be the left endpoint of $b(\gamma)$. Define $\Delta C_2 = C_2 - \frac{A'_2 + B'_2}{2}$, and first consider the case where $|\Delta C_2| \geq N_n^{1/3}(\log L)^{3a}$. We have
		\[Z^n_{A', B'}(|\Delta C_2| \geq N_n^{1/3}(\log L)^{3a}) = \hatZ^n_{\Z^2, \Z^2}(A', B')\widehat\bE_{A', B'}\bigg[\one_{|\Delta C_2| \geq N_n^{1/3}(\log L)^{3a}}e^{\tfrac{\mu}{N_n}\cA_F(\gamma) + e^{-\beta}|\gamma \cap \fT_{A', B'}|}\bigg]\,.\]
		Hence, by another application of Cauchy-Schwarz, it suffices to bound the $\widehat\bP_{A', B'}$ probability that $|\Delta C_2| \geq N_n^{1/3}(\log L)^{3a}$, which is at most $e^{-c(\log L)^{5a}}$ by \cref{eq:hmax-bound}. Hence, we altogether have
		\begin{equation}\label{eq:UB-j-large}Z^n_{A', B'}(|\Delta C_2| \geq N_n^{1/3}(\log L)^{3a}) \leq e^{-\tau_\beta(\theta_{A', B'})\ell_{A', B'}-c(\log L)^{5a}}\,.
		\end{equation}
		
		For the case that $|\Delta C_2| \leq N_n^{1/3}(\log L)^{3a}$, we recall the event $D_{b(\gamma)}$ from \cref{lem:cut-gamma-in-half} where we have (noting \cref{rem:cut-gamma-in-half-conditional})
		\[Z^n_{A', B'}(D_{b(\gamma)} = \emptyset, |\Delta C_2| \leq N_n^{1/3}(\log L)^{3a}) \geq (1 - \epsilon_\beta)Z^n_{A', B'}(|\Delta C_2| \leq N_n^{1/3}(\log L)^{3a})\,.\] 
		On the event $D_{b(\gamma)} = \emptyset$, we can split up $\gamma = \gamma_1 \circ \gamma_2$ as the segments of $\gamma$ before and after $C$. Write
		\[\cA_F(\gamma) = \cA_F^0 + \cA_F(\gamma_1) + \cA_F(\gamma_2)\,,\]
		where $\cA_F(\gamma_1)$ is the signed area above $\gamma_1$ w.r.t. the line segment $\overline{A'C}$, $\cA_F(\gamma_2)$ is the signed area above $\gamma_2$ w.r.t. the line segment $\overline{CB'}$, and $\cA_F^0$ is the signed area of the triangle $A'CB'$ intersected with $F$ (signed negative when $C$ is above $\overline{A'B'}$). Note that $|\cA_F^0| \leq \tfrac{\ell_{A', B'}}{2}|\Delta C_2|\cos(\theta_{A', B'})$. Now, define the correction term
		\begin{equation}\label{eq:correction-term}
		    \Delta\fI_{\Z^2}(\gamma_1, \gamma_2) :=  \fI_{\Z^2}(\gamma_1) + \fI_{\Z^2}(\gamma_2) - \fI_{\Z^2}(\gamma)\,.
		\end{equation}
		By the decay properties of $\Phi$ in \cref{eq:decay-bound-W-gamma}, we have that $|\Delta\fI_{\Z^2}(\gamma_1, \gamma_2)| \leq e^{-(\beta - C)}|\gamma_2 \cap \fT_{C, B'}|$. Hence, we have the bound
		\[Z^n_{A', B'}(D_{b(\gamma)} = \emptyset, |\Delta C_2| \leq N_n^{1/3}(\log L)^{3a}) \leq \sum_{\gamma: |\Delta C_2| \leq N_n^{1/3}(\log L)^{3a}}e^{\frac{1}{N_n}|\cA_F^0|}w^n(\gamma_1)w^n(\gamma_2)\,.\]
		Since both $(A', C)$ and $(C, B')$ are in $\cR_k$, by the induction hypothesis, the above display is bounded by
		\[z_k^2\sum_{|\Delta C_2| \leq N_n^{1/3}(\log L)^{3a}} \exp\bigg(\frac{\mu\ell_{A', B'}}{2N_n}|\Delta C_2|\cos(\theta_{A', B'}) + \sG^n_\mu(\ell_{A', C}, \theta_{A', C}) + \sG^n_\mu(\ell_{C, B'}, \theta_{C, B'})\bigg)\,.\]
		By a Taylor expansion in $\theta$ followed by a Gaussian summation and the convexity of $\tau_\beta$ (see \cite[Prop. 5.11]{CLMST16} for details), this is in turn bounded above by 
		\[C(\beta)\sqrt{\ell_{A', B'}}z_k^2\exp(\sG^n_\mu(\ell_{A', B'}, \theta_{A', B'}))\]
		for some constant $C(\beta)$. In total, we have proved that
		\[Z^n_{A', B'}(|\Delta C_2| \leq N_n^{1/3}(\log L)^{3a}) \leq (1+\epsilon_\beta)C(\beta)\sqrt{\ell_{A', B'}}z_k^2\exp(\sG^n_\mu(\ell_{A', B'}, \theta_{A', B'}))\,,\]
		which together with \cref{eq:UB-j-large}, after increasing the constant $c$ in the definition of $z_1$ to absorb the $C(\beta)\sqrt{\ell_{A', B'}}$ term above, concludes the induction step.    
    \end{proof}
	\begin{corollary}\label{cor:growth-bound-partition-functions}
		For any $A', B'$ in $Q$, we have
		\begin{equation*}
			Z^n_{Q, \Z^2; F, \mu}(A', B') = \sum_{\gamma \in \cP_Q(A', B')} q^n_{\Z^2; F, \mu}(\gamma) \leq \exp(\sG^n_\mu(\ell_{A', B'}, \theta_{A', B'}) + O((\log L)^{2a}))\,.
		\end{equation*}
		Let $W$ be any region containing the infinite vertical strip between the lines $x = A'_1 + (\log L)^2$ and $x = B'_1 - (\log L)^2$. The same bound holds if we replace the weight $q^n_{\Z^2; F, \mu}(\gamma)$ with $q^n_{W; F, \mu}(\gamma)$, or with 
		\[\exp\big(-\sE^*_\beta(\gamma)+ \frac{\mu}{N_n}\cA_F(\gamma) + \fI_W(\gamma) -\Delta\fI_W(\gamma, \gamma_\mathsf{left})\big)\]
		uniformly over all $\gamma_{\mathsf{left}} \in \cP_Q(A, A')$, with $\Delta\fI_W(\gamma, \gamma_\mathsf{left})$ defined as in \cref{eq:correction-term}.
	\end{corollary}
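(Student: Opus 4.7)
The plan is to derive all three bounds from \cref{prop:bound-partition-functions} by comparing each of the stated weights to $w^n(\gamma)$ and exploiting the buffer term $e^{-\beta}|\gamma \cap \fT_{A', B'}|$, which was built into $w^n$ precisely to absorb the small errors produced when the cluster interactions are restricted from $\Z^2$ to a set $W$, or when a cut-point correction $\Delta\fI_W$ is inserted. Since $A', B' \in Q$ forces $\ell_{A', B'} \leq C N_n^{2/3}(\log L)^a$, the smallest integer $k$ with $(A', B') \in \cR_k$ satisfies $k = \frac{a}{1-\delta}\log_2\log L + O(1)$; with $\delta = 1/10$ this gives $2^{k-1} = O((\log L)^{10a/9})$, hence $\log z_k = O((\log L)^{2 + 10a/9}) = O((\log L)^{2a})$, using $a > 4$. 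This supplies the $O((\log L)^{2a})$ factor asserted in the corollary.

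For the first weight $q^n_{\Z^2; F, \mu}(\gamma)$, the buffer is nonnegative so $q^n_{\Z^2; F, \mu}(\gamma) \leq w^n(\gamma)$ pointwise, and $\cP_Q(A', B') \subset \cP_{\Z^2}(A', B')$, yielding $Z^n_{Q, \Z^2; F, \mu}(A', B') \leq Z^n_{A', B'}$; the bound then follows from \cref{prop:bound-partition-functions}. For $q^n_{W; F, \mu}(\gamma)$, note that
\[ \log\tfrac{q^n_{W; F, \mu}(\gamma)}{q^n_{\Z^2; F, \mu}(\gamma)} = \fI_W(\gamma) - \fI_{\Z^2}(\gamma) = -\sum_{\substack{\sfW \cap \nabla_\gamma \neq \emptyset \\ \sfW \not\subset W}} \Phi(\sfW;\gamma), \]
so the only relevant clusters are those meeting $\nabla_\gamma$ and failing to be contained in $W$. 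Since $W \supset \{A'_1 + (\log L)^2 \leq x \leq B'_1 - (\log L)^2\}$, any such cluster meeting $\nabla_\gamma$ at a bond outside $\fT_{A', B'}$ must extend at least distance $(\log L)^2$ to reach $W^c$; the decay in \cref{it:phi-decay-bound} of \cref{def:phi-w-gamma} then yields
\[ |\fI_W(\gamma) - \fI_{\Z^2}(\gamma)| \leq e^{-(\beta-C)}|\gamma \cap \fT_{A', B'}| + e^{-c(\log L)^2}|\gamma|. \]
The second term is absorbed by $\sE^*_\beta(\gamma) \geq \beta'|\gamma|$ at essentially no cost, while the first is dominated by the buffer of $w^n(\gamma)$. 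For the third weight involving $-\Delta\fI_W(\gamma, \gamma_{\mathsf{left}})$, the same cluster-decay argument invoked immediately after \cref{eq:correction-term} in the proof of \cref{prop:bound-partition-functions} yields $|\Delta\fI_W(\gamma, \gamma_{\mathsf{left}})| \leq e^{-(\beta-C)}|\gamma \cap \fT_{A', B'}|$ uniformly in $\gamma_{\mathsf{left}}$, since any cluster surviving the cancellation must straddle the junction point $A'$; this is absorbed by the buffer as before.

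The main obstacle is making sure that replacing the buffer constant $e^{-\beta}$ by the slightly larger $e^{-(\beta - C)}$ (arising from summing $|\Phi|$ over clusters containing a given bond) does not break the induction of \cref{prop:bound-partition-functions}. The remedy is to rerun that induction with the buffer constant enlarged by an $O(1)$ factor; since $\beta$ is taken arbitrarily large, the buffer remains small and all tail estimates go through, with only the implicit constants in the $O((\log L)^{2a})$ exponent affected.
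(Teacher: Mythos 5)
Your proof is correct and takes essentially the same route as the paper's (very brief) argument: compare each weight to $w^n(\gamma)$, observe $\cP_Q(A', B') \subset \cP_{\Z^2}(A', B')$, and invoke \cref{prop:bound-partition-functions} at $k = \frac{a}{1-\delta}\log_2\log L$. You are more explicit than the paper about the mismatch between the buffer constant $e^{-\beta}$ in $w^n$ and the $e^{-(\beta - C)}$ coming from the cluster-decay sums (the paper simply states the $e^{-(\beta - C)}|\gamma \cap \fT_{A', B'}|$ factor and asserts the conclusion); your fix of rerunning the induction of \cref{prop:bound-partition-functions} with the buffer enlarged by $e^{C}$ is correct and is implicit in the paper's argument, since the tail bound from \cref{it:DKS-large-deviations} of \cref{prop:DKS-propositions} that controls $\widehat\bE_{A',B'}[e^{2e^{-\beta}|\gamma \cap \fT_{A',B'}|}]$ tolerates any tilt of order $e^{-c\beta}$ once $\beta$ is large enough. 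Your accounting $\log z_k = O((\log L)^{2 + a/(1-\delta)}) = O((\log L)^{2 + 10a/9}) \leq O((\log L)^{2a})$ (valid since the paper fixes $a > 4 > 9/4$) is also correct.
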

	\begin{proof}
		Note that because of the decay properties of the interaction terms, switching the weights from $\fI_{\Z^2}(\gamma)$ to $\fI_W(\gamma)$ or to $\fI_W(\gamma) - \Delta\fI_W(\gamma, \gamma_{\mathsf{left}})$ comes with a factor of at most $e^{-(\beta - C)}|\gamma \cap \fT_{A', B'}|$. Thus, the corollary follows from \cref{prop:bound-partition-functions} at $k = \frac{a}{1-\delta}\log_2\log L$ and the trivial bound $Q \subset \Z^2$.
	\end{proof}
	
	We are now finally ready to prove \cref{thm:growth-gadget}.
	\begin{proof}[Proof of \cref{thm:growth-gadget}]
        For simplicity, we may assume that in the construction of $Q$ we have $A' = A$ and $B' = B$, as the shift by $(\log L)^5$ is negligible compared to the margin of error provided in the proof of the theorem.
        Define the event $\fU$ as the event that $\fL_n$ intersects the line $x = 0$ at a point higher than $X = (0, Y + \sigma(\log L)^a + \log L)$ (recall the definitions of $Y, \sigma$ from \cref{eq:def-Y-sigma}). Recall the goal of the theorem is to upper bound $\pi^\xi_{Q}(\fU)$. Since $\fU$ is a decreasing event in $\phi$, by FKG we have $\pi^\xi_{Q}(\fU) \leq \pi^\xi_{Q; F}(\fU)$. Then by FKG again we can condition on the decreasing event $\cG^\sqcup$ that $\fL_n$ stays above a horizontal line $\cH$ distanced $2\log L$ above the highest point on the bottom of $Q$,  so that $\pi^\xi_{Q; F}(\fU) \leq \pi^\xi_{Q; F}(\fU \mid \cG^\sqcup)$.
            
        By \cref{lem:cluster-expansion-likely,lem:Db-smaller-log2}, we have $\pi^\xi_{Q; F}(\cG_1^c \mid \cG^\sqcup) \leq e^{-cL^{1/24}}$ and $\pi^\xi_{Q; F}(\cG_2^c \mid \cG^\sqcup) \leq e^{-c\beta\log L}$. We are then left with $\pi^\xi_{Q; F}(\fU, \cG_1, \cG_2 \mid \cG^\sqcup)$, which we upper bound by $\pi^\xi_{Q; F}(\fU \mid \cG_1, \cG_2, \cG^\sqcup)$. Since we are now conditioning on the event $\cG_1$, and also since $\fL$ and hence $\fU, \cG^\sqcup$ can be read from $\gamma$, we can use \cref{prop:CE-law-with-floor} to write the above as the ratio
		\begin{align}\label{eq:growth-initial-fraction}
		    \pi^\xi_{Q; F}(\fU \mid \cG_1, \cG_2, \cG^\sqcup) = (1+o(1))\frac{Z^n_{Q, Q; F, 1}(A, B \mid \fU, \cG_1, \cG_2, \cG^\sqcup)}{Z^n_{Q, Q; F, 1}(A, B\mid \cG_1, \cG_2, \cG^\sqcup)}\,.
		\end{align}

		We first upper bound the numerator of \cref{eq:growth-initial-fraction}. Let $S$ be an infinite vertical strip with sides containing the sides of $Q$. Let $W$ be the extension of $Q$ to $S$ at the top and bottom of $Q$. (As we will see, the exact choice of $S$ and point of extension are irrelevant.) We show we can move from $Q$ interactions to $W$ interactions. Indeed, $\cG_1$ implies $\gamma$ stays a distance $\log L$ from the top of $Q$ (as such an excursion would force $\gamma$ to have length $3N_n^{2/3}(\log L)^a$, coming from $2N_n^{2/3}(\log L)^a$ vertical bonds and $N_n^{1/3}(\log L)^a$ horizontal bonds). Moreover, $\cG_2 \cap \cG^\sqcup$ implies $\gamma$ stays distance $\log L$ away from the bottom of $Q$. Hence, by the decay properties of $\Phi$ and the polynomial bound on the length of $\gamma$, we can replace the weights $q^n_{Q; F, 1}(\gamma)$ with $q^n_{W; F, 1}(\gamma)$ at the cost of $(1+o(1))$ and then disregard the events $\cG_1, \cG^\sqcup$ so that 
        \begin{equation}\label{eq:growth-numerator}
            Z^n_{Q, Q; F, 1}(A, B \mid \fU, \cG_1, \cG_2, \cG^\sqcup) = (1+o(1))Z^n_{Q, W; F, 1}(A, B \mid \fU, \cG_1, \cG_2, \cG^\sqcup) \leq Z^n_{Q, W; F, 1}(A, B \mid \fU, \cG_2)\,.
        \end{equation}
        
        Next, let $b(\gamma)$ be the first horizontal bond where $\mathsf{UE}(\gamma)$ hits the line $x = 0$. Let $\underline\fU$ be the event that $\fL_n$ intersects $x = 0$ above $X - (0, \log L)$. Then, for the map $\f$ in \cref{lem:cut-gamma-in-half}, we have $\gamma \in \fU \cap \cG_2$ implies $\f(\gamma) \in \underline\fU \cap D_{b(\gamma)} = \emptyset$, so that \cref{rem:cut-gamma-in-half-conditional} gives us
		\[Z^n_{Q, W; F, 1}(A, B \mid |\partial D_{b(\gamma)}| > 0, \fU, \cG_2) <\epsilon_\beta Z^n_{Q, W; F, 1}(A, B \mid D_{b(\gamma)} = \emptyset, \underline\fU)\,.\]
		This implies that
        \begin{align}\label{eq:Db-empty}Z^n_{Q, W; F, 1}(A, B \mid \fU, \cG_2) &\leq Z^n_{Q, W; F, 1}(A, B \mid D_{b(\gamma)} = \emptyset, \fU) + \epsilon_\beta Z^n_{Q, W; F, 1}(A, B \mid D_{b(\gamma)} = \emptyset, \underline\fU) \nonumber\\&\leq (1+\epsilon_\beta)Z^n_{Q, W; F, 1}(A, B \mid D_{b(\gamma)} = \emptyset, \fU)\,,
        \end{align}
        where the first inequality uses the previous display, and the second inequality is because $\fU \subset \underline\fU$.
		
		Now let $C = (-\tfrac12, C_2)$ be the left endpoint of $b(\gamma)$. Consider the disjoint union $\{D_{b(\gamma)} = \emptyset\} \cap \underline\fU = \underline\fU_1 \cup \underline\fU_2$, where $\underline\fU_1, \underline\fU_2$ additionally intersect with the events that $C_2$ is smaller or bigger than $\widehat Y = Y + \frac12 \sigma (\log L)^a$, respectively.
		
		We first bound $Z^n_{Q, W; F, 1}(A, B \mid \underline\fU_1)$. Let $\bE_\mu, \bP_\mu$ be the expectation and probability defined for the polymer measure with weights $q^n_{W; F, \mu}$ and partition function $Z^n_{Q, W; F, \mu}(A, B)$. Then, we have
		\begin{align*}
			Z^n_{Q, W; F, 1}(A, B \mid \underline\fU_1) &= \hatZ^n_{Q, W}(A, B)\bE_0[\one_{\underline\fU_1}\exp(\tfrac{1}{N_n}\cA_F(\Gamma))] \\&\leq Ce^{-\tau_\beta(\theta) \ell + O((\log L)^{2a})}\sqrt{Z^n_{Q, W; F, 2}(A, B)/\hatZ^n_{Q, W}(A, B)}\sqrt{\bP_0(\underline\fU_1)}\,,
		\end{align*}
		where the equality is by definition, and then we use Cauchy-Schwarz and \cref{cor:growth-bound-partition-functions} at $\mu = 0$ to get the inequality. We can also use \cref{cor:growth-bound-partition-functions} to upper bound $Z^n_{Q, W; F, 2}(A, B)$ and \cref{lem:strip-to-Z2-interactions} to lower bound $\hatZ^n_{Q, W}(A, B)$, obtaining
		\begin{equation*}
			Z^n_{Q, W; F, 2}(A, B)/\hatZ^n_{Q, W}(A, B) \leq e^{C(\log L)^{3a} + C(\log L)^2}\,.
		\end{equation*}
		Moreover, the event $\underline\fU_1$ implies that $\gamma$ hits the line $x = 0$ once below $\widehat Y$ and once above $Y$, which means there are two points on the vertical line distanced $\frac12\sigma(\log L)^a \geq C(\beta, \theta)N_n^{1/3}(\log L)^{3a/2}$ apart. In the $\hatZ^n_{\Z^2, \Z^2}$ measure, this has probability $e^{-cN_n^{1/3}}$ by \cref{it:DKS-large-deviations} of \cref{prop:DKS-propositions}. By \cref{lem:rare-events-changing-Z} with \cref{lem:cigar-likely-in-Z2,lem:strip-to-Z2-interactions} as the inputs, we have the same for our $\bP_0$ measure:
		\begin{equation*}
			\bP_0(\underline\fU_1) \leq e^{-cN_n^{1/3}}\,.
		\end{equation*}
		Altogether, we obtain the upper bound
		\begin{equation}\label{eq:Z1U1-bound}
			Z^n_{Q, W; F, 1}(A, B \mid \underline\fU_1) \leq Ce^{-\tau_\beta(\theta)\ell}e^{-cN_n^{1/3}}\,.
		\end{equation}
		
		Next we bound $Z^n_{Q, W; F, 1}(A, B \mid \underline\fU_2)$. We will sum over possible $C_2 \geq \widehat Y$. Let $\gamma = \gamma_1 \circ \gamma_2$ be the decomposition of $\gamma$ into components before and after $C$. Note that this decomposition is now well-defined because we are on the event $D_{b(\gamma)} = \emptyset$. Note also that even though $C$ is not necessarily a cut-point, the fact that $D_{b(\gamma)} = \emptyset$ still implies that the energy naturally decomposes as $\sE^*_\beta(\gamma) = \sE^*_\beta(\gamma_1) + \sE^*_\beta(\gamma_2)$. 
		As in the proof of \cref{prop:bound-partition-functions}, we will need the correction term
		\begin{equation*}
			\Delta\fI_W(\gamma_1, \gamma_2) = \fI_W(\gamma_1) + \fI_W(\gamma_2) - \fI_W(\gamma_1 \circ \gamma_2)\,.
		\end{equation*}
		We can also write $\cA_F(\gamma) = \cA_F^0 + \cA_F(\gamma_1) + \cA_F(\gamma_2)$, where $\cA_F(\gamma_1)$ is the signed area above $\gamma_1$ with respect to the line segment $\overline{AC}$, $\cA_F(\gamma_2)$ is defined analogously with respect to $\overline{CB}$, and $\cA_F^0$ is the signed area of the triangle $ACB$ intersected with $F$ (signed as positive when $C$ is below $\overline{AB}$ and negative when $C$ is above it). We obtain
		\begin{align*}
			Z^n_{Q, W; F, 1}(A, B \mid \underline\fU_2) &\leq \sum_{y \geq \widehat Y}\sum_{\gamma\,:\; C_2 = y}e^{\frac{\cA_F^0}{N_n}}e^{-\sE^*_\beta(\gamma_1)+\fI_W(\gamma_1)+\frac{\cA_F(\gamma_1)}{N_n}}e^{-\sE^*_\beta(\gamma_2)+\fI_W(\gamma_2)-\Delta\fI_W(\gamma_1, \gamma_2)+\frac{\cA_F(\gamma_2)}{N_n}}\\
			&=: \sum_{y \leq \widehat Y}e^{\frac{1}{N_n}\cA_F^0}\sum_{\gamma_1: C_2 = y}e^{-\sE^*_\beta(\gamma_1)+\fI_W(\gamma_1)+\frac1{N_n}\cA_F(\gamma_1)}Z_{\gamma_1, y}\,.
		\end{align*}
		But now we can apply \cref{cor:growth-bound-partition-functions} twice to bound the above by 
		\begin{equation*}
			e^{C(\log L)^{2a}}\sum_{y \leq \widehat Y}\exp\Big(\frac1{N_n}\cA_F^0+ \sG^n_1(\ell_{AC}, \theta_{AC}) + \sG^n_1(\ell_{CB}, \theta_{CB})\Big)\,.
		\end{equation*}
		We can further decompose this sum into $\Sigma_1 + \Sigma_2$, where $\Sigma_1$ sums over values of $y \geq N_n^{1/3}(\log L)^{3a}$ and $\Sigma_2$ sums over the remaining values of $N_n^{1/3}(\log L)^{3a} \geq y \geq \widehat Y$.
		The case of $\Sigma_1$ is easy to bound since we have a large negative signed area from $\cA_F^0$. Indeed, by definition we have 
		\begin{equation}\label{eq:bound-G-weak}
			\sG^n_\mu(\ell, \theta) = -\tau_\beta(\theta)\ell + O((\log L)^{3a})\,.
		\end{equation} 
		However, the area term is $\frac{1}{N_n}\cA_F^0 \leq -c(\log L)^{4a}$ for some $c > 0$, using here that $b \geq 3a$ so that the triangle $ACB$ is contained in $F$ when $y = N_n^{1/3}(\log L)^{3a}$. Hence, by the convexity of $\tau$ and \cref{eq:bound-G-weak}, we have
		\begin{equation}\label{eq:Sigma1-bound}
			\Sigma_1 \leq \exp(-\tau_\beta(\theta)\ell - c(\log L)^{4a}) = \exp(\sG^n_1(\ell, \theta) - c(\log L)^{4a})\,.
		\end{equation}
		
		To bound $\Sigma_2$, we wish to do a Taylor expansion on the angles in all the surface tension terms. Through such a computation, combined with the fact that $\cA_F^0 = \frac12N_n^{2/3}(\log L)^ay$ (since in this regime of $y$, the triangle $ACB$ is contained in $F$) and $\ell\cos(\theta) = N_n^{2/3}(\log L)^a$, we eventually obtain
		\begin{align}\label{eq:taylor-expansion-theta}
			\Sigma_2 \leq (1+o(1))e^{\sG^n_1(\ell, \theta)}\sum_{y \in [\widehat Y, N_n^{1/3}(\log L)^{3a}]}e^{-\frac{(y - Y)^2}{2\sigma^2}}\,,
		\end{align}
		where we recall the definitions of $Y$ and $\sigma$ in \cref{eq:def-Y-sigma}. (See the computations leading up to \cite[Eq. 5.11]{CLMST16}, where the same computation was done, for details. A further elaboration of the steps there can be found in, e.g., \cite[Sec.~3.1]{GreenbergIoffe2005}.) We can now interpret the sum as the probability that a Gaussian with mean $Y$ and variance $\sigma^2$ lies in the interval $[ \widehat Y, N_n^{1/3}(\log L)^{3a}]$. Recall we have defined $\widehat Y = Y + \frac12\sigma(\log L)^a$, so that this probability is $\exp(-c(\log L)^{2a})$ for some $c > 0$, resulting in the upper bound
		\begin{equation}\label{eq:Sigma2-bound}
			\Sigma_2 \leq \exp(\sG^n_1(\ell,\theta) - c(\log L)^{2a})\,.
		\end{equation}
		Combining \cref{eq:growth-numerator,eq:Db-empty,eq:Z1U1-bound,eq:Sigma1-bound,eq:Sigma2-bound}, we obtain as our final upper bound for the numerator of  \cref{eq:growth-initial-fraction}
		\begin{equation}\label{eq:final-UB-numerator}
			Z^n_{Q, Q; F, 1}(A, B \mid \fU, \cG_2)  \leq \exp(\sG^n_1(\ell, \theta) - c'(\log L)^{2a})\,.
		\end{equation}
		
		We now turn to getting a lower bound for the denominator, which we recall is \begin{equation*}
			Z^n_{Q, Q; F, 1}(A, B \mid \cG_1, \cG_2, \cG^\sqcup)\,.
		\end{equation*}
        First note that by \cref{eq:good-event-CE-G',eq:G2-likely-given-G'}, we have
        \begin{equation}\label{eq:LB-exclude-G1-G2}
            Z^n_{Q, Q; F, 1}(A, B \mid \cG_1, \cG_2, \cG^\sqcup) \geq (1-o(1))Z^n_{Q, Q; F, 1}(A, B \mid \cG^\sqcup)\,.
        \end{equation}
		The lower bound in \cite{CLMST16} consisted mostly of using the proof of \cite[Lemma A.6]{MartinelliToninelli10}. We summarize these ideas together here and show the crucial modification needed in our setting (though for details that are unchanged, we will refer the reader to \cite{MartinelliToninelli10}). Begin by defining the optimal curve
		\begin{equation}
			\gamma_\mathsf{opt}(x) = \overline{AB}(x) + \frac{(x-A_1)(B_1 - x)}{2 N_n(\tau_\beta(\theta) + \tau_\beta''(\theta))\cos^3(\theta)}\,.
		\end{equation}
		Then, we can take a linear approximation $\sL$ of $\gamma_\mathsf{opt}$ consisting of $\log L$ many line segments. This linear approximation is such that the width of the line segments start at $O(N_n^{2/3}(\log L)^a)$ for the segment containing the middle of $\gamma_\mathsf{opt}$, and decreases by a factor of two with each segment going left/right from there (as was done, e.g., in \cite[Lemma~A.6]{MartinelliToninelli10}\footnote{The proof of \cite[Lemma~A.6]{MartinelliToninelli10} refers to \cite[Thm.~4.16]{DKS92}. The upper bound stated in that theorem was later found to be erroneous, but only the lower bound is required for that proof.}; we reproduce that argument here for completeness). Now let $T$ be any line segment. Denote its endpoints by $A_T, B_T$, its length by $\ell_T$, its angle by $\theta_T$, and its horizontal length by $M_T$. Recall the definition of the cigar shape $\sC(T)$ from \cref{def:cigar-shape}.  
		For brevity, define the union of the cigars as 
		\[\sC = \bigcup_{T \in \sL} \sC(T)\,.\]
		Note that the cigars are constructed such that all $\gamma$ contained in $\sC$ are automatically in $\cG^\sqcup$. We can then lower bound \cref{eq:LB-exclude-G1-G2} by restricting to polymers which lie inside the union of these cigars,
		\begin{equation}\label{eq:LB-restrict-to-cigars}
			\sum_{\gamma \in \cP_Q(A, B) \cap \cG^\sqcup}\exp(-\sE^*_\beta(\gamma) + \fI_Q(\gamma) + \frac{1}{N_n}\cA_F(\gamma)) \geq \sum_{\gamma \in \cP_Q(A, B)\cap \sC}\exp(-\sE^*_\beta(\gamma) + \fI_Q(\gamma) + \frac{1}{N_n}\cA_F(\gamma))\,.
		\end{equation}
		Now forget about the main area term $\cA_F(\gamma)$ for now. The shapes of the cigars imply that we can ignore interactions between the cigars at a cost of $e^{C(\log L)^2}$. Having done so, we can approximate by a product of polymer partition functions:
		\begin{align}\label{eq:cigar-into-products}
			\sum_{\gamma \in \cP_Q(A, B)\cap \sC}\exp(-\sE^*_\beta(\gamma) + \fI_Q(\gamma)) &\geq e^{-C(\log L)^2}\prod_{T \in \sL} \sum_{\gamma_T \in \cP_{\sC(T)}(A_T, B_T)} \!\!\exp(-\sE^*_\beta(\gamma_T) + \fI_Q(\gamma_T))\\
			&= e^{-C(\log L)^2}\prod_{T \in \sL}\hatZ^n_{\sC(T), Q}(A_T, B_T)\,.\nonumber
		\end{align}
		Now with the exception of the first and last few cigars close to the sides of $Q$, all of $\sC(T)$ will be distance $\geq (\log L)^2$ from $\partial Q$ so that
        \[|\log \hatZ^n_{\sC(T), Q}(A_T, B_T) - \log \hatZ^n_{\sC(T), \Z^2}| \leq o(1)\,.\]
        For the cigars within distance $O((\log L)^2)$ from the sides of $Q$, we can still switch $Q$ interactions to $\Z^2$ interactions at a total cost of another $e^{C(\log L)^2}$. By \cref{prop:DKS-propositions} combined with \cref{lem:cigar-likely-in-Z2,prop:compare-tau}, we have
		\begin{equation*}
			\log \hatZ^n_{\sC(T), \Z^2}(A_T, B_T) \geq -\tau_\beta(\theta_T)\ell_T - \tfrac12\log M_T - C\,.
		\end{equation*}
		This implies that \begin{equation}\label{eq:cigar-product-LB}
			\prod_{T \in \sL}\hatZ^n_{\sC(T), Q}(A_T, B_T) \geq \exp(-\sum_{T \in \sL} \tau_\beta(\theta_T)\ell_T-\sum_{T \in \sL}\tfrac12\log M_T - C(\log L))\,.
		\end{equation}
		The sum of the $\log M_T$ terms will be at most $O((\log L)^2)$, as there are $\log L$ many terms contributing at most $O(\log L)$. Finally, as in \cite{CLMST16} we can exchange the sum $\sum_{T \in \sL}\tau_\beta(\theta_T)\ell_T$ by an integral at a cost of $O(\log L)$ (it will be a constant cost for each $T$), resulting in the bound
		\begin{equation}\label{eq:cigar-integral-LB}
			\exp\bigg(-\sum_{T \in \sL} \tau_\beta(\theta_T)\ell_T-\sum_{T \in \sL}C\log M_T\bigg) \geq \exp\bigg(-\int_{\gamma_\mathsf{opt}} \tau_\beta(\theta_s)ds - O((\log L)^2)\bigg)\,.
		\end{equation}
		To summarize, by combining \cref{eq:cigar-into-products,eq:cigar-product-LB,eq:cigar-integral-LB}, we get
		\begin{equation*}
			\sum_{\gamma \in \cP_Q(A, B)\cap \sC}\exp(-\sE^*_\beta(\gamma) + \fI_Q(\gamma)) \geq \exp\Big(-\int_{\gamma_\mathsf{opt}} \tau_\beta(\theta_s)ds - O((\log L)^2)\Big)\,.
		\end{equation*}
		Now to handle the area term $\frac1{N_n}\cA_F(\gamma)$ which we removed from the above analysis, note that for every $\gamma \subset \sC$, we have 
		\begin{equation*}\cA_F(\gamma) = \cA_F(\gamma_\mathsf{opt}) + O(N_n^{2/3}(\log L)^a(N_n^{2/3}(\log L)^a)^{1/2}(\log L)^2) = \cA_F(\gamma_\mathsf{opt}) + O(N_n(\log L)^{3a/2 + 2})\,.\end{equation*}
		Thus, we get
		\begin{equation}\label{eq:LB-cigars-by-integral}
			\sum_{\gamma \in \cP_Q(A, B)\cap \sC}\!\!\!\exp\big(-\sE^*_\beta(\gamma) +  \fI_Q(\gamma)+ \frac{\cA_F(\gamma)}{N_n}\big) \geq e^{(\log L)^{3a/2+2}}e^{\frac{\cA_F(\gamma_\mathsf{opt})}{N_n}}\exp\big(-\int_{\gamma_\mathsf{opt}} \tau_\beta(\theta_s)ds - O((\log L)^2)\big)\,.
		\end{equation}
		We now use the fact that $\gamma_\mathsf{opt}$ deviates at most $O(N_n^{1/3}(\log L)^{2a})$ from $\overline{AB}$, so since $b > 2a$ it never leaves the region $F$. Thus, it is the same as if $\cA_F(\gamma_\mathsf{opt})$ were defined with no area restriction to $F$. Then, as in \cite[Eq~(5.17)]{CLMST16}, a direct computation gives
		\begin{equation*}
			-\int_{\gamma_\mathsf{opt}} \tau_\beta(\theta_s)ds + \frac1{N_n}\cA_F(\gamma_\mathsf{opt}) = \sG^n_1(\ell, \theta) + o(1)\,.
		\end{equation*}
		We can now plug this into \cref{eq:LB-cigars-by-integral} to obtain
		\begin{equation}\label{eq:final-LB-denominator}
			\sum_{\gamma \in \cP_Q(A, B) \cap \sC}\exp(-\sE^*_\beta(\gamma) +  \fI_Q(\gamma)+ \frac{1}{N_n}\cA_F(\gamma)) \geq \exp\big(\sG^n_1(\ell,\theta) + O((\log L)^{3a/2+2} + (\log L)^2)\big)\,,
		\end{equation}
        which together with \cref{eq:LB-exclude-G1-G2,eq:LB-restrict-to-cigars} provides the final lower bound on the denominator.
        
		By combining the lower bound on the denominator in \cref{eq:final-LB-denominator} with the upper bound on the numerator in \cref{eq:final-UB-numerator}, we obtain that for the choice of $a > 4$ (so that the $(\log L)^{2a}$ term in \cref{eq:final-UB-numerator} dominates), the fraction in \cref{eq:growth-initial-fraction} is upper bounded by
		\begin{equation}\label{eq:A-final-bound}
			\frac{Z^n_{Q, Q; F, 1}(A, B \mid \fU, \cG_1, \cG_2, \cG^\sqcup)}{Z^n_{Q, Q; F, 1}(A, B\mid \cG_1, \cG_2, \cG^\sqcup)} \leq \exp\big(-c(\log L)^{2a}\big)
		\end{equation}
		for some $c > 0$. 
	\end{proof}
	
	\subsection{Level lines contain translated Wulff shapes}\label{subsec:level-line-contain-wulff}
    In this subsection, we will use \cref{thm:growth-gadget} to prove \cref{thm:level-line-contains-Wulff}. The strategy will be to begin with a small Wulff shape, and use \cref{thm:growth-gadget} to ``grow'' it to a large Wulff shape (one argues that if a small Wulff shape is present, then w.h.p.\ a larger one is also present). The smallest starting Wulff shape we need has diameter $\leq \tfrac12$ (see \cref{rem:size-of-ell0}). In order to jump-start this growth process, we use the following consequence of the analysis of \cite{LMS16}.
\begin{claim}\label{clm:starting-shape}
For every $0<\epsilon<\frac1{10}$ there exists $\beta_0$ such that, for every $\beta>\beta_0$, if
$V$ is a domain such that $\Lambda_{(1-\epsilon)L} \subset V \subset \Lambda$, then 
w.h.p., $\phi\sim\pi_V^0$ satisfies the event $\cE_1(\Lambda_{(1-2\epsilon)L})$. \end{claim}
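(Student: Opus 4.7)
The plan is to deduce the claim from the main theorem of \cite{LMS16} (quoted just above the introduction of $H(L)$) combined with a Peierls-type surface-tension argument that upgrades the bulk area estimate produced there into a geometric containment. First, since $\cE_0(\Lambda_{(1-2\epsilon)L}) = \{\phi_x \geq H \text{ for all } x \in \Lambda_{(1-2\epsilon)L}\}$ is monotone increasing in $\phi$, and $\pi_V^0$ stochastically dominates $\pi_{\Lambda_{(1-\epsilon)L}}^0$ (by standard monotonicity in the domain, enlarging $V$ while keeping $0$-boundary conditions outside only raises the surface), it suffices to prove the claim for $V = \Lambda_{(1-\epsilon)L}$. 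For non-exceptional $L$, by slightly enlarging $\sB$ by a multiplicative factor depending on $\epsilon$ (still preserving zero logarithmic density via \cref{rem:Lh-B-concrete}), one has $H((1-\epsilon)L) = H(L)$, so the plateau heights match.

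Next, applying the main theorem of \cite{LMS16} to $\pi_{\Lambda_{(1-\epsilon)L}}^0$ produces w.h.p.\ a unique macroscopic $H$-level-line loop $\fL_0$ whose interior has area at least $(1 - \epsilon_\beta)((1-\epsilon)L)^2$. To promote this bulk area estimate to the geometric containment $\Lambda_{(1-2\epsilon)L} \subset \mathsf{Int}(\fL_0)$, I would argue by contradiction: if some $x \in \Lambda_{(1-2\epsilon)L}$ lay outside $\fL_0$, then (as the exterior of $\fL_0$ inside $V$ is a single annular region attached to $\partial V$) there would be a $*$-connected path in this exterior from $\partial V$ to $x$ of length at least $\epsilon L$, forcing $\fL_0$ to make a matching inward excursion of length $\geq 2\epsilon L$ beyond its ``baseline'' near $\partial V$. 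By a standard Peierls bound using the surface tension $\tau_{\beta, 0}$ developed in \cref{prop:surface-tension-properties} (or equivalently its precursor in \cite{LMS16}), such an excursion carries a Gibbs cost of $\exp(-c\beta\,\epsilon L)$, and a union bound over the $O(L)$ possible starting points along $\partial V$ yields a total failure probability $\exp(-c'\beta\,\epsilon L)$, which is $o(1)$ for $\beta \geq \beta_0(\epsilon)$.

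The main obstacle is precisely the last step: the area bound $(1-\epsilon_\beta)L^2$ from \cite{LMS16} is a priori compatible with thin intrusive filaments of width $O(\epsilon_\beta L/\epsilon)$ reaching deep into the interior, so a surface-tension argument on $\fL_0$ is essential. The subtlety is that $\fL_0$ is not a simple Ising contour but a disagreement polymer interacting with the floor and lower level-lines, so the Peierls map that ``straightens'' the excursion must respect those interactions. Fortunately, only one direction is required---namely, that long excursions are unlikely---and in that direction the floor and the lower level-lines push $\fL_0$ toward $\partial V$, so the Peierls comparison can be closed using FKG monotonicity without needing delicate depinning-type arguments of the kind that will be required later in \cref{sec:UB,sec:LB}.
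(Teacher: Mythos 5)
Your approach is genuinely different from the paper's: the paper iteratively applies the growth tool \cite[Prop.~4.5]{LMS16} (constructing $*$-circuits $C_0 \subset C_1 \subset \cdots \subset C_H$ at heights $0,1,\ldots,H$ by running it on small boxes $\Lambda_\ell$ pressed against the previous circuit), whereas you try to jump directly to the conclusion by combining the global area bound $|\mathsf{Int}(\fL_0)|\geq (1-\epsilon_\beta)L^2$ from \cite{LMS16} with a Peierls/surface-tension estimate on inward excursions of $\fL_0$. The paper's route works in $V$ directly and is local at every step; yours requires a global surface-tension argument, and that is where your proposal has a genuine gap.

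The main issue is that you assert the Peierls bound works for $\fL_0$ under the full measure $\pi_V^0$, but you have not actually closed it, and the ingredients you cite do not obviously apply at the scale required. The surface tension $\tau_{\beta,0}$ in \cref{prop:surface-tension-properties} is a property of the disagreement-polymer model $\hatZ^n_{\cdot,\cdot}$ \emph{without a floor}; transferring it to the marginal law of $\fL_0$ under $\pi_V^0$ requires the cluster expansion with a floor (\cref{prop:CE-law-with-floor}), whose area-estimate input (\cref{lem:area-estimate}) is only valid for domains $F$ with $|F|\leq L(\log L)^\kappa$ and $|\partial F|=O(L^{1-\delta})$. An inward excursion of linear length $\epsilon L$ encloses a region of area up to order $\epsilon_\beta L^2$ with boundary of order $L$, which is far outside the applicable regime, so the polymer machinery cannot be invoked as stated. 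Saying ``FKG monotonicity handles the floor and lower level-lines because they push the right way'' is plausible but is precisely the hard part: the Peierls map that erases the excursion raises the heights in the filled-in region, changing the topology and the floor-constraint probabilities $\hatpi^{h_i}_{D_i^\circ}(\phi\geq 0)$ of the enclosed components, and one still has to show the entropy factor is beaten uniformly over the random location of the excursion (your ``$O(L)$ starting points along $\partial V$'' union bound does not account for the excursion's location on the random $\fL_0$). The paper avoids all of this by never leaving the local scale where \cite[Prop.~4.5]{LMS16} already applies.

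Two smaller points. First, your identification $\cE_0(\Lambda_{(1-2\epsilon)L}) = \{\phi_x \geq H\ \forall x\in\Lambda_{(1-2\epsilon)L}\}$ is incorrect: $\cE_0(A)$ is the event that $A$ lies in the interior of the macroscopic $H$ level-line loop, and that interior may well contain sites with $\phi_x<H$ (small enclosed holes). The event on the right-hand side is strictly stronger. This does not derail your monotonicity reduction (both events are increasing), but it is not the right target. Second, your reduction to $V=\Lambda_{(1-\epsilon)L}$ via stochastic domination introduces the mismatch $H((1-\epsilon)L)$ versus $H(L)$; your fix of enlarging $\sB$ changes the usable range of $L$ downstream (in \cref{thm:level-line-contains-Wulff,thm:1}), whereas the paper's proof runs the growth tool directly inside $V$ with reference height $H=H(L)$, sidestepping this issue entirely and also handling $V\neq\Lambda_{(1-\epsilon)L}$ without a domain-comparison step.
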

Indeed, in \cite[Sec~4.4]{LMS16} the authors argue (as part of the event denoted there $\cR_{H-1}$) that  $\phi\sim\pi_{\Lambda}^0$ contains  a $*$-adjacent circuit $C$ of sites at height at least $H-1$ encapsulating $\Lambda_{L- (H-1)\ell}$ where $\ell \leq L\exp[-c_0  (\beta \log L/\log\log L)^{1/2}]$. This is obtained inductively, establishing such a circuit $C_j$ for heights $j=0,\ldots,H-1$, each time applying \cite[Prop~4.5]{LMS16} (the main growth tool in that work) to every box $\Lambda_\ell$ at distance at least $\log^2\ell$ from the previously found $C_{j-1}$; that tool shows the existence of a circuit at height at least $j$ and distance at most $\epsilon \ell$ from  $\partial\Lambda_\ell$, and ``stitching'' these circuits together gives $C_j$. For the final $\fL_1$, one chooses $\ell'$ that may be of order $L$ (still $\ell'\geq L^{1-o(1)}$ yet it can be as large as $ \frac45 L$, hence the upper bound on $\epsilon$ in our assumptions), and proceeds as before, yielding a  circuit $C_H$ encapsulating $\Lambda_{(1-\epsilon)L}$. As this argument is based on applications of the growth tool on small boxes $\Lambda_\ell$ at distance at least $\log^2\ell$ from $C_{j-1}$ (and in the first iteration~$\partial V$), all that is needed is to have $V\supset \Lambda_{(1-\epsilon)L}$. Thus, in our setting we obtain~$C_j$, for each $j\geq H-1$, encapsulating $\Lambda_{(1-\epsilon)L-j\ell}$, and finally $C_H$ that encapsulates $\Lambda_{(1-2\epsilon)L}$.
    
    We will also need the following deterministic fact about the Wulff shape (see e.g., \cite[Lemma 3.9]{CLMST16} for the proof).
	\begin{lemma}\label{lem:Wulff-shape-dist}
		Fix $\theta \in [0, \pi/4]$ and any $\tau$ satisfying the properties of \cref{prop:surface-tension-properties}. Let $\overline{AB}$ be a line segment of length $d$ and angle $\theta$ such that $A, B$ are on $\partial \cW_1(\tau)$. Recall that $\mathsf{w}_1(\tau)$ is the value of the Wulff functional on $\partial \cW_1(\tau)$. Let $\Delta(d, \theta)$ be the vertical distance between the midpoint $X = \frac{A + B}{2}$ and $\partial \cW_1$. Then we have that for $d \ll 1$, 
		\[\Delta(d, \theta) = \frac{\mathsf{w}_1(\tau) d^2(1+O(d^2))}{16(\tau(\theta)+\tau''(\theta))\cos(\theta)}\,.\]
	\end{lemma}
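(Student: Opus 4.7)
The plan is to reduce the lemma to two classical facts about the Wulff construction: (i) the radius of curvature of $\partial\cW(\tau)$ at the point with outward normal making angle $\alpha$ with the positive $x$-axis equals $\tau(\alpha)+\tau''(\alpha)$, and (ii) the Wulff identity $\int_{\partial\cW(\tau)}\tau(\theta_s)\,ds = 2\,\mathrm{Area}(\cW(\tau))$. Both follow from the support-function representation of $\cW(\tau)$ together with Cauchy's area formula; the strict convexity and analyticity of $\tau$ established in \cref{prop:surface-tension-properties} guarantee that these classical facts apply.

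First I would parameterize $\partial\cW(\tau)$ by outward normal direction $\alpha$ via $\gamma(\alpha)=\tau(\alpha)(\cos\alpha,\sin\alpha)+\tau'(\alpha)(-\sin\alpha,\cos\alpha)$, so that the tangent vector points in direction $\alpha+\pi/2$ and the radius of curvature is $R(\alpha)=\tau(\alpha)+\tau''(\alpha)$. Next, since $\cW_1(\tau)$ is obtained from $\cW(\tau)$ by an isotropic rescaling with factor $\lambda:=1/\sqrt{\mathrm{Area}(\cW(\tau))}$, one has $ds_1 = \lambda\,ds$, whence
\[
\mathsf{w}_1(\tau)=\int_{\partial\cW_1(\tau)}\tau(\theta_s)\,ds_1 = \lambda\int_{\partial\cW(\tau)}\tau(\theta_s)\,ds = 2\lambda\,\mathrm{Area}(\cW(\tau))=\tfrac{2}{\lambda},
\]
using the Wulff identity. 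Hence $\lambda=2/\mathsf{w}_1(\tau)$, and the radius of curvature of $\partial\cW_1(\tau)$ at outward-normal angle $\alpha$ equals $R_1(\alpha)=R(\alpha)/\lambda = \tfrac{2}{\mathsf{w}_1(\tau)}(\tau(\alpha)+\tau''(\alpha))$ after accounting for how curvature transforms under rescaling (a rescaling by factor $\lambda$ multiplies the radius of curvature by $1/\lambda$).

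Then I would carry out a local expansion near the midpoint of $\overline{AB}$. Choose the outward normal angle $\alpha_0$ corresponding to the tangent direction $\theta$ of the chord at its midpoint as $d\to0$, so $\alpha_0=\theta-\pi/2$ (or $\theta+\pi/2$, whichever corresponds to the correct side). Taylor expanding $\gamma(\alpha_0+t)$ to second order along the boundary, and projecting onto the inward normal direction, the perpendicular distance from the chord midpoint to the arc is the sagitta $d^2/(8R_1(\alpha_0))+O(d^4)$, with the $O(d^4)$ error absorbed into the $(1+O(d^2))$ factor in the statement. The vertical distance is obtained by dividing this perpendicular distance by $\cos\theta$, since the inward normal makes angle $\theta$ with the vertical axis (the chord makes angle $\theta$ with the horizontal). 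Substituting $R_1(\alpha_0)=\tfrac{2}{\mathsf{w}_1(\tau)}(\tau(\theta)+\tau''(\theta))$ (using $\tau(\alpha_0)=\tau(\theta-\pi/2)$, but by the symmetries in \cref{it:tau-symmetric} of \cref{prop:surface-tension-properties} combined with the fact that only the combination $\tau+\tau''$ enters and that the relevant Wulff-shape point has tangent direction $\theta$, one gets the form stated) yields
\[
\Delta(d,\theta)=\frac{d^2}{8R_1(\alpha_0)\cos\theta}(1+O(d^2))=\frac{\mathsf{w}_1(\tau)\,d^2(1+O(d^2))}{16(\tau(\theta)+\tau''(\theta))\cos\theta},
\]
as claimed.

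The only genuine obstacle is bookkeeping the angle conventions---chord angle versus outward-normal angle, and the transformation of curvature under isotropic rescaling---so that the identification of $\tau(\theta)+\tau''(\theta)$ in the denominator is unambiguous; this is handled by the explicit support-function parameterization above. Every step other than this bookkeeping is a direct application of classical Wulff geometry, justified by the properties of $\tau_{\beta,n}$ proved in \cref{prop:surface-tension-properties}.
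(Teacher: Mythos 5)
The paper does not prove this lemma itself---it is stated with the citation \cite[Lemma~3.9]{CLMST16} and no argument is given---so there is no in-paper proof to compare against. Your proposal supplies the standard argument (support-function parameterization, curvature formula $R(\alpha)=\tau(\alpha)+\tau''(\alpha)$, Cauchy's formula for the Wulff identity, sagitta expansion, angle bookkeeping), and the overall strategy is sound and gives the right formula.

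There is one genuine arithmetic slip in the rescaling step that you should fix. You write $R_1(\alpha)=R(\alpha)/\lambda$ and justify it with the parenthetical claim that ``a rescaling by factor $\lambda$ multiplies the radius of curvature by $1/\lambda$.'' Both are backwards: the map $\cW\mapsto\cW_1=\lambda\cW$ multiplies all lengths, and in particular the radius of curvature, by $\lambda$, so $R_1(\alpha)=\lambda R(\alpha)$. With $\lambda=2/\mathsf{w}_1(\tau)$ this gives $R_1(\alpha)=\tfrac{2}{\mathsf{w}_1(\tau)}(\tau(\alpha)+\tau''(\alpha))$, which is exactly the value you substitute at the end---so the erroneous intermediate equality $R(\alpha)/\lambda=\tfrac{2}{\mathsf{w}_1(\tau)}(\tau(\alpha)+\tau''(\alpha))$ is false as written (the left side is $\tfrac{\mathsf{w}_1(\tau)}{2}(\tau(\alpha)+\tau''(\alpha))$), yet the value you actually carried forward is the correct one. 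Replace $R/\lambda$ with $\lambda R$ and ``by $1/\lambda$'' with ``by $\lambda$'' and the chain becomes consistent. Two other small points worth pinning down: (i) the passage from normal angle $\alpha_0=\theta\pm\pi/2$ back to $\tau(\theta)+\tau''(\theta)$ uses that $\tau$ is invariant under rotation by $\pi/2$, which is available from \cref{it:tau-symmetric} of \cref{prop:surface-tension-properties} by composing two of the listed reflections; stating this explicitly rather than gesturing at ``the symmetries'' would be cleaner. (ii) In the sagitta step you should note that, because the curve is smooth and strictly convex with bounded curvature, the chord is tangent at its foot point up to an $O(d)$ angle, so $f'(0)=O(d)$ and the deviation from the osculating circle only contributes to the $(1+O(d^2))$ error---this is implicit in your ``Taylor expanding to second order,'' but is the step that actually justifies absorbing the correction into the stated error term.
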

	
	The next proposition is the key growth result that shows we can go from the Wulff shape to its translates, to be proven over \cref{lem:grow-droplet,lem:Wulff-Dk-convergence,lem:grow-translates}. (See \cref{fig:growth-gadget} for a visualization of the proof.) Fix $1 \leq m \leq \log L$. Recall the definitions of $\ell_n^*$ and $\kappa_{n, b}$ stated above \cref{thm:level-line-contains-Wulff}. For brevity, define $\hat\ell_n := (1+(\log L)^{-5/3})\ell_n^*$.
	
    \begin{figure}
\centering
    \begin{tikzpicture}
    \begin{scope}[scale=.75]
    \draw [color=black,thick,fill=gray!10] (0,0)--(5,0)--(5,5)--(-0,5)--cycle;
    \draw [thick, rounded corners=13,fill=blue!15] (2.5,3.3) rectangle ++(1.35,1.35);
    \node[circle,scale=0.4,fill=gray] at (3.17,3.96) {};
    \end{scope}

    \begin{scope}[scale=.75,shift={(6,0)}]
    \draw [color=black,thick,fill=gray!10] (0,0)--(5,0)--(5,5)--(-0,5)--cycle;
    \draw [thick, rounded corners=18,fill=blue!15] (2.2,3) rectangle ++(1.95,1.95);
    \draw [thick,dashed, rounded corners=13,fill=blue!10] (2.5,3.3) rectangle ++(1.35,1.35);
    \node[circle,scale=0.4,fill=gray] (o) at (3.17,3.96) {};
    \draw[thick,-stealth] ($(o)+(0.5,0.5)$) -- ($(o)+(.75,.75)$);
    \draw[thick,-stealth] ($(o)+(0.5,-0.5)$) -- ($(o)+(.75,-.75)$);
    \draw[thick,-stealth] ($(o)+(-0.5,0.5)$) -- ($(o)+(-.75,.75)$);
    \draw[thick,-stealth] ($(o)+(-0.5,-0.5)$) -- ($(o)+(-.75,-.75)$);
    \end{scope}

    \begin{scope}[scale=.75,shift={(12,0)}]
    \draw [color=black,thick,fill=gray!10] (0,0)--(5,0)--(5,5)--(-0,5)--cycle;
    \draw [thick, rounded corners=20,fill=blue!15] (0,0) rectangle ++(5,5);
    \end{scope}
\end{tikzpicture}
\caption{The growth procedure used to prove \cref{prop:wulff-to-cL}. Left: Fix $x$ such that the Wulff shape $\cW(x, \hat\ell_n)$ (colored blue) is contained in $L\hat\ell_n\cW_1(\tau_\beta)$. We start with the event $\cE_n(\cW(x, \hat\ell_n))$ that $\fL_n$ encapsulates $\cW(x, \hat\ell_n)$. Middle: By \cref{lem:grow-droplet}, we can grow this Wulff shape until it reaches a $o(L)$ distance from the boundary. Right: Repeat with all possible starting $x$ from the left picture to obtain $\cD$. We can then start again from the left, now allowing $x$ such that $\cW(x, \hat\ell_n)$ is contained in $\cD$, and iterate this process.}
\label{fig:growth-gadget}
\end{figure}

\begin{proposition}\label{prop:wulff-to-cL}
        Fix $n \geq 1$ and $1 \leq m \leq \log L$. Let $\Lambda'$ be any region containing $L\sL(\hat\ell_n, -(m-1)\kappa_{n,15})$. Under $\pi^{H-n}_{\Lambda'}$, if we know that w.h.p.\ $\cE_n(L\hat\ell_n\cW_1(\tau_\beta))$ holds, then w.h.p.\ we also have $\cE_n(L\sL(\hat\ell_n, -m\kappa_{n,15}))$.
	\end{proposition}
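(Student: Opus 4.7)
The plan is to iterate the growth gadget \cref{thm:growth-gadget} around the boundary of translated Wulff shapes already known to be contained in $\fL_n$, extending the known contained region by a layer of thickness proportional to $\kappa_{n,15}$ per step, so that after $m+1$ iterations the union of all grown translates matches $L\cL^n(\hat\ell_{n,m+1}, -m\kappa_{n,15})$.

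First I would establish a one-step growth lemma: given that $\fL_n$ is known w.h.p.\ to contain a translated Wulff shape $\cW^n(x,\ell)$ whose boundary sits at distance at least $\mathrm{poly}(\log L)\cdot N_n^{1/3}$ from $\partial\Lambda'$, with probability $1 - e^{-c(\log L)^2}$ one has that $\fL_n$ in fact contains the enlarged shape $\cW^n(x,\ell(1+\kappa_{n,15}))$. The construction is to tile a thin annular neighborhood of $\partial\cW^n(x,\ell)$ by $O(L/(N_n^{2/3}(\log L)^5))$ rectangles $R_j$ of dimensions $N_n^{2/3}(\log L)^5 \times 2N_n^{2/3}(\log L)^5$, each oriented at the local tangent angle $\theta_j$ at a sample point $p_j\in\partial\cW^n(x,\ell)$ and placed so that the ``inside'' of $\cW^n(x,\ell)$ lies on one side of $R_j$ and the ``outside'' on the other. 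Conditioning on $\phi$ outside $\bigcup_j R_j$ and using FKG monotonicity to replace $\phi$ on $\partial R_j$ by the dominating extremal boundary configuration ($H-n$ on the inside arc, $H-n-1$ on the outside arc), I would apply \cref{lem:boundary-construction} to pass to a valid wiggly sub-domain $Q_j\subset R_j$ with matching boundary data, then invoke \cref{thm:growth-gadget} with $a=5$, $b=15$ to conclude that $\fL_n$ restricted to $Q_j$ drops past the chord $\overline{A_jB_j}$ by the specific amount set by those parameters. Via \cref{lem:Wulff-shape-dist}, which controls how the vertical drop along a chord translates into outward displacement along the Wulff curve, this drop yields an outward extension of the known contained region of order $L\ell\kappa_{n,15}$ near each $p_j$, exactly the margin required to pass from $\cW^n(x,\ell)$ to $\cW^n(x,\ell(1+\kappa_{n,15}))$.

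Second, I would iterate this step, starting from the hypothesis $\cE_n(L\ell_n\cW_1(\tau_{\beta,n}))$ which identifies a single central translate. At iteration $k\leq m+1$, the contained Wulff shape has grown to size $\hat\ell_{n,k}$; by re-running the above one-step lemma at every admissible translated position $x$---i.e., every $x$ for which $\cW^n(x,\hat\ell_{n,k})$ fits inside a $(1-(k-1)\kappa_{n,15})$-contracted copy of $\Lambda'$---and taking the union over all such $x$, one recovers precisely $L\cL^n(\hat\ell_{n,k+1},-k\kappa_{n,15})$. Thus after $m+1$ iterations, $\fL_n$ contains $L\cL^n(\hat\ell_{n,m+1},-m\kappa_{n,15})$ as required. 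A union bound over the $\mathrm{poly}(L)$ applications of \cref{thm:growth-gadget}---namely $m+1=O(\log L)$ iterations with $\mathrm{poly}(L/N_n^{2/3})$ tangent points and translates per iteration---each failing with probability $e^{-c(\log L)^2}$, gives total failure probability $o(1)$, as foreshadowed in \cref{rem:choice-of-a-b}.

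The main obstacle will be uniformity of the growth rate across the curved boundary $\partial\cW^n(x,\ell)$: the tangent angle $\theta_j$ varies continuously, and the drop provided by \cref{thm:growth-gadget} is sensitive to $\theta_j$ via the coefficients $\tau_{\beta,n}(\theta_j)+\tau_{\beta,n}''(\theta_j)$ and $\cos^3\theta_j$ appearing in \cref{eq:def-Y-sigma}. The drop must uniformly yield an outward displacement of at least $L\ell\kappa_{n,15}$ after the geometric translation via \cref{lem:Wulff-shape-dist}; this uniformity follows from the analyticity and strict convexity of $\tau_{\beta,n}$ from \cref{it:tau-analytic,it:tau-strict-convexity} of \cref{prop:surface-tension-properties}. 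A secondary technical point is that \cref{thm:growth-gadget} is stated for angles $\theta\in[0,\pi/4]$; to handle the full boundary of $\cW^n(x,\ell)$ I would cover $\partial\cW^n$ by patches in each of the four cardinal orientations, rotating the local frame and invoking the symmetries of $\tau_{\beta,n}$ from \cref{it:tau-symmetric} of \cref{prop:surface-tension-properties}.
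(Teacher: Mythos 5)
Your high-level plan is sound---tile an annulus around $\partial\cW^n(x,\ell)$ with rectangles, apply \cref{thm:growth-gadget} in each, and iterate. But there are two quantitative/structural gaps in the argument as written.

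The first is the claimed size of a single growth step. You assert that one application of \cref{thm:growth-gadget} yields an outward displacement of order $L\ell\kappa_{n,15}$, matching the dilation from $\cW^n(x,\ell)$ to $\cW^n(x,\ell(1+\kappa_{n,15}))$. Unwinding the constants: with $a=5$, $b=15$, the growth gadget pushes $\fL_n$ below $Z$ by $\sim \frac{N_n^{1/3}(\log L)^{10}}{8(\tau_{\beta,n}+\tau_{\beta,n}'')\cos^3\theta}$, while $Z$ sits above $\partial\cW^n(x,\ell)$ by $\Delta' \sim \frac{\mathsf{w}_1 N_n^{4/3}(\log L)^{10}}{16(\tau_{\beta,n}+\tau_{\beta,n}'')\cos^3\theta\,\ell L}$, so the net drop below $\partial\cW^n$ is of order at most $N_n^{1/3}(\log L)^{10}$. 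The required outward displacement for a $(1+\kappa_{n,15})$-dilation is $\Theta(L\ell\kappa_{n,15}) = \Theta(\ell N_n^{1/3}(\log L)^{15})$, and since $\ell\geq\ell_n\asymp N_n/L$ (which can be of order $1/\beta$, a constant), this is $\Theta(N_n^{4/3}(\log L)^{15}/L)$, exceeding $N_n^{1/3}(\log L)^{10}$ whenever $N_n/L\gg(\log L)^{-5}$---which happens for infinitely many $L$ (including the regime $N_n\asymp L$). One application of the growth gadget simply does not reach the next dilation; you must iterate with a much smaller per-step factor, say $(1+L^{-3/4})$ requiring displacement only $O(L^{1/4})\ll N_n^{1/3}(\log L)^{10}$, repeated $O(L^{3/4}\log L)$ times. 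This is affordable thanks to \cref{rem:choice-of-a-b}.

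The second gap is in the translation coverage. The sequence $\hat\ell_{n,k}=\ell_n/(1-k\kappa_{n,15})$ is nearly constant (within $1+o(1)$ of $\ell_n$), so "the contained Wulff shape has grown to size $\hat\ell_{n,k}$" does not explain how one passes from the hypothesis of a \emph{single} central Wulff shape $\cW^n(x_0,\ell_n)$ to the set $\cL^n$, which is the \emph{union of all translates} filling nearly the full square. When you say "re-running the one-step lemma at every admissible translated position $x$," you would need as input $\cE_n(\cW^n(x,\hat\ell_{n,k}))$ for each such $x$, which you have not established---the hypothesis only gives you the central $x_0$. What is missing is the two-stage structure: (i) first grow the central shape $\cW^n(x_0,\ell_n)$ by many small multiplicative steps until it becomes the largest Wulff shape $\cD^n_0$ fitting (contracted) inside $\Lambda'$; then (ii) iterate the deterministic recursion $\cD^n_k\to\cD^n_{k+1}$, at each stage growing all translated base shapes $\cW^n(x,\ell_n)$ that fit inside the current known-contained region $\cD^n_k$. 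Since the $\cD^n_k$ converge geometrically to $\cL^n(\ell_n,0)$ (\cref{lem:Wulff-Dk-convergence}), only $O(\log L)$ such iterations are needed. Without stage (i) (which requires growing $\ell$ from $\ell_n$ to order $1$, a potentially large factor) and the $\cD^n_k$ scaffolding, the union of Wulff shapes you obtain stays localized near the center of $\Lambda'$.
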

	For shorthand, define $\fR := L\sL(\hat\ell_n, -(m-1)\kappa_{n,15})$. Let $\cW(x, \ell)$ denote the scaled Wulff shape $L\ell\cW_1(\tau_\beta)$ centered at $x$. Let $\ell_x$ be the largest value before $\cW(x, \ell)$ gets within distance $N_n^{1/3}(\log L)^{15}$ from $\fR^c$. 
	\begin{lemma}\label{lem:grow-droplet}
		Fix $x \in \fR$ and $\ell$ such that $\hat\ell_n \leq \ell < \ell_x$. Under $\pi^{H-n}_{\Lambda'}$, given that w.h.p.\ $\cE_n(\cW(x, \ell))$ holds, then w.h.p.\ we have $\cE_n(\cW(x, \ell_x))$.
	\end{lemma}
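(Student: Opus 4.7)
The plan is to grow the Wulff shape outward in $K = O(\mathrm{polylog}(L))$ small increments $\ell = \ell^{(0)} < \ell^{(1)} < \cdots < \ell^{(K)} = \ell_x$, applying \cref{thm:growth-gadget} along the boundary at each step to push the $(H-n)$ level-line past $\partial\cW^n(x,\ell^{(k+1)})$. The increments are chosen so that, via \cref{lem:Wulff-shape-dist}, the vertical gap between $\partial\cW^n(x,\ell^{(k)})$ and $\partial\cW^n(x,\ell^{(k+1)})$ at every angle $\theta$ is strictly smaller (by a definite constant factor) than the drop
\[|Y| \;=\; \frac{N_n^{1/3}(\log L)^{2a}}{8(\tau_{\beta,n}(\theta)+\tau_{\beta,n}''(\theta))\cos(\theta)^3}\]
produced by \cref{thm:growth-gadget} with $a=5,\,b=15$ (see \cref{rem:choice-of-a-b}). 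This matching is not accidental: both expressions involve the same direction-dependent curvature factor $(\tau_{\beta,n}(\theta)+\tau_{\beta,n}''(\theta))\cos(\theta)^3$, which is the standard geometric interpretation of the Wulff construction from the surface tension.

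I then proceed by induction on $k$. Assume $\cE_n(\cW^n(x,\ell^{(k)}))$ holds w.h.p.\ Discretize $\partial\cW^n(x,\ell^{(k)})$ by $O(L/(N_n^{2/3}(\log L)^a))$ points $\{y_j\}$ spaced at distance $\sim N_n^{2/3}(\log L)^a$. At each $y_j$ with outward normal angle $\theta_j$, place a rectangle $R_{y_j}$ of dimensions $N_n^{2/3}(\log L)^a \times 2N_n^{2/3}(\log L)^a$, with its long axis along the tangent at $y_j$, centered so that its top long side sits at distance $\geq C N_n^{1/3}(\log L)^b$ above the chord $\partial R_{y_j}\cap\partial\cW^n(x,\ell^{(k)})$. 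Since $\ell^{(k)}\leq \ell_x$, $R_{y_j}\subset\fR\subset\Lambda'$, and since by the inductive hypothesis $\cW^n(x,\ell^{(k)})\subset \fL_n\subset \fL_{n-1}$ w.h.p., the hypotheses of \cref{lem:boundary-construction} are met with $V=\Lambda'$ and $k=H-n-1$. That lemma produces a marked domain $Q_{y_j}\subset R_{y_j}$ with boundary conditions $\xi_{y_j}$ at heights $H-n,H-n-1$, and moreover reduces the event ``the $(H-n)$ level-line in $R_{y_j}$ lies below a given curve'' under $\pi^{H-n-1}_{\Lambda'}$ to the same event under $\pi^{\xi_{y_j}}_{Q_{y_j}}$. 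Invoking \cref{thm:growth-gadget} inside $Q_{y_j}$ then shows, with probability at least $1-e^{-c(\log L)^2}$, that $\fL_n$ lies below the point $X=(0,Y+\sigma(\log L)^a+(\log L)^2)$ measured from $\overline{A_jB_j}$, which by the increment choice lies strictly below $\partial\cW^n(x,\ell^{(k+1)})$ near $y_j$.

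Union-bounding over the $O(\mathrm{polylog}(L))$ points $y_j$ and $K=O(\mathrm{polylog}(L))$ steps, the total failure probability is $\mathrm{polylog}(L)\cdot e^{-c(\log L)^2}=o(1)$. The local arcs guaranteed by each $y_j$ form a $*$-connected $(H-n)$-height circuit encapsulating $\cW^n(x,\ell^{(k+1)})$, completing the induction. The main obstacle is the quantitative geometric matching between the growth-gadget drop and the Wulff-shape displacement: while both scale with the same curvature factor, one must verify that the lower-order errors $\sigma(\log L)^a+(\log L)^2$ are absorbed into the chosen increments and do not accumulate over the $\mathrm{polylog}(L)$ iterations, which dictates the precise choice of exponents $a=5,b=15$.
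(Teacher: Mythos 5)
Your overall approach matches the paper's: grow the Wulff shape outward in small scale increments, at each increment discretize the boundary, place growth-gadget rectangles at each boundary point, and union-bound over the rare events. However, there are two concrete issues.

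\textbf{The iteration and discretization counts are wrong.} You claim $K=O(\mathrm{polylog}(L))$ increments and $O(\mathrm{polylog}(L))$ discretization points. Neither holds. The number of discretization points along $\partial \cW^n(x,\ell)$ spaced $\sim N_n^{2/3}(\log L)^a$ apart is $O(L/(N_n^{2/3}(\log L)^a)) = L^{1/3+o(1)}$. More seriously, if the per-step vertical increment is on the order of the drop $|Y|\sim N_n^{1/3}(\log L)^{2a}$, the total displacement from $\cW^n(x,\ell_n)$ to $\cW^n(x,\ell_x)$ is of macroscopic order $\Theta(L)$, so $K\sim L^{2/3+o(1)}$. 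The paper instead takes the multiplicative increment $\ell\mapsto\ell(1+L^{-3/4})$, giving $o(L^{3/4}\log L)$ steps; the total number of applications is polynomial. Since the failure probability is $e^{-c(\log L)^2}$ (see \cref{rem:choice-of-a-b}), a polynomial union bound still kills it, so the overall scheme survives---but the claim of polylog is false and you should not lean on it.

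\textbf{The central quantitative comparison is asserted but not verified, and the wrong two quantities are being compared.} The drop $|Y|$ from \cref{thm:growth-gadget} is measured relative to the chord $\overline{AB}$ of $\cW^n(x,\ell)$, whereas \cref{lem:Wulff-shape-dist} gives the chord-to-boundary displacement $\Delta'$. The new interior point $W$ is at height $\Delta' - |Y| + O(N_n^{1/3}(\log L)^{15/2})$ relative to $\partial\cW^n(x,\ell)$, so the usable advance is $|Y|-\Delta'$, not $|Y|$. The decisive computation in the paper is
\[
\frac{\Delta'}{|Y|} \;=\; \frac{\mathsf{w}_1(\tau_{\beta,n})\,N_n}{2\ell L}\,,
\]
and the fact that $\ell\geq\ell_n = \mathsf{w}_1(\tau_{\beta,n})N_n/(2(1-\delta)L)$ forces this ratio to be $\leq 1-\delta<1$. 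This is exactly where the hypothesis $\ell\geq\ell_n$ enters; it is the reason the growth is self-propelled, and it is the whole content of the lemma. Your proposal observes that $\Delta'$ and $|Y|$ share the factor $(\tau_{\beta,n}(\theta)+\tau_{\beta,n}''(\theta))\cos^3\theta$ (correct), and then asserts "strictly smaller by a definite constant factor" without carrying out this prefactor comparison, and moreover phrases the comparison as being between the inter-Wulff-shape gap and $|Y|$, rather than between $\Delta'$ and $|Y|$. Without this you have not shown the increments are even positive, let alone that they cover $[\ell, \ell_x]$ with the claimed count.
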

	\begin{proof}
		We show that if $\cE_n(\cW(x, \ell))$ holds, then w.h.p.\ so does $\cE_n(\cW(x, \ell(1+L^{-3/4}))$. Pick an angle $\theta \in [0, \pi/4]$. Let $\overline{AB}$ be the line segment such that its horizontal distance is $N_n^{2/3}(\log L)^5$, $\overline{AB}$ has angle $\theta$, and $A, B$ lie on the bottom right quarter of $\cW(x, \ell)$. Assume that $B$ is to the right of $A$. Let $R$ be the rectangle with width $N_n^{2/3}(\log L)^5$ and height $2N_n^{2/3}(\log L)^5$ such that $A, B$ are on the sides of $R$, and the distance from $A$ to the bottom of $R$ is $N_n^{1/3}(\log L)^{15}$. Let $Z = (Z_1, Z_2)$ be the midpoint of $A$ and $B$. By \cref{lem:boundary-construction} and the fact that \cref{thm:growth-gadget} holds for any $Q$ satisfying the conditions in \cref{lem:boundary-construction}, we have that w.h.p.\ the interior of $\fL_n$ contains the point $W = (Z_1, W_2)$ where 
		\begin{equation*}
			W_2 = Z_2 -\frac{N_n^{1/3}(\log L)^{10}}{8(\tau(\theta) + \tau''(\theta))\cos(\theta)^3} + c(\beta, \theta)N_n^{1/3}(\log L)^{15/2}\,.
		\end{equation*}
		By rescaling the result of \cref{lem:Wulff-shape-dist}, we obtain that $Z$ lies a vertical distance of $\Delta'$ above $\partial \cW(x, \ell)$, where
		\begin{equation*}
			\Delta' = \ell L \Delta\bigg(\frac{N_n^{2/3}(\log L)^5}{\ell L\cos\theta}, \theta\bigg) = \frac{\mathsf{w}_1(\tau_\beta) N_n^{4/3}(\log L)^{10}}{16(\tau(\theta) + \tau''(\theta))\ell L\cos(\theta)^3} + o(1)\,.
		\end{equation*}
		Hence, as long as, say, \begin{equation}\label{eq:min-start-Wulff}1 - \frac{\mathsf{w}_1(\tau_\beta) N_n}{2\ell L} \geq (\log L)^{-5/2}c'(\beta, \theta)\,
		\end{equation}
		for some other constant $c'(\beta, \theta)$, then $W$ is at a height at least $O(N_n^{1/3}(\log L)^{10-5/2})$ below $\partial \cW(x, \ell)$, whereas the enlarged Wulff shape $\cW(x, \ell(1 + L^{-3/4}))$ is only distance $O(L^{1/4})$ below $\partial \cW(x, \ell)$. It is easy to check that \cref{eq:min-start-Wulff} is satisfied by the assumption $\ell \geq  \hat\ell_n$. By repeating this $O(L)$ times to cover all angles $\theta \in [0, \pi/4]$ and (using symmetry to conclude for other angles), we obtain $\cE_n(\cW(x, \ell(1+L^{-3/4}))$. We can then repeat this at most $o(L^{3/4}\log L)$ times to obtain $\cE_n(\cW(x, \ell_x))$.
	\end{proof}
	Now we can define (deterministic) sets $\cD_k$ as part of the enlargement/translation procedure. Fix $\ell$ such that $\hat\ell_n \leq \ell < \ell_x$. We start with $\cD_0$ defined as the largest rescaling of $\cW_1(\tau_\beta)$ that is contained in the unit square $S$. Now given $\cD_k(\ell)$, define $\cD_{k+1}(\ell)$ as follows. For any $\zeta \in \cD_k(\ell)$, define 
	\[t_\zeta = \max\{t \geq 1: t\ell\cW_1(\tau_\beta) + \zeta \subset S\}\,,\]
	with $t_\zeta = 0$ if there is no such $t$. Then, define
	\[\cD_{k+1}(\ell) = \bigcup_{\zeta \in \cD_k}\{t_\zeta\ell\cW_1(\tau_\beta) + \zeta\}\,.\]
	Note that by construction, $\cD_k(\ell) \subset \cD_{k+1}(\ell) \subset S$ for each $k \geq 0$, and there is no dependence of $\cD_0$ on $\ell$. The following lemma showing convergence of the $\cD_k(\ell)$ to $\sL(\ell, 0)$ was proved in \cite{CLMST16}.
	
	\begin{lemma}[{\cite[Lemma 6.8]{CLMST16}}]\label{lem:Wulff-Dk-convergence}
    The sequence $\cD_k(\ell)$ converges to $\cD_\infty:=\sL(\ell, 0)$, and the Hausdorff distance between $\cD_k$ and $\cD_\infty$ is bounded by $c^k$ for some constant $c \in (0, 1)$.
	\end{lemma}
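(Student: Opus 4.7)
The key identification is
\[
\cL^n(\ell,0) \;=\; \ell\,\cW_1(\tau_{\beta,n}) \;+\; E,
\qquad
E \;:=\; \{\zeta\in\R^2 : \ell\cW_1(\tau_{\beta,n})+\zeta \subset S\},
\]
where the erosion $E$ is convex since $S$ and $\cW_1(\tau_{\beta,n})$ are. Writing $\cD^n_0 = s_0\cW_1(\tau_{\beta,n}) + z_0$ for the maximal scaled Wulff translate inside $S$, the strategy is to reduce the Hausdorff convergence $\cD^n_k \to \cL^n(\ell,0)$ to an approximation problem of $E$ by a ``center set'' $\cF_k \subset E$, and then to carry out a geometric contraction argument using the Wulff metric on $E$.

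\textbf{Containment and reduction.} The Minkowski identity $\alpha\cW_1+\beta\cW_1=(\alpha+\beta)\cW_1$ for $\alpha,\beta\geq 0$ gives $\cD^n_0 = \ell\cW_1(\tau_{\beta,n}) + \cF_0$ with $\cF_0 := (s_0-\ell)\cW_1(\tau_{\beta,n})+z_0 \subset E$. Define inductively
\[
\cF_{k+1} \;:=\; \cF_k \,\cup\, \bigcup_{\zeta\in\cF_k}\bigl((t_\zeta-1)\ell\cW_1(\tau_{\beta,n})+\zeta\bigr) \;\subset\; E,
\]
using the equivalence $t_\zeta\ell\cW_1+\zeta\subset S \iff (t_\zeta-1)\ell\cW_1+\zeta\subset E$. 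By the same Minkowski identity $\cD^n_k \supset \ell\cW_1(\tau_{\beta,n})+\cF_k$, while applying the decomposition to the full iteration yields $\cD^n_k\subset\ell\cW_1(\tau_{\beta,n})+E = \cL^n(\ell,0)$. Since Minkowski addition is $1$-Lipschitz under the Hausdorff metric, this reduces the claim to showing $d_H(\cF_k,E)\leq c^k$ for some $c\in(0,1)$.

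\textbf{Wulff-geometric contraction.} Introduce the Wulff gauge $d_\cW(x,y):=\inf\{r>0 : y-x \in r\cW_1(\tau_{\beta,n})\}$; by definition $(t_\zeta-1)\ell = d_\cW(\zeta,\partial E)$, so each $\zeta\in\cF_k$ generates in $\cF_{k+1}$ a Wulff ball of radius equal to its Wulff-depth in $E$. Fix a reference point $\zeta_0\in\cF_0$ with $D_0 := d_\cW(\zeta_0,\partial E)>0$ (which is available since $\cF_0$ contains an open Wulff ball). For $\zeta^*\in E$ at distance $\delta_k := d_\cW(\zeta^*, \cF_k)$ with nearest $\zeta\in\cF_k$, convexity of $E$ places the segment $[\zeta,\zeta_0]$ inside $E$, and a linear interpolation along this segment produces a point $\zeta'$ with $d_\cW(\zeta',\partial E) \geq c_1\delta_k$ and $d_\cW(\zeta^*,\zeta')\leq(1+c_1)\delta_k$ for a constant $c_1>0$ independent of $k$. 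The Wulff ball around $\zeta'$ in $\cF_{k+1}$ then reduces the distance to $\zeta^*$ by at least a factor $c_1/(1+c_1)$, yielding the geometric contraction $d_H(\cF_{k+1},E) \leq c\, d_H(\cF_k,E)$ with $c := 1 - c_1/(1+c_1) \in (0,1)$.

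\textbf{Main obstacle.} The delicate point is extracting a \emph{uniform} contraction constant $c<1$ near $\partial E$, where both the Hausdorff distance $\delta_k$ and the available Wulff-depth at a closest point shrink simultaneously. Convexity of $E$ is essential: it permits interpolating toward the fixed deep reference $\zeta_0$ while retaining Wulff-depth proportional to the traveled fraction. The strict convexity of $\tau_{\beta,n}$ (\cref{it:tau-strict-convexity} of \cref{prop:surface-tension-properties}) guarantees $\cW_1(\tau_{\beta,n})$ contains a Euclidean ball of positive radius, so the Wulff and Euclidean metrics are uniformly equivalent and the contraction constant becomes independent of the direction of approach to $\partial E$.
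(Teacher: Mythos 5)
This lemma is cited from \cite{CLMST16} and the paper contains no proof of its own, so there is no in-paper argument to compare against. Evaluating the proposal on its own: the Minkowski decomposition $\cL^n(\ell,0)=\ell\cW_1+E$ with $E$ the erosion, the reduction to $d_H(\cF_k,E)\leq c^k$ via $1$-Lipschitzness of Minkowski addition, and the containments $\ell\cW_1+\cF_k\subset\cD^n_k\subset\ell\cW_1+E$ are all correct and form a sensible skeleton. However, the contraction step has two genuine gaps. First, for the Wulff ball around the interpolant $\zeta'\in[\zeta,\zeta_0]$ to lie in $\cF_{k+1}$, you need $\zeta'\in\cF_k$, not merely $\zeta'\in E$; convexity of $E$ does not give this. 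What is actually needed (and true, but unproved in your write-up) is that each $\cF_k$ is convex, which follows inductively from the concavity of $\zeta\mapsto d_\cW(\zeta,\partial E)$ on the convex set $E$: the union $\bigcup_{\zeta\in\cF_k}B_\cW(\zeta,d_\cW(\zeta,\partial E))$ of a concave-radius family over a convex set is convex.

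Second, and more seriously, the quantitative bounds you state do not give a contraction. From $d_\cW(\zeta^*,\zeta')\leq(1+c_1)\delta_k$ and $d_\cW(\zeta',\partial E)\geq c_1\delta_k$ one only gets $d_\cW(\zeta^*,\cF_{k+1})\leq(1+c_1)\delta_k-c_1\delta_k=\delta_k$, i.e.\ no improvement, not a factor $c_1/(1+c_1)$ reduction. This is not a slip that can be patched by rebalancing constants: moving from the nearest point $\zeta$ \emph{towards} $\zeta_0$ costs $s\|\zeta-\zeta_0\|_\cW$ in distance to $\zeta^*$ while buying only $sD_0$ of Wulff depth, and generically $\|\zeta-\zeta_0\|_\cW>D_0$ (e.g.\ when $\zeta^*$ and $\zeta$ sit near a corner of $E$), so this trade-off loses. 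The interpolation needs to go in the other direction. A working version: let $\zeta''$ be the endpoint of $[\zeta_0,\zeta^*]\cap\cF_k$ closest to $\zeta^*$ (well-defined by convexity of $\cF_k$), and set $\delta_k':=d_\cW(\zeta^*,\zeta'')$. Writing $\zeta''=(1-\lambda)\zeta_0+\lambda\zeta^*$, concavity of the Wulff depth along the segment gives $d_\cW(\zeta'',\partial E)\geq(1-\lambda)D_0\geq\delta_k'\cdot D_0/\mathrm{diam}_\cW(E)$, and since the Wulff ball around $\zeta''$ is centrally symmetric this yields $d_\cW(\zeta^*,\cF_{k+1})\leq(1-D_0/\mathrm{diam}_\cW(E))\,\delta_k'$ measured along the same ray. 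Iterating this directional distance (which dominates the Hausdorff distance) gives $d_H(\cF_k,E)\leq\mathrm{diam}_\cW(E)\cdot(1-D_0/\mathrm{diam}_\cW(E))^k$, the sought geometric decay.
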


	We can now apply \cref{lem:grow-droplet} in the iterative procedure of the $\cD_k$ sets.
	\begin{lemma}\label{lem:grow-translates}
		Let $x_0$ be the center of $\fR$. Under $\pi^{H-n}_{\Lambda'}$, given that w.h.p.\ $\cE_n(\cW(x_0, \ell_{x_0}))$ holds, then w.h.p.\ we have $\cE_n(L\sL(\hat\ell_n, -m\kappa_{n,15}))$.
	\end{lemma}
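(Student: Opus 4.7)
The plan is to carry out a random growth procedure that mirrors the deterministic iteration $\cD^n_k(\ell)$ defined just above the lemma, using \cref{lem:grow-droplet} as the single-step growth tool and then transferring the deterministic convergence from \cref{lem:Wulff-Dk-convergence} to the random setting.

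First I would set up the iteration. Let $F_0 = \cW^n(x_0, \ell_{x_0})$, which by hypothesis lies in the interior of $\fL_n$ w.h.p. Assuming inductively that $F_k$ lies in the interior of $\fL_n$ w.h.p., I would then discretize $F_k$ on the integer lattice (mesh $1$), which produces $O(L^2)$ candidate centers. For every such lattice point $x \in F_k$ with $\cW^n(x, \ell_n) \subset F_k$ (so that the hypothesis $\cE_n(\cW^n(x, \ell_n))$ is automatic from the inductive hypothesis), I would apply \cref{lem:grow-droplet} to get $\cE_n(\cW^n(x, \ell_x))$. Since the failure probability there is $e^{-c(\log L)^2}$ (inherited from \cref{thm:growth-gadget}), a union bound over $O(L^2)$ centers still gives simultaneous success w.h.p. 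Define $F_{k+1} := F_k \cup \bigcup_{x} \cW^n(x, \ell_x)$, where the union is over all qualifying lattice points $x$. Iterating and taking another union bound over the $k = C\log L$ iterations (polynomially many applications of a sub-exponentially small error) preserves the w.h.p.\ guarantee.

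Next I would compare $F_k/L$ to the deterministic sequence $\cD^n_k$, run not in the unit square $S$ but in $\fR/L$ (with the margin $N_n^{1/3}(\log L)^2/L$ replacing the hard wall used to define $\ell_x$). Since Wulff shapes centered at nearby points differ by $O(1)$ in the scaled picture, the lattice discretization contributes only a negligible $O(1/L)$ error per iteration. By \cref{lem:Wulff-Dk-convergence} (applied with the obvious generalization where ``$S$'' is replaced by the containing region $\fR/L$), the deterministic iterates converge in Hausdorff distance at rate $c^k$ to a limiting set that, up to the $N_n^{1/3}(\log L)^2/L$ margin, coincides with $\cL^n(\hat\ell_{n, m}, 0)$. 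Choosing $C$ large enough in $k = C\log L$ drives the Hausdorff error to $L \cdot c^{C\log L} = L^{1 - C\log(1/c)} \ll L\kappa_{n,15} = N_n^{1/3}(\log L)^{15}$.

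The conclusion then follows because $L\cL^n(\hat\ell_{n,m}, -m\kappa_{n,15})$ is, by definition, the dilation by $(1 - m\kappa_{n,15})$ of $L\cL^n(\hat\ell_{n,m}, 0)$; since $m \leq \log L$ we have $Lm\kappa_{n,15} \geq N_n^{1/3}(\log L)^{15}$, which is much larger than the Hausdorff error above. Therefore the random region $F_{C\log L}$ contains $L\cL^n(\hat\ell_{n,m}, -m\kappa_{n,15})$ w.h.p., and so does $\fL_n$. The main obstacle I anticipate is bookkeeping the precise correspondence between $\ell_x$ (which measures closeness to $\fR^c$ with an $N_n^{1/3}(\log L)^2$ buffer) and the $t_\zeta\ell$ of the deterministic construction (which measures closeness to $\partial S$ with no buffer). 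Handling this requires running the deterministic iteration in a slightly shrunk domain and verifying that the shrinkage by $O(N_n^{1/3}(\log L)^2/L) \ll \kappa_{n,15}$ is absorbed by the margin between the target $\cL^n(\hat\ell_{n,m}, -m\kappa_{n,15})$ and the limit $\cL^n(\hat\ell_{n,m}, 0)$.
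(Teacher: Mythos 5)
Your proposal follows the paper's proof in all essentials: apply \cref{lem:grow-droplet} at every lattice center $x$ with $\cW^n(x,\ell_n)$ inside the currently-contained region, union bound over the polynomially many centers and the $O(\log L)$ iterations against the $e^{-c(\log L)^2}$ failure probability, and feed the deterministic convergence of \cref{lem:Wulff-Dk-convergence} back into the claim. The one place where the paper's write-up is tighter than what you sketch is exactly the ``main obstacle'' you flag: instead of re-running the $\cD^n_k$ iteration in the modified domain $\fR/L$ (which would require an unstated generalization of \cref{lem:Wulff-Dk-convergence} to non-square containers), the paper keeps the iteration in the unit square and rescales, proving $\cE_n(L(1-m\kappa_{n,15})\cD^n_k(\tfrac{\ell_n}{1-m\kappa_{n,15}}))\Rightarrow\cE_n(L(1-m\kappa_{n,15})\cD^n_{k+1}(\cdot))$, and then uses the monotonicity $\cD^n_k\subset\cD^n_\infty$ to observe that the rescaled target is already at distance $N_n^{1/3}(\log L)^2$ from $\fR^c$, so the $\ell_x$ cap is never binding and \cref{lem:Wulff-Dk-convergence} can be cited verbatim.
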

	\begin{proof}
		By definition, we have $L(1 - (m-1)\kappa_{n,15})\cD_0 \subset \fR$. Hence, by adding another factor of $\kappa_{n,15}$, we get $L(1 - m\kappa_{n,15})\cD_0 \subset \cW(x_0, \ell_{x_0})$. We show that with high probability, 
		\begin{equation}\label{eq:Dk-to-Dk+1}\cE_n(L(1 - m\kappa_{n,15})\cD_k(\hat\ell_n)) \implies \cE_n(L(1 - m\kappa_{n,15})\cD_{k+1}(\hat\ell_n)))\,.
		\end{equation}
		Indeed, for any $\cW(x, \hat\ell_n) \subset L(1 - m\kappa_{n,15})\cD_k(\hat\ell_n)$, we can apply \cref{lem:grow-droplet} to conclude that
		\[\cE_n(\cW(x, \hat\ell_n)) \implies \cE_n(\cW(x, \ell_x))\,.\]
		Suppose that we could grow $\cW(x, \ell)$ to $\cW(x, \widetilde \ell_x)$, defined as the largest scaling of $\cW(x, \ell)$ contained in $\Lambda_{L(1 - m\kappa_{n,15})}$ as long as $\widetilde \ell_x \geq \ell$. Then we could apply the above display to all $x$ such that $\cW(x, \hat\ell_n) \subset L(1-m\kappa_{n,15})\cD_k(\hat\ell_n)$ to obtain \cref{eq:Dk-to-Dk+1}. However, it is a-priori possible that $\widetilde \ell_x > \ell_x$. That is, the additional restriction that we need to consider is that by definition of $\ell_x$, we can only grow each Wulff shape until we reach distance $N_n^{1/3}(\log L)^{15}$ from $\fR^c$. But, since $\cD_{k+1} \subset \cD_\infty$, the ending shape $L(1-m\kappa_{n,15})\cD_{k+1}(\hat\ell_n)$ is already at a distance $N_n^{1/3}(\log L)^{15}$ away from $\fR^c$, so this additional restriction does not change anything. Note also that we only need to repeat this up to $k = O(\log L)$ times to get $\cD_k$ within distance $1/L$ of $\cD_\infty$ by the bound given in \cref{lem:Wulff-Dk-convergence}, which is all that is needed due to the discretization of the lattice. As noted in \cref{rem:choice-of-a-b}, we can afford this many iterations.
	\end{proof}
	Combining \cref{lem:grow-droplet,lem:grow-translates} concludes the proof of \cref{prop:wulff-to-cL}. Now we prove the main theorem of this section.
	\begin{proof}[Proof of \cref{thm:level-line-contains-Wulff}]
		First, additionally assume that we have 
		\begin{equation*}
			\cE_{n+1}(L\sL(\hat\ell_n, -(H-n)\kappa_{n,15}))\,.
		\end{equation*} 
		Reveal $\fL_{n+1}$. This reveals an interior region that contains $L\sL(\hat\ell_n, -(H-n)\kappa_{n,15})$ with a boundary of sites with height $\geq H-n$. By monotonicity, we can drop this boundary condition down to exactly $H-n$. Then, we can apply \cref{prop:wulff-to-cL} to conclude that with high probability we have $\cE_n(L\sL(\hat\ell_n, -(H+1-n)\kappa_{n,15}))$.
		
		It thus suffices to justify the above assumption. We again use monotonicity, this time dropping the floor to $-(H-n)$. This allows us to control the 1 level line with a floor at 0 by the 1 level line with a floor at height $-(H-n)$, which is then the same as the $(H+1-n)$ level line when the floor is at 0 and boundary conditions are at height $H-n$. By \cref{prop:wulff-to-cL} (whose application is justified by \cref{clm:starting-shape}) with $m = 1$, this level line contains $L\sL(\hat\ell_n, -\kappa_{n,15})$. Thus in $\pi^0_{\Lambda}$ we can reveal the 1 level line, and by monotonicity drop the induced $\geq 1$ boundary conditions to be equal to 1. Then we can repeat, studying the $k$ level line for $2 \leq k \leq H-n$ by dropping the floor to $-(H+1-n-k)$, and concluding that it contains $L\sL(\hat\ell_n, -k\kappa_{n,15})$. 
	\end{proof}
	
	\section{Upper bound}\label{sec:UB}
Our goal in this section is the following bound on the distance of $\fL_n$ from the bottom side of $\Lambda$. 
\begin{theorem}\label{thm:ub}
In the setting of \cref{thm:1}, fix $n\geq 1$ and $K>0$, let $\rho_n(x)$ be the maximum vertical distance of~$\fL_n$ above $x+(\frac{L}2,0)$ for $-N_n^{2/3} \leq x\leq N_n^{2/3}$,
and set $\sigma^2>0$ as per \cref{def:sigma}. 
Then every weak limit point $\mathbf{Y}_n(t)$ of the process $Y_n(t):=N_n^{-1/3}\rho_n(t N_n^{2/3})$ ($t\in[-K,K]$), as $L\to\infty$, satisfies
\[ \mathbf{Y}_n \preceq \mathsf{FS}_{\sigma}\,. \]
Moreover, for every fixed $m$, every weak limit point $(\bY_n(t))_{n\leq m}$ of the processes $(Y_n(t))_{n\leq m}$ satisfies
\[ (\bY_n)_{n\leq m} \preceq \bigotimes_{n\leq m}\mathsf{FS}_{\sigma}\,.\]
\end{theorem}

\begin{figure}
\centering
    \begin{tikzpicture}
    \pgfmathsetseed{271818}
    \begin{scope}[scale=1.25]
    \filldraw [thick, blue, fill=green!20] (0.1,1.5) to[move to] (5.9,1.5) decorate[decoration={random steps,segment length=1.5mm}]{-- (6,0)} decorate[decoration={random steps,segment length=1.5mm}]{-- (0,0)} decorate[decoration={random steps,segment length=1.5mm}]{-- (0.1,1.5)};   
    \filldraw [thick, blue, fill=gray!10] (0.1,1.5) to[move to] (5.9,1.5) decorate[decoration={random steps,segment length=1.5mm}]{-- (5.9,4)} decorate[decoration={random steps,segment length=1.5mm}]{-- (0,4)} decorate[decoration={random steps,segment length=1.5mm}]{-- (0.1,1.5)};  
    \draw[thick, dashed, color=black] (0.1,1.5)--(5.9,1.5);
    
    \draw[color=gray!50!black,|-|](-0.25,0)--(-0.25,4);
    \node[color=gray!50!black,font=\footnotesize] at (-1.1,2) {$N_n^{\frac23}(\log L)^a$};
    
    \draw[color=gray!50!black,|-|](6.25,0)--(6.25,1.5);
    \node[color=gray!50!black,font=\footnotesize] at (7.15,.75) {$N_n^{\frac13}(\log L)^b$};

    \draw [thick, color=orange, decorate, decoration={random steps,segment length=1.5mm}]
    (-1,-.5)--(7,-.5);

    \draw [thick, color=black]
    (-2,-.75)--(8,-.75);

    \draw[color=gray!50!black,|-|](0, 4.2)--(5.9,4.2);
    \node[color=gray!50!black,font=\footnotesize] at (3.1,4.55) {$N_n^{\frac23}(\log L)^a$};

    \node[font=\small] at (3,3.6) {$H+1-n$};
    \node[font=\small] at (3,0.25) {$H-n$};

    \node[circle,scale=0.4,fill=gray] at (0.02,0.8) {};
    \node[circle,scale=0.4,fill=gray] at (5.9,0.8) {};
    \end{scope}
    \end{tikzpicture}
    \caption{An instantiation of the domain $Q$ used in the proof of the upper bound on~$\fL_n$. The orange line is the lower level line $\fL_{n+1}$, and the floor (the constraint that $\phi_x \geq 0$) is only present in the green region. The two gray points on the boundary mark where the boundary conditions change from $H-n$ to $H+1-n$.}
    \label{fig:ub-rect}
\end{figure}
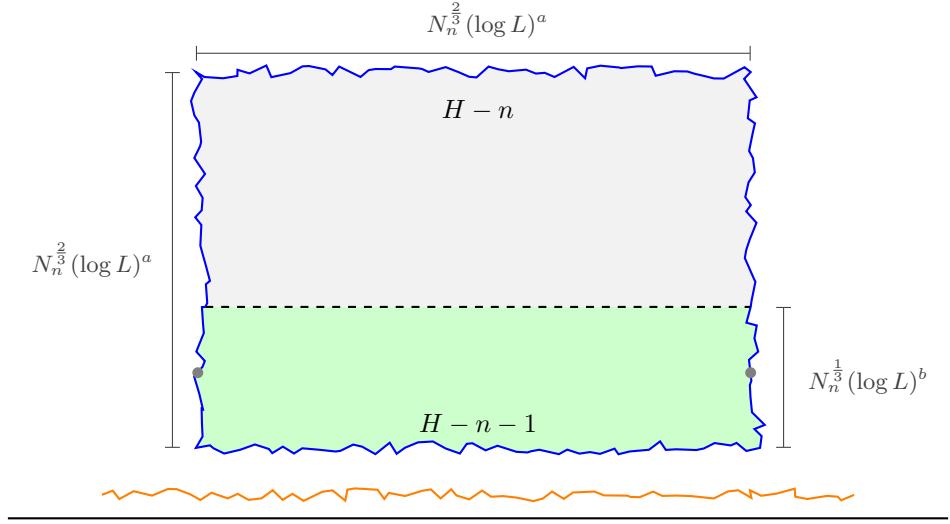

Observe that $\rho_n(x)$ is a decreasing function of $\phi$. Hence, en route to proving \cref{thm:ub} in this section we may apply monotonicity arguments that are decreasing (e.g., decreasing individual floor constraints or heights of boundary vertices; see \cref{clm:grad-p-monotone}).

Let $R$ be the rectangle of size $N_n^{2/3}(\log L)^{25} \times 2N_n^{2/3}(\log L)^{25}$ centered at the $x = 0$ such that the bottom side of $R$ is at distance $2N_n^{1/3} \exp(-c\sqrt{\beta\log L/\log\log L})$ above the bottom side of $\Lambda$, for $c$ as in \cref{thm:level-line-contains-Wulff}. Let $A, B$ be on the sides of $\partial R$ such that the distance from $A, B$ to the bottom of $R$ is $N_n^{1/3}(\log L)^{16}$. The dimensions of $R$ and the location of $A, B$ satisfy the requirements of \cref{lem:boundary-construction}. So, let $Q, \xi$ be any domain and boundary condition satisfying the properties of \cref{lem:boundary-construction} with respect to this $R, A, B$ (see \cref{fig:ub-rect}). By \cref{thm:level-line-contains-Wulff,lem:boundary-construction}, for the stochastic domination of a single $\bY_n$, it suffices to prove \cref{thm:ub} where $\fL_n$ is the $H+1-n$ level line in $\pi^\xi_Q$.

	Next, by FKG, we can first remove the floor in some parts of $Q$, and condition on decreasing events. Let $F$ be the portion of $Q$ below the line $y = 2N_n^{1/3}(\log L)^{16}$. Let $\cG^\sqcup$ be the event that $\fL_n$ stays above the horizontal line $\cH$ distance $2\log L$ above the highest point on the bottom side of $Q$. Then, $\pi^\xi_Q \succeq \pi^\xi_{Q; F}(\cdot \mid \cG^\sqcup)$. Note that the distance between $\cH$ and the bottom of $\Lambda$ is $o(N_n^{1/3})$, and goes to 0 under the rescaling of $Y_n$. For simplicity, in the rest of this section we will refer to the $y$-coordinate of $\cH$ as 0, and use ``height'' to denote the vertical distance of a point above $\cH$ (as opposed to referencing $\phi$ as a height function).

    From here, we can follow the same preliminary steps done in the proof of \cref{thm:growth-gadget} to move to a polymer model. By \cref{lem:cluster-expansion-likely}, we can condition further on the event $\cG_1$ defined there, which says that we have the cluster expansion form of the measure given by \cref{prop:CE-law-with-floor}. That is, (recalling \cref{eq:weight-of-q-E*} and the notation in \cref{eq:def-qUmu-weights}) we have
    \begin{align}\label{eq:UB-repeat-qn-weight}\pi^\xi_{Q; F}(\gamma \mid \cG^\sqcup, \cG_1) \propto \exp\bigg(-\sE^*_\beta(\gamma) + \frac{\cA_F(\gamma)}{N_n}+ \fI_Q(\gamma) \bigg) = q^n_{Q; F, 1}(\gamma)\,.
    \end{align} 
    Hence, by \cref{eq:good-event-CE-G'} we can now move to the polymer model with weights $q^n_{Q; F, 1}(\gamma)$ and partition function \[Z^n_{Q, Q; F, 1}(A, B \mid \cG^\sqcup) = \sum_{\gamma \in \cP_Q(A, B) \cap \cG^\sqcup}q^n_{Q; F, 1}(\gamma)\,.\]
    
     Recalling the animal decomposition from \cref{def:animal} and the notational comment following it, we extend the above definitions to weights and sums over animals $\Gamma$. Now define $W$ as ``$Q$ with no top or bottom'', i.e. the extension of $Q$ to an infinite vertical strip containing it at the top and bottom sides of $Q$. (Both the choice of the strip and exact point of extension are irrelevant.) By the event $\cG^\sqcup$ and the construction of $Q$, we first argue that except for a bad set of $\gamma$ making up a $o(1)$ fraction of $Z^n_{Q, Q; F, 1}(\cdot \mid \cG^\sqcup)$ and of $Z^n_{Q, W; F, 1}(\cdot \mid \cG^\sqcup)$, we have 
     \[q^n_{Q; F, 1}(\gamma) = (1+o(1))q^n_{W; F, 1}(\gamma)\,,\]
    and we can instead study the polymer model with weights $q^n_{W; F, 1}$ and partition function $Z^n_{Q, W; F, 1}(\cdot \mid \cG^\sqcup)$.
    By the decay properties of $\fI$, it suffices to show that $\gamma$ stays distance at least $\log L$ away from the top and bottom of $Q$, besides an exceptional set of $\gamma$. If $\gamma$ approaches the top of $Q$ then it must have length greater than $1.1N_n^{1/3}(\log L)^{16}$, which can be ruled out via a Peierls argument mapping to a minimal length path from $A$ to $B$. In particular, the distance from $\gamma$ to the top of $Q$ is at least $\log L$. Regarding interactions with the bottom of $Q$, in the proof of \cref{lem:Db-smaller-log2} (see \cref{eq:G2-likely-given-G'}) it is shown that 
        \[Z^n_{Q, Q; F, 1}(A, B \mid \max_{b \in \gamma}|\partial D_b| \leq \log L\,, \cG^\sqcup)\geq (1-o(1))Z^n_{Q, Q; F, 1}(A, B \mid \cG^\sqcup)\,,\]
        and the same proof also holds when replacing the interaction domain with $W$. For $\gamma$ in the left side of the inequality, the distance between $\gamma$ and the bottom side of $Q$ is at least $\log L$.

\subsection*{Random walk preliminaries and notation} Let $\bP^0$ denote the polymer measure on $\Gamma$ with weight $q^n_{W; F, 1}$ and partition function $Z^n_{Q, W; F, 1}(A, B)$, so our starting point is $\bP^0(\cdot \mid \cG^\sqcup)$. First observe that in \cref{eq:UB-repeat-qn-weight}, we could have replaced $\cA_F(\gamma)$ with $-|D_1 \cap F|$, the area below $\gamma$ intersected with $F$, by just a renormalization which does not change the measure $\bP^0$ (in fact we only started with $\cA_F(\gamma)$ to keep the same notation as \cref{sec:initial-UB-JEMS}). Now define $\cA_F^{\downarrow}(\Gamma)$ as the area of the region below the linear interpolation of the cone-points of $\Gamma$ intersected with $F$. Similarly for an irreducible component $\Gamma^{(i)}$ between cone-points $\sfu, \sfv$, we define $\cA_F^{\downarrow}(\Gamma^{(i)})$ as the area below $\overline{\sfu\sfv}$ intersected with $F$. In \cref{lem:bdd,rem:area-difference}, we show that except for a bad set of $\Gamma$ occurring with $\bP^0(\cdot \mid \cG^\sqcup)$-probability $o(1)$, we can replace $|D_1 \cap F|$ with $|\cA_F^{\downarrow}(\Gamma)|$ at the multiplicative cost of $(1+o(1))$ to the weight of $\Gamma$. We will make this change and call the new measures $\bP$ and $\bP(\cdot \mid \cG^\sqcup)$. 

    Now observe that every diamond in $\Gamma$ is fully contained in $W$, and hence $\fI_W(\Gamma^{(i)}) = \fI_{\Z^2}(\Gamma^{(i)})$ for irreducible components $\Gamma^{(i)}$. For $\bP$, recall from \cref{prop:OZ-normalization} that, after renormalizing the weights by $e^{\sfh_{(1, 0)} \cdot (B - A)}$,
    every irreducible component $\Gamma^{(i)}$ in $\Gamma$ has weight given by the probability $\P^{\sfh_{(1, 0)}}(\Gamma^{(i)})e^{-\frac{1}{N_n}\cA_F^{\downarrow}(\Gamma^{(i)})}$. By the product structure of the weights (\cref{eq:product-structure}), this implies that the law of $\Gamma$ in between any two cone-points $\sfu, \sfv$ is an area tilted random walk bridge with increments taking values in the set of irreducible components, independent of the portion of $\Gamma$ before $\sfu$ and after $\sfv$. This random walk measure will be denoted by $\bP_{\sfu, \sfv}(\Gamma) := \bP(\Gamma \mid \sfu, \sfv \in \Cpts(\Gamma))$, where it is implicit that the measure is on the restriction of $\Gamma$ in between $\sfu$ and $\sfv$. It will be useful also to define the one ended random walk measure $\bP_\sfu$ that starts at $\sfu$, so that $\bP_{\sfu, \sfv} = \bP_\sfu(\cdot \mid \sfv \in \Cpts(\Gamma))$. We can also define the above notation for the case when there is no area term, replacing $\bP$ by $\hat\bP$. Finally, the notation for the expectation with respect to the above measures will be to replace $\bP$ with $\bE$.

    \subsection*{Proof strategy} The strategy for proving \cref{thm:ub} is as follows:
    \begin{enumerate}[label=\textbf{Step~\arabic*}:, ref=\arabic*, wide=0pt, itemsep=1ex]
    \item \label[step]{st-1-bdd}
    We show in \cref{lem:bdd} that w.h.p.\ we have the event $\Bdd$, informally stating that the first and last cone-points of $\Gamma$ are not far away from $A$ and $B$, and the maximum distance between any adjacent cone-points of $\Gamma$ is suitably small. This moreover allows us to modify the area tilt so that it only depends on the cone-points of $\Gamma$, and not the full path. Calling the first and last cone-points $A', B'$, we move to the measure $\bP_{A', B'}(\cdot \mid \cG^\sqcup, \Bdd)$ for the next step.

\item\label[step]{st-2-drop} We next show in \cref{lem:dropping} that w.h.p.\ in the first and last quadrant of the interval $[A'_1, B'_1]$, we can find points with height at most $1000N_n^{1/3}$. Denoting these points $\bar\sfu, \bar\sfv$, we proceed to the next step with the measure $\bP_{\sfu, \sfv}(\cdot \mid \cG^\sqcup, \Bdd)$.

\item\label[step]{st-3-drop-to-K} Moving inwards, we show in \cref{lem:drop-to-Kepsilon} that for any $\epsilon > 0$, with probability $1 - \epsilon$, the cone-points with $x$-coordinates $-(\log\log L)N_n^{2/3}$ and $(\log\log L)N_n^{2/3}$, call them $\bar\sfu',\bar\sfv'$, are at height at most $K_\epsilon N_n^{1/3}$. Calling these points $\bar\sfu', \bar\sfv'$, we move to the next step with the measure $\bP_{\sfu', \sfv'}(\cdot \mid \cG^\sqcup, \Bdd)$.

\item\label[step]{st-4-repel}
 In \cref{lem:repel-with-area} we show that with high probability, we have the event $\mathsf{Repel}$ that $\Cpts(\Gamma)$ stays above $N^\delta$ in the interval $J$ that is $N^{8\delta}$ away from the endpoints $\bar\sfu'$ and $\bar\sfv'$. Call the first and last cone-points in $J$ by $\bar\sfu'', \bar\sfv''$ and consider the measure $\bP_{\bar\sfu'', \bar\sfv''}(\cdot \mid \cG^\sqcup, \Bdd, \mathsf{Repel})$. Since $\Bdd \cap \mathsf{Repel}$ implies $\cG^\sqcup$, the conditioning on $\cG^\sqcup$ can be removed. The resulting measure $\bP_{\bar\sfu'', \bar\sfv''}(\cdot \mid \Bdd, \mathsf{Repel})$ is such that the marginal on $\Cpts(\Gamma)$ is a 2\Dim area-tilted random walk, as (recalling also \cref{rem:area-difference}) we have removed everything not measurable with respect to $\Cpts(\Gamma)$. The convergence of such a random walk to the Ferrari--Spohn diffusion was proven in \cite{IOSV21}.
    \end{enumerate}

\subsection*{Reducing to a 2D RW with an area-tilt}

We begin with establishing the boundedness of irreducible components.

    \begin{lemma}\label{lem:bdd}
        Let $\Bdd$ be the set of $\Gamma$ such that $|\mathsf{Cpts}(\Gamma)| \geq 2$, and the maximum displacement $\max_i(|X(\Gamma^{(i)})|, |X(\Gamma^{(L)})|, |X(\Gamma^{(R)})|)$  is no more than $(\log L)^{50}$. Then, 
        \[\bP^0(\Bdd^c \mid \cG^\sqcup) \leq e^{-c(\log L)^{50}}\,,\]
        or equivalently,
        \[Z^n_{Q, W; F, 1}(\Bdd^c \mid \cG^\sqcup) \leq e^{-c(\log L)^{50}}Z^n_{Q, W; F, 1}(\cG^\sqcup)\,.\]
    \end{lemma}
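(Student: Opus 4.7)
The plan is to decompose $\Bdd^c = E_1 \cup E_2$ with $E_1 = \{|\mathsf{Cpts}(\Gamma)| < 2\}$ and $E_2 = \{\max(|X(\Gamma^{(i)})|, |X(\Gamma^{(L)})|, |X(\Gamma^{(R)})|) > (\log L)^{50}\}$, and treat each separately. Before that we lower-bound the denominator: the cigar-shape construction from the proof of \cref{thm:growth-gadget} gives $Z^n_{Q, W; F, 1}(\cG^\sqcup) \geq \exp(\sG^n_1(\ell, \theta) - O((\log L)^{40}))$ (the cigars, of height $(\log L)^2$ above $\overline{AB}$, comfortably clear $\cH$), so the task reduces to showing $Z^n_{Q,W;F,1}(E_i) \leq e^{-c(\log L)^{50}} Z^n_{Q,W;F,1}$ for $i=1,2$, absorbing the polylog overhead into the exponent by taking $\beta$ large.

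For $E_2$ we will use the Ornstein--Zernike decomposition of \cref{subsec:cone-points}. By \cref{eq:cone-pt-decomposition}, the animal weight $\hat q^n_{W}(\Gamma)$ factorizes over the irreducible components between cone-points; modulo a negligible area-tilt correction (on $\Bdd$, one has $\sum_i |X(\Gamma^{(i)})|^2/N_n = o(1)$, so the difference between $\cA_F(\gamma)$ and $\sum_i \cA_F^\downarrow(\Gamma^{(i)})$ is $o(N_n)$), so does $q^n_{W; F, 1}(\Gamma)$. After renormalizing each irreducible factor by $e^{\sfh^n_{(1, 0)} \cdot X(\Gamma^{(i)})}$ as in \cref{prop:OZ-normalization}, a component of horizontal size $m$ contributes a weight of mass at most $C e^{-\nu_g\beta m} \cdot e^{m^2/(2N_n)}$: the first factor is the OZ exponential decay and the second is the cone-bounded area-tilt $|\cA_F^\downarrow| \leq m^2/2$. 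For $\beta$ large, the OZ decay dominates in the relevant regime $m \leq 3N_n^{2/3}(\log L)^{25}$ (a Peierls bound on $|\gamma|$), and a union bound over the $O(N_n^{2/3}(\log L)^{25})$ candidate starting positions (plus the analogous bounds on $\Gamma^{(L)}, \Gamma^{(R)}$) gives $Z^n_{Q, W; F, 1}(E_2) \leq e^{-c\beta (\log L)^{50}} Z^n_{Q, W; F, 1}$.

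For $E_1$, \cref{lem:cpts-length} (applicable here since $Q \supset \sC(\overline{AB})$ by \cref{lem:cigar-likely-in-Z2}) gives the area-free bound $\hatZ^n_{Q,W}(E_1) \leq e^{-c\beta N_n^{2/3}(\log L)^{25}}\hatZ^n_{Q,W}$. We then transfer to $\widetilde\bP$ via \cref{lem:rare-events-changing-Z}, with the pointwise bound $e^{\cA_F/N_n} \leq e^{|F|/N_n}= e^{(\log L)^{41}}$ controlling the area-tilt contribution; the resulting $O((\log L)^{41})$ overhead is negligible compared to the $N_n^{2/3}(\log L)^{25}$-scale exponential decay.

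The main obstacle will be accounting for the several $\mathrm{polylog}(L)$-sized overhead factors---the cigar lower bound, the area-tilt correction in $E_2$, and the pointwise area-tilt bound in $E_1$---against the exponential decays at hand. Attaining the target $e^{-c(\log L)^{50}}$ bound demands that $\nu_g \beta$ strictly dominate the absolute constants in these overheads, and this is precisely what forces the ``$\beta$ sufficiently large'' hypothesis in the lemma.
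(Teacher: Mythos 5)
Your overall plan---OZ factorization for the large-displacement event $E_2$, \cref{lem:cpts-length} for the no-cone-points event $E_1$---matches the paper's, but the paper's execution is cleaner and yours has one gap. The paper's first move is to observe that the area tilt changes every weight by at most $e^{(\log L)^{41}}$, so both the numerator and denominator in $\widetilde\bP(\Bdd^c\mid\cG^\sqcup)$ may be replaced at once by their area-free counterparts $\hatZ^n_{Q,W}(\cdot)$; the cost $e^{2(\log L)^{41}}$ is absorbed into the target $e^{-c(\log L)^{50}}$ by a pure comparison of powers of $\log L$, with no need for $\beta$ to dominate a constant. After that, $\hatZ^n_{Q,W}(\cG^\sqcup)=(1-o(1))\hatZ^n_{Q,W}$ (random-walk estimate) and the OZ decay closes the argument. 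You instead keep the area tilt inside the factorization, which both creates extra overheads and introduces a circularity: you invoke, ``on $\Bdd$,'' that the gap between $\cA_F(\gamma)$ and $\sum_i\cA_F^{\downarrow}(\Gamma^{(i)})$ is $o(N_n)$---but the event $E_2$ you are bounding lies in $\Bdd^c$, so that hypothesis is unavailable. (This step is in fact dispensable, since the per-component cone bound $|\cA_F^{\downarrow}(\Gamma^{(i)})|\le m_i^2/2$ already controls the area additively without routing through the $\Bdd$-only estimate, but as written the step does not stand.)

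Two secondary points. The polylog overheads in your route are larger than the paper's: bounding the full area-tilted partition function via \cref{cor:growth-bound-partition-functions} costs $e^{O((\log L)^{2a})}=e^{O((\log L)^{50})}$, which is the same power as the target, so your denominator comparison genuinely needs $\beta$ to beat an absolute constant; the paper's upfront strip caps the overhead at $e^{O((\log L)^{41})}$, after which ``$\beta$ large'' is required only for the OZ machinery (\cref{prop:OZ-normalization}, \cref{lem:cpts-length}) to apply, not to dominate prefactors. And the containment $\sC(\overline{AB})\subset Q$ that legitimizes \cref{lem:cpts-length} comes from \cref{lem:boundary-construction}\cref{it:good-boundary-points}, not from \cref{lem:cigar-likely-in-Z2}, which is only a partition-function comparison that presupposes the containment.
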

    \begin{proof}
        Since the area tilt can change the weight of events by at most a factor of $e^{(\log L)^{41}}$, it suffices to prove the bounds with respect to $\hatZ^n_{Q, W}$ and weights $\hatq^n_W$ (i.e. having removed the area term). By \cref{lem:cpts-length} we can assume we have cone-points. Now by construction of $Q$, all diamonds of $\Gamma$ are entirely contained in $W$, whence $\hatq^n_{W}(\Gamma^{(i)}) = \hatq^n_{\Z^2}(\Gamma^{(i)})$. Ignoring the event $\cG^\sqcup$ by an upper bound, we can thus write
		\begin{align*}
			\hatZ^n_{Q, W}\big(&A, B  \;\big|\; |X(\Gamma^{(L)})| > (\log L)^{50}, |\mathsf{Cpts}(\Gamma)| \geq 2, \cG^\sqcup \big) \\
			&\leq \sum_{\substack{u \in \cY^{\btl}(A)\\v \in \cY^{\btr}(B)\\
            |A - u| > (\log L)^{50}}}\sum_{\Gamma^{(\sfL)} \in \sfA_\sfL \cap \cP_Q(A, u)}\!\!\hatq^n_{W}(\Gamma^{(\sfL)})\sum_{\Gamma^{(\sfR)} \in \sfA_\sfR \cap \cP_Q(v, B)}\!\!\hatq^n_{W}(\Gamma^{(\sfR)})\sum_{\substack{k \geq 1\\\Gamma^{(1)} \circ \ldots \Gamma^{(k)} \in \cP_{\Z^2}(\sfu, \sfv)}}\!\!\prod_{i =1}^k\hatq^n_{\Z^2}(\Gamma^{(i)})\\
			&= \sum_{\substack{u \in \cY^{\btl}(A)\\v \in \cY^{\btr}(B)\\
            |A - u| > (\log L)^{50}}}\sum_{\Gamma^{(\sfL)} \in \sfA_\sfL \cap \cP_Q(A, u)}\!\!\hatq^n_{W}(\Gamma^{(\sfL)})\sum_{\Gamma^{(\sfR)} \in \sfA_\sfR \cap \cP_Q(v, B)}\!\!\hatq^n_{W}(\Gamma^{(\sfR)})\hatZ^n_{\Z^2, \Z^2}(\sfu, \sfv)\,.
		\end{align*}
		Fixing $u$, every $\Gamma^{(L)} \in \cP_Q(A, u)$ has length at least $|A - u|$, so by \cref{prop:OZ-normalization} (and the definition of $\sfh_\sfy$ there) we have
		\begin{equation*}
            \sum_{\Gamma^{(\sfL)} \in \sfA_\sfL \cap \cP_Q(A, u)}\!\!\hatq^n_{W}(\Gamma^{(\sfL)}) \leq Ce^{-\nu_g\beta |A - u|}e^{-\tau_\beta(A - u)}\,,
		\end{equation*}
		with an analogous bound for $\Gamma^{(\sfR)}$. Recall that $\hatZ^n_{\Z^2, \Z^2}(\sfu, \sfv) \leq Ce^{-\tau_\beta(v - u)}$ by \cref{it:DKS-convergence-rate} of \cref{prop:DKS-propositions} combined with \cref{prop:compare-tau}. Together with the convexity of $\tau_\beta$, we obtain an upper bound of 
		\begin{align*}
		      \hatZ^n_{Q, W}(A, B \mid |X(\Gamma^{(L)})| > (\log L)^{50},& |\mathsf{Cpts}(\Gamma)| \geq 2, \cG^\sqcup) \\&\leq Ce^{-\tau_\beta(B-A)}\sum_{\substack{u \in \cY^{\btl}(A)\\
            |A - u| > (\log L)^{50}}} e^{-\nu_g\beta|A - u|}\sum_{v \in \cY^{\btr}(B)} e^{-\nu_g\beta|B - v|}\,.
		\end{align*}
		The number of points $u$ such that $|A - u| = r$ is $O(r)$, so that the sum over $u$ is bounded by $e^{-\tfrac{\nu_g}2\beta (\log L)^{50}}$. By the same logic, the sum over $v$ is bounded by a constant. Hence, we have
	\begin{align*}\hatZ^n_{Q, W}(A, B \mid |X(\Gamma^{(L)})| > (\log L)^{50}, |\mathsf{Cpts}(\Gamma)| \geq 2, \cG^\sqcup)&\leq Ce^{-\tfrac{\nu_g}2\beta (\log L)^{50}}e^{-\tau_\beta(B-A)}\\
    &\leq Ce^{-c\beta (\log L)^{50}}\hatZ^n_{\Z^2, \Z^2}(A, B)\\
    &\leq Ce^{-c\beta (\log L)^{50}}\hatZ^n_{Q, W}(A, B)\,,
    \end{align*}
        where we again used \cref{it:DKS-convergence-rate} of \cref{prop:DKS-propositions} with \cref{prop:compare-tau} in the second line, and \cref{lem:cigar-likely-in-Z2,lem:strip-to-Z2-interactions} in the third.

    The proof that the irreducible components $|X(\Gamma^{(i)}|$ have the same bound follows similarly, so the details are omitted. The lemma now concludes by a union bound.
\end{proof}

\begin{remark}\label{rem:area-difference}
    Given $\Gamma$, let $S$ be the linear interpolation of $\Cpts(\Gamma)$. On the event $\Bdd$, the area in between $\Gamma$ and $S$ between two cone-points of $\Gamma$ is at most $O(\log L)^{50}$. There are (deterministically) at most $N_n^{2/3}(\log L)^{25}$ cone-points, so the total difference in area between $\Gamma$ and $S$ is at most $O(N_n^{2/3}(\log L)^{75})$. After dividing this area by $N_n$, this is $o(1)$. Hence, as commented in the preliminaries of this section, we can change the area tilt term in \cref{eq:UB-repeat-qn-weight} to $-\cA_F^\downarrow(\Gamma)/N_n$ at the cost of a multiplicative $(1+o(1))$. 
\end{remark}

Next, we show that $\Gamma$ will drop to height $O(N_n^{1/3})$. The following lemma may be viewed as a 2\Dim analog of the 1\Dim dropping lemma in \cite[Lem. 4.2]{HKS25}. (In both cases, the proof uses the mechanism of splitting the interval into segments where the random walk is forced to follow a parabolic descent, with parameters derived from balancing the area term vs.\ the large deviations of a random walk. The 2\Dim nature of the walk introduces several technical difficulties; see \cref{fig:drop-points}.) 
For an easier read of the proof, we leave the dimensions in terms of $a, b$; our application is for $a = 25$ and $b = 16$.
\begin{lemma}\label{lem:dropping}
Fix $a,b>0$ such that $a>3b/2$, let $I:=\llb -\frac12 N_n^{2/3} (\log L)^a,-\frac12 N_n^{2/3} (\log L)^a\rrb$, and 
let $\sfu,\sfv$ be such that $\sfu_1<\sfv_1$ are within an additive $(
\log L)^{50}$-term of the two endpoints of $I$, and $\sfu_2,\sfv_2 \in N_n^{1/3}(\log L)^b \pm (\log L)^{50}  $. Let $\mathrm{Drop}$ be the event that there exist cone-points of $\Gamma$ in first and last quarters of $I$ which lie below height $KN_n^{1/3}$. Then for some constant $K > 0$,
\[\bP_{\sfu, \sfv}(\mathrm{Drop} \mid \cG^\sqcup, \Bdd) \geq 1 - e^{-c(\log L)^a}\,.\]
\end{lemma}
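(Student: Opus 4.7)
By symmetry we focus on the first quarter of $I$. Let $x^\ast := \sfu_1 + \lfloor N_n^{2/3}(\log L)^a / 8 \rfloor$ denote its midpoint. Since on $\Bdd$ the upper envelope $\mathsf{UE}(\Gamma)$ remains within $(\log L)^{50}$ of $\Cpts(\Gamma)$, it suffices to show
\[ \bP_{\sfu,\sfv}\bigl(\mathsf{UE}(\Gamma)(x^\ast) > K N_n^{1/3}/2 \bigm| \cG^\sqcup, \Bdd\bigr) \leq e^{-c(\log L)^a} \]
for some constant $K > 0$, as the nearest cone-point will then lie below $K N_n^{1/3}$ at an $x$-coordinate still comfortably within the first quarter.

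\textbf{Gaussian expansion.} Mimicking the proof of \cref{thm:growth-gadget}, express the complementary probability as a ratio of polymer partition functions and split $\Gamma$ at the first horizontal bond $b$ where $\mathsf{UE}(\Gamma)$ crosses the vertical line $x = x^\ast$, conditioning on $D_b(\Gamma) = \emptyset$ via \cref{lem:cut-gamma-in-half} at negligible cost. Writing $C = (x^\ast - \tfrac12, y)$ for the splitting point and applying \cref{cor:growth-bound-partition-functions} to each half $Z(\sfu, C), Z(C, \sfv)$, a Taylor expansion of the surface tension in the angles $\theta_{\sfu C}, \theta_{C\sfv}$ around $\theta_{\sfu\sfv} \approx 0$---combined with the planar area $\cA_F^0 = \tfrac12(\sfv_1 - \sfu_1)(\sfu_2 - y)$ of the triangle $\sfu C \sfv$---produces a Gaussian in $y$ with mean and variance
\[ y^\ast = \sfu_2 - \frac{(x^\ast - \sfu_1)(\sfv_1 - x^\ast)}{2(\tau_{\beta,n}(0) + \tau_{\beta,n}''(0))\,N_n}\,, \qquad \sigma^{\ast 2} = \frac{(x^\ast - \sfu_1)(\sfv_1 - x^\ast)}{(\sfv_1 - \sfu_1)(\tau_{\beta,n}(0)+\tau_{\beta,n}''(0))}\,. \]
Since $\sfu_2 \lesssim N_n^{1/3}(\log L)^b$ while the second term is of order $N_n^{1/3}(\log L)^{2a}$, the hypothesis $a > 3b/2$ forces $y^\ast \asymp -N_n^{1/3}(\log L)^{2a}$, well below the floor, with $\sigma^{\ast 2} \asymp N_n^{2/3}(\log L)^a$.

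\textbf{Truncated Mills ratio and cigar lower bound.} Because $\mathsf{UE}(\Gamma) \geq 0$ on $\cG^\sqcup$, the denominator of the ratio is (after the cigar construction below) essentially the Gaussian sum restricted to $y \geq 0$, while the numerator is its tail above $KN_n^{1/3}/2$. A direct tail estimate for a Gaussian with mean $y^\ast \ll 0$ truncated at $0$ yields
\[ \frac{\sum_{y > K N_n^{1/3}/2} e^{-(y-y^\ast)^2/(2\sigma^{\ast 2})}}{\sum_{y \geq 0} e^{-(y-y^\ast)^2/(2\sigma^{\ast 2})}} \leq C \exp\Big(-\tfrac{K N_n^{1/3} |y^\ast|}{2\sigma^{\ast 2}}\Big) \leq C e^{-cK(\log L)^a}\,, \]
using $|y^\ast|/\sigma^{\ast 2} \asymp (\log L)^a/N_n^{1/3}$; taking $K$ a suitably large constant gives $\leq e^{-c(\log L)^a}$. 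The matching lower bound on the denominator is obtained exactly as in the cigar argument at the end of the proof of \cref{thm:growth-gadget}, but now around the \emph{floor-constrained} optimal trajectory---at height $O(1)$ through the interior, rising through boundary layers of width $\sim N_n^{2/3}(\log L)^{b/2}$ near $\sfu$ and $\sfv$ (the width balancing $\tau h^2/w$ against $wh/N_n$). The extra boundary-layer cost is $O((\log L)^{3b/2})$, safely absorbed by $a > 3b/2$.

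\textbf{Main obstacle.} The central difficulty is that we are in the regime opposite to \cref{thm:growth-gadget}: there $b \geq 3a$ keeps the unconstrained Gaussian mean above the floor, whereas here $a > 3b/2$ forces it deep below, so the Gaussian summation must be truncated at $y = 0$ and the standard Mills ratio replaced by its floor-truncated (effectively exponential) analog with scale $\sigma^{\ast 2}/|y^\ast|$. Correspondingly, the optimal trajectory underlying the cigar lower bound is no longer a smooth parabola but develops genuine boundary layers, whose shape and cost must be controlled via an explicit variational balance between surface tension and area tilt.
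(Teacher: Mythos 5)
Your approach has a decisive gap in the error budget. You propose to split $\Gamma$ at $x^\ast$ and apply \cref{cor:growth-bound-partition-functions} to each half, obtaining a Gaussian in the intermediate height $y$, then use a truncated Mills ratio to extract a gain of $e^{-cK(\log L)^a}$. But the partition-function bound in \cref{cor:growth-bound-partition-functions} (inherited from the doubling induction in \cref{prop:bound-partition-functions}, with $z_k = z_1^{2^{k-1}}$) carries a multiplicative error of $e^{O((\log L)^{2a})}$ for pairs at separation $\asymp N_n^{2/3}(\log L)^a$, and this error \emph{does not cancel} between your numerator and denominator since the two bounds come from different constructions (Gaussian expansion on one side, floor-constrained cigars on the other). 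For any fixed constant $K$ and large $L$ one has $(\log L)^{2a} \gg K(\log L)^a$, so the error overwhelms the gain. Notice this issue does not arise in \cref{thm:growth-gadget} itself, where the gain is $e^{-c(\log L)^{2a}}$, precisely enough to beat the same error---here the targeted gain is a full factor $(\log L)^a$ smaller. A secondary imprecision: the Gaussian sum restricted to $y \geq 0$ constrains the polymer only at the single abscissa $x^\ast$, whereas $\cG^\sqcup$ constrains it everywhere; the one-point-constrained optimal parabola still dips below the floor away from $x^\ast$, so identifying the denominator with the truncated Gaussian sum loses a nontrivial multiplicative factor that you do not track.

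The paper's proof circumvents all of this by never invoking the Gaussian partition-function machinery for the numerator. It isolates the area term and observes that on $\mathrm{Drop}^c \cap \Bdd$ the area $\cA^\downarrow_F(\Gamma)$ is \emph{deterministically} at least $K'(\log L)^a N_n/4$ (the polymer sits above $K'N_n^{1/3}$ over a window of length $N_n^{2/3}(\log L)^a/4$), so $\hat\bE_{\sfu,\sfv}[e^{-\cA^\downarrow_F/N_n}\one_{\mathrm{Drop}^c}\mid\cG^\sqcup,\Bdd] \leq e^{-K'(\log L)^a/4}$---a clean bound with no partition-function errors. The matching lower bound $\hat\bE_{\sfu,\sfv}[e^{-\cA^\downarrow_F/N_n}\mid\cG^\sqcup,\Bdd] \geq e^{-c^\ast(\log L)^a}$ is obtained not via cigars but by a local-CLT targeting argument: the effective 2D random walk is forced through a sequence of balls that drop from $\sfu$ over $\asymp(\log L)^{3b/2}$ steps, stay at height $O(N_n^{1/3})$ over $\asymp(\log L)^a$ steps of width $N_n^{2/3}$ (each hitting its target with constant probability), then climb back to $\sfv$. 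Both bounds share the same reference measure $\hat\bP_{\sfu,\sfv}(\cdot\mid\cG^\sqcup,\Bdd)$, so $\hatZ(\cG^\sqcup)$ cancels exactly, and choosing $K' > 8c^\ast$ closes the argument. Your variational picture of the floor-constrained trajectory with boundary layers of width $\sim N_n^{2/3}(\log L)^{b/2}$ and cost $(\log L)^{3b/2}$ is correct and is indeed what the paper's targeting trajectory realizes, but the Gaussian-Mills route for the numerator cannot deliver the bound in the presence of the $(\log L)^{2a}$ machinery error.
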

\begin{proof}
Roughly, the sought bound is achieved by forcing the 2\Dim random walks to proceed as follows:
\begin{itemize}
\item In the two intervals of length $N_n^{2/3} / (\log L)^{b}$ closest to $\sfu,\sfv$, drop by height $N_n^{1/3}/(\log L)^{b/2}$. 
This is repeated up to $(\log L)^{3b/2}$ times from each side so the height drops to at most $\frac{K}2 N_n^{1/3}$. 

\item Not exceed height $KN_n^{1/3}$ throughout the remaining middle interval. 
\end{itemize}
We will argue that the probability cost associated to these two items in a random walk with no area-tilt is $\exp[-c (\log L)^{3b/2}]$ and $\exp[-c (\log L)^a]$ respectively, whereas the associated area of such $\Gamma$ has $\cA^\downarrow_F(\Gamma)=O((\log L)^{3b/2}+ (\log L)^a)N_n)$, giving a term of the same order as above in the probability.
    
    By a union bound, we may as well assume that $\mathrm{Drop}$ only refers to the existence of such a cone-point in the first quarter of $I$. Isolating the area term, we have
    \[\bP_{\sfu, \sfv}(\mathrm{Drop}^c \mid \cG^\sqcup, \Bdd) = \frac{\hat\bE_{\sfu, \sfv}[e^{-\frac{1}{N_n}\cA^\downarrow_F(\Gamma)}\one_{\mathrm{Drop}^c} \mid \cG^\sqcup, \Bdd]}{\hat\bE_{\sfu, \sfv}[e^{-\frac{1}{N_n}\cA^\downarrow_F(\Gamma)} \mid \cG^\sqcup, \Bdd]}\,.\]
    On the event $\mathrm{Drop}^c \cap \Bdd$, we know that $\Gamma$ lies above height $KN_n^{1/3} - (\log L)^{50} =: K'N_n^{1/3}$ over an interval of length $N_n^{2/3}(\log L)^a/4$. Thus, we have $\cA^\downarrow_F(\Gamma) \geq K'(\log L)^aN_n/4$, whence
    \[\hat\bE_{\sfu, \sfv}[e^{-\frac{1}{N_n}\cA^\downarrow_F(\Gamma)} \mid \cG^\sqcup, \Bdd] \leq e^{-K'(\log L)^a/4}\,.\]
    It suffices to now show the key lower bound, which captures the dropping behavior of $\Gamma$ to the equilibrium height of $O(N_n^{1/3})$:
    \begin{equation}\label{eq:key-lower-bound}
        \hat\bE_{\sfu, \sfv}[e^{-\frac{1}{N_n}\cA^\downarrow_F(\Gamma)} \mid \cG^\sqcup, \Bdd] \geq e^{-c^*(\log L)^a}\,,
    \end{equation}
    whence we can take $K' > 8c^*$ to conclude.

    To prove this lower bound, we break $I$ into three intervals $I = I_1 \cup I_2 \cup I_3$ such that $|I_1| = |I_3| = N_n^{2/3}(\log L)^{b/2}$. Let $\Low$ be the event that the maximum height of the cone-points of $\Gamma$ in the interval $I_2$ is at most $KN_n^{1/3}$.  Then, on $\Bdd \cap \Low$, the maximum height of $\Gamma$ in $I_2$ is at most $KN_n^{1/3} + (\log L)^{50}$, in which case
    \[\frac{\cA^\downarrow_F(\Gamma)}{N_n} \leq \frac{1}{N_n}N_n^{1/3}(\log L)^b(|I_1| + |I_3|) + \frac{C}{N_n}KN_n^{1/3}|I_2| \leq (\log L)^{3b/2} + C'K(\log L)^a\,.\]
    Hence, 
    \begin{align*}\hat\bE_{\sfu, \sfv}[e^{-\frac{1}{N_n}\cA^\downarrow_F(\Gamma)} \mid \cG^\sqcup, \Bdd] &\geq e^{-(\log L)^{3b/2} - C'K(\log L)^a}\hat\bP_{\sfu, \sfv}\big(\Low \mid \Bdd, \cG^\sqcup\big) \\
    &= e^{-(\log L)^{3b/2} - C'K(\log L)^a}\hat\bP_\sfu\big(\Low \mid \Bdd, \cG^\sqcup, \sfv \in \Cpts(\Gamma)\big)\,.
    \end{align*}
    As long as $a > 3b/2$, the $(\log L)^a$ term dominates the prefactor, which is in the form of the desired lower bound in \cref{eq:key-lower-bound}. It now remains to prove a lower bound of the same form for the probability. We trivially have
    \[\hat\bP_\sfu\big(\Low \mid \Bdd, \cG^\sqcup, \sfv \in \Cpts(\Gamma)\big) \geq \hat\bP_\sfu\big(\Low, \cG^\sqcup, \sfv \in \Cpts(\Gamma) \mid \Bdd \big)\,.\]
    
    First, let $\cG^\sqcup_*$ be the event that the minimum height of any cone-point of $\Gamma$ is at least $(\log L)^{50}$. Then, $\Bdd \cap \cG^\sqcup_* \subset \cG^\sqcup$, so that
    \[\hat\bP_\sfu\big(\Low, \cG^\sqcup, \sfv \in \Cpts(\Gamma) \mid \Bdd\big) \geq \hat\bP_\sfu\big(\Low, \cG^\sqcup_*, \sfv \in \Cpts(\Gamma) \mid \Bdd\big)\,.\]
    By this move, we have reduced to a situation where all the events in question are measurable with respect to $\Cpts(\Gamma)$. Hence, we can turn to the marginal on $\Cpts(\Gamma)$, which has the law of a 2\Dim random walk with increment law $\P(x, y) = \P^{\sfh_{(1, 0)}}(X(\Gamma) = (x, y) \mid |(x, y)| \leq (\log L)^{50})$. Let $\sfS$ denote this random walk started at $\sfu$ when the measure referred to is $\hat\bP_\sfu(\cdot \mid \Bdd)$.

\begin{figure}
\vspace{0.1in}
\centering
    \begin{tikzpicture}
    \pgfmathsetseed{314159}

    \pgfmathsetmacro{\xuv}{0.55}
    \pgfmathsetmacro{\yuv}{0.25}
    \pgfmathsetmacro{\xw}{0.7}

    \begin{scope}[scale=1.5]
    \node[circle,scale=0.3,fill=black,label={[label distance=-2pt]above left:{\footnotesize$\mathsf{u}$}}] (u0) at (0,0) {};

    \foreach \i in {1,...,5} { \node[circle,scale=0.3,fill=blue] (u\i) at (\i*\xuv,-\yuv*\i) {};
    \draw[fill=blue!50, color=blue,fill opacity=0.25] (u\i) circle (1mm);
    \draw[color=black,thick,dotted] (u\i)--++(0,\yuv) -- ++(-\xuv,0);
    }

    \node[circle,scale=0.3,fill=black,label={[label distance=-2pt]above right:{\footnotesize$\mathsf{v}$}}] (v0) at (10*\xuv+6*\xw,0) {};

    \foreach \i in {1,...,4} { \node[circle,scale=0.3,fill=blue] (v\i) at (10*\xuv+6*\xw-\i*\xuv,-\yuv*\i) {};
    \draw[fill=blue!50, color=blue,fill opacity=0.25] (v\i) circle (1mm);   
    \draw[color=black,thick,dotted] (v\i)--++(0,\yuv) -- ++(\xuv,0);
    }

    \foreach \i in {1,...,4} {
    \node[circle,scale=0.3,color=red,fill=red] (w\i) at (5*\xuv+\i*\xw,-\yuv*5) {};    
    \draw[color=orange,fill=orange!50, fill opacity=0.5] (w\i) circle (1.8mm);
    \draw[color=black,thick,dotted] (w\i)-- ++(-\xw,0);
    }

    \foreach \i in {5,6} {
    \node[circle,scale=0.3,color=red,fill=red] (w\i) at (5*\xuv+\i*\xw,-\yuv*5+0.1*\i-0.4) {};    
    \draw[color=orange,fill=orange!50, fill opacity=0.5] (w\i) circle (1.8mm); 
    \draw[color=black,thick,dotted] (w\i)--++(-\xw,0) -- ++(0,-0.05);
    }
    
    \node[font=\footnotesize] at ($(u1)+(-0.1,-.25)$) {$\mathsf{u}^{(1)}$};
    \node[font=\footnotesize] at ($(u2)+(-0.1,-.25)$) {$\mathsf{u}^{(2)}$};
    \node[font=\footnotesize,rotate=-27] at ($(u3)+(0.,-.4)$) {$\ldots$};
    \node[font=\footnotesize] at ($(u5)+(-0.3,-.1)$) {$\mathsf{u}^{(m)}$};

    \node[font=\footnotesize] at ($(v1)+(0.2,-.2)$) {$\mathsf{v}^{(1)}$};
    \node[font=\footnotesize] at ($(v2)+(0.2,-.2)$) {$\mathsf{v}^{(2)}$};
    \node[font=\footnotesize,rotate=27] at ($(v3)+(0.34,-.22)$) {$\ldots$};
    \node[font=\footnotesize] at ($(v4)+(0.4,-.2)$) {$\mathsf{v}^{(m')}$};

    \node[font=\footnotesize] at ($(w1)+(0,-.35)$) {$\mathsf{w}^{(1)}$};
    \node[font=\footnotesize] at ($(w2)+(0,-.35)$) {$\mathsf{w}^{(2)}$};
    \node[font=\footnotesize] at ($(w3)+(0.,-.4)$) {$\ldots$};
    \node[font=\footnotesize] at ($(w4)+(0,-.35)$) {$\mathsf{w}^{(k)}$};
    \node[font=\footnotesize,rotate=15] at ($(w5)+(0.,-.4)$) {$\ldots$};
    \node[font=\footnotesize] at ($(w6)+(0,-.35)$) {$\mathsf{w}^{(k')}$};
    
    \draw[color=purple!80!black, thick, decorate, decoration={random steps,segment length=1mm}] 
     (u0)--(u1)--(u2)--(u3)--(u4)--(u5)--(w1)--(w4)--(w6)--(v4)--(v3)--(v2)--(v1)--(v0);

    \draw[|-|,gray!50!black] ($(u2)+(0,0.4)$)-- ++(\xuv,0);
    \node[color=gray!50!black,font=\tiny] at ($(u2)+(0.4,0.6)$) {$N_n^{2/3}(\log L)^{-b}$};

    \draw[|-|,gray!50!black] ($(u3)+(0,\yuv+0.03)$)-- ++(0,\yuv-0.03);
    \node[color=gray!50!black,font=\tiny] at ($(u3)+(0.75,0.4)$) {$N_n^{1/3}(\log L)^{-b/2}$};

    \draw[|-|,gray!50!black] ($(v2)+(0,0.4)$)-- ++(-\xuv,0);
    \node[color=gray!50!black,font=\tiny] at ($(v2)+(-0.15,0.6)$) {$N_n^{2/3}(\log L)^{-b}$};

    \draw[|-|,gray!50!black] ($(v3)+(0,\yuv+0.03)$)-- ++(0,\yuv-0.03);
    \node[color=gray!50!black,font=\tiny] at ($(v3)+(-0.75,0.4)$) {$N_n^{1/3}(\log L)^{-b/2}$};

    \draw[|-|,gray!50!black] ($(w3)+(0,0.4)$)-- ++(\xw,0);
    \node[color=gray!50!black,font=\tiny] at ($(w3)+(0.4,0.55)$) {$N_n^{2/3}$};

    \draw[thick,black] ($(u0)+(-0.25,-2.5)$)--($(v0)+(0.25,-2.5)$);
    
    \draw[|-|,gray!50!black] ($(u0)+(0,-2.6)$)-- ++(\xuv*5,0);
    \node[color=gray!50!black,font=\tiny] at ($(u0)+(1.5,-2.75)$) {$N_n^{2/3}(\log L)^{b/2}$};

    \draw[|-|,gray!50!black] ($(u5)+(0,-0.12)$)-- ++(0,-1.1);
    \node[color=gray!50!black,font=\tiny] at ($(u5)+(-0.35,-0.55)$) {$\frac{K}2 N_n^{1/3}$};

    \draw[|-|,gray!50!black] ($(u0)+(0,-0.07)$)-- ++(0,-2.4);
    \node[color=gray!50!black,font=\tiny] at ($(u0)+(0.6,-1.4)$) {$N_n^{1/3}(\log L)^b$};

    \draw[|-|,gray!50!black] ($(v0)+(0,-0.07)$)-- ++(0,-2.4);
    \node[color=gray!50!black,font=\tiny] at ($(v0)+(-0.6,-1.4)$) {$N_n^{1/3}(\log L)^b$};

    \draw[|-|,gray!50!black] ($(v0)+(0,-2.6)$)-- ++(-\xuv*4,0);
    \node[color=gray!50!black,font=\tiny] at ($(v0)+(-1.2,-2.75)$) {$N_n^{2/3}(\log L)^{b/2}$};

    \draw[|-|,gray!50!black] ($(v4)+(0,-0.12)$)-- ++(0,-1.35);
    \node[color=gray!50!black,font=\tiny] at ($(v4)+(.5,-0.8)$) {$N_n^{1/3}\log L$};

    \draw[|-|,gray!50!black] ($(w1)+(-.65,-1.35)$)-- ++(\xw*5+1.15,0);
    \node[color=gray!50!black,font=\tiny] at ($(w1)+(1.65,-1.5)$) {$N_n^{2/3}(\log L)^{a}$};
    \end{scope}
\end{tikzpicture}
\caption{The dropping points $\sfz^{(j)}$ and target balls $\cB_j$ around them in proving \cref{lem:dropping}. We lower bound the probability that $\Gamma: \sfu \rightarrow \sfv$ hits each target ball along the way. Between $\sfw^{(k')}$ and $\sfv^{(m')}$, the size of the balls decreases in order. It is now rare for a random walk started from the orange ball around $\sfw^{(k')}$ to hit the much smaller blue ball around $\sfv^{(m')}$. Controlling this probability is easier if this transition occurs away from the bottom, hence we first climb back up to height $N_n^{1/3}(\log L)$.}
\label{fig:drop-points}
\end{figure}
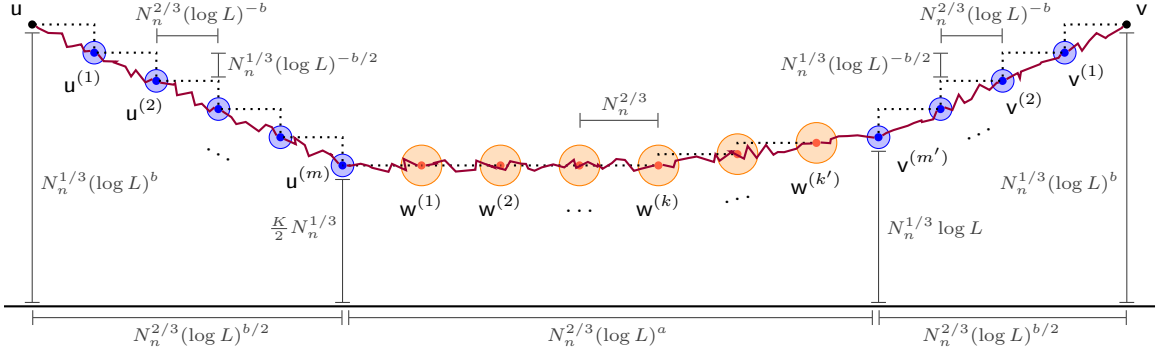

    Let
    \[ m:=\lfloor (\log L)^{3b/2} - \tfrac{K}2(\log L)^{b/2}\rfloor \quad ,\quad m' := \lfloor (\log L)^{3b/2} - (\log L)^{b/2+1}\rfloor \,,\]
    and define the points
\begin{align*}
    \sfu^{(j)} &= \left(\sfu_1+ j  N_n^{2/3}(\log L)^{-b}, \sfu_2 - jN_n^{1/3}(\log L)^{-b/2} \right)\qquad(j=0,\ldots,m)\,,\\
    \sfv^{(j)} &=  \left( \sfv_1 - j N_n^{2/3}(\log L)^{-b}, \sfv_2 -j N_n^{1/3}(\log L)^{-b/2}\right)\qquad(j=0,\ldots,m')\,,
\end{align*}
so that, recalling $|\sfu_2 - N_n^{1/3}(\log L)^b |< (\log L)^{50}$, we have $|\sfu^{(m)}_2 - \frac{K}{2}N_n^{1/3}| = O( N_n^{1/3}(\log L)^{-b/2}))$ (accounting for integer rounding in $m$), and similarly $|\sfv_2^{(m')}- N_n^{1/3}\log L| = O(N_n^{1/3}(\log L)^{-b/2}))$.
Further set \[ k :=  \lfloor (\log L)^a - (m+m') (\log L)^{-b}\rfloor - \lceil\log L - \tfrac{K}{2}\rceil \quad,\quad 
 k' = \lfloor (\log L)^a - (m+m') (\log L)^{-b}\rfloor - 1
\,,
\]
and define the points \begin{align*}
\sfw^{(j)} = \begin{cases}
    \left(\sfu^{(m)}_1 + j\lfloor N_n^{2/3}\rfloor, \sfu^{(m)}_2\right) & j=1,\ldots,k\,,\\
    \noalign{\medskip}
    \left(\sfu^{(m)}_1 + j\lfloor N_n^{2/3}\rfloor, \sfu^{(m)}_2 + (j-k)N_n^{1/3} \right) & j=k+1,\ldots,k' \,.\\
\end{cases}
\end{align*}
so that the jump from $\sfw^{(k')}$ to $\sfv^{(m')}$ is such that
\begin{align*} \lfloor N_n^{2/3}\rfloor \leq \sfv^{(m')}_1-\sfw^{(k')}_1 \leq 2\lfloor N_n^{2/3}\rfloor\,.
\end{align*} 

We will control the probability that in time $m+m' + k' + 1$, the random walk $\sfS$ passes through appropriately sized balls around the points $\sfu^{(j)}, \sfw^{(j)}, \sfv^{(j)}$, as illustrated in \cref{fig:drop-points}. That is, $\sfS$ will first drop down along the points $\sfu^{(j)}$ during the interval $I_1$, then remain low at the points $\sfw^{(j)}$ along the interval $I_2$, and finally climb back up to $\sfv$ along the points $\sfv^{(j)}$ along the interval $I_3$. We defined separately the above points as $\sfu, \sfw, \sfv$ for ease of definitions and an emphasis on the above geometric description, but it will be convenient now to just define one sequence of points $\sfz^{(j)}$ such that
\begin{equation*}
    \sfz^{(j)} = \begin{cases}
        \sfu^{(j)} & \text{for } 0 \leq j \leq m\\
        \sfw^{(j - m)} & \text{for } m < j \leq m + k'\\
        \sfv^{(m+m'+k'+1-j)} & \text{for } m + k' < j \leq m+m' + k' + 1\,.
    \end{cases}
\end{equation*}
Now along this sequence, for $1 \leq j  \leq m+m'+k'+1$, define $\alpha = \E^{\sfh_{(1, 0)}}[X(\Gamma)_1 \mid |X(\Gamma)| \leq (\log L)^{50}]$, the expectation in the $x$-coordinate of an increment. (The expectation in the $y$-coordinate is 0 by the symmetry of the model.) Then define $\ell_j := \frac{1}{\alpha}(\sfz^{(j)}_1 - \sfz^{(j-1)}_1)$, noting that with the exception of $j = m+k'$, we have $\ell_j \leq \ell_{j+1}$. 
Define $\cB_j$ as the ball of radius $\sqrt{\ell_j}$ centered about $\sfz^{(j)}$ in $\Z^2$, except when $j=m+k'$ (the index where $\ell_j > \ell_{j+1}$), where we set $\cB_j$ to have radius $\sqrt{\ell_{j+1}}$.
For convenience, define $\cB_0 = \sfz^{(0)}$. Observe now that for all~$j$ we have 
\begin{equation}\label{eq:dist-z-tildez}
|\widetilde \sfz^{(j)} - z^{(j)}| \leq \sqrt{\ell_{j+1}}\,.
\end{equation}
Indeed for $j \neq m+k'$, we have by construction that $|\widetilde \sfz^{(j)} - z^{(j)}| \leq \sqrt{\ell_j} \leq \sqrt{\ell_{j+1}}$. For $j = m+k'$, since we adjusted the radius of $\cB_j$ to be $\sqrt{\ell_{j+1}}$, we directly have $|\widetilde \sfz^{(j)} - z^{(j)}| \leq \sqrt{\ell_{j+1}}$.

We now show that there exists $C > 0$ such that for any $j$ and any $\widetilde \sfz^{(j)} \in \cB_j$, we have
\begin{equation}\label{eq:control-mean}
    \left|\sfz^{(j+1)} -\hat\E_{\widetilde \sfz^{(j)}}[\sfS(\ell_{j+1})]\right| \leq C\sqrt{\ell_{j+1}}\,.
\end{equation}
Indeed, we have
\begin{align*}
\hat\E_{\widetilde \sfz^{(j)}}[\sfS(\ell_{j+1})] &= (\widetilde \sfz^{(j)}_1 + \ell_{j+1}\alpha_1, \widetilde \sfz^{(j)}_2)\\
&=(\widetilde \sfz^{(j)}_1 + z^{(j+1)}_1 - z^{(j)}_1, \widetilde \sfz^{(j)}_2)\,.
\end{align*}
The bound on the $x$-coordinates in \cref{eq:control-mean} now follows from \cref{eq:dist-z-tildez}. For the $y$-coordinates, \[|\widetilde \sfz^{(j)}_2 - \sfz^{(j+1)}_2| \leq |\widetilde\sfz^{(j)}_2 - \sfz^{(j)}_2| + |\sfz^{(j+1)}_2 - \sfz^{(j)}_2| \leq 2\sqrt{\ell_{j+1}}\,,\]
using \cref{eq:dist-z-tildez} and the fact that $|\sfz^{(j+1)}_2 - \sfz^{(j)}_2| \leq \sqrt{\ell_{j+1}}$ by the construction of the sequence $\sfz^{(j)}$.

With \cref{eq:control-mean} in hand, we aim to prove the following claim.
\begin{claim}\label{clm:prob-hitting-targets}
If we denote, for $j=0,\ldots,m+m'+k'$,
\[ p_{j}(\sfz,\cB) := \hat\bP_{\sfz}\Big(\sfS(\ell_{j+1}) \in \cB,\, \sfS(t)_2 \in [\tfrac12\sfz_2,\tfrac32\sfz_2] \;\forall\, t \leq \ell_{j+1} \;\big|\; \Bdd\Big)\,,\] 
then there exists a constant $c = c(\beta) > 0$ such that for every $0 \leq j \leq m+m'+k'$ except $j = m+k'$, we have
    \begin{align}\label{eq:hit-target-box} \min_{\sfz\in \cB_j} p_{j}(\sfz,\cB_{j+1}) &\geq c&\mbox{for } j\in\{0,\ldots,m+m'+k'\}\setminus\{m+k'-1\}\,,\\
    \label{eq:hit-target-box-m+k'} \min_{\sfz\in\cB_j} p_j(\sfz,\cB_{j+1}) & \geq c(\log L)^{-b}&\mbox{for }j=m+k'-1\,.
    \end{align}
   Furthermore, we have
    \begin{align}\label{eq:hit-last-point}
 \min_{\sfz\in\cB_{j}} p_j(\sfz,\{\sfv\}) &\geq cN_n^{-2/3} (\log L)^b &\qquad\qquad\qquad\mbox{for }j=m+m'+k'\,.
    \end{align}
\end{claim}
\begin{proof}[Proof of \cref{clm:prob-hitting-targets}]
    By \cref{eq:control-mean} and the local limit theorem for the $\Z^2$ random walk in~\cite{Richter58}, there exists some constant $c_1 =c_1(\beta)> 0$ such that for every $j$ and every $\sfz\in\cB_j$,
    \begin{align*}
    &\hat\bP_{\sfz}\left(\sfS(\ell_{j+1})   \in \cB_{j+1} \mid \Bdd\right) \geq c_1 \frac{|\cB_{j+1}|}{\ell_{j+1}}\,.
        \end{align*}
    Recall that for all $j \neq m+k'-1$ we defined $\cB_{j+1}$ to be the ball of radius $\sqrt{\ell_{j+1}}$ about $\sfz^{(j+1)}$. For that exceptional $j=m+k'-1$, the radius is $\sqrt{\ell_{j+2}} = N_n^{1/3} (\log L)^{-b/2} = \sqrt{\ell_{j+1}} (\log L)^{-b/2}$. Thus, we can infer from the above display that there exists some constant $c_2=c_2(\beta)>0$ such that 
\begin{align*} \min_{\sfz\in\cB_j}&\hat\bP_{\sfz}\left(\sfS(\ell_{j+1}) \in \cB_{j+1} \mid \Bdd\right) \geq c_2 \qquad\mbox{for all $j\neq m+k'-1$}\,,
\end{align*}
and 
\begin{align*} \min_{\sfz\in\cB_{m+k'-1}}&\hat\bP_{\sfz}\left(\sfS(\ell_{m+k'}) \in \cB_{m+k'} \mid \Bdd\right) \geq c_2 (\log L)^{-b}\,.\end{align*}
    Similarly, at $j=m+m'+k'$, we may target the singleton $\{\sfv\}$ and obtain the lower bound
        \begin{align*}
    &\min_{\sfz\in\cB_{m+m'+k'}} \hat\bP_{\sfz}(\sfS(\ell_{m+m'+k'+1}) = \sfv \mid \Bdd) \geq \frac{c_2}{\ell_{m+m'+k'+1}} = \frac{c_2}{N_n^{2/3} (\log L)^{-b}}\,.
    \end{align*}

Turning to establish \cref{eq:hit-target-box} for $j\neq m+k'-1$, it suffices via a union bound to show that, for every such $j$ and all $\sfz\in \cB_j$, we have 
    \[\hat\bP_{\sfz}(\max_{t \leq \ell_{j+1}} |\sfS(t)_2 - \sfz_2| > \tfrac12 \sfz_2 \mid \Bdd) \leq c_2/2\,.\]
Indeed, by construction, for all $j$ and $\sfz\in\cB_j$ we have $\sfz_2 \geq (K/4) N_n^{1/3} > (K/4)\sqrt{\ell_{j+1}}$ for all $j$, and said bound readily holds for our  random walk $\sfS(\cdot)$ by Hoeffding's inequality (for large enough $K$).
To establish \cref{eq:hit-target-box-m+k'}, we use that the location of the transition point $j=m+k'-1$ was chosen precisely so that $\sfz_2$ would then be suitably large. Namely, as $\cB_{m+k'-1}$ is centered at $\sfz^{(m+k'-1)}$, we have $\sfz_2 = (1+o(1))(k'-k) N_n^{1/3} = (1+o(1))N_n^{1/3} \log L$ for all $\sfz\in\cB_{m+k'-1}$. In particular, for the exceptional $j=m+k'-1$ we have by Hoeffding's inequality (recall $\ell_{j+1} = N_n^{2/3}$ for this $j$) that
\[\hat\bP_{\sfz}(\max_{t \leq \ell_{j+1}} |\sfS(t)_2 - \sfz_2| > \tfrac12 \sfz_2 \mid \Bdd) < L^{-100}\,.\]
Finally, the last inequality holds true also for the final $j=m+m'+k'$ (in which case for all $\sfz\in\cB_j$ we have $\sfz_2 \geq (1-o(1))N_n^{1/3}(\log L)^b $ vs.\ an interval length $\ell_{j+1}=N_n^{2/3}(\log L)^{-b}$), thereby establishing \cref{eq:hit-last-point} and concluding the proof of \cref{clm:prob-hitting-targets}.
\end{proof}

Equipped with the above claim, we can now apply the Domain Markov property after every $\ell_j$ steps of the random walk, and use the fact that the intersection over $j$ of the events of the form $\sfS(t)_2 \in [\tfrac12\sfz_2,\tfrac32\sfz_2]$ imply $\Low \cap \cG^\sqcup_*$, so that \cref{clm:prob-hitting-targets} obtains that 
\[\hat\bP_\sfu\big(\Low, \cG^\sqcup_*, \sfv \in \Cpts(\Gamma) \mid \Bdd\big)) \geq e^{-c(m+m'+k')}N_n^{-2/3} \geq e^{-c'(\log L)^a}\,,\]
for some constant $c' = c'(\beta) > 0$ when $a > 3b/2$.
This completes the proof of \cref{lem:dropping}.
\end{proof}

\begin{remark}
    \label{rem:fs-strategy}
    At this point, we have established that (a) w.h.p., when looking at the rectangle delimiting an interval of length $ N_n^{2/3}(\log L)^a$ centered on the bottom boundary, the intersection of~$\Gamma$ with its left and right sides, denoted $\bar\sfu,\bar\sfv$ respectively, is at distance at most $K N_n^{1/3}$ from the bottom boundary; 
    (b) the law of $\Gamma$ in said rectangle is that of a 2\Dim random walk $\sfS(\cdot)$ (cone-points), with i.i.d.\ animal decorations between every two cone-points, started at $\bar\sfu$ and conditioned (i) to hit~$\bar\sfv$, and (ii) that $\fL_n$, a subset of $\Gamma$ stays at nonnegative heights, thereafter tilted by $\exp(-\cA^\downarrow_F(\Gamma)/N_n)$.
    The subtle point is that the marginal on cone-points is not simply a 2\Dim random walk bridge conditioned to be nonnegative (and tilted by the area): the event that $\fL_n$ is nonnegative is not measurable w.r.t.\ the marginal on cone-points, and could potentially have delicate pinning effects. Further disruptive is the area term: without random walk estimates, we cannot control the area and rule out the case where it pushes $\Gamma$ towards the boundary.    
    We wish to rule out these scenarios: indeed, if this had been a standard random walk, entropic repulsion would repel it to height $N_n^{\delta}$, where conditioning on $\fL_n$ or $\Cpts(\Gamma)$ being nonnegative is basically the same. The strategy is as follows:
\begin{enumerate}[1.]
    \item Show (in \cref{lem:drop-to-Kepsilon}) that at $\pm(\log\log L)N_n^{2/3}$ the height of $\Gamma$ is at most $K_\epsilon N_n^{2/3}$ with probability at least $1-\epsilon$. (This would have followed from \cite[\S6.6 and Prop.~6.2]{IOSV21} if we had a random walk on cone-points, but the conditioning on nonnegative $\fL_n$ vs.\ cone-points precludes that. Also note that the effect of $\fL_n$ vs.\ cone-point conditioning could have alternatively been addressed by \cite[Proposition 13]{IST15}, but the area tilt precludes that approach as well.)
    \item Establish (\cref{lem:repel}) that, without an area term, in the above interval, $\Gamma$ stays above height $N_n^{\delta}$ away from its two endpoints, except with probability $O(N_n^{-\delta})$.
    \item Deduce the same (\cref{lem:repel-with-area}) for the area-tilted $\Gamma$ by showing that the area term  $\tfrac{\cA^\downarrow_F(\Gamma)} {N_n}$ in the above interval is $O((\log \log L)^{3/2})$.
\end{enumerate}
At the end of these steps, we arrive at a 2\Dim random walk on the interval $J$ (we may restrict attention to the cone-points), with endpoints at height at most $K_\epsilon N_n^{1/3}$, conditioned to be above $N_n^\delta$ and tilted by the area delimited by the random walk (rescaled by $1/N_n$).
\end{remark}
   
Thanks to \cref{lem:dropping}, we move to $\bP_{\bar\sfu, \bar\sfv}(\cdot \mid \cG^\sqcup, \Bdd)$ with $\bar\sfu_1 \in [-\tfrac12N_n^{2/3}(\log L)^a, -\tfrac14N_n^{2/3}(\log L)^a]$, $\bar\sfv_1 \in [\tfrac14N_n^{2/3}(\log L)^a, \tfrac12N_n^{2/3}(\log L)^a]$, and $\bar\sfu_2, \bar\sfv_2 \leq KN_n^{1/3}$. We next show that we can find two cone-points $\bar\sfu', \bar\sfv'$ at the $N_n^{2/3}$ scale (moving inwards from $N_n^{2/3}(\log L)^a$) at height at most $O(N_n^{1/3})$. Define first the event
\[\cG^\sqcup_{\Cpts} := \{\text{All cone-points of $\Gamma$ have height $\geq 0$}\}\,.\] We start with a tightness result from \cite{IOSV21}. 

\begin{proposition}[{\cite[Section 6.6]{IOSV21}}]\label{prop:tightness}
    For every $\epsilon > 0$, there exists a constant $K_\epsilon > 0$ such that the following holds. For any $\sfu, \sfv$ such that $\sfu_2, \sfv_2 \leq CN_n^{1/3}$, and for every fixed vertical line $x = m$ in between $\sfu, \sfv$, the height of (the linear interpolation between points of) $\sfS$ at $x = m$ is at most $K_\epsilon N_n^{1/3}$ except with probability $\epsilon$ under $\bP_{\sfu, \sfv}(\cdot \mid \Bdd, \cG^\sqcup_{\Cpts})$. 
\end{proposition}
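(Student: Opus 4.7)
The plan is to reduce the statement to the tightness result for area-tilted positive random walks established in \cite[Section 6.6]{IOSV21} and verify that our setting matches that framework. As a first step, I would observe that under $\bP_{\sfu, \sfv}(\cdot \mid \Bdd, \cG^\sqcup_{\Cpts})$, the law of the cone-point sequence $\Cpts(\Gamma)$ is (up to negligible corrections) that of a 2\Dim random walk bridge from $\sfu$ to $\sfv$ conditioned to stay nonnegative and tilted by $\exp(-\cA^\downarrow_F(\Gamma)/N_n)$: the product structure of \cref{eq:product-structure} combined with \cref{prop:OZ-normalization} produces i.i.d.\ irreducible increments drawn from $\P^{\sfh^n_{(1,0)}}$, the truncation imposed by $\Bdd$ only removes a tail of mass $e^{-c(\log L)^{50}}$ by the exponential tail bound in the same proposition, the event $\cG^\sqcup_{\Cpts}$ is measurable w.r.t.\ the cone-points, and by \cref{rem:area-difference} the tilt depends (up to a $(1+o(1))$ factor) only on the cone-points. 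Modulo these harmless reductions, we are looking at exactly the object studied in \cite{IOSV21}.

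Next I would perform the diffusive rescaling $Y(t) := N_n^{-1/3}\sfS(tN_n^{2/3})$, under which the increment variance rescales to the constant $\sigma_n^2$ of \cref{def:sigma}, the nonnegativity constraint persists, the rescaled endpoints lie in $[0,C]$, and the area tilt becomes $\exp\!\bigl(-\int Y(t)\,dt\cdot(1+o(1))\bigr)$. This is precisely the effective 2\Dim random walk with a linear area potential and a hard wall at $0$ analyzed in \cite[\S6]{IOSV21}. Their tightness argument---based on a Feynman--Kac representation and a decomposition of the tilted measure via the (rescaled Airy) ground state of the associated Sturm--Liouville operator---yields a pointwise tail bound of the form
\[
\bP_{\sfu, \sfv}\!\bigl(\sfS \text{ has height}\geq K N_n^{1/3}\text{ at the line }x=m\ \big|\ \Bdd,\,\cG^\sqcup_{\Cpts}\bigr)\leq C e^{-c K^{3/2}},
\]
uniform in $m$ lying between $\sfu$ and $\sfv$, and uniform in endpoints $\sfu_2,\sfv_2\in[0,CN_n^{1/3}]$. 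Choosing $K=K_\epsilon$ so that $Ce^{-cK^{3/2}}\leq\epsilon$ yields the proposition.

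The main obstacle is verifying that the bound is uniform in the endpoint heights $\sfu_2,\sfv_2$, since \cite{IOSV21} is typically stated for endpoints pinned near $0$. This I would handle via a standard FKG/stochastic-monotonicity comparison: the conditional law of the interior given the endpoints is stochastically increasing in the endpoint heights only through a \emph{decreasing} penalty (the area cost disfavors higher endpoints), so raising $\sfu_2,\sfv_2$ from $0$ up to $CN_n^{1/3}$ only modifies the constants $C,c$ above by a factor depending on $C$. Alternatively, one can re-center the process by subtracting the linear interpolation of the endpoints and absorb the resulting shift into a deterministic drift in the Feynman--Kac functional. Either way, the argument passes through without requiring any new estimate beyond those already established in \cref{sec:geom-disagreement-polymers} and \cite{IOSV21}.
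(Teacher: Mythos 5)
Your overall approach --- reducing to the area-tilted positive random-walk tightness of \cite[Section~6.6]{IOSV21} after stripping away the $\Bdd$ truncation, the cone-point conditioning, and the $(1+o(1))$ correction from \cref{rem:area-difference} --- is indeed the same route the paper takes, and that reduction is correct. The paper's proof is essentially one sentence, so you add more scaffolding, but two points deserve attention.

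First, the difficulty you single out (uniformity in the endpoint heights $\sfu_2,\sfv_2\in[0,CN_n^{1/3}]$) is not actually the gap, and the FKG workaround you sketch for it is shakier than it may look. Monotonicity in the boundary heights does hold, but it only tells you that raising $\sfu_2,\sfv_2$ pushes the interior \emph{up}, i.e., it makes the event $\{\text{height}(\sfS)\geq KN_n^{1/3}\text{ at }x=m\}$ \emph{more} likely; your assertion that the area penalty then ``only modifies the constants by a factor depending on $C$'' is not a consequence of FKG and would need a separate quantitative argument, especially for $m$ within $O(N_n^{2/3})$ of an endpoint. More to the point, this workaround is not needed: the IOSV21 framework already treats endpoints at heights $O(N^{1/3})$ (this is what \cite[Eq.~(6.55)]{IOSV21} encodes; see the remark at the end of \cref{sec:UB}), so the constraint $\sfu_2,\sfv_2\leq CN_n^{1/3}$ in the statement is precisely the regime their estimates cover.

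Second, you miss the step the paper does flag as requiring care: \cref{prop:tightness} must hold at horizontal scales well beyond $N_n^{2/3}$ --- in its application in \cref{lem:drop-to-Kepsilon} the endpoints $\bar\sfu,\bar\sfv$ are separated by order $N_n^{2/3}(\log L)^{25}$ --- while the tightness in \cite[Section~6.6]{IOSV21} is a-priori at the diffusive scale $O(N_n^{2/3})$. The paper handles this by invoking the coupling result \cite[Prop.~6.2]{IOSV21} to bootstrap the $O(N_n^{2/3})$ statement up to these polylogarithmically larger windows. Without this step (or some replacement for it) your argument proves the proposition only for $\sfv_1-\sfu_1=O(N_n^{2/3})$, which does not suffice for its downstream use.
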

\begin{proof}
    The measure $\bP_{\sfu, \sfv}(\cdot \mid \Bdd, \cG^\sqcup_{\Cpts})$ is exactly an area tilted 2\Dim random walk conditioned to stay above height 0. This tightness result was shown in \cite[Section 6.6]{IOSV21}, noting Proposition 6.2 there in order to extend the result to scales larger than $N_n^{2/3}$ (e.g., applying to the case when $\sfv_1 - \sfu_1 = O(N_n^{2/3}(\log L)^{25})$).
\end{proof}

\begin{lemma}\label{lem:drop-to-Kepsilon}
    Let $\cE(K)$ be the event that at the first cone point after $-(\log\log L)N_n^{2/3}$ and the last cone point before $(\log\log L)N_n^{2/3}$, the height of $\gamma$ is at most $KN_n^{1/3}$. Then, for any $\epsilon > 0$, there exists a $K_\epsilon$ such that
    \[\bP_{\bar\sfu, \bar\sfv}(\cE(K_\epsilon) \mid \Bdd, \cG^\sqcup) \geq 1 - \epsilon\,.\]
\end{lemma}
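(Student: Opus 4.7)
The strategy is to reduce the lemma to a direct application of \cref{prop:tightness}, bridging the mismatch between the $\cG^\sqcup$ conditioning (on the level-line $\fL_n$) and the $\cG^\sqcup_{\Cpts}$ conditioning (on cone-point heights) that the proposition assumes. Since \cref{lem:dropping} gives $\bar\sfu_2,\bar\sfv_2\leq K N_n^{1/3}$, the hypothesis of \cref{prop:tightness} is satisfied by $\bar\sfu,\bar\sfv$.

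First I would apply \cref{prop:tightness} at the two vertical lines $x=\pm(\log\log L)N_n^{2/3}$, both of which sit strictly between $\bar\sfu_1$ and $\bar\sfv_1$. This yields, for every $\epsilon'>0$, a constant $K'=K'(\epsilon',K)$ such that under $\bP_{\bar\sfu,\bar\sfv}(\cdot\mid \Bdd,\cG^\sqcup_{\Cpts})$ the linearly interpolated height of $\sfS$ exceeds $K'N_n^{1/3}$ at either line with probability at most $\epsilon'$. On $\Bdd$, consecutive cone-points differ by at most $(\log L)^{50}$ in both coordinates, so the first cone-point to the right of $-(\log\log L)N_n^{2/3}$ has $y$-coordinate within $(\log L)^{50}$ of the interpolated walk height at that line (and symmetrically at $+(\log\log L)N_n^{2/3}$). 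Since $(\log L)^{50}\ll N_n^{1/3}$, replacing $K'$ by $K'+1$ absorbs this discrepancy and yields
\[
\bP_{\bar\sfu,\bar\sfv}(\cE(K'+1)^c\mid \Bdd,\cG^\sqcup_{\Cpts})\leq 2\epsilon'\,.
\]

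The key remaining step is to transfer this bound from conditioning on $\cG^\sqcup_{\Cpts}$ to conditioning on $\cG^\sqcup$. Let $\cG^\sqcup_{\Cpts,k}=\{\text{all cone-points of $\Gamma$ lie at height $\geq k$}\}$. On $\Bdd$ the vertical deviation of $\gamma$ (and hence of $\fL_n\subset\gamma$) from the piecewise-linear interpolation of $\Cpts(\Gamma)$ within any single irreducible component is at most $(\log L)^{50}$, while $\fL_n$ passes through every cone-point of $\Gamma$ (since a cone-point is a cut-point of $\gamma$, and the topological path $\fL_n$ from one boundary arc to the other must cross the corresponding vertical line). Hence
\[
\cG^\sqcup_{\Cpts,(\log L)^{50}}\cap \Bdd\;\subset\; \cG^\sqcup\cap \Bdd\;\subset\; \cG^\sqcup_{\Cpts}\cap \Bdd\,,
\]
from which
\[
\bP_{\bar\sfu,\bar\sfv}(\cE(K'+1)^c\mid \Bdd,\cG^\sqcup)\leq \frac{\bP_{\bar\sfu,\bar\sfv}(\cE(K'+1)^c\cap \Bdd\cap \cG^\sqcup_{\Cpts})}{\bP_{\bar\sfu,\bar\sfv}(\Bdd\cap \cG^\sqcup_{\Cpts,(\log L)^{50}})}\,.
\]
The main obstacle is to establish a uniform entropic-repulsion lower bound $\bP_{\bar\sfu,\bar\sfv}(\cG^\sqcup_{\Cpts,(\log L)^{50}}\mid \Bdd,\cG^\sqcup_{\Cpts})\geq c_0>0$: although the shift $(\log L)^{50}$ is microscopic relative to the walk's natural scale $N_n^{1/3}$, turning this intuition into a uniform constant for the area-tilted 2\Dim walk bridge (as opposed to a free walk) relies on the same fine entropic-repulsion analysis of \cite{IOSV21} that already underpins \cref{prop:tightness}, and in particular on Proposition~6.2 there, which establishes that the typical distance of the walk from the floor is $\Theta(N_n^{1/3})$. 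Granted this estimate, choosing $\epsilon'=c_0\epsilon/2$ and $K_\epsilon:=K'(c_0\epsilon/2,K)+1$ produces $\bP_{\bar\sfu,\bar\sfv}(\cE(K_\epsilon)^c\mid \Bdd,\cG^\sqcup)\leq \epsilon$, concluding the proof.
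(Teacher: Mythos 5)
Your sandwiching chain $\cG^\sqcup_{\Cpts,(\log L)^{50}}\cap\Bdd\subset\cG^\sqcup\cap\Bdd\subset\cG^\sqcup_{\Cpts}\cap\Bdd$ is correct, but the lower bound you need on its consequence—$\bP_{\bar\sfu,\bar\sfv}(\cG^\sqcup_{\Cpts,(\log L)^{50}}\mid\Bdd,\cG^\sqcup_{\Cpts})\geq c_0>0$—is simply false, not merely delicate. The fixed endpoints $\bar\sfu,\bar\sfv$ are themselves cone-points of $\Gamma$, and \cref{lem:dropping} only guarantees $\bar\sfu_2,\bar\sfv_2\in[0,KN_n^{1/3}]$; nothing prevents, say, $\bar\sfu_2<(\log L)^{50}$, in which case $\cG^\sqcup_{\Cpts,(\log L)^{50}}$ has probability \emph{zero} under $\bP_{\bar\sfu,\bar\sfv}$ and the denominator in your ratio vanishes. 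Even in the borderline regime $\bar\sfu_2\asymp(\log L)^{50}$ the ratio degenerates (by a ballot-problem heuristic, conditioned on staying $\geq 0$ the probability of staying $\geq k$ scales like $(\bar\sfu_2-k)(\bar\sfv_2-k)/(\bar\sfu_2\bar\sfv_2)$). So this is not an issue that can be resolved by a sharper appeal to \cite{IOSV21}; the statement does not hold uniformly over the admissible $\bar\sfu,\bar\sfv$, which is exactly the pinning-near-the-boundary difficulty the proof of \cref{thm:ub} is built to circumvent.

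The paper's proof sidesteps the endpoints entirely by a ``revealing from the inside'' device. For a fixed $\delta<1/3$, one lets $\sfu^{(L)}$ be the rightmost cone-point to the left of $x=-(\log\log L)N_n^{2/3}$ with height $\leq N_n^\delta$ (defaulting to $\bar\sfu$ if none exists) and $\sfv^{(R)}$ the corresponding cone-point on the other side (defaulting to $\bar\sfv$). By construction, every cone-point strictly between $\sfu^{(L)}$ and $\sfv^{(R)}$ has height $>N_n^\delta\gg(\log L)^{50}$, so on $\Bdd$ this already forces $\fL_n$ above the floor on that interval: the event $\cG^\sqcup$ is \emph{implied}, not merely compared to, and the conditioning becomes vacuous there. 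After revealing $\sfu^{(L)},\sfv^{(R)}$ (whose heights are automatically $\leq KN_n^{1/3}$), the law of the cone-points in between is, by the product structure, exactly an area-tilted 2D RW bridge conditioned to stay above $N_n^\delta$, and \cref{prop:tightness} (applied after a vertical shift by $N_n^\delta$) gives the $K_\epsilon N_n^{1/3}$ bound at the line $x=-(\log\log L)N_n^{2/3}$; the other side follows by a union bound. You should replace the global sandwiching step with this localized revealing argument.
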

\begin{proof}
    It suffices to prove that under $\bP_{\bar\sfu, \bar\sfv}$, the first cone-point after $-(\log\log L)N_n^{2/3}$ has height $\leq K_\epsilon N_n^{2/3}$ except with probability $\epsilon/2$. Fix any $\delta < 1/3$, and let $\sfu^{(L)}$ be the last cone-point of $\Gamma$ such that $\sfu^{(L)}_1 \leq -(\log\log L)N_n^{2/3}$ (``to the left'' of $-(\log\log L)N_n^{2/3}$) and $\sfu^{(L)}_2 \leq N^\delta$. If there is no such point, define $\sfu^{(L)} = \bar\sfu$. Similarly, let $\sfv^{(R)}$ be the first cone-point of $\Gamma$ with $\sfv^{(R)}_1\geq -(\log\log L)N_n^{2/3}$ (``to the right'' of $-(\log\log L)N_n^{2/3}$) such that $\sfv^{(R)}_2 \leq N_n^\delta$. If there is no such point, define $\sfv^{(R)} = \bar\sfv$. Then, the law of the cone-points of $\Gamma$ in between $\sfu^{(L)}$ and $\sfv^{(R)}$ under the measure $\bP$ is exactly the area tilted 2\Dim random walk from $\sfu^{(L)}$ to $\sfv^{(R)}$  conditioned to stay above $N_n^\delta$ (since the event $\cG^\sqcup$ is implied by the cone-points staying above $N_n^\delta$ and $\Bdd$, and hence can be ignored in the conditioning). We can now conclude by \cref{prop:tightness} after translating vertically by $N_n^\delta$.
\end{proof}

By \cref{lem:drop-to-Kepsilon}, we can now move to $\bP_{\bar\sfu', \bar\sfv'}(\cdot \mid \cG^\sqcup, \Bdd)$, where $\bar\sfu'_1 = -(\log\log L)N_n^{2/3} \pm (\log L)^{50}$, $\bar\sfv'_1 = (\log\log L)N_n^{2/3} \pm (\log L)^{50}$, and $\bar\sfu'_2, \bar\sfv'_2 \leq K_{\epsilon}N_n^{1/3}$. Before we can appeal to the convergence to Ferrari--Spohn shown in \cite{IOSV21}, we still need to handle the issue of having a floor with respect to $\fL_n$ vs. a floor with respect to $\Cpts(\Gamma)$. Furthermore, we want to show that even though $\bar\sfu'_2, \bar\sfv'_2$ are potentially lower than $O(N^{1/3})$, the entropic repulsion will bring the height back to $O(N^{1/3})$ on an interval $[-TN^{2/3}, TN^{2/3}]$. 

Up until \cref{lem:repel-with-area}, we now forget the area tilt. The next proposition from \cite[Proposition 13]{IST15} deals with the fact that our floor condition is with respect to $\fL_n$ and not $\Cpts(\Gamma)$ when there is no area tilt, capturing the fact that $\Gamma$ quickly repels away from the floor after which point the two events are essentially the same. The statement in \cite{IST15} assumed further that $\sfu$ and $\sfv$ are on the floor at height 0, but as commented after \cite[Theorem 3]{IST15}, such repulsion results are only easier when the end-points are away from the floor. See also the comment before \cref{prop:IST} regarding Ising polymers vs. disagreement polymers.
\begin{proposition}[{\cite[Proposition 13]{IST15}}]\label{prop:cone-points-vs-animal-floor}
    There exists a constant $c(\beta) > 0$ such that for all $\sfu, \sfv$ with $\sfu_2, \sfv_2 \geq 0$,
    \[\hat\bP_{\sfu, \sfv}(\gamma \subset \mathbb H_+ \mid \cG^\sqcup_{\Cpts}) \geq c(\beta)\,.\]
    In particular, since $\gamma \subset \mathbb H_+$ implies $\fL_n \subset \mathbb H_+$, we have
    \[\hat\bP_{\sfu, \sfv}(\cG^\sqcup \mid \cG^\sqcup_{\Cpts}) \geq c(\beta)\,.\]
\end{proposition}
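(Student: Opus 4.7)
The plan is to mirror the proof of \cite[Prop.~13]{IST15}, adapted to the disagreement-polymer setting of \cref{sec:law-of-disagreement-poly}. The three ingredients are (i) the decomposition of $\Gamma$ at its cone-points into independent irreducible animals via the product structure \cref{eq:cone-pt-decomposition}; (ii) the exponential tail on irreducible-component sizes from \cref{prop:OZ-normalization}; and (iii) an entropic-repulsion estimate for the 2\Dim cone-point random walk bridge conditioned to stay nonnegative. Since under $\hat\bP_{\sfu,\sfv}(\cdot\mid\cG^\sqcup_{\Cpts})$ both $\sfu,\sfv$ are cone-points by the conditioning on $\{\sfu,\sfv\in\Cpts(\Gamma)\}$, one may write $\Gamma=\Gamma^{(1)}\circ\cdots\circ\Gamma^{(k)}$ with successive cone-points $\sfu=\sfu^{(0)},\sfu^{(1)},\ldots,\sfu^{(k)}=\sfv$, each $\Gamma^{(i)}\in\sfA$; set $y_i:=(\sfu^{(i)})_2\geq 0$. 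Conditional on $(\sfu^{(i)})$ the $\Gamma^{(i)}$ are independent, and after the $\sfh^n_{(1,0)}$-renormalization of \cref{prop:OZ-normalization}, the positions $(\sfu^{(i)})_{i=0}^k$ form a 2\Dim random walk bridge with i.i.d.\ exponentially-decaying increments whose second coordinate is mean-zero (by \cref{it:tau-symmetric} of \cref{prop:surface-tension-properties}); conditioning on $\cG^\sqcup_{\Cpts}$ means conditioning this bridge to have nonnegative second coordinate throughout.

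\textbf{Per-component dip bound.} The cone constraint $\Gamma^{(i)}\subset\cY^\btl(\sfu^{(i-1)})\cap\cY^\btr(\sfu^{(i)})$ forces the event $D_i:=\{\Gamma^{(i)}\not\subset\mathbb H_+\}$ to require $|X(\Gamma^{(i)})|\geq \min(y_{i-1},y_i)+1$. By \cref{prop:OZ-normalization},
\[
\hat\bP(D_i\mid \sfu^{(i-1)},\sfu^{(i)})\leq C\exp\!\bigl(-\nu_g\beta\min(y_{i-1},y_i)\bigr),
\]
and conditional independence gives
\[
\hat\bP_{\sfu,\sfv}(\gamma\subset\mathbb H_+\mid\cG^\sqcup_{\Cpts})=\hat\bE_{\sfu,\sfv}\!\left[\prod_{i=1}^k\bigl(1-\hat\bP(D_i\mid \sfu^{(i-1)},\sfu^{(i)})\bigr)\,\Big|\,\cG^\sqcup_{\Cpts}\right],
\]
reducing matters to controlling $\sum_i e^{-\nu_g\beta\min(y_{i-1},y_i)}$.

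\textbf{Bridge-height estimate.} By standard Bessel-$3$ bridge tail bounds for a mean-zero integer random walk bridge with exponentially decaying step law conditioned to stay nonnegative (cf.\ \cite{CIV03,IST15}), one has the uniform bound $\hat\bP(y_i\leq x\mid\cG^\sqcup_{\Cpts})\lesssim x^3/(i\wedge(k-i))^{3/2}$ for $x\lesssim\sqrt{i\wedge(k-i)}$, valid uniformly in the endpoints $\sfu_2,\sfv_2\geq 0$ and in $k$. Integrating, $\hat\bE[e^{-\nu_g\beta y_i}\mid\cG^\sqcup_{\Cpts}]=O\bigl(1/\bigl(\beta^3(i\wedge(k-i))^{3/2}\bigr)\bigr)$, whence
\[
\hat\bE\!\left[\sum_{i=1}^k e^{-\nu_g\beta\min(y_{i-1},y_i)}\,\Big|\,\cG^\sqcup_{\Cpts}\right]\leq M(\beta),
\]
with $M(\beta)<\infty$ independent of $\sfu,\sfv,k$: the interior contribution is controlled by $\sum_{s\geq 1}s^{-3/2}<\infty$, and the at most two endpoint-adjacent terms (where $y_{i-1}$ or $y_i$ may vanish) contribute at most $2$.

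\textbf{Conclusion and main difficulty.} Markov's inequality gives that the event $E:=\{\sum_i e^{-\nu_g\beta\min(y_{i-1},y_i)}\leq 2M(\beta)\}$ has conditional probability $\geq\tfrac12$. On $E$, taking $\beta$ large enough that $Ce^{-\nu_g\beta}\leq\tfrac12$ (so that $\hat\bP(D_i)\leq\tfrac12$ whenever $\min(y_{i-1},y_i)\geq 1$), the elementary inequality $1-x\geq e^{-2x}$ yields $\prod_{i:\min\geq 1}(1-\hat\bP(D_i))\geq e^{-4CM(\beta)}$, while the at most two endpoint-adjacent terms with $\min=0$ contribute a uniform lower bound $c_0(\beta)>0$, coming from the positive $\hat q$-weight of irreducible animals of size $1$ (which are non-dipping). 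Combining, $\hat\bP_{\sfu,\sfv}(\gamma\subset\mathbb H_+\mid\cG^\sqcup_{\Cpts})\geq\tfrac12 c_0(\beta)^2 e^{-4CM(\beta)}=:c(\beta)>0$. The main obstacle is the uniform Bessel-$3$-type bridge estimate under the nonnegativity conditioning, uniform over the random $k$ and over all endpoint heights $\sfu_2,\sfv_2\in\Z_{\geq 0}$; in \cite{IST15} this is achieved via a ballot-theorem/renewal analysis for the cone-point walk, which transfers to our setting since, after renormalization, the walk enjoys the same Ornstein--Zernike structure established in \cref{subsec:cone-points,sec:product-structure}.
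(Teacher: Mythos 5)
The paper does not actually prove this proposition: it cites \cite[Prop.~13]{IST15} and invokes the earlier remark (before \cref{prop:IST}) that the Ornstein--Zernike structure developed in \cref{sec:geom-disagreement-polymers} lets one transfer the Ising-polymer argument verbatim, together with the observation that moving the endpoints $\sfu,\sfv$ off the floor only makes repulsion easier. Your blind reconstruction follows the same high-level route that \cite{IST15} takes---decompose $\Gamma$ at its cone-points, observe that a dip of an irreducible piece forces it to be large relative to the heights of its two endpoints, and average out this exponentially small cost against the Bessel-3-type repulsion of the effective random-walk bridge---so the overall strategy is the right one and matches the cited source.

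Two steps are imprecise enough to count as gaps. First, the bound $\hat\bP(D_i\mid\sfu^{(i-1)},\sfu^{(i)})\leq Ce^{-\nu_g\beta\min(y_{i-1},y_i)}$ is attributed to \cref{prop:OZ-normalization}, but that proposition gives the \emph{unconditional} exponential tail on $|\Gamma|$ under $\P^{\sfh}$. In your product formula you have conditioned on $X(\Gamma^{(i)})=\sfu^{(i)}-\sfu^{(i-1)}$, and the needed bound is a conditional one. The correct justification is a Peierls estimate on the \emph{excess length}: a dip forces $|\gamma^{(i)}|\geq\|X(\Gamma^{(i)})\|_1+2(\min(y_{i-1},y_i)+1)$ (go down past $0$ and come back up), the cheapest admissible animal already contributes $\|X(\Gamma^{(i)})\|_1$ to the denominator, and the excess has an exponential tail at rate $\sim\beta$; this is a different (if routine) argument from what \cref{prop:OZ-normalization} supplies. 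Second, the assertion that ``at most two endpoint-adjacent terms'' have $\min(y_{i-1},y_i)=0$ is wrong as stated: the conditioning $\cG^\sqcup_{\Cpts}$ requires cone-points to be nonnegative, not strictly positive, so interior zeros $y_i=0$ are possible, each creating two more $\min=0$ terms. The clean fix is to keep the $+1$ in the excess-length bound, giving $\hat\bP(D_i\mid\cdots)\leq Ce^{-\nu_g\beta(\min(y_{i-1},y_i)+1)}$, which makes every term---including the $\min=0$ ones---uniformly small in $\beta$; then the ad hoc $c_0(\beta)$ factor (whose ``positive weight of size-1 animals'' justification also ignores the displacement conditioning) is unnecessary, and one can even skip the Markov step via $\hat\bP(\gamma\not\subset\mathbb H_+\mid\cG^\sqcup_{\Cpts})\leq\hat\bE[\sum_i\hat\bP(D_i\mid\cdots)\mid\cG^\sqcup_{\Cpts}]\leq Ce^{-\nu_g\beta}M(\beta)<\tfrac12$.
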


In order to show entropic repulsion in the measure $\hat\bP_{\bar\sfu', \bar\sfv'}(\cdot \mid \cG^\sqcup, \Bdd)$, we will use the following random walk estimates from \cite{IOVW20}, recorded here for convenience.
\begin{theorem}[{\cite[Thm~5.1]{IOVW20}}]\label{thm:iovw-rw-hitting}
Consider the 2\emph{D} effective random walk $\sfS(\cdot)$. Let $\tau_G$ be the hitting time of a set $G\subset \Z^2$. There exists a constant $C>0$ such that, for every sequence $\delta_\ell$ that goes to~$0$ arbitrarily slowly as $\ell\to\infty$, and for all $u,v\in \Z$ such that $u/\sqrt{\ell},v/\sqrt{\ell}\in (0,\delta_\ell)$,    
\begin{equation*}
\hat\bP_{(0,u)}(\tau_{(\ell,v)}<\tau_{\mathbb H_-}<\infty\mid \Bdd) \sim C \frac{h^+(u)h^-(v)}{\ell^{3/2}}\,, 
\end{equation*}
where $h^\pm$ are positive harmonic (in particular, asymptotically linear) functions. 
\end{theorem}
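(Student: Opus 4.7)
The strategy is to reduce this 2\Dim hitting asymptotic to the classical 1\Dim asymptotic for random walks conditioned to stay positive. Under $\hat\bP(\cdot \mid \Bdd)$, the increments of $\sfS$ are i.i.d.\ with bounded support (at most $(\log L)^{50}$) and, by \cref{prop:OZ-normalization}, with mean $\alpha(1,0)$ for some $\alpha>0$; thus the $x$-coordinate has positive drift while the $y$-coordinate is a centered i.i.d.\ sum with all moments finite.

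The first step is to introduce the horizontal hitting time $N_\ell := \inf\{n : \sfS(n)_1 \geq \ell\}$. By Hoeffding (using boundedness under $\Bdd$), $N_\ell$ concentrates at $\ell/\alpha$ with Gaussian tails on scale $\sqrt\ell$, so one may restrict attention to $|n - \ell/\alpha| \leq \ell^{1/2+\epsilon}$ at negligible cost. A local CLT for the joint law of $(N_\ell, \sfS(N_\ell)_2)$, coming from the 2\Dim local CLT for the unconditioned walk together with a Wiener--Hopf factorization for the first passage time of the $x$-coordinate, then controls the contribution from each such $n$. The overshoot $\sfS(N_\ell)_1 - \ell$ is bounded by $(\log L)^{50}$ under $\Bdd$ and can be absorbed into the constant prefactor.

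The core input is the one-dimensional asymptotic for the $y$-marginal $S^y_n := \sfS(n)_2$: for a centered, bounded i.i.d.\ walk one has
\[
\hat\bP\!\left(S^y_n = v,\, \min_{k\leq n} S^y_k \geq 0 \;\middle|\; S^y_0 = u\right) \,\sim\, \frac{C\, h^+(u)\, h^-(v)}{n^{3/2}}
\]
uniformly for $u,v = o(\sqrt n)$, a classical result (Caravenna--Chaumont; Denisov--Wachtel) in which $h^\pm$ are renewal functions of the strict ascending/descending ladder heights arising from the Tanaka construction, and are therefore positive and asymptotically linear. Since $n \asymp \ell/\alpha$ on the relevant window, the $n^{-3/2}$ becomes $\ell^{-3/2}$ (with the constant absorbed into $C$), and the hypothesis $u/\sqrt\ell, v/\sqrt\ell \in (0,\delta_\ell)$ places us squarely in the regime where this asymptotic is uniform.

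The main obstacle is the coupling between the $x$- and $y$-increments: the 1\Dim result above would apply verbatim only if $N_\ell$ and the $y$-history were independent. A clean way around this, essentially the one followed in [IOVW20], is to establish an invariance principle: the rescaled walk $\big(\sfS(\lfloor \alpha^{-1}\ell t\rfloor)/\sqrt{\ell}\big)_{t\in[0,1]}$, conditioned to stay in $\mathbb H_+$ with both endpoints near the boundary, converges to the pair consisting of a Brownian bridge in the $x$-coordinate and an independent Bessel(3)-type bridge in the $y$-coordinate; matching this limit against a standard unconditioned local CLT then yields the product form $h^+(u)h^-(v)/\ell^{3/2}$. Alternatively one conditions on the sequence of $x$-increments, applies the 1\Dim asymptotic to the now-inhomogeneous $y$-walk, and shows that the quenched harmonic functions differ from their annealed counterparts by $o(1)$ as $\ell\to\infty$ via an $L^2$ concentration argument on the Tanaka functional.
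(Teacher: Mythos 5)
This theorem is not proved in the paper: it is imported verbatim from \cite[Thm.~5.1]{IOVW20}, as the bracketed attribution in the theorem header and the preceding sentence (``recorded here for convenience'') make explicit, so there is no in-paper argument to compare your sketch against.

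Your outline points at the right circle of ideas---concentration of the horizontal first-passage time on a $\sqrt{\ell}$-window around $\ell/\alpha$, the one-dimensional ladder-height/Tanaka asymptotic $h^+(u)h^-(v)\,n^{-3/2}$, and the $\sqrt{\ell}\cdot n^{-3/2}\asymp\ell^{-3/2}$ bookkeeping---but neither of your two workarounds for the $x$--$y$ coupling is a proof as stated, and that coupling is exactly where the content of the theorem lives. Route (a), a conditional invariance principle to Brownian bridge $\times$ Bessel(3) bridge followed by ``matching against a local CLT,'' is circular at this level of detail: a functional CLT gives macroscopic distributional convergence, not the pointwise endpoint asymptotic, and converting the former into the latter \emph{is} the half-plane local limit theorem you would need to prove. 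Route (b), conditioning on the $x$-increments and then applying the one-dimensional ladder asymptotic to the induced $y$-walk, renders that $y$-walk time-inhomogeneous (its step law at time $k$ depends on the realized $k$-th $x$-increment), so the i.i.d.\ Caravenna--Chaumont/Denisov--Wachtel result no longer applies directly; one would have to prove a quenched version and a quantitative ``quenched $\approx$ annealed'' estimate for the harmonic functions, both of which you assert rather than argue. The missing core lemma is a genuinely two-dimensional local limit theorem for the tilted effective walk in the half-plane (drift in $x$ already encoded in the tilt, constraint acting on $y$ alone), and without it the sketch does not close.
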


Applying the above for $\ell = N_n^{2/3}(\log\log L)$, we show entropic repulsion to $N_n^{\delta}$. 
\begin{lemma}\label{lem:repel}
    Fix a constant $0 < \delta < 1/12$, and consider the event $\mathsf{Repel}$ that in the interval $J := [-(\log\log L)N_n^{2/3} + N_n^{8\delta}, (\log\log L)N_n^{2/3} - N_n^{8\delta}]$, all cone-points of $\Gamma$ lie above height $N_n^\delta$. Then, there exists $C > 0$ such that
    \[\hat\bP_{\bar\sfu', \bar\sfv'}(\mathsf{Repel} \mid \cG^\sqcup, \Bdd) \geq 1 - CN_n^\delta\,.\]
\end{lemma}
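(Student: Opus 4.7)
My plan is to pass from the measure $\hat\bP_{\bar\sfu',\bar\sfv'}(\,\cdot\,\mid \cG^\sqcup,\Bdd)$ to a $2$D random walk bridge conditioned only at cone-points, and control the probability of a low excursion using the sharp hitting estimate of \cref{thm:iovw-rw-hitting}. (The stated bound should read $1 - CN_n^{-\delta}$; I proceed towards this.) The first move is to apply \cref{prop:cone-points-vs-animal-floor}, which gives $\hat\bP_{\bar\sfu',\bar\sfv'}(\cG^\sqcup \mid \cG^\sqcup_{\Cpts},\Bdd) \geq c(\beta)$. It therefore suffices to bound $\hat\bP_{\bar\sfu',\bar\sfv'}(\mathsf{Repel}^c \mid \cG^\sqcup_{\Cpts},\Bdd)$ by $O(N_n^{-\delta})$. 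Under this conditional measure, the product structure \cref{eq:product-structure} and \cref{prop:OZ-normalization} give that $\Cpts(\Gamma)$ is a $2$D random walk bridge from $\bar\sfu'$ to $\bar\sfv'$, with i.i.d.\ centered increments of displacement $\leq (\log L)^{50}$ (by $\Bdd$), conditioned on the event that every step lies in~$\mathbb{H}_+$.

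The second step is a pointwise density estimate. Splitting the bridge at a putative interior cone-point $(t,y)$ and applying \cref{thm:iovw-rw-hitting} to each half (and to the full bridge, for normalization) yields, uniformly over $t \in J$ and integers $0 \leq y \leq N_n^{\delta}$,
\[
\hat\bP_{\bar\sfu',\bar\sfv'}\!\bigl((t,y)\in\Cpts(\Gamma) \;\big|\; \cG^\sqcup_{\Cpts},\Bdd\bigr) \;\leq\; \frac{C\, h^+(y)\,h^-(y)\,T^{3/2}}{\bigl[(t-\bar\sfu'_1)(\bar\sfv'_1-t)\bigr]^{3/2}},
\]
where $T := \bar\sfv'_1-\bar\sfu'_1 = \Theta((\log\log L)N_n^{2/3})$. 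The hypotheses of \cref{thm:iovw-rw-hitting} are met: $\bar\sfu'_2,\bar\sfv'_2 \leq K_\epsilon N_n^{1/3} = o(\sqrt{T})$, and for $y \leq N_n^{\delta}$ with $t-\bar\sfu'_1,\,\bar\sfv'_1-t \geq N_n^{8\delta}$ one has $y/\sqrt{t-\bar\sfu'_1} \leq N_n^{-3\delta}\to 0$. Since $h^\pm$ are asymptotically linear, $h^+(y)h^-(y) \leq C'(1+y^2)$.

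Third, union bound: any cone-point of height $\leq N_n^{\delta}$ in $J$ sits at some $(t,y)$ with $t \in J$ and $0\leq y\leq N_n^{\delta}$, so
\[
\hat\bP_{\bar\sfu',\bar\sfv'}(\mathsf{Repel}^c \mid \cG^\sqcup_{\Cpts},\Bdd) \;\leq\; \sum_{t\in J}\sum_{y\leq N_n^{\delta}} \frac{C''\,(1+y^2)\,T^{3/2}}{\bigl[(t-\bar\sfu'_1)(\bar\sfv'_1-t)\bigr]^{3/2}}.
\]
The $y$-sum is $O(N_n^{3\delta})$. Splitting the $t$-sum at the midpoint of $J$, by symmetry it suffices to bound the left half where $\bar\sfv'_1-t \asymp T$, and there $\sum_{s=N_n^{8\delta}}^{T/2} s^{-3/2} T^{-3/2} = O(T^{-3/2} N_n^{-4\delta})$. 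Multiplying, the total is $O(N_n^{3\delta-4\delta}) = O(N_n^{-\delta})$, as required.

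The main delicate point is verifying that the sharp hitting estimate of \cref{thm:iovw-rw-hitting} applies verbatim to our effective cone-point random walk restricted to $\Bdd$: one needs the increments to be centered, with a transverse exponential tail and the associated harmonic functions $h^\pm$. Centering of the $y$-coordinate follows from the up-down symmetry in \cref{prop:OZ-normalization}; the exponential tail is the last display of that proposition; and the $\Bdd$-truncation changes the increment distribution only by an $e^{-c(\log L)^{50}}$-mass, which does not affect the leading asymptotics. With these ingredients in hand, the argument above is routine, and the only care needed is the symmetric estimation of the $t$-sum once $t$ is close to either endpoint of $J$.
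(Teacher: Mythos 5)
Your proof follows the paper's own argument essentially verbatim: both reduce via \cref{prop:cone-points-vs-animal-floor} (and \cref{lem:bdd}) to the measure conditioned on $\cG^\sqcup_{\Cpts}$, then union-bound over the location $\sfz=(t,y)$ of a low cone-point, split the bridge at $\sfz$ via the Markov property, and apply \cref{thm:iovw-rw-hitting} to the two sub-bridges and to the full bridge (for the denominator). The paper phrases the union bound as a sum over the \emph{first} cone-point below threshold; your ``pointwise density estimate'' is the same three-factor ratio written slightly differently, and the arithmetic $\sum_{y\le N_n^\delta}(1+y^2)\cdot\sum_{s\ge N_n^{8\delta}}s^{-3/2}=O(N_n^{3\delta-4\delta})$ matches the paper's $\ell^{3\delta}\cdot\ell^{-4\delta}$. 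You also correctly catch the sign typo in the displayed bound, which should indeed read $1-CN_n^{-\delta}$.

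The one place where you are less careful than the paper is the verification that \cref{thm:iovw-rw-hitting} applies to the \emph{sub}-bridges. You check that the interior cone-point height satisfies $y/\sqrt{t-\bar\sfu'_1}\le N_n^{-3\delta}\to0$, but the other endpoint of each sub-bridge carries height $\bar\sfu'_2$ or $\bar\sfv'_2$, which is only bounded by $K_\epsilon N_n^{1/3}$. For $t-\bar\sfu'_1 \asymp N_n^{8\delta}$, the required $\bar\sfu'_2/\sqrt{t-\bar\sfu'_1}\to0$ forces $4\delta > 1/3$, precisely the opposite of the standing $\delta<1/12$; and the region near the endpoints of $J$ is what dominates $\sum_s s^{-3/2}$, so this is not a negligible boundary case. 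The argument survives because $h^+(u)h^-(v)/\ell^{3/2}$ remains an upper bound when $u\gg\sqrt{\ell}$ (the true probability picks up a Gaussian suppression that the linear $h^+(u)$ does not, and the offending $h^+(\bar\sfu'_2)$ in any case cancels against the same factor in the denominator), but your write-up claims the hypotheses are met when they are not. The paper handles this by following \cite[Lemma~3.6]{IOVW20}, which performs exactly this computation with the necessary care; you should either cite that reference or add a sentence extending the upper-bound direction of \cref{thm:iovw-rw-hitting} past its stated regime.
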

\begin{proof}[Proof of \cref{lem:repel}]
    
    By \cref{prop:cone-points-vs-animal-floor} (and \cref{lem:bdd}) it suffices to prove that 
    \[\hat\bP_{\bar\sfu', \bar\sfv'}(\mathsf{Repel}^c \mid \cG^\sqcup_{\Cpts}, \Bdd) \leq C'N_n^\delta\,.\]
    Set $\ell = N_n^{2/3}(\log\log L)$. Since $\bar\sfu_2', \bar\sfv_2' \leq K_\epsilon N_n^{1/3} = o(\sqrt{\bar\sfv_1' - \bar\sfu_1'})$, we are in the regime of \cref{thm:iovw-rw-hitting}.
    We follow the computation of \cite[Lemma 3.6]{IOVW20} (except simpler in our case, as we have already moved to the setting of the effective random walk). We can sum over the first cone-point $\sfz$ below height $\ell^\delta$ in the interval $J$ and apply random walk estimates, obtaining
    \begin{align*}
        \hat\bP_{\bar\sfu', \bar\sfv'}(\mathsf{Repel}^c \mid \cG^\sqcup_{\Cpts}, \Bdd) &\leq \sum_{\sfz_1 = -\ell + \ell^{8\delta}}^{\ell - \ell^{8\delta}}\,\sum_{\sfz_2 = 0}^{\ell^\delta}\hat\bP_{\bar\sfu', \bar\sfv'}(\tau_{\sfz}<\infty \mid \cG^\sqcup_{\Cpts}, \Bdd)\\
        &=\sum_{\sfz_1 = -\ell + \ell^{8\delta}}^{\ell - \ell^{8\delta}}\,\sum_{\sfz_2 = 0}^{\ell^\delta}\frac{\hat\bP_{\bar\sfu'}(\tau_\sfz<\tau_{\bar\sfv'} < \tau_{\H_-} < \infty \mid \Bdd)}{\hat\bP_{\bar\sfu'}(\tau_{\bar\sfv'} < \tau_{\H_-} < \infty \mid \Bdd)}\\
        &= \sum_{\sfz_1 = -\ell + \ell^{8\delta}}^{\ell - \ell^{8\delta}}\,\sum_{\sfz_2 = 0}^{\ell^\delta} \frac{\hat\bP_{\bar\sfu'}(\tau_{\bar\sfz} < \tau_{\H_-} < \infty \mid \Bdd)\bP_{\sfz}(\tau_{\bar\sfv'} < \tau_{\H_-} < \infty \mid \Bdd)}{\bP_{\bar\sfu'}(\tau_{\bar\sfv'} < \tau_{\H_-} < \infty \mid \Bdd)}\\
        &\leq C\ell^{3/2 + \delta}\sum_{\sfz_1 = -\ell + \ell^{8\delta}}^{\ell - \ell^{8\delta}}\frac{\ell^\delta}{(\sfz_1 +\ell)^{3/2}}\cdot \frac{\ell^\delta}{(\ell - \sfz_1)^{3/2}}\\
    &\leq 2C\ell^{3\delta}\sum_{\sfz_1 = -\ell + \ell^{8\delta}}^{0} \frac{1}{(\sfz_1 +\ell)^{3/2}}\cdot \frac{1}{(1 - \tfrac{\sfz_1}{\ell})^{3/2}}\\
    &\leq 2C\ell^{3\delta}\sum_{\sfz_1 = -\ell + \ell^{8\delta}}^{0} \frac{1}{(\sfz_1 +\ell)^{3/2}}\\
    &\leq4C\ell^{3\delta}\ell^{-4\delta} = 4C\ell^{-\delta}\,.
    \end{align*}
    The first two lines follow by a union bound and conditional probability, noting that the event $\tau_{\H_-} < \infty$ occurs with probability 1 as the random walk has no drift in the $y$-coordinate. The third line follows by the markov property, and the fourth line uses \cref{thm:iovw-rw-hitting}. The rest of the computations are purely algebraic, requiring only that $\delta < 1/8$.
\end{proof}

We can now add back the area tilt to obtain entropic repulsion in the measure $\bP_{\bar\sfu', \bar\sfv'}(\cdot \mid \cG^\sqcup, \Bdd)$.
\begin{lemma}\label{lem:repel-with-area}
    For the event $\mathsf{Repel}$ defined in \cref{lem:repel},
    \[\bP_{\bar\sfu', \bar\sfv'}(\mathsf{Repel} \mid \cG^\sqcup, \Bdd) \geq 1 - o(1)\,.\]
\end{lemma}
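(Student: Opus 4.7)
The plan is to reduce \cref{lem:repel-with-area} to \cref{lem:repel} via a change-of-measure argument, using the Radon--Nikodym density between $\bP_{\bar\sfu', \bar\sfv'}$ and $\hat\bP_{\bar\sfu', \bar\sfv'}$ on the portion of $\Gamma$ between the cone-points $\bar\sfu'$ and $\bar\sfv'$. By the product structure \cref{eq:product-structure} applied at these cone-points, this density is simply $e^{-\cA_F^\downarrow(\Gamma)/N_n}$ (restricted to the middle portion), so
\[
\bP_{\bar\sfu', \bar\sfv'}(\mathsf{Repel}^c \mid \cG^\sqcup, \Bdd) = \frac{\hat\bE_{\bar\sfu', \bar\sfv'}\!\left[\one_{\mathsf{Repel}^c}\, e^{-\cA_F^\downarrow(\Gamma)/N_n} \mid \cG^\sqcup, \Bdd\right]}{\hat\bE_{\bar\sfu', \bar\sfv'}\!\left[e^{-\cA_F^\downarrow(\Gamma)/N_n} \mid \cG^\sqcup, \Bdd\right]}.
\]

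For the numerator, I use the trivial bound $e^{-\cA_F^\downarrow/N_n}\le 1$ and invoke \cref{lem:repel} to get a bound of $CN_n^{-\delta}$. The crux is the lower bound on the denominator, for which the target is $e^{-C(\log\log L)^{3/2}}$. To this end, I introduce the event $\mathsf{Max}$ that, over the interval $[\bar\sfu'_1,\bar\sfv'_1]$ of horizontal length $2(\log\log L)N_n^{2/3}$, the cone-points of $\Gamma$ stay below height $C\sqrt{\log\log L}\, N_n^{1/3}$. On $\mathsf{Max}\cap\Bdd$, $\Gamma$ itself lies below height $C\sqrt{\log\log L}\,N_n^{1/3}+(\log L)^{50}$, so $\cA_F^\downarrow(\Gamma) \le C'(\log\log L)^{3/2}N_n$, giving $e^{-\cA_F^\downarrow/N_n}\ge e^{-C'(\log\log L)^{3/2}}$. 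Combining the two bounds,
\[
\bP_{\bar\sfu', \bar\sfv'}(\mathsf{Repel}^c \mid \cG^\sqcup, \Bdd) \le \frac{CN_n^{-\delta}}{c\cdot e^{-C'(\log\log L)^{3/2}}} = O\!\left(N_n^{-\delta} e^{C'(\log\log L)^{3/2}}\right) = o(1),
\]
since $N_n = L^{1-o(1)}$ and $(\log\log L)^{3/2} = o(\log L)$.

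The remaining task, and the main technical point, is to show $\hat\bP_{\bar\sfu',\bar\sfv'}(\mathsf{Max} \mid \cG^\sqcup, \Bdd) \ge c > 0$. By \cref{prop:cone-points-vs-animal-floor}, I can condition on $\cG^\sqcup_{\Cpts}$ instead of $\cG^\sqcup$ at the cost of a constant factor, so the event becomes a statement about the 2D effective random walk bridge from $\bar\sfu'$ to $\bar\sfv'$ with $\bar\sfu'_2,\bar\sfv'_2\le K_\epsilon N_n^{1/3}$, conditioned to remain positive. Since the horizontal length $\ell=2(\log\log L)N_n^{2/3}$ corresponds to a bridge with typical displacement of $\sqrt{\ell}=\sqrt{2\log\log L}\,N_n^{1/3}$ — the same order as the threshold in $\mathsf{Max}$ — a Brownian-bridge heuristic predicts constant probability of staying inside the tube $(0, C\sqrt{\log\log L}\, N_n^{1/3})$. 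Quantitatively, this can be obtained by partitioning $[\bar\sfu'_1,\bar\sfv'_1]$ into $O(1)$ subintervals and applying \cref{thm:iovw-rw-hitting} to control the hitting probabilities between appropriately placed targets at height $\Theta(\sqrt{\log\log L}\,N_n^{1/3})$, mirroring the strategy used in the proof of \cref{lem:dropping}, together with Gaussian tail bounds for excursions of the bridge between consecutive targets.

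The main obstacle is the denominator lower bound: although the target scale $\sqrt{\log\log L}\,N_n^{1/3}$ is comfortably larger than the endpoint heights and also larger than $K_\epsilon N_n^{1/3}$, some care is needed because the bridge is conditioned to stay positive, which skews its fluctuations. This is harmless since the conditioning only repels the walk upward, so a trivial coupling with the unconditioned bridge shows that staying \emph{below} the upper envelope $C\sqrt{\log\log L}\,N_n^{1/3}$ is only \emph{more} likely after conditioning, while staying above $0$ is enforced. Thus the probability of $\mathsf{Max}$ remains bounded below by a constant, completing the proof.
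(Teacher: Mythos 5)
Your decomposition of $\bP_{\bar\sfu',\bar\sfv'}(\mathsf{Repel}^c\mid\cG^\sqcup,\Bdd)$ as a ratio, the bound $e^{-\cA_F^\downarrow/N_n}\le 1$ for the numerator combined with \cref{lem:repel}, and the reduction of the denominator to a constant-probability lower bound for $\{\max\text{ of }\Gamma \le O(\sqrt{\log\log L}\,N_n^{1/3})\}$, are exactly what the paper does. (You also correctly write $N_n^{-\delta}$ for the numerator bound, which silently fixes a sign typo in the paper's statement of \cref{lem:repel}.) The budget check $N_n^{-\delta}e^{C(\log\log L)^{3/2}}=o(1)$ also matches.

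The genuine gap is in the final monotonicity claim. You assert that, under a coupling with the unconditioned bridge, conditioning the walk to stay positive makes it \emph{more} likely to stay below the envelope $C\sqrt{\log\log L}\,N_n^{1/3}$. This is backwards. Conditioning on $\{\min_t\sfS(t)\geq 0\}$ pushes the path \emph{up}, so the event $\{\max_t\sfS(t)\leq M\}$---which is a decreasing event in the increments, while positivity is increasing---becomes \emph{less} likely after the conditioning, not more. (Compare the one-step toy case: for $X\sim N(0,1)$ and $b>0$, $\P(X\leq b\mid X\geq 0)=2\P(X\leq b)-1<\P(X\leq b)$.) So a ``trivial coupling with the unconditioned bridge'' cannot go in the direction you need, and this is precisely why the point requires actual care rather than being harmless.

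The correct route, which the paper takes, is to note that after invoking \cref{prop:cone-points-vs-animal-floor} to replace $\cG^\sqcup$ with $\cG^\sqcup_{\Cpts}$, the resulting object is a 2\textsc{D} effective random walk bridge with endpoint heights $o(\sqrt{\ell})$ conditioned nonnegative, and to apply \cite[Theorem~5.3]{IOVW20}: the rescaled walk converges weakly to a Brownian \emph{excursion}. Tightness of the maximum of the excursion then gives $\hat\bP_{\bar\sfu',\bar\sfv'}(\mathsf{Max}\mid\cG^\sqcup_{\Cpts},\Bdd)\geq c>0$ directly, without any comparison to the unconditioned bridge. Your final conclusion is therefore true, but the coupling argument you offer for it would not survive scrutiny; you should either cite the excursion convergence as the paper does, or, if you wish to avoid the scaling limit theorem, fully carry out the target-ball estimate you sketch for the \emph{conditioned} walk itself (using \cref{thm:iovw-rw-hitting} on each subinterval), rather than reducing to the unconditioned bridge and arguing monotonicity in the wrong direction.
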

\begin{proof}
    We have \begin{equation}
        \label{eq:repel-CS}
        \bP_{\bar\sfu', \bar\sfv'}(\mathsf{Repel}^c \mid \cG^\sqcup, \Bdd) = \frac{\hat\bE_{\bar\sfu', \bar\sfv'}[e^{-\tfrac{\cA^\downarrow_F(\Gamma)}{N_n}}\one_{\mathsf{Repel}^c} \mid \cG^\sqcup, \Bdd]}{\hat\bE_{\bar\sfu', \bar\sfv'}[e^{-\tfrac{\cA^\downarrow_F(\Gamma)}{N_n}} \mid \cG^\sqcup, \Bdd]} \leq \frac{\hat\bE_{\bar\sfu', \bar\sfv'}[\one_{\mathsf{Repel}^c} \mid \cG^\sqcup, \Bdd]}{\hat\bE_{\bar\sfu', \bar\sfv'}[e^{-\tfrac{\cA^\downarrow_F(\Gamma)}{N_n}} \mid \cG^\sqcup, \Bdd]}
    \end{equation}
    The numerator is $\leq CN_n^\delta$ by \cref{lem:repel}. To lower bound the denominator, we wish to show that with at least constant probability in $\hat\bP_{\bar\sfu', \bar\sfv'}(\cdot \mid \cG^\sqcup, \Bdd)$, the maximum of $\Gamma$ is at most $O(N_n^{1/3}\sqrt{\log\log L})$, for then we will have a lower bound on the denominator by $e^{-c(\log\log L)^{3/2}}$, which is negligible as $e^{c(\log\log L)^{3/2}} \ll N_n^\delta$. By \cref{prop:cone-points-vs-animal-floor} again, we can lower bound this probability under $\hat\bP_{\bar\sfu', \bar\sfv'}(\cdot \mid \cG^\sqcup_{\Cpts}, \Bdd)$ instead. Yet this is now a 2\Dim random walk whose start and end-points are of a lower order than the square root of the horizontal distance between them. By \cite[Theorem 5.3]{IOVW20}, this converges to a Brownian excursion, where the bound on the maximum is a well known fact.
\end{proof}

\begin{proof}[Proof of \cref{thm:ub}]
    As per \cref{st-1-bdd,st-2-drop,st-3-drop-to-K,st-4-repel} in
    the strategy outlined at the beginning of this section, by \cref{lem:bdd,lem:dropping,lem:drop-to-Kepsilon,lem:repel-with-area}, we can move to the measure $\bP_{\bar\sfu', \bar\sfv'}(\cdot \mid \mathsf{Repel}, \cG^\sqcup, \Bdd)$. As noted in the proof of \cref{lem:repel-with-area}, we can reveal the first and last cone-points $\bar\sfu'', \bar\sfv''$ in $J$, and by the linearity of the cones we have $|\bar\sfu''_2 - \bar\sfu'_2|, |\bar\sfv''_2 - \bar\sfv'_2| \leq N^{8\delta}$. On the interval $J$ in between $\bar\sfu''$ and $\bar\sfv''$, the event $\mathsf{Repel} \cap \Bdd$ already implies $\cG^\sqcup$, so we have reduced to the measure $\bP_{\bar\sfu'', \bar\sfv''}(\cdot \mid \mathsf{Repel}, \Bdd)$. At this point, we have an area-tilted 2\Dim random walk bridge conditioned to stay above height $N_n^\delta$ with increment law $\P(x, y) = \P^{\sfh_{(1, 0)}}(X(\Gamma) = (x, y) \mid |(x, y)| \leq (\log L)^{50})$, as we have removed every constraint on $\Gamma$ that is not measurable with respect to $\Cpts(\Gamma)$. By the exponential tails on the increments in \cref{prop:OZ-normalization}, changing the increment law to $\P^{\sfh_{(1, 0)}}(X(\Gamma) = (x, y))$ only changes the law of the random walk in total variation by $e^{-c(\log L)^{50}}$. Moreover, the endpoints $\bar\sfu'', \bar\sfv''$ satisfy $\bar\sfv''_1 - \bar\sfu''_1 \geq TN^{2/3}$ for a  constant $T$ sufficiently large depending on $K_\epsilon$, and $\bar\sfu''_2, \bar\sfv''_2 \leq K_\epsilon N_n^{1/3}$. In particular, we have reduced to the starting point of \cite[Sec.~6]{IOSV21}, where it was shown that the area-tilted 2\Dim random walk bridge above a floor with increment law $\P^{\sfh_{(1, 0)}}(X(\Gamma) = (x, y))$ converges to $\mathsf{FS}_{\sigma}$ (see \cite[Eq.~(6.55)]{IOSV21}) in the limit $L\to\infty$ followed by $T\to\infty$.

Thus far we have  established that, w.h.p., for every fixed $n\geq 1$, the process $Y_n(t)$ is stochastically dominated by a process $Z_n(t)$ (the 2\Dim random walk with an area-tilt) that weakly converges to 
$\mathsf{FS}_{\sigma}$ as~$L\to\infty$. Moreover, as the lower level lines are w.h.p.\ below the randomly constructed $Q$, the proof in fact showed that, w.h.p., $Y_n(t) \preceq Z_n(t)$   conditionally on $\fL_k$ for $k > n$ (and in particular, conditionally on the rescaled portions of these level lines, $Y_k(t)$ for $k>n$). Hence, the stronger statement concerning the joint law follows from the next elementary fact.
\begin{fact}\label{fact:stoch-dom-indep}
    Suppose $Y_1, \ldots Y_m, Z_1, \ldots Z_m$ are $\R^d$-valued random variables such that $Y_n \preceq Z_n$ for each $n$. Suppose furthermore that for each $n$, conditional on $\{Y_k, k > n\}$, we have $Y_n \preceq Z_n$. Then, the joint law satisfies $(Y_1, \ldots Y_m) \preceq (Z'_1,\ldots,Z'_m)$ where $\{Z'_k\}_{n=1}^m$ are independent with $Z'_k \stackrel{\mathrm{d}}{=} Z_k$.
\end{fact}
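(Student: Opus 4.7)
\medskip\noindent\textbf{Plan.} The cleanest route is induction on $m$, using the functional characterization of stochastic dominance: for $\R^d$-valued random vectors with the coordinatewise order, $X\preceq X'$ if and only if $\mathbb{E}[f(X)]\leq \mathbb{E}[f(X')]$ for every bounded, coordinatewise non-decreasing $f$. A naive attempt (couple each $Y_n$ monotonically with some $W_n\stackrel{d}{=}Z_n$ using the conditional hypothesis) is doomed because the resulting $W_n$'s are not independent, and their joint law need not be dominated by the product $\bigotimes_n\mathrm{Law}(Z_n)$. Instead, I would work directly with the functional criterion and reveal the variables from top to bottom, matching the order in which the hypothesis is stated.

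\medskip\noindent The base case $m=1$ is precisely $Y_1\preceq Z_1$ (the $n=1$ case of the hypothesis, with empty conditioning on $\{Y_k\,:\,k>1\}$). For the inductive step, the first observation is that the hypothesis is stable under further conditioning on $Y_m$: by the tower property, for each $n<m$ and each $y_m$, the conditional law of $Y_n$ given $(Y_{n+1},\ldots,Y_{m-1},Y_m=y_m)$ equals a version of the conditional law given $(Y_k)_{k>n}$, which by assumption is dominated by $\mathrm{Law}(Z_n)$. Therefore the inductive hypothesis applies to $(Y_1,\ldots,Y_{m-1})$ conditionally on $Y_m=y_m$, giving independent $Z'_1,\ldots,Z'_{m-1}$ (not depending on $y_m$) with $Z'_n\stackrel{d}{=}Z_n$ and, for every bounded coordinatewise non-decreasing $\varphi:(\R^d)^{m-1}\to\R$,
\[
\mathbb{E}[\varphi(Y_1,\ldots,Y_{m-1})\mid Y_m=y_m]\;\leq\;\mathbb{E}[\varphi(Z'_1,\ldots,Z'_{m-1})]\,.
\]

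\medskip\noindent To conclude, pick any bounded coordinatewise non-decreasing $g:(\R^d)^m\to\R$, and for each fixed $y_m$ apply the preceding display to the section $\varphi_{y_m}(y_1,\ldots,y_{m-1}):=g(y_1,\ldots,y_{m-1},y_m)$, which is coordinatewise non-decreasing in its first $m-1$ arguments. Setting $h(y_m):=\mathbb{E}[g(Z'_1,\ldots,Z'_{m-1},y_m)]$, this yields $\mathbb{E}[g(Y_1,\ldots,Y_m)\mid Y_m=y_m]\leq h(y_m)$. Since $g$ is non-decreasing in its last coordinate and $Z'_1,\ldots,Z'_{m-1}$ do not depend on $y_m$, the function $h$ is non-decreasing. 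The $n=m$ case of the original hypothesis (empty conditioning) gives $Y_m\preceq Z_m$, hence $\mathbb{E}[h(Y_m)]\leq \mathbb{E}[h(Z'_m)]$ for an independent $Z'_m\stackrel{d}{=}Z_m$. Taking expectations over $Y_m$ and using independence of $Z'_m$ from $(Z'_1,\ldots,Z'_{m-1})$,
\[
\mathbb{E}[g(Y_1,\ldots,Y_m)]\;\leq\;\mathbb{E}[h(Y_m)]\;\leq\;\mathbb{E}[h(Z'_m)]\;=\;\mathbb{E}[g(Z'_1,\ldots,Z'_m)]\,,
\]
which is the claim.

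\medskip\noindent The argument is essentially routine and the only real point requiring care is the stability of the hypothesis under conditioning on $Y_m$ (immediate from the tower property) and recognizing that revealing $Y_m$ last is what lets the integration go through with a monotone test function of~$y_m$. No regularity beyond the existence of regular conditional distributions on $\R^d$ is used.
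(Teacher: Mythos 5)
The paper states \cref{fact:stoch-dom-indep} as an elementary fact and does not supply a proof, so there is nothing to compare against. Your argument is correct: the induction on $m$ is well set up, the key observation that the conditional-domination hypothesis is preserved upon further conditioning on $Y_m$ is exactly the tower-property point one needs, and the final integration using the monotonicity of $h(y_m)=\mathbb{E}[g(Z'_1,\dots,Z'_{m-1},y_m)]$ together with $Y_m\preceq Z_m$ and the independence of $Z'_m$ from $(Z'_1,\dots,Z'_{m-1})$ closes the loop. A small stylistic remark: the first hypothesis ($Y_n\preceq Z_n$) is already implied by the conditional one by averaging, so one can drop it; and ``equals a version of the conditional law given $(Y_k)_{k>n}$'' could be stated more directly as the two $\sigma$-algebras $\sigma(Y_{n+1},\dots,Y_m)$ and $\sigma(Y_k:k>n)$ coinciding. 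Neither affects correctness.
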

Note that here we conditioned on the lower level lines $\fL_k$ for $k>n$; we cannot condition on the upper level lines $\fL_k$ for $k<n$ (unlike the proof of \cref{thm:lb}, the matching lower bound for \cref{thm:ub}, where  we condition on the upper level lines  but cannot condition on the lower ones). The reason for this is that, in our proof, we construct a rectangle $Q$ of height $N_n^{2/3}$ surrounding the typical location of $\fL_n$, and while it is positioned so as not to intersect the lower level line $\fL_{n+1}$, it will impede on the support of $\fL_{n-1}$. (The necessity for such a rectangle, as opposed to one of height (say)  $N^{1/3}(\log L)^c$, is that the latter  entails pinning issues with the top side of $Q$, and the present tools cannot resolve those when said boundary is random and wiggly.)
\end{proof}

\begin{remark}
In the proof of the convergence of the area-tilted 2\Dim random walk to $\mathsf{FS}_{\sigma}$ in \cite[Sec.~6]{IOSV21}, the initial hypothesis that the height of the endpoints (our $\bar\sfu''_2,\bar\sfv''_2$) is replaced by 
\cite[Eq.~(6.55)]{IOSV21}, requiring that they would belong to $ [c_\epsilon N_n^{1/3},K_\epsilon N_n^{1/3}] $. Said assumption qualifies for an application of a coupling tool (\cite[Prop.~6.2]{IOSV21}). At this stage, both here and in the setting of \cite{IOSV21}, the heights of the endpoints could potentially be $o(N_n^{1/3})$. This is not an issue for their argument, as the proof of \cite[Prop.~6.2]{IOSV21} remains valid even without the assumption of the $c_\epsilon N_n^{1/3}$ lower bound.

Alternatively, it is not difficult to show that one can move from the assumption  $\bar\sfu''_2,\bar\sfv''_2 \leq K_\epsilon N_n^{1/3}$ to  $\bar\sfu''_2,\bar\sfv''_2 \in [c_\epsilon N_n^{1/3}, K_\epsilon N_n^{1/3}]$ with probability at least $1-\epsilon$. Indeed, it suffices to consider $\sfu''$ (then applying the same argument to $\sfv''$ by a union bound).
Let $g_L^\epsilon$ be the maximum integer such that $\sfu''_2 \geq g_L^\epsilon$ with probability at least $1-\epsilon$. If $\liminf_{L\to\infty} g_L^\epsilon N_n^{-1/3} > 0$ then $\sfu''$ satisfies the sought condition (with $c_\epsilon$ as this $\liminf$ value). Otherwise, we may apply the same logic to the ``last'' cone-point $\sfu'''$ in the interval of length $N_n^{2/3}$ starting at $\sfu''$ (i.e., the cone-point $\sfx$ with maximizing $\sfx_1-\sfu''_1$ out of those with $\sfx_1-\sfu''_1 < N_n^{2/3}$), looking at the maximum integer $h_L^\epsilon$ such that $\sfu'''_2\geq h_L^\epsilon$ with probability at least $1-\epsilon$. If $\liminf_{L\to\infty} h_L^\epsilon N_n^{-1/3}>0$, we may use $\sfu'''$ as our desired endpoint. It remains to handle the case where $\sfu''_2 ,\sfu'''_2 < \delta_L N_n^{1/3}$ for $\delta_L=o(1)$ (for $\delta_L = (g_L^\epsilon \vee h_L^\epsilon) N_n^{-1/3}$). For such $\sfu'',\sfu'''$, by \cite[Thm.~5.3]{IOVW20}, the 2\Dim random walk with \emph{no area-tilt} converges to a Brownian excursion. Consequently, we claim that there exists some $c_\epsilon$ such that the cone-points at the middle of this interval---concretely, take the first cone-point $\sfw$ such that $\sfw_1-\sfu''_1 > N_n^{2/3}/2$---are at height at least $c_\epsilon N_n^{1/3}$ with probability at least $1-\epsilon$. Indeed, we may bound the complement as in \cref{eq:repel-CS}: the denominator is uniformly bounded away from zero, e.g., we may bound it from below by $\frac12 \exp(-A)$ where $A$ is the median of the area of a standard Brownian excursion $B_t$ ($0<t<1$), whereas the numerator is bounded by $\P(B_{1/2}<\epsilon)$.
\end{remark}

    \section{Lower bound}\label{sec:LB}
	
	Our goal in this section is the following bound on the distance of $\fL_n$ from the bottom side of $\Lambda$. 

\begin{theorem}\label{thm:lb}
In the setting of \cref{thm:1}, fix $n\geq 1$ and $K>0$, let $\rho_n(x)$ be the maximum vertical distance of~$\fL_n$ above $x+(\frac{L}2,0)$ for $-N_n^{2/3} \leq x\leq N_n^{2/3}$,
and set $\sigma^2>0$ as per \cref{def:sigma}. 
Then every weak limit point $\mathbf{Y}_n(t)$ of the process $Y_n(t):=N_n^{-1/3}\rho_n(t N_n^{2/3})$ ($t\in[-K,K]$), as $L\to\infty$, satisfies
\[ \mathbf{Y}_n \succeq \mathsf{FS}_{\sigma}\,. \]
Moreover, for every fixed $m$, every weak limit point $(\bY_n(t))_{n\leq m}$ of the processes $(Y_n(t))_{n\leq m}$ satisfies
\[ (\bY_n)_{n\leq m} \succeq \bigotimes_{n\leq m}\mathsf{FS}_{\sigma}\,.\]
\end{theorem}

Since $\rho_n(x)$ is a decreasing function of $\phi$, en route to proving \cref{thm:lb} in this section we may apply monotonicity arguments that are increasing (e.g., raising the heights of boundary vertices).

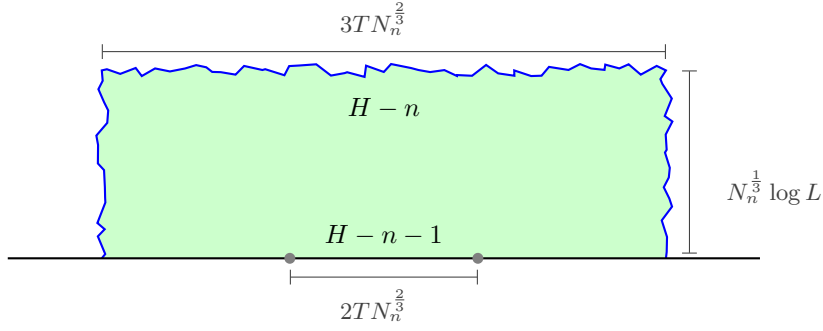
\begin{figure}
\centering
    \begin{tikzpicture}
    \begin{scope}[scale=1.25]
    \pgfmathsetseed{314159}
    \filldraw [thick, blue, fill=green!20] (0,0) to[move to] (6,0) decorate[decoration={random steps,segment length=1.5mm}]{-- (6,2)} decorate[decoration={random steps,segment length=1.5mm}]{-- (0,2)} decorate[decoration={random steps,segment length=1.5mm}]{-- (0,0)};
     
    \draw[thick, color=black] (-1,0)--(7,0);
    
    \draw[color=gray!50!black,|-|](6.25,0.05)--(6.25,2);
    \node[color=gray!50!black,font=\footnotesize] at (7.15,.75) {$N_n^{\frac13}\log L$};

    \draw[color=gray!50!black,|-|](0, 2.2)--(6,2.2);
    \node[color=gray!50!black,font=\footnotesize] at (2.9,2.55) {$3T N_n^{\frac23}$};

    \draw[color=gray!50!black,|-|](2, -0.2)--(4,-0.2);
    \node[color=gray!50!black,font=\footnotesize] at (2.9,-0.5) {$2TN_n^{\frac23}$};

    \node[font=\small] at (3,1.6) {$H+1-n$};
    \node[font=\small] at (3,0.25) {$H-n$};

    \node[circle,scale=0.4,fill=gray] at (2,0.) {};
    \node[circle,scale=0.4,fill=gray] at (4,0.) {};
    \end{scope}
    
    \end{tikzpicture}
    \caption{An instantiation of the domain $Q$ used in the proof of the lower bound on $\fL_n$. The bottom boundary of $Q$ coincides with the bottom boundary of $\Lambda$. The two gray points on the boundary mark the change in boundary conditions from $H-n$ to $H+1-n$. In contrast with \cref{fig:ub-rect}, the conditioning on $\phi_x \geq 0$ is present in all of $Q$.}
    \label{fig:lb-rect}
\end{figure}

Fix $T > K$ (eventually we will take $T \to \infty$). Let $R$ be the $3TN_n^{2/3} \times N_n^{1/3}(\log L)$ rectangle centered at $x = 0$ such that the bottom side  of $\partial R$ coincides with the bottom of $\partial \Lambda$. Let $A = (-TN_n^{2/3}, 0)$ and $B = (TN_n^{2/3}, 0)$. We start with a simpler analogue of \cref{lem:boundary-construction}, the proof of which we also postpone to \cref{sec:random-boundary}.

\begin{lemma}\label{lem:LB-boundary-construction}
        Fix $n \geq 1$. For $R, A, B$, as defined above, assume we know that w.h.p.\ under $\pi^0_{\Lambda}$, all of $R$ lies in the exterior of $\fL_{n-1}$ (for $n = 1$, there is no assumption, since in the setting of \cref{thm:1}, w.h.p.\ $\fL_0$ does not exist). Then, there exists a $\pi^0_{\Lambda}$-measurable distribution on connected regions $Q\subset R$ with marked boundary conditions $\xi$ satisfying  \cref{it:LB-Q-simp-conn,it:LB-dist-Q-R-bdy,it:LB-Q-bc-arc}
        below, such that the following holds. If $\cA_1$ is the area of the interior of $\fL_n$ intersected with~$Q$, and $\cA_2$ is the area above the $(H+1-n)$ level line in $Q$ under $\pi^\xi_Q$, then $\cA_1 \subset \cA_2$ w.h.p.\ under $\pi^0_{\Lambda}$.        
        \begin{enumerate}
            \item \label{it:LB-Q-simp-conn}$Q$ is simply connected,

            \item\label{it:LB-dist-Q-R-bdy} $\dist(\partial Q, \partial R) \leq \log L$, and the bottom side of $\partial Q$ coincides with the bottom side of $\partial R$,

            \item \label{it:LB-Q-bc-arc} The boundary conditions $\xi$ assigns height $H-n$ on the straight line between $A$ and $B$, and $H+1-n$ on the remainder of $\partial Q$.
        \end{enumerate}
    \end{lemma}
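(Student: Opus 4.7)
\textbf{Proof plan for \cref{lem:LB-boundary-construction}.} The construction mirrors that of \cref{lem:boundary-construction} but is considerably simpler due to the flat bottom: since the bottom of $R$ coincides with $\partial\Lambda_L$, it automatically supplies the reference boundary values $\phi\equiv 0$, so the only task is to carve out a random top/left/right boundary of $Q$ on which $\phi \leq H-n$ with high probability. Then the boundary condition $\xi$ will dominate $\phi|_{\partial Q}$ pointwise, and the Domain Markov property combined with FKG monotonicity will yield $\cA_1\subset \cA_2$.

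\emph{Step 1 (location of $R$).} When $n\geq 1$, apply \cref{thm:level-line-contains-Wulff} at level $n-1$: w.h.p.\ $\fL_{n-1}$ stays at vertical distance at least $N_{n-1}^{1/3}(\log L)^{16}$ from the bottom of $\Lambda_L$ (away from the corners), and by \cref{rem:Nn-scales} this is $\gg N_n^{1/3}\log L$, the height of $R$. Hence w.h.p.\ all of $R$ lies in the exterior of $\fL_{n-1}$, which is exactly the assumption made in the statement. For $n=0$ this step is vacuous.

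\emph{Step 2 (no atypically high site in a thin strip).} Let $S$ be the strip of width $(\log L)^2$ inside $R$ adjacent to its top, left, and right sides. The definition of $H$ in \cref{eq:H-def} together with the ratio estimate \cref{eq:LD-ratio-inf} gives
\[
\hatpi_\infty(\phi_o \geq H-n+1) \leq \hatpi_\infty(\phi_o = H-n)\cdot e^{-c\sqrt{\log L/\log\log L}} \leq L^{-1}\cdot e^{-c\sqrt{\log L/\log\log L}}.
\]
Combining \cref{lem:UB-LD-any-point} to control sites near $\partial\Lambda_L$ with the decorrelation bound \cref{eq:decay-of-corr} to transfer the infinite-volume estimate to $\pi^0_{\Lambda_L}$ at a negligible cost, one obtains $\pi^0_{\Lambda_L}(\phi_x\geq H-n+1)\leq L^{-1-\omega(1)}$ uniformly in $x\in R$. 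Since $|S|\leq 3 T N_n^{2/3}(\log L)^3\leq L^{1-o(1)}$, a union bound yields
\[
\pi^0_{\Lambda_L}\bigl(\exists\, x\in S:\ \phi_x\geq H-n+1\bigr) \;\leq\; L^{-\omega(1)} \;=\; o(1).
\]

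\emph{Step 3 (definition of $Q$ and $\xi$).} On the good event of Steps 1--2, let $Q$ be the sub-rectangle of $R$ obtained by shrinking $R$ from the top, left and right sides by exactly $(\log L)^2$, while keeping the bottom side of $Q$ equal to the bottom side of $R$. Then $Q\subset R$ is simply connected, $\dist(\partial Q,\partial R)\leq (\log L)^2$, and since $T N_n^{2/3}\ll 3 T N_n^{2/3}/2 - (\log L)^2$, both $A$ and $B$ lie on the bottom side of $\partial Q$. Define $\xi\equiv H-n$ on $\partial Q$, except on the horizontal segment between $A$ and $B$ where $\xi\equiv H-n-1$. On the complementary bad event (of vanishing probability) take $Q,\xi$ to be any fixed default choice; this will not affect the conclusion.

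\emph{Step 4 (monotonicity and conclusion).} By the Domain Markov property, conditional on $\phi|_{\Lambda_L\setminus Q}$, the law of $\phi|_Q$ under $\pi^0_{\Lambda_L}$ is $\pi^{\phi|_{\partial Q}}_Q$. On the good event constructed above, $\phi|_{\partial Q}\leq H-n$ on the top and sides (by Step~2, since $\partial Q\subset S$) and $\phi|_{\partial Q}=0\leq H-n-1$ on the bottom (by the zero boundary condition on $\partial\Lambda_L$). Therefore $\xi\geq \phi|_{\partial Q}$ pointwise, and FKG gives $\pi^\xi_Q\succeq \pi^{\phi|_{\partial Q}}_Q$. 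Since the event $\{\phi_x\geq H-n\}$ is increasing for each $x$, a monotone coupling yields $\cA_1\subset \cA_2$ with high probability, as claimed.

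\emph{Main obstacle.} The only substantive probabilistic step is Step~2. It relies crucially on the sharp large-deviation bounds of \cref{thm:LD-DG,lem:UB-LD-any-point}: the super-linear rate $h^2/\log h$ in the exponent of $\hatpi_\infty(\phi_o\geq h)$ ensures that $\hatpi_\infty(\phi_o\geq H-n+1)$ is a factor $e^{-c\sqrt{\log L/\log\log L}}=L^{-\omega(1)}$ smaller than $1/N_n\leq 1/L$, and this slack is exactly what makes the union bound over the $L^{1-o(1)}$ sites of $S$ affordable. In the analogous upper-bound construction \cref{lem:boundary-construction} one works at a genuinely fluctuating interface so a straight shrunk rectangle is not sufficient; here the flat bottom removes that difficulty entirely.
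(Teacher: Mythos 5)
Your Steps 1, 3, 4 are sound, and Step 1 matches the paper's justification that $R$ lies w.h.p.\ in the exterior of $\fL_{n-1}$. However, Step 2 has a fatal gap, and it invalidates the central idea of using a \emph{deterministic} shrunk rectangle for $Q$.

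The claim in Step 2 is $\pi^0_{\Lambda_L}(\phi_x\geq H-n+1)\leq L^{-1-\omega(1)}$ uniformly over $x\in S$, where $\pi^0$ is the measure \emph{with} the floor. This is false. Under $\pi^0_{\Lambda_L}$ the surface in the bulk (and, by \cref{thm:level-line-contains-Wulff}, already at vertical distance $\gtrsim N_n^{1/3}$ from the bottom) sits at height $H-n$ or above, not at height $0$. For $x$ near the top of $S$, i.e.\ at vertical distance $\approx N_n^{1/3}\log L$ from the bottom of $\Lambda_L$, the site is typically in the interior of $\fL_n$ and the exterior of $\fL_{n-1}$, so $\phi_x = H-n$ with $\pi^0$-probability $1-\epsilon_\beta$. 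In that regime $\pi^0_{\Lambda_L}(\phi_x\geq H-n+1)$ is not a rare event: it is a one-step upward bubble in a sea of height-$(H-n)$ sites, with probability $\Theta(e^{-4\beta})$ --- a $\beta$-dependent \emph{constant}, not tending to $0$ with $L$. The tools you invoke do not bridge this gap: \cref{lem:UB-LD-any-point} and the decorrelation estimate \cref{eq:decay-of-corr} are statements about the \emph{no-floor} measure $\hatpi^0_V$; passing from $\hatpi^0_V$ to $\pi^0_{\Lambda_L}$ via the floor is a monotone increase in the FKG order, so it only pushes $\pi^0_{\Lambda_L}(\phi_x\geq H-n+1)$ \emph{up}, not down. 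With $|S|\asymp N_n^{2/3}(\log L)^3 = L^{2/3-o(1)}$, the union bound therefore yields a quantity of order $L^{2/3-o(1)}\cdot e^{-4\beta}\gg 1$, and the good event of Step 2 does not occur with high probability. (As a smaller quibble, your display $\hatpi_\infty(\phi_o=H-n)\leq L^{-1}$ also has the inequality pointing the wrong way --- for $n\geq0$ this quantity is $\geq 5\beta/L$ by \cref{eq:H-def} --- but that is a cosmetic issue compared to the floor/no-floor confusion.)

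This is exactly why the construction of $Q$ must be \emph{random}. There genuinely are, with constant density, microscopic islands $C_x=\{y:\phi_y\geq H-n+1\}$ straddling any fixed top/side boundary of a rectangle, and no deterministic boundary can avoid them. The paper's construction reveals the components $C_x$ intersecting the fixed frame $\partial^1 R$, raises their outer boundary to $H-n$ by monotonicity, and then takes $Q$ to be the region enclosed by these raised circuits together with the flat bottom. That $\dist(\partial Q,\partial R)\leq(\log L)^2$ (property \cref{it:LB-dist-Q-R-bdy}) then follows not from the non-existence of islands but from the smallness of each island, which is the content of \cref{obs:LB-dist-bdy-R}: w.h.p.\ every such component in the exterior of $\fL_{n-1}$ has diameter $\leq(\log L)^2$. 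To fix your proof you should replace Step 2 with this revealing-and-wiggling argument; the flat bottom of $R$ does simplify matters relative to \cref{lem:boundary-construction} (no need for the $A', B'$ adjustment or the cigar-containment check of \cref{it:good-boundary-points}), but the random carving of the top and sides is not dispensable.
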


Let $Q, \xi$ be any domain and boundary condition satisfying \cref{lem:LB-boundary-construction} (see \cref{fig:lb-rect}). It now suffices to prove \cref{thm:lb} where $\fL_n$ is the $H+1-n$ level line in $\pi^\xi_Q$. We start by showing the lower bound for $\fL_1$ so that w.h.p.\ the $y$-coordinate of $\fL_1$ is $O(N_1^{1/3})$ above the interval $[-\tfrac32TN_2^{2/3}, \tfrac32TN_2^{2/3}]$. This justifies the assumption in \cref{lem:LB-boundary-construction} for $n = 2$, since $N_2^{1/3} \leq N_1^{1/3}e^{-c\beta\sqrt{\log L/\log\log L}}$ by \cref{eq:LD-ratio-inf}. We can then prove the lower bound for $\fL_2$, and proceed inductively. 

In the rest of this section, we will use ``height'' to refer to the vertical distance above the bottom of $\Lambda$. Let $\cG^\sqcap$ be the event that $\fL_n$ stays under the horizontal line $\cH$ at distance $2\log L$ below the lowest point on the top side of $Q$. Then, $\pi^\xi_Q \preceq \pi^\xi_Q(\cdot \mid \cG^\sqcap)$. We can now  follow the same preliminary steps as in \cref{sec:UB} to move to a polymer model. By \cref{lem:cluster-expansion-likely}, we can condition further on the event $\cG_1$ defined there, so that by \cref{prop:CE-law-with-floor} (and \cref{eq:weight-of-q-E*}) we have
\begin{align}\label{eq:LB-repeat-qn}\pi^\xi_Q(\gamma \mid \cG^\sqcap, \cG_1) &\propto \exp\bigg(-\sE^*_\beta(\gamma) + \frac{\cA_Q(\gamma)}{N_n}+ \fI_Q(\gamma) \bigg) = q^n_{Q; Q, 1}(\gamma)\,.
\end{align}
By \cref{eq:good-event-CE-G'} we can move to the polymer model with weights $q^n_{Q; Q, 1}$ and partition function
\[Z^n_{Q, Q; Q, 1}(A, B \mid \cG^\sqcap) = \sum_{\gamma \in \cP_Q(A, B) \cap \cG^\sqcap} q^n_{Q; Q, 1}(\gamma)\,.\]

Forgetting the area term for now, the first step is to control the effect of the interactions with the bottom of $Q$, effectively eliminating any potential pinning effects. This was shown in the half-space $\H_+$ in \cite{IST15} for Ising polymers. However, as mentioned in the end of the introduction there, the proof is more robust and allows for more complicated geometries in $\gamma$ (e.g. our connected components of bonds), and more complicated energies (e.g. our $\cE^*_\beta(\gamma)$), as long as the polymer model in question features the Ornstein--Zernike results found in \cref{lem:cpts-length,prop:OZ-normalization}. The particular structure of the Ising polymer model is only used to show that the increment measure has $1-\epsilon_\beta$ mass on three basic irreducible components (see \cite[Definition 18]{IST15}), which simplifies the proof of certain random walk estimates. The same is true for the class of polymer models considered in this paper, for the same reason that allowed us to transfer the results of \cite{DKS92} in \cref{sec:geom-disagreement-polymers}: the assumption in \cref{eq:energy-property} that $e^{-\cE^*_\beta(\gamma)}$ decays exponentially in $\beta\sN(\gamma)$ implies that the lowest energy disagreement polymers coincide with those of the Ising polymer setting, and the increase in entropy of other polymers is negligible compared to their energy.
\begin{proposition}[{\cite[Theorem 2]{IST15}}]\label{prop:IST}
    There exists a constant $C(\beta)>0$ such that
    \[\tfrac{1}{C(\beta)}\hatZ^n_{\H_+,\Z^2}(A, B) \leq \hatZ^n_{\H_+,\H_+}(A, B) \leq C(\beta)\hatZ^n_{\H_+, \Z^2}(A, B)\,.\]
\end{proposition}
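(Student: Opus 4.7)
My plan is as follows. The upper bound is essentially free: from the definition \cref{eq:def-Phi'} we have $\Phi'_{\H_+}(\sfW;\gamma) = \Phi'(\sfW;\gamma)\one_{\{\sfW\subset\H_+\}} \leq \Phi'_{\Z^2}(\sfW;\gamma)$ pointwise, and both functions are non-negative. Hence $\fI_{\H_+}(\gamma) \leq \fI_{\Z^2}(\gamma)$ and thus $\hatq^n_{\H_+}(\gamma) \leq \hatq^n_{\Z^2}(\gamma)$ for every $\gamma \in \cP_{\H_+}(A,B)$, yielding $\hatZ^n_{\H_+,\H_+}(A,B) \leq \hatZ^n_{\H_+,\Z^2}(A,B)$.

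The substantive direction is the lower bound, which is a depinning statement. I would write the ratio as
\[
\frac{\hatZ^n_{\H_+,\Z^2}(A,B)}{\hatZ^n_{\H_+,\H_+}(A,B)} = \bE_{\H_+}\!\bigl[\exp\bigl(\fI_{\Z^2}(\gamma) - \fI_{\H_+}(\gamma)\bigr)\bigr],
\]
where $\bE_{\H_+}$ is expectation under the normalized measure with weights $\hatq^n_{\H_+}$ on $\cP_{\H_+}(A,B)$. The excess interaction satisfies
\[
0 \;\leq\; \fI_{\Z^2}(\gamma) - \fI_{\H_+}(\gamma) \;\leq\; \sum_{k\geq 1} e^{-(\beta-C)k}\bigl|\{b\in\gamma:\dist(b,\partial\H_+)\leq k\}\bigr|
\]
by the decay bound in \cref{def:phi-w-gamma}\cref{it:phi-decay-bound}, since only clusters $\sfW$ touching $\gamma$ and reaching outside $\H_+$ contribute. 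So it suffices to establish exponential tails, uniformly in $A,B$, on the number of bonds of $\gamma$ within distance $k$ of $\partial\H_+$ under $\bE_{\H_+}$, with decay rate at most $\beta-C$.

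To this end I would follow the strategy of \cite[Thm.~2]{IST15}, now phrased through the irreducible decomposition of \cref{subsec:cone-points}. After the Ornstein--Zernike renormalization $e^{\sfh^n_{(1,0)}\cdot X(\Gamma)}$ of \cref{prop:OZ-normalization}, the sequence of irreducible components between cone-points behaves as an i.i.d.\ random walk with exponentially decaying increment tails. Restricting the partition function to $\cP_{\H_+}(A,B)$ amounts to conditioning this effective 2\Dim random walk bridge to stay non-negative, and classical ballot-type estimates (of the kind invoked in \cref{thm:iovw-rw-hitting}) give uniform exponential tails in $k$ on the number of steps at height $\leq k$, which combined with the geometric factor $e^{-(\beta-C)k}$ above gives a convergent sum and hence the constant $C(\beta)$. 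The main obstacle is the distinction between ``$\gamma\subset\H_+$'' and ``$\Cpts(\Gamma)\subset\H_+$'': the partition function enforces the former while the random-walk representation handles the latter. Reconciling the two is precisely the content of \cref{prop:cone-points-vs-animal-floor} (quoted from \cite{IST15}), and the same finite-range local correction must be applied inside the proof of \cref{prop:IST}. Once this is in place, the argument transfers from the Ising-polymer setting of \cite{IST15} to ours without change, since our only structural generalizations — the richer geometry of $\gamma$ and the additional terms absorbed into $\sE^*_\beta(\gamma)$ — preserve the two key inputs, namely the linear density of cone-points (\cref{lem:cpts-length}) and the probabilistic interpretation of irreducible components (\cref{prop:OZ-normalization}).
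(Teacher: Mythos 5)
Your overall strategy matches the paper's: both you and the authors quote \cite[Theorem~2]{IST15} and argue that its proof transfers because the Ornstein--Zernike inputs (\cref{lem:cpts-length}, \cref{prop:OZ-normalization}) hold for disagreement polymers. Your extra step of expressing the partition-function ratio as an exponential moment of the excess interaction, and then controlling it via exponential tails of the local time near $\partial\H_+$, is a faithful unpacking of what \cite{IST15} does under the hood. You omit one point the paper emphasizes: the other structural fact used in \cite{IST15} is that the renormalized increment measure puts $1-\epsilon_\beta$ mass on three basic irreducible components, and the authors explain that this survives here because $\sE_\beta^*(\gamma)\geq\beta'\sN(\gamma)$ (\cref{eq:energy-property}) means the lowest-energy disagreement polymers coincide with Ising contours.

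There is, however, a concrete error at the start of your argument. You assert
$\Phi'_{\H_+}(\sfW;\gamma) = \Phi'_{\Z^2}(\sfW;\gamma)\one_{\{\sfW\subset\H_+\}}$ and deduce $\Phi'_{\H_+}\leq\Phi'_{\Z^2}$ pointwise. This misreads \cref{eq:def-Phi'}: the buffer term $|\sfW\cap\nabla_\gamma|e^{-(\beta-C)\bd(\sfW)}$ is \emph{not} gated by the indicator; only the $\Phi_U$-summand is. Thus
\[
\Phi'_{\Z^2}(\sfW;\gamma)-\Phi'_{\H_+}(\sfW;\gamma)=\Phi(\sfW;\gamma)\one_{\{\sfW\not\subset\H_+\}}\,,
\]
which has no definite sign since $\Phi$ is the raw decoration, not $\Phi'$. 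Consequently your claim that the upper bound of \cref{prop:IST} is ``essentially free'' fails, and the asserted nonnegativity $0\leq\fI_{\Z^2}(\gamma)-\fI_{\H_+}(\gamma)$ in your lower-bound step also fails. What does survive is the absolute-value bound
\[
\bigl|\fI_{\Z^2}(\gamma)-\fI_{\H_+}(\gamma)\bigr|\ \leq\ \sum_{k\geq1}e^{-(\beta-C)k}\bigl|\{b\in\gamma:\dist(b,\partial\H_+)\leq k\}\bigr|\,,
\]
which you already invoke; and both directions of the proposition then follow from the same exponential-moment/depinning estimate (for the upper bound, write the reciprocal ratio as $\bE_{\Z^2}[\exp(\fI_{\H_+}-\fI_{\Z^2})]$ over $\cP_{\H_+}(A,B)$ and apply the identical bound). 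So the error is confined to the ``freeness'' claim: once you replace the one-sided inequality with the absolute-value bound and run the depinning argument for both directions, the proposal is sound and essentially coincides with the paper's justification.
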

In our setting, we will need the following corollary.

\begin{corollary}\label{cor:move-to-H-H}
For the above defined $Q$ and endpoints $A,B$, one has
\[    \hatZ^n_{Q, Q}(A, B \mid \cG^\sqcap) = (1+o(1))\hatZ^n_{\H_+, \H_+}(A, B)\,.
\]
\end{corollary}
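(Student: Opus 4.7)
The plan is to sandwich $\hatZ^n_{Q, Q}(A, B \mid \cG^\sqcap)$ between two quantities, each shown to equal $(1+o(1))\hatZ^n_{\H_+, \H_+}(A, B)$. Introduce the good event
\[\fG := \Bigl\{\gamma \subset [A_1 - \tfrac12 T N_n^{2/3},\, B_1 + \tfrac12 T N_n^{2/3}] \times [0,\, \tfrac12 N_n^{1/3}\log L]\Bigr\},\]
a box buffered from $\partial Q \setminus \partial \H_+$ by at least $\tfrac13 N_n^{1/3}\log L$ in every non-floor direction. On $\fG$ one has $\gamma \subset Q$ (so $\cP_Q(A, B) \cap \fG = \cP_{\H_+}(A, B) \cap \fG$) and, since $\fL_n \subset \gamma$, also $\fG \subset \cG^\sqcap$. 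Throughout I will exploit the pointwise monotonicity $\hatq^n_Q(\gamma) \leq \hatq^n_{\H_+}(\gamma)$, which follows from $\Phi'_U \geq 0$ (Section 3.1) together with $Q \subset \H_+$; this yields in particular the a priori upper bound $\hatZ^n_{Q, Q}(A, B) \leq \hatZ^n_{\H_+, \H_+}(A, B)$.

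The first step is to show $\hatZ^n_{\H_+, \H_+}(A, B,\, \fG^c) \leq e^{-c(\log L)^2/2}\hatZ^n_{\H_+, \H_+}(A, B)$. Applying \cref{it:DKS-large-deviations} of \cref{prop:DKS-propositions} in $(\H_+, \Z^2)$ with $j = \tfrac12 N_n^{1/3}\log L$ (vertical escape via the $\bar h_t$ bound) and with $j = \tfrac12 T N_n^{2/3}$ (horizontal escape via the $\ell_t$ bound), then a union bound over integer $t \in [0, M_{A,B}]$, gives
\[\hatZ^n_{\H_+, \Z^2}(A, B,\, \fG^c) \leq e^{-c(\log L)^2}\hatZ^n_{\H_+, \Z^2}(A, B)\]
(the horizontal escape is far rarer, since $T$ is fixed). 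Then \cref{lem:rare-events-changing-Z} applied with $V_1 = V_2 = \H_+$, $U_1 = \H_+$, $U_2 = \Z^2$ - whose hypothesis holds with $f_1 = O(1)$ supplied by \cref{prop:IST} and $f_2 = 0$ trivially - transports this tail estimate to the $(\H_+, \H_+)$ setting at the price of a square root in the exponent.

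The second step compares the two measures on $\fG$. For $\gamma \in \fG$, any $\sfW$ contributing to $\fI_{\H_+}(\gamma) - \fI_Q(\gamma) = \sum_{\sfW \cap \nabla_\gamma \neq \emptyset,\, \sfW \subset \H_+,\, \sfW \not\subset Q} \Phi'(\sfW;\gamma)$ must reach from within unit distance of $\gamma$ out to $\partial Q \setminus \partial \H_+$, forcing $\bd(\sfW) \geq \tfrac13 N_n^{1/3}\log L$. Summing the decay bound \cref{it:phi-decay-bound} of \cref{def:phi-w-gamma} over the at most $9|Q| = O(N_n \log L)$ bonds of $\nabla_\gamma$ yields $|\fI_{\H_+}(\gamma) - \fI_Q(\gamma)| \leq e^{-cN_n^{1/3}\log L}$ uniformly on $\fG$. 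Combined with the identity $\cP_Q(A, B) \cap \fG = \cP_{\H_+}(A, B) \cap \fG$ and the first step, this gives
\[\hatZ^n_{Q, Q}(A, B,\, \fG) = (1+o(1))\hatZ^n_{\H_+, \H_+}(A, B,\, \fG) = (1+o(1))\hatZ^n_{\H_+, \H_+}(A, B).\]

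Closing the sandwich is then immediate. Since $\fG \subset \cG^\sqcap$, one gets the lower bound $\hatZ^n_{Q, Q}(A, B \mid \cG^\sqcap) \geq \hatZ^n_{Q, Q}(A, B,\, \fG) = (1-o(1))\hatZ^n_{\H_+, \H_+}(A, B)$, while the monotonicity $\hatq^n_Q \leq \hatq^n_{\H_+}$ together with $\cP_Q(A, B) \subset \cP_{\H_+}(A, B)$ gives the matching upper bound $\hatZ^n_{Q, Q}(A, B \mid \cG^\sqcap) \leq \hatZ^n_{Q, Q}(A, B) \leq \hatZ^n_{\H_+, \H_+}(A, B)$. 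The hard part is the first step: the sharp LD estimates in \cref{prop:DKS-propositions} are stated with $\Z^2$ interactions, while the floor measure carries $\H_+$ interactions, and bridging the two demands the depinning comparison of \cref{prop:IST} fed through \cref{lem:rare-events-changing-Z}, with a square-root loss in the exponent that is nevertheless comfortably absorbed by the initial $e^{-c(\log L)^2}$ tail.
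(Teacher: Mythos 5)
Your proposed sandwich argument contains a genuine error in the upper-bound step. You claim the pointwise monotonicity $\hatq^n_Q(\gamma)\leq \hatq^n_{\H_+}(\gamma)$, citing $\Phi'_U\geq 0$ and $Q\subset\H_+$. This does not follow. By definition $\Phi'_U(\sfW;\gamma)=|\sfW\cap\nabla_\gamma|e^{-(\beta-C)\bd(\sfW)}+\Phi(\sfW;\gamma)\one_{\{\sfW\subset U\}}$, so the difference
\[
\Phi'_{\H_+}(\sfW;\gamma)-\Phi'_Q(\sfW;\gamma)=\Phi(\sfW;\gamma)\bigl(\one_{\{\sfW\subset\H_+\}}-\one_{\{\sfW\subset Q\}}\bigr)
\]
has the sign of the \emph{undecorated} function $\Phi(\sfW;\gamma)$, which is \emph{not} sign-constant (only $|\Phi|\leq e^{-(\beta-C)\bd(\sfW)}$ is guaranteed); the nonnegativity of $\Phi'_U$ does not transfer to a monotonicity of $U\mapsto \fI_U(\gamma)$. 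Consequently the inequality $\hatZ^n_{Q,Q}(A,B)\leq\hatZ^n_{\H_+,\H_+}(A,B)$ is unjustified, and your upper bound collapses. The paper avoids this by proving a two-sided comparison $\hatZ^n_{Q,Q}(A,B\mid\cG^\sqcap)=(1\pm o(1))\hatZ^n_{\H_+,\H_+}(A,B\mid\cG^\sqcap)$ directly on the restricted set (using that on $\cG^\sqcap$, after a Peierls and a $\partial D_b$ estimate, the polymer stays $(\log L)^2$ away from the top and sides of $Q$, so $\fI_Q$ and $\fI_{\H_+}$ differ by $o(1)$), and then separately shows $\hatZ^n_{\H_+,\H_+}(A,B\mid\cG^\sqcap)=(1\pm o(1))\hatZ^n_{\H_+,\H_+}(A,B)$. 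To salvage your structure you would need an independent tail estimate $\hatZ^n_{Q,Q}(A,B,\,\cG^\sqcap\cap\fG^c)=o(1)\hatZ^n_{Q,Q}(A,B\mid\cG^\sqcap)$, which again requires the domain/interaction comparison on $\cG^\sqcap$ rather than a free monotonicity.

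A secondary issue: in the first step you invoke \cref{it:DKS-large-deviations} of \cref{prop:DKS-propositions} in $(\H_+,\Z^2)$, but the stated generalization applies only to domains $W\supset\cS$ (the full vertical strip), which $\H_+$ is not. The correct route is to use the bound with $W=\Z^2$, giving $\hatZ^n_{\H_+,\Z^2}(A,B,\fG^c)\leq\hatZ^n_{\Z^2,\Z^2}(A,B,\fG^c)\leq e^{-c(\log L)^2}\hatZ^n_{\Z^2,\Z^2}(A,B)$, and then relate $\hatZ^n_{\Z^2,\Z^2}(A,B)$ to $\hatZ^n_{\H_+,\Z^2}(A,B)$; their ratio is of order $\mathrm{poly}(N_n)$, not $O(1)$, which is still absorbed by $e^{-c(\log L)^2}$, but the step must be made explicit (the paper instead uses the Brownian-excursion convergence from \cite{IOVW20} together with \cref{prop:cone-points-vs-animal-floor} to conclude $\cG^\sqcap$ has probability $1-o(1)$ under $\hatZ^n_{\H_+,\Z^2}$, bypassing this comparison). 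Finally, a minor point: as written, $\fG$ has no horizontal buffer from $\partial Q$ (the slab $[A_1-\tfrac12 TN_n^{2/3},B_1+\tfrac12 TN_n^{2/3}]$ coincides with the full horizontal extent of $R$), so the decay argument in your second step needs to additionally invoke the Peierls bound $|\gamma|\leq 1.1|B-A|$ to keep $\gamma$ away from the left/right sides.
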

\begin{proof}
    First note that by a simple Peierls argument (mapping the polymer to the straight line between $A,B$), we can rule out the event that $|\gamma| > 1.1|B-A|$ regardless of the domain or interaction with the boundary. In particular, this implies that
\begin{equation*}
        \hatZ^n_{\H_+, \H_+}(A, B \mid \cG^\sqcap) = (1+o(1))\hatZ^n_{Q, \H_+}(A, B \mid \cG^\sqcap)\,,
    \end{equation*}
    and by the decay of the $\Phi$ functions (since by the above Peierls argument we can assume on $\cG^\sqcap$ that $\gamma$ stays $\log L$ away from the top and sides of $\partial Q$, and the bottom side of $\partial Q$ coincides with $\partial H$), we also have
    \begin{equation*}
        \hatZ^n_{Q, \H_+}(A, B \mid \cG^\sqcap) = (1+o(1))\hatZ^n_{Q, Q}(A, B \mid \cG^\sqcap)\,.
    \end{equation*}
    Combining the last two displays results in
    \begin{equation}
    \label{eq:change-Q-H}
        \hatZ^n_{Q, Q}(A, B \mid \cG^\sqcap) = (1+o(1))\hatZ^n_{\H_+, \H_+}(A, B \mid \cG^\sqcap)\,.
    \end{equation}
    
Next, to eliminate the conditioning on $\cG^\sqcap$, we turn to $\hatZ^n_{\H_+, \Z^2}$ and argue that
    \begin{equation}\label{eq:ignore-soft-ceiling}
        \hatZ^n_{\H_+, \Z^2}(A, B \mid \cG^\sqcap) \geq (1-o(1))\hatZ^n_{\H_+, \Z^2}(A, B)\,.
    \end{equation}
    Indeed, suppose first that the domain restriction induces only that the cone-points have nonnegative heights (instead of also forcing all of $\gamma$ to be nonnegative). Then, \cite[Theorem 5.3]{IOVW20} shows that $\Gamma$ with weight $\hatq^n_{\Z^2}$ and partition function $\hatZ^n_{\Z^2, \Z^2}(A, B \mid \text{height}(\Cpts(\Gamma)) \geq 0)$ converges weakly to a Brownian excursion upon rescaling, whence the probability of reaching height $N_n^{1/3}(\log L)$ is $o(1)$ by standard estimates on the Brownian excursion. We can then use \cref{prop:cone-points-vs-animal-floor} to translate the same result to our setting with a partition function of $\hatZ^n_{\H_+, \Z^2}$, proving \cref{eq:ignore-soft-ceiling}.  
    We now deduce the analogue of \cref{eq:ignore-soft-ceiling} for interactions in $\H_+$ as opposed to $\Z^2$:
    \begin{equation}\label{eq:ignore-soft-ceiling-H}
        \hatZ^n_{\H_+, \H_+}(A, B \mid \cG^\sqcap) \geq (1-o(1))\hatZ^n_{\H_+, \H_+}(A, B)\,.
    \end{equation}
    Indeed, recall that \cref{lem:rare-events-changing-Z} implies that events that have a probability of $o(1)$ in the measure associated to  $\hatZ^n_{\H_+, \H_+}(A, B)$ also have probability $o(1)$ in the analogue for $\hatZ^n_{\H_+, \Z^2}(A, B)$ as long as the two partition functions are up to a multiplicative constant apart; \cref{prop:IST} provides exactly that hypothesis. 
    Since $\hatZ_{\H_+,\H_+}^n(A,B\mid\cG^\sqcap)$ is a subset of the sum in 
$\hatZ_{\H_+,\H_+}^n(A,B\mid\cG^\sqcap)$,  \cref{eq:ignore-soft-ceiling-H} yields
\[
        \hatZ^n_{\H_+, \H_+}(A, B \mid \cG^\sqcap) = (1-o(1))\hatZ^n_{\H_+, \H_+}(A, B)\,,
\]
which, after combining with \cref{eq:change-Q-H}, completes the proof.
\end{proof}

\begin{proof}[Proof of \cref{thm:lb}]
First observe that we can replace $\cA_Q(\gamma)$ with $-|D_1|$, the area below $\gamma$, in \cref{eq:LB-repeat-qn} by just a renormalization. Now we are in the same setting as in \cite{CKL24}, and we recall the proof of \cite[Theorems 7.1]{CKL24} to show the necessary adjustments. For convenience, let $\bP^n_{V, U}, \hat\bP^n_{V, U}$ denote the measures on $\Gamma$ associated to $Z^n_{V, U}, \hatZ^n_{V, U}$ respectively. For the case of the Ising polymer model with no area tilt $\hat\bP^n_{\H_+, \H_+}$, the authors prove that w.h.p.,
\begin{enumerate}
    \item\label{it:LB-length-cpts} $\Gamma$ has a linear length and number of cone-points (\cite[Lemma 5.7]{CKL24}, also proved here in \cref{lem:cpts-length}),
    
    \item\label{it:LB-bounded-components} The left, right, and irreducible components of $\Gamma$ have size no more than $(\log L)^2$ (\cite[Lemmas 5.5, 5.8]{CKL24}), and

    \item\label{it:LB-entropic-repulsion} The cone-points of $\Gamma$ stay above height $N_n^\delta$ in the interval $[A_1 +N_n^{4\delta}, B_2 - N_n^{4\delta}]$ (\cite[Lemma 5.11]{CKL24}). 
\end{enumerate}
The proof of the above inputs relied only on results from \cite{IST15} and the product structure from the Ornstein--Zernike theory, both of which we have in our disagreement polymer model, whence their results translate immediately to our setting in $\hat\bP^n_{\H_+, \H_+}$ for general disagreement polymers. We can then use \cref{cor:move-to-H-H} (with \cref{lem:rare-events-changing-Z}) to obtain that the same results hold w.h.p.\ in $\hat\bP^n_{Q, Q}(\cdot \mid \cG^\sqcap)$.

To reintroduce the area tilt, we first replace $\cA_Q(\gamma)$ in \cref{eq:LB-repeat-qn} by $-|D_1|$ by just a renormalization, which does not change the above measures. Then, the same argument leading to \cref{eq:ignore-soft-ceiling,eq:ignore-soft-ceiling-H} (i.e., using convergence to Brownian excursion and then \cref{prop:IST}) proves that for some constant $c = c(T) > 0$,
\[\hat\bE^n_{\H_+, \H_+}[e^{-|D_1|/N_n}] \geq c\,.\]
(See also \cite[Claim 7.4]{CKL24} for more details.\footnote{More work had to be done in \cite{CKL24} to show convergence to a Brownian excursion because the 2\Dim random walk there was not symmetric in the $y$-coordinate, and hence the authors could not directly apply \cite[Theorem 5.3]{IOVW20}. Our setting does feature this symmetry, so we cite \cite{IOVW20} for the convergence.}) Again \cref{cor:move-to-H-H} and \cref{lem:rare-events-changing-Z} implies the same lower bound with respect to $\hat\bE^n_{Q, Q}(\cdot \mid \cG^\sqcap)$. This in turn implies that the events in \cref{it:LB-length-cpts,it:LB-bounded-components,it:LB-entropic-repulsion} hold w.h.p.\ in $\bP^n_{Q, Q}(\cdot \mid \cG^\sqcap)$, as per a computation analogous to \cref{eq:repel-CS}. Finally, as in \cref{rem:area-difference}, this allows us to replace $|D_1|$ with the area below the linear interpolation of the cone-points of $\Gamma$ at the cost of a multiplicative $1+o(1)$ to the weight. 

The consequence of all of the above is that in $\bP^n_{Q, Q}(\cdot \mid \cG^\sqcap)$, upon looking at the first and last cone-points $\sfu, \sfv$ in the interval $[A_1 + N_n^{4\delta}, B_1 - N_n^{4\delta}]$, the law of $\Cpts(\Gamma)$ in between $\sfu, \sfv$ can be coupled to an area-tilted 2\Dim random walk bridge conditioned to stay above height $N_n^\delta$, with increment law given by $\P(x, y) = \P^{\sfh_{(1, 0)}}(X(\Gamma) = (x, y))$. The convergence to the Ferrari--Spohn diffusion $\mathsf{FS}_{\sigma}$ in the limit $L \to \infty$ followed by $T \to \infty$ now follows by \cite[Sec. 6]{IOSV21}.

Thus, we have established that w.h.p., for every fixed $n\geq 1$, the process $Y_n(t)$ stochastically dominates a process $Z_n(t)$ (the 2\Dim random walk with an area-tilt) that weakly converges to 
$\mathsf{FS}_{\sigma}$. As noted in the induction below \cref{lem:LB-boundary-construction}, the upper level lines are w.h.p.\ above the randomly constructed $Q$, so the proof in fact shows that w.h.p., $Y_n(t) \succeq Z_n(t)$ conditionally on $\fL_k$ for $k < n$ (and in particular, conditionally on the rescaled portions of these level lines, $Y_k(t)$ for $k<n$). Hence, the stronger statement concerning the joint law follows from \cref{fact:stoch-dom-indep} (for the other direction of stochastic domination, which is easily deduced by multiplying the random variables by $-1$).

Note that this conditioning is in the reverse direction as in the proof of \cref{thm:ub}, and we cannot instead condition on the lower lines $\fL_k$ for $k > n$. Indeed, revealing $\fL_{n+1}$ reveals a wiggly boundary at $o(N_n^{1/3})$ with only size $\log L$ perturbations, which appears to be no different from the flat boundary we have in the construction of $Q$ at the macroscopic level. However, our proof relies strongly on the depinning proved in \cite{IST15}, and it is unclear how to extend those results to the case of a wiggly boundary.
\end{proof}
	
\section{Extension to all \texorpdfstring{$|\nabla\phi|^p$}{grad-phi} models}\label{sec:gradphi}
	In this section we prove \cref{thm:grad-phi-p}, extending \cref{thm:1} to $|\nabla\phi|^p$ models for fixed $p>1$. 
	
	\subsection{Large deviations}\label{subsec:ld-grad-phi}
	The following analogues of \cref{eq:LD-ratio-inf,eq:LD-inf,eq:LD-conditional-inf} for $1<p<2$ (\cite[Thm.~5.1]{LMS16}) and $2<p<\infty$ (\cite[Thm.~5.5]{LMS16}) are known:
	\begin{align}
		\frac{\hatpi^{(p)}_\infty(\phi_o = h)}{\hatpi^{(p)}_\infty(\phi_o = h-1)} &\leq \exp\Big[-c \beta h^{(p-1)\,\wedge\, 1}\Big]\,,\label{eq:gradphi-LD-ratio-inf}\\
		\exp\Big[-c_1 \beta h^{p \,\wedge\, 2}\Big] \leq \hatpi^{(p)}_\infty(\phi_o = h) &\leq \exp\Big[-c_2\beta h^{p \,\wedge\, 2}\Big]\,,\label{eq:gradphi-LD-inf}\\        
		\hatpi^{(p)}_\infty(\phi_z = h \mid \phi_o = h) &\leq \exp\Big[-c \beta h^{(p-1)\,\wedge\, 1}\Big]\,.\label{eq:gradphi-LD-conditional-inf}
	\end{align}
	
	As was the case for $p=2$, our proof will require a refined version of \cref{eq:gradphi-LD-conditional-inf}, namely that one appearing below in \cref{eq:gradphi-LD-conditional} below. Let $\cB_R(x)$ be the ball of radius $R$ centered at the site $x$. 
\begin{theorem}\label{thm:gradphi-large-deviations}
    There exists a constant $C=C(p)>0$ such that the following holds for any domain $V$ that contains $\cB_R(o)$, where $R = Ch^{p-1}$ for $1 < p < 2$ and $R = Ch$ for $2 < p < \infty$. Let $z$ be such that $\cB_{R+1}(z) \subset V$. Then there exist absolute constants $c_0,c_1,c_2,c_3,c_4>0$ such that
    \begin{align}
		\exp\Big[-c_0 \beta h^{(p-1)\,\wedge\, 1}\Big] \leq \frac{\hatpi^{(p)}_V(\phi_o = h)}{\hatpi^{(p)}_V(\phi_o = h-1)} &\leq \exp\Big[-c_1 \beta h^{(p-1)\,\wedge\, 1}\Big]\,,\label{eq:gradphi-LD-ratio}\\
		\exp\Big[-c_2 \beta h^{p \,\wedge\, 2}\Big] \leq \hatpi^{(p)}_V(\phi_o = h) &\leq \exp\Big[-c_3\beta h^{p \,\wedge\, 2}\Big]\,,\label{eq:gradphi-LD}\\        
		\hatpi^{(p)}_V(\phi_z = h \mid \phi_o = h) &\leq \exp\Big[-c_4 \beta h^{p\,\wedge\,\frac{p}{p-1}}\Big]\,.\label{eq:gradphi-LD-conditional}
	\end{align}
\end{theorem}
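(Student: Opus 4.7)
The plan is to adapt the proof of \cref{thm:LD-DG} (the case $p=2$) to general $p>1$, replacing the harmonic pinnacle by its $p$-harmonic analogue. The infinite-volume upper bounds \eqref{eq:gradphi-LD-ratio-inf}--\eqref{eq:gradphi-LD-conditional-inf} established in \cite{LMS16} will be transferred to $V\supset\cB_R(o)$ via an FKG-based comparison, namely the $|\nabla\phi|^p$ analogue of \cref{clm:hatpi-V-vs-Br}, together with the same Peierls-type rigidity estimates used in the $p=2$ case. Consequently, the two genuinely new ingredients needed are (a) the lower bound on the ratio in \eqref{eq:gradphi-LD-ratio}, which was not addressed in \cite{LMS16} for $p\neq 2$, and (b) the refined conditional estimate \eqref{eq:gradphi-LD-conditional}, strictly stronger than the infinite-volume bound \eqref{eq:gradphi-LD-conditional-inf}.

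For (a), I would construct a discrete $p$-harmonic pinnacle $\phi^*:\cB_R(o)\to\R$ with $\phi^*(o)=h$ and $\phi^*=0$ on $\partial\cB_R(o)$, obtained by discretizing the radial minimizer of $\int|\nabla f|^p$ on the continuum ball. The radial profile solves $(r|f'|^{p-2}f')'=0$ and gives a power law whose natural range---minimizing the total $p$-energy at fixed peak height $h$---is $R\asymp h$ for $p>2$ and $R\asymp h^{p-1}$ for $1<p<2$, with total $p$-energy $\asymp h^{p\wedge 2}$; the marginal cost of raising the peak from $h-1$ to $h$ is then $\asymp h^{(p-1)\wedge 1}$. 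Following the three-step reduction of \cite[Sec.~3]{LMS16} used for $p=2$---decomposing $\phi=\phi^*+\sigma$, passing to $\lfloor\sigma\rfloor$, and reducing to a modified $|\nabla\phi|^p$ model on $V\setminus\{o\}$ with altered interaction strength in $\cB_R(o)$---yields the sought lower bound $\exp(-c\beta h^{(p-1)\wedge 1})$.

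The upper bound in \eqref{eq:gradphi-LD-ratio} then follows the argument of \cite[\S3]{LMS16}: for $r=\delta R$ with $\delta>0$ small, split the configuration space according to whether $\phi\restriction_{\partial\cB_r(o)}$ is close to $\phi^*\restriction_{\partial\cB_r(o)}$, and combine the lower bound just established with the infinite-volume upper bound \eqref{eq:gradphi-LD-ratio-inf}. The extension from $\Z^2$ to arbitrary $V\supset\cB_R(o)$ proceeds exactly as in the proof of \cref{thm:LD-DG}, via the FKG comparison of $\hatpi_V^{(p)}$ with $\hatpi_{\cB_R(o)}^{(p)}$. Iterating \eqref{eq:gradphi-LD-ratio} between $k=1$ and $k=h$ and using the telescoping sum $\sum_{k\leq h}k^{(p-1)\wedge 1}\asymp h^{p\wedge 2}$ then produces the two-sided bound \eqref{eq:gradphi-LD}.

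For (b), I would mirror the proof of \eqref{eq:LD-conditional}. Set $X=\max_{x\sim z}\phi_x$, $Y=\min_{x\sim z}\phi_x$, and introduce the events $E_1=\{X\leq h+\Delta\}$ and $E_2=\{Y\geq h-C\Delta\}$. Iterating the (now available) upper bound \eqref{eq:gradphi-LD-ratio} $\Delta$ times yields $\hatpi_V^{(p)}(E_1^c\mid\phi_o=h)\leq\exp(-c\beta h^{(p-1)\wedge 1}\Delta)$; on $E_1\cap E_2^c$, a direct single-site energy bound with prescribed neighbors gives $\hatpi_V^{(p)}(\phi_z=h\mid\phi_o=h,E_1,E_2^c)\leq e^{-c\beta\Delta^p}$, while the matching lower bound $\hatpi_V^{(p)}(\phi_z\geq h+\Delta\mid\phi_o=h,E_1,E_2)\geq e^{-C'\beta\Delta^p}$ allows one (exactly as in the $p=2$ proof) to convert this into an upper bound on $\hatpi_V^{(p)}(E_2\mid\phi_o=h,E_1)$. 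Choosing $\Delta^{p-1}\asymp h^{(p-1)\wedge 1}$---i.e.\ $\Delta\asymp h^{1/(p-1)}$ for $p>2$ and $\Delta\asymp h$ for $1<p<2$---balances the competing exponents $\beta h^{(p-1)\wedge 1}\Delta$ and $\beta\Delta^p$, producing the claimed bound $\exp(-c\beta h^{p\wedge p/(p-1)})$. The main obstacle will be carrying out the pinnacle reduction of (a) in this nonlinear setting: unlike $p=2$, the step replacing $\sigma$ by $\lfloor\sigma\rfloor$ (which for $p=2$ was a Gaussian computation) requires a local second-order Taylor expansion of the $p$-energy around $\phi^*$, relying on the uniform strict convexity of $|x|^p$ on bounded intervals and, for $1<p<2$, restricting to the bulk of the pinnacle where $|\nabla\phi^*|$ is bounded away from $0$ in order to avoid the singularity of the second derivative of $|x|^p$ at the origin.
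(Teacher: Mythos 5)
Your plan for part~(b) --- the events $E_1, E_2$, the choice $\Delta \asymp h^{1\wedge\frac{1}{p-1}}$, and the balancing of $\beta\Delta h^{(p-1)\wedge 1}$ against $\beta\Delta^p$ --- matches the paper's proof of \eqref{eq:gradphi-LD-conditional} essentially verbatim (the paper sets $\Delta := \lfloor \frac{c_1}{6^{p+1}} h^{1\wedge\frac{1}{p-1}}\rfloor$), and your FKG-based transfer from $\Z^2$ to $V\supset\cB_R(o)$ is the same strategy as the paper's \cref{lem:gradphi-hatpi-V-vs-Br}. So that part is fine.

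For part~(a), however, you propose a route the paper deliberately avoids, and you yourself flag the place where it is unfinished. You want to build a discrete $p$-harmonic pinnacle $\phi^*$ and re-run the three-step reduction of \cite[Sec.~3]{LMS16} (decompose $\phi = \phi^* + \sigma$, pass to $\lfloor\sigma\rfloor$, reduce to a modified model on $V\setminus\{o\}$). The second step is a genuine obstacle: for $p=2$ it is a discrete Gaussian computation exploiting exact quadraticity, and for $p\neq 2$ your proposed ``second-order Taylor expansion of the $p$-energy around $\phi^*$'' runs into the fact that $(|x|^p)''$ is singular at $0$ for $1<p<2$ and degenerate at $0$ for $p>2$; restricting to ``the bulk of the pinnacle'' does not control the tail region near $\partial\cB_R(o)$ where $|\nabla\phi^*|\to 0$, which is precisely where the cost accumulates. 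This is a real gap in the argument as written. The paper sidesteps all of this: \cref{lem:ratio-lb-p12} proves the ratio lower bound for $1<p<2$ (in fact all $p\geq 1$) by a one-line Peierls/map argument --- condition on $\min_{x\sim o}\phi_x\geq 0$ (which holds with conditional probability $1-\epsilon_\beta$ by a Peierls bound), then raise $\phi_o$ by one unit, costing at most $e^{4\beta(h^p-(h-1)^p)} \leq e^{4\beta p h^{p-1}}$. For $p>2$ the paper first proves \cref{lem:bd-outermost-loop} (the outermost $1$-level-line loop $\Gamma_1$ has weight $\mathscr W(\Gamma_1)\leq Ch$ except with exponentially small probability, via a delete-and-duplicate map on the multi-loop energy), and then \cref{cor:ratio-lb-p2inf} raises the interior of $\Gamma_1$ by one unit, costing $e^{p2^{p-1}\beta\mathscr W(\Gamma_1)}\leq e^{c\beta h}$. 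Both arguments are elementary and, crucially, exploit the slack the theorem allows between $c_0$ and $c_1$: the lower bound you get this way matches the upper bound's exponent $h^{(p-1)\wedge 1}$ but not its constant, which is enough. Your pinnacle approach, if it could be completed, would prove something sharper, but that is more than the theorem needs, and as proposed it is not a proof. You also do not mention anything in the spirit of \cref{lem:bd-outermost-loop}, which the paper requires both to justify the choice $R=Ch$ for $p>2$ and to prove the upper bound direction of the comparison $\hatpi_V^{(p)}(\phi_o=h)\leq e^{\epsilon_\beta R}\hatpi_{\cB_R(o)}^{(p)}(\phi_o=h)$.
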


\begin{remark}\label{rem:bad-set-calc}
In \cref{rem:gradphi-bad-set} we defined the set $\sB$ of exceptional values of $L$ (extending \cref{rem:Lh-B-concrete} for the \ZGFF) as $\bigcup_{h\geq 1}\llb \frac34 L_h,L_h\rrb$ where $L_h = \lceil 5\beta/\hatpi_\infty^{(p)}(\phi_o=h)\rceil$. For this choice, for every $h$ we have
$\sum_{k\in\sB\cap\llb 1,L_h\rrb]}\frac1k = O(h) = o(\log 
L_h)$---i.e., $\sB$ has zero logarithmic density---as \cref{eq:gradphi-LD} shows $\lim_{h\to\infty}-\frac1h \hatpi_\infty^{(p)}(\phi_o)=h) = \infty$. (Precisely, $\frac1{\log n}\sum_{k\in\sB\cap\llb 1,n\rrb}\frac1k=O((\log n)^{-(\frac{p-1}p\,\wedge\, \frac12)})$ for all $p>1$.)
\end{remark}

The extensions of the large deviation estimates in \cref{eq:gradphi-LD-ratio-inf,eq:gradphi-LD-inf,eq:gradphi-LD-conditional-inf} to a general domain $V$ containing $\cB_R(o)$ follow immediately from their proofs in \cite{LMS16}, which we summarize below. Looking at the $1 < p < 2$ case, the proof begins by showing that the outermost 1 level-line loop containing the origin (denoted $\Gamma_1$, not to be confused with the notation for animals used previously) has size at most $Ch^{p-1}$ except with probability $e^{-\beta h^{p-1}}$, via a Peierls map argument which holds in any domain $V$. Hence we obtain
\begin{claim}[extending {\cite[Lem.~5.3]{LMS16}}]\label{clm:extend-lem53}
    Suppose $\phi_o \geq 1$. Let $\Gamma_1$ be the outermost 1 level-line loop surrounding the origin $o$. There exists a constant $C = C(p) > 0$ such that for any domain $V$ containing $o$, 
    \[\hatpi_V^{(p)}(|\Gamma_1| > Ch^{p-1} \mid \phi_o \geq h) \leq e^{-\beta h^{p-1}}\,.\]
\end{claim}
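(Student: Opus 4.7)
I would prove this by a Peierls-map argument, closely following \cite[Lem.~5.3]{LMS16} but replacing the infinite-volume ratio bound \cref{eq:gradphi-LD-ratio-inf} by its domain-dependent counterpart \cref{eq:gradphi-LD-ratio}. Given $\phi$ with $\phi_o \geq h$ whose outermost $1$-level-line loop around $o$ is $\Gamma_1$ of length $\ell$, let $D$ be the finite interior region enclosed by $\Gamma_1$ and define $\f(\phi)_x := \phi_x - \one_{\{x \in D\}}$. Since $o \in D$, the image $\f(\phi)$ is still $\Z$-valued and satisfies $\f(\phi)_o \geq h-1$, so $\f$ sends the conditioning event $\{\phi_o \geq h\}$ into the slightly larger event $\{\phi_o \geq h-1\}$.

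The only bonds whose gradient changes under $\f$ are those dual to $\Gamma_1$: for such a bond $e = (x,y)$ with interior $x \in D$ (necessarily in the $*$-connected component of $\{\phi\geq 1\}$ containing $o$) and exterior $y \notin D$ with $\phi_y < 1$, the absolute gradient goes from $k := |(\nabla\phi)_e| \geq 1$ to $k-1 \geq 0$; convexity of $t \mapsto t^p$ for $p > 1$ gives $k^p - (k-1)^p \geq 1^p - 0^p = 1$, yielding an energy saving of $\geq \beta$ per bond, hence $\geq \beta\ell$ in total. The multiplicity of preimages of $\f$ is at most $e^{c\ell}$: given $\f(\phi)$ together with the loop $\Gamma_1$ one recovers $\phi$ by adding $\one_D$, and the number of simple dual loops of length $\ell$ encircling $o$ is bounded by $\ell \cdot 3^\ell$ (fix a canonical anchor bond, within $O(\ell)$ choices thanks to the constraint that the loop surround $o$, then count simple walks with $\leq 3$ choices per step). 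Combining these ingredients,
\[
\hatpi_V^{(p)}(|\Gamma_1| = \ell,\, \phi_o \geq h) \;\leq\; e^{-(\beta - c)\ell}\,\hatpi_V^{(p)}(\phi_o \geq h-1)\,.
\]

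To relate the right-hand side to $\hatpi_V^{(p)}(\phi_o \geq h)$, I would invoke \cref{thm:gradphi-large-deviations}, whose proof precedes this claim in the organization of the paper. The upper bound of \cref{eq:gradphi-LD-ratio} combined with a geometric-series argument gives $\hatpi_V^{(p)}(\phi_o \geq h) \leq 2\hatpi_V^{(p)}(\phi_o = h)$ and likewise for $\phi_o \geq h-1$, while the lower bound of \cref{eq:gradphi-LD-ratio} at height $h$ gives $\hatpi_V^{(p)}(\phi_o = h-1) \leq e^{c_0 \beta h^{(p-1)\wedge 1}}\hatpi_V^{(p)}(\phi_o = h)$. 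Chaining these with the trivial $\hatpi_V^{(p)}(\phi_o = h) \leq \hatpi_V^{(p)}(\phi_o \geq h)$ yields
\[
\hatpi_V^{(p)}(\phi_o \geq h-1) \;\leq\; C\,e^{c_0\beta h^{(p-1)\wedge 1}}\,\hatpi_V^{(p)}(\phi_o \geq h)\,.
\]
Dividing by $\hatpi_V^{(p)}(\phi_o \geq h)$ and summing the previous display over $\ell > Ch^{p-1}$ with $C = C(p)$ sufficiently large, the geometric $e^{-(\beta-c)\ell}$ factor absorbs the $c_0 \beta h^{p-1}$ loss (using $h^{(p-1)\wedge 1} \leq h^{p-1}$), producing the claimed bound $e^{-\beta h^{p-1}}$ for $\beta$ above a $p$-dependent threshold.

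The main subtlety is the domain requirement $V \supset \cB_R(o)$ of \cref{eq:gradphi-LD-ratio}. For $V$ not containing $\cB_R(o)$, either the diameter of $V$ is already too small to admit a loop of length $>Ch^{p-1}$ around $o$ (and the claim is vacuous), or one can work in an auxiliary enclosing domain $V' \supset \cB_R(o)$ and transfer the ratio estimate via monotonicity/stability arguments in the spirit of \cite[Cor.~3.9]{LMS16}. Beyond this bookkeeping the only delicate point is the per-bond energy accounting via convexity of $t \mapsto t^p$, which is classical; I do not anticipate any fundamental new difficulty relative to \cite[Lem.~5.3]{LMS16}.
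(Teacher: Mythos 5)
Your core Peierls-map argument---shift the interior $D$ of $\Gamma_1$ down by one, save energy $\geq\beta$ per bond of $\Gamma_1$ via convexity of $t\mapsto t^p$, count preimages by enumerating loops around $o$---is exactly the argument from \cite[Lem.~5.3]{LMS16} that the paper appeals to, and your energy and multiplicity accounting are correct. (The minimum of $k^p-(k-1)^p$ over $k\geq 1$ is indeed $1$, attained at $k=1$, since the function is increasing in $k$ for $p>1$; and for every bond $e$ of $\Gamma_1=\partial D$, the interior endpoint lies in the $\{\phi\geq 1\}$-component of $o$ while the exterior endpoint has $\phi<1$, so the shift strictly decreases the gradient magnitude.)

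However, there is a genuine logical problem with the step that controls $\hatpi^{(p)}_V(\phi_o\geq h-1)/\hatpi^{(p)}_V(\phi_o\geq h)$. You write that you ``would invoke \cref{thm:gradphi-large-deviations}, whose proof precedes this claim in the organization of the paper,'' but this is backwards: \cref{clm:extend-lem53} is the \emph{first} ingredient in the paper's extension of the large-deviation estimates to finite $V$, and the proof of \cref{thm:gradphi-large-deviations} (in particular the $V$-version of the upper bounds on $\hatpi^{(p)}_V(\phi_o=h)$ and on the ratio) \emph{depends} on \cref{clm:extend-lem53}. As written, your argument is circular. The fix is to notice that the only piece you actually need is the lower bound on the ratio, $\hatpi^{(p)}_V(\phi_o=h-1)\leq e^{c_0\beta h^{p-1}}\hatpi^{(p)}_V(\phi_o=h)$, and that this can be established from first principles \emph{without} the rest of \cref{thm:gradphi-large-deviations}: it is precisely \cref{lem:ratio-lb-p12}, whose proof is a raise-origin Peierls map whose only other ingredient is the elementary bound $\hatpi^{(p)}_V(\phi_o>h)\leq\epsilon_\beta\,\hatpi^{(p)}_V(\phi_o=h)$ from \cref{eq:gradphi-eq-vs-ineq}. (The invocation of the ``upper bound of \cref{eq:gradphi-LD-ratio}'' in the proof of \cref{lem:ratio-lb-p12} can be replaced by \cref{eq:gradphi-eq-vs-ineq} alone, breaking the cycle; and since that argument only touches $o$ and its four neighbors, it holds for \emph{any} $V$ containing $o$, so your worry about the hypothesis $V\supset\cB_R(o)$ and the appeal to an auxiliary enclosing domain are unnecessary.) You should therefore cite \cref{lem:ratio-lb-p12} directly rather than \cref{thm:gradphi-large-deviations}, and similarly replace the ``upper bound of \cref{eq:gradphi-LD-ratio} combined with a geometric-series argument'' by \cref{eq:gradphi-eq-vs-ineq}, which is what actually yields $\hatpi^{(p)}_V(\phi_o\geq h)\leq (1+\epsilon_\beta)\hatpi^{(p)}_V(\phi_o=h)$ without circularity.
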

This is used in \cite[Lem.~5.4]{LMS16} to show the large deviation rate by comparing to the real-valued energy minimizer $\phi^*$ in a similar (but simpler) strategy as in the $p = 2$ case. The lower bound of \cref{eq:gradphi-LD-inf} is established by taking the probability of a single $\phi$ which is an integer valued approximation to $h\phi^*$, and is supported on $\cB_{R - 1}(o)$, thus holding also on $\hatpi_V^{(p)}$. The upper bound of \cref{eq:gradphi-LD-inf} was proved by revealing $\Gamma_1$ and using monotonicity to set the boundary conditions outside $\Gamma_1$ to 0, and hence follows from \cref{clm:extend-lem53}. The proof of \cref{eq:gradphi-LD-ratio-inf} also uses the same revealing strategy, combined with a map argument which holds in any domain. Thus, we will have \cref{thm:gradphi-large-deviations} for $1 < p < 2$ once we prove the lower bound in \cref{eq:gradphi-LD-ratio} (stated as \cref{lem:ratio-lb-p12}) and the conditional probability bound in \cref{eq:gradphi-LD-conditional}\footnote{Here we mention how the proof of \cref{eq:gradphi-LD-ratio-inf,eq:gradphi-LD-inf} extend to $\hatpi^{(p)}_V$ for $1<p<2$; the same is valid for \eqref{eq:gradphi-LD-conditional-inf}, but we will need a stronger version of that inequality (namely, \cref{eq:gradphi-LD-conditional}) anyway.}.

For the $2 < p < \infty$ case, the lower bound of \cref{eq:gradphi-LD-inf} was again proven by taking a single $\phi^*$ with $\phi^*_o = h$ that has probability at least $e^{-c\beta h}$, which is already supported on $\cB_R(o)$. Both the upper bound of \cref{eq:gradphi-LD-inf} and the bound on the ratio of \cref{eq:gradphi-LD-ratio-inf} then apply a map argument, which holds in any domain, together with \cite[Claim~5.7]{LMS16}, which refers only to the energy of a set of level-line loops. Thus, \cref{thm:gradphi-large-deviations} for $2 < p < \infty$ will be established upon proving the lower bound in~\cref{eq:gradphi-LD-ratio} (stated as \cref{cor:ratio-lb-p2inf}) and \cref{eq:gradphi-LD-conditional}. One extra ingredient which will be needed for this is to bound the effective support of solutions to the large deviation problem (see \cref{lem:bd-outermost-loop}) in $\cB_{R}(o)$ where $R=C h$ for a sufficiently large absolute constant $C>0$. 

\begin{remark}\label{rem:gradphi-radius-req}
We emphasize that for extending the results of \cite{LMS16} for $2 < p < \infty$, only the lower bound of $\hatpi^{(p)}_V(\phi_o = h)$ requires a restriction on $V$, which is that it must contain the support of a particular $\phi^*$ defined as $\phi^*(x) = (h - \norm{x}_1) \vee 0$. In particular, we can take $R = \sqrt{2}h$, with no dependence on $\beta$. Then to prove the additional lower bound on \cref{eq:gradphi-LD} and the bound in \cref{eq:gradphi-LD-conditional}, we increase $R$ to some other constant $Ch$, as needed to capture the outermost 1 level-line loop. If instead $R$ had the form $C\beta h$, then our proof of the upper bound for points near the boundary of~$V$ in \cref{lem:gradphi-UB-LD-any-point} would fail.
\end{remark}
\begin{lemma}\label{lem:ratio-lb-p12}
    Fix $1 < p < 2$, and $V$ as in \cref{thm:gradphi-large-deviations}. There exists a constant $c > 0$ such that for all $h\geq 1$,
    \begin{equation*}
 \frac{\hatpi^{(p)}_V(\phi_o = h)}{\hatpi^{(p)}_V(\phi_o = h-1)}\geq \exp\big[-c\beta h^{p-1}\big]
 \,. 
    \end{equation*}
\end{lemma}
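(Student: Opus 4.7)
The plan is to follow the strategy used to establish the lower bound in \cref{eq:LD-ratio} of \cref{thm:LD-DG} (adapted from \cite{LMS16}), where one inserts a deterministic integer-valued ``bump'' function that raises $\phi_o$ by $1$ at a controlled energy cost. Concretely, let $\psi\colon \Z^2\to\Z$ be a radially decreasing integer-valued function with $\psi(o)=1$, $\psi\equiv 0$ outside $\cB_R(o)$ for $R=Ch^{p-1}$, chosen to be an integer-valued approximation of the minimizer of the discrete $p$-Dirichlet functional $\sum_{x\sim y}|\nabla\psi_{xy}|^p$ subject to $\psi(o)=1$, $\psi\equiv 0$ on $\partial \cB_R$. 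Since the hypothesis guarantees $\cB_R(o)\subset V$, the map $\phi\mapsto \phi+\psi$ is a bijection from $\{\phi\colon\phi_o=h-1\}$ onto $\{\phi\colon\phi_o=h\}$ within $V$, giving
\[
\frac{\hatpi^{(p)}_V(\phi_o=h)}{\hatpi^{(p)}_V(\phi_o=h-1)} \;=\; \bE_{\hatpi^{(p)}_V(\cdot\mid\phi_o=h-1)}\!\left[e^{-\beta\,\Delta E(\phi;\psi)}\right],
\]
where $\Delta E(\phi;\psi) = \sum_{x\sim y}(|\nabla\phi_{xy}+\nabla\psi_{xy}|^p - |\nabla\phi_{xy}|^p)$ is supported on edges near $\cB_R(o)$.

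Second, I would apply Jensen's inequality to reduce the goal to showing the conditional expectation bound
\[
\bE_{\hatpi^{(p)}_V(\cdot\mid\phi_o=h-1)}[\Delta E(\phi;\psi)] \;\leq\; c\, h^{p-1}.
\]
For $1<p<2$ one has the elementary inequality $\bigl||a+b|^p-|a|^p\bigr|\leq p\,|b|\,(|a|^{p-1}+|b|^{p-1})$, so
\[
\bE[\Delta E] \;\leq\; p\sum_{x\sim y}|\nabla\psi_{xy}|\,\bE\bigl[|\nabla\phi_{xy}|^{p-1}\bigr] \;+\; p\sum_{x\sim y}|\nabla\psi_{xy}|^p.
\]
The second sum is the $p$-Dirichlet energy of $\psi$, and the scaling analysis for the discrete $p$-capacity of a point at unit height on $\cB_R$ with $R=Ch^{p-1}$ gives that the optimal profile satisfies $\sum_{x\sim y}|\nabla\psi_{xy}|^p=O(h^{p-1})$ (balancing the $h^p R^{2-p}$ Dirichlet cost of a smoothly rescaled profile against the integrality constraint $|\nabla\psi|\geq 1$ at unit-height steps). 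The same profile has total variation $\sum|\nabla\psi_{xy}|=O(R)=O(h^{p-1})$ by a co-area computation.

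The main obstacle, and where most of the work goes, is controlling the first sum, i.e.\ bounding $\bE_{\hatpi^{(p)}_V(\cdot\mid\phi_o=h-1)}[|\nabla\phi_{xy}|^{p-1}]$ uniformly in $x\sim y\in\cB_R(o)$ by an $O(1)$ constant. The conditioning $\phi_o=h-1$ can push gradients up near $o$, so a naive Peierls argument does not suffice. The plan is to first factor out the contribution of a deterministic approximate $p$-harmonic profile $\phi^*$ with $\phi^*(o)=h-1$ and $\phi^*\equiv 0$ outside $\cB_R(o)$ (the same $\phi^*$ used to prove the lower bound of \cref{eq:gradphi-LD} in \cite{LMS16}), rewriting $\phi=\phi^*+\sigma$; the conditional measure on $\sigma$ is then close to an unconditional $|\nabla\phi|^p$ model on $V\setminus\{o\}$ with interaction strength reduced to $1/2$ inside $\cB_R(o)$ (as in step (iii) of the ZGFF argument recalled in the remark following the proof of \cref{thm:LD-DG}). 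For this $\sigma$-model, $\bE[|\nabla\sigma_{xy}|^{p-1}]=O(1)$ by the Brandenberger--Wayne Peierls argument at large $\beta$, while the deterministic gradients $|\nabla\phi^*_{xy}|$ satisfy $\sum|\nabla\psi_{xy}|\cdot|\nabla\phi^*_{xy}|^{p-1}=O(h^{p-1})$ directly from the scaling $|\nabla\phi^*|=O(h/R)=O(h^{2-p})$ and the choice of $R$. Combining these ingredients yields the desired $\bE[\Delta E]\leq c\,h^{p-1}$ and concludes the lemma.
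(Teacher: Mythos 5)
Your proposal takes a genuinely different and considerably more elaborate route than the paper's, and there is a real gap in it. The paper's proof is a one-paragraph map argument: first condition on $\min_{x\sim o}\phi_x\geq 0$ (which has probability $1-\epsilon_\beta$ under $\hatpi^{(p)}_V(\cdot\mid\phi_o=h-1)$, by the upper bound of \cref{eq:gradphi-LD-ratio} to replace the conditioning by $\phi_o\geq h-1$, then FKG to drop it, then Peierls); next raise $\phi_o$ by $1$. On that event every edge gradient at $o$ that can increase is bounded deterministically by $h-1$, so the energy cost per edge is at most $h^p-(h-1)^p\leq ph^{p-1}$, giving the bound with $c=4p$. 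No expectation of gradients needs to be controlled.

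Your plan instead bounds $\bE_{\hatpi^{(p)}_V(\cdot\mid\phi_o=h-1)}[\Delta E]$ by Jensen and the inequality $\bigl||a+b|^p-|a|^p\bigr|\leq p|b|(|a|^{p-1}+|b|^{p-1})$, and then tries to control $\bE[|\nabla\phi_{xy}|^{p-1}]$ near $o$ by writing $\phi=\phi^*+\sigma$ and arguing the $\sigma$-marginal is close to an unconditional $|\nabla\phi|^p$ model with halved interaction inside $\cB_R(o)$. This last step is where the argument breaks down for $p\neq 2$: the decomposition you cite is the $p=2$ (ZGFF) reduction, which relies on the quadratic identity $|\nabla\phi^*+\nabla\sigma|^2=|\nabla\phi^*|^2+2\nabla\phi^*\cdot\nabla\sigma+|\nabla\sigma|^2$ and the harmonicity of $\phi^*$ to kill the cross term after summation by parts; for $p\neq 2$ the energy $\sum|\nabla\phi^*_{xy}+\nabla\sigma_{xy}|^p$ does not split in any such way, so the claim that the conditional law of $\sigma$ is an unconditional $|\nabla\phi|^p$ model with modified couplings is unjustified. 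In addition, the scaling $|\nabla\phi^*|=O(h/R)=O(h^{2-p})$ is wrong for $1<p<2$: the $\phi^*$ used in the lower bound of \cref{eq:gradphi-LD} is spike-like, with gradients of order $h$ on the four edges at $o$; and if $\psi$ is integer-valued, radially decreasing, and has $\psi(o)=1$, it must be a $\{0,1\}$-valued indicator, so $\sum|\nabla\psi_{xy}|$ is the perimeter of its support and is minimized by $\psi=\one_{\{o\}}$, giving $O(1)$, not $O(R)$; with your stated scalings the product $\sum|\nabla\psi|\cdot|\nabla\phi^*|^{p-1}$ would come out to $O(h^{(p-1)(3-p)})$, which is \emph{larger} than $h^{p-1}$ for $1<p<2$. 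The cleanest way to salvage your outline is to take $\psi=\one_{\{o\}}$ and, instead of the $\phi^*+\sigma$ decomposition, condition on $\min_{x\sim o}\phi_x\geq 0$ to get the deterministic gradient bound $|\nabla\phi_{xo}|\leq h-1$ when the gradient increases — at which point you have recovered the paper's proof.
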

\begin{remark}
The proof of \cref{lem:ratio-lb-p12} is valid for all $p\geq1$, though its lower bound of $\exp[-c\beta h^{p-1}]$ is weaker than the matching upper bounds of $\exp(-c\beta h/\log h)$ for $p=2$ and $\exp(-c\beta h)$ for $p> 2$.
\end{remark}
\begin{proof}[Proof of \cref{lem:ratio-lb-p12}]
    It is easy to see that we have
    \[\hatpi^{(p)}_V(\min_{x \sim o}\phi_x < 0 \mid \phi_o = h) \leq \epsilon_\beta\,.\] Indeed, by the upper bound of \cref{eq:gradphi-LD-ratio}, we have that $\hatpi^{(p)}_V(\phi_o = h) = (1+o(1))\hatpi^{(p)}_V(\phi_o \geq h)$, so it suffices to prove the bound when the conditioning is instead on $\phi_o \geq h$. But by FKG, we can then forget about the conditioning entirely, whence the bound is immediate by a standard Peierls argument. 

    Now we can write
    \[\hatpi^{(p)}_V(\phi_o = h-1) \leq (1+\epsilon_\beta)\hatpi^{(p)}_V(\phi_o = h-1,\, \min_{x \sim o}\phi_x \geq 0)\,.\]
    On this latter event, we can raise the height of the origin by 1, mapping to the set $\{\phi: \phi_o = h\}$, and this changes the weight of $\phi$ by a factor of at most $e^{4\beta(h^p-(h-1)^p)} \leq e^{4\beta ph^{p-1}}$, so that
    \[\hatpi^{(p)}_V(\phi_o = h-1,\, \min_{x \sim o}\phi_x \geq 0) \leq e^{4\beta ph^{p-1}}\hatpi^{(p)}_V(\phi_o = h)\,.\qedhere\]
\end{proof}
We now look to prove the lower bound of \cref{eq:gradphi-LD-ratio} for $2 < p < \infty$. For each configuration $\phi$ with $\phi_o = h$, let $\Gamma_i(\phi)$ for $1 \leq i \leq h$ be the outermost $i$ level-line loop surrounding the origin $o$, dropping $\phi$ from the notation if it is clear from context. For a collection of such loops $\{\Gamma_i\}$, define the energy of the collection by \[\sE(\{\Gamma_i\}) = \sum_e \Delta_e^p\,,\]
where $\Delta_e$ denotes the number of $\Gamma_i$ in the collection that contain the dual bond $e$. Let the weight of $\Gamma_i$ with respect to $\{\Gamma_i\}$ be defined as
    \[\mathscr{W}(\Gamma_i) = \sum_{e \in \Gamma_i} \Delta_e^p/\Delta_e = \sum_{e \in \Gamma_i} \Delta_e^{p-1}\,.\]
    (The weight depends on the full collection $\{\Gamma_i\}$ and not just $\Gamma_i$, but we will omit this from the notation for simplicity as it will be obvious from the context.) Observe that $\sE(\{\Gamma_i\}) = \sum_i \mathscr{W}(\Gamma_i)$, and that $|\Gamma_i| \leq \mathscr{W}(\Gamma_i)$.
\begin{lemma}\label{lem:bd-outermost-loop}
    Fix $p\geq1$ and $V$ as in \cref{thm:gradphi-large-deviations}. Then, there exist constants $C, c > 0$ such that for all $h \geq 1$, $r \geq Ch$,
    \[\hatpi^{(p)}_V(\mathscr{W}(\Gamma_1) \geq r\mid \phi_o = h) < e^{-(\beta - c)(r - Ch)}\,.\]
\end{lemma}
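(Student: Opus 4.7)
\medskip

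The plan is a Peierls-type surgery argument, parallel to (but quantitatively extending) the length bound of \cref{clm:extend-lem53}. Given $\phi$ with $\phi_o=h$ and $\mathscr{W}(\Gamma_1(\phi))=w\geq r$, I would strip off the outermost layer by setting
\[\f(\phi):=\phi-\one_{\mathrm{int}(\Gamma_1(\phi))},\]
so $\f(\phi)_o=h-1$ and the energy change is supported on bonds of $\Gamma_1(\phi)$. Applying the elementary inequality $t^p-(t-1)^p\geq t^{p-1}$ (valid for all $t\geq 1$, $p\geq 1$, with equality at $t=1$) together with $\phi_x-\phi_y\geq\Delta_e\geq 1$ on each bond of $\Gamma_1$ yields the key energy gain
\[\sE(\phi)-\sE(\f(\phi))\geq\sum_{e\in\Gamma_1(\phi)}\Delta_e^{p-1}=\mathscr{W}(\Gamma_1(\phi))=w.\]
The map is inverted by specifying the loop $\gamma=\Gamma_1(\phi)$, and since $|\gamma|\leq\mathscr{W}(\gamma)=w$ and the number of self-avoiding circuits of length $\ell$ enclosing $o$ in $V$ is at most $C_0^\ell$ for an absolute constant $C_0$, each image admits at most $C_0^w$ preimages with $\mathscr{W}(\Gamma_1)=w$.

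Grouping by the value of $w\geq r$ and summing geometrically (valid for $\beta\geq \beta_0$ so that $C_0 e^{-\beta}<\tfrac12$) gives
\[\hatpi^{(p)}_V\big(\mathscr{W}(\Gamma_1)\geq r,\,\phi_o=h\big)\leq 2\,e^{-(\beta-\log C_0)r}\,\hatpi^{(p)}_V(\phi_o=h-1),\]
and dividing by $\hatpi^{(p)}_V(\phi_o=h)$ one obtains
\[\hatpi^{(p)}_V\big(\mathscr{W}(\Gamma_1)\geq r\mid\phi_o=h\big)\leq 2\,e^{-(\beta-\log C_0)r}\cdot\frac{\hatpi^{(p)}_V(\phi_o=h-1)}{\hatpi^{(p)}_V(\phi_o=h)}.\]
The ratio in the last display can be bounded by $e^{c_1\beta h^{(p-1)\wedge 1}}$ using the direct monotonicity-and-lift argument from the proof of \cref{lem:ratio-lb-p12} (which produces the $h^{p-1}$ rate for every $p\geq 1$, even when it is not the sharp rate for $p>2$), extended from $\hatpi^{(p)}_\infty$ to $\hatpi^{(p)}_V$ via the hypothesis $\cB_R(o)\subset V$ with $R=Ch$ and the standard Peierls decay of correlations. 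Choosing the constant $C$ in the statement large enough (depending on $p$) so that $c_1 h^{(p-1)\wedge 1}\leq Ch$ for all $h\geq 1$, the bound rearranges into $e^{-(\beta-c)(r-Ch)}$ with $c=\log C_0+O(1/\beta)$.

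The main obstacle I expect lies in carrying out the entropic bookkeeping under the map $\f$ with the required uniformity. Several subtleties arise: (i) the loop $\gamma$ of $\f^{-1}(\phi')$ must be genuinely recoverable without further data, which relies on the FKG reduction (standard in \cite{LMS16}) to the case $\phi\equiv 0$ on $V\setminus\mathrm{int}(\gamma)$—only decreasing the conditional probability of our event; (ii) for $p>2$, the ratio rate $h^{p-1}$ from \cref{lem:ratio-lb-p12}'s proof is strictly weaker than the sharp rate $h$ (which requires exactly our lemma via \cref{cor:ratio-lb-p2inf}, and is therefore unavailable here); this is precisely why the constant $C=C(p)$ must be taken large enough to absorb the suboptimal $h^{p-1}$ correction into the $Ch$ window. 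Once these points are handled, the remaining estimate is the elementary geometric summation, and the hypothesis $\cB_R(o)\subset V$ with $R=Ch$ is exactly what allows the lower bound on $\hatpi^{(p)}_V(\phi_o=h)$ (via the explicit Lipschitz test profile $\phi^*(x)=(h-\|x-o\|_1)_+$ supported in $\cB^{L^1}_h(o)\subset \cB_R(o)$) to be applied uniformly in $V$.
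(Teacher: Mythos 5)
Your deletion map $\f(\phi)=\phi-\one_{\mathrm{int}(\Gamma_1)}$, and the energy gain $\sE(\phi)-\sE(\f(\phi))\geq\mathscr{W}(\Gamma_1)$ via $t^p-(t-1)^p\geq t^{p-1}$, are both fine. The gap is in how you dispose of the side effect that $\f$ changes $\phi_o$ from $h$ to $h-1$. After the Peierls sum you are left with a factor $\hatpi^{(p)}_V(\phi_o=h-1)/\hatpi^{(p)}_V(\phi_o=h)$, and you need an \emph{upper} bound on it. For $1<p\leq 2$ the monotonicity-and-lift argument of \cref{lem:ratio-lb-p12} gives $e^{O(\beta h^{p-1})}$ which is absorbable since $h^{p-1}\leq h$. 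But for $p>2$ that same argument still only yields $e^{O(\beta h^{p-1})}$, and $h^{p-1}$ grows strictly faster than $h$, so no constant $C=C(p)$ can place it inside a $Ch$ window uniformly in $h$. Your remark (ii) acknowledges that the rate available is $h^{p-1}$ and that the sharp rate $h$ is unavailable (since \cref{cor:ratio-lb-p2inf} is downstream of this very lemma), but the proposed fix --- absorb $h^{p-1}$ into $Ch$ by enlarging $C$ --- is simply false when $p>2$ and $h\to\infty$. So the argument does not close for $p>2$, which is precisely the regime for which the lemma is invoked in the paper (to establish \cref{cor:ratio-lb-p2inf}).

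The paper sidesteps the ratio altogether by choosing a map that \emph{preserves} $\phi_o=h$: instead of peeling off a layer, it deletes $\Gamma_1$ and simultaneously inserts a duplicate of a low-weight loop $\Gamma_{i^*}$. One first shows $\sE(\{\Gamma_i\})\leq C_1h^2$ except on an event of probability $e^{-\beta h^2}$ (a Peierls enumeration plus the lower bound in \cref{eq:gradphi-LD}); then pigeonhole among the $h$ nested loops gives some $i^*$ with $\mathscr{W}(\Gamma_{i^*})\leq C_1h$, and the insertion costs at most $\sum_{e\in\Gamma_{i^*}}\left[(\Delta_e+1)^p-\Delta_e^p\right]\leq p2^{p-1}C_1h$. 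The net energy gain is still $\geq\mathscr{W}(\Gamma_1)-C_2h$, but the multiplicity count (an extra factor $h$ for the choice of $i^*$ and $k$ for the length of $\Gamma_1$) now lands directly inside the same $\hatpi^{(p)}_V(\phi_o=h)$, with no ratio to estimate and hence no circularity, for all $p\geq 1$. As a minor point: your concern (i) about an FKG reduction to make $\gamma$ recoverable is unnecessary for your own map --- given $\psi$ and a candidate circuit $\gamma$, the preimage is the deterministic lift $\psi+\one_{\mathrm{int}(\gamma)}$, so enumerating circuits of length $\leq w$ already bounds the multiplicity.
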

\begin{proof}
    By a standard Peierls argument (enumerate over the $h$ level-line loops $\{\Gamma_i\}$, where a length-$k$ loop intersects (say) a horizontal ray from the origin at distance at most $k$),
    \[
    \hatpi^{(p)}_V(\sE(\{\Gamma_i\}) \geq C_1h^2 \,,\,\phi_o = h) = e^{-(C_1/2)\beta h^2}\,,
    \]
    and hence the lower bound on $\hatpi^{(p)}_V(\phi_o = h)$ in \cref{eq:gradphi-LD} implies that    \begin{equation}\label{eq:gradphi-E-bigger-h2}\hatpi^{(p)}_V(\sE(\{\Gamma_i\}) \geq C_1h^2 \mid \phi_o = h) = e^{-\beta h^2}\,
    \end{equation}
    provided that $C_1/2>c_2+1$ for $c_2$ from \cref{eq:gradphi-LD}.
    Hence, we can assume that $\sE(\{\Gamma_i\}) \leq C_1h^2$, whence by pigeonhole principle there exists some $\Gamma_{i^*}$ such that $\mathscr{W}(\Gamma_{i^*}) \leq C_1h$. Now consider the map $T$ on the set $\{\phi: \phi_o = h,\, \sE(\{\Gamma_i\}) \leq C_1h^2\}$ that takes the collection $\{\Gamma_i\}$ in $\phi$, deletes the outermost contour $\Gamma_1$, and adds a copy of $\Gamma_{i^*}$. This map preserves the fact that the origin is at height $h$. Calling this resulting collection $\{\Gamma_i(T(\phi))\}$, we obtain that
    \begin{align*}
    -\frac{1}{\beta}\log\frac{\hatpi^{(p)}_V(T(\phi))}{\hatpi^{(p)}_V(\phi)} = \sE(\{\Gamma_i(T(\phi))\}) - \sE(\{\Gamma_i(\phi)\}) &\leq -\mathscr{W}(\Gamma_1(\phi)) + \sum_{e \in \Gamma_{i^*}(\phi)} (\Delta_e+1)^p - \Delta_e^p\\
    &\leq -\mathscr{W}(\Gamma_1(\phi)) + \sum_{e\in \Gamma_{i^*}(\phi)} p(\Delta_e + 1)^{p-1}\\
    &\leq -\mathscr{W}(\Gamma_1(\phi)) + p2^{p-1}C_1h \\
    &=: -\mathscr{W}(\Gamma_1(\phi)) + C_2h\,.
    \end{align*}
    
    Moreover, the multiplicity of this map is such that for any given $\psi$ in the image of $T$,
    \[|\{\phi: T(\phi) = \psi, \mathscr{W}(\Gamma_1(\phi)) = k\}| \leq hks^k\,,\]
    for some absolute constant $s > 0$,
    since given the image $\psi$ we can read off $T(\{\Gamma_i(\phi)\})$, and reconstruct $\{\Gamma_i(\phi)\}$ if we know what $i^*$ and $\Gamma_1$ are. There are $h$ choices of $i^*$, $k$ choices for the length $|\Gamma_1| = j \leq k$, and $s^j \leq s^k$ choices of loops of length $j$ which surround the origin. Hence, if $\mathscr{W}(\Gamma_1) \geq r \geq Ch$, we can apply a Peierls argument with this map to obtain that
    \begin{align}\label{eq:gradphi-contour-Bh}
        \sum_{\substack{\phi: \phi_o = h,\\ \sE(\{\Gamma_i\}) \leq C_1h^2,\\
        \mathscr{W}(\Gamma_1) \geq r}} \hatpi^{(p)}_V(\phi) &\leq \sum_{k \geq r}\sum_{\psi \in \mathsf{Im}(T)}\sum_{\substack{\phi: \phi_o = h,\\ \sE(\{\Gamma_i\}) \leq C_1h^2,\\
        \mathscr{W}(\Gamma_1) = k,\\
        T(\phi) = \psi}} \hatpi^{(p)}_V(\psi) e^{-\beta(k - C_2h)}\nonumber\\
        &\leq \sum_{k \geq r}\sum_{\psi \in \mathsf{Im}(T)}\hatpi^{(p)}_V(\psi)e^{-\beta(k - C_2h)}hks^k\nonumber\\
        &\leq e^{-(\beta - C_3)(r - C_2h)}\hatpi^{(p)}_V(\phi_o = h)\,.
    \end{align}
    Combining \cref{eq:gradphi-E-bigger-h2,eq:gradphi-contour-Bh} concludes the proof.
\end{proof}

The constant $C$ used for $R = Ch$ in \cref{thm:gradphi-large-deviations} should now be taken to be the maximum between $2C$ from \cref{lem:bd-outermost-loop} and the constant needed previously from \cite{LMS16} (which was just to ensure that the single $\phi$ used to lower bound \cref{eq:gradphi-LD-inf} is supported in $\cB_R(o)$).

\begin{corollary}\label{cor:ratio-lb-p2inf}
    Fix $2 < p < \infty$ and let $V$ be as in \cref{thm:gradphi-large-deviations}. There exists a constant $c > 0$ such that for all $h \geq 1$,
    \begin{equation*}
 \frac{\hatpi^{(p)}_V(\phi_o = h)}{\hatpi^{(p)}_V(\phi_o = h-1)}\geq \exp\big[-c\beta h\big] \,. 
    \end{equation*}
\end{corollary}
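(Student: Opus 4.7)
The plan is to mimic the map-based arguments that appeared in~\cite{LMS16} and in the excerpt for the $1<p<2$ ratio (\cref{lem:ratio-lb-p12}): build an injective map $T:\{\phi:\phi_o = h-1\}\to\{\phi':\phi'_o = h\}$ with controlled energy cost, and conclude from the resulting partition function comparison. The naive map that merely raises $\phi_o$ by one (used for $1<p<2$) incurs a cost of $4\beta p h^{p-1}$, which is too weak for $p>2$. The correct refinement, inspired by \cref{lem:bd-outermost-loop}, is to raise the entire interior of the outermost $1$ level-line loop $\Gamma_1$ around $o$ by $1$: then the only edges whose gradient changes are those in $\Gamma_1$ (and each increases by~$1$), and the total cost is governed by $\mathscr{W}(\Gamma_1)$, which is typically $O(h)$ conditional on $\phi_o=h-1$.

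More precisely, define $T(\phi) = \phi + \mathbf{1}_D$, where $D$ is the (topological) interior of $\Gamma_1(\phi)$, so that $o\in D$ and $\phi'(o) = h$. For $e\in\Gamma_1$, writing $\Delta_e := |(\nabla\phi)_e|\geq 1$, one has $|(\nabla \phi')_e| = \Delta_e+1$ while all other gradients are unchanged, so
\[\sE(\phi')-\sE(\phi) \;=\; \sum_{e\in\Gamma_1}\big[(\Delta_e+1)^p - \Delta_e^p\big] \;\leq\; p\, 2^{p-1}\sum_{e\in\Gamma_1}\Delta_e^{p-1} \;=\; p\, 2^{p-1}\mathscr{W}(\Gamma_1)\,.\]
The map is injective: in $\phi'=T(\phi)$, one still has $\phi'\geq 2$ just inside $\Gamma_1$ (since $\phi\geq 1$ on the component $K\subset D$ of $\{\phi\geq 1\}$ containing $o$), whereas $\phi'=\phi\leq 0$ just outside $\Gamma_1$ by maximality of $\Gamma_1$ in $\phi$. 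Hence $\Gamma_1$ remains the outermost $1$ level-line loop surrounding $o$ in $\phi'$, and $\phi=\phi'-\mathbf{1}_D$ is recovered by first extracting $\Gamma_1$ from $\phi'$ and then its topological interior.

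Applying \cref{lem:bd-outermost-loop} with $h$ replaced by $h-1$ and $r=C_0 h$ for any $C_0>C$ yields $\hatpi^{(p)}_V(\mathscr{W}(\Gamma_1)\leq C_0 h\mid\phi_o=h-1)\geq \tfrac12$ for all sufficiently large $h$ (the case of bounded $h$ is handled trivially by the naive one-site map at constant cost). Restricting the summation to this event and invoking injectivity gives
\[\hatpi^{(p)}_V(\phi_o=h) \;\geq\; \sum_{\substack{\phi:\,\phi_o=h-1\\\mathscr{W}(\Gamma_1)\leq C_0 h}}\hatpi^{(p)}_V(T(\phi)) \;\geq\; \tfrac12\, e^{-p\, 2^{p-1}C_0\,\beta h}\,\hatpi^{(p)}_V(\phi_o=h-1)\,,\]
which is the claim. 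The main subtlety to verify carefully is the injectivity of $T$ when the component $K$ has ``holes'' (i.e., $D\setminus K\neq \emptyset$, where $\phi\leq 0$): this is resolved precisely by recovering $\Gamma_1$ as the outermost $1$ level-line loop of $\phi'$ surrounding $o$, which avoids the pitfall of reading off $D$ from, say, the component of $\{\phi'\geq 2\}$ around $o$ (that component could be strictly smaller than $D$).
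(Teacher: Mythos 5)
Your argument coincides with the paper's proof of this corollary step for step: both restrict to the typical event $\mathscr{W}(\Gamma_1)\lesssim h$ via \cref{lem:bd-outermost-loop}, apply the map $T$ that raises the interior of $\Gamma_1$ by one, bound the energy increment by $p\,2^{p-1}\mathscr{W}(\Gamma_1)$, and conclude via injectivity of $T$. You also make the injectivity explicit (recovering $\Gamma_1$ as the outermost $1$\,level-line loop of $\phi'$, correctly flagging that one cannot simply read off $D$ from the $\{\phi'\geq 2\}$-component around $o$ when $K$ has holes), a point the paper only asserts.
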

\begin{proof}
    By \cref{lem:bd-outermost-loop}, there exists $C$ such that 
    \[\hatpi^{(p)}_V(\phi_o = h-1) = (1+o(1))\hatpi^{(p)}_V(\phi_o = h-1,\,  \mathscr{W}(\Gamma_1) \leq Ch)\,.\]
    On the latter event, consider the map $T(\phi) = \psi$ which raises the interior of $\Gamma_1(\phi)$ by 1. Then, we have 
    \begin{align*}
    -\frac{1}{\beta}\log\frac{\hatpi^{(p)}_V(T(\phi))}{\hatpi^{(p)}_V(\phi)} &= \sum_{e \in \Gamma_1(\phi)} (\Delta_e + 1)^p - \Delta_e^p\\
    &\leq p2^{p-1}\mathscr{W}(\Gamma_1(\phi))\,.
    \end{align*}
    As this map is injective and maps into the set $\{\phi: \phi_o = h\}$, we obtain
    \[\hatpi^{(p)}_V(\phi_o = h-1,\,  \mathscr{W}(\Gamma_1) \leq Ch) \leq \hatpi^{(p)}_V(\phi_o = h)e^{-\beta C'h}\]
    for $C' = p2^{p-1}C$.
\end{proof}

Finally, to show the bound on the conditional probability in \cref{eq:gradphi-LD-conditional}, we begin with a simple lemma comparing $\hatpi_V^{(p)}$ to $\hatpi_{\cB_R(o)}^{(p)}$.
\begin{lemma}\label{lem:gradphi-hatpi-V-vs-Br}
    Let $V \supset \cB_R(o)$ be as in \cref{thm:gradphi-large-deviations}. Then, there exists $\epsilon_\beta \to 0$ as $\beta \to \infty$ such that
    \[ e^{-\epsilon_\beta R} \leq \frac{\hatpi_V^{(p)}(\phi_o=h)}{\hatpi_{\cB_R(o)}^{(p)}(\phi_o=h)} \leq e^{\epsilon_\beta R}\,.\] 
\end{lemma}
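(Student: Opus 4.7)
My plan is to follow the same two-step strategy as in the proof of \cref{clm:hatpi-V-vs-Br} for the \ZGFF. First, using the already-established upper bound in \cref{eq:gradphi-LD-ratio} (which follows from \cref{eq:gradphi-LD-ratio-inf} via the map argument of \cite{LMS16}, valid in any domain), I iterate and sum a geometric series to obtain $\hatpi_V^{(p)}(\phi_o\geq h) = (1+\epsilon_\beta)\hatpi_V^{(p)}(\phi_o=h)$, and likewise on $\cB_R(o)$; this reduces matters to comparing $\hatpi_V^{(p)}(\phi_o\geq h)$ with $\hatpi_{\cB_R(o)}^{(p)}(\phi_o\geq h)$. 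For the lower bound I would apply FKG to the increasing event $\cE_+ := \{\phi_x\geq 0\text{ for all }x\in\partial\cB_R(o)\}$: by Peierls rigidity (which extends \cite{BrandenbergerWayne82} to the $|\nabla\phi|^p$ model for any $p\geq 1$ at large $\beta$), $\hatpi_V^{(p)}(\phi_x\geq 0)\geq 1-\epsilon_\beta$ for each individual $x$, so FKG yields $\hatpi_V^{(p)}(\cE_+)\geq (1-\epsilon_\beta)^{|\partial\cB_R(o)|}\geq e^{-\epsilon_\beta R}$. Conditional on $\cE_+$ and the trace $\psi := \phi\restriction_{\partial\cB_R(o)}$, the Markov property identifies the law in $\cB_R(o)$ with $\hatpi_{\cB_R(o)}^{(p),\psi}$, which by monotonicity in boundary conditions dominates $\hatpi_{\cB_R(o)}^{(p),0}$, giving $\hatpi_V^{(p)}(\phi_o\geq h\mid\cE_+)\geq \hatpi_{\cB_R(o)}^{(p),0}(\phi_o\geq h)$ and hence the lower bound.

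For the upper bound, let $\Gamma_1$ denote the outermost $1$-level-line loop surrounding the origin and decompose
\[\hatpi_V^{(p)}(\phi_o\geq h) = \hatpi_V^{(p)}(\phi_o\geq h,\,\Gamma_1\subset\cB_R(o)) + \hatpi_V^{(p)}(\phi_o\geq h,\,\Gamma_1\not\subset\cB_R(o)).\]
On $\{\Gamma_1\subset\cB_R(o)\}$ one has $\phi<1$ outside $\Gamma_1$, so $\phi\leq 0$ on $\partial\cB_R(o)$; conditioning on the trace $\psi\leq 0$, the Markov property plus FKG (in the reverse direction, $\hatpi_{\cB_R(o)}^{(p),\psi}\preceq\hatpi_{\cB_R(o)}^{(p),0}$) bound the first term by $\hatpi_{\cB_R(o)}^{(p),0}(\phi_o\geq h)$. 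For the second term, a loop surrounding $o$ that escapes $\cB_R(o)$ must have $|\Gamma_1|>2R$, so by \cref{clm:extend-lem53} in the regime $1<p<2$ (choosing the constant $C$ in $R=Ch^{p-1}$ larger than the one there) or by \cref{lem:bd-outermost-loop} in the regime $2<p<\infty$ (similarly with $R=Ch$), this event has conditional probability $\leq e^{-c\beta R}$ given $\phi_o\geq h$. Rearranging yields $\hatpi_V^{(p)}(\phi_o\geq h)\leq (1-e^{-c\beta R})^{-1}\hatpi_{\cB_R(o)}^{(p),0}(\phi_o\geq h)\leq e^{\epsilon_\beta R}\hatpi_{\cB_R(o)}^{(p),0}(\phi_o\geq h)$ for $\beta$ large.

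The only delicate point is ensuring the Peierls decay $e^{-c\beta R}$ in the tail bound on $|\Gamma_1|$ absorbs the rigidity loss $\epsilon_\beta R$; this forces the constant $C$ in $R=Ch^{(p-1)\wedge 1}$ to be chosen sufficiently large relative to the constants in \cref{clm:extend-lem53,lem:bd-outermost-loop}, which is permissible since \cref{thm:gradphi-large-deviations} allows $C$ to be enlarged freely. I expect the single nontrivial verification is the uniform rigidity $\hatpi_V^{(p)}(\phi_x\geq 0)\geq 1-\epsilon_\beta$ in an arbitrary domain, but this is a direct consequence of the classical Peierls argument, since for any $p\geq 1$ the contour cost $e^{-\beta |\gamma|^p}$ produces a convergent Peierls sum at large $\beta$.
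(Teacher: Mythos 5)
Your lower bound argument is correct and essentially identical to the paper's, which combines the FKG estimate \cref{eq:gradphi-force-bdy-0} (forcing $\phi\restriction_{\partialvtx\cB_R(o)}=0$ via an independent-sites lower bound) with the Peierls estimate \cref{eq:gradphi-eq-vs-ineq} to move between $\{\phi_o=h\}$ and $\{\phi_o\geq h\}$.

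The upper bound, however, contains a genuine gap in the step handling the main term. You claim that on the event $\{\Gamma_1\subset\cB_R(o)\}$ ``one has $\phi<1$ outside $\Gamma_1$, so $\phi\leq 0$ on $\partial\cB_R(o)$.'' This is not a deterministic consequence of $\Gamma_1\subset\cB_R(o)$. The loop $\Gamma_1$ is the outermost $1$ level-line loop \emph{surrounding the origin}; this says nothing about $1$ level-line loops elsewhere in $V$ that do not surround $o$, and such a loop may well intersect $\partialvtx\cB_R(o)$, giving sites there with $\phi\geq 1$. Since your application of monotonicity ($\hatpi_{\cB_R(o)}^{(p),\psi}\preceq\hatpi_{\cB_R(o)}^{(p),0}$ for the increasing event $\{\phi_o\geq h\}$) requires $\psi\leq 0$ pointwise, it breaks down on the positive-probability subevent where $\psi$ is not everywhere nonpositive. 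As a sanity check, your argument would yield a factor of only $(1-e^{-c\beta R})^{-1}$ (not genuinely $e^{\epsilon_\beta R}$) in front of $\hatpi_{\cB_R(o)}^{(p)}(\phi_o\geq h)$; the $e^{\epsilon_\beta R}$ in the lemma is in fact a real loss, coming from a domain-change step you have elided.

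The repair is exactly the device used in the paper (and which you invoke correctly for the tail of $\Gamma_1$, so the ingredients are all at hand): instead of conditioning at $\partial\cB_R(o)$, work with the $*$-connected circuit $\cC$ of sites with $\phi\leq 0$ sitting immediately outside $\Gamma_1$. On $\{\mathscr{W}(\Gamma_1)\leq R\}$ (which by \cref{lem:bd-outermost-loop}, or \cref{clm:extend-lem53} for $1<p<2$, fails with conditional probability $e^{-c\beta h^{(p-1)\wedge 1}}$ once the constant in $R$ is enlarged), the circuit $\cC$ lies inside $\cB_R(o)$, surrounds $o$, and has length $O(R)$. Revealing $\phi$ outside the interior $\Lambda$ of $\cC$ and raising the heights on $\cC$ to $0$ gives
\[
\hatpi_V^{(p)}(\phi_o\geq h,\,\mathscr{W}(\Gamma_1)\leq R)\;\leq\;\max_{\substack{o\in\Lambda\subset\cB_R(o)\\ |\partial\Lambda|\leq R}}\hatpi_\Lambda^{(p),0}(\phi_o\geq h)\,.
\]
One then compares $\hatpi_\Lambda^{(p),0}$ to $\hatpi_{\cB_R(o)}^{(p),0}$ via \cref{eq:gradphi-force-bdy-0} applied with $V_1=\Lambda$, $V_2=\cB_R(o)$, and it is precisely this step that produces the $e^{\epsilon_\beta R}$ factor in the statement of the lemma.
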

\begin{proof}
    For any $V_1 \subset V_2$, it is a straightforward FKG argument (see \cite[Eq.~3.14]{LMS16}, or the proof of \cref{lem:UB-LD-any-point}) to show that
    \begin{equation}\label{eq:gradphi-force-bdy-0}
        \hatpi^{(p)}_{V_2}(\phi_o \geq h) \geq e^{-\epsilon_\beta|\partial V_1 \setminus \partial V_2|} \hatpi^{(p)}_{V_1}(\phi_o \geq h)\,.
    \end{equation}
    Moreover, by a standard Peierls argument, (see, e.g., the proof of \cite[Clm.~3.6]{LMS16}) one obtains that for any $V$, $h \geq 0$,
    \begin{equation}\label{eq:gradphi-eq-vs-ineq}
        \hatpi^{(p)}_V(\phi_o > h) \leq \epsilon_\beta\hatpi^{(p)}_V(\phi_o = h)\,.
    \end{equation}
    Combining the above two displays immediately implies the lower bound. 
    
    For the upper bound, by \cref{lem:bd-outermost-loop} it suffices to upper bound $\hatpi^{(p)}_V(\mathscr{W}(\Gamma_1) \leq R,\, \phi_o = h)$ (noting the comment about the constants after \cref{lem:bd-outermost-loop}). The event $\mathscr{W}(\Gamma_1) \leq R$ implies $|\Gamma_1| \leq R$, which in turn implies that there is a chain of sites with height $\phi_x \leq 0$ contained inside $\cB_R(o)$ which surrounds the origin, whose length is at most $R$. By monotonicity, the heights along this chain can be raised to 0. That is, calling the interior of this chain $\Lambda$, we have argued that
    \[\hatpi^{(p)}_V(\mathscr{W}(\Gamma_1) \leq R,\, \phi_o = h) \leq \hatpi^{(p)}_V(\mathscr{W}(\Gamma_1) \leq R,\, \phi_o \geq h) \leq \max_{\substack{o \in \Lambda \subset \cB_R(o)\\
    |\partial \Lambda|\leq R}}\hatpi^{(p)}_\Lambda(\phi_o \geq h)\,.\]
    The upper bound now follows by applying \cref{eq:gradphi-force-bdy-0} for $V_1 = \Lambda$,  $V_2 = \cB_R$, along with~\cref{eq:gradphi-eq-vs-ineq}.
\end{proof}

\begin{proof}[Proof of \cref{thm:gradphi-large-deviations}, \cref{eq:gradphi-LD-conditional}.]
With the above lemma in hand (replacing \cref{clm:hatpi-V-vs-Br} from the case $p=2$), the proof follows from the same argument used to establish \cref{eq:LD-conditional} in \cref{thm:LD-DG}, with the only difference that one defines the two events $E_1$ and $E_2$ for
\[ \Delta := \Big\lfloor \frac{c_1}{6^{p+1}} h^{1\,\wedge\,\frac{1}{p-1}} \Big\rfloor\,,\]
with $c_1>0$ from the upper bound on $\hatpi_V^{(p)}(\phi_o=h)/\hatpi_V^{(p)}(\phi_o=h-1)$, now given by \eqref{eq:gradphi-LD-ratio}. Note that for this choice of $\Delta$ we have
\[ \Delta h^{(p-1)\,\wedge\, 1} \asymp \Delta^p \asymp h^{p\,\wedge\,\frac{p}{p-1}}\,.\]
As was argued in \cref{eq:E1c-upper}, one can iterate the upper bound of \cref{eq:gradphi-LD-ratio} and thereafter apply \cref{lem:gradphi-hatpi-V-vs-Br} to arrive at the analogue here:
\[ \hatpi_V^{(p)}(E_1^c\mid \phi_o = h) \leq \exp\bigg(-(c_1-o(1))\beta \Delta h^{(p-1)\,\wedge\, 1}\bigg)\,.\]
(NB.\ We apply \cref{lem:gradphi-hatpi-V-vs-Br} for $R=Ch^{(p-1)\wedge 1}$ whence the term $e^{\epsilon_\beta R}$ from said lemma is absorbed in the $o(1)$-term in the above exponent.) 
Proceeding as we did below \cref{eq:E1c-upper}, one then has
\[ \hatpi_V^{(p)}(\phi_z = h \mid \phi_o = h,\, E_1,\, E_2^c) \leq (1+\epsilon_\beta)e^{-\beta(3 \Delta^p + (5\Delta)^p)}=(1+\epsilon_\beta)e^{-(5^p+3)\beta \Delta^p}\,,\]
where the constants here, apart from the $p$-power of the interaction, were unchanged as we did not alter the definition of $E_1$ and $E_2$ (but only the definition of $\Delta$). For the same reason we further get
\[ \hatpi_V^{(p)}(\phi_z \geq h+\Delta \mid \phi_o = h,\, E_1,\, E_2) \geq (1-\epsilon_\beta)e^{-4\beta  (6\Delta)^p}
\]
and
\[
	\hatpi_V^{(p)}(\phi_z \geq h+\Delta \mid \phi_o =h,\, E_1\,, E_2) \leq \frac{
				\exp(-(c_1-o(1))\beta \Delta h^{(p-1)\,\wedge\,1})}
			{\hatpi_V^{(p)}(E_2\mid \phi_o=h,\,E_1)}\,,
\]
combining to yield
\begin{align*}
\hatpi_V^{(p)}(E_2 \mid\phi_o=h\,,\,E_1) &\leq (1+\epsilon_\beta)\exp\bigg(\beta\Big(4\cdot 6^p\Delta-(c_1-o(1))h^{(p-1)\,\wedge\,1} \Big) \Delta\bigg) \\ 
&\leq \exp\Big(-(c_1/3)\beta \Delta h^{(p-1)\,\wedge\,1}\Big)\,,    
\end{align*}
as required.
\end{proof}

Analogously to \cref{lem:UB-LD-any-point} from the $p = 2$ case, as a consequence of extending the large deviation results to an arbitrary domain containing $\cB_R(o)$, we obtain an upper bound on the probability that $\phi_x \geq h$ even for points $x$ close to the boundary.
\begin{lemma}\label{lem:gradphi-UB-LD-any-point}
		Fix $1 < p < 2$, or $2 < p < \infty$. There exists $\epsilon_\beta \to 0$ as $\beta \to \infty$ such that, for every $V \subset \Z^2$, $x \in V$, and $h$ sufficiently large compared to $\beta$,
		\begin{equation}
			\hatpi_V^{(p)}(\phi_x \geq h) \leq \hatpi_\infty^{(p)}(\phi_o \geq h)^{1 + \epsilon_\beta}\,.
		\end{equation}
	\end{lemma}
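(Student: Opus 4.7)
The plan is to mimic the argument of \cref{lem:UB-LD-any-point} essentially verbatim, now substituting the ZGFF-specific ingredients (\cref{thm:LD-DG} and the Peierls rigidity estimate \cref{eq:Peierls-rigidity}) with their $|\nabla\phi|^p$ counterparts that have been built up in \cref{subsec:ld-grad-phi}: namely \cref{thm:gradphi-large-deviations}, \cref{lem:gradphi-hatpi-V-vs-Br}, and \cref{eq:gradphi-eq-vs-ineq}. Set $R = Ch^{p-1}$ if $1<p<2$ and $R = Ch$ if $2<p<\infty$, the radius required by \cref{thm:gradphi-large-deviations}, and define $V' := V \cup \cB_R(x)$.

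The first step is the FKG/Peierls move. Following \cref{eq:force-zero-boundary} with $\hatpi$ replaced by $\hatpi^{(p)}$, FKG gives
\[
\hatpi_{V'}^{(p)}(\phi_x \geq h) \geq \hatpi_{V}^{(p)}(\phi_x \geq h)\prod_{y \in \partial V \setminus \partial V'} \hatpi_{V'}^{(p)}(\phi_y \geq 0),
\]
and a standard Peierls argument (using \cref{eq:gradphi-eq-vs-ineq} in place of its ZGFF analogue) shows that each factor $\hatpi_{V'}^{(p)}(\phi_y \geq 0)$ is at least $1-\epsilon_\beta$, so the product is at least $\exp(-\epsilon_\beta |\partial V\setminus \partial V'|) \geq \exp(-\epsilon_\beta \pi R^2)$. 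Hence
\[
\hatpi_V^{(p)}(\phi_x \geq h) \leq e^{\epsilon_\beta R^2}\hatpi_{V'}^{(p)}(\phi_x \geq h).
\]

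The second step exploits that $V' \supset \cB_R(x)$. Applying \cref{eq:gradphi-eq-vs-ineq} to pass from $\{\phi_x \geq h\}$ to $\{\phi_x = h\}$ at cost $(1+\epsilon_\beta)$, and then \cref{lem:gradphi-hatpi-V-vs-Br} (to both $V'$ and $\hatpi_\infty^{(p)}$, routed through the reference domain $\cB_R(x)$), yields
\[
\hatpi_{V'}^{(p)}(\phi_x \geq h) \leq e^{O(\epsilon_\beta R)} \hatpi_\infty^{(p)}(\phi_o \geq h).
\]
Combining the two steps gives an overall prefactor of $\exp(\epsilon_\beta R^2)$, which it remains to absorb into an adjustment of the exponent of $\hatpi_\infty^{(p)}(\phi_o \geq h)$. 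The latter has absolute value $|\log \hatpi_\infty^{(p)}(\phi_o\geq h)| \asymp \beta h^{p\wedge 2}$ by \cref{eq:gradphi-LD}, and a direct comparison verifies $R^2 = o(\beta h^{p\wedge 2})$: for $1<p<2$, $R^2 = O(h^{2p-2})$ which is $o(h^p)$ since $2p-2<p$; for $p>2$, $R^2 = O(h^2) = O(h^{p \wedge 2})$, and the constant factor is absorbed by the $\epsilon_\beta$ once $\beta$ is taken large. Rewriting the prefactor as a fractional power of $\hatpi_\infty^{(p)}(\phi_o\geq h)$ then gives the claimed bound (with a renamed $\epsilon_\beta$).

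The main obstacle is not in the combinatorics of the FKG/Peierls step, which is routine, but in the bookkeeping around the matching between the error $\epsilon_\beta R^2$ and the large deviation rate $\beta h^{p\wedge 2}$. The dangerous case is $p>2$, where $R^2$ and the rate are both of order $h^2$; here one has to invoke that the constant $C$ in $R=Ch$ is a fixed absolute constant (as noted in \cref{rem:gradphi-radius-req}, $C$ does not depend on $\beta$), so that for $\beta$ sufficiently large (and $h$ at least a fixed multiple of $\beta$), the ratio $C^2\epsilon_\beta/\beta$ can indeed be absorbed into a small $\epsilon_\beta' \to 0$. No new machinery beyond the results of \cref{subsec:ld-grad-phi} is needed.
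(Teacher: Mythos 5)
Your proposal is correct and follows essentially the same route as the paper's proof: the paper's first step (citing \cref{eq:gradphi-force-bdy-0}) encapsulates exactly the FKG/Peierls argument you spell out, its second step is the same double application of \cref{lem:gradphi-hatpi-V-vs-Br} via the reference domain $\cB_R(x)$, and the absorption of the $\exp(\epsilon_\beta R^2)$ prefactor into the exponent is handled identically, including the observation that the $p>2$ case (where $R^2$ and $\beta h^2$ are comparable) is saved by $C$ being a $\beta$-independent absolute constant. The only cosmetic difference is that you explicitly route $\{\phi_x\geq h\}\leftrightarrow\{\phi_x=h\}$ through \cref{eq:gradphi-eq-vs-ineq}, which the paper leaves implicit.
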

	\begin{proof}
		Consider first $1 < p < 2$. Here we take $R = Ch^{p-1}$. Let $V' = V \cup \cB_R(x)$. By \cref{eq:gradphi-force-bdy-0}, we have
        \[\hatpi_{V'}^{(p)}(\phi_x \geq h) \geq e^{-\epsilon_\beta R^2}\hatpi_V^{(p)}(\phi_x \geq h)\,,\]
        using the fact that $\partial V \setminus \partial V' \subset \cB_R(x)$ so that $|\partial V \setminus \partial V'| \leq \pi R^2$. Combining with \cref{lem:gradphi-hatpi-V-vs-Br} to compare both $\hatpi_V^{(p)}$ and $\hatpi_\infty^{(p)}$ with $\hatpi_{\cB_R(o)}^{(p)}$, we obtain that
        \[\hatpi_V^{(p)}(\phi_x \geq h) \leq \hatpi_\infty^{(p)}(\phi_x \geq h)e^{\epsilon_\beta C h^{2p-2}}\,.\]
        By the lower bound of \cref{eq:gradphi-LD} and the fact as $h \to \infty$, $  h^{2p-2} \ll h^p$ in the regime $1 < p < 2$, we have that
        \[e^{\epsilon_\beta C h^{2p-2}} \leq \hatpi^{(p)}_\infty(\phi_o \geq h)^{o(1)}\,,\]
        where $o(1)$ is as $h \to \infty$. We conclude by combining the above two displays with the translation invariance of $\hatpi_\infty^{(p)}$.

        For $2 < p < \infty$, we can apply the same logic as above with $R = Ch$ to obtain that
        \[\hatpi_V^{(p)}(\phi_x \geq h) \leq \hatpi_\infty^{(p)}(\phi_x \geq h)e^{\epsilon_\beta C h^2}\,.\]
        In this case, the error term is of the same order as the large deviation with respect to $h$ (both are $\exp[O(h^2)]$). We instead use the $\epsilon_\beta$ term to ensure that the error is sufficiently small. That is, by the lower bound of \cref{eq:gradphi-LD} for $2 < p < \infty$, we have
        \[e^{\epsilon_\beta Ch^2} \leq \hatpi_\infty^{(p)}(\phi_o \geq h)^{\epsilon_\beta}\,,\]
        and the proof concludes by the translation invariance of $\hatpi_\infty^{(p)}$.
	\end{proof}

    We now have all the tools needed to prove the crucial bound on the probability of having a floor (analogous to \cref{lem:area-estimate}), which results in the area term in the cluster expansion. Recall the definition of $H^{(p)}$ in \cref{eq:gradphi-def-H-Nn}.
    \begin{lemma}\label{lem:gradphi-area-estimate}
        Fix $1 < p < 2$ or $2 < p < \infty$. Fix $0<\delta<\frac13$ and $n\geq 1$, let $V \subset \Z^2$ be a connected region, let $F\subset V$ be a subset satisfying $|F| \leq Le^{\kappa\sqrt{\log L}}$ and $|\partial F| = O(L^{1-\delta})$, and set $h = H^{(p)}+1-n$. Then, for $\beta$ sufficiently large,
		\[
		\hatpi_V^{(p)}(\phi_x \geq -h,\, \forall x \in F) = (1+o(1))\exp\Big(-\hatpi_\infty^{(p)}(\phi_o < -h)|F| \Big)\,.
		\]
	\end{lemma}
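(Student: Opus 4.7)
The proof will follow essentially the same blueprint as that of \cref{lem:area-estimate}, with three main inputs replaced: the large deviations of \cref{thm:LD-DG} are swapped for their $|\nabla\phi|^p$ analogues in \cref{thm:gradphi-large-deviations}, the pointwise upper bound \cref{lem:UB-LD-any-point} is replaced by \cref{lem:gradphi-UB-LD-any-point}, and the decay of correlations bound (which followed from a Peierls argument valid for all $p\geq1$) is used as-is. The symmetry of $\hatpi_\infty^{(p)}$ under $\phi\mapsto -\phi$ together with the definition of $H^{(p)}$ in \cref{eq:gradphi-def-H-Nn} gives $\hatpi_\infty^{(p)}(\phi_o < -h)\leq L^{-1+o(1)}$ uniformly over $h \in \{H^{(p)},H^{(p)}-1,\ldots,H^{(p)}-n\}$ (the move from $h=H^{(p)}$ to $h=H^{(p)}-n$ costs a factor of $\exp[O(n\beta h^{(p-1)\wedge 1})]=L^{o(1)}$ by iterating the ratio bound \cref{eq:gradphi-LD-ratio}).

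For the lower bound, I will use FKG to factorize $\hatpi_V^{(p)}(\phi_x\geq -h,\,\forall x\in F) \geq \prod_{x\in F}\hatpi_V^{(p)}(\phi_x\geq -h)$, and bound each factor by $1-\hatpi_V^{(p)}(\phi_x<-h)\geq \exp[-\hatpi_V^{(p)}(\phi_x<-h)-\hatpi_V^{(p)}(\phi_x<-h)^2]$. Summing the linear term, and splitting $F$ based on whether $\dist(x,\partial F)\leq \log^2 L$ or not, all terms other than $\sum_{x\in F,\,\dist(x,\partial F)>\log^2 L}\hatpi_\infty^{(p)}(\phi_o<-h)$ are shown to be $o(1)$ exactly as in the proof of \cref{lem:area-estimate}, using \cref{lem:gradphi-UB-LD-any-point} to handle sites near $\partial F$ and the Peierls-based correlation decay $\|\hatpi_V^{(p)}(\phi\restriction_{\cB_R(x)}\in\cdot)-\hatpi_\infty^{(p)}(\phi\restriction_{\cB_R(o)}\in\cdot)\|_{\tv}\leq e^{-c\beta R}$ at $R=\log^2 L$.

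The upper bound carries over the grid partitioning/Bonferroni argument: partition $\Z^2$ into cells $P_i\supset Q_i$ with shells $P_i\setminus Q_i$ of widths $L^{\delta/3}$ and $L^{\delta/4}$, discard the set $V_1\cup V_2$ of sites in cells intersecting $F^c$ or in shells, and handle the event $\fD_\fs$ of a macroscopic disagreement polymer by a Peierls bound valid for all $p$. Within each good cell, one conditions on an outermost $*$-circuit of zeros in $P_{i_j}$ to get a domain $\sfU\supset \cB_r(x)$ for every $x\in Q_{i_j}$, and applies Bonferroni's inequality. The single-point term becomes $\hatpi_\infty^{(p)}(\phi_o<-h)|Q'_{i_j}|$ (via correlation decay and \cref{lem:gradphi-UB-LD-any-point}), and the two-point term at distances $\leq\log^2 L$ is dominated by the key conditional bound $\hatpi_\infty^{(p)}(\phi_z=h\mid \phi_o=h)\leq \exp[-c\beta h^{p\wedge p/(p-1)}]$ from \cref{eq:gradphi-LD-conditional}.

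The main obstacle to verify is that the error prefactor from the two-point estimate indeed produces an $o(1)$ additive error in the exponent, analogous to the ZGFF case. Concretely, after factoring out $\hatpi_\infty^{(p)}(\phi_o<-h)|Q'_{i_j}|$, the residual multiplicative correction equals (up to a $(\log L)^{O(1)}$ factor) $\exp[-c\beta h^{p\wedge p/(p-1)}+ O(n\beta h^{(p-1)\wedge 1})]$ times $|F|/L$. Since $p\wedge\frac{p}{p-1}>(p-1)\wedge 1$ for every $p>1$, and since $h=H^{(p)}-n$ with $h\to\infty$ as $L\to\infty$ (because $H^{(p)}\to\infty$ by \cref{eq:gradphi-def-H-Nn,eq:gradphi-LD}), this correction is $L^{-o(1)}\cdot|F|/L$, and the assumption $|F|\leq L(\log L)^{50}$ then yields the desired $o(1)$ bound. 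This asymmetry, analogous to \cref{rem:bbq:cond-bound} in the ZGFF case, is exactly what the strengthened conditional bound in \cref{eq:gradphi-LD-conditional} is designed to furnish.
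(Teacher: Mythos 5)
Your proposal is correct and follows essentially the same route as the paper: both proofs port the argument of \cref{lem:area-estimate} verbatim, substituting \cref{thm:gradphi-large-deviations} for \cref{thm:LD-DG} and \cref{lem:gradphi-UB-LD-any-point} for \cref{lem:UB-LD-any-point}, and then reduce the matter to the two exponent-arithmetic checks you carry out: that $p\wedge\frac{p}{p-1}>(p-1)\wedge 1$ for all $p>1$ (so the $n$-fold ratio cost is absorbed by the conditional-probability gain), and that $h^{p\wedge p/(p-1)}$ is a positive power of $\log L$ (so the surviving exponential beats the $(\log L)^{O(1)}$ prefactor from $|F|$). The only cosmetic caveat is that writing the residual correction as $L^{-o(1)}\cdot|F|/L$ is slightly loose---a bare $L^{-o(1)}$ does not by itself dominate $(\log L)^{50}$; what actually saves you, as you rely on implicitly, is that the exponent is a genuine power of $\log L$ (for $1<p<2$ it is in fact polynomially small in $L$), which does beat any fixed power of $\log L$.
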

    \begin{proof}
        The proof follows exactly as in \cref{lem:area-estimate}, only plugging in the estimates above when needed. We summarize the crucial relationships between the exponents in \cref{thm:gradphi-large-deviations} for the reader's convenience. 
        
        To lower bound the left hand side, we firstly need that the exponent in (the lower bound of) \cref{eq:gradphi-LD-ratio} is of a smaller order than in \cref{eq:gradphi-LD}. This ensures that for each finite $h = H^{(p)}$ to $h = H^{(p)}+1-n$, $\hatpi^{(p)}_\infty(\phi_o \geq h) = L^{-1 + o(1)}$, a fact used throughout the proof. To control points near $\partial F$, by \cref{lem:gradphi-UB-LD-any-point}, we have that for all $x$,
        \[\hatpi^{(p)}_V(\phi_x < -h) \leq L^{-1 + \epsilon_\beta}\,,\]
        which replaces \cref{eq:x-close-to-boundary}. The $\epsilon_\beta$ in place of $o(1)$ is not an issue as we only needed something smaller than $\delta$ for the application in \cref{eq:sum-dist-log2-from-F} (though it does mean that for every choice of $\delta$, we need to take a sufficiently large $\beta$). The rest of the proof of the lower bound just involved FKG and coupling to infinite volume, holding exactly in the same manner. 

        The upper bound was a more involved ``grill'' estimate. The key equation to prove (analogous to \cref{eq:grill-good-points}) is 
        \begin{equation}\label{eq:gradphi-grill-good-points}\hatpi_V^{(p)}\bigg(\fD_\fs\cap \bigcap_{x \in Q'_{i_j}}
			\{\phi_x \geq -h\} \;\Big|\; \cF_{j-1}\bigg) \leq \exp\Big(-\Big(1 -e^{-c\beta h^{p\,\wedge\,\frac{p}{p-1}}}\Big)\hatpi_\infty^{(p)}(\phi_o < -h)|Q'_{i_j}| \Big)\,,
        \end{equation}
        where the definitions of $\fD_\fs, Q'_{i_j}, \cF_j$ all remain the same. The exponent is motivated by the bound on the conditional probability in \cref{eq:gradphi-LD-conditional}. Indeed, the justification for the above equation is to use Bonferroni's inequalities, with the main term to estimate being
        \[\sum_{\substack{x,y\in Q'_{i_j}\\ 0<\dist(x,y)\leq \log L}} \!\!\!\!\!\hatpi_\infty^{(p)}(\phi_x<-h,\, \phi_y<-h) \leq (\log L)^2\exp(-c\beta h^{p\,\wedge\,\frac{p}{p-1}})\hatpi_\infty^{(p)}(\phi_o < -h)|Q'_{i_j}|\,,\]
        and this follows directly from \cref{eq:gradphi-LD-conditional}, \cref{eq:gradphi-LD}, and the lower bound on \cref{eq:gradphi-LD-ratio}.
        We highlight that 
        when justifying this point, as was done in the proof of \cref{lem:area-estimate}, one needs that 
        \begin{enumerate}[(a)]
            \item the probability $\exp[-c \beta h^{(p-1)\,\wedge\, 1}]$ given
        in (the lower bound of) \cref{eq:gradphi-LD-ratio} is of a larger order than the probability $\exp[-c\beta h^{p\,\wedge\,\frac{p}{p-1}}]$ from \cref{eq:gradphi-LD-conditional}, and is thus absorbed (see \cref{eq:grill-absorbing-cond}); 
        \item the bound on $|F|$ ensures  $|F|\hatpi_\infty^{(p)}(\phi_o < -H^{(p)}) = o(\exp[-c\beta h^{p\,\wedge\,\frac{p}{p-1}}])$ (again, cf.~\cref{eq:grill-absorbing-cond});
        \item for $h=H^{(p)}+1-n$, which satisfies $h \asymp(\beta \log L)^{1/(p\,\wedge\,2)}$ by \cref{eq:gradphi-LD}, the quantity
        $\exp[-c \beta h^{(p-1)\,\wedge\, 1}]$ from
        the (lower bound of) \cref{eq:gradphi-LD-ratio} 
        outweighs the $(\log L)^2$ pre-factor. 
        \end{enumerate}
        The remaining computations are unchanged.
    \end{proof}
	
    We can now establish the cluster expansion law. We use the same definitions of disagreement polymers as per \cref{def:gamma,def:disagree-polymer-len-energy-decor}, with the exception that the energy of $\gamma$ is now defined to be
	\[ \sE_\beta(\gamma) = \beta \sum_{e\in\gamma}|(\nabla\phi)_e|^p\,.\]
	With this definition, we have the following analog of \cref{prop:CE-law-with-floor}. 
	
	\begin{proposition}[Cluster expansion with a floor for $|\nabla\phi|^p$]\label{prop:gradphi-CE-with-floor}
		In the setting of the $|\nabla\phi|^p$ model for fixed $p>1$, with $\pi_V^{(p),\xi}$ replacing $\pi_V^\xi$ and $\sE_\beta$ as above, the statement of
		\Cref{prop:CE-law-with-floor} holds unchanged.
	\end{proposition}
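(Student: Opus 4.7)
The plan is to follow the proof of \cref{prop:CE-law-with-floor} verbatim, replacing the two $\ZGFF$-specific inputs (namely \cref{prop:CE-law} on cluster expansion without a floor and \cref{lem:area-estimate} on the floor probability) by their $|\nabla\phi|^p$ analogues. The first is purely combinatorial and carries over with essentially no modification; the second has already been proved in full generality as \cref{lem:gradphi-area-estimate}, which is what makes the extension work.

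The first step is to establish the $|\nabla\phi|^p$ analogue of \cref{prop:CE-law}: for a connected domain $V$ with boundary conditions $\xi$ interpolating between two adjacent heights (so that there is a unique disagreement polymer containing boundary disagreements), one has
\[
\hatpi^{(p),\xi}_V(\gamma) = \frac{1}{\widehat Z^\xi_V}\exp\bigg(-\sE_\beta(\gamma) + \!\!\sum_{\substack{\sfW\subset V\\\sfW\cap\Delta_\gamma\neq\emptyset}}\!\!\Phi(\sfW;\gamma)\bigg)\,,
\]
with decoration functions $\Phi$ as in \cref{def:disagree-polymer-len-energy-decor}. The proof is identical to that of \cref{prop:CE-law}: one observes that under $\hatpi_V^{(p),0}$ the unnormalized weight of $\phi$ factors as $\exp(-\sum_{\gamma\in\cP_\phi}\sE_\beta(\gamma))$; the Kotecký--Preiss criterion~\cite{KoteckyPreiss86} is verified using only the bound $\sE_\beta(\gamma)\geq \beta\sN(\gamma)$ (which for $p\geq 1$ follows from $|(\nabla\phi)_e|^p\geq|(\nabla\phi)_e|$ on disagreement bonds) together with the standard combinatorial count of rooted connected sets of bonds. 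The resulting $\Phi_0(\fP)$ are then regrouped by their vertex support to yield $\Phi(\sfW)$ satisfying the properties of \cref{def:disagree-polymer-len-energy-decor}, and the ratio $(\prod_i \widehat Z^{h_i}_{D_i^\circ})/\widehat Z^0_V$ is rewritten via \cref{eq:log-Z-0}. Nothing in this derivation is specific to $p=2$.

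The second step is to derive the cluster expansion identity with a floor, \cref{eq:CE-with-area}, exactly as in the proof of \cref{prop:CE-law-with-floor}: one writes
\[
\pi^{(p),\xi}_{V;F}(\gamma) = \frac{\widehat Z^0_V}{Z^{\xi}_{V;F}}\exp(-\sE_\beta(\gamma))\frac{1}{\widehat Z^0_V}\prod_i \widehat Z^{h_i}_{D_i^\circ;F}\,\hatpi^{(p),h_i}_{D_i^\circ}\big(\phi_x\geq 0,\,\forall x\in D_i^\circ\cap F\big)\,,
\]
absorbs $\widehat Z^0_V/Z^\xi_{V;F}$ into the normalization, and rewrites $(\prod_i \widehat Z^{h_i}_{D_i^\circ})/\widehat Z^0_V$ using the step above. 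For the conditional refinement \cref{eq:CE-with-area-cond}, the key step is to apply \cref{lem:gradphi-area-estimate} (instead of \cref{lem:area-estimate}) to each factor $\hatpi^{(p),h_i}_{D_i^\circ}(\phi_x\geq 0,\,\forall x\in D_i^\circ\cap F)$ for $i=0,1$, verifying under the event $E$ that $|D_i^\circ\cap F|\leq L(\log L)^\kappa$ and $|\partial(D_i^\circ\cap F)|=O(L^{1-\delta})$ using the bound $\partial(D_i\cap F)\subset \gamma\cup\partial F$; the arithmetic that converts $\hatpi^{(p)}_\infty(\phi_o<-(H^{(p)}-n))|D_0\cap F|$ into $|D_0\cap F|/N_n^{(p)}$ then proceeds exactly as in the $p=2$ case, using the definition of $N_n^{(p)}$ in \cref{eq:gradphi-def-H-Nn} and the fact that moving from $-(H^{(p)}-n-1)$ to $-(H^{(p)}-n)$ in the exponent of $\hatpi^{(p)}_\infty$ produces the correct single-level-line contribution.

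There is no substantial obstacle here since both ingredients (the cluster expansion without a floor, and the floor estimate) were the substantive work and have already been carried out earlier in the excerpt---specifically, \cref{thm:gradphi-large-deviations} and \cref{lem:gradphi-area-estimate} were set up precisely to make this black-box invocation possible. The only points that require a sentence of verification are (i) that $\sE_\beta(\gamma)\geq \beta\sN(\gamma)$ in the $|\nabla\phi|^p$ model for $p\geq 1$, which is what the Kotecký--Preiss argument uses; and (ii) that the hypothesis $|F|\leq L(\log L)^\kappa$ of \cref{prop:CE-law-with-floor} is compatible with the hypothesis $|F|\leq L(\log L)^{50}$ of \cref{lem:gradphi-area-estimate} (which in applications is always the case, since $\kappa$ is a fixed constant that can be absorbed).
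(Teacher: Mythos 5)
Your proposal matches the paper's proof, which is equally terse: it invokes the bound $\sE_\beta(\gamma)\geq \beta\sN(\gamma)$ (valid for all $p\geq 1$) to carry over \cref{prop:CE-law} verbatim, and then cites \cref{lem:gradphi-area-estimate} as the replacement for \cref{lem:area-estimate}. Your extra detail on the bookkeeping (the $\widehat Z^\xi_V$ ratio, the bound $\partial(D_i\cap F)\subset\gamma\cup\partial F$, and the note that the fixed exponent $(\log L)^{50}$ in \cref{lem:gradphi-area-estimate} can accommodate any fixed $\kappa$) is a faithful expansion of the same argument, not a different route.
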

	\begin{proof}
		The proof is identical as in \cref{prop:CE-law-with-floor}. The cluster expansion without a floor only requires that $\cE_\beta(\gamma) \geq \sN(\gamma)$, where $\sN(\gamma)$ is the number of bonds in $\gamma$, and we have this. Obtaining from this the cluster expansion with a floor requires only the area estimate of \cref{lem:gradphi-area-estimate}.
	\end{proof}

    \begin{proof}[Proof of {\cref{thm:grad-phi-p}}]
    
    With \cref{prop:gradphi-CE-with-floor} replacing \cref{prop:CE-law-with-floor}, we can proceed through \cref{sec:geom-disagreement-polymers,sec:initial-UB-JEMS,sec:UB,sec:LB} just as in the case $p = 2$, since the only property needed from the polymer model was that the energy $\cE^*_\beta(\gamma)$ satisfies \cref{eq:energy-property}. Otherwise, the only properties we used of the actual measure $\pi_\Lambda$ were the FKG inequality and basic Peierls arguments. We simply highlight that in part (b) of \cref{thm:level-line-contains-Wulff}, the initial upper bound on the displacement of the $(H-n)$ line is obtained by the estimate of the ratio $N_{n+1}/N_n$ coming directly from the upper bound in \cref{eq:LD-ratio}, now replaced with the upper bound in \cref{eq:gradphi-LD-ratio} at $h = H^{(p)}+1-n$. What is needed is that this ratio is smaller than (and easily absorbs) $(\log L)^{16}$, which holds for all $1 < p < \infty$.
    \end{proof}
	\appendix
	
	\section{Monotonicity of \texorpdfstring{$|\nabla\phi|^p$}{grad-phi} models with non-uniform floors and ceilings}

It is well-known that the $|\nabla\phi|^p$ measures enjoy the FKG inequality for any $p\geq 1$; we will use this fact in the presence of non-uniform floors and ceilings, and we include the short proof for completeness.
    \begin{claim}\label{clm:grad-p-monotone}
        Let $\pi_V^{(p),\xi,\mathbf{a},\mathbf{b}}$ be the $|\nabla \phi|^p$ model ($p\in[1,\infty)$ and $\beta>0$) on a domain $V$ with boundary conditions $\xi$ and arbitrary floors and ceilings $\mathbf{a},\mathbf{b}\in (\Z\cup\{\pm\infty\})^V$ (i.e., $\mathbf{a}_x\leq \phi_x\leq \mathbf{b}_x$ for all~$x\in V$). This measure satisfies the FKG lattice condition. In particular, if $\xi \leq \xi'$, $\mathbf{a}\leq \mathbf{a}'$ and $\mathbf{b}\leq \mathbf{b}'$ pointwise, then $\pi_V^{(p),\xi,\mathbf{a},\mathbf{b}}\preceq \pi_V^{(p),\xi',\mathbf{a}',\mathbf{b}'}$.
    \end{claim}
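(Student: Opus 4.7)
The plan is to verify the FKG lattice condition at the level of the Hamiltonian together with the hard-wall/ceiling constraints, and then invoke Holley's theorem to deduce the stochastic domination claim. Write the unnormalized density as a product of bond factors $F_{xy}(\phi):=\exp(-\beta|\phi_x-\phi_y|^p)$ over edges $xy$ of $V$ (including boundary edges, where one of $\phi_x,\phi_y$ is replaced by $\xi$) together with single-site indicator factors $G_x(\phi):=\mathbf{1}_{\{\mathbf{a}_x\leq \phi_x\leq \mathbf{b}_x\}}$. It is a standard fact that log-supermodularity of each local factor suffices, so the key step is to show that for every edge $xy$ and every pair of configurations $\phi,\psi:V\cup\partialvtx V\to \Z\cup\{\pm\infty\}$,
\begin{equation}\label{eq:fkg-bond}
|(\phi\vee\psi)_x-(\phi\vee\psi)_y|^p+|(\phi\wedge\psi)_x-(\phi\wedge\psi)_y|^p \leq |\phi_x-\phi_y|^p+|\psi_x-\psi_y|^p,
\end{equation}
where $\vee,\wedge$ denote the pointwise maximum and minimum.

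To verify \cref{eq:fkg-bond}, assume without loss of generality that $\phi_x\geq \psi_x$ and split on the relative order of $\phi_y$ and $\psi_y$. If $\phi_y\geq \psi_y$, then both sides coincide. Otherwise, set $a=\phi_x-\psi_y$, $b=\psi_x-\phi_y$ and $t=\psi_y-\phi_y>0$; the left-hand side of \cref{eq:fkg-bond} is $|a|^p+|b|^p$ while the right-hand side is $|a+t|^p+|b-t|^p$, and a brief computation gives $(a+t)+(b-t)=a+b$ together with $|(a+t)-(b-t)|^2-|a-b|^2=4(\phi_x-\psi_x)t\geq 0$. Hence the pair $(a+t,b-t)$ is obtained from $(a,b)$ by a transfer along a fixed-sum line moving the two coordinates \emph{further} apart. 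Since $x\mapsto |x|^p$ is convex and even for $p\geq 1$, the map $r\mapsto |s+r|^p+|s-r|^p$ is convex and symmetric in $r$ (at fixed $s=(a+b)/2$), so it is nondecreasing in $|r|$, which yields $|a|^p+|b|^p\leq |a+t|^p+|b-t|^p$, establishing \cref{eq:fkg-bond}.

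The site factor $G_x$ is a $\{0,1\}$-valued indicator of the sublattice $[\mathbf{a}_x,\mathbf{b}_x]\cap\Z$ of $\Z$, so $G_x(\phi\vee\psi)G_x(\phi\wedge\psi)\geq G_x(\phi)G_x(\psi)$ trivially (this is where we use that $\mathbf{a}_x$ and $\mathbf{b}_x$ depend only on $\phi_x$). Combining with \cref{eq:fkg-bond} edge-by-edge, the full product density of $\pi^{(p),\xi,\mathbf{a},\mathbf{b}}_V$ is log-supermodular, i.e., satisfies the FKG lattice condition on $\Z^V$; standard FKG arguments then give the correlation inequality for increasing events.

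For the stochastic domination, one applies Holley's theorem to the two measures $\mu=\pi^{(p),\xi,\mathbf{a},\mathbf{b}}_V$ and $\mu'=\pi^{(p),\xi',\mathbf{a}',\mathbf{b}'}_V$: it suffices to check that for any $\phi\leq \phi'$ and any $x\in V$, the conditional law of $\phi_x$ under $\mu$ given $\phi\restriction_{V\setminus\{x\}}$ is stochastically dominated by the conditional law of $\phi'_x$ under $\mu'$ given $\phi'\restriction_{V\setminus\{x\}}$. Both conditional laws are of the form $\mathrm{const}\cdot \exp(-\beta \sum_{y\sim x}|s-\eta_y|^p)\mathbf{1}_{\{\mathbf{a}''_x\leq s\leq \mathbf{b}''_x\}}$, where $\eta$ is the current configuration at the neighbors of $x$ (including the boundary heights $\xi$ at $\partial V$), so by convexity of $|\cdot|^p$ raising the $\eta_y$'s, raising the floor, and raising the ceiling each move the one-dimensional conditional distribution stochastically upward; this yields the required pointwise domination of conditional laws, and Holley's theorem concludes $\mu\preceq \mu'$.

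The only nontrivial step is \cref{eq:fkg-bond}; once it is in hand, everything else is a routine application of the FKG/Holley machinery, and in particular the non-uniformity of $\mathbf{a},\mathbf{b}$ plays no role beyond the sublattice property of the site constraints.
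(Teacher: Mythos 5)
Your proof is correct and its core step coincides with the paper's: reduce the FKG lattice condition to the single-bond inequality
$|(\phi\vee\psi)_x-(\phi\vee\psi)_y|^p+|(\phi\wedge\psi)_x-(\phi\wedge\psi)_y|^p \leq |\phi_x-\phi_y|^p+|\psi_x-\psi_y|^p$,
and establish it by convexity of $u\mapsto |u|^p$. Your formulation (fixing the sum $a+b$ and observing that the transfer $(a,b)\mapsto(a+t,b-t)$ increases $|a-b|$, then using that $r\mapsto|s+r|^p+|s-r|^p$ is even and convex, hence nondecreasing in $|r|$) is equivalent to the paper's ``monotonicity of slopes'' phrasing; both are standard incarnations of the same convexity fact, and your reparametrization and the paper's $k,\Delta_a,\Delta_b$ substitution give the same four values. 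Your separate treatment of the site indicators is fine (they depend on a single coordinate, hence are trivially log-supermodular), and this mirrors the paper's reduction to pairs of configurations differing at two neighboring sites.

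The one place where your route genuinely diverges is the stochastic-domination conclusion. The paper disposes of it in one line, deriving $\pi_V^{(p),\xi,\mathbf{a},\mathbf{b}}\preceq\pi_V^{(p),\xi',\mathbf{a}',\mathbf{b}'}$ as ``a consequence of FKG in $\pi_V^{(p),\xi',\mathbf{a}',\mathbf{b}'}$'' --- in effect, by viewing the smaller measure as the larger one conditioned on a decreasing event (lower boundary values, higher floor being released, lower ceiling being imposed). You instead verify Holley's one-site criterion directly: the conditional single-site law $\mathrm{const}\cdot\exp(-\beta\sum_{y\sim x}|s-\eta_y|^p)\mathbf{1}_{[\mathbf{a}_x,\mathbf{b}_x]}(s)$ has the monotone-likelihood-ratio property in $\eta_y$ (since $u\mapsto\mathrm{sgn}(u)|u|^{p-1}$ is nondecreasing for $p\geq 1$), and truncating the support from below or extending it above moves the law stochastically upward. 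This is a bit more hands-on and self-contained; the paper's version is shorter but relies on the reader recognizing the constraint changes as conditioning on monotone events. Both are valid; yours arguably makes the role of the non-uniform $\mathbf{a},\mathbf{b}$ more transparent, at the cost of invoking Holley in a formulation that implicitly assumes the usual irreducibility of the single-site dynamics (automatic here since the constraint set is a product of intervals, but worth flagging).
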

    \begin{proof}
    Write $\pi=\pi_V^{(p),\xi,\mathbf{a},\mathbf{b}}$ for brevity.
    It suffices (see, e.g.,~\cite[Thm.~2.22]{Grimmett_RC}) 
    to verify the condition 
\begin{equation}\label{eq:fkg-lattice}\pi(\phi_1\vee\phi_2)\pi(\phi_1\wedge\phi_2)\geq \pi(\phi_1)\pi(\phi_2)\end{equation} for two configurations $\phi_1,\phi_2$ that differ on exactly $2$ sites $x,y\in V$. We may further assume that $x, y$ are nearest-neighbors (or there is equality in \cref{eq:fkg-lattice}, as all  interactions are nearest-neighbor). Writing $a_i:=\phi_i(x)$ and $b_i := \phi_i(y)$, we may further assume without loss of generality that $a_1 >a_2$ and $b_1<b_2$ (or else again there is trivially an equality). The above then translates into showing 
\[ |a_1-b_2|^p + |a_2-b_1|^p \leq |a_1-b_1|^p + |a_2-b_2|^p\,.\]
Writing $k:= a_1-b_1$, $\Delta_a := a_1-a_2>0$ and $\Delta_b := b_2-b_1>0$, this is equivalent to 
    \[ |k-\Delta_b|^p + |k-\Delta_a|^p \leq  |k - \Delta_a- \Delta_b|^p + |k|^p \,.\]
If $\Delta_a>\Delta_b$ then we have 
 $[|k-\Delta_a|^p - |k-\Delta_a-\Delta_b|^p]/\Delta_b <
[|k|^p - |k-\Delta_b|^p]/\Delta_b
$ by the monotonicity of slopes due to
the convexity of the function  $x\mapsto |x|^p$ for $p\geq 1$. Similarly, when $\Delta_b>\Delta_a $ the monotonicity of slopes implies 
$[|k-\Delta_b|^p - |k-\Delta_a-\Delta_b|^p]/\Delta_a <
[|k|^p - |k-\Delta_a|^p]/\Delta_a
$, thus \cref{eq:fkg-lattice} holds in both cases.
The final conclusion of the claim is a consequence of FKG in $\pi_V^{(p),\xi,\mathbf{a},\mathbf{b}'}$ if $\xi=\xi'$, whereas the statement for $\xi\neq\xi'$ follows as we may implement $\xi$ via $\mathbf{a}_x=\mathbf{b}_x=\xi_x$ for all $x\in\partial V$ (in the measure on $V\cup\partial V$ with, say, $0$ boundary conditions).
\end{proof}    
     
    \section{Random boundary construction}\label{sec:random-boundary}

In this section, we prove \cref{lem:boundary-construction,lem:LB-boundary-construction}. We begin with the following fact, which follows easily by a Peierls argument. 
\begin{observation}[e.g.,~{\cite[Eq.~(4.2)]{LMS16}}]\label{obs:dist-bdy-R}
     Let $\cC_x$ be the connected component of~$x$ of sites $y$ with $\phi_y \leq h-1$. W.h.p.\ under $\pi^k_V$ ($k$ can be any uniform b.c.), the interior of an $h$ line does not contain $x$ with $|\cC_x|>\log L$.
\end{observation}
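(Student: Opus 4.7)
My plan is to prove \cref{obs:dist-bdy-R} by a standard Peierls-type estimate on the outer boundary of a hypothetical large sub-level component, executed inside the macroscopic region enclosed by $\fL_{n+i-1}$.

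First I would reveal the loop $\fL_{n+i-1}$ together with $\phi$ outside its interior $D$, for instance via an exploration process starting from $\partial V$. By the domain Markov property, conditionally on this data the law of $\phi|_D$ is $\pi^\sigma_D$ with the floor constraint $\phi\geq 0$ inherited from $\pi_V^k$ and boundary values $\sigma\equiv H-n-i+1$ on $\partial D$, up to a use of FKG (\cref{clm:grad-p-monotone}) to lower any higher boundary values to exactly $H-n-i+1$; this can only increase the probability of the decreasing event in question. Writing $\widetilde\phi := \phi-(H-n-i+1)$, the hypothesis $|\cC_x^i|>(\log L)^2$ becomes the existence of a $\{\widetilde\phi\leq -1\}$-component of size exceeding $(\log L)^2$ in a shifted $\ZGFF$ on $D$ with boundary $\equiv 0$.

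Next I would execute the Peierls map: any outer boundary of a $\{\widetilde\phi\leq -1\}$-component $\cC$ is a dual loop $\gamma$ along which every bond $e$ satisfies $|(\nabla\widetilde\phi)_e|\geq 1$, contributing at least $\beta|\gamma|$ to the energy. The map that raises $\widetilde\phi$ by $+1$ on $\cC$ is injective, preserves the shifted floor (since $\widetilde\phi$ only increases), and lowers the energy by at least $\beta|\gamma|$. Combined with the standard bound of $C^{k}$ on the number of dual loops of length $k$ through a fixed point, this yields a probability of at most $e^{-(\beta-\log C)k}$ for each component with $|\gamma|=k$. The isoperimetric inequality $|\gamma|\geq 2\sqrt{|\cC|}\geq 2\log L$ and a union bound over the at most $L^{2}$ candidate sites $x$ then produce
\begin{equation*}
\pi_V^k\bigl(\exists\, x\in\mathrm{int}(\fL_{n+i-1})\text{ with }|\cC_x^i|>(\log L)^2\bigr)
\leq L^2 \sum_{k\geq 2\log L} C^k e^{-\beta k}
\leq L^{-c\beta} = o(1)
\end{equation*}
for $\beta$ sufficiently large.

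The main delicate point is the revealing step: to invoke the domain Markov property cleanly one must ensure the exploration of $\fL_{n+i-1}$ does not leak information into $D$ beyond the outermost loop itself. The standard resolution is to perform the exploration from the outside inward, stopping as soon as one of the outermost $(H-n-i+1)$-level disagreement loops has been fully traced; at that moment the interior configuration is still distributed as a $\ZGFF$ with the appropriate boundary values, so the Peierls estimate above applies. Since the observation is cited to \cite{LMS16} in essentially the form stated, I expect the full argument to follow that reference almost verbatim.
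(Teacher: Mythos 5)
Your proposal is correct and executes exactly the argument the paper has in mind: the paper gives no separate proof, only the remark ``follows easily by a Peierls argument'' and a citation to~\cite[Eq.~(4.2)]{LMS16}. Your exploration of $\fL_{n+i-1}$ from the outside, the FKG/monotonicity step (valid because the event $\{|\cC_x^i|>(\log L)^2\}$ is decreasing, so lowering the boundary to $H-n-i+1$ only increases its probability), and the floor-preserving Peierls map raising $\phi$ by $1$ on the low component (energy drops by at least $\beta|\gamma|$ since $|(\nabla\phi)_e|\geq 1$ across the outer contour $\gamma$), followed by the isoperimetric bound and a union bound, is precisely the standard route.
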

\begin{proof}[Proof of \cref{lem:boundary-construction}]
        Start by revealing all of $\phi\restriction_{\Lambda \setminus R}$. Now on $R$, we will continue to reveal $\phi$ at certain sites and lower some of the revealed heights in order to obtain $Q, \xi$ in the manner described below, and this defines the $\pi^k_V$-measurable distribution on $Q$. 
        
        As the procedure will reveal all of $\phi\restriction_{Q^c}$, the boundary conditions $\xi$ leave two cases to consider: first, that $\fL_n$ does not intersect $R$ at all. Then, either $R$ is entirely in the interior of $\fL_n$ or entirely in the exterior, and the requirement that $A, B \in \partial R$ are in the interior of $\fL_n$ rules out the latter possibility. Hence, $\cA_1$ is all of $Q$, so $\cA_2 \subset \cA_1$ deterministically. In the second case, $\fL_n$ intersects $R$, in which case $\xi$ implies that the restriction $\fL_n\restriction_Q$ coincides with the $H+1-n$ line in $\pi^\xi_Q$. In this case, the law of $\fL_n$ has also been changed due to the lowering of heights in the revealing procedure. However, since the area inside a line is an increasing function of $\phi$, we obtain that $\cA_2 \subset \cA_1$ by monotonicity.
                 
        The procedure to construct $Q, \xi$ is as follows: 
	\begin{enumerate}
        \item Call $\partial^1 R, \partial^2 R$ the top and bottom arcs of $\partial R$ from $A$ to $B$, respectively. For every $x$ on $\partial^1R$ with $\phi_x \leq H-n$, reveal the connected component $\cC^1_x$ of sites $y$ with $\phi_y \leq H-n$ containing $x$. Similarly, for every $x$ on $\partial^2R$ such that $\phi_x \leq H-n-1$, reveal the connected component $\cC^2_x$ of sites $y$ with $\phi_y \leq H-n-1$ containing $x$. 
		
		\item By maximality, the sites in the outer boundary of  $\cC^1_x$ have height $\geq H+1-n$, which we can drop by monotonicity to be exactly $H+1-n$. Similarly, the sites in the outer boundary of the $\cC^2_x$ have height $\geq H-n$, which we can drop to be exactly $H-n$.
		
		\item The remaining sites in $\partial^1R$ all have height $\geq H+1-n$, which can be lowered to $H+1-n$ by monotonicity. Similarly, the remaining sites in $\partial^2R$ have height $\geq H-n$, which can be lowered to $H-n$.
		
		\item By construction, every $x \in \partial^1R$ is now either at height $H+1-n$, or enclosed in a boundary of sites with height $H+1-n$. Similarly, every $x \in \partial^2R$ is at height $H-n$ or enclosed in a boundary of sites with height $H-n$. Hence, the union of these sites at height $H+1-n$ and $H-n$ enclose a simply connected domain $Q$ satisfying the conditions of \cref{lem:boundary-construction} except for \cref{it:good-boundary-points}. (Here, we use \cref{obs:dist-bdy-R} to ensure \cref{it:dist-Q-R-bdy}.)
		
		\item To obtain \cref{it:good-boundary-points}, let $A^{(0)}=(A^{(0)}_1,A^{(0)}_2)$ be the rightmost point on the left side of $\partial Q$ such that $A^{(0)}_2 = A_2 + (\log L)^5 + (\log L)^2$. We start the following iterative procedure starting with $A^{(0)}$. Look at the $(\log L)^2 \times 2(\log L)^3$ rectangle with $A^{(0)} + (1, 0)$ as the midpoint of the left side of the rectangle. If there is no point in $\partial Q$ which intersects the rectangle, then set $A' = A^{(0)}$. Otherwise, let $A^{(1)}$ be the rightmost intersection of $\partial Q$ with the rectangle. Then, repeat starting with $A^{(1)}$, continuing until we set $A' = A^{(k)}$ for some $k$. This procedure must stop after at most $(\log L)^2$ steps because each $A^{(i+1)}$ is to the right of $A^{(i)}$, while $\partial Q$ only deviates at most $(\log L)^2$ distance from the left side of $\partial R$. Moreover, the change in the $y$-coordinate from $A^{(i)}$ to $A^{(i+1)}$ is at most $(\log L)^3$ at each step, so that $|A'_2 - A^{(0)}_y| \leq (\log L)^5$. In particular, $A'_2 \geq A_2 + (\log L)^2$, so that by \cref{obs:dist-bdy-R}, $A_2$ lies on the arc of $\partial Q$ with boundary condition $H+1-n$. 
		
		\item Repeat the above procedure analogously on the right side of $Q$ to obtain the point $B$. Finally, lower the boundary conditions from $H+1-n$ to $H-n$ on the arcs of $Q$ between $A$ and $A'$, and $B$ and $B'$. 
	\end{enumerate}	

        The fact that $\sC(\overline{A'B'}) \subset Q$ follows because of the assumption that $A, B$ are at least distance $\sqrt{\ell_1}(\log L)^2$ away from the top or bottom of $R$, and the shifts by up to $(\log L)^5$ to $A', B'$ are irrelevant as $\ell_1 \gg (\log L)^6$. It is straightforward to check that the remaining conditions in \cref{it:good-boundary-points} are now satisfied by $A', B'$. 
\end{proof}

We turn now to prove \cref{lem:LB-boundary-construction}, which is essentially the same as \cref{lem:boundary-construction} but with reversed monotonicity. The minor complication is that the $C_x$ components we wish to reveal now will now be of the form $\{y: \phi_y \geq h\}$. The Peierls argument that deletes the contour $\partial C_x$ would now want to shift the sites in $C_x$ down, but this may run into problems with the conditioning that $\phi \geq 0$. This was handled in \cite{LMS16}, which we recall here.
\begin{observation}[{\cite[Theorem 2 and Rem. 1.5]{LMS16}}]\label{obs:LB-dist-bdy-R}
    With high probability under $\pi^0_{\Lambda}$, for each $h = 0, \ldots, H$, there is at most one \texttt{large} $h$ line, and there is no \texttt{large} $H+1$ line. In particular, if $C_x$ is the connected component of $x$ of sites $y$ with $\phi_y \geq H+2-n$, then for $n \geq 2$, w.h.p.\ the exterior of $\fL_{n-1}$ does not contain $x$ with $|C_x| > \log L$. For $n = 1$, w.h.p.\ there are no points $x$ with $|C_x| > \log L$.
\end{observation}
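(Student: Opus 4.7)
The plan is to derive the observation by combining the level-line structure theorem of [LMS16] (stated in the excerpt immediately after \cref{eq:H-def}) together with \cref{rem:Lh-B-concrete}, and then deducing the ``In particular'' consequence via a short isoperimetric argument. I take as black-box inputs from [LMS16] that, w.h.p.\ for non-exceptional $L$: (a) there exists a unique macroscopic $h$-level-line loop for each $h = 0, \ldots, H$, and (b) there are no macroscopic $h$-level-line loops for any $h \geq H+1$. These are proved in [LMS16] via a refined cluster expansion and area-estimate analysis of the sort systematized in \cref{prop:CE-law-with-floor} and \cref{lem:area-estimate}; we will not reprove them here.

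The key geometric input to translate (a)--(b) into a statement about $C_x$ is elementary. For any connected $A \subset \Z^2$ with $|A| > (\log L)^2$, the isoperimetric inequality on $\Z^2$ gives $|\partial A| \geq 4\sqrt{|A|} > 4 \log L$. In particular, the outermost $*$-connected circuit of dual bonds enclosing $A$ has length at least $4 \log L$, which exceeds the weaker macroscopicity threshold $(C/\beta)\log L$ mentioned in the definition following \cref{eq:H-def} (valid once $L$ is large enough for the fixed $\beta$). I will invoke (a)--(b) with this weaker threshold; both [LMS16] and its proof are robust to this choice.

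Now fix $n \geq 1$ and suppose, for contradiction, that with non-negligible probability there exists some $x$ in the exterior of $\fL_{n-1}$ with $|C_x| > (\log L)^2$. By the maximality in the definition of $C_x$, every site in its external vertex boundary $\partialvtx C_x$ satisfies $\phi_y < H-n+1$, so the outermost $*$-connected circuit of dual bonds around $C_x$ is an $(H-n+1)$-level-line loop of length at least $4\log L$, i.e., macroscopic. Since $C_x$ is connected and $x$ lies in the exterior of $\fL_{n-1}$, the entire set $C_x$ (and hence this new loop) lies in the exterior of $\fL_{n-1}$, so the new loop is distinct from $\fL_{n-1}$, contradicting the uniqueness in (a). For $n = 0$, the same argument produces a macroscopic $(H+1)$-level-line loop, contradicting (b). This gives the ``In particular'' claim.

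The main (but mild) subtlety is verifying that the loop around $C_x$ is genuinely disjoint from $\fL_{n-1}$: if the two loops intersected, then, by following $C_x$ from $x$ to a common point and crossing into the interior of $\fL_{n-1}$, we would produce a path in $C_x$ that meets both the interior and exterior of $\fL_{n-1}$, forcing $C_x$ to cross $\fL_{n-1}$ in the dual sense; but $\fL_{n-1}$ separates $\{\phi \geq H-n+1\}$ from $\{\phi < H-n+1\}$, and $C_x \subset \{\phi \geq H-n+1\}$ by definition, so no such crossing is possible. Thus the entire heavy lifting is the [LMS16] input; everything else here is elementary graph-theoretic bookkeeping.
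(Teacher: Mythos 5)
Your proof is correct; the paper supplies no argument here, attributing the observation directly to [LMS16, Theorem~2] and leaving the ``in particular'' deduction implicit. The subtle point you correctly identify is that isoperimetry only yields an outer contour around $C_x$ of length $\geq 4\sqrt{|C_x|} > 4\log L$, which is far below the $(\log L)^2$ cutoff in the observation's first sentence --- so one must invoke the [LMS16] uniqueness/absence statements at the weaker macroscopicity threshold $(C/\beta)\log L$, which the paper's definition of ``macroscopic'' explicitly allows. (Strictly speaking, you are then not deriving the ``in particular'' from the observation's first sentence, but deriving both statements together from the sharper threshold version of the [LMS16] input; this is the only reading under which the observation is internally consistent as written.) Your disjointness argument is sound and worth spelling out: every bond of $\fL_{n-1}$ has its endpoint with $\phi\geq H-n+1$ in the interior of $\fL_{n-1}$, whereas $C_x\subset\{\phi\geq H-n+1\}$ lies entirely in the exterior, so no bond of $\fL_{n-1}$ can appear on $\partial C_x$, and the two loops are genuinely distinct. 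Two cosmetic points: the inequality $4\log L > (C/\beta)\log L$ is a condition on $\beta$, not on $L$, so the parenthetical ``valid once $L$ is large enough for the fixed $\beta$'' should read ``valid for $\beta$ large''; and the absence of macroscopic $(H+1)$-level-lines (your input~(b) specialized to $h=H+1$) comes not from the main theorem of [LMS16] but from [LMS16, Remark~1.3] together with the restriction to non-exceptional $L$, which you gesture at but should cite explicitly.
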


\begin{proof}[Proof of \cref{lem:LB-boundary-construction}]
As described in the beginning of the proof of \cref{lem:boundary-construction}, by monotonicity it suffices to construct $Q, \xi$ via a procedure which reveals $\phi\restriction_{Q^c}$ under $\pi^0_{\Lambda}$ and raises some of the revealed heights. The procedure to construct $Q, \xi$ is as follows:
\begin{enumerate}
    \item Call $\partial^1 R$ the top and sides of $\partial R$. For every $x$ on $\partial^1 R$ with $\phi_x \geq H+2-n$, reveal the connected component $C_x$ of sites $y$ with $\phi_y \geq H+2-n$ containing $x$.

    \item By maximality, the sites in the outer boundary of $C_x$ have height $\leq H+1-n$, which we can raise by monotonicity to be exactly $H+1-n$. 

    \item Call $\partial^2 R$ the bottom side of $\partial R$ in between $A, B$. Call $\partial^3 R$ the remainder of the bottom side of $\partial R$. Since all the boundary heights on $\partial^2 R \cup \partial^3 R$ are 0, by monotonicity we can raise the boundary condition on $\partial^2 R$ to $H-n$ and on $\partial^3 R$ to $H+1-n$.

    \item By construction, every $x \in \partial^1 R$ is now either at height $H+1-n$, or enclosed in a boundary of sites with height $H+1-n$. Hence, the union of these sites at height $H+1-n$ with $\partial^2 R$ and $\partial^3 R$ enclose a simply connected domain $Q$ satisfying the conditions of \cref{lem:LB-boundary-construction}, using \cref{obs:LB-dist-bdy-R} to ensure \cref{it:LB-dist-Q-R-bdy}.\qedhere
\end{enumerate}
\end{proof}

\subsection*{Acknowledgments}
This research was supported by the NSF grant \textsc{dms}-2451083.

	\bibliographystyle{abbrv}
	\bibliography{dg_limit_ref}

\end{document}